\definecolor{Prune}{RGB}{99,0,60} 
\definecolor{B1}{RGB}{49,62,72} 
\definecolor{C1}{RGB}{124,135,143}
\definecolor{D1}{RGB}{213,218,223}
\definecolor{A2}{RGB}{198,11,70}
\definecolor{B2}{RGB}{237,20,91}
\definecolor{C2}{RGB}{238,52,35}
\definecolor{D2}{RGB}{243,115,32}
\definecolor{A3}{RGB}{124,42,144}
\definecolor{B3}{RGB}{125,106,175}
\definecolor{C3}{RGB}{198,103,29}
\definecolor{D3}{RGB}{254,188,24}
\definecolor{A4}{RGB}{0,78,125}
\definecolor{B4}{RGB}{14,135,201}
\definecolor{C4}{RGB}{0,148,181}
\definecolor{D4}{RGB}{70,195,210}
\definecolor{A5}{RGB}{0,128,122}
\definecolor{B5}{RGB}{64,183,105}
\definecolor{C5}{RGB}{140,198,62}
\definecolor{D5}{RGB}{213,223,61}
\renewcommand*\backref[1]{\ifx#1\relax \else (Cité p.#1) \fi}
\newcommand{\cat}{\text{CAT}}
\newcommand{\ch}{\text{ch}}
\newcommand{\diam}{\text{diam}}
\newcommand{\Map}{\text{Map}}
\newcommand{\bnd}{\text{bnd}}
\newcommand{\bdd}{\text{Bdd}}
\newcommand{\Har}{\text{Har}}
\newcommand{\Lip}{\text{Lip}}
\newcommand{\bdg}{\partial_{\text{Grom}}}
\newcommand{\iso}{\text{Isom}}
\newcommand{\bdinf}{\Delta^\infty}
\newcommand{\Aut}{\text{Aut}}
\newcommand{\dt}{\partial T}
\newcommand{\Out}{\text{Out}}
\newcommand{\supp}{\text{supp}}
\newcommand{\haar}{Haar}
\newcommand{\R}{\mathbb{R}}
\newcommand{\Opp}{\text{Opp}}
\newcommand{\conv}{\text{Conv}}
\newcommand{\K}{\mathbb{K}}
\newcommand{\Ell}{\text{Ell}}
\newcommand{\proj}{\text{proj}}
\newcommand{\germ}{\text{germ}}
\newcommand{\rang}{\text{rang}}
\newcommand{\SO}{\text{SO}}
\newcommand{\res}{\text{Res}}
\newcommand{\dep}{\text{dep}}
\newcommand{\psl}{\text{PSL}}
\newcommand{\glnr}{\text{GL}_n(\mathbb{R})}
\newcommand{\gl}{\text{GL}}
\newcommand{\stab}{\text{Stab}}
\newcommand{\Id}{\text{Id}}
\newcommand{\bd}{\partial_\infty}
\newcommand{\nui}{\check{\nu}}
\newcommand{\mui}{\check{\mu}}
\newcommand{\prob}{\text{Prob}}
\newcommand{\Xhu}{\widetilde{X}^{u}}
\newcommand{\Xh}{\widetilde{X}}
\newcommand{\Xhb}{\widetilde{X}^f}
\newcommand{\Xhuc}{\widehat{X}^{u}}
\newcommand{\Xhc}{\widehat{X}}
\newcommand{\XG}{\overline{X}^{\text{Grom}}}
\newcommand{\Xhbc}{\widehat{X}^f}
\newcommand{\bary}{\text{bar}}
\newcommand{\tnu}{\tilde{\nu}}
\theoremstyle{plain}
\newtheorem{thm}{Théorème}[section]
\newtheorem*{nthm}{Théorème}
\newtheorem{ithm}{Théorème}
\newtheorem{conj}{Conjecture}
\newtheorem{cor}[thm]{Corollaire}
\newtheorem{icor}[ithm]{Corollaire}
\newtheorem{lem}[thm]{Lemme}
\newtheorem{prop}[thm]{Proposition}
\newtheorem{ethm}{Theorem}[section]
\newtheorem{ecor}[ethm]{Corollary}
\newtheorem{elem}[ethm]{Lemma}
\newtheorem{eprop}[ethm]{Proposition}
\theoremstyle{definition} 
\newtheorem{Def}[thm]{Définition}
\newtheorem{ex}[thm]{Exemple}
\newtheorem{eDef}[ethm]{Definition}
\theoremstyle{remark}
\newtheorem{erem}[ethm]{Remark}
\newtheorem{rem}[thm]{Remarque}
\title{Marches aléatoires et éléments contractants sur des espaces $\cat$(0)}
\author{Corentin Le Bars}
\begin{document}
	\sffamily
	\begin{titlepage}

		\newgeometry{left=6cm,bottom=2cm, top=1cm, right=1cm}
		
		\tikz[remember picture,overlay] \node[opacity=1,inner sep=0pt] at (-13mm,-135mm){\includegraphics{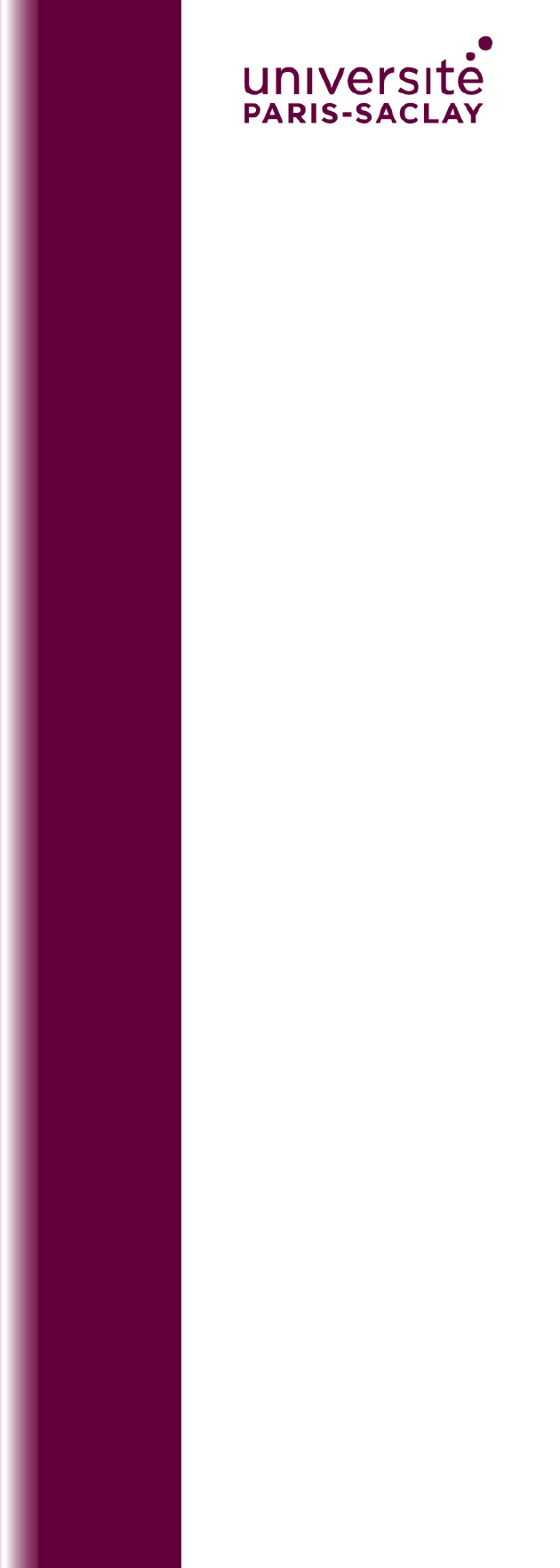}};
		
		
		\color{white}
		
		\begin{picture}(0,0)
			\put(-152,-743){\rotatebox{90}{\Large \textsc{THESE DE DOCTORAT}}} \\
			\put(-120,-743){\rotatebox{90}{NNT : 2023UPASAM017}}
		\end{picture}
		
		\color{black}
		\vspace{5mm}
		\begin{flushright}
			\color{Prune}
			
			\fontsize{22}{26}\selectfont
			\Huge Marches aléatoires et éléments contractants sur des espaces {CAT}(0) \\
			\vspace{5mm}
			\normalsize
			\color{black}
			\Large{\textit{Random walks and contracting elements on {CAT}(0) spaces}} \\

			\fontsize{8}{12}\selectfont
			
			\vspace{1.5cm}
			
			\textbf{Thèse de doctorat de l'université Paris-Saclay} \\
			
			\vspace{6mm}
			
			\small École doctorale n$^{\circ} 574$ MH | Mathématiques Hadamard (EDMH)\\
			\small Spécialité de doctorat : Mathématiques Fondamentales\\
			\small Graduate School : Mathématiques. Référent : Faculté des sciences d’Orsay \\
			\vspace{6mm}
			
			\footnotesize Thèse préparée dans l'unité de recherche \textbf{Laboratoire de mathématiques d'Orsay (Université Paris-Saclay, CNRS)}, sous la direction de \textbf{Jean LÉCUREUX}, maître de conférences.\\
			\vspace{15mm}
			
			\textbf{Thèse soutenue à Paris-Saclay, le 31 août 2023, par}\\
			\bigskip
			\Large {\color{Prune} \textbf{Corentin LE BARS}} 
		\end{flushright}

		\vspace{\fill} 
		
		\flushleft
			\small {\color{Prune} \textbf{Composition du jury}}\\
			{\color{Prune} \scriptsize {Membres du jury avec voix délibérative}} \\
			\vspace{2mm}
			\scriptsize
			\begin{tabular}{|p{7cm}l}
				\arrayrulecolor{Prune}
				\textbf{Yves BENOIST} &  Président du jury \\ 
				Professeur, Laboratoire de Mathématiques d'Orsay, Université Paris-Saclay   &   \\ 
				\textbf{Peter HAÏSSINSKY} &  Rapporteur \& Examinateur \\ 
				Professeur, Institut de Mathématiques de Marseille, Aix-Marseille Université   &   \\ 
				\textbf{Alessandro SISTO} &   Rapporteur \& Examinateur \\ 
				Assistant Professor, Heriott-Watt University &   \\
				
				\textbf{Pierre-Emmanuel CAPRACE} &  Examinateur \\ 
				Professeur, Institut de Recherche en Mathématiques et Physique, UCLouvain   & \\
				\textbf{Anne PARREAU} & Examinatrice \\ 
				Maître de conférences, Institut Fourier, Université Grenoble Alpes  &   \\ 
		\end{tabular} 
		
	\end{titlepage}

	\thispagestyle{empty}
	\newgeometry{top=1.5cm, bottom=1.25cm, left=2cm, right=2cm}

	\noindent 
	\includegraphics[height=2.45cm]{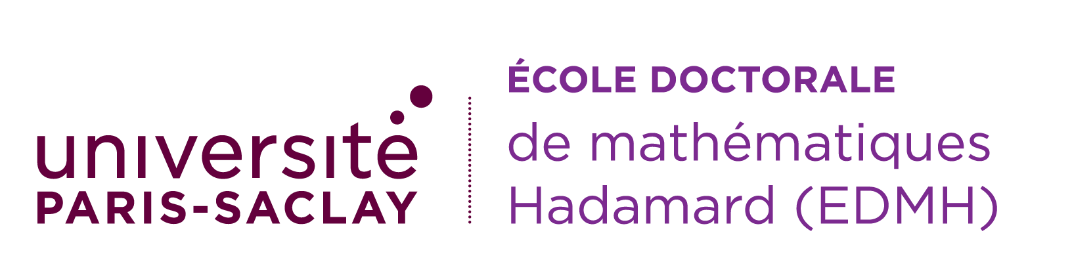}
	\vspace{1cm}
	
	\small
	
	\begin{mdframed}[linecolor=Prune,linewidth=1]
		
		\textbf{Titre:} Marches aléatoires et éléments contractants sur des espaces $\cat$(0). 
		
		\noindent \textbf{Mots clés:} Théorie des groupes, Géométrie, Topologie, Dynamique, Courbure négative. 
		
		\vspace{-.5cm}
		\begin{multicols}{2}
			\noindent \textbf{Résumé:} Dans cette thèse, on étudie des marches aléatoires induites par des actions de groupes probabilisés sur des espaces de courbure négative ou nulle. On décrit en particulier le cas d'actions sur des espaces $\cat$(0) admettant des éléments contractants, c'est-à-dire dont les propriétés imitent ceux des isométries loxodromiques dans les espaces Gromov-hyperboliques. On montre dans ce cadre plusieurs lois limites, comme la convergence presque sûre vers le bord sans hypothèse de moment, la positivité de la dérive, et un théorème de la limite centrale. Dans un second temps, on étudie les mesures stationnaires sur un immeuble affine de type $\tilde{A}_2$, et on montre qu'il existe toujours des isométries hyperboliques pour une action non élémentaire et par automorphismes sur un tel espace. Notre approche implique l'utilisation de modèles hyperboliques pour les espaces $\cat$(0), dont la construction est due à H.~Petyt, D.~Spriano et A.~Zalloum, et des techniques de théorie des bords mesurés dont les principes remontent à H.~Furstenberg. 
		\end{multicols}
		
	\end{mdframed}
	
	\vspace{8mm}
	
	\begin{mdframed}[linecolor=Prune,linewidth=1]
		
		\textbf{Title:} Random walks and contracting elements on $\cat$(0) spaces. 
		
		\noindent \textbf{Keywords:} Group theory, Geometry, Topology, Dynamics, Negative curvature. 
		
		\begin{multicols}{2}
			\noindent \textbf{Abstract:} This thesis is dedicated to random walks on spaces with non-positive curvature. In particular, we study the case of group actions on $\cat$(0) spaces that admit contracting elements, that is, whose properties mimic those of loxodromic isometries in Gromov-hyperbolic spaces. In this context, we prove several limit laws, among which the almost sure convergence to the boundary without moment assumption, positivity of the drift and a central limit theorem. In a second part, we study boundary maps and stationary measures on affine buildings of type $\tilde{A}_2$, ans we show that there always exists a hyperbolic isometry for a non-elementary action by isometries on such a space. Our approach involves the use of hyperbolic models for $\cat$(0) spaces, which were constructed by H.~Petyt, D.~Spriano and A.~Zalloum, and measured boundary theory, whose principles come from H.~Furstenberg. 
		\end{multicols}
	\end{mdframed}
	\vfill
	\newgeometry{top=3.5cm,bottom=4cm,left=1.5cm,right=3cm, heightrounded,bindingoffset=5mm}
	\makeatother
	
	\fontsize{12}{15}\selectfont
	\rmfamily
	
	\selectlanguage{french}
	
	\mtcselectlanguage{french}
	
	\begin{flushleft}
		\huge{\textbf{Remerciements}}
	\end{flushleft}
	\normalsize
	\vspace{1cm}
	 Cette thèse est le produit d'un travail collectif qui, pour une large part, est invisible. J'ai écrit ce manuscrit après plusieurs années de travail sous les conseils de Jean Lécureux, qui a été un directeur de thèse exceptionnellement attentionné et bienveillant, et à qui je souhaiterais adresser mes premiers remerciements. Jean, tu as réussi à créer un équilibre parfait entre ton accompagnement et la liberté que tu me laissais, et tu m'as orienté vers des questions mathématiques passionnantes et riches. Ton écoute, ton savoir et tes conseils m'ont énormément aidé, et j'espère de tout cœur pouvoir continuer à discuter et travailler avec toi dans les prochaines années. 
	 \newline
	 
	 Je tiens à remercier Peter Haïssinsky et Alessandro Sisto, qui m'ont fait l'honneur de rapporter cette thèse. Je suis particulièrement reconnaissant à Peter Haïssinsky pour ses commentaires et questions qui ont permis d'en améliorer le contenu. Je remercie également chaleureusement Yves Benoist, Pierre-Emmanuel Caprace et Anne Parreau pour avoir accepté de faire partie de mon jury. 
	 \newline 
	 
	 J'ai bénéficié de la gentillesse et du savoir de nombreux\textperiodcentered ses chercheuses et chercheurs du laboratoire d'Orsay. Je remercie notamment Frédéric Paulin, Bruno Duchesne et Yves Benoist, qui ont pris du temps pour m'aider dans mes recherches ou dans mon parcours. Le laboratoire d'Orsay est un lieu intimidant mais au sein duquel j'ai rencontré de très belles mathématiques et où j'ai appris la pratique scientifique. J'ai de plus eu l'honneur de faire partie de son équipe de foot, qui avec ses attaquants Rachid Fahlaoui et Pascal Massart n'avait pas d'adversaires sérieux sur le plateau. Un grand merci aux personnes travaillant dans l'administration, au service informatique et à l'École Doctorale, qui font un travail peu reconnu bien qu'indispensable. Plus généralement, je pense à tous\textperiodcentered tes les non-titulaires de l'université, précaires qui la font tenir debout en souffrant.  
	 \newline 
	 
	 Je suis très reconnaissant envers Thomas Haettel, qui a été un formidable directeur de mémoire pour mon Master, et qui m'a fait découvrir le domaine des espaces hyperboliques et des complexes cubiques. Thomas a ensuite continué de m'aider en m'offrant ses conseils et idées, et j'ai toujours plaisir à discuter avec lui. Je remercie également Camille Horbez pour m'avoir proposé d'étudier les groupes modulaires de surfaces, le complexe des courbes et la belle construction des quasi-arbres d'espaces métriques associés aux complexes de projections, lors d'un stage pendant le premier confinement. Ces objets n'apparaissent pas explicitement dans ce manuscrit, mais certaines idées m'ont donné des perspectives intéressantes, et les rencontrer m'a permis d'élargir ma culture scientifique. 
	 \newline
	 
	 Je souhaiterais remercier beaucoup de jeunes chercheur\textperiodcentered ses, qui ont fait de ma thèse quelque chose d'un peu moins solitaire et avec qui j'espère pouvoir travailler dans les prochaines années. Je pense notamment à Hermès Lajoinie, Paige Helms, Ella Blair, Mireille Soergel, Oussama Bensaid, Antoine Goldsborough, Pénélope Azuelos, Baptiste Cerclé, Yusen Long et David Xu. Je suis particulièrement reconnaissant envers Harry Petyt, Davide Spriano et Abdul Zalloum d'avoir gentiment accepté de discuter de leur construction des modèles hyperboliques. Félix Loubaton, en plus d'être un ami qui m'est cher, a attentivement relu des parties de ce manuscrit, et m'a beaucoup aidé sur la fin de sa rédaction. 
	 \newline 
	 
	 Je souhaiterais enfin manifester toute ma gratitude et mon amour à mes proches, ma famille et mes ami\textperiodcentered es. A mes parents, sans qui, pour des raisons évidentes et bien d'autres encore, je n'aurais pas pu faire cette thèse. À mes frères Alain et Yann, avec qui je partage une relation belle et précieuse, et à ma petite sœur Garance, qui a très tôt compris qu'avoir un grand frère qui fait des maths avait des inconvénients, surtout en vacances. Au reste de ma famille enfin, merci pour votre affection. Si travailler sur cette thèse à Paris ces dernières années a été une expérience heureuse, c'est aussi beaucoup grâce aux merveilleuses personnes de mon entourage, mes ami\textperiodcentered es qui le resteront malgré l'éloignement. 
	 \newline 
	 
	 Une bonne partie des pages de ce manuscrit a été écrite avec Léonard à mes côtés, beau et affectueux chat blanc doué dans les manipulations informatiques complexes --- on pourrait dire que ce texte lui a survécu. 
	 \newline 
	 
	 Ma dernière pensée, celle qui compte le plus, vient à Juliette, évidemment, que j'aime et qui est ma joie. 
	 \newpage
	\dominitoc
	
	\tableofcontents
	
	\chapter{Introduction}
	\minitoc


	\section{Marches aléatoires en courbure non-positive}\label{intro lois lim}

	Cette thèse est consacrée à certains problèmes de théorie géométrique des groupes, et leur lien avec les systèmes dynamiques sur des espaces de courbure négative. En particulier, nous étudierons certaines propriétés de marches aléatoires induites par des actions de groupes probabilisés sur des espaces qui manifestent un comportement hyperbolique. Plus précisément, soit $G \curvearrowright (X,d,o)$ une action de groupe sur un espace métrique marqué, et soit $(Z_n) $ une marche aléatoire sur $G$, on étudie les questions suivantes : 
	\begin{itemize}
		\item En fonction des propriétés géométriques de $X$, que peut-on dire du comportement asymptotique de la suite de variables aléatoires $(Z_n o)_{n \in \mathbb{N}}$ ? 
		\item Réciproquement, ces lois limites peuvent-elles nous en apprendre sur le groupe $G$ ? 
	\end{itemize}
	
	Ces questions ont été étudiées dans des contextes variés et ont mené à des avancées importantes en théorie des groupes. En particulier, l'étude de ces marches aléatoires et de ces systèmes dynamiques mesurés a permis de démontrer certains résultats fondamentaux n'ayant a priori rien de probabilistes. Le théorème de super-rigidité de Margulis en est un exemple remarquable. On verra que les propriétés asymptotiques des marches aléatoires sur un groupe ont en effet des liens étroits avec, par exemple, des questions d'analyse harmonique, de croissance et de moyennabilité du groupe ou encore de rigidité. L'étude de ces systèmes dynamiques aléatoires fait donc l'objet d'une recherche active, et dont les ramifications sont nombreuses.  
	\newline
	
	\subsection*{Émergence du sujet}
	
	Soit $X_1, \dots, X_n$ des variables aléatoires réelles, indépendantes et identiquement distribuées. L'étude des propriétés asymptotiques de la somme $S_n = X_1 + \dots + X_n$ est un sujet fondamental et bien compris de la théorie des probabilités. Dès le début du XVIIIe siècle, Bernoulli prouve en effet que si $X_1$ a une espérance finie, $S_n/n $ converge en probabilité vers $\mathbb{E}[X_1]$. Dans un travail qui a fondé la théorie moderne des probabilités, Kolmogorov montre en 1928 que la convergence a également lieu presque sûrement \cite{kolmogorov28}. Tout au long du vingtième siècle, des résultats limites de cette nature (théorèmes de récurrence, loi des grands nombres, théorème de la limite centrale...) sont prouvés. Plus récemment, on a cherché à étendre ce type de résultats pour d'autres groupes, dans un contexte non commutatif : on s'intéresse au produit $S_n = X_1  \dots X_n$, où les $X_i$ sont des variables aléatoires à valeurs dans un groupe $G$, distribuées selon une même mesure de probabilité $\mu  $. Les premiers résultats dans ce cadre remontent à Furstenberg et Kesten, qui étudient dans \cite{furstenberg_kesten60} et \cite{furstenberg63} des produits de matrices dans $\gl(n ,\mathbb{R})$. Sous certaines conditions de moment pour la mesure $\mu$, les auteurs montrent qu'il existe $\lambda >0 $ tel que pour tout $v \in \R^n \setminus \{0\}$, 
	\begin{eqnarray}
		\frac{1}{n} \log \| X_n \dots X_1 v\| \underset{n}{\longrightarrow} \lambda. \nonumber
	\end{eqnarray}
	Ce résultat est l'analogue de la loi des grands nombres pour la marche aléatoire $(X_n \dots X_1 v)_{n }$. Sous des conditions de moments plus restrictives, la marche aléatoire vérifie d'autres propriétés asymptotiques comme le théorème de la limite centrale, \cite{lepage82}, \cite{guivarch_raugi85} : il existe $\sigma_\mu >0 $ tel que pour tout $v \in \mathbb{R}^n \setminus \{0\}$, 
	\begin{equation*}
		\frac{\log \| X_n \dots X_1 v\| - n  \lambda}{\sqrt{n}} \underset{n}{\longrightarrow} \mathcal{N}(0, \sigma_\mu^2), 
	\end{equation*}
	où $\mathcal{N}(0, \sigma_\mu^2)$ désigne une loi gaussienne centrée, non-dégénérée. 
	\newline
	
	L'étude de ces lois limites dans des groupes présentant de la courbure négative s'est également beaucoup développée, notamment depuis l'introduction des groupes hyperboliques par Gromov dans \cite{gromov87}. Dans l'article fondamental \cite{kaimanovich00}, V. Kaimanovich démontre que si $G$ est un groupe Gromov-hyperbolique, alors la marche aléatoire $(Z_n ) $ converge presque sûrement vers un point du bord $\bdg G$. Ce résultat de convergence a été amélioré par Maher et Tiozzo dans l'article \cite{maher_tiozzo18}, où les auteurs montrent cette convergence pour des marches aléatoires issues d'actions de groupes sur des espaces hyperboliques non localement compacts. Encore plus récemment, Gouëzel montre que dans ce cadre, la marche aléatoire $(Z_n o)$ satisfait une loi des grands nombres, et ce sans hypothèse de premier moment fini, \cite{gouezel22}. Ces progrès concernent également les lois limites à l'ordre supérieur. Par exemple, sous des hypothèses de moments exponentiels finis, des théorèmes de la limite centrale sont démontrés pour les groupes libres \cite{sawyer_steger87}, puis pour une large classe de groupes hyperboliques \cite{bjorklund09}. Plus récemment, Y.~Benoist et J-F.~Quint ont prouvé des théorèmes de la limite centrale pour les groupes linéaires \cite{benoist_quint16CLTlineargroups} et hyperboliques \cite{benoist_quint16}. 
	\newline 
	
	Il existe une grande variété d'approches pour étudier ces lois limites. L'une d'entre elles  concerne l'utilisation de la théorie des bords. Dans l'article fondamental \cite{furstenberg63}, Furstenberg introduit la notion de bord de Poisson-Furstenberg $B(G, \mu)$ d'un groupe probabilisé $(G, \mu)$. L'espace $B$ est un espace probabilisé qui représente l'ensemble des directions asymptotiques de la marche aléatoire $(Z_n) $ sur $G$ engendrée par $\mu$. On en donnera une présentation plus approfondie dans la Section \ref{section bord poisson}, ainsi que ses principales propriétés. On verra notamment que déterminer le bord de Poisson-Furstenberg associé à une mesure $\mu$ peut renseigner sur des propriétés géométriques et analytiques du groupe $G$. Par exemple, les fonctions mesurables bornées sur $B(G, \mu)$ sont canoniquement associées aux fonctions $\mu$-harmoniques sur $G$ \cite{furstenberg63}. Un autre exemple est fourni par le résultat suivant, lié à la croissance dans le groupe : $G$ est moyennable si et seulement s'il existe une mesure admissible $\mu$ telle que $B(G, \mu)$ est trivial, voir \cite{kaimanovich_vershik} et \cite{rosenblatt81}. Enfin, il existe une action naturelle de $G$ sur son bord de Poisson, et cette action possède des propriétés particulièrement intéressantes de moyennabilité et d'ergodicité \cite{zimmer78}. 
	
	Les propriétés associées à l'action de $G$ sur son bord de Poisson ont été utilisées de manière cruciale pour étudier certains phénomènes de rigidité de groupe. L'exemple emblématique de ce genre de résultat est le théorème de super-rigidité de Margulis, dont le théorème de Mostow est une des applications les plus spectaculaires. 
	
	\begin{nthm}[{Rigidité de Margulis-Mostow, \cite[Section 5]{margulis91}}]
		Soit $G$ et $G'$ deux groupes de Lie semisimples de rangs supérieurs, connexes, de centres triviaux et sans facteurs compacts. Soit $\Gamma$ un réseau irréductible de $G$ et $\Gamma'$ un réseau de $G'$. Alors tout isomorphisme $\phi : \Gamma \rightarrow \Gamma'$ s'étend en un isomorphisme rationnel $\tilde{\phi} : G \rightarrow G'$.
	\end{nthm} 
	
	L'un des ingrédients principaux de la preuve est l'utilisation d'une \textit{application de bord}, c'est-à-dire une application équivariante $\psi$ entre des bords bien choisis $B$ et $B'$ de $\Gamma$ et $\Gamma'$ respectivement. Grâce aux hypothèses sur $G$ et sur la représentation  $\phi : \Gamma \rightarrow \Gamma'$, $\psi$ s'étend en un homomorphisme rationnel de groupes. En revanche, l'existence de cette application est non triviale, et provient justement des bonnes propriétés ergodiques de l'action $\Gamma \curvearrowright B$. Les applications de bords jouent des rôles importants dans de nombreuses situations, et sont liées aux propriétés asymptotiques des marches aléatoires sur le groupe $G$. Plus récemment, U.~Bader et A.~Furman ont introduit la notion de $G$-bord et développé de nouvelles techniques de théorie ergodique des représentations inspirées de la théorie des bords de Furstenberg, \cite{bader_furman14}. Grâce à ces méthodes, les auteurs ont étendu le théorème de super-rigidité de Margulis à des groupes algébriques définis sur des corps non nécessairement locaux, \cite{bader_furman22}. 
	\newline 
	
	Mon travail s'inscrit dans cette approche des lois limites par la théorie des bords. Je me suis particulièrement intéressé aux systèmes dynamiques dans des espaces $\cat$(0), c'est-à-dire présentant des caractéristiques de courbure non-positive, et dans lesquels on peut trouver des comportements hyperboliques malgré l'existence de sous-espaces de courbure nulle. 
	\newline

	\subsection*{Espaces de courbure non-positive}
	
	Un espace métrique $(X,d)$ est dit de courbure non-positive dans un sens généralisé, ou $\cat$(0), si \og ses triangles sont plus fins que ses analogues euclidiens \fg. C'est une notion globale, et elle implique que l'espace $(X,d)$ est simplement connexe et uniquement géodésique. Ces espaces ont été particulièrement étudiés en théorie géométrique des groupes lors des vingt dernières années, notamment de par la variété des situations dans lesquelles ils apparaissent. Les variétés riemanniennes simplement connexes de courbure partout négative ou nulle, et en particulier les espaces symétriques de type non compact, sont des familles très importantes d'exemples. En effet, ces derniers peuvent être décrits comme des (produits de) quotients $G/K$, où $G$ est un groupe de Lie semisimple sans facteur compact et $K$ est un sous-groupe compact maximal de $G$, e.g. $\text{SL}_n(\mathbb{R})/\text{SO}(n)$. Une autre famille est celle des immeubles affines, qui peuvent être vus comme les analogues des espaces symétriques pour des groupes de Lie sur des corps locaux non-archimédiens. Par exemple, $\text{PGL}_n(\mathbb{Q}_p)/\text{PGL}_n(\mathbb{Z}_p)$ est l'ensemble des sommets d'un immeuble affine associé canoniquement à $G = \text{PGL}_n(\mathbb{Q}_p)$, et est appelé immeuble de Bruhat-Tits de $G$. Une partie significative de mon travail a justement été consacrée à étudier des lois limites sur certains immeubles affines. Enfin, de nombreux développements récents concernent les complexes cubiques $\cat$(0) et leurs propriétés combinatoires très riches. 
	\newline 
	
	Lorsqu'on étudie un espace métrique $X$, il est souvent très utile de se munir d'un espace qui \og représente \fg{} la géométrie asymptotique de $X$. Un tel objet est appelé \emph{bord}. Lorsque $X$ est un espace $\cat$(0), il existe une définition naturelle de bord que l'on appelle \textit{bord visuel} et que l'on note $\bd X$. L'espace $\bd X$ est construit à partir des classes d'équivalences des rayons géodésiques sur $X$. Lorsque $X$ est propre, il existe une topologie sur $\overline{X} := X \cup \bd X$ qui en fait un espace compact. Il existe néanmoins des constructions prenant en compte d'autres caractéristiques de la géométrie de $X$, et on sera amené à travailler avec ces différentes notions en fonction du contexte.

	\subsection*{Géodesiques et isométries contractantes}
	
	Lorsqu'on considère des espaces $\cat$(0), il est fondamental de pouvoir distinguer des directions qui ont un comportement hyperbolique. La notion de géodésique contractante donne un sens précis à cette idée. D'après Bestvina et Fujiwara \cite{bestvina_fujiwara09}, qui s'appuient eux-mêmes sur des idées de Masur et Minsky \cite{masur_minsky99}, on dit qu'une géodésique $\gamma$ est contractante si toute boule disjointe de $\gamma$ se projette sur $\gamma$ en un ensemble de diamètre uniformément borné. Cette propriété est vraie dans les espaces Gromov-hyperboliques. Une isométrie d'un espace $\cat$(0) est ainsi dite contractante si ses axes le sont. 
	
	Si l'espace $X$ est de plus localement compact, les isométries de rang 1, c'est-à-dire dont aucun axe ne borde de demi-plat, sont contractantes, \cite{bestvina_fujiwara09}. La notion de rang 1 entre en jeu dans plusieurs thèmes de recherches contemporains, parmi lesquels la conjecture du rang \cite{ballman95} ou l'alternative de Tits, voir \cite{caprace_sageev11}. D'après la conjecture du rang, qui a été prouvée dans plusieurs cas particuliers, un groupe agissant géométriquement sur un espace $\cat$(0) quelconque possède génériquement des éléments contractants. Il est donc particulièrement intéressant d'étudier ces actions. Par ailleurs, les isométries contractantes présentent des caractéristiques typiquement vérifiées par les isométries loxodromiques dans les espaces Gromov-hyperboliques. Elles induisent par exemple elles-aussi une dynamique Nord-Sud sur le bord visuel $\bd X$, \cite{hamenstadt09}. Cette analogie est rendue encore plus claire après l'introduction des modèles hyperboliques $\{X_L\}_L$ par Petyt, Spriano et Zalloum \cite{petyt_spriano_zalloum22}, pour lesquels une isométrie contractante de $X$ agit de manière loxodromique sur son modèle $X_L$. Plus généralement, un des thèmes récurrents de ce travail a été de m'inspirer des techniques et des idées de la théorie des marches aléatoires sur les espaces hyperboliques pour étudier des actions de groupes sur des espaces $\cat$(0) admettant des isométries contractantes.

	\section{Présentation des résultats de recherche}
	
	\subsection*{Espaces $\cat$(0) avec des éléments contractants}

	Soit $G$ un groupe dénombrable discret et $\mu$ une mesure de probabilité sur $G$. On supposera toujours que $\mu$ est admissible, c'est-à-dire telle que son support engendre $G$ comme semi-groupe. Soit $(X,d)$ un espace $\cat$(0) séparable complet, et soit $G\curvearrowright (X,d)$ une action par isométries. On suppose de plus que $G$ admet deux isométries contractantes pour cette action, dont les points fixes à l'infini sont disjoints. La première partie de mon travail a été de montrer que dans ce contexte, les lois limites évoquées dans la partie précédente sont bien vérifiées, et que la marche aléatoire adopte des comportements typiquement hyperboliques. Ces résultats ont été obtenus dans \cite{LeBars22} et \cite{le-bars22b} avec l'hypothèse supplémentaire que l'espace $(X,d)$ est propre. Ici, on les montre pour des espaces $\cat$(0) séparables et complets, mais plus nécessairement localement compacts. Il est à noter que nous énonçons ces résultats pour des groupes dénombrables discrets, mais ceux-ci restent vrais si on suppose seulement $G$ localement compact à base dénombrable, en adaptant les preuves avec une attention particulière sur les hypothèses de mesurabilité. 
	\newline 
	
	Le premier résultat que l'on obtient est la convergence de la marche aléatoire vers le bord visuel. 
	
	\begin{ithm}[{Convergence vers le bord visuel}]
		Soit $G$ un groupe dénombrable discret et $\mu$ une mesure de probabilité admissible sur $G$. Soit $(X,d) $ un espace $\cat$(0) séparable complet et soit $G\curvearrowright (X,d)$ une action par isométries telle que $G$ admet deux éléments contractants indépendants. Soit $o \in X$ un point-base de $X$. Alors la marche aléatoire $(Z_n o)_n $ converge presque sûrement vers un point du bord visuel $\bd X$ de $X$. 
	\end{ithm}

	Lorsque la mesure $\mu$ a un premier moment fini, i.e. $\int_G d(g x, x) d\mu(g) < \infty$, le Théorème sous-additif de Kingman implique que la limite $\lambda := \lim_n \frac{1}{n} d(o, Z_n (\omega)o ) $ existe et est essentiellement constante. Cette limite est la \emph{dérive} de la marche aléatoire, et peut se comprendre comme la vitesse moyenne à laquelle $(Z_n o)_n$ part à l'infini. Puisque l'action est isométrique, $\lambda$ ne dépend pas du choix du point-base $o \in X$. Sous l'hypothèse d'un premier moment fini et d'une dérive strictement positive, Karlsson et Margulis montrent que $(Z_n o)_n$ converge presque sûrement vers le bord visuel \cite[Theorem 2.1]{karlsson_margulis}. Dans le cas où $G$ est non-moyennable, Guivarc'h montre que la marche aléatoire engendrée par une métrique des mots sur $G$ a toujours une dérive strictement positive \cite{guivarch80}. Dans notre cas, on ne suppose pas que l'action est propre, et on montre la convergence vers le bord sans hypothèse de moment. De plus, la positivité de la dérive peut être difficile à obtenir à priori. En fait, on l'obtient dans un second temps. 
	
	\begin{ithm}[{Positivité de la dérive}]
		Soit $G$ un groupe dénombrable discret et $\mu$ une mesure de probabilité admissible sur $G$. Soit $(X,d) $ un espace $\cat$(0) séparable complet et soit $G\curvearrowright (X,d)$ une action par isométries admettant deux éléments contractants indépendants. Soit $o \in X$ un point-base de $X$. Soit $l_X(\mu) \in [0, \infty]$ défini par 
		\begin{eqnarray}
			l_X(\mu) :=\liminf_{n\rightarrow \infty} \frac{1}{n}\int_\Omega d(Z_n(\omega)o, o) d \mathbb{P}(\omega) \nonumber.
		\end{eqnarray}
		Alors $l_X(\mu) >0$. Si de plus $\mu$ a un premier moment fini, on a la convergence presque sûre
		\begin{eqnarray}
			 \frac{1}{n} d(o, Z_n o )  \underset{n \rightarrow \infty}{\longrightarrow} l_X(\mu) >0. \nonumber
		\end{eqnarray}
	\end{ithm}
	
	En particulier, le Théorème de Karlsson et Margulis \cite[Theorem 2.1]{karlsson_margulis} s'applique, et on a immédiatement un suivi sous-linéaire de la marche aléatoire par une géodésique. 
	
	\begin{icor}[{Suivi sous-linéaire}]
		Soit $G$ un groupe dénombrable discret et $\mu$ une mesure de probabilité admissible et de premier moment fini sur $G$. Soit $(X,d) $ un espace $\cat$(0) séparable complet et soit $G\curvearrowright (X,d)$ une action par isométries admettant deux éléments contractants indépendants. Soit $o \in X$ un point-base de $X$. Alors pour presque tout $\omega \in \Omega$, il existe un rayon géodésique $\gamma^\omega : [0, \infty) \rightarrow X$ issu de $o$ tel que
		\begin{equation}
			\lim_{n\rightarrow \infty} \frac{1}{n} d(\gamma^\omega(l_X(\mu) n), Z_n(\omega)o) = 0, \nonumber
		\end{equation}
		où $l_X(\mu) >0 $ est la dérive de la marche aléatoire. 
	\end{icor}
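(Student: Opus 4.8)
Le plan est de déduire l'énoncé du théorème ergodique multiplicatif de Karlsson et Margulis \cite[Theorem 2.1]{karlsson_margulis}. Rappelons la forme de ce résultat : si $(X,d)$ est un espace $\cat$(0) complet, $G \curvearrowright X$ une action par isométries et $\mu$ une mesure de probabilité de premier moment fini telle que la dérive $\ell := \lim_n \frac1n d(o, Z_n o)$ est bien définie et strictement positive, alors pour presque tout $\omega$ il existe un rayon géodésique $\gamma^\omega$ issu de $o$ vérifiant $\frac1n d(\gamma^\omega(\ell n), Z_n(\omega) o) \to 0$ ; c'est exactement la conclusion voulue. Il s'agit donc uniquement de s'assurer que toutes les hypothèses sont réunies.

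Je vérifierais d'abord les hypothèses de régularité : $(X,d)$ est $\cat$(0), complet et séparable, et $\mu$ est admissible de premier moment fini, par hypothèse. Le point-clé est l'existence et la valeur de la dérive. La finitude du premier moment permet d'appliquer le théorème sous-additif de Kingman au cocycle sous-additif $(n,\omega) \mapsto d(o, Z_n(\omega) o)$ sur le décalage de Bernoulli ergodique $(G^{\mathbb{N}}, \mu^{\otimes \mathbb{N}})$, ce qui garantit que $\frac1n d(o, Z_n o)$ converge presque sûrement vers une constante, nécessairement égale à $l_X(\mu)$. La positivité de cette constante est précisément fournie par le théorème précédent (positivité de la dérive). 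On peut alors prendre $\ell = l_X(\mu)$ dans l'énoncé de Karlsson et Margulis.

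Le seul point délicat est que, dans ce cadre, $X$ n'est pas supposé propre ni $G$ localement compact ; il faut donc s'assurer que le théorème de Karlsson et Margulis reste valide. C'est en fait un trait essentiel de leur résultat : la preuve n'utilise pas la compacité d'un bord, mais construit le rayon limite directement à partir des segments géodésiques $[o, Z_n(\omega) o]$ --- qui existent et sont uniques puisque $X$ est $\cat$(0) --- en exploitant la convexité de la distance le long des géodésiques et la complétude de $X$ pour en extraire une limite. L'argument s'applique donc sans modification ici, ce qui conclut. En définitive, toute la substance de ce corollaire est reléguée au théorème précédent : c'est la preuve de la positivité de $l_X(\mu)$ pour une action sur un espace potentiellement non localement compact qui constitue le cœur technique, le reste étant une application directe de Karlsson--Margulis.
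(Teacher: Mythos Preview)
Your proposal is correct and follows exactly the paper's approach: both derive the corollary directly from Karlsson--Margulis \cite[Theorem 2.1]{karlsson_margulis} once the positivity of the drift is established by the preceding theorem. If anything, you are more careful than the paper, which simply states that the result is ``a reformulation of \cite[Theorem 2.1]{karlsson_margulis}, now that we know that the drift is positive'' without further justification; your discussion of why properness is not needed in Karlsson--Margulis is a useful explicit check that the paper leaves implicit.
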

	
	On peut de plus donner des précisions sur les éléments de la marche aléatoire. Dans \cite[Theorem 1.4]{maher_tiozzo18}, Maher et Tiozzo montrent que la longueur de translation des éléments d'une marche aléatoire sur un espace hyperbolique croît presque sûrement au moins linéairement. Ceci implique que la proportion des éléments de la marche qui sont loxodromiques tend vers 1. Le résultat analogue dans notre contexte est le suivant. 
	
	\begin{ithm}[{Proportion d'éléments contractants}]\label{thm prop contra intro}
		Soit $G$ un groupe dénombrable discret et $\mu$ une mesure de probabilité admissible sur $G$. Soit $(X,d) $ un espace $\cat$(0) séparable complet et soit $G\curvearrowright (X,d)$ une action par isométries admettant deux éléments contractants indépendants. Alors la proportion d'isométries contractantes parmi les éléments de la marche aléatoire $(Z_n)_n$ tend vers $1$ quand $n $ tend vers $\infty$ : 
		\begin{eqnarray}
			\mathbb{P}(\omega \, : \, Z_n (\omega) \text{ est une isométrie contractante }) \underset{n\rightarrow \infty}{\longrightarrow} 1 \nonumber.
		\end{eqnarray}
		Si $\mu$ est à support fini, alors il existe $c < 1$ et $ K, L>0$ tels que pour tout $n $, 
		\begin{eqnarray}
			\mathbb{P}(\omega \, : \, Z_n (\omega) \text{ n'est pas contractant }) \leq Kc^n  \nonumber.
		\end{eqnarray}
	\end{ithm}

	Dans \cite{petyt_spriano_zalloum22}, Petyt, Spriano et Zalloum introduisent pour tout espace $\cat$(0) une famille de distances $\{d_L\}_L$ sur $X$ telles que les espaces $X_L = (X, d_L)$ sont hyperboliques, et telles qu'une isométrie contractante de $X$ est une isométrie loxodromique du modèle $X_L$. De plus, ces espaces traduisent bien les comportements hyperboliques de $X$, et permettent de faire des estimations géométriques précises quant à ces comportements. La démonstration des théorèmes précédents se fait en étudiant la marche aléatoire $(Z_n ) $ agissant sur un modèle hyperbolique bien choisi $(X,d_L)$. Au cours de la démonstration, on est amené à étudier le lien entre le bord de Gromov $\bdg X_L$ de ces espaces hyperboliques et un sous-ensemble $B_L$ du bord visuel. On montre le théorème suivant, qui était connu dans le cas propre \cite[Proposition 8.9]{petyt_spriano_zalloum22}.
	
	\begin{ithm}
		Soit $(X,d)$ un espace $\cat$(0) complet. Alors l'application identité $\iota : (X,d_L) \rightarrow (X, d)$ induit un plongement homéomorphe et $\iso(X)$-équivariant entre les bords $\iota_L : \bdg X_L \hookrightarrow \bd X$.
	\end{ithm}
	
	L'utilisation de ces modèles $X_L$ permet de convoquer la théorie, très riche, des marches aléatoires sur les espaces hyperboliques. En particulier, lorsqu'un groupe agit de manière non-élémentaire sur un espace hyperbolique, la convergence de la marche aléatoire $(Z_n) $ vers le bord de Gromov est connue, \cite{maher_tiozzo18} et \cite{gouezel22}. On en redonne une démonstration, qui s'appuie sur des idées de théorie ergodique et de rigidité des applications de bords présentées dans \cite{bader_caprace_furman_sisto22}. Ce résultat n'est pas nouveau en lui-même, mais la simplicité de l'argument et les propriétés très générales qui sont impliquées dans la preuve pourraient s'avérer utiles dans des contextes différents. Nous pensons que cela pourrait par exemple être le cas dans l'étude de cocycles ergodiques, c'est-à-dire de produits aléatoires d'isométries identiquement distribuées mais non-indépendantes. 
	\newline 
	
	Lorsqu'on étudie une marche aléatoire $(Z_n o)$ sur $X$, il est souvent très utile de doter une compactification bien choisie $\overline{X}$ d'une mesure de probabilité invariante sous l'action du système dynamique aléatoire sous-jacent $(G, \mu, X)$. Plus précisément, on munit $\overline{X}$ d'une mesure de probabilité $\mu$-stationnaire, c'est-à-dire préservée sous l'action du produit de convolution par $\mu$. Puisque dans notre contexte la marche aléatoire $(Z_n o)$ converge presque sûrement vers le bord visuel $\bd X$, la mesure de sortie $\nu$ définie pour tout mesurable $F \subseteq \overline{X}$ par 
	\begin{eqnarray}
		\nu(F) = \mathbb{P}\big(\lim_n Z_n o \in F\big) \nonumber
	\end{eqnarray} 
	est une mesure stationnaire. En fait, il s'agit de la seule. 
	
	\begin{ithm}[{Unicité de la mesure stationnaire}]
		Soit $G$ un groupe dénombrable discret et $\mu$ une mesure de probabilité admissible sur $G$. Soit $(X,d) $ un espace $\cat$(0) séparable complet et soit $G\curvearrowright (X,d)$ une action par isométries qui admet deux éléments contractants indépendants. Alors il existe une unique mesure de probabilité $\mu$-stationnaire sur $\overline{X}$. 
	\end{ithm}
	
	Dans le contexte d'une marche aléatoire sur un groupe hyperbolique, Blachère, Haissinski et Mathieu donnent dans \cite{blachere_haissinsky_mathieu11} une formule explicite pour calculer la dimension de Hausdorff de la mesure harmonique $\nu$. Ce résultat a été récemment amélioré par Dussaule et Yang dans \cite{dussaule_yang20} qui montrent une formule analogue pour des groupes relativement hyperboliques. L'étude de la mesure $\nu$ et de ses propriétés est un sujet intéressant en lui-même, et on montrera qu'elle ne charge que les directions \og les plus hyperboliques \fg. 
	
	L'unicité de la mesure stationnaire $\nu$ est de plus indispensable pour démontrer notre théorème de la limite centrale, dont on fait maintenant la présentation. 
	\newline 
	
	Le résultat de positivité de la dérive correspond à une loi des grands nombres pour les variables aléatoires $(Z_n o )$ dans le contexte d'un produit non-commutatif. Il est naturel de se demander si on peut également démontrer un théorème de la limite centrale (TLC). Furstenberg et Kesten le montrent pour des produits de matrices aléatoires, \cite{furstenberg_kesten60}, et Björklund pour des espaces hyperboliques. Dans ces deux cas, l'hypothèse sur la mesure $\mu$ est forte, car on suppose un moment exponentiel fini, c'est-à-dire l'existence d'un nombre $\alpha >0 $ tel que $\int_G \exp(\alpha d(o, g o)) d\mu(g)<\infty$. Les techniques employées se ramènent à montrer l'existence d'un trou spectral pour l'opérateur de Markov $P_\mu$ associé à la marche aléatoire. 
	
	Plus récemment, Y. Benoist et J-F. Quint ont développé une nouvelle approche pour cette question, qui permet de s'affranchir de l'hypothèse de moment exponentiel. Sous la condition (optimale) où $\mu$ a un second moment fini $\int_G d(o, g o)^2 d\mu(g)<\infty$, Benoist et Quint prouvent un théorème de la limite centrale dans les contextes linéaires \cite{benoist_quint16CLTlineargroups} et hyperboliques \cite{benoist_quint16}. Grâce à cette approche, C. Horbez démontre un résultat similaire pour les groupes modulaires de surfaces orientables et pour $\Out(F_N)$ \cite{horbez18}. Plus récemment, Fern\'os, Lécureux et Mathéus ont montré le TLC pour les groupes agissant de manière non-élémentaire sur un complexe cubique $\cat$(0) de dimension finie \cite{fernos_lecureux_matheus21}. Nous utilisons également l'approche de Benoist-Quint pour montrer le résultat suivant. 
	
	\begin{ithm}[{Théorème de la Limite Centrale}]
		Soit $G$ un groupe dénombrable discret et $\mu$ une mesure de probabilité admissible et de second moment fini sur $G$. Soit $(X,d) $ un espace $\cat$(0) séparable complet et soit $G\curvearrowright (X,d)$ une action par isométries admettant deux éléments contractants indépendants. Soit $o \in X$ un point-base de $X$. Soit $\lambda$ la dérive (strictement positive) de la marche aléatoire $(Z_n o)$. Alors les variables aléatoires $\frac{1}{\sqrt{n}}(d(Z_n o, o) - n \lambda) $ convergent en loi vers une distribution gaussienne $N_\mu$ de variance strictement positive. 
	\end{ithm}
	
	La stratégie repose sur l'étude d'un cocycle associé à la marche aléatoire, dont le comportement représente bien celui des déplacements $d(o, Z_n)$, et la preuve nécessite l'unicité de la mesure stationnaire ainsi que des estimations géométriques. Un résultat similaire est donné par Mathieu et Sisto (\cite[Theorem 13.4]{mathieu_sisto20}), mais le contexte est légèrement différent : les auteurs supposent que $G$ est un groupe de type fini acylindriquement hyperbolique, et que $\mu$ a un moment exponentiel fini. Par ailleurs, leur approche repose sur un contrôle d'inégalités de déviations, une méthode plus \og locale \fg{} que la nôtre. 
	\newline 
	
	Au cours de ce travail, I. Choi a publié une série de travaux dans lesquels plusieurs des lois limites précédentes apparaissent, \cite{choi22a} et \cite{choi22b}. L'hypothèse principale est toujours la présence d'une paire d'isométries contractantes indépendantes, mais l'approche est complètement différente. En effet, Choi s'inspire d'idées de Boulanger, Mathieu, Sisto \cite{mathieu_sisto20}, \cite{boulanger_mathieu_sisto21} et Gouëzel \cite{gouezel22}. Le principe est d'étudier des inégalités de déviations pour la marche aléatoire, et nécessite une technique de pivots développée par Gouëzel dans \cite{gouezel22}. 
	
	Notre approche repose au contraire sur des propriétés ergodiques au bord de la marche aléatoire, et sur la géométrie particulière des modèles hyperboliques $X_L$. Nous pensons que ces deux points de vue ont des intérêts spécifiques et complémentaires. 
	\newline 
	
	\subsection*{Mesures stationnaires dans un immeuble $\tilde{A}_2$}

	La seconde partie de cette thèse est consacrée à l'étude d'une famille particulièrement importante d'espaces $\cat$(0) en théorie géométrique des groupes : les immeubles affines. A moins qu'il s'agisse d'un arbre réel, un immeuble affine ne possède pas d'éléments contractants et les résultats précédents ne s'appliquent plus. Soit $(X,d)$ un immeuble simplicial épais de type $\tilde{A}_2$, et soit $G$ un groupe localement compact à base dénombrable. On se donne une action $G \curvearrowright X$ continue par isométries et non-élémentaire, c'est-à-dire sans sous-espaces plats invariants (possiblement dégénérés en des points). En dimension $\geq 3$, Tits a montré que tout immeuble affine simplicial est canoniquement associé à un groupe algébrique semi-simple sur un corps local. En dimension 2, ce n'est plus vrai, et la classe d'immeubles exotiques la plus étudiée est celle des immeubles $\tilde{A}_2$. Il est donc intéressant d'étudier les marches aléatoires dans ce contexte, car certains immeubles $\tilde{A}_2 $ sont exotiques, et donc échappent aux résultats de marches aléatoires dans les groupes linéaires, voir Section \ref{section immeuble affine}.
	\newline

	La notion de $G$-bord (ou de bord au sens fort) que nous utiliserons a été introduite dans \cite{bader_furman14} pour généraliser les propriétés de moyennabilité et d'ergodicité du bord de Poisson-Furstenberg associé à une mesure $\mu$. Grâce à des résultats de théorie des bords, notamment \cite{bader_duchesne_lecureux16}, et à la géométrie particulière des immeubles $\tilde{A}_2$, on montre que dans ce contexte, il existe une unique application de bord. 
	
	\begin{ithm}[{Existence et Unicité de l'application de bord}]
		Soit $G$ un groupe localement compact à base dénombrable et $G\curvearrowright X$ une action continue par isométries et non-élémentaire sur un immeuble épais, complet et localement fini de type $\tilde{A}_2$. Soit $(B, \nu)$ un $G$-bord. Alors il existe une unique application mesurable $G$-équivariante $\psi : B \rightarrow \ch(\bdinf)$ de $B$ vers les chambres de l'immeuble sphérique à l'infini $\bdinf$. 
	\end{ithm}
	
	Soit maintenant $\mu$ une mesure de probabilité admissible sur $G$, et $B$ le bord de Poisson-Furstenberg associé à $\mu$. Bader et Furman montrent qu'il s'agit d'un $G$-bord \cite{bader_furman14}. Comme on le verra plus en détails dans la Section \ref{section bord poisson}, il y a un lien fort entre les $G$-applications mesurables $B \rightarrow \bd X$ et les mesures $\mu$-stationnaires sur $\bd X$. En particulier, on prouve le résultat suivant. 
	
	\begin{ithm}[{Unicité de la mesure stationnaire sur $\bdinf$}]
		Soit $G$ un groupe localement compact à base dénombrable et $G\curvearrowright X$ une action continue par isométries et non-élémentaire sur un immeuble épais, complet et localement fini de type $\tilde{A}_2$. Soit $\mu$ une mesure de probabilité admissible sur $G$. Alors il existe une unique mesure $\mu$-stationnaire $\nu  \in \prob(\ch(\bdinf))$ sur les chambres de l'immeuble à l'infini $\bdinf$. 
	\end{ithm}
	
	Ce résultat est le point de départ d'une étude plus poussée de la marche aléatoire $(Z_n o)$ engendrée par la mesure $\mu$ sur l'immeuble $X$. En particulier, un projet en cours est de montrer un théorème de la limite centrale, à la manière de ce qui a été fait dans \cite{benoist_quint16CLTlineargroups} pour les groupes linéaires. 
	\newline
	
	Dans un travail en collaboration avec J. Lécureux et J. Schillewaert, l'unicité de la mesure stationnaire sur l'immeuble à l'infini et ses propriétés permettent de prouver des résultats de structure sur le groupe $G$. 
	
	\begin{ithm}[{Existence d'un élément hyperbolique}]
		Soit $G$ un groupe localement compact à base dénombrable et $G\curvearrowright X$ une action continue par isométries et non-élémentaire sur un immeuble épais, complet et localement fini de type $\tilde{A}_2$. Alors il existe un élément de $G$ qui agit comme une isométrie hyperbolique sur $X$. 
	\end{ithm}
	
	De plus, on peut montrer le résultat suivant. 
	
	\begin{ithm}[{Point fixe}]
		Soit $G$ un groupe de type fini et $G\curvearrowright X$ une action continue par isométries et non-élémentaire sur un immeuble épais, complet et localement fini de type $\tilde{A}_2$. Supposons que tous les éléments de $G$ soient elliptiques pour cette action. Alors $G$ fixe un point de $X$. 
	\end{ithm}
	
	Il est probable que les résultats de cette section restent vrais si on enlève l'hypothèse que l'immeuble $X$ est localement fini. En effet, la plupart des preuves restent valables dans le cadre d'un immeuble $\tilde{A}_2$ non-discret mais séparable et métriquement complet. Un projet en cours est de montrer les mêmes résultats pour des immeubles de type $\tilde{C}_2$ et $\tilde{G}_2$. Cela permettrait d'obtenir une extension des résultats d'Osajda et Przytycki, qui considèrent des complexes triangulaires munis d'une action presque libre \cite{osajda_przytycki21}.

	\section*{Structure du texte}
	
	Le texte est divisé en deux parties. La première partie présente le contexte général et introduit la plupart des notions du sujet. Le Chapitre \ref{chapter cat} décrit les espaces $\cat$(0), leurs propriétés et les grandes familles d'exemples. On y rappelle quelques résultats de structure et quelques conjectures importantes. On s'attache enfin à donner la construction des modèles hyperboliques $X_L$ telle qu'elle est faite dans \cite{petyt_spriano_zalloum22}. Dans le Chapitre \ref{chapter immeubles}, on présente la notion d'immeuble en insistant sur les immeubles affines. Forcément lacunaire, cette partie ne consacre que peu d'espace aux immeubles classiques associés aux groupes algébriques. Enfin, dans le chapitre \ref{chapter rw intro}, on présente les marches aléatoires et la théorie de bords d'après Furstenberg. On s'appuiera dans toute la suite sur les résultats énoncés dans cette section. 
	\newline
	
	La seconde partie présente les résultats de recherche, et contient deux chapitres indépendants. Le chapitre \ref{chapter rw cat} concerne les marches aléatoires sur les espaces $\cat$(0) en présence d'éléments contractants. On revient sur quelques éléments de théorie des espaces Gromov-hyperboliques, et on démontre les lois limites citées plus-haut. Ce chapitre est en fait composé de deux papiers soumis à la publication, quoique dans des formes plus faibles, \cite{LeBars22} et \cite{le-bars22b}. Dans le chapitre \ref{chapter rw immeuble}, on étudie les propriétés ergodiques d'une action sur un immeuble $\tilde{A}_2$, et on en déduit des résultats structurels sur le groupe agissant. Il fait l'objet de deux papiers en cours de préparation, dont un en collaboration avec J.~Lécureux et J.~Schillewaert, \cite{le-bars23immeuble} et \cite{le-bars_lecureux_schillewaert23}.

	\fancyhead[RO]{\rmfamily\nouppercase{\rightmark}}
	\fancyhead[LE]{\rmfamily\nouppercase{\leftmark}}
	
	\part{Présentation du contexte}

	\chapter{Espaces CAT(0)}\label{chapter cat}
	
	Ce chapitre est dédié à présenter la théorie des espaces $\cat$(0). L'un des objets de cette partie est de mettre en évidence des similarités entre la dynamique sur les espaces Gromov-hyperboliques et celle sur les espaces $\cat$(0), si on suppose la présence d'éléments contractants. Par souci d'économie, et car il existe de nombreuses références sur le sujet, on ne reviendra pas sur la théorie très riche des espaces Gromov-hyperboliques, et le lecteur ou la lectrice pourra consulter \cite{gromov87}, \cite{ghys_dlharpe90} et \cite[Part III.H]{bridson_haefliger99}. On commence dans la Section \ref{section generalite cat} par donner quelques généralités sur les espaces de courbure non-positive, et à introduire des familles importantes d'exemples, à savoir les complexes cubiques $\cat$(0) et les espaces symétriques de type non compact. Dans la Section \ref{section bord cat}, on construit quelques bordifications importantes possibles sur un espace $\cat$(0). La Section \ref{section isom cat} classifie les isométries sur un espace $\cat$(0), et présente la notion fondamental d'élément de rang 1. Enfin, on présente dans la Section \ref{section modèle hyp} les modèles hyperboliques construits par H.~Petyt, D.~Spriano et A.~Zalloum dans \cite{petyt_spriano_zalloum22}, et qui seront essentiels dans le chapitre \ref{chapter rw cat}. 
	\newline 
	
	\vspace{1cm}
	\minitoc
	
	\vspace{1cm}
	
	\section{Généralités}\label{section generalite cat}
	
	Dans cette section, on présente la théorie des espaces $\cat$(0), ainsi que quelques exemples fondamentaux. Nous ne couvrirons qu'une petite partie de ce sujet, et le lecteur intéressé trouvera plus d'informations dans les références classiques \cite{bridson_haefliger99} et \cite{ballman95}.
	
	\subsection{Définitions et premières propriétés}

	Soit $(X,d)$ un espace métrique. Soit $I$ un intervalle de $\R$, une application $\gamma : I \rightarrow X$ telle que pour tout $t,t' \in I$, $d(\gamma(t), \gamma(t')) = |t-t'|$ est appelée \textit{géodésique}. Dans toute la suite, on supposera $(X,d)$ \textit{géodésique}, c'est-à-dire que pour toute paire de points $x,y \in X$, il existe une géodésique $\gamma : [0, d(x,y)] \rightarrow X$ telle que $\gamma(0) = x$ et $\gamma(d(x,y)) = y$. On dira alors que $\gamma$ est un \textit{segment géodésique} dans $X$, qu'on notera $[x,y]$, bien qu'un tel segment puisse ne pas être unique. Un \textit{triangle géodésique} $\Delta(x,y,z)$ dans $X$ est la donnée de trois points $x,y,z \in X$ appelés \textit{sommets}, ainsi que d'un choix de trois segments géodésiques $[x,y], [x,z], [y, z]$ appelés \textit{côtés}. Un \textit{triangle de comparaison euclidien} par rapport à $\Delta(x,y,z)$ est un triangle géodésique $\overline{\Delta} := \Delta(\overline{x},\overline{y},\overline{z})$ dans $(\R^2, d_{E})$ tel que $d(x, y) = d_E(\overline{x}, \overline{y})$, $d(x, z) = d_E(\overline{x}, \overline{z})$, $d(z, y) = d_E(\overline{z}, \overline{y})$, où $d_E$ est la distance euclidienne sur $\R^2$. Un tel triangle de comparaison existe toujours et est unique à isométrie près. De plus, pour tout $p \in [x,y]$, il existe un unique $\overline{p} \in [\overline{x}, \overline{y}]$ tel que $d(x, p) = d_E(\overline{x}, \overline{p})$ et $d(p, y) = d_E(\overline{p}, \overline{y})$, voir figure \ref{fig tri comparaison}. Un tel point est un \textit{point de comparaison} pour $p$. 
	
	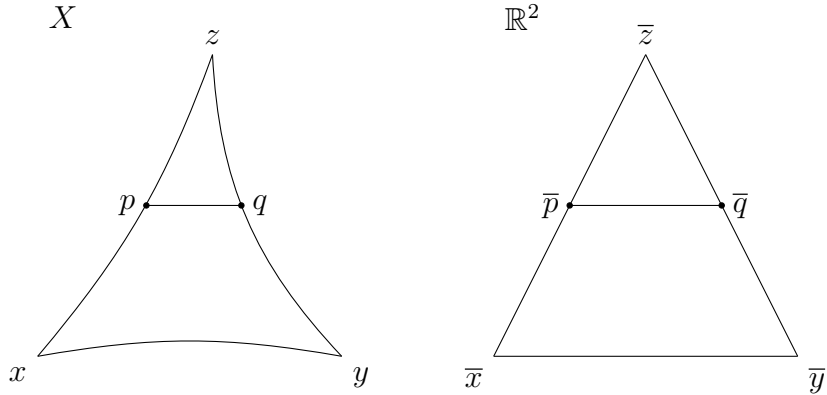
\begin{figure}[h!]
		\centering
		\begin{center}
			\begin{tikzpicture}[scale=1]
				\draw (0,0) to[bend left = 10] (4,0)  ;
				\draw (0,0) to[bend right = 10] (2.3,4)  ;
				\draw (4,0) to[bend left = 20] (2.3,4)  ;
				\draw (6,0) -- (10,0)  ;
				\draw (6,0) -- (8,4)  ;
				\draw (8,4) -- (10,0);
				\draw (1.43,2) -- (2.68, 2) ; 
				\draw (7,2) -- (9,2) ; 
				\draw (1.43,2) node[left]{$p$} ;
				\draw (2.68, 2) node[right]{$q$} ;
				\draw (7,2) node[left]{$\overline{p}$} ;
				\draw (9,2) node[right]{$\overline{q}$} ;
				\draw (0,0) node[below left]{$x$} ;
				\draw (4,0) node[below right]{$y$} ;
				\draw (2.3,4) node[above]{$z$} ;
				\draw (6,0) node[below left]{$\overline{x}$} ;
				\draw (10,0) node[below right]{$\overline{y}$} ;
				\draw (8,4) node[above]{$\overline{z}$} ;
				\filldraw[black] (9,2) circle(1pt);
				\filldraw[black] (7,2) circle(1pt);				
				\filldraw[black] (1.43,2) circle(1pt);
				\filldraw[black] (2.68,2) circle(1pt);
				\draw (0,4.5) node[right]{$X$}; 
				\draw (6,4.5) node[right]{$\R^2$};
				
			\end{tikzpicture}
		\end{center}
		\caption{Triangle et points de comparaison.}\label{fig tri comparaison}
	\end{figure}
	
	\begin{Def}
		Soit un espace métrique géodésique $(X,d)$. Un triangle géodésique $\Delta = \Delta(x,y,z)$ satisfait \emph{l'inégalité $\cat$(0)} si pour toute paire de points $p,q \in \Delta$, les points de comparaison $\overline{p}, \overline{q}$ dans un triangle de comparaison euclidien par rapport à $\Delta$ satisfont 
		\begin{eqnarray}
			d(p,q ) \leq d_E(\overline{p}, \overline{q}) \nonumber. 
		\end{eqnarray}
		On dira que $(X,d)$ est un \emph{espace $\cat$(0)} si tous ses triangles satisfont l'inégalité $\cat$(0). 
	\end{Def}
	
	Il existe une caractérisation équivalente des espaces $\cat$(0), plus pratique à utiliser dans les calculs, que nous donnons maintenant. 
	
	\begin{prop}[{\cite[Proposition I.5.1]{ballman95}}]
		Soit $(X,d)$ un espace métrique complet. Alors $X$ est $\cat$(0) si et seulement si pour tout triplet $x,y,z \in X$,  
		\begin{eqnarray}
			d(z,m)^2 \leq \frac{1}{2} (d(z,x)^2 + d(z,y)^2) - \frac{1}{4}d(x,y)^2, 
		\end{eqnarray}
	où $m$ est le milieu du segment $[x, y]$. 
	\end{prop}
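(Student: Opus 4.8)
The statement is the classical CN inequality of Bruhat and Tits, and the plan is to prove the two implications separately. The forward implication is essentially a one-line computation: given $x,y,z\in X$, a geodesic triangle $\Delta=\Delta(x,y,z)$, the midpoint $m$ of $[x,y]$, and a Euclidean comparison triangle $\overline{\Delta}=\Delta(\overline{x},\overline{y},\overline{z})$, the comparison point of $m$ is the midpoint $\overline{m}$ of $[\overline{x},\overline{y}]$ (since $d(x,m)=d(m,y)=\tfrac12 d(x,y)$). Applying the $\cat(0)$ inequality to the pair $z,m$ gives $d(z,m)\le d_E(\overline{z},\overline{m})$, and the Euclidean median-length identity $d_E(\overline{z},\overline{m})^2=\tfrac12\big(d_E(\overline{z},\overline{x})^2+d_E(\overline{z},\overline{y})^2\big)-\tfrac14 d_E(\overline{x},\overline{y})^2$ together with the equality of the side lengths of $\Delta$ and $\overline{\Delta}$ yields the asserted bound.

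For the converse I would first note that the inequality presupposes that midpoints exist, so I take $X$ to be complete and geodesic — this is implicit in the statement, which refers to the segment $[x,y]$; alternatively, positing only that every pair of points has a midpoint, completeness upgrades this to a genuine geodesic structure. The first step is uniqueness of midpoints: if $m,m'$ are both midpoints of the pair $x,y$, applying the hypothesis to the triple $x,y,z:=m'$ gives $d(m',m)^2\le \tfrac12\big(\tfrac14 d(x,y)^2+\tfrac14 d(x,y)^2\big)-\tfrac14 d(x,y)^2=0$, hence $m=m'$. Iterating the midpoint construction over dyadic parameters and using completeness then shows that geodesic segments are unique, so $X$ is uniquely geodesic.

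The heart of the argument is to upgrade the hypothesis (which is the case $t=\tfrac12$ below) to the full comparison inequality from a vertex to the opposite side. Fix $x,y,z$, put $a=d(z,x)$, $b=d(z,y)$, $c=d(x,y)$, let $p_t\in[x,y]$ be the point with $d(x,p_t)=tc$, and set $\phi(t)=(1-t)a^2+tb^2-t(1-t)c^2$; this is precisely $d_E(\overline{z},\overline{p_t})^2$, and as a polynomial in $t$ it is quadratic with leading coefficient $c^2$, with $\phi(0)=a^2$ and $\phi(1)=b^2$. I claim $d(z,p_t)^2\le\phi(t)$ for every $t\in[0,1]$. This is trivial for $t\in\{0,1\}$ and is the hypothesis for $t=\tfrac12$. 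Assuming it for two dyadic values $t<t'$, then, $[x,y]$ being the unique geodesic, $p_{(t+t')/2}$ is the unique midpoint of $[p_t,p_{t'}]$ and $d(p_t,p_{t'})=(t'-t)c$, so applying the hypothesis to the triple $p_t,p_{t'},z$ gives
\begin{align*}
d\!\left(z,p_{\frac{t+t'}{2}}\right)^2 &\le \tfrac12\big(d(z,p_t)^2+d(z,p_{t'})^2\big)-\tfrac14(t'-t)^2c^2\\
&\le \tfrac12\big(\phi(t)+\phi(t')\big)-\tfrac14(t'-t)^2c^2=\phi\!\left(\tfrac{t+t'}{2}\right),
\end{align*}
the last equality being the identity $\tfrac12(\phi(t)+\phi(t'))-\phi(\tfrac{t+t'}{2})=\tfrac{c^2}{4}(t'-t)^2$, valid for any quadratic with leading coefficient $c^2$. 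By induction over dyadic rationals and continuity of $t\mapsto d(z,p_t)$ the bound holds for all $t\in[0,1]$, i.e. $d(z,p_t)\le d_E(\overline{z},\overline{p_t})$.

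It remains to deduce the full $\cat(0)$ inequality from this vertex-to-side comparison. Given $p\in[x,y]$ and $q\in[y,z]$ on a geodesic triangle $\Delta(x,y,z)$, one applies the estimate just proved to the triangle $\Delta(p,y,z)$ (vertex $p$, opposite side $[y,z]$) to bound $d(p,q)$ by the corresponding distance in a comparison triangle for $\Delta(p,y,z)$, and then glues this comparison triangle to a comparison triangle for $\Delta(x,y,z)$ by the standard argument (Alexandrov's lemma) to obtain $d(p,q)\le d_E(\overline{p},\overline{q})$; the case of two points on the same side is immediate. I expect the two genuinely delicate points to be (i) manufacturing honest geodesics from midpoints and completeness, which relies on the midpoint-uniqueness step to control the dyadic approximations, and (ii) the passage from the one-point comparison to the four-point $\cat(0)$ inequality via Alexandrov's lemma — this is classical but is the only real geometric ingredient; the dyadic bootstrap, though it is the core of the proof, collapses to the single polynomial identity for $\phi$.
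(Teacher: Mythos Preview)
The paper does not actually prove this proposition: it is stated with a citation to \cite[Proposition I.5.1]{ballman95} and no argument is given. There is therefore nothing in the paper to compare your proof against.

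That said, your argument is correct and is essentially the standard textbook proof (as in Ballmann or Bridson--Haefliger). The forward implication via the Euclidean median identity is immediate, and for the converse your dyadic bootstrap from $t=\tfrac12$ to all $t\in[0,1]$ using the quadratic identity for $\phi$, followed by Alexandrov's lemma to pass from vertex-to-side comparison to the full $\cat(0)$ inequality, is exactly the right line. Your remark about the implicit hypothesis---that the statement as written presupposes the existence of the segment $[x,y]$, so for the converse one must either assume $X$ geodesic or posit only the existence of midpoints and use completeness to manufacture geodesics---is well taken; the paper's phrasing is indeed slightly informal on this point, and the cleanest formulation (as in Ballmann) is that for every $x,y$ there \emph{exists} a point $m$ satisfying the inequality for all $z$, from which one deduces that $m$ is the unique midpoint and then builds geodesics.
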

	
	Soit $\kappa \in \mathbb{R}$. Il existe, à isométrie près, une seule surface riemannienne $M_\kappa$ qui est complète, simplement connexe et de courbure sectionnelle constante égale à $\kappa$, voir \cite[Chapter I.2]{bridson_haefliger99}. Soit $\Delta$ un triangle géodésique dans l'espace métrique $(X,d)$. On suppose que le périmètre de $\Delta$ est inférieur à $ 2 \pi/\sqrt{\kappa}$ si $\kappa > 0 $ et de périmètre quelconque sinon. Alors il existe un triangle de comparaison $\Delta_\kappa$ dans $M_\kappa$ respectant les longueurs entre les sommets, unique à isométrie près. Comme pour la définition des espaces $\cat$(0), on dira que $(X,d)$ est un espace $\cat(\kappa)$ si chaque tel triangle de $(X,d)$ est plus fin que son homologue dans l'espace $M_\kappa$, voir \cite[Section I.3]{ballman95}. 
	\newline
	
	Satisfaire l'inégalité $\cat$(0) est une propriété globale, et a des conséquences structurelles sur l'espace, voir \cite[Chapter II.1]{bridson_haefliger99}
	
	\begin{prop}
		Soit $(X,d)$ un espace $\cat$(0). Alors 
		\begin{enumerate}
			\item $X$ est uniquement géodésique : entre toute paire de points $x, y \in X$, il existe un unique segment géodésique qui relie $x$ à $ y$. 
			\item $X$ est contractile.  
		\end{enumerate}
	\end{prop}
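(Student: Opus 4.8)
The plan is to derive both assertions directly from the $\cat(0)$ comparison inequality, doing uniqueness first since contractibility will rely on it.

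\textbf{Uniqueness of geodesics.} Suppose $\gamma,\gamma'\colon[0,1]\to X$ are two geodesic segments from $x$ to $y$, reparametrized proportionally. I would consider the (possibly degenerate) geodesic triangle $\Delta(x,y,z)$ with $z=y$, taking the side $[x,y]$ to be $\gamma$, the side $[x,z]$ to be $\gamma'$, and the side $[y,z]=[y,y]$ to be the constant path; this is a legitimate geodesic triangle precisely because the definition does not require geodesic segments to be unique. In a Euclidean comparison triangle $\overline{\Delta}(\bar x,\bar y,\bar z)$ one has $d_E(\bar y,\bar z)=d(y,z)=0$, so $\bar z=\bar y$ and the triangle collapses to the segment $[\bar x,\bar y]$; for every $t\in[0,1]$ the comparison points of $\gamma(t)\in[x,y]$ and of $\gamma'(t)\in[x,z]$ are both the point of $[\bar x,\bar y]$ at distance $t\,d(x,y)$ from $\bar x$, hence coincide. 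The $\cat(0)$ inequality then gives $d(\gamma(t),\gamma'(t))\le d_E(\bar\gamma(t),\bar\gamma'(t))=0$, so $\gamma=\gamma'$. (Alternatively one can run the argument through the midpoint inequality of the preceding proposition: with $z$ the midpoint of $\gamma'$ and $m$ the midpoint of $\gamma$ one gets $d(z,m)^2\le\tfrac12(d(z,x)^2+d(z,y)^2)-\tfrac14 d(x,y)^2=0$, so the midpoints agree; recursing on $[x,m]$ and $[m,y]$ shows all dyadic points agree, and continuity concludes.)

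\textbf{A convexity estimate.} The technical input for the second part is the following: if $\alpha,\beta\colon[0,1]\to X$ are geodesics, affinely parametrized, with $\alpha(0)=\beta(0)=o$, then $d(\alpha(t),\beta(t))\le t\,d(\alpha(1),\beta(1))$ for all $t\in[0,1]$. I would prove this by applying the $\cat(0)$ inequality to the triangle $\Delta(o,\alpha(1),\beta(1))$: in the Euclidean comparison triangle the points $\bar\alpha(t),\bar\beta(t)$ lie at parameter $t$ along the two sides issuing from $\bar o$, so by similarity $d_E(\bar\alpha(t),\bar\beta(t))=t\,d_E(\bar\alpha(1),\bar\beta(1))=t\,d(\alpha(1),\beta(1))$, and the comparison inequality transfers the bound to $X$.

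\textbf{Contractibility.} Fix a basepoint $o\in X$. By the first part, for each $x\in X$ there is a unique geodesic from $o$ to $x$; let $\sigma_x\colon[0,1]\to X$ denote its proportional parametrization and set $H(x,t)=\sigma_x(t)$, so that $H(\cdot,0)\equiv o$ and $H(\cdot,1)=\mathrm{id}_X$. It remains to check that $H\colon X\times[0,1]\to X$ is continuous, which I would do with the triangle inequality and the convexity estimate: $d(H(x,t),H(x',t'))\le d(\sigma_x(t),\sigma_{x'}(t))+d(\sigma_{x'}(t),\sigma_{x'}(t'))\le t\,d(x,x')+|t-t'|\,d(o,x')$, and the right-hand side tends to $0$ as $(x',t')\to(x,t)$. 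Hence $H$ is a homotopy from the constant map at $o$ to the identity, so $X$ is contractible.

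The argument contains no genuinely hard step; the only points needing care are the bookkeeping of parametrizations in the degenerate comparison triangle of the first part and the \emph{joint} (rather than separate) continuity of $H$, both handled by the convexity estimate. One should also note that the standing hypothesis that $(X,d)$ is geodesic is what guarantees that the segments $\sigma_x$ exist at all, so that $H$ is well defined.
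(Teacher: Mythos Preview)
Your argument is correct. Note, however, that the paper does not actually supply a proof of this proposition: it is stated as a structural consequence of the $\cat(0)$ inequality with a reference to \cite[Chapter II.1]{bridson_haefliger99}, so there is no ``paper's own proof'' to compare against. What you have written is essentially the classical argument one finds there: the degenerate-triangle (or equivalently the midpoint-inequality) trick for uniqueness, and geodesic contraction to a basepoint with continuity coming from the convexity of the metric (your estimate $d(\sigma_x(t),\sigma_{x'}(t))\le t\,d(x,x')$ is exactly the content of Proposition~\ref{prop convexite cat} specialised to geodesics with a common origin). Nothing is missing.
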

	
	\begin{rem}
		Dans toute la suite, on supposera que les espaces $\cat$(0) sont complets. Dans le cas où $X$ n'est pas supposé complet a priori, le théorème de Hopf-Rinow \cite[Theorem I.2.4]{ballman95} donne une liste de conditions équivalentes pour que $X$ soit complet. Néanmoins, nous préciserons les hypothèses dans chaque théorème, et lorsque l'hypothèse de complétude ne sera pas nécessaire, nous l'omettrons. 
	\end{rem}

	Un espace métrique géodésique $(X,d)$ est dit localement $\cat$(0) si pour tout $x \in X$, il existe une boule $B(x,r)$ autour de $x$ de rayon non nul qui est $\cat$(0). Le théorème suivant montre que la notion locale peut s'étendre à tout l'espace si on suppose de plus que l'espace est simplement connexe \cite[Theorem II.4.1]{bridson_haefliger99}.
	
	\begin{thm}[Cartan-Hadamard]
		Un espace métrique géodésique complet $(X,d) $ est $\cat$(0) si et seulement si $X$ est localement $\cat$(0)  et simplement connexe. 
	\end{thm}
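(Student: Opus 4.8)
Le sens direct se déduit immédiatement de ce qui précède : si $(X,d)$ est $\cat$(0), il est contractile donc simplement connexe, et comme la fonction $d(\cdot,p)$ est convexe, les boules de $X$ sont convexes, donc $\cat$(0) pour la distance induite, ce qui montre que $X$ est localement $\cat$(0). Le cœur de l'énoncé est la réciproque : on suppose $(X,d)$ complet, géodésique, simplement connexe et localement $\cat$(0), et le plan est de montrer que tout triangle géodésique de $X$ satisfait l'inégalité $\cat$(0). On commence par fixer un recouvrement de $X$ par des boules $B(x,r_x)$ qui sont $\cat$(0) ; quitte à rétrécir les rayons, on peut de plus les supposer convexes, de sorte que toute paire de points d'une telle boule est reliée par un unique segment géodésique contenu dans cette boule, le long duquel la distance est convexe. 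On introduit alors la notion de \emph{géodésique locale} : un chemin qui se restreint en une géodésique au voisinage de chacun de ses points.

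La première étape, qui est le point délicat, est le passage du local au global pour les géodésiques. Par un procédé de raccourcissement de type Birkhoff à l'intérieur des boules $\cat$(0), on montre que tout chemin de $p$ à $q$ est homotope à extrémités fixées à une géodésique locale. C'est ici qu'intervient de façon essentielle la simple connexité : tous les chemins de $p$ à $q$ étant homotopes à extrémités fixées, il suffit d'établir l'unicité de la géodésique locale dans une classe d'homotopie donnée. Pour cela, étant données deux géodésiques locales $c_0$ et $c_1$ de $p$ à $q$ et une homotopie $H$ entre elles, on découpe $H$ en petits rectangles envoyés dans des boules $\cat$(0) et, à l'aide de la convexité locale de la distance, on montre que l'ensemble des $s \in [0,1]$ tels que $c_s$ coïncide avec $c_0$ est ouvert et fermé, donc égal à $[0,1]$. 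Le même type d'argument de convexité montre que cette unique géodésique locale est minimisante, donc une véritable géodésique, et que les géodésiques dépendent continûment de leurs extrémités ; en particulier $X$ est uniquement géodésique.

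La seconde étape déduit alors l'inégalité $\cat$(0) par subdivision. Soit $\Delta(x,y,z)$ un triangle géodésique : ses côtés sont compacts et les géodésiques varient continûment, donc on peut subdiviser $\Delta$ au moyen d'un réseau fini de géodésiques en triangles géodésiques suffisamment petits pour être chacun contenu dans l'une des boules $\cat$(0) du recouvrement, et satisfaisant donc l'inégalité $\cat$(0). On recolle ensuite les triangles de comparaison euclidiens correspondants, en appliquant de façon répétée le lemme d'Alexandrov (recollement de deux triangles de comparaison le long d'un côté commun), ce qui entraîne l'inégalité $\cat$(0) pour $\Delta$ tout entier.

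La principale difficulté est concentrée dans la première étape : obtenir, à partir de la seule hypothèse locale et de la simple connexité, l'existence, l'unicité et le caractère minimisant des géodésiques locales. La seconde étape est conceptuellement transparente grâce au lemme d'Alexandrov, mais réclame un soin combinatoire pour garantir qu'un grand triangle géodésique se découpe effectivement en petits triangles géodésiques tous contenus dans des boules $\cat$(0) — c'est là que l'on réutilise la continuité des géodésiques et la compacité des côtés. On renvoie à \cite[Chapitre II.4]{bridson_haefliger99} pour une rédaction complète.
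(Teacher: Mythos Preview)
Le papier ne donne pas de démonstration de ce théorème : il se contente de l'énoncer en renvoyant à \cite[Theorem II.4.1]{bridson_haefliger99}. Ton esquisse est correcte et suit précisément l'approche de cette référence (unicité des géodésiques locales dans leur classe d'homotopie via la convexité locale et la simple connexité, puis passage au global par subdivision et lemme d'Alexandrov), donc il n'y a rien à comparer au-delà du fait que tu détailles ce que le papier laisse entièrement à la référence.
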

	
	\begin{ex}
		Voici quelques exemples d'espaces $\cat$(0) : 
		\begin{itemize}
			\item Les arbres simpliciaux ou réels $(T,d)$, avec la distance de longueur induite.
			\item Les espaces euclidiens $(\mathbb{E},d_E)$. Plus généralement, les espaces de Banach qui sont $\cat$(0) sont exactement les espaces de Hilbert $(H, \|. \|)$. 
			\item Soit $(X_1, d_1)$ et $(X_2, d_2)$ deux espaces $\cat$(0). Alors le produit $X = X_1 \times X_2$, muni de la distance $ d = \sqrt{d_1^2 + d_2^2}$ est $\cat$(0). Si les espaces $X_1$ et $X_2$ sont non bornés, on dira alors que $X$ est \textit{réductible}. 
			\item Les variétés riemanniennes complètes simplement connexes de courbure sectionnelle partout négative ou nulle, \cite[Theorem II.1.A.6]{bridson_haefliger99} sont des espaces $\cat$(0). De telles variétés sont dites \textit{de Hadamard}, et nous reviendrons plus précisément sur cette classe d'exemples. \label{var hadamard}	
		\end{itemize}
	\end{ex}
	
	Donnons quelques propriétés fondamentales satisfaites par les espaces $\cat$(0), et qui nous serviront par la suite.  
	
	\begin{prop}[Convexité, {\cite[Proposition II.2.2]{bridson_haefliger99}}]\label{prop convexite cat}
		Soit $(X,d)$ un espace $\cat$(0). Soit $\gamma_1, \gamma_2 : I \rightarrow \R$ deux géodésiques. Alors $t \in I \mapsto d(\gamma_1(t), \gamma_2(t))$ est une application convexe. 
	\end{prop}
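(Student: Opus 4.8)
The plan is to deduce the convexity of the displacement function $f(t):=d(\gamma_1(t),\gamma_2(t))$ from a single elementary comparison lemma about geodesic triangles, applied twice and glued together by an auxiliary point on a ``diagonal''. The lemma I would use is: if $\Delta(A,B,C)$ is a geodesic triangle in $X$, $\lambda\in[0,1]$, and $M$, $N$ are the points of $[A,B]$ and $[A,C]$ with $d(A,M)=\lambda\,d(A,B)$ and $d(A,N)=\lambda\,d(A,C)$, then $d(M,N)\le\lambda\,d(B,C)$. To prove it, fix a Euclidean comparison triangle $\overline\Delta(\overline A,\overline B,\overline C)$. Since $d(A,M)=\lambda\,d_E(\overline A,\overline B)$ and $d(M,B)=(1-\lambda)\,d_E(\overline A,\overline B)$, the uniqueness of comparison points recalled above forces the comparison point of $M$ to be $\overline M:=\overline A+\lambda(\overline B-\overline A)$, and similarly that of $N$ to be $\overline N:=\overline A+\lambda(\overline C-\overline A)$; the $\cat$(0) inequality then yields $d(M,N)\le d_E(\overline M,\overline N)=|\lambda(\overline B-\overline C)|=\lambda\,d(B,C)$.

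Granting the lemma, I would fix $s,s'\in I$ and $\lambda\in[0,1]$ and set $t:=(1-\lambda)s+\lambda s'$, which again lies in $I$. Write $p:=\gamma_1(s)$, $p':=\gamma_1(s')$, $q:=\gamma_2(s)$, $q':=\gamma_2(s')$. By uniqueness of geodesics in a $\cat$(0) space (Proposition above), the restriction of $\gamma_1$ to the interval bounded by $s$ and $s'$ is exactly the segment $[p,p']$, so $\gamma_1(t)$ is the point of $[p,p']$ at distance $\lambda\,d(p,p')$ from $p$; likewise $\gamma_2(t)$ is the point of $[q,q']$ at distance $\lambda\,d(q,q')$ from $q$, equivalently at distance $(1-\lambda)\,d(q,q')$ from $q'$. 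Let $c$ be the point of the segment $[p,q']$ with $d(p,c)=\lambda\,d(p,q')$, so also $d(c,q')=(1-\lambda)\,d(p,q')$. Applying the lemma to $\Delta(p,p',q')$ with apex $p$ and parameter $\lambda$ gives $d(\gamma_1(t),c)\le\lambda\,d(p',q')$, and applying it to $\Delta(q',q,p)$ with apex $q'$ and parameter $1-\lambda$ gives $d(c,\gamma_2(t))\le(1-\lambda)\,d(q,p)$. The triangle inequality then delivers
\begin{align*}
  d(\gamma_1(t),\gamma_2(t)) &\le d(\gamma_1(t),c)+d(c,\gamma_2(t)) \\
  &\le (1-\lambda)\,d(p,q)+\lambda\,d(p',q') = (1-\lambda)f(s)+\lambda f(s'),
\end{align*}
which is precisely the convexity of $f$.

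I do not anticipate any genuine obstacle. The only slightly delicate point is the identification, inside the lemma, of the comparison points of interior points of a side with the corresponding Euclidean convex combinations, and this is exactly what the uniqueness clause in the definition of comparison point provides. If one prefers to avoid the ``affine'' version of the lemma, it suffices to prove only the midpoint case $\lambda=\tfrac12$ of the displayed inequality and then invoke the continuity of $f$ --- it is $2$-Lipschitz, since $|f(t)-f(t')|\le d(\gamma_1(t),\gamma_1(t'))+d(\gamma_2(t),\gamma_2(t'))=2|t-t'|$ --- together with the standard fact that a continuous midpoint-convex function on an interval is convex.
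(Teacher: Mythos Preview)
Your argument is correct and is precisely the standard ``diagonal'' proof one finds in the cited reference. Note that the paper itself does not supply a proof of this proposition: it is stated with a citation to \cite[Proposition II.2.2]{bridson_haefliger99} and used as a background fact, so there is nothing to compare against beyond the source, which your proof faithfully reproduces.
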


	\begin{prop}[Projection, {\cite[Proposition II.2.4]{bridson_haefliger99}}]\label{prop projection cat}
		Soit $(X,d)$ un espace $\cat$(0). Soit $C\subseteq X$ un fermé convexe non vide, complet pour la métrique induite. Alors :
		\begin{enumerate}
			\item Pour tout $ x \in X$, il existe un unique point $\pi_C(x)$ tel que $d(x, \pi_C(x))= d(x, C) = \inf_{c \in C} d(x, c) $; 
			\item si $x' \in [x, \pi(x)]$, alors $\pi_C(x')= \pi_C(x)$; 
			\item $\pi_C $ est une rétraction de $X$ sur $C$. Plus précisément, l'application $H : [0,1] \times X \rightarrow X $ qui à $(t, x) $ associe le point à distance $td(x ,\pi_C(x))$ de $x$ sur le segment $[x, \pi_C (x)]$ est une homotopie de $\Id_X$ à $\pi_C$ ;
			\item $\pi_C$ n'augmente pas les distances : pour tout $x, y \in X$, $d(\pi_C(x), \pi_C(y )) \leq d(x,y)$. En particulier, $\pi_C$ est continue. 
		\end{enumerate}
	\end{prop}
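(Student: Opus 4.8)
The proof proceeds in the order the four statements are listed, each building on the previous. First I would establish existence and uniqueness of the projection point. For existence, take a minimizing sequence $(c_n)$ in $C$ with $d(x,c_n) \to d(x,C) =: \delta$. The key step is to show $(c_n)$ is Cauchy: applying the $\cat$(0) inequality in the form of the preceding proposition to the triple $x, c_n, c_m$ with midpoint $m$ of $[c_n,c_m]$ (which lies in $C$ by convexity), one gets
\begin{equation*}
    d(x,m)^2 \leq \tfrac{1}{2}(d(x,c_n)^2 + d(x,c_m)^2) - \tfrac{1}{4}d(c_n,c_m)^2,
\end{equation*}
and since $d(x,m) \geq \delta$ while the first two terms tend to $\delta^2$, we force $d(c_n,c_m)^2 \to 0$. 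Completeness of $C$ gives a limit $\pi_C(x)$, which realizes the infimum by continuity of $d(x,\cdot)$. Uniqueness is the same computation: if $c,c'$ both achieve $\delta$, the midpoint estimate forces $d(c,c') = 0$.

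Second, the statement that $\pi_C(x') = \pi_C(x)$ for $x' \in [x,\pi_C(x)]$ follows from the triangle inequality along the geodesic: $d(x',C) \leq d(x',\pi_C(x)) = d(x,\pi_C(x)) - d(x,x') = \delta - d(x,x')$, and the reverse inequality $d(x',C) \geq d(x,C) - d(x,x')$ is automatic, so $d(x',C) = d(x',\pi_C(x))$; uniqueness from the first part identifies the projection. Third, the retraction/homotopy statement is then essentially formal: $H(0,x) = x$, $H(1,x) = \pi_C(x)$, $H(t,c) = c$ for $c \in C$, and continuity of $H$ follows once the final (distance-nonincreasing) property is in hand, using unique geodesics and the convexity proposition (Proposition~\ref{prop convexite cat}) to control $d(H(t,x),H(t,y))$.

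The main obstacle — and the genuinely geometric step — is statement~(4), that $\pi_C$ is $1$-Lipschitz. The plan is to show that for $x \notin C$ the comparison angle at $\pi_C(x)$ between $x$ and any point $c \in C$ is at least $\pi/2$: if it were smaller, one could move slightly along $[\pi_C(x),c] \subseteq C$ and strictly decrease the distance to $x$, contradicting minimality (this uses the first variation formula, or directly the $\cat$(0) inequality on a thin triangle). Given $x, y \notin C$ with projections $p = \pi_C(x)$, $q = \pi_C(y)$, apply this angle bound at both $p$ (to the segment $[p,q]$) and at $q$ (to $[q,p]$). Passing to a Euclidean comparison quadrilateral for the path $x,p,q,y$ and using that both comparison angles at $p$ and $q$ are $\geq \pi/2$, a planar argument gives $d_E(\bar x,\bar y) \geq d_E(\bar p,\bar q) = d(p,q)$, while the $\cat$(0) condition gives $d(x,y) \geq$ the relevant comparison quantity — after assembling these one concludes $d(\pi_C(x),\pi_C(y)) \leq d(x,y)$. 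Continuity of $\pi_C$ is then immediate, which retroactively completes the continuity of $H$ in statement~(3). The degenerate cases ($x \in C$ or $y \in C$) are handled separately and trivially.
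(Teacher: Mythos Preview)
The paper does not give its own proof of this proposition: it is stated with a citation to \cite[Proposition II.2.4]{bridson_haefliger99} and used as a black box throughout. Your proposal is correct and is essentially the standard argument from Bridson--Haefliger that the paper is citing, so there is nothing to compare against here.
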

	
	Soit $Y \subseteq X$ un sous-ensemble borné de $X$. Le \textit{rayon} (ou circumrayon) de $Y$ est défini comme 
	\begin{eqnarray}
		r_Y := \inf_{r>0} \{r \, | \, \text{ il existe }x \in X \text{ tel que } Y \subseteq B(x, r)\} \nonumber
	\end{eqnarray}
	
	\begin{prop}[Centre d'un ensemble borné, {\cite[Proposition II.2.2.7]{bridson_haefliger99}}]\label{prop bruhat tits centre}
		Soit $X$ un espace $\cat$(0) complet, et soit $Y \subseteq X$ un sous-ensemble borné et soit $r_Y$ son rayon. Alors il existe un unique $c_Y \in X$ tel que $Y \subseteq \overline{B}(c_Y, r_Y)$. On appelle $c_Y$ le centre (ou circumcentre) de $Y$. 
	\end{prop}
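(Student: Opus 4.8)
The statement is the classical Bruhat--Tits centre lemma, and I would prove it by the standard two-part argument: a convexity/midpoint estimate for uniqueness, and a Cauchy-sequence argument for existence, both powered by the equivalent characterisation of $\cat(0)$ spaces stated above (the inequality $d(z,m)^2 \leq \frac12(d(z,x)^2 + d(z,y)^2) - \frac14 d(x,y)^2$ for $m$ the midpoint of $[x,y]$). The bookkeeping device throughout is the function $f : X \to \R$, $f(x) := \sup_{y \in Y} d(x,y)$; since each $d(\cdot,y)$ is $1$-Lipschitz and convex (Proposition~\ref{prop convexite cat}), $f$ is $1$-Lipschitz, convex, finite (as $Y$ is bounded), and $r_Y = \inf_{x \in X} f(x)$, with $Y \subseteq \overline{B}(x,r) \iff f(x) \leq r$.

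\textbf{Uniqueness.} Suppose $c_1, c_2 \in X$ both satisfy $Y \subseteq \overline{B}(c_i, r_Y)$, i.e. $f(c_i) \leq r_Y$, hence $f(c_i) = r_Y$. Let $m$ be the midpoint of $[c_1,c_2]$. Applying the $\cat(0)$ inequality with $z = y \in Y$, $x = c_1$, $y = c_2$ gives $d(y,m)^2 \leq \frac12(d(y,c_1)^2 + d(y,c_2)^2) - \frac14 d(c_1,c_2)^2 \leq r_Y^2 - \frac14 d(c_1,c_2)^2$ for every $y \in Y$. Taking the supremum over $y$ yields $f(m)^2 \leq r_Y^2 - \frac14 d(c_1,c_2)^2$. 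Since $f(m) \geq r_Y$ by definition of $r_Y$, we get $d(c_1,c_2) = 0$.

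\textbf{Existence.} Choose a sequence $(x_n)$ with $f(x_n) \leq r_Y + \varepsilon_n$, $\varepsilon_n \downarrow 0$. For indices $n,p$, let $m_{n,p}$ be the midpoint of $[x_n,x_p]$; the same computation gives, for all $y \in Y$,
\[
d(y,m_{n,p})^2 \;\leq\; \tfrac12\big((r_Y+\varepsilon_n)^2 + (r_Y+\varepsilon_p)^2\big) - \tfrac14 d(x_n,x_p)^2,
\]
so that $r_Y^2 \leq f(m_{n,p})^2 \leq \tfrac12((r_Y+\varepsilon_n)^2 + (r_Y+\varepsilon_p)^2) - \tfrac14 d(x_n,x_p)^2$, i.e. $\tfrac14 d(x_n,x_p)^2 \leq \tfrac12((r_Y+\varepsilon_n)^2 + (r_Y+\varepsilon_p)^2) - r_Y^2 \to 0$. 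Hence $(x_n)$ is Cauchy; by completeness of $X$ it converges to some $c_Y$, and since $f$ is $1$-Lipschitz, $f(c_Y) = \lim_n f(x_n) = r_Y$, so $Y \subseteq \overline{B}(c_Y, r_Y)$, proving existence; uniqueness was shown above.

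\textbf{Expected obstacle.} There is no serious obstacle: the only points requiring a little care are the reduction of the infimum defining $r_Y$ (over radii with existence of a centre) to $\inf_x f(x)$ with closed balls, the implicit nonemptiness of $Y$ (if $Y = \emptyset$ the statement is vacuous or conventional), and making sure the $\varepsilon_n$-estimate is arranged so that the cross term disappears cleanly in the limit. Completeness of $X$ is used exactly once, to extract the limit of the Cauchy sequence.
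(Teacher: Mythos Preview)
Your proof is correct and is precisely the standard argument. Note that the paper does not actually prove this proposition: it is stated with a citation to \cite[Proposition II.2.2.7]{bridson_haefliger99} and used as a black box, so there is no in-paper proof to compare against; your argument is essentially the one given in that reference.
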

	
	Cette proposition implique le résultat fondamental suivant. 
	
	\begin{cor}[Théorème du point fixe de Bruhat-Tits]
		Soit $G$ un groupe d'isométries d'un espace $\cat$(0) complet. Si $G$ stabilise un sous-ensemble borné de $X$, alors il fixe un point de $X$. De plus, l'ensemble des points fixes de $G$ est convexe. 
	\end{cor}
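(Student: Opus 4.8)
Le plan est d'obtenir le point fixe comme circumcentre d'un ensemble borné bien choisi, en invoquant la Proposition \ref{prop bruhat tits centre}. Soit $Y \subseteq X$ un sous-ensemble borné non vide stabilisé par $G$. Quitte à remplacer $Y$ par son adhérence $\overline{Y}$ --- qui est encore bornée, stabilisée par $G$ puisque $G$ agit par homéomorphismes, et de même rayon --- on peut supposer $Y$ fermé. La Proposition \ref{prop bruhat tits centre} fournit alors un unique point $c_Y \in X$ et un réel $r_Y \geq 0$ minimal tels que $Y \subseteq \overline{B}(c_Y, r_Y)$.

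Je montrerais ensuite que $c_Y$ est fixé par $G$. Soit $g \in G$ : comme $g$ est une isométrie, $g \cdot \overline{B}(c_Y, r_Y) = \overline{B}(g c_Y, r_Y)$, et comme $g Y = Y$, on en déduit $Y \subseteq \overline{B}(g c_Y, r_Y)$. Ainsi $g c_Y$ est un circumcentre de $Y$ pour le même rayon $r_Y$ ; l'unicité dans la Proposition \ref{prop bruhat tits centre} force $g c_Y = c_Y$. Comme $g$ était arbitraire, $c_Y$ est un point fixe de $G$. Pour la convexité de l'ensemble $\mathrm{Fix}(G)$ des points fixes, on prend $x, y \in \mathrm{Fix}(G)$ et l'unique segment géodésique $[x,y]$ (unicité des géodésiques dans un espace $\cat$(0)) paramétré par $\gamma : [0, d(x,y)] \to X$. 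Pour $g \in G$, l'application $g \circ \gamma$ est une géodésique joignant $gx = x$ à $gy = y$ ; par unicité du segment géodésique, $g \circ \gamma = \gamma$, donc $g$ fixe chaque point de $[x,y]$. Il s'ensuit que $[x,y] \subseteq \mathrm{Fix}(G)$, ce qui conclut.

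Il n'y a pas ici de difficulté sérieuse : tout le contenu géométrique est concentré dans la Proposition \ref{prop bruhat tits centre} (existence et surtout \emph{unicité} du circumcentre, qui repose sur la convexité $\cat$(0) et la complétude de $X$). Le seul point qui mérite attention est que $G$ préserve exactement l'ensemble $Y$, de sorte qu'une isométrie de $G$ envoie le circumcentre de $Y$ sur le circumcentre de $Y$ lui-même, et non sur celui d'un translaté --- c'est cette observation, combinée à l'unicité, qui transforme l'invariance de $Y$ en un point fixe véritable.
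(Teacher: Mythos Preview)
Your proof is correct and follows exactly the approach the paper has in mind: the corollary is stated right after Proposition~\ref{prop bruhat tits centre} with the comment ``Cette proposition implique le résultat fondamental suivant'', and no further proof is given. You have simply spelled out the standard details---uniqueness of the circumcentre forces it to be $G$-fixed, and uniqueness of geodesics gives convexity of the fixed-point set. One minor remark: passing to the closure $\overline{Y}$ is unnecessary here, since Proposition~\ref{prop bruhat tits centre} as stated applies to any bounded subset, closed or not; but this does no harm.
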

	
	En particulier, si $G$ est groupe compact agissant continûment par isométries sur un espace $\cat$(0) complet, il admet un point fixe global. Terminons cette liste de propriétés par le théorème de la bande plate. Soit $X$ un espace $\cat$(0), et soit $\gamma_1, \gamma_2 : \mathbb{R} \rightarrow X$ deux droites géodésiques dans $X$. On dira que $\gamma_1 $ et $\gamma_2$ sont \textit{asymptotes} s'il existe $K \geq 0 $ tel que $d(\gamma_1(t), \gamma_2(t)) \leq K $ pour tout $t$. Par convexité de la métrique, il est équivalent de dire qu'il existe $K \geq 0 $ tel que $d(\gamma_1(t), \gamma_2(t)) = K $ pour tout $t$. 
	
	\begin{thm}[Théorème de la bande plate, {\cite[Theorem II.2.2.13]{bridson_haefliger99}}]
		Soit $X$ un espace $\cat$(0), et soit $\gamma_1, \gamma_2 : \mathbb{R} \rightarrow X$ deux droites géodésiques asymptotes dans $X$. Alors il existe $D \geq 0$ tel que l'enveloppe convexe de $\gamma_1(\R) \cup \gamma_2(\R) $ est isométrique à la bande euclidienne $\R \times [0,D] \subseteq \R^2$. 
	\end{thm}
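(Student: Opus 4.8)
\noindent Le plan que je suivrais est de réaliser l'enveloppe convexe de $\gamma_1(\R)\cup\gamma_2(\R)$ comme une réunion croissante de rectangles euclidiens plats, chacun produit par un argument de rigidité à partir de l'inégalité $\cat$(0). La première étape (un « lemme du pont ») sert à bien paramétrer les deux droites. Les projections $\pi_1 := \pi_{\gamma_1(\R)}$ et $\pi_2 := \pi_{\gamma_2(\R)}$ sur les convexes fermés $\gamma_1(\R)$, $\gamma_2(\R)$ sont $1$-lipschitziennes (Proposition \ref{prop projection cat}), et les fonctions $t\mapsto d(\gamma_1(t),\gamma_2(\R))$, $t\mapsto d(\gamma_2(t),\gamma_1(\R))$ sont convexes (Propositions \ref{prop convexite cat} et \ref{prop projection cat}) et bornées (asymptotie), donc constantes, de valeur commune $D := d(\gamma_1(\R),\gamma_2(\R))$. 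Pour $x\in\gamma_1(\R)$, en posant $y := \pi_2(x)$ et $x' := \pi_1(y)$, la caractérisation de la projection donne $\angle_{x'}(y,x)\geq\pi/2$, d'où par comparaison $\cat$(0) l'inégalité $d(x,y)^2 \geq d(x,x')^2 + d(x',y)^2$, soit $D^2 \geq d(x,x')^2 + D^2$ et $x' = x$. Ainsi $\pi_1|_{\gamma_2(\R)}$ et $\pi_2|_{\gamma_1(\R)}$ sont des isométries bijectives réciproques l'une de l'autre. On reparamètre alors $\gamma_2$ de sorte que $\gamma_2(t) = \pi_2(\gamma_1(t))$ : alors $\gamma_1(t) = \pi_1(\gamma_2(t))$, le segment $c_t := [\gamma_1(t),\gamma_2(t)]$ est de longueur constante $D$ (et perpendiculaire aux deux droites), et si $D = 0$ le théorème est trivial, donc on suppose $D>0$ et on pose $F(t,s) := c_t(s)$ pour $(t,s)\in\R\times[0,D]$.

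\textbf{Platitude des rectangles compacts.} Fixons $a<b$ et considérons le quadrilatère géodésique $Q$ de sommets $\gamma_1(a),\gamma_1(b),\gamma_2(b),\gamma_2(a)$. Comme $\gamma_1(a) = \pi_1(\gamma_2(a))$ et $\gamma_1(b)\in\gamma_1(\R)$, la caractérisation de la projection donne $\angle_{\gamma_1(a)}(\gamma_2(a),\gamma_1(b))\geq\pi/2$, et de même aux trois autres sommets ; la somme des quatre angles d'Alexandrov de $Q$ est donc $\geq 2\pi$. Mais en coupant $Q$ par une diagonale en deux triangles géodésiques, et comme dans un espace $\cat$(0) la somme des angles d'un triangle géodésique est $\leq\pi$ (conséquence immédiate de la définition), cette somme est aussi $\leq 2\pi$. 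Il y a donc égalité : c'est le cas de rigidité de l'inégalité $\cat$(0) (« théorème du quadrilatère plat »), et $Q$ borde une région convexe plate sur laquelle $F$ induit une isométrie vers le rectangle euclidien $[a,b]\times[0,D]$ — tous les angles étant droits et les deux côtés transverses de longueur $D$. En particulier chaque courbe « horizontale » $t\mapsto F(t,s)$ est géodésique sur $[a,b]$, avec $d(F(a,s),F(b,s)) = b-a$ ; on notera que la convexité de la métrique (Proposition \ref{prop convexite cat}) ne donnait a priori que l'inégalité $\leq$.

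\textbf{Exhaustion.} En appliquant ceci à $a=-n$, $b=n$ et en laissant $n\to\infty$, on obtient des plongements isométriques $F|_{[-n,n]\times[0,D]}$ sur des rectangles plats $P_n\subseteq X$, compatibles entre eux par unicité des géodésiques, avec $P_n\subseteq P_{n+1}$. Chaque $P_n$ est convexe et contient $\gamma_1([-n,n])\cup\gamma_2([-n,n])$ ; l'application $F : \R\times[0,D]\to X$ ainsi obtenue est une isométrie sur son image, laquelle est fermée (car $\R\times[0,D]$ est complet) et coïncide avec l'enveloppe convexe de $\gamma_1(\R)\cup\gamma_2(\R)$. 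Ceci établit le théorème.

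\textbf{Point délicat.} La difficulté sera entièrement concentrée dans l'étape de rigidité : passer de « somme des angles de $Q$ égale à $2\pi$ » à « $Q$ borde un rectangle euclidien ». La convexité de la métrique ne fournit que des majorations des distances, et les arguments naïfs par inégalité triangulaire perdent une erreur additive d'ordre $D$ qu'on ne peut résorber. La rigidité vient de ce que l'égalité dans la somme des angles force l'égalité dans l'inégalité de comparaison $\cat$(0) pour chacun des deux triangles ; ceux-ci se développent alors isométriquement dans $\mathbb{E}^2$ et se recollent le long de leur diagonale commune pour paver le rectangle plat. C'est là, et dans le lemme du pont (où intervient de façon essentielle le fait que $\gamma_1$ et $\gamma_2$ sont asymptotes dans les deux directions), que l'hypothèse $\cat$(0) est réellement utilisée.
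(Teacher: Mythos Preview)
The paper does not contain a proof of this statement: the Flat Strip Theorem is stated as a background result and attributed to \cite[Theorem II.2.2.13]{bridson_haefliger99} without proof. Your proposal is essentially the standard textbook argument (Sandwich Lemma to align the parametrizations, then the Flat Quadrilateral Theorem II.2.11 of Bridson--Haefliger for rigidity, then exhaustion), and it is correct in outline.

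A few remarks on precision. In the bridge step you show $\pi_1\circ\pi_2=\mathrm{id}$ on $\gamma_1(\R)$; you should state explicitly that the symmetric argument gives $\pi_2\circ\pi_1=\mathrm{id}$ on $\gamma_2(\R)$, so that the two $1$-Lipschitz maps are mutually inverse and hence isometries. In the rigidity step, what you call the ``point délicat'' is exactly the content of the Flat Quadrilateral Theorem; your sketch (equality in the angle sum forces equality in the $\cat(0)$ comparison for each of the two triangles, which then develop isometrically in $\mathbb{E}^2$) is the right idea but is not self-contained --- a complete write-up requires the rigidity clause in the comparison inequality (angle equal to comparison angle forces the triangle to be flat), which you invoke implicitly. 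Finally, the claim that $\bigcup_n P_n$ equals the convex hull deserves one more sentence: each $P_n$ is convex in $X$ (isometric image of a Euclidean convex set in a uniquely geodesic space) and contained in the convex hull of the four corners, hence in the convex hull of $\gamma_1(\R)\cup\gamma_2(\R)$; conversely $\bigcup_n P_n$ is convex and contains both lines.
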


	On conclut cette section en introduisant la notion de rang d'un espace $\cat$(0). Soit $(X, d)$ un espace $\cat$(0), un sous-espace $Y$ totalement géodésique de $X$ qui est isométrique à un espace euclidien $\mathbb{E}^n$ est appelé un \textit{plat de dimension $n$}.
	
	\begin{Def}
		On dit qu'un espace $X$ est de \textit{rang $\geq d$} si toute paire de points est reliée par une géodésique entièrement contenue dans un plat de dimension $d$ de $X$. Le \textit{rang} d'un espace $\cat$(0) est le minimum de la dimension sur ces plats :  
		\begin{eqnarray}
			\rang(X) := \max \{d \, | \,\textit{ toute géodésique est contenue dans un plat de dimension }d\}. \nonumber
		\end{eqnarray}
	\end{Def}
	
	Le rang d'un espace $\cat$(0) est un invariant qui apparait dans plusieurs conjectures importantes. On les discutera plus en détails dans la section \ref{conjectures}.

	\subsection{Des exemples importants}
	
	Dans cette section, on présente deux familles d'exemples très importants d'espaces $\cat$(0) : les complexes cubiques $\cat$(0) et les espaces symétriques de type non compact. Les premiers possèdent une structure combinatoire très riche, dont on s'inspirera pour trouver des modèles hyperboliques aux espaces $\cat$(0). Une présentation bien plus complète de ces espaces est faite dans \cite{sageev14}. De leur côté, les espaces symétriques ont, entre autres, une grande importance dans la théorie des groupes de Lie. 
	
	\subsubsection{Complexes cubiques $\cat$(0)}\label{section complexe cubique}
	
	Un \textit{complexe cubique} $X$ est un complexe cellulaire dont les cellules sont des cubes euclidiens. A priori, $X$ n'est pas un espace métrique, mais il existe une manière naturelle de construire une distance en supposant que les arêtes sont de longueur 1 et en utilisant la métrique de longueur associée. 
	
	On rappelle que si $(Y, \delta) $ est un espace métrique, la \textit{distance de longueur} $d$ associée à $\delta$ est définie comme 
	\begin{eqnarray}
		d(x, y) := \inf \big\{ \delta(\gamma) \, | \, \gamma \textit{ est une courbe rectifiable entre } x \textit{ et } y \big\}, \nonumber
	\end{eqnarray}
	où une courbe $\gamma : [0, T]$ est dite $\delta$-rectifiable si 
	
	\begin{eqnarray}
		\delta (\gamma) = \sup \big\{ \sum_i \delta(\gamma(t_i), \gamma(t_{i+1})) \, | \, 0 =  t_1 \leq t_2 \dots \leq t_n = T, \ n \in \mathbb{N} \big\} < \infty. \nonumber
	\end{eqnarray}
	
	Soit donc $ X$ un complexe cubique. En supposant que les arêtes sont de longueur 1 (ce qu'on fera toujours), on peut munir $X$ de la métrique de longueur $d$ induite par la distance euclidienne sur les cubes. L'espace $(X,d)$ est donc métrique, et on peut se demander s'il est $\cat$(0). Il existe en fait un argument combinatoire simple pour montrer qu'un complexe cubique donné est $\cat$(0).
	
	Le \textit{link} d'un sommet $v \in X$ est le complexe sphérique obtenu en dessinant une $\varepsilon$-sphère dans $X$ autour de $v$ pour $0< \varepsilon <1$. Le link d'un sommet dispose d'une structure simpliciale abstraite qui ne dépend pas de $0< \varepsilon <1$ ni de son caractère sphérique. On dit qu'un complexe simplicial $\Delta$ est \textit{de drapeau} si tout sous-ensemble de sommets $V \subseteq \Delta^{(1)}$ deux-à-deux adjacents engendre un simplexe.  
	
	\begin{thm}[{Critère de Gromov, \cite{gromov87}}]
		Un complexe cubique simplement connexe de dimension finie est $\cat$(0) si et seulement si le link de chacun de ses sommets est un complexe simplicial de drapeau. 
	\end{thm}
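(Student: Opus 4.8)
Le principe est de ramener la propriété globale \og être $\cat$(0) \fg{} à une condition locale à l'aide du théorème de Cartan-Hadamard, puis de traduire cette condition locale en la condition combinatoire de l'énoncé via la géométrie des links. \emph{Étape 1 : réduction au local.} Le complexe $X$ étant de dimension finie, ses cellules appartiennent à un nombre fini de types isométriques (les cubes unités de dimension $\le \dim X$) ; un résultat de Bridson assure alors que la métrique de longueur fait de $X$ un espace géodésique complet. Comme $X$ est de plus simplement connexe, le théorème de Cartan-Hadamard ramène \og $X$ est $\cat$(0) \fg{} à \og $X$ est localement $\cat$(0) \fg. Il suffit donc de montrer que $X$ est localement $\cat$(0) si et seulement si le link de chaque sommet est de drapeau.

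\emph{Étape 2 : les links contrôlent la courbure.} Au voisinage d'un point situé dans l'intérieur relatif d'un cube $C$ de dimension $k$, l'espace $X$ est isométrique à $\mathbb{E}^k \times C_0(\mathrm{Lk}(C))$, où $C_0(\mathrm{Lk}(C))$ désigne le cône euclidien sur le link (sphérique) de $C$. Un facteur euclidien ne change pas le fait d'être $\cat$(0), et un cône euclidien $C_0(L)$ est $\cat$(0) si et seulement si $L$ est $\cat$(1) (théorème de Berestovskii) ; ainsi $X$ est localement $\cat$(0) si et seulement si le link de chaque cube est $\cat$(1). Or le link d'un cube s'identifie au link du simplexe correspondant dans le link de n'importe lequel de ses sommets, et la propriété \og de drapeau \fg{} passe aux links ; compte tenu de l'étape suivante, il est donc équivalent d'exiger que tous les links de cubes soient de drapeau ou que tous les links de sommets le soient. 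Le point décisif est que le link $\mathrm{Lk}(v)$ d'un sommet $v$ est naturellement un \emph{complexe sphérique tout-droit} : c'est un complexe simplicial dont chaque $n$-simplexe, correspondant à un $(n+1)$-cube contenant $v$, est métrisé comme l'intersection d'un octant de $\mathbb{E}^{n+1}$ avec la sphère unité, de sorte que toutes ses arêtes ont pour longueur $\pi/2$.

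\emph{Étape 3 : lemme de Gromov.} Il reste à établir qu'un complexe sphérique tout-droit $L$ de dimension finie est $\cat$(1) si et seulement s'il est de drapeau. Supposons d'abord $L$ non de drapeau et choisissons un ensemble minimal de sommets deux-à-deux adjacents n'engendrant pas de simplexe : par minimalité, c'est un \og triangle vide \fg{} $\{v_0,v_1,v_2\}$. Le lacet formé des trois arêtes $[v_0,v_1]$, $[v_1,v_2]$, $[v_2,v_0]$ a longueur $3\pi/2 < 2\pi$ ; comme, dans un complexe tout-droit, deux sommets ne bordant pas de simplexe commun sont à distance $\ge \pi$ (fait valable aussi dans chaque link), l'angle en chaque $v_i$ entre les deux arêtes du lacet vaut $\ge \pi$, donc ce lacet est une géodésique locale fermée. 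Mais une géodésique locale fermée de longueur $<2\pi$ est impossible dans un espace $\cat$(1) : en la découpant en deux arcs de longueur $3\pi/4 < \pi$, chacun est une géodésique (une géodésique locale de longueur $<\pi$ est minimisante dans un $\cat$(1)), ce qui contredit l'unicité des géodésiques entre points à distance $<\pi$.

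\emph{Étape 3, réciproque --- le point délicat.} Pour l'implication \og de drapeau $\Rightarrow$ $\cat$(1) \fg, on raisonne par récurrence sur $\dim L$. Si $\dim L \le 1$, être de drapeau signifie être sans triangle, donc avoir une systole $\ge 4\cdot \frac{\pi}{2} = 2\pi$, ce qui caractérise les graphes $\cat$(1). Pour l'hérédité, on observe que le link dans $L$ de tout sommet est encore un complexe tout-droit de drapeau, de dimension strictement inférieure, donc $\cat$(1) par récurrence ; on invoque alors le critère de recollement pour les complexes sphériques de dimension finie (\og link condition \fg) : $L$ est $\cat$(1) si et seulement si tous les links de ses sommets sont $\cat$(1) et $L$ ne contient aucune géodésique fermée de longueur $<2\pi$. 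Tout revient donc à exclure une telle géodésique fermée $c$ dans un complexe tout-droit de drapeau à links $\cat$(1). C'est ici que se concentre la difficulté : on supposera $c$ de longueur minimale et passant par un sommet $v$, on examinera les directions d'entrée et de sortie de $c$ en $v$ comme points de $\mathrm{Lk}(v,L)$ --- nécessairement à distance $\ge \pi$ l'un de l'autre --- et on utilisera simultanément le caractère tout-droit et l'absence de simplexe vide pour pousser $c$ vers le $1$-squelette, ou pour la raccourcir, jusqu'à obtenir une contradiction. C'est cette dernière étape combinatoire qui est le principal obstacle ; les autres ingrédients (Cartan-Hadamard, critère de courbure locale par les cônes, théorème de Berestovskii, caractérisation des graphes $\cat$(1) par la systole, critère de recollement) sont classiques et s'appliquent sans difficulté nouvelle.
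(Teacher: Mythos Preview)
The paper does not prove this theorem: it is stated with a citation to \cite{gromov87} and immediately followed by a remark on the infinite-dimensional extension. There is therefore no proof to compare against; I evaluate your sketch on its own merits.

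Your overall architecture is the standard one and is sound: Cartan--Hadamard reduces to a local question, Berestovski\u{\i}'s cone criterion turns local $\cat(0)$ into $\cat(1)$ for links, and the all-right structure of vertex links reduces everything to Gromov's lemma ``flag $\Leftrightarrow$ $\cat(1)$ for all-right spherical complexes''. The reverse implication (flag $\Rightarrow$ $\cat(1)$) you correctly set up by induction via the link condition, and you honestly flag the exclusion of short closed geodesics as the main obstacle.

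There is, however, a genuine error in your treatment of the direction ``not flag $\Rightarrow$ not $\cat(1)$''. You write that a minimal set of pairwise adjacent vertices not spanning a simplex is, by minimality, a triple $\{v_0,v_1,v_2\}$. This is false: take $L=\partial\Delta^3$, the boundary of a $3$-simplex with the all-right metric. Its four vertices are pairwise adjacent, every proper subset spans a simplex, yet the four vertices do not span a $3$-simplex; the minimal empty simplex here has four vertices and there is no empty triangle at all. Your argument produces no closed geodesic in this example.

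The fix is to make this direction inductive as well. If $\{v_0,\dots,v_k\}$ is a minimal empty simplex with $k\geq 3$, then in $\mathrm{Lk}(v_0,L)$ the vertices $v_1,\dots,v_k$ are pairwise adjacent (each pair, together with $v_0$, spans a $2$-simplex of $L$ by minimality) but do not span a $(k-1)$-simplex; hence $\mathrm{Lk}(v_0,L)$ is not flag. Since $\cat(1)$ for $L$ forces $\cat(1)$ for its vertex links (via the spherical-cone description of a ball around $v_0$), induction on dimension gives a contradiction. Only when $k=2$ do you actually need the closed-geodesic argument you wrote, and there it is correct.
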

	
	En fait, un résultat récent montre que le théorème précédent est également vrai pour les complexes cubiques de dimension infinie : un complexe cubique dont le link de chaque sommet est de drapeau est un espace localement $\cat$(0) pour la métrique de longueur induite, \cite{leary13}. 
	
	\begin{ex}
		Un exemple parlant est le complexe cubique induit par le graphe de Cayley de $\mathbb{Z}/2\mathbb{Z}\ast \mathbb{Z}$ (pour les générateurs canoniques), dont le link de chaque sommet est donné par la figure \ref{fig link tree of flats}. On s'y réfère comme à \emph{l'arbre des plats}. D'après le critère de Gromov, $\mathbb{Z}/2\mathbb{Z} \ast \mathbb{Z}$ muni de la métrique de longueur induite est un espace $\cat$(0). 
		
		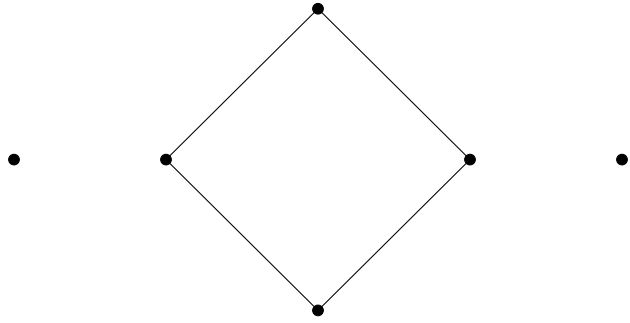
\begin{figure}[!]
			\centering
			\begin{center}
				\begin{tikzpicture}[scale=1]
					\draw (-2,0) -- (0,2)  ;
					\draw (0,2) -- (2,0)  ;
					\draw (2,0) -- (0,-2)  ;
					\draw (-2,0) -- (0,-2)  ;
					\filldraw[black] (-2,0)circle(2pt);
					\filldraw[black] (2,0)circle(2pt);
					\filldraw[black] (4,0)circle(2pt);
					\filldraw[black] (-4,0)circle(2pt);
					\filldraw[black] (0,2)circle(2pt);
					\filldraw[black] (0,-2)circle(2pt);
				\end{tikzpicture}
			\end{center}
			\caption{Link d'un sommet de $\mathbb{Z}^2 \ast \mathbb{Z}$.}\label{fig link tree of flats}
		\end{figure}
		
	\end{ex}
	
	Un intérêt particulier des complexes cubiques est l'existence d'une structure d'hyperplans sur $X$. On s'inspirera de ces hyperplans pour construire des modèles hyperboliques associés à des espaces $\cat$(0) quelconques. 
	
	\begin{Def}
		Soit $X$ un complexe cubique $\cat$(0), et soit un cube $C = [0,1]^n $ dans $X$. Un hypercube $M_i$ dans $C$ est donné par 
		\begin{eqnarray}
			M_i = \{ x \in C \, | \, x_i = \frac{1}{2}\}, \nonumber
		\end{eqnarray}
		où $i \in \{1, \dots, n\}$. 
	\end{Def}
	
	On dira que deux arêtes $e, e'$ sont équivalentes s'il existe une suite de $2$-cubes $C_0, \dots, C_{n-1}$ et des arêtes $e_0, \dots, e_n$ telles que $e_0 = e$, $e_n = e'$, et $e_i, e_{i+1}$ sont opposées dans le $2$-cube $C_i$. On notera $e \sim e'$ la relation d'équivalence, et $[e]$ la classe de $e$ pour $\sim$. Enfin, on dira qu'un hypercube $M$ est \textit{transverse} à une classe d'équivalence $[e]$ si l'intersection de $M$ avec le $1$-squelette de $X$ consiste en les milieux des arêtes $e' \in [e]$, et on notera alors $M \pitchfork [e]$. 
	
	\begin{Def}
		Un \textit{hyperplan} $H$ de $X$ est l'union de tous les hypercubes transverses à une classe d'arêtes $[e]$ : 
		\begin{eqnarray}
			H(e) = \underset{M \pitchfork [e]}{\bigcup} M. \nonumber
		\end{eqnarray}
	\end{Def}
	
	Les hyperplans vérifient notamment les propriétés suivantes. 
	
	\begin{prop}[{\cite[Theorem 1.1]{sageev14}}]\label{prop hyperplans ccc}
		Soit $X$ un complexe cubique $\cat$(0) de dimension finie, et soit $H$ un hyperplan de $X$. 
		\begin{enumerate}
			\item $H$ est un fermé convexe dans $X$;
			\item $H$ est lui-même doté d'une structure de complexe cubique $\cat$(0); 
			\item $X \setminus H$ a exactement deux composantes connexes, appelés demi-espaces. 
		\end{enumerate}
	\end{prop}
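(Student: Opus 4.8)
\emph{Stratégie de preuve envisagée.} Ce résultat classique est dû à Sageev \cite{sageev14}, et toute la démonstration se ramène à un unique lemme combinatoire. Fixons la classe d'arêtes $[e]$, l'hyperplan $H = H(e)$, et introduisons le \emph{carrier} $N(H) = \bigcup\{C \text{ cube de } X : C \cap H \neq \emptyset\}$, réunion des cubes que $H$ rencontre. Rappelons qu'un chemin d'arêtes de $X^{(1)}$ est une \emph{géodésique combinatoire} s'il réalise la distance entre ses extrémités pour la métrique des mots de $X^{(1)}$. Le plan serait le suivant : (i) établir le lemme de non-retraversée ci-dessous ; (ii) en déduire que $H$ ne se réintersecte ni ne se réoscule, ce qui donne la structure de produit $N(H) \cong \hat{H} \times [0,1]$ ; (iii) en tirer la convexité de $H$ (point (1)), puis la séparation (point (3)) via la projection sur $H$, et enfin la structure de complexe cubique $\cat$(0) sur $H$ (point (2)).

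\emph{Le lemme central.} Je commencerais par établir : \textbf{toute géodésique combinatoire de $X^{(1)}$ contient au plus une arête de la classe $[e]$}. On raisonne par l'absurde : s'il existe une géodésique combinatoire contenant deux arêtes de $[e]$, on choisit un tel exemple dont le sous-segment $p$ séparant deux arêtes $e_0, e_m \in [e]$ est de longueur $m$ minimale. La chaîne de $2$-cubes opposés reliant $e_0$ à $e_m$ permet de fermer $p$ en un lacet ; $X$ étant simplement connexe, ce lacet borde un diagramme en disque $D \to X$ que l'on choisit réduit. Grâce au critère de Gromov (disponible ici car $X$ est de dimension finie et $\cat$(0)), la condition de drapeau sur les links force le diagramme réduit $D$ à n'avoir, en ses sommets intérieurs, que de la courbure combinatoire négative ou nulle ; par Gauss--Bonnet il apparaît donc au bord de $D$ un carré adjacent à $p$ le long de deux arêtes consécutives, dont l'échange raccourcit strictement $p$ ou diminue $m$ --- contradiction. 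Appliqué aux diagonales de petits cubes, le même raisonnement montre qu'un cube ne peut contenir deux arêtes de $[e]$ de directions distinctes, ni deux arêtes de $[e]$ incidentes à un même sommet sans être opposées dans un carré ; autrement dit $H$ ne se réintersecte pas --- donc rencontre chaque cube $C$ en au plus un hypercube $M(C) = C \cap H$ --- et $H$ ne se réoscule pas.

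\emph{Preuve de (1) et (2).} Tout cube $C$ transverse à $[e]$ se scinde alors canoniquement en $C \cong M(C) \times [0,1]$, et l'absence de réintersection et de réosculation de $H$ rend ces scindements mutuellement compatibles, de sorte que $N(H)$ s'identifie, en tant que complexe cubique, au produit $\hat{H} \times [0,1]$, où $\hat{H}$ est le complexe cubique ayant pour cellules les hypercubes transverses à $[e]$, l'hyperplan étant $H = \hat{H} \times \{1/2\}$ ; muni de sa métrique de longueur, $N(H)$ est isométrique au produit métrique $\hat{H} \times [0,1]$. Par ailleurs $H$ est connexe (via la chaîne de carrés définissant $[e]$), donc $N(H)$ aussi, et le link de chaque sommet de $N(H)$ est l'étoile fermée, dans le link ambiant de $X$, du sommet représentant l'unique arête de $[e]$ incidente : c'est un sous-complexe plein, donc $N(H)$ est localement convexe ; connexe, complet et localement convexe, $N(H)$ est convexe dans $X$. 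Ses géodésiques sont donc celles de $X$, et dans un produit métrique $\hat{H} \times [0,1]$ la tranche $\hat{H} \times \{1/2\}$ est convexe (une géodésique de la tranche se relève en géodésique du produit) : ainsi $H$ est convexe, et fermé puisque fermé dans le fermé $N(H)$, d'où (1). Pour (2) : le link d'un sommet de $H$, milieu d'une arête $e' \in [e]$, est canoniquement le link de $e'$ dans $X$, sous-complexe plein d'un complexe de drapeau donc de drapeau ; et $H$, convexe dans l'espace $\cat$(0) $X$, est contractile donc simplement connexe ; le critère de Gromov fait alors de $H$, muni de la structure cubique dont les cellules sont les hypercubes $M_i$, un complexe cubique $\cat$(0).

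\emph{Preuve de (3), et obstacle principal.} Pour (3), on exploite (1) : $H$ étant fermé, convexe et complet, on dispose de la projection $\pi_H : X \to H$. Pour $x \notin H$, la géodésique $[x, \pi_H(x)]$ ne rencontre $H$ qu'en son extrémité, et comme $N(H)$ est un voisinage de $H$, elle pénètre dans $N(H) \cong \hat{H} \times [0,1]$ ; sur son segment final contenu dans $N(H)$, la coordonnée dans le facteur $[0,1]$ est affine, tend vers $1/2$ sans l'atteindre avant l'extrémité, donc est strictement monotone. On définit le \emph{côté} de $x$ comme le signe de la différence entre cette coordonnée et $1/2$ près de l'extrémité ; il est localement constant et prend les deux valeurs (déjà le long d'une arête de $[e]$ privée de son milieu). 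Puisque $\pi_H$ est constante le long de $[x, \pi_H(x)]$, on vérifie sans peine que chacun des deux côtés est un ouvert connexe (on relie deux points d'un même côté en les remontant vers $H$ à hauteur fixe, puis en se déplaçant dans la tranche correspondante de $N(H)$), donc $X \setminus H$ est réunion de deux ouverts connexes disjoints non vides, ses deux composantes connexes : c'est (3). \textbf{Le point le plus délicat est le lemme central}, c'est-à-dire l'argument de diagramme en disque et de chirurgie combinatoire : il est le moteur de toute la théorie des hyperplans, le reste n'étant que vérifications. Le second point à surveiller est la compatibilité des scindements fondant l'identification $N(H) \cong \hat{H} \times [0,1]$, laquelle repose précisément sur l'absence de réosculation garantie par ce lemme.
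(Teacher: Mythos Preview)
The paper does not give its own proof of this proposition: it is stated with a citation to \cite{sageev14} and used as a black box. There is therefore nothing to compare against in the paper itself.

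Your proof strategy is the standard one and is essentially Sageev's original argument. The central lemma (a combinatorial geodesic crosses each hyperplane at most once) via minimal-area disk diagrams and Gauss--Bonnet is exactly the engine of the theory, and you correctly identify it as the crux. The deduction of the product structure $N(H)\cong \hat H\times[0,1]$ from non-self-intersection and non-self-osculation, the local convexity of $N(H)$ via fullness of the relevant links, and the passage from local to global convexity are all correct. Two minor remarks: (i) the step ``connexe, complet et localement convexe $\Rightarrow$ convexe'' is not a triviality and deserves a reference (e.g.\ the local-to-global convexity argument in Bridson--Haefliger, or Haglund's treatment for cube complexes); (ii) in your argument for (3), the connectedness of each side can be obtained more simply by noting that any two vertices on the same side are joined by a combinatorial geodesic which, by your central lemma, cannot cross $H$. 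Otherwise the outline is sound.
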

	
	Notons $\mathfrak{H}(X)$ l'ensemble des demi-espaces de $X$. Pour $v \in X$ un sommet, notons $U_v := \{ h \in \mathfrak{H} \, | \, v \in h\}$.

	Tout sommet $v $ est uniquement déterminé par $U_v$ : 
	\begin{eqnarray}
		\{v \} = \underset{h \in U_v}{\cap} h. \nonumber
	\end{eqnarray}
	On appelle cette correspondance la \textit{dualité de Roller-Sageev}. Cette structure combinatoire est très riche, et les complexes cubiques $\cat$(0) sont désormais bien étudiés. On sait en effet que les groupes agissant sur des complexes cubique $\cat$(0) vérifient des résultats puissants comme l'alternative de Tits ou la rigidité du rang, dont nous parlerons plus tard. 
	
	Notons également que du point de vue de la théorie géométrique des groupes, des classes très intéressantes de groupes agissent sur des complexes cubiques $\cat$(0) : les groupes d'Artin à angles droits, certains groupes de Coxeter et des groupes à petite annulation. On dit que ces groupes sont \textit{cubulés}.

	\subsubsection{Espaces symétriques de type non compacts}
	Les espaces symétriques de type non-compact sont des espaces $\cat$(0) qui ont une importance fondamentale dans la théorie des groupes de Lie semi-simples. On n'en fera qu'une présentation très rapide, et on renvoie aux références classiques \cite{eberlein96} et \cite{helgason01}. On utilisera également les notes de cours de F. Paulin \cite{paulin} et de B. Duchesne \cite{duchesne16}. 
	
	\begin{Def}
		Une variété Riemannienne connexe $M$ est un \textit{espace symétrique} si pour tout point $x \in M$, il existe une isométrie globale $\sigma_x$ qui fixe $x$ et dont la différentielle $T_x \sigma_x$ en $x$ est $-\Id_{T_x M}$. Un \textit{espace symétrique de type non compact} est un espace symétrique de courbure sectionnelle négative ou nulle, non isométrique à un produit $M' \times \mathbb{R}$. 
	\end{Def}
	
	\begin{ex}
		Toute variété de Hadamard $M$ à courbure sectionnelle constante est un espace symétrique. En effet, le revêtement universel d'une telle variété est (quitte à normaliser la constante de courbure) $\mathbb{E}^n $, $\mathbb{S}^n $ ou $\mathbb{H}^n$, \cite[Théorème 3.38]{paulin}. Dans chacun de ces cas, on trouve facilement des isométries involutives satisfaisant la condition de symétrie. En revanche, si cette courbure est positive ou nulle, l'espace symétrique $M$ n'est pas de type non compact. Un espace symétrique de type non compact est un espace $\cat$(0). 
	\end{ex}

	Soit $M$ un espace symétrique de type non compact. Notons $G = \iso_0(M)$ la composante connexe de l'identité dans $\iso(M)$, et $K = \stab(x)$ le stabilisateur d'un point $x \in M $. On peut montrer que le groupe $G$ est alors un groupe de Lie semi-simple de centre trivial et sans facteur compact, et que $K$ est un sous-groupe compact maximal de $G$ \cite[Chapter 4]{helgason62}. Le groupe $G$ agit sur $M$ de manière transitive, donc on dispose d'un $C^\infty$-difféomorphisme entre l'espace homogène $G/K$ et l'espace symétrique $M$. 
	
	Réciproquement, soit $G$ un groupe de Lie semi-simple de centre trivial et sans facteur compact, et soit $K$ un sous-groupe compact maximal, alors il existe une structure d'espace symétrique de type non compact sur l'espace homogène $G/K$. En fait, étant donné un tel groupe $G$, cette structure d'espace symétrique $M = G/K$ est canonique, au sens où deux telles structures sont multi-homothétiques, c'est-à-dire isométriques après homothéties sur les facteurs de de Rham, voir \cite[Théorème 4.29]{paulin} et \cite[Chap. X]{helgason01}. On a donc une correspondance géométrico-algébrique univoque entre groupes de Lie semi-simples sans facteur compact, de centre trivial, et espaces symétriques de type non-compact.

	La théorie de Lie permet de classifier les groupes de Lie réels semi-simples de centres triviaux et sans facteurs compacts, et par conséquent les espaces symétriques de type non compact. En particulier, tout espace symétrique de type non compact est (à multi-homothétie près) un produit riemannien d'espaces $G/K$, où $G$ est un groupe de Lie simple de centre trivial et sans facteur compact et $K$ un sous-groupe compact maximal. 
	\newline 
	
	Le \textit{rang réel} d'un groupe de Lie $G$ est la dimension d'un sous-espace de Cartan $\mathfrak{a}$ en tant qu'espace vectoriel réel, voir \cite[Section 4.2.1]{paulin}. On peut montrer que le rang d'un espace symétrique de type non compact $X$ en tant qu'espace $\cat$(0) coïncide avec le rang réel du groupe de Lie $\iso_0(X)$  \cite[Proposition 4.31]{paulin}. On dit qu'un espace symétrique est de \textit{rang supérieur} s'il est de rang $\geq 2$.

	\begin{ex}
		Voici quelques exemples importants d'espaces symétriques de type non compacts. 
		\begin{itemize}
			\item L'espace hyperbolique $\mathbb{H}^n_{\R}$ est symétrique de type non compact. En fait, les seuls espaces symétriques de type non compact et de rang 1 sont (à isométrie près) les espaces hyperboliques réels $\mathbb{H}^n_{\R}$, complexes $\mathbb{H}^n_{\mathbb{C}}$, quaternioniens $\mathbb{H}^n_{\mathbb{H}}$ et le plan hyperbolique des octaves de Cayley $\mathbb{H}^2_{\text{Ca}}$. 
			\item Considérons l'espace $\Ell_n$ les matrices symétriques définies positives de déterminant 1 sur $\R^n$, qui correspond aux ellipsoïdes de volume $1$ dans $\R^n $ : 
			\begin{eqnarray}
				\Ell_n := \{ A \in M_n(\R) \, | \, A \text{ est symétrique définie positive de déterminant 1}\} \nonumber. 
			\end{eqnarray}
			Alors $\Ell_n$ est un espace symétrique $\cat$(0), de rang $n-1$. Ici le groupe de Lie associé est $G = \iso_0 (\Ell_n) = \psl_n(\R)$ et $K = \SO(n) \cap \psl_n(\R)$.  
		\end{itemize}
	\end{ex}

	\section{Différentes notions de bords}\label{section bord cat}
	Dans cette section, on discute de la géométrie à l'infini des espaces $\cat$(0), en introduisant différentes bordifications. Chacune de ces bordifications a des avantages et des inconvénients, et on sera souvent amené à passer de l'une à l'autre selon les besoins.

	\subsection{Le bord visuel}
	
	\subsubsection{Bordification par les rayons}
	Soit $(X,d)$ un espace $\cat$(0). La première manière de construire un bord pour $X$ est de considérer les classes d'équivalence des rayons géodésiques. 
	
	\begin{Def}
		Soit $\gamma_1, \gamma_2 : [0, \infty[ \rightarrow X$ deux rayons géodésiques. On dit que $\gamma_1$ et $\gamma_2$ sont \textit{asymptotes} s'ils sont à distance de Hausdorff finie, c'est-à-dire qu'il existe $K \geq 0$ tel que $$\sup_t \{d(\gamma_1 (t),\gamma_2(t))\} \leq K.$$ Cette relation est une relation d'équivalence sur l'ensemble des rayons géodésiques sur $X$, invariante par isométrie. Le \textit{bord visuel} (ou conique) $\bd X $ est l'espace des classes de rayons géodésiques pour cette relation d'équivalence. On note $\overline{X} = X \cup \bd X$ la bordification visuelle correspondante. 
	\end{Def}
	
	Notons déjà qu'on peut recouvrir tout le bord à partir d'un unique point-base. 
	
	\begin{prop}[{\cite[Proposition 8.2]{bridson_haefliger99}}]\label{prop rayon geod}
		Soit $X$ un espace $\cat$(0) complet et $\gamma_x : [0, \infty[ \rightarrow X$ un rayon géodésique tel que $\gamma_x(0) = x$. Alors pour tout $y \in X $, il existe un unique rayon géodésique $\gamma_y$ tel que $\gamma_y(0) = y$, et asymptote au rayon $\gamma_x$.
	\end{prop}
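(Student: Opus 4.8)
The plan is to follow the classical construction: build $\gamma_y$ as the limit of the geodesic segments joining $y$ to $\gamma_x(n)$, and extract both the asymptoty estimate and uniqueness from convexity of the metric (Proposition~\ref{prop convexite cat}). Write $C := d(x,y)$. For $n$ large, let $c_n \colon [0,\ell_n] \to X$ be the unit-speed geodesic from $y$ to $\gamma_x(n)$, with $\ell_n = d(y,\gamma_x(n))$; since $\gamma_x$ is a ray with $\gamma_x(0)=x$ we have $d(x,\gamma_x(n))=n$, hence $|\ell_n - n| \le C$ and in particular $\ell_n \to \infty$.

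First I would prove that $(c_n(t))_n$ is Cauchy for each fixed $t \ge 0$. Given $m \le n$ large enough that $t \le \min(\ell_m,\ell_n)$, take a Euclidean comparison triangle $\overline{\Delta}$ for $\Delta(y,\gamma_x(m),\gamma_x(n))$. Its side opposite $\overline{y}$ has length $d(\gamma_x(m),\gamma_x(n)) = n-m$, while the two sides at $\overline{y}$ have lengths $\ell_m,\ell_n \in [\,\min(m,n)-C,\ \max(m,n)+C\,]$. Feeding these into the Euclidean law of cosines for the angle $\overline{\alpha}_{m,n}$ at $\overline{y}$ gives $\cos\overline{\alpha}_{m,n} \ge \frac{(m-C)(n-C)}{(m+C)(n+C)} \to 1$ as $m,n\to\infty$, so $\overline{\alpha}_{m,n}\to 0$. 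The comparison points of $c_m(t)$ and $c_n(t)$ both lie at distance $t$ from $\overline{y}$ on the two sides issuing from it, so their Euclidean distance is $2t\sin(\overline{\alpha}_{m,n}/2) \le t\,\overline{\alpha}_{m,n}$, whence by the $\cat$(0) inequality $d(c_m(t),c_n(t)) \le t\,\overline{\alpha}_{m,n}$. For $t$ ranging over a fixed $[0,T]$ the same bound is uniform, so $(c_n)$ converges uniformly on compacts; completeness of $X$ provides the limit $\gamma_y(t) := \lim_n c_n(t)$. Passing to the limit in $d(c_n(s),c_n(t)) = |s-t|$ and in $c_n(0)=y$ shows $\gamma_y$ is a geodesic ray with $\gamma_y(0)=y$.

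For the asymptoty, reparametrize both geodesics affinely on $[0,1]$: set $\widetilde{\gamma}(s) := \gamma_x(ns)$ and $\widetilde{c}_n(s) := c_n(\ell_n s)$, two geodesics from $x$, resp.\ $y$, to the common point $\gamma_x(n)$. By Proposition~\ref{prop convexite cat}, $s \mapsto d(\widetilde{\gamma}(s), \widetilde{c}_n(s))$ is convex, with value $C$ at $s=0$ and $0$ at $s=1$, hence $\le (1-s)C \le C$ throughout. Taking $s = t/n$ and using that $c_n$ is $1$-Lipschitz with $|\ell_n t/n - t| \le tC/n$ gives $d(\gamma_x(t), c_n(t)) \le C + tC/n$; letting $n\to\infty$ yields $d(\gamma_x(t),\gamma_y(t)) \le C$ for all $t$, so $\gamma_x$ and $\gamma_y$ are asymptotic. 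For uniqueness, if $\gamma_y$ and $\gamma_y'$ are two rays from $y$ asymptotic to $\gamma_x$, then they are asymptotic to each other, so $f(t) := d(\gamma_y(t),\gamma_y'(t))$ is bounded on $[0,\infty)$; by Proposition~\ref{prop convexite cat} it is convex, and a bounded convex function on $[0,\infty)$ is non-increasing, so from $f(0)=0$ and $f \ge 0$ we get $f \equiv 0$, i.e.\ $\gamma_y = \gamma_y'$.

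The only step with real content is the existence part, and within it the claim $\overline{\alpha}_{m,n} \to 0$: one must notice that although the opposite side length $n-m$ is controlled directly by $\gamma_x$ being a ray, the two long sides $\ell_m,\ell_n$ are pinned down only up to the basepoint shift $C = d(x,y)$, and then check that the law-of-cosines ratio still tends to $1$. Completeness of $X$ enters exactly once, to produce the pointwise limit $\gamma_y(t)$, and it is genuinely needed, as the statement fails in incomplete $\cat$(0) spaces.
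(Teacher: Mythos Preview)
The paper does not give its own proof of this statement; it is simply quoted from \cite[Proposition II.8.2]{bridson_haefliger99}. Your argument is correct and is in fact the standard Bridson--Haefliger proof: construct the asymptotic ray as the limit of the segments $[y,\gamma_x(n)]$, control the comparison angle at $y$ via the law of cosines (your lower bound $\cos\overline\alpha_{m,n}\ge \frac{(m-C)(n-C)}{(m+C)(n+C)}$ is exactly right), deduce that the sequence is Cauchy uniformly on compacts, and then read off asymptoty and uniqueness from convexity of the metric.
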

	
	En particulier, pour tout $x \in X$, et pour tout $ \xi = [\gamma] \in \bd X$ une classe de rayons géodésiques, il existe un unique rayon géodésique $\gamma_x^\xi $ qui représente $\xi $ et tel que $\gamma_x^\xi(0) = x$. Dans la suite, pour $o \in X $ et $\xi \in \overline{X}$, on notera toujours $\gamma_o^\xi$ la géodésique issue de $o$ qui représente $\xi$ si $\xi  \in \bd X$ et telle que $\gamma_o^\xi (d(o, \xi))= \xi $ si $\xi \in X$. 
	
	Fixons un point-base $o \in X$. On peut mettre une topologie $\mathcal{T}(o)$ sur l'ensemble des rayons géodésiques issus de $o$, en prenant la topologie de la convergence uniforme sur les compacts. Une base pour la topologie $\mathcal{T}(o)$ est donnée par les ouverts~: 
	
	\begin{equation*}
		U(\gamma, r, \varepsilon) := \{ x \in \overline{X} \ | \ d(x, o) >r,\,  d(\gamma_o^x(r), \gamma(r)) < \varepsilon)\}, 
	\end{equation*}
	où $\gamma $ est un rayon géodésique issu de $o \in X$, $r, \varepsilon >0$, et où on admet que $d(x, o) >r$ est vérifié si $x \in \bd X$. Si $\gamma = \gamma_o^\xi$ représente la classe de $\xi \in \bd X$ et $\gamma(0) = o$, on notera alternativement 
	
	\begin{equation*}
		U(o, \xi, r, \varepsilon) := \{ x \in \overline{X} \ | \ d(x, o) >r, d(\gamma_o^x (r), \gamma(r)) < \varepsilon)\}.  
	\end{equation*}
	
	L'espace $\overline X$ muni de la topologie $\mathcal{T}(o)$ est métrisable. En effet, pour tout rayon géodésique $\gamma_1, \gamma_2$ issus de $o$, on considère la distance suivante : 
	
	\begin{eqnarray}
		d_o(\gamma_1, \gamma_2) = \inf \{ \varepsilon >0 \, | \, \underset{t \in [0, \frac{1}{\varepsilon}]}{\sup} d(\gamma_1 (t), \gamma_2 (t))< \varepsilon\} \nonumber. 
	\end{eqnarray}
	
	Pour $\gamma $ un rayon géodésique issu de $o \in X$, et pour tout $\varepsilon, r > 0$, prenons $r' < \varepsilon$ et $1/r' > r$. Alors par convexité de la métrique, pour tout rayon géodésique $\gamma'$ issu de $o $ qui vérifie $d_o (\gamma, \gamma') < r' $, on a $\gamma' \in U(\gamma, r, \varepsilon)$. Réciproquement, pour $r>0$ si $\varepsilon' < 1/r$ et $r' > 1/r$, alors pour tout $\xi \in U(\gamma, r, \varepsilon)$, on a $d_o (\gamma, \gamma_o^\xi) < r$. 
	
	La proposition suivante montre que l'on peut oublier la dépendance au point-base. 
	
	\begin{prop}[{\cite[Proposition II.8.8]{bridson_haefliger99}}]
		Pour $o, o' \in X$, les topologies $\mathcal{T}(o)$ et $\mathcal{T}(o')$ sont les mêmes. 
	\end{prop}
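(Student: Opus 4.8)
The plan is to show that the identity map is a homeomorphism $(\overline{X},\mathcal{T}(o))\to(\overline{X},\mathcal{T}(o'))$. Since the roles of $o$ and $o'$ are symmetric, it suffices to prove that this map is continuous; and as $(\overline{X},\mathcal{T}(o))$ is metrizable (hence first countable) by the discussion preceding the statement, it is enough to check sequential continuity. So I fix a sequence $(x_n)$ in $\overline{X}$ with $x_n\to\eta$ in $\mathcal{T}(o)$ and aim at $x_n\to\eta$ in $\mathcal{T}(o')$. Write $D:=d(o,o')$. If $\eta\in X$, the cone topology restricted to $X$ is just the original metric topology (metric balls around points of $X$ are basic open sets of $\overline X$, and they contain no point of $\bd X$), so convergence to a point of $X$ does not involve the basepoint and $x_n\to\eta$ in $\mathcal{T}(o')$ as well. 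After possibly splitting $(x_n)$ into a subsequence in $X$ and one in $\bd X$, I may therefore assume $\eta=\xi\in\bd X$.

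Unwinding the basic neighbourhoods $U(o,\xi,r,\varepsilon)$, the condition $x_n\to\xi$ in $\mathcal{T}(o)$ says exactly that $d(o,x_n)\to\infty$ and that $d(\gamma_o^{x_n}(T),\gamma_o^{\xi}(T))\to 0$ for every $T>0$ (the point $\gamma_o^{x_n}(T)$ being defined once $d(o,x_n)>T$, which holds eventually). Since $d(o',x_n)\ge d(o,x_n)-D\to\infty$, the first requirement for convergence to $\xi$ in $\mathcal{T}(o')$ is automatic, and it remains to prove that $d(\gamma_{o'}^{x_n}(T'),\gamma_{o'}^{\xi}(T'))\to 0$ for every fixed $T'>0$.

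Two consequences of convexity of the metric (Proposition \ref{prop convexite cat}) drive the estimate. First, if two geodesics share an endpoint $p$, the distance between their points at equal arclength from $p$ is a convex function vanishing at $p$, hence it lies below its chords and grows at most linearly from $p$; applying this with $p=x_n$ to the pair $\gamma_o^{x_n},\gamma_{o'}^{x_n}$ (segments if $x_n\in X$, rays if $x_n\in\bd X$), and using $|d(o,x_n)-d(o',x_n)|\le D$, gives $d(\gamma_o^{x_n}(s),\gamma_{o'}^{x_n}(s))\le 3D$ for all admissible $s$. Second, a bounded convex function on $[0,\infty)$ is non-increasing, so the convex bounded function $t\mapsto d(\gamma_o^{\xi}(t),\gamma_{o'}^{\xi}(t))$, equal to $D$ at $t=0$, stays $\le D$. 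Finally, recall that $\gamma_{o'}^{\xi}$ is obtained as the limit, uniform on compact sets, of the geodesic segments $[o',\gamma_o^{\xi}(t)]$ as $t\to\infty$ (this is how the asymptotic ray is produced in Proposition \ref{prop rayon geod}).

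Now, given $\varepsilon>0$ and $T'>0$, I first choose $t$ large enough that $d\big([o',\gamma_o^{\xi}(t)](T'),\gamma_{o'}^{\xi}(T')\big)<\varepsilon/2$ and that $\tfrac{5DT'}{t-D}<\varepsilon/2$. For $n$ large, $d(\gamma_o^{x_n}(t),\gamma_o^{\xi}(t))$ is tiny, so by the $3D$-fellow-travelling bound the point $\gamma_{o'}^{x_n}(t)$ lies within, say, $4D$ of $\gamma_o^{\xi}(t)$; applying the first convexity fact to the two geodesics issuing from $o'$ (with endpoints $x_n$ and $\gamma_o^{\xi}(t)$, which agree at $o'$, and recalling $|d(o',\gamma_o^{\xi}(t))-t|\le D$) bounds $d\big(\gamma_{o'}^{x_n}(T'),[o',\gamma_o^{\xi}(t)](T')\big)$ by $\tfrac{5DT'}{t-D}$, and the triangle inequality yields $d(\gamma_{o'}^{x_n}(T'),\gamma_{o'}^{\xi}(T'))<\varepsilon$. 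This proves $x_n\to\xi$ in $\mathcal{T}(o')$, hence the identity is continuous, hence (by symmetry) a homeomorphism and $\mathcal{T}(o)=\mathcal{T}(o')$. The main obstacle is exactly the estimate of the last paragraph: since $X$ is not assumed proper there is no Arzelà–Ascoli available, and everything must be extracted from convexity alone, through the two elementary facts ``geodesics with a common endpoint separate sublinearly from it'' and ``a bounded convex function on a half-line is non-increasing''; once these are in place, the bookkeeping with $D=d(o,o')$ is routine.
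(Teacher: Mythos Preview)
The paper does not give its own proof of this proposition: it is stated as a citation of \cite[Proposition II.8.8]{bridson_haefliger99} and immediately followed by the definition of the cone topology, with no argument in between. Your proof is correct and is essentially the standard one found in Bridson--Haefliger: reduce to sequential continuity via metrizability, observe that the restriction to $X$ is the metric topology, and for a limit $\xi\in\bd X$ use the two convexity facts (asymptotic rays from $o$ and $o'$ stay within $d(o,o')$; geodesics with a common endpoint diverge at most linearly) together with the construction of $\gamma_{o'}^{\xi}$ as a limit of segments $[o',\gamma_o^{\xi}(t)]$. The bookkeeping with the constant $5D$ is slightly loose but the inequalities go through; the argument also covers the case $x_n\in\bd X$ since then $\gamma_o^{x_n}$ and $\gamma_{o'}^{x_n}$ are asymptotic rays and the bound improves to $D$.
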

	
	On appelle cette topologie la \textit{topologie conique} sur $\overline{X}$. Il est clair avec cette topologie que toute isométrie de $X$ s'étend en un homéomorphisme du bord visuel $\bd X$, \cite[Corollary II.8.9]{bridson_haefliger99}.

	\subsubsection{Bordification par les fonctions de Busemann}
	
	Il existe une autre manière de recouvrir le bord visuel d'un espace $\cat$(0), en utilisant des fonctions de Busemann. Soit $(X, d)$ un espace métrique. Notons : 
	
	\begin{eqnarray}
		b : (o, x,y) \in X \times X \times X \mapsto b_x^{o}(y) = d(x, y) - d(x, o). \nonumber
	\end{eqnarray}
	
	Lorsqu'il n'y aura pas de confusion sur le point-base $o $, on notera plus simplement $b_x : X \rightarrow X$ la fonction de Busemann associée à $x \in X$. D'après l'inégalité triangulaire, 
	pour tout $o, x \in X$, $b_x^o $ est $1$-Lipschitzienne sur $X$. De plus, changer de point base ne fait changer la valeur des fonctions de Busemann que d'une constante : soit $o, o', x, y \in X$, 
	
	\begin{eqnarray}
		|b_x^{o'}(y) - b_x^o (y) | & = & | d(o', x ) - d(o, x) | \nonumber\\ 
		& \leq &  d(o, o'). \nonumber
	\end{eqnarray}
	Fixons $o \in X$. Pour $x, x', y \in X$, on a également $|b_x (y ) - b_{x'}(y)| \geq d(x,x')$. On a donc une application injective de $X$ vers les applications continues sur $X$ : 
	
	\begin{eqnarray}
		\iota : x \in X \mapsto b_x^o \in \mathcal{C}(X).  \nonumber
	\end{eqnarray}
	
	\begin{Def}
		Munissons $\mathcal{C}(X)$ de la topologie de la convergence uniforme sur les bornés. Alors la \textit{bordification de $X$ par les fonctions de Busemann} est la clôture $\overline{X}^B := \overline{\iota(X)} \subseteq \mathcal{C} (X)$ pour cette topologie, et $\partial^B X := \overline{X}^B \setminus X$ les fonctions de Busemann à l'infini. 
	\end{Def}
	
	Plus explicitement, on dira qu'une suite $(x_n)_n$ dans $X$ converge vers une fonction de Busemann à l'infini $ f$ si $d(o, x_n) \rightarrow \infty $ et $b_{x_n} \rightarrow f$ pour la topologie de la convergence sur les bornés. 
	
	La proposition suivante montre que la bordification par les fonctions de Busemann est la même que la bordification visuelle. 
	
	\begin{prop}[{\cite[Theorem II.8.13]{bridson_haefliger99}}]
		Soit $(x_n)$ une suite dans $X$ telle que $X$ telle que $d(o, x_n) \rightarrow \infty$. Alors $b_{x_n}^o $ converge vers $f \in \partial^B X$ si et seulement si il existe $\xi \in \bd X$ tel que $x_n \rightarrow \xi $ pour la bordification visuelle. De plus, si $\gamma^\xi_o$ est un rayon géodésique qui représente $\xi$ tel que $\gamma(0) = o$, on peut écrire 
		\begin{eqnarray}
			f(y) = \lim_{t \rightarrow \infty} d(\gamma^\xi_o(t), y) - t,  \nonumber
		\end{eqnarray}
		et on notera $f = b_\xi$. 
	\end{prop}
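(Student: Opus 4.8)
The plan is to prove the two implications separately; the formula for $f$ will fall out of the construction of the limiting ray. I would first assemble the elementary facts. For a geodesic ray $\gamma$ issued from $o$ and $y\in X$, the function $t\mapsto d(\gamma(t),y)-t$ is non-increasing (triangle inequality) and bounded below by $-d(o,y)$, so its limit $b_\gamma(y)$ is well defined and $b_\gamma$ is $1$-Lipschitz as a decreasing limit of $1$-Lipschitz functions; moreover $d(\gamma(t),y)-t=b_{\gamma(t)}^o(y)$, so $b_{\gamma(t)}^o\to b_\gamma$ pointwise. I would also record that for $x\in X$ with $d(o,x)>r$ the point $\gamma_o^x(r)$ is exactly the projection of $x$ onto the closed convex ball $\overline B(o,r)$ (Proposition~\ref{prop projection cat} together with convexity of balls), so $b_x^o(z)\geq -r$ for all $z\in\overline B(o,r)$, with equality iff $z=\gamma_o^x(r)$; and, crucially, a quantitative refinement: for $z\in S(o,r)$, writing $w$ for the midpoint of $[z,\gamma_o^x(r)]$, the $\cat(0)$ inequality (\cite[Proposition I.5.1]{ballman95}) gives $d(o,w)\leq r-d(z,\gamma_o^x(r))^2/(8r)$, so $w$ lies in a strictly smaller ball, and combining with convexity of $d(x,\cdot)$ (Proposition~\ref{prop convexite cat}) one gets
\[ b_x^o(z)\ \geq\ -r+\frac{d(z,\gamma_o^x(r))^2}{4r},\qquad z\in S(o,r), \]
with a modulus \emph{independent of $x$}.

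For $(\Leftarrow)$ and the formula I would take $x_n\to\xi=[\gamma]$ in $\overline X$ (so $d(o,x_n)\to\infty$), fix $\rho>0$, and set $g_r(y):=d(\gamma(r),y)-r$ and $\alpha_n(r):=d(\gamma_o^{x_n}(r),\gamma(r))\to 0$ (cone convergence). For $y\in\overline B(o,\rho)$, monotonicity of $t\mapsto d(\gamma_o^{x_n}(t),y)-t$ along $[o,x_n]$ gives $b_{x_n}^o(y)\leq g_r(y)+\alpha_n(r)$. For the reverse bound I would use angles: at $p:=\gamma_o^{x_n}(r)$ the Alexandrov angle along the geodesic $[o,x_n]$ is $\pi$, while the comparison angle at $p$ in $\Delta(p,o,y)$ tends to $0$ as $r\to\infty$ (a one-line Euclidean law-of-cosines estimate, $1-\cos\bar\angle_p(o,y)\leq \rho^2/(2r(r-\rho))$), whence $\angle_p(y,x_n)\geq\pi-\angle_p(o,y)$ is close to $\pi$; inserting this in the comparison triangle of $\Delta(p,y,x_n)$ yields a reverse triangle inequality $d(x_n,y)\geq d(p,x_n)+d(p,y)-\varepsilon_1(r,\rho)$ with $\varepsilon_1(r,\rho)\to 0$ as $r\to\infty$, uniformly in $y\in\overline B(o,\rho)$ and $n$. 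Since $d(p,x_n)=d(o,x_n)-r$, this rearranges to $b_{x_n}^o(y)\geq g_r(y)-\varepsilon_1(r,\rho)-\alpha_n(r)$. Combining the two bounds,
\[ \sup_{y\in\overline B(o,\rho)}\bigl|b_{x_n}^o(y)-g_r(y)\bigr|\ \leq\ \varepsilon_1(r,\rho)+\alpha_n(r), \]
so, choosing $r$ large then $n,m$ large, $(b_{x_n}^o)_n$ is uniformly Cauchy on each bounded set; it therefore converges uniformly on bounded sets to some $f\in\overline{\iota(X)}$, and letting $n\to\infty$ then $r\to\infty$ in the two bounds pins down $f(y)=\lim_r g_r(y)=b_\gamma(y)$. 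As $d(o,x_n)\to\infty$, $f=b_\gamma\in\partial^B X$, which gives the implication and the stated formula.

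For $(\Rightarrow)$, suppose $b_{x_n}^o\to f$ uniformly on bounded sets (with $d(o,x_n)\to\infty$); I claim this forces the segments $[o,x_n]$ to converge. Apply the quantitative estimate above with $x=x_n$ and $z=\gamma_o^{x_m}(r)\in S(o,r)$: since $b_{x_m}^o(\gamma_o^{x_m}(r))=-r$ and $b_{x_n}^o,b_{x_m}^o$ are within $\varepsilon$ of each other on $\overline B(o,r)$ for $n,m$ large, one gets $d(\gamma_o^{x_n}(r),\gamma_o^{x_m}(r))^2\leq 4r\varepsilon$. Hence $(\gamma_o^{x_n}(r))_n$ is Cauchy for every $r\geq 0$; by completeness of $X$ it converges to a point $\gamma(r)$, and since the $\gamma_o^{x_n}$ are geodesics, $r\mapsto\gamma(r)$ is a geodesic ray from $o$. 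Setting $\xi:=[\gamma]\in\bd X$, the convergences $d(o,x_n)\to\infty$ and $\gamma_o^{x_n}(r)\to\gamma(r)$ are exactly the statement that $x_n\to\xi$ in the cone topology; and then $(\Leftarrow)$ identifies $f=b_\gamma=b_\xi$.

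I expect the main obstacle to be precisely this last point: deducing convergence of the geodesic segments $[o,x_n]$ from convergence of the Busemann functions. In the absence of a local-compactness hypothesis one cannot extract a limiting ray via Arzelà--Ascoli, and the substitute is the quantitative strict convexity of balls, whose modulus $d(z,\cdot)^2/4r$ does not depend on $n$. A secondary delicate point is the \emph{uniform} (not merely pointwise) control on bounded sets needed in $(\Leftarrow)$, which is exactly what the angle estimate provides; everything else is monotonicity of Busemann functions along segments and the comparison properties of $\cat(0)$ spaces.
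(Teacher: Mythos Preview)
The paper does not prove this proposition; it is stated with a citation to \cite[Theorem II.8.13]{bridson_haefliger99} and then used without further justification. So there is no ``paper's own proof'' to compare against.

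Your argument is correct and is essentially the proof one finds in Bridson--Haefliger. The key non-obvious step is exactly the one you isolate: in the direction $(\Rightarrow)$, one must show that uniform convergence of $b_{x_n}^o$ on bounded sets forces the segments $[o,x_n]$ to converge, without appealing to properness. Your quantitative strict-convexity estimate $b_x^o(z)\geq -r+d(z,\gamma_o^x(r))^2/(4r)$ for $z\in S(o,r)$ is precisely the substitute for Arzel\`a--Ascoli, and it is the same device Bridson--Haefliger use (their Lemma II.8.18 and the surrounding discussion). Your handling of $(\Leftarrow)$ via the angle at $p=\gamma_o^{x_n}(r)$ is also standard and correct; the uniformity in $n$ of the error $\varepsilon_1(r,\rho)$ is the point that makes the convergence uniform on bounded sets rather than merely pointwise. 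Two minor remarks: your bound $d(o,w)\leq r-d(z,\gamma_o^x(r))^2/(8r)$ tacitly uses $\sqrt{r^2-a}\leq r-a/(2r)$, which requires $a\leq r^2$; this is harmless here since $d(z,\gamma_o^x(r))\leq 2r$, but you might note it. And in deriving $d(x,z)\geq d(o,x)-r+2\delta$ from $d(o,w)\leq r-\delta$, the step $d(x,w)\geq d(o,x)-d(o,w)$ is just the triangle inequality, after which convexity of $d(x,\cdot)$ on $[z,\gamma_o^x(r)]$ gives the conclusion; it might help the reader to spell this out.
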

	
	Par conséquent, les bordifications $\overline{X}$ et $\overline{X}^B $ sont équivalentes. Dans toute la suite, on gardera la notation $\overline{X}$ et $\bd X$ de la bordification visuelle. 
	\newline 
	
	Lorsque $X$ est localement compact, l'espace $\bd X$ est compact, de même que $\overline{X}$ et $X$ est ouvert dense dans $\overline{X}$. En d'autres termes, la bordification visuelle est alors une \textit{compactification}. C'est une conséquence directe du théorème d'Arzelà-Ascoli sur les fonctions de Busemann $b_x$, qui sont $1$-Lipschitz et normalisées par $b_x(x) = 0 $. C'est en particulier le cas lorsqu'on suppose que l'espace est propre, c'est-à-dire où les boules fermées sont compactes. 
	\newline
	
	Il existe une autre bordification par les fonctions de Busemann si on change la topologie. Munissons maintenant $\mathcal{C}(X)$ de la topologie de la convergence uniforme sur les compacts. 
	
	\begin{Def}
		La \textit{bordification de $X$ par les horofonctions} (ou par les métriques fonctionnelles) est la clôture $\overline{X}^h:= \overline{\iota(X)} \subseteq \mathcal{C} (X)$ pour la topologie de la convergence uniforme sur les compacts, et $\partial^h X := \overline{X}^h \setminus X$ les horofonctions. 
	\end{Def}
	
	Puisqu'on utilise la topologie compacte-ouverte sur $\mathcal{C}(X)$, l'espace $\overline{X}^h$ est en fait toujours compact grâce au théorème d'Arzelà-Ascoli. En revanche, bien que l'injection $\iota : X \rightarrow \mathcal{C}(X)$ soit toujours continue, il est possible que $X$ ne soit pas ouvert dans $\overline{X}^h$, donc cette bordification n'est pas une compactification au sens strict. Il peut s'avérer très utile de travailler avec $\overline{X}^h $ car il est toujours compact, même lorsque l'espace $X$ n'est pas propre. C'est par exemple le cas dans \cite{karlsson_ledrappier11}, \cite{maher_tiozzo18}, \cite{fernos_lecureux_matheus18} et \cite{gouezel_karlsson20}. En revanche, lorsque l'espace $X$ est propre, la topologie de la convergence uniforme sur les bornée est la même que celle de la convergence uniforme sur les compacts, et donc les espaces $\overline{X}^h$  et $\overline{X}^B$ sont les mêmes.

	\subsection{Bord de Tits}
	
	Dans cette section, on présente la géométrie de Tits sur les espaces $\cat$(0). Le bord de Tits est en fait le bord visuel, muni d'une topologie plus fine qui permet de mieux détecter les plats de l'espace. Ce qui suit s'inspire de la présentation qui est faite dans \cite[Section II.4]{ballman95}, et dans \cite[Chapter II.9]{bridson_haefliger99}. 
	
	Soit $C_1, C_2 \in \R^\ast_+ \cup \{\infty\}$, $\sigma_1 : [0, C_1[ \rightarrow X$ et $ \sigma_2 : [0, C_2[ \rightarrow X$ deux géodésiques dans $X$ issus du même point $\sigma_1 (0 ) = \sigma_2(0) = x$. Pour tout $(s, t) \in ]0, C_1[ \times ]0,C_2[$, il existe un triangle de comparaison euclidien $\overline{\Delta}_{s,t}$ du triangle $\Delta_{s,t} := \Delta(x, \sigma_1 (s), \sigma_2(t))$ dans $X$ engendré par $(x, \sigma_1 (s), \sigma_2(t))$. Soit $\overline{\angle}_{\overline{x}} (\overline{\sigma_1} (s), \overline{\sigma_2} (t) )$ l'angle en $\overline{x}$ du triangle de comparaison $\overline{\Delta}_{s,t}$. Par l'inégalité $\cat(0) $,  $\overline{\angle}_{\overline{x}} (\overline{\sigma_1} (s), \overline{\sigma_2} (t) )$ est une fonction qui décroit avec $s,t$, donc on peut définir 
	\begin{equation*}
		\angle (\sigma_1, \sigma_2) = \lim_{s, t \rightarrow 0} \overline{\angle}_{\overline{x}} (\overline{\sigma_1} (s), \overline{\sigma_2} (t) ). 
	\end{equation*}
	
	Puisque $X$ est uniquement géodésique, pour tout triplet $x, y, z \in X$, il existe exactement un segment géodésique $\sigma_1$ (resp. $ \sigma_2$) de $x$ à $y$ (resp. de $x$ à $z$).  On définit donc l'angle $\angle_x (y,z) $ en $x$ entre $y$ et $z$  comme $ \angle (\sigma_1, \sigma_2)$. 
	Soit maintenant $\xi, \eta \in \bd X$, on peut étendre la notion d'angle aux points du bord. 
	D'après la proposition \ref{prop rayon geod}, il existe un unique rayon géodésique $\gamma_x^\xi : [0, \infty[ \rightarrow$ (resp. $\gamma_x^\eta$) issu de $x$ dans la classe de $\xi$ (resp. dans la classe de $\eta$). On définit alors : 
	\begin{equation*}
		\angle_x (\xi, \eta) =  \angle_x (\gamma_x^\xi, \gamma_x^\eta),
	\end{equation*}
	
	Intuitivement, si on veut mesurer la distance (angulaire) entre deux points à l'infini, on doit prendre en compte tous les points de l'espace. La \textit{métrique angulaire} $\angle : \bd X\times \bd X \rightarrow [0, \pi] $ est donc définie par 
	\begin{equation*}
		\angle (\xi, \eta) = \sup_{x \in X} \angle_x(\xi, \eta). 
	\end{equation*}
	
	\begin{rem}
		Par exemple, s'il existe une droite géodésique $\gamma$ qui relie deux points du bord $\xi, \eta \in \bd X$, alors $\angle(\xi, \eta) = \pi$, un maximum atteint en tout point de $\gamma$. 
	\end{rem}
	
	On liste ici quelques propriétés importantes de la métrique angulaire, qui sont empruntées à \cite[Proposition II. 9.5]{bridson_haefliger99}, \cite[Proposition II. 9.7]{bridson_haefliger99} et \cite[Corollary II.9.9]{bridson_haefliger99}.
	
	\begin{prop}\label{prop topo tits }
		Soit $X$ un espace $\cat$(0) complet. 
		\begin{enumerate}
			\item La fonction $(\xi, \eta) \in \bd X \times \bd X \rightarrow \angle (\xi, \eta) \in [0, \pi]$ est une métrique complète. Tout isométrie de $X$ s'étend en une isométrie de $\bd X$ pour la métrique angulaire. 
			\item \label{continuité topologie angle} L'application identité $\Id : (\bd X, \angle) \rightarrow \bd X$, où l'espace d'arrivée est muni de la topologie conique, est une application continue. 
			\item La fonction $(\xi, \eta) \in \bd X \times \bd X \rightarrow \angle (\xi, \eta) \in [0, \pi]$ est semi-continue inférieurement par rapport à la topologie conique : si $\xi_n \rightarrow \xi $ et $\eta_n \rightarrow \eta$ dans la topologie visuelle, $\liminf_{n\rightarrow \infty} \angle (\xi_n, \eta_n) \geq \angle (\xi, \eta)$. 
			\item \label{secteur plat angle} Soit $\xi, \eta \in \bd X$. Supposons qu'il existe $o \in X$ tel que $\angle(\xi, \eta) =\angle_o (\xi,\eta) < \pi$. Alors l'enveloppe convexe du triangle idéal déterminé par $\gamma^\xi_o$ et  $\gamma^\eta_o$ est isométrique à un secteur du plan euclidien borné par deux rayons qui se rencontrent à un angle $\angle(\xi, \eta)$. 
		\end{enumerate}
	\end{prop}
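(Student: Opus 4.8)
The plan is to recover the four statements along the lines of \cite[Propositions II.9.5, II.9.7 and Corollary II.9.9]{bridson_haefliger99}. The workhorse I would set up first is a single $\cat$(0) comparison estimate: for geodesic rays $\gamma_o^\xi,\gamma_o^\eta$ issued from a common point $o$, the function $t\mapsto\frac{1}{t}d(\gamma_o^\xi(t),\gamma_o^\eta(t))$ is nondecreasing, equivalently the Euclidean comparison angle $\overline{\angle}_o(\gamma_o^\xi(t),\gamma_o^\eta(t))$ is nondecreasing in $t$, with infimum $\angle_o(\xi,\eta)$ (the Alexandrov angle, attained as $t\to 0^+$) and supremum $\Theta(o):=\lim_{t\to\infty}\overline{\angle}_o(\gamma_o^\xi(t),\gamma_o^\eta(t))$. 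One always has $\angle_o(\xi,\eta)\le\Theta(o)\le\angle(\xi,\eta)$, the second inequality being the asymptotic comparison estimate of \cite[Ch.~II.9]{bridson_haefliger99}; together with $\angle_o(\xi,\eta)\le\Theta(o)$ and $\angle(\xi,\eta)=\sup_o\angle_o(\xi,\eta)$ this gives $\sup_o\Theta(o)=\angle(\xi,\eta)$. I would also record that conical convergence $\xi_n\to\xi$ is the same as $\gamma_o^{\xi_n}\to\gamma_o^\xi$ uniformly on compact sets, and that the Alexandrov angle based at a fixed point satisfies the triangle inequality (\cite[Ch.~I.1]{bridson_haefliger99}). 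Everything below rests on these.

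For (1): symmetry and $\angle\le\pi$ are immediate; if $\angle(\xi,\eta)=0$ then $\Theta(o)=0$, so $d(\gamma_o^\xi(t),\gamma_o^\eta(t))=0$ for all $t$, whence $\xi=\eta$. For the triangle inequality I would use $\angle_x(\xi,\zeta)\le\angle_x(\xi,\eta)+\angle_x(\eta,\zeta)\le\angle(\xi,\eta)+\angle(\eta,\zeta)$ and take the supremum over $x$. An isometry $g$ of $X$ sends $\gamma_o^\xi$ to $\gamma_{go}^{g\xi}$ and preserves comparison triangles, so $\angle_{go}(g\xi,g\eta)=\angle_o(\xi,\eta)$; as $o\mapsto go$ is onto, $g$ acts isometrically on $(\bd X,\angle)$. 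For completeness, given a $\angle$-Cauchy sequence $(\xi_n)$ and a basepoint $o$, the bound $d(\gamma_o^{\xi_n}(t),\gamma_o^{\xi_m}(t))\le 2t\sin\!\big(\tfrac12\angle(\xi_n,\xi_m)\big)$ makes $(\gamma_o^{\xi_n}(t))_n$ Cauchy in $X$ for each $t$; by completeness of $X$ the limits assemble into a geodesic ray $\gamma$, and $\xi_n\to\xi:=[\gamma]$ conically. That $\xi_n\to\xi$ in the $\angle$-metric then follows from (3), since $\angle(\xi_n,\xi)\le\liminf_m\angle(\xi_n,\xi_m)\le\sup_{m\ge N}\angle(\xi_n,\xi_m)$ is small once $n\ge N$ is large --- so I would prove (3) before finishing (1). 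Statement (2) is then quick: if $\angle(\xi_n,\xi)\to 0$, then for $t\in[0,R]$ one has $d(\gamma_o^{\xi_n}(t),\gamma_o^\xi(t))\le 2t\sin\!\big(\tfrac12\angle(\xi_n,\xi)\big)\le R\,\angle(\xi_n,\xi)\to 0$ uniformly in $t$, which is exactly $\xi_n\to\xi$ conically.

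For (3), fix $o$. Conical convergence of $\xi_n\to\xi$ and $\eta_n\to\eta$ gives $\gamma_o^{\xi_n}(t)\to\gamma_o^\xi(t)$ and $\gamma_o^{\eta_n}(t)\to\gamma_o^\eta(t)$ for each fixed $t$, hence $\overline{\angle}_o(\gamma_o^{\xi_n}(t),\gamma_o^{\eta_n}(t))\to\overline{\angle}_o(\gamma_o^\xi(t),\gamma_o^\eta(t))$ by continuity of the Euclidean comparison angle. Since the comparison angle is nondecreasing in $t$, its value at any $t$ is bounded by $\lim_{s\to\infty}\overline{\angle}_o(\gamma_o^{\xi_n}(s),\gamma_o^{\eta_n}(s))\le\angle(\xi_n,\eta_n)$ (the last inequality again being the asymptotic comparison estimate applied to $\xi_n,\eta_n$); thus $\angle(\xi_n,\eta_n)\ge\overline{\angle}_o(\gamma_o^{\xi_n}(t),\gamma_o^{\eta_n}(t))$, and letting $n\to\infty$ gives $\liminf_n\angle(\xi_n,\eta_n)\ge\overline{\angle}_o(\gamma_o^\xi(t),\gamma_o^\eta(t))$ for every $t$, hence $\ge\Theta(o)$. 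Taking the supremum over $o$ and using $\sup_o\Theta(o)=\angle(\xi,\eta)$ concludes (3).

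For (4), assume $\angle_o(\xi,\eta)=\angle(\xi,\eta)=:\alpha<\pi$. Then for all $s,t>0$, $\alpha=\angle_o(\xi,\eta)\le\overline{\angle}_o(\gamma_o^\xi(s),\gamma_o^\eta(t))\le\Theta(o)\le\angle(\xi,\eta)=\alpha$, so the comparison angle at $o$ of each triangle $\Delta(o,\gamma_o^\xi(s),\gamma_o^\eta(t))$ equals its Alexandrov angle $\alpha$ at $o$. By the flat triangle lemma of \cite{bridson_haefliger99}, the convex hull of each such triangle is isometric to its Euclidean comparison triangle, which has apex angle $\alpha$. These flat triangles are nested as $s,t\to\infty$; their union is convex and equals $\conv(\gamma_o^\xi\cup\gamma_o^\eta)$, and the compatible comparison isometries glue to an isometry of this union onto an increasing union of Euclidean triangles with a common apex of angle $\alpha$ and sides along two fixed rays, i.e. onto a Euclidean sector of angle $\alpha$. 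I expect (4) to be the main obstacle: it hinges on the flat-triangle rigidity lemma, on pinning every comparison angle at $o$ to the common value $\angle(\xi,\eta)$, and on gluing the nested flat pieces coherently. A secondary point of care is the logical order in (1): lower semicontinuity must be available before the completeness argument invokes it, and the direction of the comparison-angle monotonicity must be tracked throughout.
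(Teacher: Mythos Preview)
Your proposal is correct and follows exactly the Bridson--Haefliger arguments that the paper cites; note that the paper itself does not give a proof of this proposition but simply attributes it to \cite[Propositions II.9.5, II.9.7 and Corollary II.9.9]{bridson_haefliger99}. Your organization (establishing the monotonicity of the comparison angle and the asymptotic estimate first, proving (3) before completing the completeness argument in (1), and invoking the flat triangle lemma for (4)) is the standard route and matches the cited references.
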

	
	\begin{ex}
		Pour l'espace hyperbolique $\mathbb{H}^n$, toute paire de points du bord $\xi, \eta $ est reliée par une droite géodésique. Donc $\angle$ donne la topologie discrète sur $\bd \mathbb{H}^n$. Cet exemple montre que l'application identité du \ref{continuité topologie angle} n'est en général pas un homéomorphisme. La proposition \ref{secteur plat angle} et l'exemple de $\mathbb{H}^n$ montre que la topologie angulaire donne des informations sur la présence de secteurs euclidiens dans $X$. 
	\end{ex}

	Dans la suite, il sera souvent utile de remplacer la métrique angulaire par sa distance de longueur associée. 
	\begin{Def}
		La \textit{métrique de Tits} $d_T : \bd X \times \bd X \rightarrow \mathbb{R} \cup \{\infty\}$ est la distance de longueur associée à $\angle (.,.)$. L'espace $\partial_T X := (\bd X, d_T) $ est le \textit{bord de Tits} de $X$. 
	\end{Def}
	
	Nous reviendrons dans l'article \cite{LeBars22} sur la métrique de Tits, mais donnons déjà quelques propriétés. 
	
	\begin{thm}[{\cite[Theorem II.4.11]{ballman95}}]\label{tits}
		Soit $X$ un espace $\cat$(0) complet. Alors $(\partial_T X, d_T)$ un espace $\cat $(1) complet. De plus, pour $\eta, \, \xi \in \bd X$ : 
		\begin{enumerate}
			\item s'il n'existe pas de géodésique dans  $X$ qui relie $\xi $ à $\eta$, alors $d_T(\xi, \eta) = \angle (\xi, \eta) \leq \pi$. 
			\item Réciproquement, si $\angle (\xi, \eta) < \pi$, alors il n'existe pas de géodésique dans $X$ qui relie $\xi $ à $\eta$ et il existe une unique $d_T$-géodésique de $\xi$ à $\eta $ dans $\partial_T X$. 
			\item S'il existe une géodésique $\gamma$ dans $X$  de $\xi $ à $ \eta$, alors $d_T(\xi, \eta) \geq \pi $, avec égalité si et seulement si $\gamma$ borde un demi-plat. 
			\item La métrique de Tits $d_T : \bd X \times \bd X \rightarrow \mathbb{R} \cup \{\infty\}$ est semi-continue inférieurement. 
		\end{enumerate}
	\end{thm}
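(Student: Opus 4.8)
The plan is to pin down the Tits geodesics first (items 1--3) and to deduce the $\cat(1)$ inequality and the lower semicontinuity afterwards. Two facts are immediate: $d_T\geq\angle$ pointwise, since any path has $\angle$-length at least the $\angle$-distance between its endpoints, and $\angle\leq\pi$. So for item 1 it suffices, when no geodesic line of $X$ joins $\xi$ to $\eta$, to build a path in $(\bd X,\angle)$ from $\xi$ to $\eta$ of $\angle$-length at most $\angle(\xi,\eta)$. The first step is a dichotomy: for $\xi\neq\eta$ in $\bd X$, either some geodesic line $c:\R\to X$ has $c(-\infty)=\xi$ and $c(+\infty)=\eta$, or the supremum $\angle(\xi,\eta)=\sup_p\angle_p(\xi,\eta)$ is attained at some $o\in X$. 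To see this, pick $o_n$ with $\angle_{o_n}(\xi,\eta)\to\angle(\xi,\eta)$; the map $p\mapsto\angle_p(\xi,\eta)$ is upper semicontinuous, being a decreasing limit of comparison-angle functions that vary continuously with $p$ (Proposition \ref{prop rayon geod}), so a bounded subsequence of $(o_n)$ converges to a point attaining the supremum, while if $(o_n)$ leaves every bounded set I would pass to a limit of the segments $[\gamma_{o_n}^\xi(R),\gamma_{o_n}^\eta(R)]$, using convexity of the metric (Proposition \ref{prop convexite cat}), to produce a geodesic line from $\xi$ to $\eta$. Moreover, if the supremum is attained at $o$ with value $\pi$, then $\gamma_o^\xi$ and $\gamma_o^\eta$ leave $o$ at angle $\pi$ and hence concatenate into a geodesic line; so if no geodesic line joins $\xi,\eta$ the attained value is $<\pi$, and conversely a geodesic line forces $\angle(\xi,\eta)=\pi$.

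The heart of the argument is the flat sector. Assume $\angle(\xi,\eta)=\angle_o(\xi,\eta)=\alpha<\pi$. By Proposition \ref{secteur plat angle} the convex hull of $\gamma_o^\xi$ and $\gamma_o^\eta$ is a Euclidean sector $S$ of apex angle $\alpha$ at $o$; write $\sigma_\beta$ for its ray from $o$ at angle $\beta\in[0,\alpha]$ from $\gamma_o^\xi$. Because $S$ is closed and convex, nearest-point projection (Proposition \ref{prop projection cat}) shows that $\angle^X$ and $\angle^S$ coincide on $\bd S$ --- a ray $\gamma_q^\zeta$ with $\zeta\in\bd S$ stays within $d(q,\pi_S q)$ of $S$, so projecting its points into $S$ changes comparison angles negligibly as one goes to infinity --- whence $\angle^X(\sigma_\beta(\infty),\sigma_{\beta'}(\infty))=|\beta-\beta'|$. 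Thus $\beta\mapsto\sigma_\beta(\infty)$ is a path from $\xi$ to $\eta$ of $\angle$-length $\alpha$, giving $d_T(\xi,\eta)\leq\alpha=\angle(\xi,\eta)\leq d_T(\xi,\eta)$, so $d_T(\xi,\eta)=\angle(\xi,\eta)$ and this arc is a $d_T$-geodesic. For uniqueness, any $d_T$-geodesic $\zeta:[0,\alpha]\to\bd X$ has $d_T(\xi,\zeta(s))=s<\pi$ and $d_T(\zeta(s),\eta)=\alpha-s$, hence $\angle(\xi,\zeta(s))=s$ and $\angle(\zeta(s),\eta)=\alpha-s$ by the case just treated; the triangle inequality for $\angle_o$ then forces $\angle_o(\xi,\zeta(s))=s$ and $\log_o\zeta(s)$ to lie on the geodesic of the space of directions $\Sigma_o$ joining $\log_o\xi$ to $\log_o\eta$, i.e. $\log_o\zeta(s)=\log_o\sigma_s(\infty)$; since distinct rays from $o$ have distinct initial directions, $\zeta(s)=\sigma_s(\infty)$. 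With the dichotomy this proves items 1 and 2. For item 3: a geodesic line $\gamma$ from $\xi$ to $\eta$ gives $\angle(\xi,\eta)=\pi$, hence $d_T(\xi,\eta)\geq\pi$; if $d_T(\xi,\eta)=\pi$, running the flat-sector construction at the points $\gamma(t)$ (where the angle to each endpoint is near $\pi$) and passing to a limit yields, by the flat-strip-theorem circle of ideas, a Euclidean half-plane bounded by $\gamma$, i.e. $\gamma$ bounds a half-flat; conversely a half-flat $H$ bounded by $\gamma$ is closed convex with $\bd H$ a length-$\pi$ arc from $\xi$ to $\eta$, so $d_T(\xi,\eta)\leq\pi$.

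For the $\cat(1)$ property I would check the comparison inequality on a geodesic triangle of $\partial_T X$ of perimeter $<2\pi$: by items 1--2 each side bounds a flat sector, and the point is to realize all three sectors with a common apex $p$ --- equivalently, that the direction map $\bd X\to\Sigma_p$ (which is $1$-Lipschitz, since $\angle_p\leq d_T$) restricts on the triangle to an isometric embedding into the $\cat(1)$ space $\Sigma_p$. Granting this, the triangle sits inside spherical geometry and the inequality follows. Completeness of $(\partial_T X,d_T)$ comes from completeness of $(\bd X,\angle)$ (Proposition \ref{prop topo tits }): a $d_T$-Cauchy sequence eventually has consecutive distances $<\pi$, hence is $\angle$-Cauchy, and its $\angle$-limit is its $d_T$-limit. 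Item 4 then follows from items 1--2 and the semicontinuity $\liminf\angle(\xi_n,\eta_n)\geq\angle(\xi,\eta)$ (Proposition \ref{prop topo tits }(3)): subdividing a near-geodesic from $\xi_n$ to $\eta_n$ into arcs of $d_T$-length $<\pi$, on which $d_T=\angle$, and passing to the limit gives $\liminf_n d_T(\xi_n,\eta_n)\geq d_T(\xi,\eta)$.

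I expect the two main obstacles to be, first, the escaping-basepoint case of the dichotomy: extracting a bi-infinite geodesic from $\xi$ to $\eta$ when the basepoints $o_n$ attaining the angle run off to infinity is routine when $X$ is proper, but without local compactness it requires a genuine limiting argument built on completeness and metric convexity; and second, upgrading the pairwise flat-sector picture to a comparison for triangles --- that is, realizing the three sides of a small Tits triangle by flat sectors sharing one apex --- which is the true content of the $\cat(1)$ assertion and where most of the work lies.
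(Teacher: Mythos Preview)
The paper does not prove this theorem: it is stated with a citation to \cite[Theorem II.4.11]{ballman95} and used as a black box, so there is no proof in the paper to compare your proposal against.

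That said, your outline follows essentially the standard route taken in Ballmann's book (and in \cite[Chapter II.9]{bridson_haefliger99}): the dichotomy between ``angle attained'' and ``geodesic line exists'', the flat-sector lemma to produce Tits geodesics when the angle is attained below $\pi$, and the reduction of the $\cat(1)$ inequality to the geometry of $\Sigma_o$. You have correctly identified the two genuine difficulties. The first --- extracting a bi-infinite geodesic when the approximating basepoints $o_n$ escape to infinity --- is exactly where the non-proper case bites; the standard treatments handle this with an ultralimit or a careful Cauchy argument on midpoints, and your sketch (``pass to a limit of the segments'') would need to be made precise here, since Arzel\`a--Ascoli is not available. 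The second --- realizing the three sides of a small Tits triangle by flat sectors with a common apex --- is indeed where most of the content of the $\cat(1)$ assertion lies, and your ``granting this'' hides the real work: one must show that for a triangle of perimeter $<2\pi$ there is a single point $p$ at which all three pairwise angles are attained simultaneously, which is not automatic from the pairwise statement. Your uniqueness argument also tacitly uses that $\Sigma_o$ is $\cat(1)$, which in the generality stated (complete but not proper, no curvature bound below) requires justification.
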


	En conséquence de la proposition \ref{prop topo tits }, la topologie de Tits est plus fine que la topologie conique. En toute généralité, le bort de Tits $\partial_T X$ n'est pas compact, et sa structure peut être complexe. L'étude de la topologie de Tits d'un espace $\cat$(0) peut renseigner sur les décompositions de l'espace, ainsi que sur la connexité du bord visuel, voir \cite{papasoglu_swenson09}.

	\section{Isométries dans un espace $\cat$(0)}\label{section isom cat}
	
	Dans cette section, on discute des isométries d'un espace $\cat$(0) et en particulier des isométries de rang 1. On verra que celles-ci ont un comportement similaire aux isométries loxodromiques dans les espaces Gromov-hyperboliques.
	
	\subsection{Propriétés structurelles}

	Soit $X$ un espace $\cat$(0), et $g \in \iso(X)$ une isométrie. On définit la fonction de déplacement de $g$ par 
	\begin{eqnarray}
		\tau : 	x \in X \mapsto \tau_g(x) = d(gx, x). \nonumber
	\end{eqnarray}
	
	La \textit{longueur de translation} de $g$ est alors $|g| := \inf \{ \tau_g (x) \, | \, x \in X\}$.
	
	\begin{Def}
		On dit que $g$ est une isométrie \textit{semi-simple} si sa fonction de déplacement admet un minimum dans $X$, autrement dit si sa longueur de translation est atteinte. Si $g$ est semi-simple, on notera $\min(g)$ l'ensemble des $x \in X$ tels que $\tau_g(x) = |g| $. 
	\end{Def}
	
	Par convexité de la fonction distance dans les espaces $\cat$(0), l'ensemble $\min(g)$ d'une isométrie semi-simple est un fermé convexe non vide. 
	
	\begin{Def}
		Soit $X$ un espace $\cat$(0), et $g \in \iso(X)$ une isométrie. On dit que $g$ est 
		\begin{enumerate}
			\item \textit{elliptique} si $g$ est semi-simple et $|g| = 0$; 
			\item \textit{axiale} si $g$ est semi-simple et $|g| >0$; 
			\item \textit{parabolique} si la longueur de translation $|g|$ n'est pas atteinte dans $X$. 
		\end{enumerate}
	\end{Def}

	\begin{ex}
		\begin{itemize}
			\item Une isométrie $g$ d'un espace $\cat(0) $ complet est elliptique si et seulement si $g$  a une orbite bornée. En effet, $g$ fixe également le circumcentre de cette orbite. 
			\item Dans les espaces euclidiens et les arbres réels, toutes les isométries sont semi-simples. 
			\item On a vu que $\glnr$ agissait par isométries sur l'espace des matrices symétriques définies positives dans $\text{M}_n(\R)$. On peut montrer qu'une isométrie est semi-simple au sens précédent si et seulement si elle est semi-simple au sens classique, c'est-à-dire diagonalisable. 
		\end{itemize}
	\end{ex}
	
	Les isométries d'un espace $\cat$(0) laissent certains sous-espaces invariants, comme le montre la proposition suivante, voir \cite[Proposition II.3.3]{ballman95} et \cite[Theorem 6.8]{bridson_haefliger99}. 
	
	\begin{thm}
		Soit $X$ un espace $\cat$(0) et $g \in \iso(X)$ une isométrie. Alors 
		\begin{enumerate}
			\item $g$ est axiale si et seulement si il existe une droite géodésique $\gamma : \R \rightarrow X$ sur laquelle $g$ agit comme une translation de longueur $|g|$. Une telle droite est appelée $\emph{axe}$ de $g$. Tous les axes de $g$ sont parallèles. 
			\item Si $g$ est axiale, alors $\min(g)$ est de la forme $C \times \R$, sur lequel $g$ agit par $g(c, t) = (c, t + |g|)$. Si de plus $X$ est complet, alors $C$ est fermé convexe (non vide). 
			\item Si $g$ est une isométrie parabolique et $X$ est localement compact, il existe une fonction de Busemann $b_\xi $, $\xi \in \bd X$ qui est invariante par $g$. En d'autres termes, $g$ stabilise $\xi $ et les horosphères centrées en $\xi$. 
		\end{enumerate}
	\end{thm}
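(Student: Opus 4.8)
Je traiterais les trois assertions séparément, la deuxième et la troisième reposant sur la première.

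\emph{(1)} Le sens réciproque est immédiat : une isométrie agissant par translation non triviale sur une droite géodésique $\gamma$ y atteint la valeur $|g|$ de $\tau_g$, donc est semi-simple avec $|g|>0$, c'est-à-dire axiale, d'axe $\gamma$. Pour le sens direct, je partirais d'un point $x\in\min(g)$. \emph{Première étape} : montrer que $\min(g)$ est convexe. La fonction déplacement $\tau_g$ est convexe --- c'est la Proposition \ref{prop convexite cat} appliquée aux géodésiques $t\mapsto\gamma(t)$ et $t\mapsto g\gamma(t)$ --- donc sur tout segment joignant deux points de $\min(g)$ elle est majorée par $|g|=\inf\tau_g$, donc constante égale à $|g|$ ; en particulier $[x,gx]\subseteq\min(g)$, et par équivariance tous les $[g^nx,g^{n+1}x]$ aussi. \emph{Deuxième étape} (le cœur de la preuve) : montrer que la concaténation $\hat\gamma:=\bigcup_{n\in\mathbb{Z}}g^n[x,gx]$ est une droite géodésique. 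Comme une géodésique locale d'un espace $\cat$(0) est globale (conséquence standard de la convexité de la métrique), il suffit de vérifier que $\hat\gamma$ ne \og plie \fg{} pas au point de raccord $gx$, c'est-à-dire que $gx$ est le milieu de $[x,g^2x]$. Notons $\sigma:[0,|g|]\to X$ le segment de $x$ à $gx$ et $p=\sigma(|g|/2)$ son milieu : puisque $\tau_g\equiv|g|$ le long de $\sigma$, on a $d(p,gp)=|g|=d(p,gx)+d(gx,gp)$, donc le cas d'égalité de l'inégalité triangulaire force $gx\in[p,gp]$ ; par unicité des géodésiques, $[p,gp]=\sigma([|g|/2,|g|])\cup g\sigma([0,|g|/2])$, qui est un voisinage géodésique de $gx$ dans $\hat\gamma$. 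Enfin, pour voir que tous les axes sont parallèles : si $\gamma_1,\gamma_2$ sont deux axes paramétrés de sorte que $g$ les translate chacun de $|g|$, alors $t\mapsto d(\gamma_1(t),\gamma_2(t))$ est convexe (Proposition \ref{prop convexite cat}) et $|g|$-périodique, donc constante ; les deux droites sont asymptotes, et le Théorème de la bande plate leur fournit une bande euclidienne commune, dans laquelle elles sont parallèles.

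\emph{(2)} Supposons $g$ axiale. D'après (1), par tout point de $\min(g)$ passe un axe de $g$, et deux tels axes sont à distance constante. Je considèrerais alors l'ensemble $C$ des axes de $g$ muni de cette distance, et l'application $C\times\mathbb{R}\to\min(g)$ envoyant $(c,t)$ sur le point de paramètre $t$ de l'axe $c$, les axes étant calibrés les uns sur les autres par les projections --- qui réalisent des isométries entre axes parallèles, par la Proposition \ref{prop projection cat} et le Théorème de la bande plate. On vérifie que cette application est une isométrie, et par construction $g$ agit par $(c,t)\mapsto(c,t+|g|)$. Si de plus $X$ est complet, $\min(g)=\tau_g^{-1}(|g|)$ est fermé (car $\tau_g$ est $2$-lipschitzienne) et convexe, donc complet ; étant un facteur de $C\times\mathbb{R}$, la tranche $C\times\{0\}$ est alors elle-même fermée, convexe et complète dans $X$.

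\emph{(3)} Supposons $g$ parabolique et $X$ localement compact ; alors $X$ est propre (Hopf--Rinow). Puisque $\tau_g$ n'atteint pas $\inf\tau_g=|g|$, je choisirais une suite $(x_n)$ avec $\tau_g(x_n)\to|g|$ ; cette suite tend nécessairement vers l'infini, faute de quoi une sous-suite convergerait par propreté vers un minimum de $\tau_g$. Fixons $o\in X$ : par convexité de la métrique, les segments $[o,x_n]$ et leurs translatés $g[o,x_n]=[go,gx_n]$ restent à distance au plus $\max\bigl(d(o,go),\,\sup_n\tau_g(x_n)\bigr)<\infty$ l'un de l'autre. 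Par Arzelà--Ascoli (propreté), une sous-suite de $[o,x_n]$ converge uniformément sur les compacts vers un rayon géodésique $\rho$ issu de $o$, et $g\rho$ lui reste à distance bornée ; donc $g$ fixe le point $\xi=\rho(\infty)\in\bd X$, et permute par conséquent les horosphères centrées en $\xi$. Il resterait à vérifier que l'on peut prendre la fonction de Busemann $b_\xi$ véritablement invariante, i.e. que la constante $c$ dans $b_\xi\circ g=b_\xi+c$ est nulle : sinon $g$ possèderait un axe (comme dans $\mathbb{H}^2$, où une isométrie fixant $\xi$ avec $c\neq 0$ est hyperbolique), contredisant son caractère parabolique.

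\textbf{Difficulté principale.} Le point réellement délicat est la deuxième étape de (1) : montrer que l'orbite d'un point de $\min(g)$ s'aligne sur une géodésique, autrement dit le non-pliage de la concaténation des segments $[g^nx,g^{n+1}x]$. Le reste consiste en applications assez directes d'outils standard --- convexité de la métrique, Théorème de la bande plate, propreté (Hopf--Rinow), Arzelà--Ascoli.
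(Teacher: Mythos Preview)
Note first that the paper does not prove this theorem: it is stated with references to \cite[Proposition II.3.3]{ballman95} and \cite[Theorem 6.8]{bridson_haefliger99}, so there is no in-house argument to compare against beyond the standard proofs in those sources.

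Your treatments of (1) and (2) are correct and close to the standard ones. The midpoint trick in (1) --- showing $gx\in[p,gp]$ via the equality case of the triangle inequality --- is exactly right, and your sketch of (2) captures the mechanism, even if the verification that $C\times\R\to\min(g)$ is an isometry (where the Flat Strip Theorem does the real work globally, not just strip by strip) is only gestured at.

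Part (3), however, has a genuine gap. Your final claim --- that $b_\xi\circ g=b_\xi+c$ with $c\neq 0$ would force $g$ to have an axis --- is false in general $\cat(0)$ spaces. Take $X=\mathbb{H}^2\times\R$ and $g=(\varphi,\tau)$ with $\varphi$ parabolic in $\mathbb{H}^2$ fixing $\eta\in\partial\mathbb{H}^2$ and $\tau$ the unit translation on $\R$. Then $g$ is parabolic with $|g|=1$, and for the fixed boundary point $\xi_\theta=(\eta,\theta)$ in the spherical-join description of $\partial X$ one computes $b_{\xi_\theta}((p,t))=\cos\theta\, b_\eta(p)-\sin\theta\, t$, hence $c=-\sin\theta$, nonzero whenever $\theta\neq 0$. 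Your construction --- an arbitrary minimizing sequence $(x_n)$ with $\tau_g(x_n)\to|g|$ --- can perfectly well converge to such a $\xi_\theta$, since nothing constrains the $\R$-coordinate of $x_n$. The $\mathbb{H}^2$ analogy you invoke works only because parabolics there have $|g|=0$, so the easy bound $|c|\le|g|$ already forces $c=0$; that fails as soon as $|g|>0$.

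The standard repair (Ballmann's argument) builds the invariance in from the start rather than checking it a posteriori. The sublevel sets $C_s=\{\tau_g\le s\}$ for $s>|g|$ are closed, convex, and $g$-invariant, so the functions $f_s=d(\,\cdot\,,C_s)$ are $g$-invariant, convex, $1$-Lipschitz. Since $g$ is parabolic one has $\bigcap_{s>|g|} C_s=\emptyset$, hence $f_s(o)\to\infty$ as $s\downarrow|g|$; by Arzel\`a--Ascoli (propreté) a subsequential limit of $f_s-f_s(o)$ is a $g$-invariant horofunction, and in a proper $\cat(0)$ space every horofunction is a genuine Busemann function $b_\xi$.
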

	
	\begin{rem}
		Une isométrie axiale a donc deux points fixes à l'infini $g^{+}, g^{-} \in \bd X$. L'un de ces points fixes est dit \textit{attractif} (resp. \textit{répulsif}) si pour tout $c \in C$, où $C$ est tel que dans le précédent théorème, on a $g^n(c, 0) \rightarrow g^+$ quand $n \rightarrow \infty$ (resp. $g^{-n}(c, 0) \rightarrow g^{-}$).  
	\end{rem}
	
	Dans une série de deux papiers \cite{caprace_monod1} et \cite{caprace_monod2}, Caprace et Monod s'intéressent la question suivante : étant donné un groupe agissant sur un espace $\cat$(0), quelle interaction existe-t-il entre les propriétés de l'action et la structure de l'espace ? Les auteurs démontrent de nombreux théorèmes de structures dans le cadre des actions propres et cocompactes sur des espaces propres, et on en donne seulement un aperçu. On dit qu'un espace métrique est géodésiquement complet si tout segment géodésique peut être étendu en une géodésique bi-infinie. Le premier résultat que l'on mentionne est une décomposition canonique de $X$ en facteurs irréductibles. 
	
	\begin{thm}[{\cite[Theorem 1.1]{caprace_monod2}}]
		Soit $X$ un espace $\cat$(0) propre et géodésiquement complet tel que $\iso(X)$ agit sans point fixe à l'infini. Alors $X$ admet une décomposition $\iso(X)$-équivariante de la forme 
		\begin{eqnarray}
			X = M \times \R^n \times Y, \nonumber
		\end{eqnarray}
		où $M$ est un espace symétrique de type non-compact, et tel que le groupe d'isomorphismes $\iso(Y)$ est totalement discontinu et composé d'isométries semi-simples. 
	\end{thm}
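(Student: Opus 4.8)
\emph{Esquisse de la stratégie de preuve.}\quad Le plan est de transférer la géométrie de $X$ vers la structure du groupe topologique $G := \iso(X)$, qui est localement compact à base dénombrable puisque $X$ est propre, et qui agit proprement sur $X$ (Arzelà--Ascoli). Je commencerais par la décomposition de de Rham : un espace $\cat$(0) propre et géodésiquement complet admet une décomposition canonique $X \cong \R^m \times X_1 \times \cdots \times X_p$, où chaque $X_i$ est géodésiquement complet, irréductible et non isométrique à $\R$ (Foertsch--Lytchak). La canonicité force toute isométrie de $X$ à préserver le facteur euclidien et à permuter les $X_i$ isométriques entre eux; comme $G$ n'a pas de point fixe à l'infini, un argument sur le bord de Tits du produit (qui est $\cat(1)$, cf. Théorème \ref{tits}) montre que cette hypothèse se transmet aux facteurs que l'on souhaite isoler, et qu'il suffit dès lors de raisonner facteur par facteur.

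Ensuite, j'analyserais la composante neutre $G^\circ$. On montre --- c'est un avatar délicat du cinquième problème de Hilbert, où interviennent de façon essentielle la propreté et la complétude géodésique --- que $G^\circ$ est un groupe de Lie connexe. L'hypothèse \og sans point fixe à l'infini \fg{} se transmet à $G^\circ$ : l'ensemble des points fixes à l'infini d'un sous-groupe normal est $G$-invariant dans $\partial_T X$, et un tel ensemble non vide fournirait, via un circumcentre canonique dans le bord de Tits (qui est $\cat(1)$), ou via l'argument de Bruhat--Tits dans $X$ lorsque son rayon est $< \pi/2$, un point fixe global de $G$, contradiction. Je scinderais alors $G^\circ = R \cdot S$ en radical résoluble et facteur semi-simple. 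Le radical $R$, moyennable et sans point fixe à l'infini, agit essentiellement par translations sur un facteur euclidien canonique (théorie des translations de Clifford, à la Adams--Ballmann), ce qui produit le facteur $\R^n$. Le facteur $S$ peut être supposé de centre trivial et sans facteur compact (un facteur compact aurait une orbite bornée, donc un point fixe dans $X$, qui serait $G$-invariant); un théorème de rigidité --- le fait qu'un tel $S$ stabilise une copie convexe fermée de son espace symétrique $S/K$, avatar $\cat$(0) d'un résultat de Karpelevich --- combiné à la complétude géodésique et à la canonicité de la décomposition, permet d'identifier cette copie à une réunion de facteurs $X_i$ : c'est le facteur $M$.

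Enfin, je poserais $Y$ comme le produit des $X_i$ restants, de sorte que $X = M \times \R^n \times Y$ soit $\iso(X)$-équivariant. Par construction, la composante neutre de $\iso(X)$ est entièrement absorbée dans $\iso(M) \times \iso(\R^n)$; donc $\iso(Y)^\circ$ est trivial et $\iso(Y)$ est totalement discontinu. Il reste à voir que toute isométrie de $Y$ est semi-simple : si $g \in \iso(Y)$ était parabolique, elle fixerait un point $\xi \in \bd Y$, et l'étude de l'ensemble minimal de $g$ --- ou du sous-groupe fermé totalement discontinu qu'elle engendre --- ferait réapparaître dans $Y$ un facteur euclidien ou un facteur de type symétrique, exclu par construction. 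Le point le plus délicat sera l'étape semi-simple : établir, pour une action sans point fixe à l'infini d'un groupe de Lie semi-simple sans facteur compact sur un espace $\cat$(0), l'existence d'une copie convexe invariante de l'espace symétrique, puis sa compatibilité avec la décomposition de de Rham; c'est là qu'interviennent de façon cruciale le théorème de la bande plate et les propriétés fines des sous-espaces plats.
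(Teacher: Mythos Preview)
Le texte ne démontre pas ce théorème : il est cité tel quel comme résultat de structure issu de \cite[Theorem 1.1]{caprace_monod2}, dans la section \og Propriétés structurelles \fg{} du chapitre sur les espaces $\cat(0)$, sans aucune preuve ni esquisse. Il n'y a donc pas de \og preuve du papier \fg{} à laquelle comparer ton esquisse.

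Cela dit, ton plan suit fidèlement l'architecture de Caprace--Monod : décomposition de de Rham à la Foertsch--Lytchak, analyse de la composante neutre $G^\circ$ via le cinquième problème de Hilbert, séparation radical/semi-simple, puis identification du facteur symétrique par un énoncé de type Karpelevich. Deux points où ton esquisse reste trop optimiste : d'une part, l'argument final de semi-simplicité des isométries de $Y$ n'est pas une conséquence directe de \og parabolique $\Rightarrow$ réapparition d'un facteur euclidien \fg{} ; chez Caprace--Monod c'est un énoncé indépendant (toute isométrie d'un espace $\cat(0)$ propre géodésiquement complet dont le groupe d'isométries est totalement discontinu est semi-simple), qui repose sur l'existence de sous-groupes compacts ouverts et le théorème du point fixe de Bruhat--Tits, pas sur une obstruction de type de Rham. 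D'autre part, ton passage de \og pas de point fixe à l'infini pour $G$ \fg{} à la même propriété pour $G^\circ$ via un circumcentre dans $\partial_T X$ suppose implicitement que l'ensemble des points fixes de $G^\circ$ dans le bord de Tits est de rayon $\leq \pi/2$, ce qui n'est pas automatique et demande l'argument plus fin de réduction aux sous-espaces minimaux développé dans \cite{caprace_monod1}.
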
  
	
	Si de plus l'action satisfait une certaine propriété à l'infini, alors on peut même caractériser l'espace $X$. 
	
	\begin{thm}[{\cite[Theorem 1.3]{caprace_monod2}}]
		Soit $X$ un espace $\cat$(0) propre et géodésiquement complet. Supposons que le stabilisateur $\stab(\xi) \leq \iso(X)$ de tout point $\xi \in \bd X$ agisse cocompactement sur $X$. Alors $X$ est isométrique à un produit d'espaces symétriques, d'immeubles euclidiens et d'arbres de Bass-Serre.
	\end{thm}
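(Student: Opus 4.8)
The plan is to bootstrap from the structure theorem \cite[Theorem 1.1]{caprace_monod2} quoted just above and then feed its totally disconnected factor into a metric recognition theorem for Euclidean buildings. First, applying the cocompactness hypothesis to any single $\xi \in \bd X$ shows that $\iso(X)$ itself acts cocompactly on $X$, since $\stab(\xi) \leq \iso(X)$; thus $X$ is proper, cocompact and geodesically complete. I would then reduce to the case where $\iso(X)$ has no fixed point in $\bd X$. Indeed, if some $\xi_0$ were globally fixed, the Busemann function $b_{\xi_0}$ is $\iso(X)$-invariant up to an additive character $c : \iso(X) \rightarrow \R$: if $c \equiv 0$ then $b_{\xi_0}$ is constant on $\iso(X)$-orbits, hence bounded on $X = \iso(X)K$ for $K$ compact, which is impossible; if $c \not\equiv 0$ one exploits $c$ together with geodesic completeness to produce an $\iso(X)$-invariant splitting, which by cocompactness and geodesic completeness yields a flat factor that can be split off and the argument iterated. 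In the end one is in the situation of \cite[Theorem 1.1]{caprace_monod2}, so $X = M \times \R^n \times Y$ with $M$ a symmetric space of non-compact type, $\R^n$ Euclidean, and $\iso(Y)$ totally disconnected, all of whose elements are semisimple.

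Next I would transfer the hypothesis to the factors. Since $\iso(M)\times\iso(\R^n)\times\iso(Y)$ has finite index in $\iso(X)$, cocompactness of the full boundary stabilizers forces cocompactness of the corresponding stabilizers in each factor; in particular every boundary point of $Y$ has cocompact stabilizer in $\iso(Y)$. Decomposing $Y = \prod_i Y_i$ into irreducible factors, each $Y_i$ is an irreducible, proper, cocompact, geodesically complete $\cat$(0) space with totally disconnected isometry group acting by semisimple isometries, and every boundary point of $Y_i$ has cocompact stabilizer. The factor $M$ is canonically a product of symmetric spaces, so everything reduces to identifying each $Y_i$ (and $\R^n$, a degenerate Euclidean building).

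For a fixed irreducible $Y_i$, I would analyse its Tits boundary $\partial_T Y_i$. The key point is that cocompactness of the parabolic subgroups $\stab(\xi) \leq \iso(Y_i)$, together with geodesic completeness and total disconnectedness, forces $\partial_T Y_i$ to be a thick, irreducible spherical building; this is the technical heart of the proof and rests on the structure theory of totally disconnected isometry groups of $\cat$(0) spaces --- amenability and ``Levi-type'' decompositions of the parabolics $\stab(\xi)$, minimality of the action, and a local-to-global argument producing inside $Y_i$ enough flats (apartments) from the ``unipotent'' parts of the $\stab(\xi)$. Once $\partial_T Y_i$ is a thick irreducible spherical building, a metric recognition theorem in the spirit of Leeb's characterization of symmetric spaces and Euclidean buildings (and Charney--Lytchak) identifies $Y_i$ as a symmetric space or a Euclidean building; total disconnectedness of $\iso(Y_i)$ rules out the symmetric-space alternative, so $Y_i$ is a Euclidean building --- of dimension $1$, i.e. a Bass--Serre tree, precisely when $\partial_T Y_i$ is $0$-dimensional. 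Reassembling the factors, $X$ is isometric to a product of symmetric spaces (the de Rham factors of $M$), Euclidean buildings and Bass--Serre trees, as claimed.

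The main obstacle is the recognition step of the previous paragraph: extracting a genuine building at infinity for the totally disconnected factor from the bare cocompactness of boundary stabilizers. Concretely, one must show that each parabolic $\stab(\xi)$ acts cocompactly on the horospheres centred at $\xi$ and contains ``translation'' and ``transvection'' subgroups rich enough to carry one flat through $\xi$ onto all the others, so that the flats of $Y_i$ organize into an apartment system on $\partial_T Y_i$; doing this via the structure theory of totally disconnected groups is where the real work lies. The implication from ``$\partial_T Y_i$ is a building'' to ``$Y_i$ is a Euclidean building or a tree'' is then a substantial but citable black box, and the first step (ruling out a fixed point at infinity) is more elementary but still needs care, because of the possibility of parabolic isometries fixing $\xi_0$.
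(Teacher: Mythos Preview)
The paper does not contain a proof of this statement: it is quoted verbatim from \cite[Theorem 1.3]{caprace_monod2} as background material in the survey section on $\cat$(0) spaces, with no accompanying argument or sketch. There is therefore nothing in the thesis to compare your proposal against.

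For what it is worth, your outline is a plausible roadmap for the actual Caprace--Monod proof: first invoking the canonical decomposition \cite[Theorem 1.1]{caprace_monod2} and then recognising the totally disconnected factors as Euclidean buildings via the structure of their Tits boundaries is indeed the architecture of their argument. You are also right that the recognition step is the hard part and that it relies on a Leeb/Charney--Lytchak type characterisation. But since the thesis itself treats this theorem as a black box, any detailed assessment of your proposal would have to be made against \cite{caprace_monod2} rather than against the present paper.
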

	
	Réciproquement, si l'espace $X$ satisfait certaines propriétés, alors on peut en déduire beaucoup d'informations sur le groupe agissant. On dit que l'action du groupe $G$ sur $X$ est \textit{minimale} s'il n'existe pas de sous-espace propre $X' \nsubseteq X$ qui soit convexe fermé et $G$-invariant. 
	
	\begin{thm}[{\cite[Theorem 5.7]{caprace_monod2}}]
		Soit $X \neq \R$ un espace $\cat$(0) propre irréductible, de bord de Tits $\partial_T X$ de dimension finie. Soit $G < \iso(X)$ un sous-groupe fermé agissant minimalement sur $X$ et sans point fixe à l'infini. Alors $G$ est soit totalement discontinu ou $G$ est un groupe de Lie et de centre trivial, tel que $G/G_0$ est compact (quasi-connexité). 
	\end{thm}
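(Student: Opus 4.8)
The plan is to combine the solution of Hilbert's fifth problem (Gleason--Yamabe--Montgomery--Zippin) with the Bruhat--Tits fixed point theorem and the geometric hypotheses on $X$. Since $X$ is proper, $\iso(X)$ with the topology of uniform convergence on bounded sets is locally compact and second countable, hence so is the closed subgroup $G$; write $G_0$ for its identity component. If $G_0=\{1\}$ then $G$ is totally disconnected and we are done, so from now on $G_0\neq\{1\}$, and the goal is: $G$ is a Lie group, $Z(G)=\{1\}$, and $G/G_0$ is compact.

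I would first isolate two rigidity consequences of the hypotheses, valid for any subgroup $H\leq\iso(X)$ acting minimally without fixed point at infinity. \emph{(i)} $H$ has no nontrivial normal subgroup $N$ with a bounded orbit: the circumcentre set of such an orbit (Bruhat--Tits) is nonempty, closed, convex and $H$-invariant, hence all of $X$ by minimality, so $N=\{1\}$ since $\iso(X)$ acts faithfully; in particular $H$ has no nontrivial compact normal subgroup. \emph{(ii)} $C_{\iso(X)}(H)=\{1\}$, hence $Z(H)=\{1\}$: for $z$ commuting with $H$, the displacement function $x\mapsto d(zx,x)$ is convex and $H$-invariant, hence constant (an $H$-invariant continuous convex function on $X$ is constant when $H$ acts minimally without fixed point at infinity), so $z$ is either trivial or a Clifford translation; a nontrivial Clifford translation produces a Euclidean de~Rham factor of $X$, and since $X$ is irreducible any complementary factor is bounded and hence fixed by $H$ (Bruhat--Tits), collapsing $X$ to $\R$ --- excluded. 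I would also record that a normal subgroup $N\trianglelefteq H$ cannot fix a point at infinity, since $\operatorname{Fix}_{\bd X}(N)$ would be a nonempty $H$-invariant subset of the complete \emph{finite-dimensional} $\cat(1)$ space $\partial_T X$, and such a subset admits a canonical $H$-fixed point (its circumcentre) --- this is exactly where finite-dimensionality of $\partial_T X$ enters.

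Next I would prove that $G$ is a Lie group. By Gleason--Yamabe there are an open subgroup $G^\dagger\leq G$ and a compact normal subgroup $K\trianglelefteq G^\dagger$ with $G^\dagger/K$ a Lie group. The delicate point is to pass from this local statement to a global one; here the action is used: compact subgroups fix points, so ``small'' subgroups of $G$ are severely constrained, and one upgrades (as in Caprace--Monod's $\cat(0)$-addendum to Gleason--Yamabe) to a compact \emph{$G$-normal} subgroup $Q$ with $G/Q$ Lie. By (i), $Q=\{1\}$, so $G$ is a Lie group; being a Lie group with $G_0\neq\{1\}$ it is not discrete, and $Z(G)\subseteq C_{\iso(X)}(G)=\{1\}$ by (ii).

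It remains to see $G/G_0$ is compact. The connected Lie group $G_0$ is normal in $G$, and by the remark above it fixes no point at infinity; so (again using finite-dimensionality of $\partial_T X$, via Caprace--Monod) there is a canonical $G_0$-minimal nonempty closed convex subset of $X$, which by canonicity is $G$-invariant, hence all of $X$: thus $G_0$ acts minimally without fixed point at infinity. Applying (i) and (ii) to $G_0$: it has trivial centre and no nontrivial compact normal subgroup; and its amenable radical, being characteristic in $G_0$ hence normal in $G$, either fixes a point at infinity (excluded) or stabilizes a flat (Adams--Ballmann) which, by canonicity, is $G$-invariant --- impossible for a \emph{proper} subset by minimality, and impossible for all of $X$ since $X$ is irreducible and $\neq\R$. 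Hence $G_0$ is a connected semisimple Lie group of noncompact type with trivial centre, acting minimally on the irreducible space $X$; the structural results of Caprace--Monod then identify $X$ with the Riemannian symmetric space of $G_0$, on which $\iso(X)$ is a Lie group acting properly and $G_0$ acts cocompactly (indeed transitively). Therefore $G$ acts properly and cocompactly, $G_0\backslash X$ is compact, and $G/G_0$ acts properly on it, so $G/G_0$ is compact. The heart of the argument, and the main obstacle, is the middle step: ruling out that an exotic (non-Lie) totally disconnected part coexists with a nontrivial connected part; this is precisely where all four hypotheses are needed --- finite-dimensionality of $\partial_T X$ for the circumcentre and canonical-minimal-set arguments, irreducibility and $X\neq\R$ to forbid invariant flats and Euclidean factors, and minimality together with the absence of a fixed point at infinity to force the relevant characteristic subgroups to be trivial or the action to be transitive.
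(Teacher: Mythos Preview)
The paper does not prove this theorem: it is stated as a background result, with the attribution \cite[Theorem 5.7]{caprace_monod2}, in the survey section on structural properties of isometry groups of $\cat(0)$ spaces. There is therefore no proof in the paper to compare your proposal against.

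That said, your sketch is a reasonable outline of the Caprace--Monod argument itself: the combination of Gleason--Yamabe with the Bruhat--Tits fixed point theorem to kill compact normal subgroups, the use of displacement functions and Clifford translations to show the centralizer is trivial, the finite-dimensionality of $\partial_T X$ to run circumcentre arguments at infinity for normal subgroups, and Adams--Ballmann to handle the amenable radical of $G_0$ --- these are indeed the ingredients. One point to be careful about: the step where you ``upgrade'' the Gleason--Yamabe compact normal subgroup of an open subgroup $G^\dagger$ to a compact \emph{$G$-normal} subgroup is genuinely nontrivial and is one of the contributions of the Caprace--Monod structure theory; you correctly flag this as ``the delicate point'' but the mechanism you allude to (that compact subgroups fix points) is not by itself sufficient --- one needs the full machinery of their decomposition theorem to control how the totally disconnected and connected parts interact. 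Similarly, the identification of $X$ with the symmetric space of $G_0$ in the last step is a substantial result in its own right (this is essentially the geometric superrigidity part of their work), not a formal consequence of semisimplicity.
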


	\subsection{Isométries de rang 1 et contractantes}
	
	Dans cette section, on présente les isométrie de rang 1 et les dynamiques induites par ces éléments. On trouvera plus d'informations sur cette notion fondamentale dans \cite[Section III. 3]{ballman95}, et plus récemment dans \cite{caprace_fujiwara10} et dans \cite{bestvina_fujiwara09}. On rappelle qu'un demi-plat dans $X$ est l'image d'un demi-plan euclidien isométriquement plongé. 
	
	\begin{Def}
		On dit qu'une droite géodésique dans $X$ est de \textit{rang 1} si elle ne borde pas de demi-plat. Si $g$ est une isométrie axiale de $X$, on dit que $g$ est de rang 1 si aucun de ses axes ne borde de demi-plat. 
	\end{Def}
	
	Soit $X$ un espace $\cat(0) $ et soit $G$ un groupe agissant sur $X$ par isométries. Si l'action est géométrique, c'est-à-dire propre et cocompacte, on dira que $G$ est $\cat$(0), ou que le couple $(G, X)$ est $\cat$(0). On pourra dire que $G$ est un groupe de rang 1 s'il possède un élément $g \in G$ qui agit comme une isométrie de rang 1 sur $X$. En revanche, il faut garder à l'esprit qu'être de rang 1 est une propriété de l'action, et non du groupe. La question de savoir si on peut reconnaitre un groupe de rang 1 sans action donnée est assez complexe, notamment car la notion de bord d'un groupe $\cat$(0) n'est pas aussi satisfaisante que pour les groupes hyperboliques, pour lesquels le bord visuel est invariant par quasi-isométrie. Pour les groupes $\cat$(0), Croke et Kleiner montrent en effet dans \cite{croke_kleiner00} qu'il existe des groupes $\cat$(0) dont les bords visuels construits à partir de deux parties génératrices différentes ne sont pas homéomorphes entre eux. 
	
	On rappelle qu'un plat dans un espace $\cat$(0) est un espace euclidien (possiblement dégénéré en un point) isométriquement plongé. Par exemple, une droite géodésique est un plat de $X$ de dimension 1.
	
	\begin{Def}
		Soit $X$ un espace $\cat$(0) et soit $G$ un groupe. Une action $G \curvearrowright X$ par isométries sur  $X$ est \textit{non-élémentaire} si $G$ n'a pas de point fixe global $x \in \overline{X}$ et que $G$ ne stabilise aucune aucun plat de $X$. 
	\end{Def}
	
	On va montrer que dans le cas d'actions avec des éléments de rang 1, on peut donner un critère alternatif de non-élementarité. La définition qui suit est empruntée de \cite[Définition 2.1]{caprace_fujiwara10}. 
	
	\begin{Def}\label{def isom indé}
		Soit $g_1, \, g_2 \in G$ deux isométries axiales de $G$, et soit $o\in X$. Les éléments $g_1, g_2 \in G$ sont dits \textit{indépendants} si l'application 
		\begin{equation}
			\mathbb{Z} \times \mathbb{Z} \rightarrow [0, \infty) : (m,n) \mapsto d(g_1^m o, g_2^n o)
		\end{equation}
		est propre. 
	\end{Def}

	En particulier, les points fixes de deux isométries axiales indépendantes forment quatre points distincts de $\bd X$. En fait, lorsque l'action est proprement discontinue, c'est même une équivalence : deux isométries axiales sont indépendantes si et seulement si leurs points fixes sont mutuellement distincts.

	\begin{prop}[{\cite[Proposition 3.4]{caprace_fujiwara10}}]\label{non elem caprace fuj intro}
		Soit $X$ un espace $\cat $(0) propre et $G < \iso (X)$ un sous-groupe fermé qui contient un élément de rang 1 pour cette action. Alors exactement l'une des assertions suivantes est vraie : 
		\begin{enumerate}
			\item \label{alt elem intro} Le groupe $G$ a un point fixe dans $\bd X$, ou bien stabilise une droite géodésique. Dans les deux cas, il possède un sous-groupe d'indice au plus 2, d'abélianisé infini. De plus, si $X$ a un groupe d'isométries cocompact, alors $G$ est moyennable. 
			
			\item \label{alt non elem intro} Le groupe $G$ contient deux éléments de rang 1 indépendants. De plus, $G$ contient un sous-groupe libre non-abélien. 
		\end{enumerate}
	\end{prop}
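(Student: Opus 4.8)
\emph{Stratégie.} Soit $h \in G$ un élément de rang $1$ fourni par l'hypothèse, de points fixes à l'infini $h^+ \neq h^- \in \bd X$. L'ingrédient de base est la dynamique Nord--Sud de Hamenstädt \cite{hamenstadt09} : $h$ agit sur le compact $\bd X$ (topologie conique) de façon que, pour tout voisinage $V$ de $h^+$ et tout compact $K \subseteq \bd X \setminus \{h^-\}$, on ait $h^n K \subseteq V$ pour $n$ assez grand, et symétriquement pour $h^{-1}$ et $h^-$. En particulier $\{h^+, h^-\}$ est exactement l'ensemble des points de $\bd X$ fixés par $h$, et il en va de même pour toute puissance non triviale et tout conjugué de $h$. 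La preuve consiste à examiner comment $G$ agit sur la paire $\{h^+, h^-\}$ : comme tout point de $\bd X$ fixé par $G$ est fixé par $h$, donc appartient à $\{h^+, h^-\}$, et comme (par le théorème de la bande plate, cf. ci-dessous) stabiliser une droite géodésique équivaut à préserver $\{h^+, h^-\}$, l'alternative \ref{alt elem intro} est réalisée si et seulement si $G$ fixe l'un des points $h^\pm$ ou préserve la paire $\{h^+, h^-\}$.

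\emph{Cas élémentaire.} Supposons d'abord que $G$ fixe un point $\xi \in \bd X$. Alors $\xi \in \{h^+, h^-\}$, et le cocycle de Busemann en $\xi$ fournit un homomorphisme non trivial $\beta_\xi : G \to \R$, avec $\beta_\xi(h) = \pm |h| \neq 0$ ; l'abélianisé de $G$ se surjecte donc sur un sous-groupe non trivial de $\R$, donc est infini. Supposons ensuite que $G$ préserve $\{h^+, h^-\}$ (cas où une droite est stabilisée) : l'union $P$ des droites géodésiques de $X$ joignant $h^-$ à $h^+$ est un fermé convexe $G$-invariant qui, par le théorème de la bande plate, se scinde isométriquement en $P = Y \times \R$, ces droites étant les facteurs $\{y\} \times \R$. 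Comme $h$ est de rang $1$ et $X$ propre, $Y$ est borné (sinon $Y$ contiendrait un rayon, et un axe de $h$ borderait un demi-plat) ; par la Proposition \ref{prop bruhat tits centre}, le sous-groupe $G_0 \leq G$ d'indice au plus $2$ fixant $h^+$ et $h^-$ fixe le circumcentre $y_0$ de $Y$, donc $G$ stabilise la droite $\{y_0\} \times \R$, et l'argument précédent appliqué à $G_0$ montre que l'abélianisé de $G_0$ est infini. Enfin, si $\iso(X)$ agit cocompactement sur $X$, l'amenabilité de $G$ découle de l'étude par Caprace et Monod des stabilisateurs de points du bord dans un espace $\cat$(0) cocompact \cite{caprace_monod2}, le caractère contractant de l'axe de rang $1$ forçant $G_0$ à être virtuellement résoluble.

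\emph{Cas non élémentaire.} Supposons maintenant que $G$ n'est pas dans l'alternative \ref{alt elem intro} : $G$ ne fixe aucun $h^\pm$ et ne préserve pas $\{h^+, h^-\}$. On affirme qu'il existe $g \in G$ tel que $g\{h^+, h^-\} \cap \{h^+, h^-\} = \emptyset$. Admettons-le : alors $k := ghg^{-1}$ est de rang $1$, de points fixes $\{gh^+, gh^-\}$ disjoints de $\{h^+, h^-\}$, et $h, k$ sont indépendants au sens de la Définition \ref{def isom indé} : si $d(h^{m_j}o, k^{n_j}o)$ restait borné le long d'une suite $(m_j, n_j)$ tendant vers l'infini, on pourrait (quitte à extraire) supposer $m_j$ et $n_j$ de signes constants, et les deux suites, à distance bornée l'une de l'autre et de norme tendant vers l'infini, convergeraient dans $\overline{X}$ vers un même point de $\bd X$ appartenant à la fois à $\{h^+, h^-\}$ et à $\{gh^+, gh^-\}$, une contradiction. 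Pour le sous-groupe libre, on applique le lemme du ping-pong à l'action sur $\bd X$ : les quatre points $h^\pm, k^\pm$ étant distincts, on choisit des voisinages ouverts deux à deux disjoints $U^\pm_h \ni h^\pm$ et $U^\pm_k \ni k^\pm$, et par dynamique Nord--Sud il existe $N \geq 1$ tel que $h^{\pm N}(\bd X \setminus U^{\mp}_h) \subseteq U^{\pm}_h$ et $k^{\pm N}(\bd X \setminus U^{\mp}_k) \subseteq U^{\pm}_k$ ; le lemme du ping-pong donne alors un isomorphisme de $\langle h^N, k^N \rangle$ sur un groupe libre de rang $2$, et $h^N, k^N$ sont deux éléments de rang $1$ indépendants : c'est l'alternative \ref{alt non elem intro}.

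\emph{Point central de la preuve et exclusion mutuelle.} Le point technique est l'existence de $g$ ci-dessus. Les non-fixations fournissent $g_1$ avec $g_1 h^+ \notin \{h^+, h^-\}$ et $g_2$ avec $g_2 h^- \notin \{h^+, h^-\}$ ; le cas résistant est celui où les paires $\{g'h^+, g'h^-\}$, $g' \in G$, s'intersectent deux à deux. On le scinde : si elles admettent un point commun $\zeta$, alors la $G$-orbite de $\zeta$ est contenue dans $\{h^+, h^-\}$, donc un sous-groupe d'indice fini fixe $\zeta$, contredisant que $G$ ne fixe aucun point de $\bd X$ ; sinon, trois de ces paires forment un « triangle » $\{a,b\}, \{b,c\}, \{c,a\}$, et à partir de deux éléments de rang $1$ ne partageant qu'un seul point fixe, en configuration source/puits opposée, on construit par produit de puissances assez grandes un nouvel élément de rang $1$ dont les points fixes sont arbitrairement proches des deux points non partagés, le contrôle des projections reposant sur le caractère contractant des axes \cite{bestvina_fujiwara09, caprace_fujiwara10} ; une inspection des configurations restantes livre alors deux éléments de rang $1$ à points fixes mutuellement disjoints. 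C'est l'étape où la géométrie contractante intervient de manière essentielle, et c'est l'obstacle principal. Enfin, les deux alternatives s'excluent : deux éléments de rang $1$ indépendants $a, b$ ont quatre points fixes distincts, donc $G$ ne peut fixer aucun $\xi \in \bd X$ (un tel $\xi$ appartiendrait à $\{a^+,a^-\} \cap \{b^+,b^-\}$) ni stabiliser une droite géodésique de bord $\{\eta^+,\eta^-\}$ (une puissance paire de $a$, resp. de $b$, fixerait $\eta^\pm$, d'où $\{\eta^+,\eta^-\} = \{a^+,a^-\} = \{b^+,b^-\}$, absurde).
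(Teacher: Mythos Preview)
Le manuscrit ne démontre pas cette proposition : elle est simplement citée de \cite[Proposition 3.4]{caprace_fujiwara10} comme résultat de la littérature, sans preuve. Il n'y a donc pas de démonstration interne à laquelle comparer ton argument.

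Ton esquisse suit bien la stratégie de Caprace--Fujiwara : dynamique Nord--Sud sur $\bd X$, homomorphisme de Busemann pour l'abélianisé infini dans le cas élémentaire, et ping-pong pour le sous-groupe libre dans le cas non élémentaire. La structure générale et l'exclusion mutuelle des deux alternatives sont correctes.

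Deux points méritent d'être renforcés. Premièrement, l'affirmation que les non-fixations fournissent simultanément $g_1$ avec $g_1 h^+ \notin \{h^+,h^-\}$ et $g_2$ avec $g_2 h^- \notin \{h^+,h^-\}$ est vraie mais demande une ligne : si pour tout $g$ on avait $gh^+ \in \{h^+,h^-\}$, alors $G\cdot h^+ \subseteq \{h^+,h^-\}$, et comme $G$ ne fixe pas $h^+$ on aurait $G\cdot h^+ = \{h^+,h^-\} = G\cdot h^-$, donc $G$ préserverait la paire, ce qui est exclu. Deuxièmement, et c'est le point essentiel, le cas du \og triangle \fg{} que tu isoles correctement comme obstacle principal reste à l'état de renvoi aux références. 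C'est précisément là que l'argument original exploite la contraction des axes de rang~$1$ (via \cite{bestvina_fujiwara09}) pour fabriquer, à partir de deux éléments de rang~$1$ partageant exactement un point fixe en configuration attractif/répulsif convenable, un produit de grandes puissances qui est encore de rang~$1$ avec points fixes proches des deux points non partagés ; c'est un lemme non trivial, et tant qu'il n'est pas démontré ta preuve reste une esquisse. Enfin, pour la moyennabilité sous hypothèse de cocompacité, l'invocation de \cite{caprace_monod2} est la bonne direction, mais il faudrait préciser l'énoncé utilisé (moyennabilité des stabilisateurs de points du bord dans un espace $\cat(0)$ propre cocompact).
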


	Par conséquent, l'action $G \curvearrowright X$ d'un groupe $G$ sur un espace $\cat (0)$  avec un élément de rang 1 est non-élémentaire si et seulement si l'alternative \ref{alt non elem intro} de la proposition précédente est vérifiée. 
	
	\begin{rem}
		Dans \cite[Theorem 1.1]{hamenstadt09}, Hamenstädt étend le résultat précédent en montrant que si l'alternative \ref{alt non elem intro} est vérifiée, il existe un sous-groupe libre à deux générateurs qui sont des isométries de rang 1. De plus, il est montré que l'ensemble limite $\Lambda G$, c'est-à-dire l'ensemble des points d'accumulation dans $\bd X$ d'une orbite $Go$, ne contient pas de point isolé. 
	\end{rem}
	
	On termine cette section en montrant en quoi les isométries de rang 1 ont un comportement similaire à celui des isométries loxodromiques dans un espace Gromov-hyperboliques. 
	
	\begin{Def}\label{def isom contract}
		Soit $(X,d)$ un espace $\cat$(0). Une géodésique $\gamma$ de $X$ est \textit{$C$-contractante}, avec $C >0$ si pour toute boule $B$ disjointe de $\gamma$, la projection $\pi_\gamma (B)$ de la boule $B$ sur $\gamma$ a un diamètre inférieur à $C$. Une isométrie axiale est dite \textit{$C$-contractante} pour $C>0$ si l'un de ses axes est $C$-contractant. 
	\end{Def} 
	
	Il est clair qu'une isométrie contractante ne peut posséder d'axe qui borde de demi-plat, donc est de rang 1. La réciproque est vraie pour les espaces propres.
	
	\begin{thm}[{\cite[Theorem 5.4]{bestvina_fujiwara09}}]\label{bestv fuj rank one contracting intro}
		Soit $X$ un espace $\cat (0)$ propre, $g \in G$ une isométrie axiale de $X$ et $\gamma$ un axe de $g$. Alors il existe $C >0$ tel que $\gamma$ est $C$-contractante si et seulement si $\gamma$ ne borde pas de demi-plat. 
	\end{thm}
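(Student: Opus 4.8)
Je traiterais les deux implications par contraposée.

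\emph{Sens facile (contractant $\Rightarrow$ ne borde pas de demi-plat).} L'idée serait d'exhiber dans un demi-plat des boules disjointes de $\gamma$ dont la projection est arbitrairement grande. Supposons que $\gamma$ borde un demi-plat, i.e. qu'il existe un plongement isométrique $\iota : \R \times [0, \infty[ \rightarrow X$ d'image convexe $H$ avec $\iota(\R \times \{0\}) = \gamma$. Pour $R > 0$, je poserais $z_R = \iota(0,R)$ : par convexité de $H$ (qui force le segment géodésique de $z_R$ à $\gamma$ à rester dans $H$), on a $d(z_R, \gamma) = R$, donc $B(z_R, R/2)$ est disjointe de $\gamma$ ; et pour $|x| \le R/2$ le point $\iota(x,R)$ y appartient et se projette sur $\gamma$ en $\iota(x,0)$. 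Ainsi $\diam \pi_\gamma(B(z_R, R/2)) \ge R$, et en faisant $R \rightarrow \infty$ on voit que $\gamma$ n'est $C$-contractante pour aucun $C$.

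\emph{Sens difficile (ne borde pas de demi-plat $\Rightarrow$ contractant), mise en place.} La stratégie serait de supposer $\gamma$ non $C$-contractante pour tout $C$ et d'en déduire l'existence d'un demi-plat bordé par $\gamma$, contredisant l'hypothèse. Pour chaque $n$ je choisirais une boule $B(x_n, r_n)$ disjointe de $\gamma$ avec $\diam \pi_\gamma(B(x_n, r_n)) > n$ ; comme $\pi_\gamma$ est $1$-lipschitzienne, $r_n > n/2 \rightarrow \infty$, et je prendrais $y_n$ dans la boule tel que, en posant $p_n = \pi_\gamma(x_n)$, $q_n = \pi_\gamma(y_n)$, on ait $d(p_n, q_n) > n/2$. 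J'utiliserais ensuite que $g$ agit sur son axe par translation de longueur $|g| > 0$, donc que $\langle g \rangle$ agit cocompactement sur $\gamma$ : quitte à remplacer $(x_n, y_n)$ par une $g$-translatée (une isométrie fixant $\gamma$, $\pi_\gamma$-équivariante, préservant les distances), on peut supposer que le milieu de $[p_n, q_n] \subseteq \gamma$ est à distance $\le |g|$ du point-base $o = \gamma(0)$.

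\emph{Extraction de platitude et passage à la limite.} L'étape centrale serait de tirer de la configuration $(x_n, p_n, q_n, y_n)$ des rectangles euclidiens plats de taille tendant vers l'infini. J'emploierais la géométrie de comparaison $\cat$(0) : dans un triangle présentant un angle $\ge \pi/2$ en un sommet, le carré du côté opposé majore la somme des carrés des côtés adjacents ; appliqué aux triangles découpant le quadrilatère géodésique $x_n p_n q_n y_n$ (dont les angles en $p_n$ et $q_n$ sont $\ge \pi/2$, ce sont des projections) et combiné à la convexité de $z \mapsto d(z,\gamma)$ le long de $[x_n, y_n]$, cela devrait montrer que ce quadrilatère est quantitativement proche de son rectangle de comparaison euclidien. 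Le cas d'égalité dans l'inégalité $\cat$(0) (comme dans le théorème de la bande plate) fournirait alors un plongement isométrique d'un rectangle plat $R_n \cong [-L_n/2, L_n/2] \times [0,1]$ de base $[\gamma(-L_n/2), \gamma(L_n/2)]$ (à $|g|$ près, après recentrage par une puissance de $g$), avec $L_n \rightarrow \infty$. Enfin, la propreté de $X$ interviendrait : pour $K$ fixé, les restrictions des $R_n$ au-dessus de $[\gamma(-K), \gamma(K)]$ sont des plongements isométriques de $[-K,K] \times [0,1]$ à valeurs dans la boule compacte $\overline{B}(o, K+2)$, donc par Arzelà--Ascoli et un procédé diagonal sur $K \rightarrow \infty$ on obtiendrait un plongement isométrique limite $\R \times [0,1] \hookrightarrow X$ de base $\gamma$, c'est-à-dire un demi-plat bordé par $\gamma$ : contradiction.

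\emph{Difficulté principale.} Le point vraiment délicat est cette étape d'extraction des rectangles plats : passer de la quasi-platitude \emph{quantitative} du quadrilatère (conséquence de l'échec du caractère contractant) à un \emph{véritable} sous-espace plat isométriquement plongé dont un côté est exactement porté par $\gamma$. C'est là qu'interviennent de façon essentielle la propreté, la périodicité de $\gamma$ (via l'action cocompacte de $\langle g \rangle$) et l'analyse du cas d'égalité dans les comparaisons $\cat$(0) — et c'est ce qui distingue réellement la situation de l'intuition hyperbolique, où l'énoncé serait immédiat. Comme garde-fou, je noterais que d'après le théorème \ref{tits}(3) on a l'équivalence : $\gamma$ borde un demi-plat $\iff d_T(\gamma^+, \gamma^-) = \pi$ ; l'énoncé à démontrer revient donc à : $\gamma$ est contractante $\iff d_T(\gamma^+, \gamma^-) > \pi$.
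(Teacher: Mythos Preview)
Le texte ne démontre pas ce théorème : c'est un résultat cité de Bestvina--Fujiwara, énoncé sans preuve dans la thèse. Il n'y a donc pas de démonstration interne à laquelle comparer ton esquisse.

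Sur ton plan lui-même : le sens facile est correct. Pour le sens difficile, la stratégie (échec de la contraction, recentrage par $\langle g\rangle$, extraction par propreté) est la bonne, mais il y a une erreur dans la conclusion. Un plongement isométrique de $\R\times[0,1]$ est une \emph{bande plate}, pas un demi-plat ; un demi-plat est l'image d'un demi-plan euclidien $\R\times[0,\infty)$. Ta construction telle qu'écrite ne produit donc pas la contradiction cherchée : une géodésique périodique peut border une bande plate de largeur finie sans border de demi-plat, tout en restant contractante (penser à $\{0\}\times\R$ dans la bande $[0,1]\times\R$, qui est $2$-contractante). La donnée que tu as mise en place sans l'exploiter est $r_n\to\infty$, donc $d(x_n,\gamma)\geq r_n\to\infty$ : les quadrilatères $x_np_nq_ny_n$ ont une hauteur \emph{et} une base non bornées. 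L'extraction diagonale doit viser, pour chaque $K$, un rectangle $[-K,K]\times[0,K]$ basé sur $\gamma$, puis à la limite le demi-plan entier. Par ailleurs, le passage de \og quadrilatère quantitativement proche de son rectangle de comparaison\fg{} à \og rectangle plat $R_n$ isométriquement plongé\fg{} via le \og cas d'égalité\fg{} est un saut logique : à $n$ fixé tu n'as qu'une proximité, pas l'égalité ; la platitude exacte n'apparaît qu'à la limite, et c'est ce passage à la limite --- non un cas d'égalité à $n$ fini --- qui constitue le c\oe{}ur de l'argument de Bestvina--Fujiwara.
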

	
	Dans les espaces Gromov-hyperboliques, toute isométrie loxodromique est contractante au sens précédent. Une seconde similarité avec ces isométries concerne la dynamique sur le bord visuel. Le résultat qui suit est une extension d'un théorème de Ballmann et Brin \cite{ballmann_brin95} sur les variétés de Hadamard, et nous l'utiliserons à bien des reprises par la suite. 
	
	\begin{thm}[{\cite[Lemma 4.4]{hamenstadt09}}] \label{thm contract dyn NS}
		Soit $X$ un espace $\cat$(0). Alors une isométrie axiale $g \in G$ est contractante si et seulement si $g$ agit avec dynamique Nord-Sud sur le bord par rapport à ses points fixes $g^+$ et $g^-$ : pour tout voisinage $V$ of $g^{-}$ et $U$ de $g^{+}$, il existe $k_0 \geq 0 $ tel que pour tout $k \geq k_0$, $g^{k} (\overline{X}  - V) \subseteq U$  et $g^{-k} (\overline{X} - U) \subseteq V$. 
	\end{thm}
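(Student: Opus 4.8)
The plan is to run the forward implication on one fixed $C$-contracting axis and then obtain the converse by contraposition from Theorem~\ref{bestv fuj rank one contracting intro}. Throughout, let $\gamma$ be a $C$-contracting axis of $g$, parametrised so that $g\cdot\gamma(t)=\gamma(t+\ell)$ with $\ell=|g|>0$; then $g^{+}=\gamma(+\infty)$ and $g^{-}=\gamma(-\infty)$, and these two points are distinct since $\gamma$ does not bound a half-flat. Fix $o=\gamma(0)$ and write $\pi_\gamma$ for the ($1$-Lipschitz, $g$-equivariant) nearest-point projection onto $\gamma$. Replacing $g$ by $g^{-1}$ exchanges $g^{+}$ and $g^{-}$ and turns the first inclusion of the statement into the second, so it is enough to prove: for every cone-neighbourhood $U$ of $g^{+}$ and every neighbourhood $V$ of $g^{-}$ there is $k_{0}$ with $g^{k}(\overline{X}\setminus V)\subseteq U$ for all $k\ge k_{0}$.

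The geometric heart is a projection estimate, which I would isolate first. From the definition of $C$-contraction, if $d(x,y)\le d(x,\gamma)$ then $d(\pi_\gamma(x),\pi_\gamma(y))\le C$; as in Bestvina--Fujiwara this yields that any geodesic ray or segment $[a,b]$ with $d(\pi_\gamma(a),\pi_\gamma(b))$ large passes within $2C$ of each of $\pi_\gamma(a),\pi_\gamma(b)$ and stays within $2C$ of $\gamma$ in between, and also that for $\xi\in\bd X\setminus\{g^{+},g^{-}\}$ the projection $\pi_\gamma(\gamma_{o}^{\xi}(t))$ converges to a well-defined $\pi_\gamma(\xi)=\gamma(s_{\xi})$ (the function $t\mapsto d(\gamma_{o}^{\xi}(t),\gamma)$ is convex, vanishes at $0$ and is unbounded, hence grows at least linearly, so the projection moves only a finite total amount). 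Now fix $\xi\notin V$, hence $\xi\neq g^{-}$, and let $\rho_{n}=\gamma_{o}^{g^{n}\xi}$. Equivariance gives $\pi_\gamma(g^{n}\xi)=\gamma(s_{\xi}+n\ell)\to g^{+}$, so by the estimate $\rho_{n}$ stays within $3C$ of $\gamma$, with synchronised parameters, on $[0,T_{n}]$ with $T_{n}\ge s_{\xi}+n\ell-2C\to\infty$. The point is then that $t\mapsto d(\rho_{n}(t),\gamma(t))$ is convex (Proposition~\ref{prop convexite cat}) and vanishes at $t=0$, whence $d(\rho_{n}(r),\gamma(r))\le (r/T_{n})\,3C\to0$ for each fixed $r$; together with $d(o,g^{n}\xi)\ge|s_{\xi}+n\ell|\to\infty$ (using that $\pi_\gamma$ is $1$-Lipschitz) this gives $g^{n}\xi\to g^{+}$ in the cone topology. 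This settles pointwise convergence; the convexity trick is what upgrades coarse $O(C)$-fellow-travelling to genuine convergence, which is needed because the cone neighbourhoods $U(o,g^{+},r,\varepsilon)$ have $\varepsilon$ arbitrarily small.

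The remaining and, I expect, hardest point is to make this uniform over $\xi\in\overline{X}\setminus V$: the only $\xi$-dependent quantity above is $s_{\xi}$, so I would show $t_{0}:=\inf\{s_{\xi}:\xi\in\overline{X}\setminus V\}>-\infty$. If not, pick $\xi_{k}\notin V$ with $s_{\xi_{k}}\to-\infty$; then $\pi_\gamma(\xi_{k})=\gamma(s_{\xi_{k}})\to g^{-}$, and the same argument applied to $\gamma_{o}^{\xi_{k}}$ (whose endpoints now project to $\gamma(0)$ and $\gamma(s_{\xi_{k}})$, at distance $|s_{\xi_{k}}|\to\infty$) forces $\gamma_{o}^{\xi_{k}}(r)\to\gamma(-r)$ for each $r$, i.e.\ $\xi_{k}\to g^{-}$, contradicting $\xi_{k}\notin V$ for $k$ large. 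Granting $t_{0}$, the estimate is uniform: given $U$, choose $r,\varepsilon$ with $U(o,g^{+},r,\varepsilon)\subseteq U$, then $k_{0}$ so large that $t_{0}+k_{0}\ell>r$ and $(r/(t_{0}+k_{0}\ell))\,3C<\varepsilon$; for $k\ge k_{0}$ and every $\xi\notin V$, when $g^{k}\xi\in X$ one has $d(o,g^{k}\xi)\ge|s_{\xi}+k\ell|\ge t_{0}+k\ell>r$, and in all cases $d(\gamma_{o}^{g^{k}\xi}(r),\gamma(r))<\varepsilon$, so $g^{k}\xi\in U$. (For $\xi\in\bd X$ one argues with the segments $[o,\gamma_{o}^{\xi}(T)]$, letting $T\to\infty$.) It is exactly here that $C$-contraction is used essentially and that, for non-proper $X$, one cannot simply replace it by compactness of $\overline{X}\setminus V$.

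For the converse I would argue by contraposition. If $g$ is axial but not contracting, then, for proper $X$ by Theorem~\ref{bestv fuj rank one contracting intro} (and in general after a rescaling/limiting argument in the spirit of Bestvina--Fujiwara), an axis $\gamma$ with $\gamma(\pm\infty)=g^{\pm}$ bounds a half-flat $F\cong\R\times[0,\infty)$, so $\partial_{T}F$ is a $d_{T}$-geodesic of length $\pi$ from $g^{-}$ to $g^{+}$; choose an interior point $\eta$, so $0<\angle(g^{+},\eta)=:\alpha<\pi$, in particular $\eta\notin\{g^{+},g^{-}\}$. Since $g$ fixes $g^{+}$ and extends to an isometry of $(\bd X,\angle)$ (Proposition~\ref{prop topo tits }), one has $\angle(g^{+},g^{n}\eta)=\angle(g^{+},\eta)=\alpha$ for all $n$. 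But if $g^{n}\eta\to g^{+}$ in the cone topology, then $\gamma_{o}^{g^{n}\eta}\to\gamma_{o}^{g^{+}}$ uniformly on compacta, so for fixed small $s,t$ the comparison angles $\overline{\angle}_{\overline{o}}(\overline{\gamma_{o}^{g^{+}}(s)},\overline{\gamma_{o}^{g^{n}\eta}(t)})$ tend to $0$; since $\angle_{o}(g^{+},g^{n}\eta)$ is bounded above by any such comparison angle, this forces $\alpha\le0$, a contradiction. Hence $g^{n}\eta\not\to g^{+}$ (taking $V$ a small neighbourhood of $g^{-}$ avoiding $\eta$), so $g$ has no North--South dynamics. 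The subtlety here is that $F$ need not be $g$-invariant, which is precisely why the argument is channelled through the $g$-invariant Tits angle rather than through the geometry of $F$ itself.
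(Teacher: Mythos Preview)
The paper does not give its own proof of this statement: it is simply recorded as a citation of Hamenst\"adt and used as a black box thereafter. There is therefore nothing in the paper to compare your attempt against, so let me comment on the argument itself.

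Your forward implication is sound. The projection estimate for a $C$-contracting axis, the convexity upgrade from $O(C)$-fellow-travelling to genuine cone convergence, and the uniformity argument via $t_0=\inf\{s_\xi:\xi\notin V\}>-\infty$ are all correct and well organised. One small omission: you define $s_\xi$ only for $\xi\in\bd X$; for $\xi\in X$ just set $\pi_\gamma(\xi)=\gamma(s_\xi)$ directly and the same estimates go through.

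The converse, however, has a real gap. You correctly deduce that $\angle_o(g^+,g^n\eta)\to 0$ whenever $g^n\eta\to g^+$ in the cone topology, but then write ``this forces $\alpha\le 0$''. Here $\alpha=\angle(g^+,\eta)$ is the \emph{Tits} angle, i.e.\ $\sup_x\angle_x(g^+,\eta)$, not $\angle_o$; knowing that the angle at the single basepoint $o$ tends to $0$ says nothing about $\alpha$, and lower semicontinuity of $\angle$ (Proposition~\ref{prop topo tits }) only yields the vacuous $\liminf\angle(g^+,g^n\eta)\ge 0$. The fix is short: by equivariance $\angle_o(g^+,g^n\eta)=\angle_{g^{-n}o}(g^+,\eta)$, and since $g^{-n}o=\gamma(-n\ell)\in\gamma\subset F$, both rays from $g^{-n}o$ to $g^+$ and to $\eta$ lie in the convex half-flat $F$, so this angle is the Euclidean one and equals $\alpha$ for every $n$. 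That gives the genuine contradiction $\alpha=\angle_o(g^+,g^n\eta)\to 0$. Separately, your reduction ``not contracting $\Rightarrow$ axis bounds a half-flat'' via Theorem~\ref{bestv fuj rank one contracting intro} is stated there only for proper $X$; the parenthetical ``rescaling/limiting argument in the spirit of Bestvina--Fujiwara'' is not a proof, and this equivalence is delicate without properness.
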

	
	En vertu du théorème \ref{bestv fuj rank one contracting intro}, si l'espace ambiant est propre, les isométries de rang 1 vérifient donc une dynamique Nord-Sud sur le bord visuel.

	\subsection{Rigidité, conjectures}\label{conjectures}
	
	Dans cette section, on introduit quelques questions et conjectures dans le domaine, et qui ont connu des développements récents. Comme on le verra, ces questions sont liées à l'existence de plats, et donc au rang de l'espace $\cat$(0). 
	
	\subsubsection{Conjecture du rang}
	Soit $(X,d)$ un espace $\cat$(0). On rappelle que $X$ est géodésiquement complet si toute géodésique peut être étendue en une droite géodésique, et qu'une action géométrique d'un groupe $G$ sur $X$ est une action propre, cocompacte par isométries. Enfin, une variété est dite irréductible si elle ne se décompose pas en un produit métrique de variétés riemanniennes non triviales. Le théorème suivant est un résultat fondamental de W. Ballmann, \cite[Theorem C]{ballman95}. 
	
	\begin{thm}[Théorème de Rigidité du Rang pour les Variétés de Hadamard]
		Soit $X$ une variété de Hadamard irréductible, et soit $G$ un groupe discret qui agit géométriquement sur $X$. Alors soit $X$ est un espace symétrique de type non compact et de rang supérieur, soit $G$ contient une isométrie de rang 1. 
	\end{thm}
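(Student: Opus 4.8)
\emph{Esquisse de preuve.} On démontre la contraposée : on suppose que $G$ ne contient aucune isométrie de rang $1$ et on montre qu'alors $X$ est un espace symétrique de type non compact et de rang supérieur. (Les deux possibilités de l'énoncé s'excluent d'ailleurs mutuellement, puisqu'un espace symétrique de rang supérieur ne possède aucune géodésique de rang $1$, donc aucune isométrie de rang $1$.) La première étape est une réduction dynamique, qui ramène l'hypothèse sur les isométries à une hypothèse sur les géodésiques de $X$. On utilise qu'une action isométrique cocompacte sur une variété de Hadamard vérifie la condition de dualité d'Eberlein : pour toute droite géodésique $\gamma$ de $X$, il existe une suite $(g_n)$ dans $G$ telle que $g_n o \to \gamma(+\infty)$ et $g_n^{-1} o \to \gamma(-\infty)$ dans $\overline{X}$. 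Une variété de Hadamard étant propre, une géodésique de rang $1$ est contractante (Théorème \ref{bestv fuj rank one contracting intro}), donc la dynamique Nord-Sud associée (Théorème \ref{thm contract dyn NS}), jointe à la dualité et à un passage à la limite sur l'espace des plats de $X$, entraîne : si $X$ possède une droite géodésique de rang $1$, alors $G$ possède une isométrie axiale de rang $1$ (et même, comme dans les énoncés de Caprace--Fujiwara et de Hamenstädt rappelés plus haut, un sous-groupe libre d'isométries de rang $1$). Par contraposée, $X$ n'a aucune géodésique de rang $1$ ; par le théorème de la bande plate et la caractérisation du rang d'une géodésique au moyen des champs de Jacobi parallèles perpendiculaires (c'est ici qu'on utilise la structure riemannienne), ceci équivaut à $\rang(X) \geq 2$.

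\emph{Deuxième étape : ensemble régulier et structure de plats.} Posons $r = \rang(X) \geq 2$. Pour un vecteur unitaire $v \in SX$, notons $\mathrm{rk}(v)$ la dimension de l'espace des champs de Jacobi parallèles le long de la géodésique $\gamma_v$ ; alors $\mathrm{rk}(v) \geq r$, et l'ensemble $\mathcal{R} \subseteq SX$ des vecteurs tels que $\mathrm{rk}(v) = r$ est ouvert, invariant par le flot géodésique $\phi^t$ et par $G$. On établit que $\mathcal{R}$ est non vide, que par tout $v \in \mathcal{R}$ passe un unique plat maximal $F(v)$ de dimension $r$ dépendant continûment de $v$, dont l'espace tangent le long de $\gamma_v$ est engendré par $\gamma_v'$ et les champs de Jacobi parallèles, et que, $M = X/G$ étant compact, le flot $\phi^t$ sur $SM$ est récurrent. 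Ceci permet de propager l'information sur les plats $F(v)$ le long des orbites récurrentes, en particulier le long des géodésiques fermées de $M$.

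\emph{Troisième étape : rigidité.} Il reste à passer de \og toute géodésique est contenue dans un $r$-plat \fg{} à la conclusion. Une voie (Ballmann) analyse les champs de Jacobi parallèles le long des géodésiques régulières récurrentes pour construire sur $X$ une distribution parallèle non triviale, invariante par holonomie ; par le théorème de décomposition de de Rham, une telle distribution propre scinderait $X$ en un produit riemannien non trivial, contredisant l'irréductibilité --- sauf si elle est le fibré tangent tout entier, auquel cas un critère caractérisant les espaces symétriques par leurs plats (théorème d'holonomie de Berger) identifie $X$ comme espace symétrique de type non compact, de rang $r \geq 2$. Une voie alternative (Burns--Spatzier) montre que sous ces hypothèses le bord de Tits $\partial_T X$ est un immeuble sphérique épais, irréductible, de rang $\geq 2$, donc de Moufang par les travaux de Tits, et provient alors d'un groupe algébrique semi-simple, ce qui identifie $X$ à l'espace symétrique associé. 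Dans les deux cas, on obtient l'espace symétrique de type non compact et de rang supérieur recherché.

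\emph{Principal obstacle.} La difficulté essentielle est la troisième étape : il faut montrer que les $r$-plats passant par les vecteurs réguliers se recollent de manière suffisamment cohérente le long des orbites récurrentes pour produire un objet global parallèle, ou une structure d'immeuble au bord. C'est là que la cocompacité intervient de façon cruciale, via la récurrence du flot géodésique, et c'est aussi là que l'hypothèse de variété riemannienne --- régularité de la courbure, équation de Jacobi --- est indispensable et ne peut être remplacée par une métrique $\cat(0)$ quelconque. L'appel final, soit à la classification des immeubles sphériques épais, soit au théorème d'holonomie de Berger, est lui aussi substantiel. La réduction de la première étape est plus élémentaire, mais utilise néanmoins de façon essentielle le fait que les extrémités d'une géodésique de rang $1$ sont isolées dans $\partial_T X$.
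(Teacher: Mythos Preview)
The paper does not prove this theorem: it is stated as a fundamental result of Ballmann, with a citation to \cite[Theorem C]{ballman95}, and serves only as motivation for the rank rigidity conjecture discussed immediately afterwards. There is therefore no proof in the paper to compare your proposal against.

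That said, your sketch is a faithful outline of the classical argument as it appears in Ballmann's book and in the Ballmann--Brin--Eberlein and Burns--Spatzier papers cited just after the statement. The three-step structure (reduction from isometries to geodesics via duality, analysis of the regular set and parallel Jacobi fields, then either the holonomy/de Rham route or the Tits-building route) is the standard one, and your identification of the third step as the substantial part is accurate. One small caveat on your first step: the implication ``$X$ has a rank-one geodesic $\Rightarrow$ $G$ has a rank-one axial isometry'' under a cocompact action is usually obtained more directly via density of closed geodesics (or Ballmann's closing lemma for rank-one geodesics) rather than through the contracting/North-South machinery you invoke; the latter works but is anachronistic relative to the original proofs and requires the properness you correctly note.
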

	
	Ce résultat a ouvert la voie d'une étude systématique de ce type de rigidité liée au rang de l'action, notamment avec les travaux de W. Ballmann, M. Brin, R. Spatzier, P. Eberlein et K. Burns (voir \cite{ballmann_brin95}, \cite{ballman_brin_eberlein}, \cite{burns_spatzier}, \cite{eberlein_heber} et \cite{ballman_burns_spatzier}). On peut formuler la conjecture de la rigidité du rang de la manière suivante. 
	
	\begin{conj}[Rigidité du rang]
		Soit $X$ un espace $\cat$(0) irréductible, localement compact et géodésiquement complet. Soit $G$ un groupe dénombrable qui agit géométriquement sur $X$. Alors soit $X$ est un espace symétrique de type non compact et de rang supérieur, soit $X$ est un immeuble affine de dimension $\geq2$, soit $G$ contient une isométrie de rang 1. 
	\end{conj}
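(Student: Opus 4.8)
On raisonne par contraposée : on suppose que $G$ ne contient aucune isométrie de rang $1$ pour l'action $G \curvearrowright X$, et l'on veut montrer que $X$ est un espace symétrique de type non compact de rang supérieur, ou bien un immeuble affine de dimension $\geq 2$. Observons d'abord que l'hypothèse exclut d'emblée les cas dégénérés : si $X = \R$, ou si $X$ est un arbre non réduit à un point, ou si $X$ est un espace symétrique de rang $1$ (donc l'un des $\mathbb{H}^n_\K$), alors $G$ possède une isométrie axiale dont un axe ne borde aucun demi-plat, c'est-à-dire une isométrie de rang $1$. Il reste donc à traiter le cas où $X$ est irréductible, propre, géodésiquement complet, non isométrique à $\R$, et où $G$ --- donc aussi $\iso(X)$, qui agit encore cocompactement --- ne contient pas d'élément de rang $1$.

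\textbf{Première étape : fermeture des géodésiques de rang $1$.} On montre que l'absence d'élément de rang $1$ dans $G$ force $X$ à être de rang supérieur, au sens où toute droite géodésique de $X$ borde un demi-plat (de façon équivalente, dans ce cadre cocompact et géodésiquement complet, est contenue dans un plat de dimension $\geq 2$, via un argument limite). L'argument est dans l'esprit de Ballmann : si une droite géodésique $c$ ne bordait pas de demi-plat, elle serait contractante (théorème \ref{bestv fuj rank one contracting intro}), ses deux extrémités dans $\bd X$ seraient « isolées », et un argument de récurrence exploitant la cocompacité produirait une isométrie $g \in G$ d'axe arbitrairement proche de $c$, donc elle-même contractante, donc de rang $1$ ; la proposition \ref{non elem caprace fuj intro} et le renforcement de Hamenstädt (\cite[Theorem 1.1]{hamenstadt09}) fourniraient même une paire d'isométries de rang $1$ indépendantes dans $G$, contredisant l'hypothèse. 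La mise en œuvre de ce lemme de fermeture dans le cadre $\cat$(0) localement compact, sans hypothèse riemannienne, est un point délicat mais que l'on peut espérer accessible.

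\textbf{Deuxième étape et conclusion.} Supposons désormais $X$ de rang supérieur, $n = \rang(X) \geq 2$. On vise à montrer que pour tout $\xi \in \bd X$, le stabilisateur $\stab(\xi) \leq \iso(X)$ agit cocompactement sur $X$. Heuristiquement : puisque toute géodésique s'étend dans un $n$-plat, $\bd X$ muni de la topologie de Tits est un complexe épais et très homogène, dont les $(n-1)$-sphères provenant des bords des $n$-plats jouent le rôle d'appartements d'un immeuble sphérique, et la cocompacité de l'action sur $X$ se transporte au bord en une transitivité essentielle de $\stab(\xi)$ sur les $n$-plats contenant un rayon donné de classe $\xi$. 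Une fois cette cocompacité parabolique acquise, le théorème de Caprace--Monod (\cite[Theorem 1.3]{caprace_monod2}) donne que $X$ est isométrique à un produit d'espaces symétriques de type non compact, d'immeubles euclidiens et d'arbres de Bass--Serre ; l'irréductibilité impose un unique facteur, qui ne peut être ni un arbre (qu'il soit de Bass--Serre ou immeuble de dimension $1$), ni $\R$, ni un espace symétrique de rang $1$, sous peine de produire une isométrie de rang $1$ dans $G$. Il ne reste que les deux possibilités souhaitées.

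\textbf{Obstacle principal.} La deuxième étape est le cœur du problème et la raison pour laquelle l'énoncé demeure conjectural. Passer de l'hypothèse « de rang supérieur » à la cocompacité des stabilisateurs paraboliques revient essentiellement à munir $\bd X$ d'une structure d'immeuble sphérique épais, puis à l'identifier via la classification de Tits des immeubles sphériques irréductibles de rang $\geq 3$ (complétée par la version topologique de Burns--Spatzier dans le cas non discret) et sa variante pour les polygones de Moufang en rang $2$. Or organiser les plats de $X$ en appartements satisfaisant les axiomes d'incidence, à partir de la seule abondance de plats, n'est pas connu en général, le cas du rang $2$ étant particulièrement rétif. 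C'est précisément dans les situations où cette étape peut être menée directement --- variétés de Hadamard (Ballmann, Burns--Spatzier, Eberlein--Heber), complexes $\cat$(0) de dimension $2$ (Ballmann--Brin), complexes cubiques $\cat$(0) (Caprace--Sageev) --- que la conjecture est un théorème ; le lemme de fermeture de la première étape ne constitue qu'une difficulté secondaire, mais réelle hors du cadre riemannien.
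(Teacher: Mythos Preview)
The statement you are addressing is labelled \emph{Conjecture} in the paper, not \emph{Théorème}: the paper does not prove it, and no proof exists in the literature in this generality. Immediately after the statement, the paper only lists the known special cases (Ballmann--Brin for cell complexes of dimension $2$ and $3$, Caprace--Sageev for cube complexes, Caprace--Fujiwara for Coxeter complexes and buildings, Stadler for spaces without $3$-flats). There is therefore no ``paper's own proof'' to compare against.

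That said, your sketch is a reasonable heuristic outline of how one might hope to attack the conjecture, and you correctly identify the principal obstruction: promoting the higher-rank hypothesis to a spherical-building structure on $\partial_T X$ (equivalently, to cocompactness of point stabilizers at infinity, so as to feed into \cite[Theorem 1.3]{caprace_monod2}) is precisely the missing ingredient. Your first step --- the closing lemma producing a rank-one isometry from a rank-one geodesic under a cocompact action --- is indeed the easier half and is essentially contained in the Ballmann--Bestvina--Fujiwara circle of ideas you cite. Your honest acknowledgement that the second step is open, and your pointer to the rank-$2$ difficulty (Moufang polygons), are both accurate. In short: this is not a proof, nor could it be, but as a strategy memo it is well-aimed and matches the state of the art the paper surveys.
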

	
	Cette conjecture a été prouvée dans certains cas particuliers, notamment : 
	\begin{itemize}
		\item si $X$ est un complexe cellulaire de dimension 2 ou 3, \cite{ballmann_brin95}, \cite{ballmann_brin00}; 
		\item si $X$ est un complexe cubique $\cat$(0) de dimension finie, \cite{caprace_sageev11}; 
		\item si $X$ est un complexe de Coxeter ou un immeuble \cite{caprace_fujiwara10}, moyennant quelques conditions supplémentaires sur l'action; 
		\item si $X$ ne contient pas de plat de dimension 3, \cite{stadler22}.
	\end{itemize}

	Selon cette conjecture, les actions de groupes sur des espaces $\cat$(0) qui contiennent des éléments de rang 1 sont \og génériques \fg{}, et une partie de mon travail de thèse a été de les étudier. 
	
	\subsubsection{Conjecture du plat périodique}
	
	Un problème ouvert et très général dans la théorie des espaces $\cat$(0) est le suivant : étant donné un groupe discret $G$ agissant géométriquement sur un espace $\cat$(0) propre, est-il vrai que $G$ est hyperbolique si et seulement si $G$ ne contient pas de sous-groupe isomorphe à $\mathbb{Z}^2$ ?
	
	\begin{thm}[{\cite[Theorem III.H.1.5]{bridson_haefliger99}}]
		Soit $G$ un groupe discret agissant géométriquement sur un espace $\cat$(0) propre. Alors $G$ est hyperbolique si et seulement si $X$ ne contient pas un plat de dimension 2. 
	\end{thm}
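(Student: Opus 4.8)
\emph{Proposition de preuve.} Le plan est de se ramener, via le lemme de \v{S}varc--Milnor, à l'énoncé purement métrique: un espace $\cat$(0) propre admettant une action géométrique est Gromov-hyperbolique si et seulement s'il ne contient pas de plat de dimension~$2$. Comme $G$ agit proprement, cocompactement et par isométries sur l'espace de longueur complet $X$, le lemme de \v{S}varc--Milnor montre que $G$ est quasi-isométrique à $X$; l'hyperbolicité étant invariante par quasi-isométrie entre espaces géodésiques, $G$ est hyperbolique si et seulement si $X$ l'est. Il reste donc à traiter l'équivalence pour $X$ lui-même.

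Le sens \og un plat $\Rightarrow$ $X$ non hyperbolique \fg{} est immédiat: si $F\subseteq X$ est totalement géodésique et isométrique à $\mathbb{E}^2$, alors $F$ est convexe dans $X$ (l'unique géodésique de $X$ joignant deux points de $F$ coïncide avec la géodésique euclidienne, qui reste dans $F$), de sorte que tout triangle géodésique de $X$ à sommets dans $F$ est isométrique à un triangle euclidien plein. En choisissant dans $F$, pour chaque $R>0$, un triangle équilatéral de côté $\geq 100R$, on obtient un triangle géodésique de $X$ dont le centre inscrit est à distance $>R$ de la réunion des deux autres côtés; $X$ n'est donc $R$-fin pour aucun $R$, et n'est pas hyperbolique.

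Pour la réciproque on raisonne par contraposée: supposant $X$ non hyperbolique, on veut construire un plongement isométrique $\mathbb{E}^2\hookrightarrow X$. L'hyperbolicité d'un espace géodésique équivalant à la finesse uniforme des triangles, on dispose pour chaque $n$ d'un triangle géodésique $\Delta(x_n,y_n,z_n)$ et d'un point $m_n\in[x_n,y_n]$ avec $d(m_n,[x_n,z_n]\cup[y_n,z_n])>n$; en particulier $d(m_n,x_n)$, $d(m_n,y_n)$ et $d(m_n,z_n)$ excèdent $n$, chacun de ces sommets étant extrémité de l'un des côtés évités. Par cocompacité on fixe un compact $K$ avec $GK=X$, puis on remplace la configuration par son image sous un $g_n\in G$ tel que $m_n\in K$. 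Comme $X$ est propre, le théorème d'Arzelà--Ascoli permet, quitte à extraire, de supposer $m_n\to m\in K$ et que les segments $[m_n,x_n]$, $[m_n,y_n]$, $[m_n,z_n]$ (de longueurs tendant vers l'infini) convergent uniformément sur les compacts vers des rayons géodésiques $\rho_x,\rho_y,\rho_z$ issus de $m$. Puisque $m_n\in[x_n,y_n]$, la réunion $L:=\rho_x\cup\rho_y$ est une limite de géodésiques, donc une géodésique locale, donc une droite géodésique de $X$ par unique géodésicité. L'étape suivante est de montrer que $\conv(L\cup\rho_z)$ est isométrique à un demi-plan euclidien $\mathbb{R}\times[0,\infty)$: l'information à exploiter est que les côtés $[x_n,z_n]$, $[y_n,z_n]$ joignent des points arbitrairement éloignés le long de $\rho_x$ (resp.\ $\rho_y$) à des points arbitrairement éloignés le long de $\rho_z$ tout en restant à distance $>n$ de $m_n$ --- comportement euclidien et non hyperbolique --- que l'on convertit, au moyen des lemmes de comparaison $\cat$(0) décrivant les sous-configurations plates (un triangle $\cat$(0) dont un angle d'Alexandrov égale son angle de comparaison est isométrique à un triangle euclidien plein, et ses analogues pour les quadrilatères), en la platitude de la configuration limite; on obtient ainsi un demi-plat $H\subseteq X$ de droite-bord $L$. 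Un dernier argument d'élargissement, de nouveau par cocompacité, produit le plat cherché: on translate $H$ par $h_n\in G$ pour amener le point de $H$ situé à distance $n$ de $\partial H=L$ dans $K$, on recentre les plongements isométriques $\mathbb{R}\times[-n,\infty)\hookrightarrow X$ correspondants, et l'on extrait par Arzelà--Ascoli une limite $\psi:\mathbb{R}^2\to X$; limite uniforme sur les compacts de plongements isométriques, $\psi$ préserve les distances, et comme le bord du demi-plat translaté s'éloigne à l'infini le domaine recentré épuise $\mathbb{R}^2$, de sorte que $\psi(\mathbb{R}^2)$ est un plat de dimension~$2$.

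L'obstacle principal est le passage de la configuration-limite \og grasse \fg{} à la platitude effective du demi-plat $H$: il faut à la fois choisir soigneusement les triangles (voire des quadrilatères) témoins de la non-hyperbolicité et les points de recentrage pour que les limites d'Arzelà--Ascoli ne dégénèrent pas, et faire fonctionner la géométrie de comparaison $\cat$(0) pour exhiber une région isométrique à un secteur, puis à une bande euclidienne. Les autres ingrédients --- la réduction par \v{S}varc--Milnor, le sens facile, et l'élargissement final du demi-plat en un plat complet --- sont de routine.
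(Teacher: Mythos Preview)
Le texte ne démontre pas ce théorème: il est simplement énoncé avec une référence à \cite[Theorem III.H.1.5]{bridson_haefliger99}, dans le cadre de la discussion de la conjecture du plat périodique. Il n'y a donc pas de preuve propre au texte à comparer.

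Ton plan suit l'approche standard du \og Flat Plane Theorem \fg{} de Bridson--Haefliger: réduction par \v{S}varc--Milnor, sens facile par existence de triangles euclidiens arbitrairement gros dans un plat, sens difficile par extraction de triangles gras recentrés par cocompacité, passage à la limite via Arzelà--Ascoli pour obtenir un demi-plat, puis élargissement du demi-plat en plat complet par une seconde extraction cocompacte. C'est bien la structure de la preuve originale. Tu identifies correctement le point délicat: montrer que la configuration limite (droite $L$ et rayon $\rho_z$) borde effectivement un demi-plat. Dans Bridson--Haefliger, cela passe par une suite de lemmes de platitude (triangle plat, quadrilatère plat, bande plate) qui exploitent l'égalité entre angles d'Alexandrov et angles de comparaison; ton esquisse les invoque sans les détailler, ce qui est acceptable pour un plan de preuve mais constitue, comme tu le signales toi-même, l'endroit où le travail technique se concentre.
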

	
	Par conséquent, on peut reformuler la conjecture dans ces termes. Soit $G$ un groupe discret agissant géométriquement sur un espace $\cat$(0) propre, est-ce que l'existence d'un plat de dimension 2 dans $X$ implique l'existence d'un sous-groupe isomorphe à $\mathbb{Z}^2$ ? Plus généralement, est-ce que l'existence d'un plat de dimension n dans $X$ implique l'existence d'un sous-groupe isomorphe à $\mathbb{Z}^n$ ? 
	
	La réponse est vraie pour $n = 1$, \cite{swenson99}. Il y a eu des avancées plus récemment dans le cade général dans \cite{caprace_zadnik13}, mais la conjecture reste ouverte. 
	
	\subsubsection{Alternative de Tits}
	
	On termine cette section en discutant de la question de l'alternative de Tits sur un groupe. Un groupe localement compact est dit moyennable si toute action affine de $G$ sur un convexe compact contient un point fixe. On reviendra sur la moyennabilité dans la section \ref{section action moyennable}, mais rappelons simplement que cette notion est stable par extension et par sous-groupe fermé. Puisque les groupes compacts sont moyennables, tout groupe virtuellement résoluble est moyennable. En revanche, le groupe libre à deux générateurs $\mathbb{F}_2$ n'est pas moyennable. Le résultat fondamental suivant montre que pour les groupes linéaires, la question de la moyennabilité conduit à une dichotomie simple. 
	
	\begin{thm}[{Alternative de Tits, \cite{tits72}}]
		Soit $G < \gl (n, \R)$ un sous-groupe. Alors $G$ est virtuellement résoluble ou contient un sous-groupe libre non-abélien. 
	\end{thm}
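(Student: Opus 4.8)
Le plan est de reproduire l'argument de ping-pong dû à Tits. Je commencerais par me ramener au cas où $G$ est de type fini : un groupe linéaire dont tous les sous-groupes de type fini sont virtuellement résolubles est lui-même virtuellement résoluble (grâce aux bornes uniformes de Mal'cev sur la longueur dérivée et de Jordan--Schur sur l'indice d'un sous-groupe abélien d'un groupe linéaire fini, qui ne dépendent que de $n$, via un argument de finitude). Donc si $G$ n'est pas virtuellement résoluble, l'un de ses sous-groupes de type fini ne l'est pas non plus, et il suffit d'y produire un sous-groupe libre. Je suppose désormais $G$ de type fini, engendré par des matrices à coefficients dans un corps $K$ de type fini, et non virtuellement résoluble.

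Le cœur de la preuve, et la difficulté principale, est de construire un \emph{élément proximal}. Précisément, je chercherais un corps local $k$, une représentation $V$ de dimension finie sur $k$ --- obtenue de la représentation standard en remplaçant d'abord $G$ par un sous-groupe d'indice fini pour que chaque constituant irréductible soit \emph{fortement irréductible}, puis en passant à une puissance extérieure $\wedge^r$ bien choisie d'un bon constituant --- et un élément $g \in G$ dont l'image dans $\gl(V)$ a une unique valeur propre de module maximal, celle-ci étant simple. L'existence de $g$ et de $k$ provient de la non-résolubilité virtuelle : la clôture de Zariski de $G$ a alors une partie semi-simple non triviale, donc $G$ contient un élément ayant une valeur propre $\lambda$ qui n'est pas une racine de l'unité ; comme $K$ est de type fini, le théorème de Kronecker (sous la forme utilisant la formule du produit) fournit une place $v$ de $K(\lambda)$, archimédienne ou ultramétrique, telle que $|\lambda|_v \neq 1$. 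En posant $k = K(\lambda)_v$ et en passant au bon $\wedge^r$, l'élément correspondant devient proximal sur $\mathbb{P}(V)$.

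Une fois $g$ proximal, il agit sur $\mathbb{P}(V)$ avec une dynamique Nord-Sud : un point attractif $x^+$ et un hyperplan répulsif $H^-$. J'utiliserais ensuite la forte irréductibilité pour trouver $h \in G$ tel que $h^{\pm 1} x^+ \notin H^-$ et $x^+ \notin h^{\pm 1} H^-$, ce qui est possible car l'orbite $G \cdot x^+$ n'est contenue dans aucun hyperplan. Alors $g' = h g h^{-1}$ est proximal, de point attractif $h x^+$ et d'hyperplan répulsif $h H^-$, en position générale par rapport aux données de $g$. Pour $N$ assez grand, les dynamiques Nord-Sud de $g^{\pm N}$ et $(g')^{\pm N}$ sur $\mathbb{P}(V)$ satisfont les hypothèses du lemme de ping-pong, appliqué à de petits voisinages disjoints de $x^+$ et $h x^+$ évitant les hyperplans répulsifs : tout mot réduit non trivial en $g^N$ et $(g')^N$ déplace un voisinage fixé hors de lui-même, donc est non trivial. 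Ainsi $\langle g^N, (g')^N \rangle$ est libre non abélien, et $G$ contient un sous-groupe libre non abélien.

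Je m'attends à ce que l'obstacle principal soit la construction de l'élément proximal : il faut combiner la structure des groupes algébriques (pour localiser la semi-simplicité dans la clôture de Zariski), une réduction soigneuse sur la représentation (passage à un sous-groupe d'indice fini et à une puissance extérieure, afin de transformer l'information « des valeurs propres de modules distincts » en « une valeur propre dominante simple et isolée »), et un argument diophantien pour produire la place $v$ en laquelle une valeur propre sort du cercle unité. Une fois l'élément proximal en main, la partie restante --- le ping-pong --- est purement formelle.
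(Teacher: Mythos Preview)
Your sketch is a faithful outline of Tits's original argument --- the reduction to finitely generated subgroups, the construction of a proximal element via a well-chosen local field completion and exterior power, and the ping-pong on projective space --- and I do not see a genuine gap in the plan.

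However, the paper does not prove this theorem at all: it is stated as a classical result with a citation to \cite{tits72}, purely as background motivation for the question of whether $\cat(0)$ groups satisfy an analogous dichotomy. There is no proof in the paper to compare your proposal against. Your work stands on its own as a reasonable summary of Tits's method, but it is not something this paper undertakes.
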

	
	Il est naturel de chercher à savoir si des groupes $\cat$(0) plus généraux satisfont cette dichotomie. La question la plus générale possible est donc la suivante. Soit $G$ un groupe agissant proprement et cocompactement sur un espace $\cat$(0) : est-il vrai que $G$ est virtuellement résoluble ou possède un sous-groupe libre non-abélien ? 
	
	La réponse est positive dans les cas suivants : 
	\begin{itemize}
		\item si $X$ est un complexe cubique $\cat$(0) de dimension finie \cite{caprace_sageev11}.
		\item si $X$ est un complexe triangulaire $\cat$(0) \cite[Theorem A]{osajda_przytycki21}.
	\end{itemize}  
	
	Dans les cas précédents, la preuve se ramène à détecter la présence d'éléments de rang 1, afin de pouvoir utiliser l'argument de ping-pong qui conduit à la proposition \ref{non elem caprace fuj intro}.

	\section{Modèles hyperboliques}
	
	Dans cette section, on présente une structure combinatoire pour des espaces $\cat$(0) quelconques, qui mène à la construction de modèles hyperboliques. Ces modèles ont été construits par Petyt, Zalloum et Spriano dans \cite{petyt_spriano_zalloum22}, et s'inspirent du complexe des courbes dans le cadre des groupes modulaires de surfaces ou du graphe de contact dans les complexes cubiques $\cat$(0). Dans les deux cas, ces objets combinatoires s'avèrent être hyperboliques. En fait, les analogies entre ces théories sont multiples, et ont fait l'objet de plusieurs développements récents (\cite{petyt21}, \cite{hagen14}, \cite{genevois19}...). On les utilisera de manière essentielle sur ces objets dans le Chapitre \ref{chapter rw cat}. 
	
	\subsection{Graphe de contact sur un complexe cubique}
	
	Les modèles hyperboliques dont nous parlerons plus tard s'inspirent beaucoup du graphe de contact dans le cadre de complexes cubiques $\cat$(0), que nous rappelons maintenant. 
	
	Soit $X$ un complexe cubique $\cat$(0). 
	
	\begin{Def}
		Soit $H, K$ deux hyperplans de $X$, et soit $h, k \in \mathcal{H}(X)$ des demi-espaces associés respectivement à $H, K$. On notera par $h^\ast$ le demi-espace associé à $H$ qui correspond à changer l'orientation, donné par la proposition \ref{prop hyperplans ccc}. On dit que les hyperplans $H$ et $K$ sont 
		\begin{enumerate}
			\item \textit{fortement imbriqués} si à orientation près, $h \subsetneq k$ et si tout demi-espace $h'$ tel que $h \subseteq h' \subseteq k$ vérifie alors $h=h' $ ou $h'=k$. 
			\item \textit{transverses} si $h \cap k$, $h^\ast \cap k$, $h \cap k^\ast$ et $h^\ast \cap k^\ast$ sont toutes non vides. 
			\item \textit{fortement séparés} si $H $ et $K $ ne sont pas transverses et si aucun hyperplan n'est transverse à $H$ et $K $ simultanément. 
		\end{enumerate}
	\end{Def}
	
	La définition du graphe de contact est la suivante. 
	
	\begin{Def}
		Soit $X$ un complexe cubique $\cat$(0). Le \textit{graphe de contact} de $X$, que l'on note $\mathcal{C}X$, est le graphe dont les sommets sont les hyperplans de $X$ et tel que deux hyperplans sont reliés par une arête s'ils sont fortement imbriqués ou transverses. 
	\end{Def}
	
	Dans l'article fondamental \cite{masur_minsky99}, Masur et Minsky montrent que le complexe des courbes sur une surface orientée de type fini est $\delta$-hyperbolique. Un résultat analogue est vrai pour les complexes cubiques $\cat$(0). 
	
	\begin{thm}[{\cite[Theorem 4.1]{hagen14}}]
		Le graphe de contact $\mathcal{C}X$ associé à un complexe cubique $\cat$(0) est quasi-isométrique à un arbre. En particulier, il est Gromov-hyperbolique. 
	\end{thm}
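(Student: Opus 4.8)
Le plan est d'appliquer le \emph{critère du goulot d'étranglement} de Manning : un graphe connexe $\Gamma$ est quasi-isométrique à un arbre simplicial si et seulement s'il existe $\Delta \geq 0$ tel que, pour tous $x, y \in \Gamma$, on puisse choisir un milieu $m$ (vérifiant $d(x, m) = d(y, m) = \tfrac{1}{2}d(x, y)$) par lequel passe, à distance au plus $\Delta$, tout chemin joignant $x$ à $y$. Il suffit donc de produire une telle constante $\Delta$, que l'on cherchera d'ailleurs uniforme en le complexe cubique $X$. Fixons deux hyperplans $H$ et $K$, posons $n := d_{\mathcal{C}X}(H, K)$, et supposons $n$ grand, le cas borné étant immédiat.

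L'observation géométrique essentielle est qu'un hyperplan qui \emph{sépare} $N(H)$ de $N(K)$ dans $X$ fournit automatiquement un goulot. En effet, un chemin $H = V_0, V_1, \dots, V_m = K$ dans $\mathcal{C}X$ correspond à une suite d'hyperplans dont deux porteurs consécutifs $N(V_j)$, $N(V_{j+1})$ se rencontrent (deux hyperplans adjacents dans $\mathcal{C}X$ étant transverses ou emboîtés sans hyperplan s'intercalant strictement entre eux) ; la réunion $\bigcup_j N(V_j)$ est donc une partie connexe de $X$ contenant $N(H)$ et $N(K)$. Si $W$ sépare $N(H)$ de $N(K)$, cette connexité force l'existence d'un indice $j$ tel que $N(V_j)$ rencontre $N(W)$, donc tel que $d_{\mathcal{C}X}(V_j, W) \leq 1$. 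Tout revient alors à exhiber un hyperplan séparant $W$ qui soit de surcroît un \emph{milieu grossier} de $\{H, K\}$ dans $\mathcal{C}X$, c'est-à-dire avec $d_{\mathcal{C}X}(H, W)$ et $d_{\mathcal{C}X}(W, K)$ toutes deux de l'ordre de $n/2$.

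Pour cela, on part du fait que, $n$ étant $\geq 2$, les convexes $N(H)$ et $N(K)$ sont disjoints, donc séparés par un hyperplan dans $X$. On note $\mathcal{S}$ l'ensemble des hyperplans séparant $N(H)$ de $N(K)$ ; en extrayant de $\mathcal{S}$ une sous-chaîne maximale pour l'emboîtement des demi-espaces, on obtient un chemin $H, W_1, \dots, W_k, K$ dans $\mathcal{C}X$, car deux termes consécutifs ont nécessairement des porteurs qui se rencontrent — sinon un hyperplan les séparant appartiendrait à $\mathcal{S}$ et s'intercalerait dans la chaîne — et de même aux deux bouts. L'application $i \mapsto d_{\mathcal{C}X}(H, W_i)$ est alors $1$-lipschitzienne, vaut au plus $1$ en $i = 1$ et au moins $n - 1$ en $i = k$ ; elle prend donc la valeur $\lfloor n/2 \rfloor$ en un certain indice $i_0$, et l'on pose $W := W_{i_0}$, d'où déjà $d_{\mathcal{C}X}(W, K) \geq \lceil n/2 \rceil$ par l'inégalité triangulaire.

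Le point véritablement difficile — et le cœur du résultat de Hagen — est la majoration $d_{\mathcal{C}X}(W, K) \leq n/2 + O(1)$, c'est-à-dire le fait qu'un hyperplan séparant se situe, à distance bornée près, sur une géodésique de $\mathcal{C}X$ joignant $H$ à $K$. Ce contrôle n'a rien d'automatique : des hyperplans \emph{transverses} autorisent a priori des raccourcis dans $\mathcal{C}X$ court-circuitant de longues familles d'hyperplans séparants (le graphe de contact de la bande $\R \times [0, 1]$ est de diamètre $2$, alors qu'elle contient une infinité d'hyperplans séparant deux hyperplans verticaux fixés). Sa résolution repose sur l'analyse des hyperplans \emph{fortement séparés} introduits plus haut : le long d'une géodésique de $\mathcal{C}X$, deux hyperplans assez éloignés finissent par être fortement séparés, ce qui interdit exactement ce type de raccourcis et force les géodésiques de $\mathcal{C}X$ entre $H$ et $K$ à longer la chaîne de séparation à distance bornée. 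On en déduit que $W$ est bien un milieu grossier, puis que $\mathcal{C}X$ satisfait le critère de Manning (insensible au remplacement d'un milieu exact par un milieu grossier) ; il est donc quasi-isométrique à un arbre, d'où l'hyperbolicité de Gromov.
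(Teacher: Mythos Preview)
Le papier ne démontre pas ce théorème : il s'agit d'un résultat de Hagen simplement cité sans preuve. Il n'y a donc pas de \og preuve du papier \fg{} à laquelle confronter votre tentative ; je commente uniquement la validité de votre argument.

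Votre approche par le critère de Manning est la bonne, et l'observation clef --- un hyperplan $W$ séparant $N(H)$ de $N(K)$ est approché à distance $\leq 1$ par tout chemin de $H$ à $K$ dans $\mathcal{C}X$ --- est exactement ce qu'il faut. Mais vous vous égarez à l'étape finale. Après avoir construit un $W$ séparant avec $d_{\mathcal{C}X}(H, W) = \lfloor n/2 \rfloor$, vous déclarez que la majoration $d_{\mathcal{C}X}(W, K) \leq n/2 + O(1)$ est \og le point véritablement difficile \fg{} et invoquez vaguement les hyperplans fortement séparés. Or cette majoration découle \emph{immédiatement} de votre propre observation de goulot, appliquée à une géodésique $H = V_0, \dots, V_n = K$ : il existe $j$ avec $d_{\mathcal{C}X}(V_j, W) \leq 1$, d'où
\[
d_{\mathcal{C}X}(H, W) + d_{\mathcal{C}X}(W, K) \leq (j+1) + (n-j+1) = n + 2,
\]
et donc $d_{\mathcal{C}X}(W, K) \leq \lceil n/2 \rceil + 2$. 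Autrement dit, ce que vous qualifiez de \og c\oe ur du résultat de Hagen \fg{} --- qu'un hyperplan séparant est à distance bornée d'une géodésique de $\mathcal{C}X$ --- est précisément le contenu de votre deuxième paragraphe. Votre $W$ est donc un milieu grossier et le critère de Manning s'applique. L'appel aux hyperplans fortement séparés est un détour superflu : tous les ingrédients étaient déjà là, vous ne les avez simplement pas assemblés.
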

	
	Plus récemment, A. Genevois a proposé dans \cite{genevois20} un raffinement de la notion d'espaces fortement séparés, dont s'inspireront Petyt, Zalloum et Spriano pour le cas d'un espace $\cat$(0) quelconque. 
	
	\begin{Def}
		Deux hyperplans disjoints $H, K$ sont $L$-fortement séparés, pour $L \geq 0$ si toute chaine d'hyperplans qui est transverse simultanément à $H$ et $K$ est de cardinal $\leq L$. 
	\end{Def}
	
	En utilisant des hyperplans fortement séparés, Genevois donne une caractérisation utile pour déterminer si une isométrie d'un complexe cubique $\cat$(0) est contractante. Soit $H_1, H_2$ des hyperplans de $X$, et $h_1, h_2$ des demi-espaces associés à $H_1$, $H_2$ respectivement. On dit que $g$ \textit{traverse} la paire $H_1,H_2$ si, quitte à changer $h_i $ par $h_i^\ast$, il existe $n \in \mathbb{Z} \setminus \{0\}$ tel que $g^n h_1 \subsetneq h_2 \subsetneq h_1$. 
	
	\begin{thm}[{\cite[Theorem 1.3]{genevois20}}]
		Soit $X$ un complexe cubique $\cat$(0) et $g$ une isométrie de $X$. Alors $g $ est contractante si et seulement $g$ traverse une paire d'hyperplans $H_1, H_2$ qui sont $L$-fortement séparés. 
	\end{thm}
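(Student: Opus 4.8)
\noindent\emph{Esquisse de preuve.} Le plan est de passer par un axe combinatoire de $g$ et de traduire le caractère contractant en termes de forte séparation des hyperplans traversés. Comme $X$ est de dimension finie, les métriques $\ell^1$ et $\cat(0)$ y sont bi-lipschitziennes (de constante $\sqrt{\dim X}$), donc ``$g$ contractante'' équivaut à ``$g$ est $\ell^1$-axiale et possède un axe combinatoire $\gamma$ contractant pour la projection par portes $\pi_\gamma$'' ; de plus toute isométrie non elliptique d'un complexe cubique $\cat(0)$ de dimension finie admet un axe combinatoire (Haglund). On démontrerait d'abord le critère pivot suivant : \emph{un axe combinatoire $\gamma$ est contractant si et seulement s'il existe $L, R \geq 0$ tels que deux hyperplans quelconques traversant $\gamma$ et séparés le long de $\gamma$ par au moins $R$ hyperplans sont $L$-fortement séparés.} La direction difficile de ce critère --- ``séparation uniforme des hyperplans éloignés $\Rightarrow$ $\gamma$ contractant'' --- reposerait sur une estimée de pont borné pour les hyperplans $L$-fortement séparés (le diamètre de la porte de $N(H)$ dans $N(K)$ est majoré en fonction de $L$ et de $\dim X$, à la Behrstock--Charney), puis sur un argument de projection standard.

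\smallskip
\noindent Pour l'implication ``$\Leftarrow$'' du théorème, supposons $g^n h_1 \subsetneq h_2 \subsetneq h_1$ avec $H_1, H_2$ $L$-fortement séparés. On poserait $\mathcal{W}_k := g^{kn} H_1$ pour $k \in \mathbb{Z}$ : les demi-espaces $g^{kn} h_1$ formant une suite strictement emboîtée, les $\mathcal{W}_k$ sont deux à deux disjoints et linéairement ordonnés, avec $g^{kn} H_2$ intercalé entre $\mathcal{W}_k$ et $\mathcal{W}_{k+1}$. Premier point : \emph{toute} paire $\mathcal{W}_i, \mathcal{W}_j$ (avec $i < j$) est $L$-fortement séparée. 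En effet, si un hyperplan $V$ est transverse à $\mathcal{W}_i$ et $\mathcal{W}_j$ et si $W$ est un hyperplan situé entre les deux, l'emboîtement des demi-espaces force les quatre quadrants de $(V, W)$ à être non vides, donc $V$ est transverse à $W$ ; en particulier toute chaîne transverse à $\mathcal{W}_i$ et $\mathcal{W}_j$ est transverse à la paire $(g^{in} H_1, g^{in} H_2)$, donc de cardinal $\leq L$. Deuxième point : on prendrait un axe combinatoire $\gamma$ de $g$ ; il traverse tous les $\mathcal{W}_k$, et comme $g$ décale la suite ordonnée des hyperplans traversés par $\gamma$, deux $\mathcal{W}_k$ consécutifs y sont à distance bornée $D_0$. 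Alors deux hyperplans $A, B$ traversant $\gamma$ et séparés le long de $\gamma$ par plus de $3 D_0$ hyperplans admettent deux $\mathcal{W}_i, \mathcal{W}_j$ strictement intercalés entre eux ; tout $V$ transverse à $A$ et $B$ est alors transverse à $\mathcal{W}_i$ et $\mathcal{W}_j$ (même argument de quadrants), donc $A$ et $B$ sont $L$-fortement séparés. Le critère pivot s'applique avec $R = 3 D_0$, donc $\gamma$ est contractant et $g$ est une isométrie contractante.

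\smallskip
\noindent Pour l'implication ``$\Rightarrow$'', supposons $g$ contractante, d'axe combinatoire contractant $\gamma$. Les hyperplans traversés par $\gamma$ sont linéairement ordonnés et $g$ y agit par un décalage d'amplitude $\geq 1$ ; $g$ traverse donc chacun d'eux. Par le critère pivot, il existe $L, R$ tels que deux hyperplans traversant $\gamma$ suffisamment espacés soient $L$-fortement séparés ; on fixerait $W_0$ traversé par $\gamma$ et $W_N$ traversé ``plus loin'', espacé de plus de $R$ hyperplans, de sorte que $W_0$ et $W_N$ soient $L$-fortement séparés. Enfin $g$ traverse la paire $(W_0, W_N)$ : en orientant $h_0$ et $h_N$ vers la direction où les positions sur $\gamma$ croissent, on a $h_N \subsetneq h_0$ puisque $W_N$ est ``au-delà'' de $W_0$, et $g^{n'} h_0 \subsetneq h_N$ dès que $g^{n'}$ translate $\gamma$ d'une longueur strictement supérieure à la position de $W_N$.

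\smallskip
\noindent Le principal obstacle est la direction difficile du critère pivot, c'est-à-dire le passage de la forte séparation uniforme des hyperplans éloignés au caractère contractant de l'axe : il faut l'estimée de pont borné pour les hyperplans $L$-fortement séparés, et un argument de projection qui transforme ``une géodésique et $\gamma$ traversent toutes deux deux hyperplans éloignés d'une même chaîne fortement séparée'' en un contrôle uniforme de $\diam \pi_\gamma$. Il faut aussi prendre garde aux hypothèses de dimension finie, utilisées à la fois pour l'existence de l'axe combinatoire et pour le transfert de la propriété contractante entre $\ell^1$ et $\cat(0)$.
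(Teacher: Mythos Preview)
The paper does not contain a proof of this statement: it is quoted verbatim as \cite[Theorem 1.3]{genevois20} and used as a black box, with no argument given. There is therefore nothing in the paper's text to compare your sketch against.

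That said, your outline is a reasonable reconstruction of the expected argument. A few remarks on points you should tighten if you want this to stand on its own. First, your ``pivot criterion'' is stated as an equivalence, but you only sketch one direction (strong separation of far-apart hyperplanes $\Rightarrow$ contracting); the converse (contracting $\Rightarrow$ uniform strong separation) is needed for the ``$\Rightarrow$'' implication of the theorem, and you invoke it without justification. Second, in your proof that any pair $\mathcal{W}_i,\mathcal{W}_j$ is $L$-strongly separated, the quadrant argument shows that a hyperplane transverse to both is transverse to every hyperplane \emph{between} them, but you then conclude using the pair $(g^{in}H_1,g^{in}H_2)$; for this to work you need $g^{in}H_2$ to lie between $\mathcal{W}_i$ and $\mathcal{W}_j$, which holds only when $j\geq i+1$, so the case $j=i$ is vacuous and the argument is fine --- but it would be cleaner to say this. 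Third, you repeatedly invoke finite dimension (for Haglund's axis, for the $\ell^1$/$\cat(0)$ comparison, and implicitly for the bounded-bridge estimate), whereas the statement as quoted does not assume it; either add this hypothesis or indicate how to bypass it.
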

	
	\subsubsection{Bord du graphe de contact}
	
	Dans la suite, on étudiera les relations entre les différents bords d'un espace $\cat$(0). Le résultat suivant montre que la définition du graphe de contact \og se comporte bien à l'infini \fg{} par rapport à la topologie conique. 
	
	\begin{thm}[{\cite[Theorem 3.7]{fernos_lecureux_matheus21}}]\label{bord contact}
		Soit $X$ un complexe cubique $\cat$(0). Il existe un plongement homéomorphe $\iso(X)$-équivariant $\iota : \partial \mathcal{C}X \rightarrow \bd X$ du bord de Gromov du graphe de contact vers le bord visuel (en tant qu'espace $\cat$(0)) de $X$. 
	\end{thm}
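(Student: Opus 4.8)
The plan is to send each $\xi\in\partial\mathcal{C}X$ to the endpoint in $\bd X$ of a $\cat$(0) ray that $\xi$ determines canonically, exploiting two features of $\mathcal{C}X$: it is a quasi-tree (Hagen), and along a geodesic of $\mathcal{C}X$ the hyperplanes become uniformly strongly separated in the sense of Genevois. Fix a base vertex $o\in X$. Given $\xi$, choose a geodesic ray $(H_n)_n$ in $\mathcal{C}X$ converging to $\xi$. The structural input I would establish first is that, after passing to a subsequence, the $H_n$ are pairwise disjoint and admit a coherent orientation for which the associated half-spaces form a strictly descending chain $h_0\supsetneq h_1\supsetneq\cdots$ with $o\notin h_0$ and $\bigcap_n h_n=\emptyset$, with consecutive members $h_k,h_{k+1}$ (equivalently $H_k,H_{k+1}$) uniformly strongly separated; this is where the quasi-tree geometry of $\mathcal{C}X$ enters, via \cite{hagen14} and \cite{genevois20}.

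Next I would run a gate argument. Let $x_n:=\mathfrak{g}_{h_n}(o)$ be the gate (combinatorial projection) of $o$ onto the convex half-space $h_n$. Since the $h_n$ are nested, gates compose and $x_n$ lies on a combinatorial geodesic from $o$ to $x_{n+1}$, so $x_0,x_1,x_2,\dots$ are the consecutive vertices of a single combinatorial geodesic ray $\gamma_\xi$, which crosses $H_0,H_1,\dots$ in this order. As $x_n$ is separated from $o$ by $H_0,\dots,H_{n-1}$, finite-dimensionality of $X$ gives $d(o,x_n)\to\infty$. The key point is that $\gamma_\xi$, being an $\ell^1$-geodesic ray crossing a sequence of uniformly strongly separated pairs of hyperplanes, is a contracting quasi-geodesic for the $\cat$(0) metric — the analogue, for a whole sequence, of Genevois' criterion for contracting isometries — and therefore converges to a well-defined point $\iota(\xi):=\lim_{t}\gamma_\xi(t)\in\bd X$. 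Equivalently, $\iota(\xi)$ is the unique $\eta\in\bd X$ whose representative ray $\gamma_o^\eta$ eventually enters every $h_n$: uniqueness follows from the bottleneck behaviour of strongly separated hyperplanes (two rays from $o$ crossing the same strongly separated pair $H,K$ both pass within a uniform distance of the bridge between $H$ and $K$, hence fellow-travel up to a distance tending to $\infty$). This characterisation makes $\iota(\xi)$ independent of the choices made (subsequence, orientation, representing geodesic of $\mathcal{C}X$) and of $o$ (by convexity of the metric, Proposition \ref{prop convexite cat}, and Proposition \ref{prop rayon geod}), and $\iso(X)$-equivariant, since gates, half-spaces and $\mathcal{C}X$ are all natural.

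It then remains to check that $\iota$ is a topological embedding. For injectivity: if $\iota(\xi)=\iota(\xi')$ then $\gamma_\xi,\gamma_{\xi'}$ are asymptotic $\cat$(0) rays, so the bottleneck property forces the two descending chains to cross a common cofinal family of hyperplanes, whence the geodesics $(H_n),(H'_n)$ of $\mathcal{C}X$ converge to the same end and $\xi=\xi'$. For continuity: if $\xi_k\to\xi$, representing geodesics fellow-travel over longer and longer initial segments in $\mathcal{C}X$, the associated chains agree to higher and higher levels, and the gate estimate shows $\gamma_{\xi_k}$ agrees with $\gamma_\xi$ on $[0,T_k]$ with $T_k\to\infty$; hence $\iota(\xi_k)\to\iota(\xi)$ in the cone topology. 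For continuity of $\iota^{-1}$ on its image, argue contrapositively: if $\iota(\xi_k)\to\iota(\xi)$ but $\xi_k\not\to\xi$, then along a subsequence the Gromov products $\langle\xi_k,\xi\rangle$ stay bounded, the chains branch at a bounded level, and the bottleneck property separates $\gamma_{\xi_k}$ from $\gamma_\xi$ by a definite amount at a fixed radius, contradicting $\iota(\xi_k)\to\iota(\xi)$.

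The heart of the argument — and where I expect most of the work to lie — is twofold. First, the structural lemma producing from an end of the quasi-tree $\mathcal{C}X$ a coherently oriented descending chain of uniformly strongly separated half-spaces, together with its uniqueness features, resting on \cite{hagen14} and \cite{genevois20}. Second, the quantitative bottleneck estimates for strongly separated hyperplanes, which are precisely the dictionary converting coarse statements about $\mathcal{C}X$ (fellow-travelling of rays, size of Gromov products) into metric statements in $X$ (asymptotic versus uniformly divergent $\cat$(0) rays); essentially every well-definedness and embedding claim above is a corollary of this dictionary. Note that the usual shortcut of extracting the limiting ray by Arzelà--Ascoli is unavailable, since $X$ need not be locally compact; the gate construction circumvents this by producing $\gamma_\xi$ directly as an increasing union of geodesic segments, at the price that each comparison between two such rays must be carried out by hand from the bottleneck property.
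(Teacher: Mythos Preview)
The paper does not prove this theorem: it is quoted verbatim from \cite[Theorem 3.7]{fernos_lecureux_matheus21} as a background result, with no argument supplied. So there is no ``paper's own proof'' to compare against.

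That said, your outline is very much in the spirit of the argument the thesis \emph{does} give for the analogous statement in the curtain-model setting, Theorem~\ref{thm homeomorphism bords XL} and Proposition~\ref{prop inverse map}. There the r\^ole of your strongly separated hyperplanes is played by $L$-chains of curtains, and the r\^ole of your gate/bridge estimates is played by the bottleneck Lemma~\ref{lem bottleneck intro}; the proof then shows that the $\cat(0)$ segments $[o,x_n]$ form a Cauchy family of rays by a convexity argument, exactly as you propose to do via gates. So your overall architecture is sound and matches the thesis' philosophy.

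Two points deserve care. First, your structural lemma --- that a geodesic ray in $\mathcal{C}X$ yields, after thinning, a nested chain of \emph{uniformly strongly separated} hyperplanes --- is the real content and is not a formal consequence of $\mathcal{C}X$ being a quasi-tree; you should point precisely to where in \cite{hagen14} or \cite{genevois20} this is extracted, or prove it. Second, note that the thesis later refers back to this theorem as an embedding into the \emph{Roller} boundary (``vers le bord de Roller $\partial_{\mathcal{R}}X$''), not the visual boundary as stated here; in the original \cite{fernos_lecureux_matheus21} the target is indeed the Roller boundary, and the passage to $\bd X$ requires an extra step (or a finite-dimensionality hypothesis). Your argument, which produces a $\cat(0)$ ray directly, is closer to the visual-boundary formulation, but you should be aware of the discrepancy.
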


	\subsection{Modèles hyperboliques pour un espace $\cat$(0)}\label{section modèle hyp}

	Dans cette section, on présente quelques idées de \cite{petyt_spriano_zalloum22}. En particulier, on revoit la construction grâce à laquelle on peut attacher une famille de métriques $X_L = (X, d_L)_L$ sur un espace $\cat$(0) qui d'une part sont hyperboliques, et d'autre part capturent correctement le comportement des isométries contractantes de l'espace.

	\begin{Def}
		Soit $X$ un espace $\cat(0) $, et soit $\gamma: I \rightarrow X$ une géodésique. Soit $\pi_\gamma$ la projection sur (l'image de) $\gamma$ caractérisée par la proposition \ref{prop projection cat}. Soit $t \in I$ tel que $[t - \frac{1}{2} , t + \frac{1}{2}] \subseteq I$. Alors le \textit{rideau} (ou \textit{hyperplan}) dual à $\gamma$ en $t$ est
		\begin{eqnarray}
			h = h_{\gamma, t} = \pi^{-1}_\gamma (\gamma ([t - \frac{1}{2} , t + \frac{1}{2}])). \nonumber
		\end{eqnarray}
		Le \textit{pôle} du rideau $h:= h_{\gamma, t}$ est $ \gamma ([t - \frac{1}{2} , t + \frac{1}{2}]) $. En empruntant à la terminologie pour les complexes cubiques, on appellera $h^{-} =  \pi^{-1}_\gamma (\gamma ((-\infty, t- \frac{1}{2})\cap I))$ et $h^{+} =  \pi^{-1}_\gamma (\gamma ((t +\frac{1}{2}, + \infty )\cap I))$ les \textit{demi-espaces} déterminés par $h$.  	
	\end{Def}
	
	Il est clair par les propriétés de projection que $\{h^{-} , h , h^{+}\}$ forme une partition de $X$. Si $A \subseteq h^{-}$ et $B \subseteq h^{+}$ sont des sous-ensembles de $X$, on dira que $h$ \textit{sépare} $A$ et $B$.
	
	Dans la suite, on dénotera souvent un hyperplan par $h$, même s'il faut garder en tête que $h$ est caractérisé par une géodésique et par un pôle $h = h_{\gamma, t}$. 
	
	\begin{rem}\label{remark curtains thick intro}
		Par la proposition \ref{prop projection cat}, un rideau $h$ est fermé dans $X$, et puisque la projection n'augmente pas les distances, il est automatiquement épais : $d(h^{-}, h^{+})= 1$. 
	\end{rem} 
	
	En revanche, une différence avec les hyperplans dans les complexes cubiques est qu'un rideau peut ne pas être convexe : si $x, y \in h^{-}$, il peut arriver qu'il existe $z \in [x, y ]\cap h^{+}$. Il existe en revanche une notion plus faible de convexité.
	
	\begin{prop}[{\cite[Lemma 2.6]{papasoglu_swenson09}}]\label{star convexity}
		Soit $h$ un rideau dual à $\gamma$ et $P\subseteq \gamma$ son pôle. alors pour tout $x \in h$, le segment géodésique $[x, \pi_P (x)] $ est contenu dans $ h$. 
	\end{prop}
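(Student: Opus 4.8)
The plan is to express the curtain $h$ through the projection onto $\gamma$ and then to use the two standard facts about nearest-point projections recorded in Proposition~\ref{prop projection cat}: that the closest point to a closed convex complete set is unique, and that projecting a point of the segment $[x,\pi_C(x)]$ returns $\pi_C(x)$. Everything reduces to these two facts plus a comparison of the projections onto $\gamma$ and onto the pole.

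First I would unwind the definition. Write $P = \gamma([t-\frac{1}{2}, t+\frac{1}{2}])$ for the pole, so that by construction $h = h_{\gamma,t} = \pi_\gamma^{-1}(P)$; thus for $x \in X$ one has $x \in h$ if and only if $\pi_\gamma(x) \in P$. The key observation is that $\pi_P$ and $\pi_\gamma$ agree on $h$. Indeed, for $x \in h$ the point $\pi_\gamma(x)$ realizes the distance from $x$ to the image of $\gamma$, and since $\pi_\gamma(x) \in P \subseteq \gamma$ it a fortiori realizes the distance from $x$ to $P$; as $P$ is a compact, hence closed convex complete, geodesic segment, Proposition~\ref{prop projection cat}(1) guarantees that such a closest point is unique, whence $\pi_P(x) = \pi_\gamma(x)$. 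In particular $[x,\pi_P(x)] = [x,\pi_\gamma(x)]$.

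It then remains to show $[x,\pi_\gamma(x)] \subseteq h$ for $x \in h$. Fix $y \in [x,\pi_\gamma(x)]$. Applying point~2 of Proposition~\ref{prop projection cat} to the (closed, convex, complete) image of $\gamma$, one gets $\pi_\gamma(y) = \pi_\gamma(x) \in P$, hence $y \in \pi_\gamma^{-1}(P) = h$. Combined with the identification of the previous paragraph, this yields $[x,\pi_P(x)] = [x,\pi_\gamma(x)] \subseteq h$, which is exactly the claim.

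There is no genuine obstacle here; the only step that deserves a word of care is the identity $\pi_P = \pi_\gamma$ on $h$, after which the whole statement follows from a single application of the sub-segment invariance of the projection onto $\gamma$. If one wishes to be fully rigorous in the case where $\gamma$ is parametrized on a non-closed interval, one may first replace $\gamma$ by a closed sub-geodesic containing the pole $P$ (for instance $\gamma([t-1,t+1])$ if that lies in the domain, or the relevant closed sub-interval), which changes none of the projections appearing above, so that Proposition~\ref{prop projection cat} genuinely applies.
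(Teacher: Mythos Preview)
Your proof is correct. The paper does not actually supply its own argument for this proposition: it is stated as a cited result, so there is nothing to compare against. Your approach is exactly the natural one and goes through cleanly: the identification $\pi_P(x)=\pi_\gamma(x)$ for $x\in h$ follows from uniqueness of the nearest point (Proposition~\ref{prop projection cat}(1)), and then Proposition~\ref{prop projection cat}(2) applied to the image of $\gamma$ gives $\pi_\gamma(y)=\pi_\gamma(x)\in P$ for every $y\in[x,\pi_\gamma(x)]$, whence $y\in h$. The closing remark about replacing $\gamma$ by a closed subsegment is a reasonable precaution, though note that the paper itself already invokes Proposition~\ref{prop projection cat} when defining $\pi_\gamma$, so the hypothesis that the image of $\gamma$ is closed convex complete is implicitly in force.
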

	
	\begin{Def}
		Une famille de rideaux $\{h_i\}$ est une \textit{chaine} si pour tout $i$, $h_i $ sépare $h_{i-1}$ et $h_{i+1}$. On dira qu'une chaine est \textit{duale} à une géodésique si tout hyperplan de la chaine est dual à cette géodésique. Pour $x \neq y \in X$, on définit maintenant
		\begin{eqnarray}
			d_\infty (x, y) = 1 + \max \{\, |c| \, : \, c \text{ est une chaine de rideaux qui sépare }x \text{ et } y \}. \nonumber
		\end{eqnarray}
	\end{Def}

	Puisque la projection n'augmente pas les distances (proposition \ref{prop projection cat}), tout rideau $h$ est épais, c'est-à-dire $d(h^{-}, h^{+}) = 1$. Par conséquent, il est immédiat que pour $x,y \in X$, $d_\infty (x, y ) \leq \lceil d(x,y ) \rceil$. Réciproquement, on peut montrer que $d$ et $d_\infty$ diffèrent d'au plus 1. 
	
	\begin{lem}[{\cite[Lemma 2.10]{petyt_spriano_zalloum22}}]
		Soit  $x, y \in X$. Alors il existe une chaine $c$, duale à $[x,y]$ telle que $d_\infty (x, y) = 1 + |c| $. 
	\end{lem}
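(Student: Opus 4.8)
Le plan est de construire la chaine voulue explicitement, comme une chaine de rideaux duale à la géodésique $[x,y]$ elle-même, puis d'en lire la longueur grâce à l'inégalité $d_\infty(x,y)\le\lceil d(x,y)\rceil$ rappelée juste au-dessus. On peut supposer $\lceil d(x,y)\rceil\ge 2$ : sinon cette inégalité, jointe au fait que $d_\infty(x,y)\ge 1$ (la chaine vide donnant $|c|=0$), impose $d_\infty(x,y)=1$, et la chaine vide, qui est trivialement une chaine duale à $[x,y]$, vérifie $d_\infty(x,y)=1+0$.

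Posons donc $D:=d(x,y)>1$ et soit $\sigma\colon[0,D]\to X$ le paramétrage par longueur d'arc de $[x,y]$ tel que $\sigma(0)=x$ et $\sigma(D)=y$ (unique puisque $X$ est uniquement géodésique) ; posons $k:=\lceil D\rceil-1\ge 1$, de sorte que $k<D$. Je commencerais par choisir des réels $\tfrac12<\tau_1<\cdots<\tau_k<D-\tfrac12$ tels que $\tau_{j+1}-\tau_j>1$ pour tout $j$, ce qui est possible précisément parce que $k<D$ (par exemple $\tau_j=\tfrac12+\varepsilon+(j-1)(1+\delta)$ pour $\varepsilon,\delta>0$ assez petits). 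Comme $[\tau_j-\tfrac12,\tau_j+\tfrac12]\subseteq[0,D]$, chaque $h_j:=h_{\sigma,\tau_j}$ est un véritable rideau dual à $[x,y]$, de demi-espaces $h_j^-=\pi_\sigma^{-1}(\sigma([0,\tau_j-\tfrac12)))$ et $h_j^+=\pi_\sigma^{-1}(\sigma((\tau_j+\tfrac12,D]))$.

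Je vérifierais ensuite que $c:=\{h_1,\dots,h_k\}$ est une chaine duale à $[x,y]$ qui sépare $x$ et $y$. Comme tout point de $[x,y]$ est son propre projeté sur $[x,y]$, on a $x=\sigma(0)\in h_j^-$ (car $\tau_j>\tfrac12$) et $y=\sigma(D)\in h_j^+$ (car $\tau_j<D-\tfrac12$), donc chaque $h_j$ sépare $x$ et $y$. De plus $\sigma$ est injective, donc $\tau_{j+1}-\tau_j>1$ entraîne $[\tau_{j-1}-\tfrac12,\tau_{j-1}+\tfrac12]\subseteq[0,\tau_j-\tfrac12)$ et $[\tau_{j+1}-\tfrac12,\tau_{j+1}+\tfrac12]\subseteq(\tau_j+\tfrac12,D]$, d'où $h_{j-1}\subseteq h_j^-$ et $h_{j+1}\subseteq h_j^+$ ; ainsi $h_j$ sépare $h_{j-1}$ et $h_{j+1}$, et $c$ est une chaine, manifestement duale à $[x,y]$.

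Il resterait alors à identifier $|c|=k$ avec $d_\infty(x,y)-1$. D'une part $c$ est une chaine de rideaux qui sépare $x$ et $y$, donc $|c|\le d_\infty(x,y)-1$ par définition de $d_\infty$. D'autre part $d_\infty(x,y)-1\le\lceil d(x,y)\rceil-1=k=|c|$ par la borne rappelée plus haut. On obtient donc $d_\infty(x,y)=1+|c|$ avec $c$ duale à $[x,y]$. Je ne m'attends pas à un véritable obstacle ici ; les seuls points demandant un peu de soin sont qu'un rideau dual à $[x,y]$ a bien $x$ dans l'un de ses demi-espaces et $y$ dans l'autre (ce qui utilise que $\pi_\sigma$ est l'identité sur $[x,y]$, et que le pôle est de largeur exactement $1$), et que la contrainte $[\tau-\tfrac12,\tau+\tfrac12]\subseteq[0,D]$ est exactement ce qui limite le nombre de tels rideaux à $\lceil d(x,y)\rceil-1$ — ce qui se trouve être précisément la borne a priori sur $d_\infty(x,y)$, de sorte qu'aucune chaine, duale ou non, ne peut faire mieux.
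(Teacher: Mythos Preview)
Your proof is correct. The paper does not give its own argument here (it merely cites \cite[Lemma 2.10]{petyt_spriano_zalloum22}), and your construction is the natural one: place $k=\lceil d(x,y)\rceil-1$ curtains with disjoint poles along $[x,y]$, then squeeze $k\le d_\infty(x,y)-1\le\lceil d(x,y)\rceil-1=k$ using the a~priori bound stated just above the lemma. As a bonus, your argument explicitly yields $d_\infty(x,y)=\lceil d(x,y)\rceil$, which is precisely the ``réciproquement'' the paper announces before the lemma.
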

	
	La définition suivante est l'analogue de la $L$-séparation pour les hyperplans dans les complexes cubiques $\cat$(0).

	\begin{Def}[$L$-separation]
		Soit $L \in \mathbb{N}$, on dit que deux rideaux $h_1, h_2$ sont $L$\textit{-séparés} si toute chaine transverse simultanément à $h_1$ et $h_2$ est de cardinal au plus $L$. Une chaine de rideaux $L$-séparés est appelée une $L$\textit{-chaine}. 
	\end{Def}
	
	Grâce à la $L$-séparation, on peut construire une nouvelle métrique sur $X$.  
	
	\begin{Def}
		Soit $x\neq y \in X$ deux points distincts dans $X$, et $L \in \mathbb{N}$, on définit 
		\begin{eqnarray}
			d_L(x,y) = 1 + \max\{|c| \, : \, c \text{ est une  } L\text{-chaine de rideaux qui sépare }x \text{ et } y\}, \nonumber
		\end{eqnarray}
		et on pose $d_L(x,x)= 0 $ pour tout $x \in X$. 
	\end{Def}

	Pour tout $L \geq 0$, $d_L$ définit une métrique sur $X$ \cite[Lemma 2.17]{petyt_spriano_zalloum22}. Dans la suite, on écrira $X_L = (X, d_L)$ l'espace métrique correspondant. 
	
	Le lemme qui suit est clef dans la preuve du théorème \ref{theorem hyperbolic models intro}, affirmant que les espaces modèles $X_L$ sont hyperboliques. Il sera utilisé plusieurs fois dans la suite, et montre que la $L$-séparation induit de bonnes propriétés de Morse. 
	
	\begin{lem}[{\cite[Lemma 2.14]{petyt_spriano_zalloum22}}]\label{lem bottleneck intro}
		Soit $A$, $B$ deux ensembles séparés par une L-chaine $\{h_1, h_2, h_3\}$ duale à une géodesique $\gamma= [x_1, y_1]$, telle que $x_1 \in A$ et $y_1 \in B$. Alors, pour tout $x_2 \in A$, $y_2 \in B$, si $p \in h_2 \cap [x_2, y_2]$, on a $d(p, \pi_\gamma(p))\leq 2L + 1$. 
	\end{lem}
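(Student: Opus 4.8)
Le plan est de raisonner par l'absurde. Supposons $d(p,\pi_\gamma(p)) > 2L+1$ et posons $q := \pi_\gamma(p)$ ; le but est de produire une chaine de plus de $L$ rideaux transverses à la fois à $h_1$ et à $h_3$, ce qui contredira leur $L$-séparation. Je commence par fixer les orientations. Quitte à renuméroter, les pôles de $h_1$, $h_2$, $h_3$ apparaissent dans cet ordre le long de $\gamma$, aux paramètres $t_1 < t_2 < t_3$ ; comme $\{h_1,h_2,h_3\}$ est une chaine, ses rideaux sont deux à deux disjoints, donc $t_2-t_1 > 1$ et $t_3-t_2 > 1$, si bien que le pôle $P_2 := \gamma([t_2-\tfrac12,\,t_2+\tfrac12])$ de $h_2$ est contenu dans $\gamma((t_1+\tfrac12,\,t_3-\tfrac12))$. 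Comme $p \in h_2$, son projeté $q$ appartient à $P_2$ ; la description des demi-espaces de $h_i$ en termes de $\pi_\gamma$ montre alors que $p$ et $q$ sont tous deux dans le \og milieu \fg{} $h_1^+ \cap h_3^-$, tandis que, la chaine séparant $A$ de $B$ avec $x_1 \in A$ et $y_1 \in B$, on a $x_1,x_2 \in A \subseteq h_1^- \cap h_2^-$ et $y_1,y_2 \in B \subseteq h_2^+ \cap h_3^+$.

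Ensuite je construis les rideaux voulus. Soit $\delta := [q,p]$, paramétrée par $\delta(0) = q$. Puisque $q \in P_2$, c'est aussi le projeté de $p$ sur $P_2$, donc la Proposition \ref{star convexity} donne $\delta \subseteq h_2$, et en particulier $\delta \subseteq h_1^+ \cap h_3^-$. De $d(q,p) > 2L+1$ et de ce que $d$ et $d_\infty$ diffèrent d'au plus $1$, on tire $d_\infty(q,p) \geq 2L+1$ ; par \cite[Lemma 2.10]{petyt_spriano_zalloum22} il existe donc une chaine de rideaux duale à $\delta$ de cardinal $d_\infty(q,p)-1 \geq 2L$, et quitte à retirer les rideaux extrémaux on obtient une chaine $m_1,\dots,m_M$ duale à $\delta$, avec $M > L$, dont tous les pôles sont strictement compris entre $q$ et $p$ (les petites valeurs de $L$, pour lesquelles la borne $2L+1$ est de toute façon petite, se traitent directement). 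On note $m_j^q$ (resp. $m_j^p$) le demi-espace de $m_j$ contenant $q$ (resp. $p$).

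Le cœur de la preuve est de vérifier que chaque $m_j$ est transverse à $h_1$ et à $h_3$. Du côté $q$ c'est immédiat : comme $q = \pi_\gamma(p)$, l'angle en $q$ entre $\delta$ et n'importe quel segment $[q,z]$ avec $z \in \gamma$ est au moins $\pi/2$, donc $\pi_\delta(x_1) = \pi_\delta(y_1) = q$ et ainsi $x_1,y_1 \in m_j^q$ ; combiné à $q \in m_j^q \cap h_1^+ \cap h_3^-$, à $x_1 \in h_1^-$ et à $y_1 \in h_3^+$, ceci fournit les quatre quadrants \og du côté $q$ \fg{} aussi bien pour $h_1$ que pour $h_3$. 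Du côté $p$, on a $p \in m_j^p \cap h_1^+ \cap h_3^-$, et il reste à atteindre $h_1^-$ et $h_3^+$ depuis $m_j^p$ : on utilise alors que $[x_2,y_2]$ est une géodésique passant par $p$, de sorte que les angles en $p$ de $[p,x_2]$ et de $[p,y_2]$ avec $\delta = [p,q]$ ont pour somme au moins $\pi$, et donc qu'au moins l'un des points $x_2 \in h_1^-$ ou $y_2 \in h_3^+$ se projette sur $\delta$ au point $p$, donc appartient à $m_j^p$.

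La partie délicate sera de garantir que les \emph{deux} quadrants manquants, $m_j^p \cap h_1^-$ et $m_j^p \cap h_3^+$, sont remplis \emph{simultanément} et uniformément en $j$, alors que l'argument d'angle ci-dessus n'en assure a priori qu'un seul. Je compte la traiter par une disjonction de cas selon celui des deux angles en $p$ qui dépasse $\pi/2$, complétée par un contrôle des projections $\pi_\delta(x_2)$ et $\pi_\delta(y_2)$ à l'aide de $\pi_\gamma(x_2)$ et $\pi_\gamma(y_2)$, lesquels se situent respectivement avant le pôle de $h_1$ et après le pôle de $h_3$ ; c'est ce bilan, mené comme dans \cite[Lemma 2.14]{petyt_spriano_zalloum22}, qui explique le facteur $2$ de la borne $2L+1$ (plutôt qu'un simple $L + O(1)$). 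Une fois cette vérification faite, les $m_j$ forment une chaine de plus de $L$ rideaux transverses à $h_1$ et à $h_3$, en contradiction avec leur $L$-séparation, ce qui conclut.
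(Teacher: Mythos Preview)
The paper does not prove this lemma; it is quoted from \cite{petyt_spriano_zalloum22} and used as a black box. Your overall strategy --- assume $d(p,q) > 2L+1$, build a long chain of curtains dual to $\delta = [q,p]$, and contradict $L$-separation --- is the right one and matches the original argument. The setup is correct: $q \in P_2$, $\delta \subseteq h_2$ by star-convexity, $x_1, y_1 \in \gamma$ project to $q$ on $\delta$, and three of the four quadrants needed for transversality of each $m_j$ to $h_1$ (resp.\ to $h_3$) are immediate.

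However, your proof is explicitly incomplete. You correctly identify that the angle inequality $\alpha + \beta \geq \pi$ at $p$ only forces \emph{one} of $x_2, y_2$ to project to $p$ on $\delta$, and hence fills only \emph{one} of the two missing quadrants $m_j^p \cap h_1^-$ and $m_j^p \cap h_3^+$. You then write ``je compte la traiter par une disjonction de cas \dots{} comme dans \cite[Lemma~2.14]{petyt_spriano_zalloum22}'' --- but this is a promissory note, not an argument. The case split you allude to (controlling $\pi_\delta(x_2)$ and $\pi_\delta(y_2)$ via $\pi_\gamma(x_2)$ and $\pi_\gamma(y_2)$) is not carried out, and it is not obvious from what you have written how it would go: knowing that $\pi_\gamma(x_2)$ lies before the pole of $h_1$ says nothing direct about where $\pi_\delta(x_2)$ lands on $\delta$. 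To make this a proof you must actually exhibit, for more than $L$ of the $m_j$, witnesses in \emph{both} missing quadrants simultaneously (or otherwise contradict one of the pairwise $L$-separations available in the $L$-chain), and the mechanism that produces the factor $2$ in $2L+1$ must be made explicit rather than asserted. As written, the argument stops precisely at the non-trivial step.
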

	
	\begin{rem}
		Soit $x_2, x_3 \in A $ sont des points tels que dans le lemme \ref{lem bottleneck intro}. Prenons $z_2 \in [x_2, y_1 ]\cap h_2$ et $z_3 \in [x_3, y_1 ]\cap h_2$. Puisque le pôle de $h_2$ est de diamètre 1, l'inégalité triangulaire implique que $d(z_2, z_3) \leq 4L+3$. 
	\end{rem}
	
	Pour illustrer l'idée sous-jacente, rappelons le critère \og bottleneck \fg{} (ou du goulot) de Manning \cite{manning05}, voir la figure \ref{figure bottleneck}.
	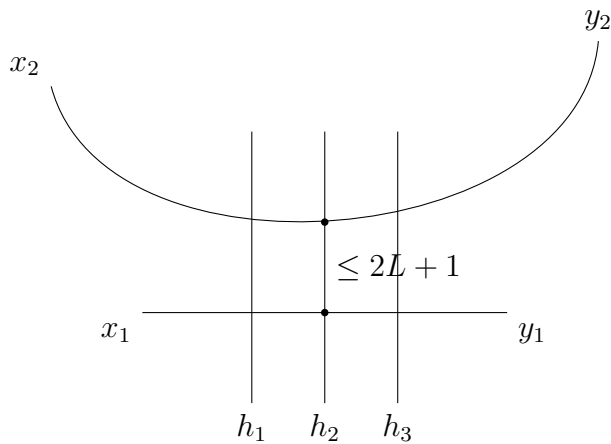
\begin{figure}
		\centering
		\begin{center}
			\begin{tikzpicture}[scale=1.2]
				\draw (0,0) -- (4,0)  ;
				\draw (2.8,2) -- (2.8,-1)  ;
				\draw (1.2,2) -- (1.2,-1)  ;
				\draw (2,2) -- (2,-1)  ;
				\draw (2,0.5) node[right]{$\leq 2L+1$} ;
				\draw (-1, 2.5) to[bend right = 80] (5, 3);
				\draw (0,0) node[below left]{$x_1$} ;
				\draw (2, -1) node[below]{$h_2$} ;
				\draw (1.2, -1) node[below]{$h_1$} ;
				\draw (2.8, -1) node[below]{$h_3$} ;
				\draw (-1, 2.5) node[above left]{$x_2$} ;
				\draw (4,0) node[below right]{$y_1$} ;	
				\draw (5,3) node[above]{$y_2$};
				\filldraw[black] (2,0) circle(1pt);
				\filldraw[black] (2,1) circle(1pt);
			\end{tikzpicture}
		\end{center}
		\caption{Illustration du lemme \og du goulot \fg{} \ref{lem bottleneck intro}.}\label{figure bottleneck}
	\end{figure}

	\begin{thm}[Critère  \og du goulot \fg{} de Manning]
		Soit $Y, d$ un espace métrique géodésique. Alors $Y$ est quasi-isométrique à un arbre (et donc hyperbolique) si et seulement s'il existe $\delta >0$ tel que, pour tout point $x, y\in Y$, il existe un point $m \in Y$ tel que $d(x,m) = d(m,y) = \frac{1}{2}d(x,y)$, et si tout chemin qui relie $x$ à $y$ contient un point à distance $\delta$ de $m$. 
	\end{thm}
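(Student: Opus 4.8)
Pour démontrer ce critère, on procéderait en deux temps, le sens direct étant nettement le plus facile. Supposons donc $f:Y\to T$ une $(\lambda,c)$-quasi-isométrie (un plongement quasi-isométrique suffit) vers un arbre $T$. Fixons $x,y\in Y$, un segment géodésique $[x,y]$ dans $Y$ et son milieu $m$ : on a bien $d(x,m)=d(m,y)=\tfrac12 d(x,y)$ puisque $Y$ est géodésique. Comme $f([x,y])$ est une $(\lambda,c)$-quasi-géodésique de $T$, la stabilité des quasi-géodésiques dans un arbre (élémentaire dans ce cas) assure qu'elle reste à distance de Hausdorff d'au plus une constante $H=H(\lambda,c)$ de la géodésique $[f(x),f(y)]$, donc $d\big(f(m),[f(x),f(y)]\big)\le H$. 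Soit maintenant $\gamma$ un chemin quelconque de $x$ à $y$ dans $Y$ : comme $\gamma$ est continu et $f$ lipschitzienne à grande échelle, l'ensemble $\{f(\gamma(t))\}_t$ est $c$-enchaîné de $f(x)$ à $f(y)$, donc son $c$-voisinage est connexe par arcs, contient ces deux points, donc contient tout le segment $[f(x),f(y)]$ puisque $T$ est un arbre. Il existe ainsi un paramètre $t$ tel que $d\big(f(\gamma(t)),f(m)\big)\le c+H$, d'où $d(\gamma(t),m)\le \lambda(c+H)+\lambda c$. Le chemin $\gamma$ rencontre donc la boule $B(m,\delta)$ avec $\delta:=\lambda(c+H)+\lambda c$, ce qui établit la propriété du goulot.

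Pour la réciproque, on partirait de la propriété du goulot de constante $\delta$ et on en tirerait d'abord deux conséquences de nature \og arborescente \fg{}. Primo, l'hyperbolicité : dans un triangle géodésique $x,y,z$, le concaténé $[x,z]\cup[z,y]$ est un chemin de $x$ à $y$, donc le milieu de $[x,y]$ est $\delta$-proche de $[x,z]\cup[z,y]$ ; en appliquant récursivement ce \og recentrage \fg{} aux demi-segments — et en utilisant que deux géodésiques de mêmes extrémités sont mutuellement proches, ce qui découle aussi du goulot — on obtient que tout point de $[x,y]$ est proche de $[x,z]\cup[z,y]$, donc des triangles fins. Secundo, une propriété de séparation : si $u$ et $v$ appartiennent à une même composante connexe par arcs de $Y\setminus B(x_0,r)$, alors un chemin joignant $u$ à $v$ en évitant $B(x_0,r)$ reste, par le goulot, proche du segment $[u,v]$, lequel évite donc $B(x_0,r')$ pour un $r'$ un peu plus petit que $r$. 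Autrement dit, les boules centrées en un point-base fixe $x_0$ découpent $Y$ en directions asymptotiques nettement séparées, exactement comme dans un arbre.

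La seconde étape, qui est le cœur de l'argument, consiste à reconstruire un arbre à partir de cette structure de séparation. On fixerait $x_0\in Y$ et une suite lacunaire de rayons $r_0<r_1<\cdots$ d'écarts grands devant $\delta$ ; toute composante de $Y\setminus B(x_0,r_{n+1})$ étant contenue dans une unique composante de $Y\setminus B(x_0,r_n)$, l'ensemble de ces composantes ordonné par cette inclusion forme un arbre enraciné $T$. En choisissant un représentant par composante et en reliant le représentant d'échelle $r_{n+1}$ à celui de la composante d'échelle $r_n$ qui le contient, on obtient un arbre simplicial métrique, muni d'une application $\phi:Y\to T$ qui envoie $y$ sur le représentant de la composante la plus fine le contenant (soit à l'échelle $\simeq d(x_0,y)$). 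La propriété de séparation donnerait alors la minoration $d_T(\phi(u),\phi(v))\geq \tfrac{1}{\lambda'} d(u,v)-c'$ à constantes près (deux points dont les segments vers $x_0$ divergent tôt sont séparés dès une petite échelle), tandis que le fait qu'un segment $[x_0,y]$, une fois entré dans une composante emboîtée, n'en ressort plus, fournirait la majoration ; on conclurait que $\phi$ est une quasi-isométrie.

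La principale difficulté est précisément cette dernière étape : il faut calibrer la suite $(r_n)$ et mener assez finement le contrôle des distances pour que la distance combinatoire dans $T$ reproduise celle de $Y$ à constantes multiplicative et additive près — c'est là que la propriété du goulot est réellement indispensable, pour interdire à des géodésiques qui \og se séparent \fg{} de \og se rejoindre \fg{} autrement que de manière bornée. Une difficulté technique annexe est que la récursion dyadique esquissée plus haut ne fournit a priori qu'une majoration croissant logarithmiquement avec $d(x,y)$ dans les énoncés de type Hausdorff ; on la contournerait en établissant d'abord l'hyperbolicité, puis en invoquant la stabilité des quasi-géodésiques dans les espaces hyperboliques pour rendre uniformes les constantes de séparation utilisées dans la construction.
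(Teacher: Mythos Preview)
The paper does not prove this theorem: it is stated only as background, with a citation to Manning \cite{manning05}, in order to motivate the terminology ``lemme du goulot'' for Lemma~\ref{lem bottleneck intro}. There is therefore no proof in the paper to compare your proposal against.

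That said, your sketch is in the spirit of Manning's original argument. The forward direction is fine. For the reverse, Manning does not first establish hyperbolicity and then invoke quasi-geodesic stability as you suggest in your final paragraph; rather, he builds the approximating tree directly from the bottleneck property, and the quasi-isometry estimate comes from a careful inductive control of how geodesics branch, without appealing to general hyperbolic machinery. Your worry about the dyadic recursion yielding only a logarithmic bound is legitimate, and it is precisely what Manning's direct construction circumvents. Your outline is plausible but would need substantial work at the step where you claim the combinatorial tree distance is bi-Lipschitz to the original one; this is the heart of the matter and your sketch does not yet contain the key estimate.
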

	
	En référence à ce critère, on se réfèrera dans toute la suite au lemme \ref{lem bottleneck intro} comme le lemme \textit{du goulot}. 
	
	L'un des résultats fondamentaux de \cite{petyt_spriano_zalloum22} est l'hyperbolicité des modèles $(X, d_L)$. 
	
	\begin{thm}[{\cite[Theorem 3.9]{petyt_spriano_zalloum22}}] \label{theorem hyperbolic models intro}
		Soit $X$ un espace $\cat$(0), et soit $L$ un entier. Alors l'espace $(X, d_L)$ est un espace Gromov-hyperbolique presque géodésique, dont la constante d'hyperbolicité dépend uniquement de $L$. De plus, toute isométrie $g\in \iso(X)$ est également une isométrie de $(X, d_L)$. 
	\end{thm}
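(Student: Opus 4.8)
The plan is to deduce both halves of the statement from the curtain calculus already in place: the bottleneck lemma (Lemme \ref{lem bottleneck intro}) and the combinatorics of $L$-chains, together with Manning's criterion. I would dispatch the isometry assertion first, then show $X_L$ is almost geodesic by exhibiting $\cat(0)$-geodesics as uniform quasi-geodesics of $X_L$, and finally feed the bottleneck lemma into Manning's criterion to get hyperbolicity with a constant depending only on $L$.

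\textbf{Isometries.} This is immediate from Proposition \ref{prop projection cat}. If $g\in\iso(X)$ and $\gamma$ is a geodesic, then $g\gamma$ is again a geodesic and, by uniqueness of nearest-point projections, $\pi_{g\gamma}\circ g=g\circ\pi_\gamma$; hence $g$ carries the curtain $h_{\gamma,t}$ to $h_{g\gamma,t}$ and each half-space $h^\pm$ to the corresponding half-space of $h_{g\gamma,t}$. Therefore $g$ sends chains to chains and transverse pairs to transverse pairs, so it sends an $L$-chain separating $x$ from $y$ to an $L$-chain of equal cardinality separating $gx$ from $gy$. Consequently $d_L(gx,gy)=d_L(x,y)$, i.e. $g\in\iso(X_L)$.

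\textbf{Almost geodesicity (the geometric core).} Fix $x\neq y$, let $\gamma=[x,y]$, and let $c=\{h_1,\dots,h_n\}$ be a maximal $L$-chain dual to $\gamma$ realizing $d_L(x,y)=n+1$ (the $L$-analogue of the realization lemma for $d_\infty$). Choose $p_i\in\gamma$ in the pole of $h_i$, with $p_0=x$, $p_{n+1}=y$, and orient all $h_i$ so that the negative half-spaces are nested. From the sub-chains one reads off lower bounds $d_L(p_i,p_j)\ge |i-j|-1$; the point I would establish next, following the curtain calculus of \cite{petyt_spriano_zalloum22}, is the matching upper bound $d_L(p_i,p_{i+1})\le C(L)$ between consecutive poles, so that $(p_0,\dots,p_{n+1})$ is a $(1,C'(L))$-rough geodesic of $X_L$ joining $x$ to $y$; combined with the chain of inequalities $d_L\le d_\infty\le\lceil d\rceil$, this gives that $X_L$ is almost geodesic with constant a function of $L$ alone, and that we may restrict attention to rough geodesics that are concatenations of $\cat(0)$-segments $[p_i,p_{i+1}]$.

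\textbf{Bottleneck and conclusion.} Assume $d_L(x,y)$ large and set $m=p_{\lfloor n/2\rfloor}$; using the two half sub-chains for lower bounds and the rough geodesic for upper bounds, $d_L(x,m)$ and $d_L(m,y)$ are each within $C''(L)$ of $\tfrac12 d_L(x,y)$. Let $\sigma$ be any path in $X_L$ from $x$ to $y$. Since $\{h_j^-,h_j,h_j^+\}$ (with $j=\lfloor n/2\rfloor$) partitions $X$ into two open sets and one closed set, and $x\in h_{j-1}^-$, $y\in h_{j+1}^+$, the connected set $\sigma$ must meet $h_j$, say at $q$, with $q\in h_j\cap[p_i,p_{i+1}]$ for some sub-segment of $\sigma$ whose endpoints lie in $h_{j-1}^-$ and $h_{j+1}^+$ respectively (taking $x,y$ themselves if need be). The bottleneck lemma applied to the $L$-chain $\{h_{j-1},h_j,h_{j+1}\}$ dual to $\gamma$ then gives $d(q,\pi_\gamma(q))\le 2L+1$; since $\pi_\gamma(q)$ and $m$ both lie in the pole of $h_j$, of diameter $1$, we get $d(q,m)\le 2L+2$, hence $d_L(q,m)\le d_\infty(q,m)\le\lceil d(q,m)\rceil\le 2L+2$. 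Thus every path from $x$ to $y$ in $X_L$ comes $d_L$-within $2L+2$ of the coarse midpoint $m$, all constants depending only on $L$. Passing to a genuine geodesic space quasi-isometric to $X_L$ (e.g. the graph on $X$ with edges between points at $d_L$-distance $\le C'$, which is connected precisely by almost geodesicity), Manning's bottleneck criterion applies and shows this space, hence $X_L$, is quasi-isometric to a tree, so Gromov-hyperbolic with $\delta=\delta(L)$.

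\textbf{Main obstacle.} The delicate step is the uniform spacing estimate $d_L(p_i,p_{i+1})\le C(L)$ (equivalently, that the $\cat(0)$-geodesic is a uniform quasi-geodesic of $X_L$): it is exactly here that the interplay between $L$-separation, transversality and the nesting of half-spaces must be used in full, and it is also what forces the hyperbolicity constant to depend on $L$ alone. Secondary, routine points are packaging Manning's criterion — stated for genuine geodesic spaces — for the merely almost geodesic $X_L$, and checking the topological fact that a connected path meeting both $h_j^-$ and $h_j^+$ must meet $h_j$ (which follows since $\pi_\gamma$ is continuous, so $h_j^\pm$ are open and $h_j$ is closed).
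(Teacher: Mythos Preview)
The thesis does not prove this theorem; it is quoted from \cite{petyt_spriano_zalloum22}, and the mention of Manning's criterion is explicitly only ``pour illustrer l'id\'ee sous-jacente'' of the bottleneck lemma, not as the proof strategy. Your isometry paragraph is correct, and your almost-geodesicity sketch is reasonable (the step $d_L(p_i,p_{i+1})\le C(L)$ is indeed the technical heart and you flag it honestly).

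The genuine gap is in the hyperbolicity step. Manning's criterion characterises quasi-\emph{trees}, not hyperbolic spaces, and $X_L$ is in general \emph{not} a quasi-tree: for $X=\mathbb H^2$ (or any proper Gromov-hyperbolic $\cat(0)$ space) one has, by \cite{petyt_spriano_zalloum22}, that $X_L$ is quasi-isometric to $X$ for $L$ large, hence has boundary a circle and cannot be a quasi-tree. So the conclusion you are aiming for is simply false, and the argument must break. It breaks exactly where you invoke Lemme~\ref{lem bottleneck intro}: that lemma bounds $d(q,\pi_\gamma(q))$ only for $q\in h_2\cap [x_2,y_2]$ on a $\cat(0)$ \emph{geodesic} $[x_2,y_2]$, not for an arbitrary point $q\in h_2$ lying on an arbitrary $d_L$-path (your sentence ``$q\in h_j\cap[p_i,p_{i+1}]$ for some sub-segment of $\sigma$'' conflates the rough geodesic with $\sigma$). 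A point of $h_j$ can sit arbitrarily far from the pole in both $d$ and $d_L$: in $\mathbb H^2$, take $v\in h_j$ far along the curtain; the geodesic $[v,m]$ is essentially perpendicular to $\gamma$, it is contracting (we are in a hyperbolic space), and curtains dual to it yield long $L$-chains separating $v$ from $m$, so $d_L(v,m)$ is unbounded. Hence even after passing to the auxiliary graph you cannot conclude that a vertex of $\sigma$ lying in $h_{j-1}\cup h_j\cup h_{j+1}$ is $d_L$-close to $m$.

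What actually works (and is what \cite{petyt_spriano_zalloum22} do) is to keep the $\cat(0)$ geodesics as preferred paths and feed them into a \emph{guessing-geodesics} criterion (\`a la Bowditch/Masur--Schleimer/Hamenst\"adt): one shows, using the bottleneck lemma applied only to $\cat(0)$ segments, that triangles whose sides are $\cat(0)$ geodesics are uniformly $d_L$-slim, and that these paths are unparametrised rough geodesics of $X_L$. That yields hyperbolicity with $\delta=\delta(L)$ without ever claiming a quasi-tree.
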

	
	On appellera donc $(X, d_L)$ un modèle hyperbolique pour l'espace $X$. L'utilisation de ces modèles est particulièrement intéressante grâce au théorème suivant, qui correspond à la situation des complexes cubiques $\cat$(0). On rappelle qu'une isométrie $g$ d'un espace hyperbolique $(X,d) $ est dite \textit{loxodromique} si pour tout $x \in X$, l'application orbitale
	\begin{eqnarray}
		n \in \mathbb{Z} \mapsto g^n x \in X \nonumber
	\end{eqnarray}
	est une quasi-isométrie. De manière équivalente, $g$ est loxodromique si c'est une isométrie semi-simple axiale de $X$. 
	
	\begin{thm}[{\cite[Theorem 4.9]{petyt_spriano_zalloum22}}] \label{thm contract loxodromic}
		Soit $X$ un espace $\cat$(0) et soit $g$ une isométrie semi-simple de $X$. Les assertions suivantes sont équivalentes : 
		\begin{enumerate}
			\item $g$ est une isométrie contractante de $X$;
			\item il existe $L \in \mathbb{N}$ tel que $g$ est une isométrie loxodromique de $X_L$.
			\item il existe $L$ tel que $g$ traverse deux hyperplans $L$-séparés. 
		\end{enumerate} 
	\end{thm}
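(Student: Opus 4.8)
Le plan est de prouver le cycle d'implications en établissant les deux équivalences $(2)\Leftrightarrow(3)$ et $(1)\Leftrightarrow(3)$, la première reposant sur l'hyperbolicité des modèles $X_L$ (Théorème~\ref{theorem hyperbolic models intro}) et la seconde sur le lemme du goulot (Lemme~\ref{lem bottleneck intro}). On commence par une réduction : chacune des assertions $(2)$ et $(3)$ entraîne que $g$ a une orbite non bornée dans $(X,d)$ — pour $(2)$ parce que $d_L\le\lceil d\rceil$, pour $(3)$ au vu de $(3)\Rightarrow(2)$ ci-dessous —, tandis que $(1)$ présuppose $g$ axiale. Une isométrie semi-simple d'orbite non bornée étant axiale, on peut dans tous les cas supposer $g$ axiale, d'axe $\gamma\colon\R\to X$ et de longueur de translation $\ell:=|g|>0$ ; l'observation élémentaire utilisée partout est que $g^m$ envoie le rideau $h_{\gamma,t}$ dual à $\gamma$ en $t$ sur le rideau $h_{\gamma,t+m\ell}$.

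Montrons $(3)\Rightarrow(2)$. Si $g$ traverse la paire $L$-séparée $h_1,h_2$, disons $g^{n}h_1\subsetneq h_2\subsetneq h_1$ avec $n>0$, alors la suite de demi-espaces $\{g^{kn}h_1\}_{k\in\mathbb{Z}}$ est strictement décroissante et les rideaux associés forment une $L$-chaine : une chaine transverse à $g^{in}h_1$ et $g^{jn}h_1$ devient, après application de $g^{-in}$, transverse à $h_1$ et $g^{(j-i)n}h_1$, donc à $h_1$ et $h_2$, donc de cardinal $\le L$. En prenant pour point-base $o$ un point de $h_1\setminus g^{n}h_1$, les rideaux d'indices strictement compris entre $kn$ et $jn$ séparent $g^{kn}o$ de $g^{jn}o$, d'où $d_L(g^{kn}o,g^{jn}o)\ge j-k-1$ ; combiné à $d_L\le\lceil d\rceil$ et à $d(g^{kn}o,g^{jn}o)\le|j-k|\,d(o,g^{n}o)$, ceci montre que $k\mapsto g^{kn}o$ est un plongement quasi-isométrique de $\mathbb{Z}$ dans l'espace Gromov-hyperbolique $X_L$, donc que $g$, comme $g^{n}$, y est loxodromique. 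Réciproquement, pour $(2)\Rightarrow(3)$, si $g$ est loxodromique dans $X_L$ alors $d_L(o,g^{n}o)\ge\kappa n-\kappa'$ pour un $\kappa>0$ ; en prenant $o=\gamma(0)$, le segment $[o,g^{n}o]$ n'est autre que $\gamma|_{[0,n\ell]}$, et un lemme de réalisation des chaines pour $d_L$ (\cite{petyt_spriano_zalloum22}) fournit, pour $n$ assez grand, une $L$-chaine $h_1,\dots,h_m$ ($m\ge2$) de rideaux duaux à $\gamma$ séparant $o$ de $g^{n}o$, de pôles croissants le long de $\gamma$. En notant $h_1^+,h_2^+$ les demi-espaces ne contenant pas $o$, on a $h_2^+\subsetneq h_1^+$, et $g^{k}h_1^+\subsetneq h_2^+$ dès que $k\ell$ dépasse l'écart entre les pôles de $h_1$ et $h_2$ ; donc $g$ traverse la paire $L$-séparée $\{h_1,h_2\}$.

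Pour $(3)\Rightarrow(1)$ : grâce à $(3)\Rightarrow(2)$ puis au lemme de réalisation des chaines et à la $g$-équivariance, on dispose d'une $L$-chaine bi-infinie $\{c_k\}_{k\in\mathbb{Z}}$ de rideaux \emph{duaux à $\gamma$}, de pôles uniformément espacés le long de $\gamma$ (quitte à remplacer $g$ par une puissance et à extraire une sous-chaine). On montre alors que $\gamma$ est $C$-contractante, $C=C(L)$. Soit $B$ une boule disjointe de $\gamma$ et $\gamma(a),\gamma(b)\in\pi_\gamma(B)$ avec $a<b$ ; comme $\pi_\gamma$ est constante le long des segments $[p,\pi_\gamma(p)]$ (Proposition~\ref{prop projection cat}) et que $B$ est convexe, des points de $B$ se projetant en $\gamma(a)$, resp. $\gamma(b)$, se trouvent dans le demi-espace $c_i^-$, resp. $c_j^+$, pour des indices $i<j$. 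Si $b-a$ était grand, trois rideaux consécutifs $\{c_{i'},c_{i'+1},c_{i'+2}\}$ — une $L$-chaine duale à un sous-segment de $\gamma$ — se situeraient strictement entre $\gamma(a)$ et $\gamma(b)$, les deux points choisis de $B$ étant de part et d'autre ; la géodésique qui les joint traverse $c_{i'+1}$, et le lemme du goulot, puis la remarque qui le suit appliquée aux deux extrémités, contraignent le point de traversée — et donc les projetés sur $\gamma$ des deux points de $B$ — à rester à distance $O(L)$ du pôle de $c_{i'+1}$, ce qui majore $b-a$ en fonction de $L$ seul.

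Enfin, pour $(1)\Rightarrow(3)$, le lemme-clef est que la contraction de $\gamma$ entraîne l'existence de $L=L(C)$ et $T_0=T_0(C)$ tels que $h_{\gamma,s}$ et $h_{\gamma,t}$ soient $L$-séparés dès que $t-s\ge T_0$ ; admettant cela, on choisit une telle paire puis $n$ avec $n\ell>t-s$, et les inclusions $g^{n}h_{\gamma,s}^+\subsetneq h_{\gamma,t}^+\subsetneq h_{\gamma,s}^+$ montrent que $g$ traverse la paire $L$-séparée $\{h_{\gamma,s},h_{\gamma,t}\}$. Ce lemme est l'analogue $\cat(0)$ de la caractérisation de Genevois pour les complexes cubiques : une chaine transverse simultanément à $h_{\gamma,s}$ et $h_{\gamma,t}$ est formée de rideaux chevauchant ces deux rideaux, et la contraction de $\gamma$ — via les Propositions~\ref{prop projection cat} et~\ref{star convexity} — borne le cardinal d'une telle chaine par une fonction de $C$. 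Le point principal, dans les deux sens de $(1)\Leftrightarrow(3)$, est l'obtention de ces bornes de manière \emph{uniforme} : uniformément en le rayon de $B$ pour $(3)\Rightarrow(1)$, et indépendamment de la longueur de la chaine transverse pour $(1)\Rightarrow(3)$. C'est là qu'intervient la $L$-séparation plutôt que la simple disjonction des rideaux, et là que se fait sentir l'absence de l'hypothèse de propreté, qui empêche de recourir au critère des demi-plats de Bestvina--Fujiwara (Théorème~\ref{bestv fuj rank one contracting intro}).
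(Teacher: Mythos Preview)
Le manuscrit ne contient pas de démonstration de ce théorème : il est simplement énoncé avec la référence \cite[Theorem 4.9]{petyt_spriano_zalloum22}, sans preuve, comme un résultat importé de Petyt--Spriano--Zalloum. Il n'y a donc rien à comparer côté article.

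Votre esquisse suit l'architecture attendue et les idées principales sont bien identifiées. Deux remarques néanmoins. D'abord, dans $(3)\Rightarrow(2)$, l'argument \og transverse à $h_1$ et $g^{(j-i)n}h_1$, donc à $h_1$ et $h_2$\fg{} n'est pas tout à fait immédiat : les rideaux ne sont pas convexes, et il faut justifier qu'un rideau traversant $h_1$ et un rideau contenu dans $h_2^+$ traverse aussi $h_2$ (c'est vrai, mais cela demande un argument). Il suffit en réalité de vérifier que $h_1$ et $g^n h_1$ sont eux-mêmes $L$-séparés, ce qui découle d'une propriété d'emboîtement établie dans \cite{petyt_spriano_zalloum22}. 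Ensuite, pour $(1)\Rightarrow(3)$, vous admettez explicitement le \og lemme-clef\fg{} reliant contraction de $\gamma$ et $L$-séparation des rideaux duaux éloignés ; c'est effectivement le c\oe ur de la preuve dans \cite{petyt_spriano_zalloum22} et demande un travail quantitatif non trivial sur les projections. Votre esquisse est donc correcte dans ses grandes lignes mais incomplète sur ce point précis, ce que vous reconnaissez d'ailleurs.
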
 
	
	Par la suite, cette correspondance nous permettra de voir qu'une action de groupe sur un espace $\cat$(0) avec des éléments contractants induit une action non-élémentaire sur un espace hyperbolique.

	\subsection{Actions acylindriquement hyperboliques}\label{section acylindrique cat}
	
	Beaucoup de résultats récents en théorie géométrique des groupes exploitent les bonnes propriétés qui apparaissent pour certaines actions sur des espaces hyperboliques. Parmi elles, l'existence d'une action non-élémentaire et acylindrique sur un espace hyperbolique a de nombreuses implications, et la recherche est très active dans ce domaine, \cite{bestvina_bromberg_fujiwara15}, \cite{sisto18}, \cite{osin2016}, \cite{dahmani_guirardel_osin17}. Dans \cite{bestvina_fujiwara09}, Bestvina et Fujiwara introduisent la notion d'élément faiblement proprement discontinu (WPD). Les éléments WPD sont des isométries d'un espace métrique qui satisfont une version faible d'acylindricité, et qui apparaitront plus tard quand nous parlerons de bord de Poisson-Furstenberg d'un espace hyperbolique. La définition qui suit est due à Bowditch \cite{bowditch08}.
	
	\begin{Def}
		L'action d'un groupe $G$ sur un espace métrique $(X,d)$ est \textit{acylindrique} si pour tout $\varepsilon >0$, il existe $R,N >0$ tels que pour tous $x,y\in X$ avec $d(x,y ) \geq R$, il existe au plus $N$ éléments de $G$ qui satisfont 
		\begin{eqnarray}
			d(x, gx) \leq \varepsilon \text{ et } d(y,gy) \leq \varepsilon \nonumber. 
		\end{eqnarray}
	\end{Def} 
	
	L'existence d'une action acylindrique sur un espace hyperbolique a de fortes conséquences de structure sur le groupe, comme le montre le résultat suivant. 
	
	\begin{thm}[{\cite[Theorem 1.1]{osin2016}}]
		Soit $G$ un groupe agissant de manière acylindrique sur un espace hyperbolique. Alors $G$ satisfait exactement l'une des trois propriétés suivantes. 
		\begin{enumerate}
			\item$G$ a des orbites bornées. 
			\item $G$ est virtuellement cyclique et a un élément loxodromique. 
			\item\label{thm acyl non eleme} $G$ contient une infinité d'éléments loxodromiques indépendants. 
		\end{enumerate}
	\end{thm}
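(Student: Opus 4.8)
Le plan est de partir de la classification générale des actions isométriques sur un espace hyperbolique, puis d'utiliser l'acylindricité de façon quantitative pour supprimer les cas intermédiaires et rigidifier ce qui reste. Je rappellerais d'abord qu'à l'action de $G$ sur un espace hyperbolique $X$ on associe son \emph{ensemble limite} $\Lambda(G) \subseteq \partial X$ (les points d'accumulation d'une orbite $Go$, indépendant de $o$), et que, selon $|\Lambda(G)|$ et l'existence d'un point fixe global de $G$ dans $\partial X$, l'action est \emph{elliptique} ($\Lambda(G) = \emptyset$, orbites bornées), \emph{parabolique} ($|\Lambda(G)| = 1$, aucune isométrie loxodromique), \emph{lineale} ($|\Lambda(G)| = 2$), \emph{focale} ($|\Lambda(G)|$ infini mais $G$ fixe un point de $\partial X$) ou \emph{de type général} ($|\Lambda(G)|$ infini, $G$ sans point fixe dans $\partial X$). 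Le cas elliptique donne directement l'alternative~(1) ; pour le cas de type général, j'utiliserais la dynamique Nord--Sud des isométries loxodromiques et un argument de ping-pong (comme dans la Proposition~\ref{non elem caprace fuj intro}) pour produire, à partir de deux isométries loxodromiques à points fixes distincts, une infinité d'isométries loxodromiques deux à deux indépendantes au sens de la Définition~\ref{def isom indé}, d'où l'alternative~(3). Restent les cas parabolique, focal et lineal, et c'est là qu'intervient l'acylindricité.

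La clef est le lemme suivant, qui sera la partie la plus technique : \emph{une action acylindrique dont les orbites sont non bornées possède une isométrie loxodromique}. On peut l'aborder en deux temps : (i) un fait classique d'hyperbolicité affirme qu'une action à orbites non bornées est soit parabolique, soit pourvue d'une isométrie loxodromique ; (ii) une action parabolique est incompatible avec l'acylindricité — au voisinage de son unique point fixe au bord, on trouve, pour tout rayon $R$, deux points à distance $\geq R$ que tous les éléments d'un sous-ensemble infini déplacent d'une quantité uniformément bornée, ce qui contredit la condition $(\varepsilon, R, N)$. (Alternativement, ce lemme s'obtient via la théorie des actions acylindriques de Bowditch \cite{bowditch08}.) Ce lemme élimine déjà le cas parabolique.

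Pour le cas focal, je supposerais que $G$ fixe $\xi \in \partial X$ et admet une isométrie loxodromique $g$ (nécessairement de point fixe $\xi$, disons $g^{+} = \xi$). Le cocycle horosphérique associé à $\xi$ donne un homomorphisme $\beta : G \to \R$, et les isométries loxodromiques sont exactement les éléments d'image non nulle, donc le noyau $K := \ker \beta$ ne contient aucune isométrie loxodromique ; par le lemme-clef (appliqué à $K$), $K$ est elliptique, d'orbites de diamètre $\leq D$. Comme $\Lambda(G)$ est infini alors que $G/K$ se plonge dans $\R$, le groupe $K$ est infini. Or chaque $k \in K$ fixe $\xi$ et a une orbite bornée, donc déplace le point $g^{n}o$ — qui converge vers $\xi$ — d'une quantité majorée indépendamment de $n$ (deux rayons vers $\xi$ issus de points proches restent uniformément proches), tandis que $d(o, g^{n}o) \sim n|g| \to \infty$. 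Pour $n$ grand, les éléments de $K$ contredisent donc l'acylindricité : le cas focal est impossible.

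Enfin, dans le cas lineal, $\Lambda(G) = \{\xi^{+}, \xi^{-}\}$, $G$ préserve cette paire et un sous-groupe $G_{0}$ d'indice $\leq 2$ fixe $\xi^{+}$ et $\xi^{-}$, préserve grossièrement une droite quasi-géodésique $\ell$ les reliant et contient une isométrie loxodromique ; la longueur de translation grossière le long de $\ell$ fournit un homomorphisme $G_{0} \to \R$ de noyau elliptique et borné, donc \emph{fini} par acylindricité (même argument que ci-dessus, appliqué à deux points très éloignés de $\ell$), et d'image un sous-groupe discret non trivial de $\R$ — donc isomorphe à $\mathbb{Z}$ — puisque les éléments de petite longueur de translation sont en nombre borné. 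Ainsi $G_{0}$, et donc $G$, est virtuellement cyclique avec une isométrie loxodromique, soit l'alternative~(2). L'exclusivité mutuelle est immédiate : (1) n'a pas d'isométrie loxodromique, (2) n'a que deux points limites et donc pas d'infinité d'éléments loxodromiques indépendants, (3) a des orbites non bornées. Le principal obstacle est bien le lemme-clef et les arguments de finitude qui s'y ramènent, où l'acylindricité doit être exploitée quantitativement ; tout le reste relève de la géométrie hyperbolique standard et du ping-pong.
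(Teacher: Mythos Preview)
The paper does not supply its own proof of this statement: it is quoted verbatim as \cite[Theorem 1.1]{osin2016} in the introductory Section~\ref{section acylindrique cat} and immediately used as background, with no argument given. There is therefore nothing in the paper to compare your proposal against.

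That said, your sketch is a faithful outline of Osin's original argument: start from the fivefold classification (elliptic / parabolic / lineal / focal / general type), use the key lemma that an acylindrical action with unbounded orbits must contain a loxodromic element to eliminate the parabolic case, exploit acylindricity again to force the kernel of the translation quasi-morphism to be finite in the lineal case (yielding virtual cyclicity), rule out the focal case by a similar finiteness contradiction, and handle the general type by ping-pong. One technical caution: the ``horospherical cocycle'' $\beta: G \to \R$ you invoke in the focal and lineal cases is in general only a quasi-morphism on a non-proper hyperbolic space, not a genuine homomorphism, so the passage to $K = \ker\beta$ and the discreteness of the image require the more careful treatment via Busemann quasi-characters that Osin carries out; your sketch glosses over this, but the overall architecture is correct.
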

	
	En d'autres termes, l'action de $G$ ne peut pas être parabolique ou quasi-parabolique. Remarquons que si l'action est acylindrique et non-élémentaire, alors seule la possibilité \ref{thm acyl non eleme} est permise. 
	
	Il faut comprendre un élément WPD comme une isométrie qui engendre une action acylindrique \og dans la direction de son quasi-axe\fg. 
	
	\begin{Def}
		Soit $G$ un groupe agissant sur un espace métrique $(X,d)$, et $g \in G$. Alors $g$ agit de manière \textit{faiblement proprement discontinue}, ou est un \textit{élément WPD}, si pour tout $\varepsilon >0$, et pour tout $x\in X$, il existe $M \in \mathbb{N}$ tel que 
		\begin{eqnarray}
			\# \{h \in G \; | \; d(hx, x) \leq \varepsilon \text{ et } d(hg^Mx, g^Mx)\leq \varepsilon\} < \infty. \nonumber
		\end{eqnarray}
	\end{Def}
	
	Il y a un lien entre action acylindrique et présence d'un élément loxodromique WPD. 
	
	\begin{thm}[{\cite[Theorem 1.2]{osin2016}}]
		Soit $G$ un groupe. Alors $G$ admet une action acylindrique non-élémentaire sur un espace hyperbolique si et seulement si $G$ n'est pas virtuellement cyclique et admet une action sur un espace hyperbolique tel qu'il existe un élément loxodromique $WPD$ pour cette action. 
	\end{thm}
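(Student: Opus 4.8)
La démonstration procède par double implication, la notion de sous-groupe hyperboliquement plongé servant de charnière entre les deux.

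\emph{Sens direct.} Supposons que $G \curvearrowright X$ soit une action acylindrique et non-élémentaire sur un espace hyperbolique. D'après le théorème de classification précédent, l'action possède deux éléments loxodromiques indépendants, et $G$ n'est donc pas virtuellement cyclique (les éléments loxodromiques d'un groupe virtuellement cyclique ne sont jamais indépendants). Il reste à voir qu'un élément loxodromique $g$ d'une action acylindrique est automatiquement WPD : étant donnés $\varepsilon > 0$ et $x \in X$, la loxodromicité de $g$ donne $d(x, g^M x) \to \infty$, donc pour $M$ assez grand $d(x, g^M x)$ dépasse la constante $R(\varepsilon)$ de l'acylindricité, laquelle borne alors uniformément par $N(\varepsilon)$ le nombre de $h \in G$ vérifiant $d(hx, x) \leq \varepsilon$ et $d(h g^M x, g^M x) \leq \varepsilon$, ensemble qui est en particulier fini. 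C'est le sens facile.

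\emph{Sens réciproque.} On suppose que $G \curvearrowright X$ agit sur un espace hyperbolique avec un élément loxodromique WPD $g$, et que $G$ n'est pas virtuellement cyclique. La première étape consiste à introduire la clôture élémentaire $E(g)$, formée des $h \in G$ dont le translaté par $h$ d'un quasi-axe de $g$ reste à distance de Hausdorff bornée de ce quasi-axe : c'est exactement la condition WPD qui assure que $E(g)$ est virtuellement cyclique (sans elle, $E(g)$ pourrait ne pas l'être). La deuxième étape, qui est le cœur technique, est de montrer que $E(g) \hookrightarrow_h G$, c'est-à-dire que $E(g)$ est hyperboliquement plongé dans $G$ relativement à un sous-ensemble bien choisi ; c'est un résultat de Dahmani, Guirardel et Osin \cite{dahmani_guirardel_osin17}, obtenu en analysant la géométrie relative le long du quasi-axe de $g$ au moyen de l'hypothèse WPD. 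Comme $G$ n'est pas virtuellement cyclique, $E(g)$ est propre, et il est infini car il contient $g$ : on dispose donc dans $G$ d'un sous-groupe propre infini hyperboliquement plongé. On conclut par le critère d'Osin \cite{osin2016} : si $H \hookrightarrow_h (G, Y)$ avec $H$ infini et $H \neq G$, le graphe de Cayley $\Gamma(G, Y \sqcup H)$ est hyperbolique par définition même du plongement, l'action de $G$ sur ce graphe est acylindrique --- c'est le lemme principal, qui repose sur les propriétés de finitude de la métrique relative et sur les estimées de pénétration bornée des classes de $H$ --- et elle est non-élémentaire, car $g$ y agit loxodromiquement et la propreté de $E(g)$ laisse la place à un second élément loxodromique indépendant.

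Le principal obstacle réside donc dans les deux résultats invoqués : la construction d'un sous-groupe virtuellement cyclique hyperboliquement plongé à partir d'un seul élément loxodromique WPD, puis l'acylindricité de l'action sur le graphe de Cayley relatif. Ce sont deux théorèmes profonds --- respectivement l'essentiel de \cite{dahmani_guirardel_osin17} et de \cite{osin2016} --- que l'on citerait tels quels ; les seuls passages rédigés intégralement seraient le sens direct et la réduction du sens réciproque au critère de plongement hyperbolique.
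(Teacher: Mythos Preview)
The paper does not prove this theorem at all: it is stated as a citation from the literature (\cite[Theorem 1.2]{osin2016}) and used as background in Section~\ref{section acylindrique cat}, with no proof given or attempted. There is therefore nothing in the paper to compare your proposal against.

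That said, your sketch is a faithful outline of Osin's actual argument: the forward direction is indeed immediate from the definitions, and the converse does go through hyperbolically embedded subgroups, first producing $E(g)\hookrightarrow_h G$ from the WPD condition (this is \cite{dahmani_guirardel_osin17}), then invoking the equivalence between the existence of a proper infinite hyperbolically embedded subgroup and acylindrical hyperbolicity (the main theorem of \cite{osin2016}). Your identification of where the real work lies is accurate.
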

	
	Si $G$ satisfait les conditions équivalentes du théorème précédent, on dit que $G$ est un groupe \textit{acylindriquement hyperbolique}. 
	
	\subsection{Bords des modèles hyperboliques}
	On conclut cette partie en discutant de la relation entre le bord des modèles hyperboliques, et la géométrie à l'infini de l'espace $\cat$(0) original. 
	
	\begin{Def}
		On dit qu'un rayon géodésique $\gamma : [0, \infty[ \rightarrow X$ \textit{traverse} un rideau $h $ s'il existe $t_0 \in [0, \infty[$ tel que  $h$ sépare $\gamma(0) $ et $\gamma ([t_0, \infty[)$. De même, on dit qu'une droite géodésique $\gamma: \mathbb{R} \rightarrow X$ traverse un hyperplan $h$ s'il existe $t_1, t_2 \in \mathbb{R}$ tels que $h$ sépare $\gamma (]-\infty, t_1])$ et $\gamma ([t_2, \infty[)$. On dit que $\gamma $ traverse une chaine de rideaux $c = \{h_i\}$ si elle traverse chaque rideau $h_i$ individuellement. 
	\end{Def}

	\begin{rem}
		On peut montrer que si deux rayons géodésiques issus du même point traversent la même $L$-chaine $c$, et que $c$ est infinie, alors les rayons sont asymptotique (et donc égaux). Cela provient notamment du lemme \og du goulot \fg, et de \cite[Lemma 2.21]{petyt_spriano_zalloum22} 
	\end{rem}
	
	Soit $o \in X$ un point-base, on définit $\mathcal{B}_L $ comme le sous-espace de $\bd X$ qui consiste en tous les rayons géodésiques $\gamma : [0, \infty[ \rightarrow X$ issus de $o$ et tels qu'il existe une $L$-chaine infinie qui est traversée par $\gamma$. Le résultat suivant est l'analogue du théorème \ref{bord contact} dans le cas des espaces $\cat$(0) propres. 
	
	\begin{thm}[{\cite[Theorem 8.1]{petyt_spriano_zalloum22}}] \label{equivariant embedding of boundaries}
		Soit $X$ un espace $\cat$(0) propre. Alors, pour tout $L \in \mathbb{N}^\ast$, l'application identité $\iota : X \longrightarrow X_L$ induit un homéomorphisme $\iso(X)$-équivariant $\partial_L : \mathcal{B}_L \longrightarrow \bdg X_L$. De plus, les points de $\mathcal{B}_L$ sont des points de visibilité de $X$. 
	\end{thm}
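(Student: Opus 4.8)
Le plan est d'établir successivement : (i) que $\partial_L$ est bien définie et $\iso(X)$-équivariante, (ii) qu'elle est bijective, (iii) que c'est un homéomorphisme, puis (iv) la propriété de visibilité. Pour (i), je partirais de $\xi \in \mathcal{B}_L$ et du rayon géodésique $r_\xi := \gamma_o^\xi$ de $X$, qui par définition traverse une $L$-chaine infinie $c = \{h_i\}_{i \geq 1}$ ; quitte à invoquer les lemmes de \cite{petyt_spriano_zalloum22}, on peut la supposer duale à $r_\xi$. Comme la $L$-chaine $\{h_1, \dots, h_i\}$ sépare $o$ d'une queue de $r_\xi$, on a $d_L(o, r_\xi(n)) \to \infty$ ; surtout, le lemme du goulot \ref{lem bottleneck intro}, qui code les bonnes propriétés de Morse de la $L$-séparation, montre que $r_\xi$ se comporte comme une quasi-géodésique (non paramétrée) de $X_L$ --- c'est le même phénomène que dans le théorème \ref{thm contract loxodromic}. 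Il s'ensuit que $(r_\xi(n))_n$ converge vers un point $\partial_L(\xi) \in \bdg X_L$ ne dépendant pas du choix de $c$. L'équivariance est alors formelle : $\iota$ est l'identité ensembliste et entrelace les actions de $\iso(X)$ (théorème \ref{theorem hyperbolic models intro}), et $\mathcal{B}_L$ est $\iso(X)$-invariant puisqu'une isométrie envoie une $L$-chaine sur une $L$-chaine.

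Pour (ii), l'injectivité se traite ainsi : si $\xi \neq \eta$ dans $\mathcal{B}_L$, les rayons $r_\xi, r_\eta$ ne sont pas asymptotes, donc $d(r_\xi(t), r_\eta(t)) \to \infty$ par convexité ; si on avait $\partial_L(\xi) = \partial_L(\eta)$, ces quasi-géodésiques de $X_L$ aboutiraient au même point de $\bdg X_L$, donc resteraient à $d_L$-distance bornée, et le lemme du goulot (qui transforme \og traverser un rideau d'une $L$-chaine \fg{} en \og passer $d$-près de son pôle \fg{}) combiné à cette même convexité forcerait $r_\xi$ et $r_\eta$ à être asymptotes --- contradiction. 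Pour la surjectivité, on part de $\zeta \in \bdg X_L$ et d'une suite $x_n \to \zeta$ dans $X_L$ ; comme $d(o, x_n) \geq d_L(o, x_n) - 1 \to \infty$, la propreté de $X$ permet d'extraire des segments géodésiques $[o, x_n]$ un rayon limite $r$, définissant $\xi \in \bd X$. Les $L$-chaines duales à $[o, x_n]$ réalisant $d_L(o, x_n)$ ont leurs rideaux confinés dans des régions bornées (via le lemme du goulot et la propreté), ce qui permet, par un argument de compacité de type lemme de König, d'en extraire une $L$-chaine infinie traversée par $r$ : ainsi $\xi \in \mathcal{B}_L$, et un calcul de produits de Gromov donne $\partial_L(\xi) = \zeta$.

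Pour (iii), le dictionnaire est le suivant : $\xi_n \to \xi$ pour la topologie conique équivaut à la convergence uniforme sur les compacts des rayons $r_{\xi_n} \to r_\xi$, tandis que $\zeta_n \to \zeta$ dans $\bdg X_L$ équivaut à ce que le produit de Gromov de $\zeta_n$ et $\zeta$ en $o$ (pour $d_L$) tende vers $\infty$. Le pont reste le lemme du goulot : deux rayons traversant un rideau commun d'une $L$-chaine passent $d$-près de son pôle, qui est compact, de sorte que proximité conique et proximité dans $\bdg X_L$ se contrôlent mutuellement dès qu'on les fait transiter par les $L$-chaines infinies ; on en déduit séparément la continuité de $\partial_L$ et celle de $\partial_L^{-1}$. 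Pour (iv), soit $\xi \in \mathcal{B}_L$, $c = \{h_i\}$ une $L$-chaine infinie traversée par $r_\xi$, et $\eta \neq \xi$ quelconque ; d'après la remarque sur les rayons issus de $o$ qui traversent une même $L$-chaine infinie, $r_\eta$ ne traverse qu'un nombre fini des $h_i$. Pour $j$ assez grand, on choisit des points $q$ arbitrairement lointains le long de $r_\eta$ situés dans le demi-espace de $h_j$ contenant $o$, et des points $p \to \xi$ le long de $r_\xi$ situés dans le demi-espace opposé ; les géodésiques $[q, p]$ traversent $h_j$, donc passent toutes à distance $\leq 2L+1$ du pôle compact $P_j$ de $h_j$. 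Par Arzelà-Ascoli (propreté de $X$), une valeur d'adhérence de ces segments est une droite géodésique joignant $\eta$ à $\xi$ ; comme $\eta$ est arbitraire, $\xi$ est un point de visibilité de $X$.

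L'obstacle principal est le contrôle des passages à la limite sur les $L$-chaines : pour la surjectivité, extraire une $L$-chaine infinie d'une famille de $L$-chaines finies de plus en plus longues, et pour la continuité, s'assurer que la $L$-séparation est suffisamment stable sous perturbation des rideaux. Dans les deux cas on s'appuie de façon essentielle sur la propreté de $X$ (pour disposer de limites de rideaux et de géodésiques via Arzelà-Ascoli) et sur le lemme du goulot (pour confiner les rideaux dans une région bornée) ; c'est ce point technique, plus que l'architecture de la preuve, qui demande le plus de soin.
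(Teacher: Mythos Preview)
Ce théorème n'est pas démontré dans le papier : c'est un résultat cité de \cite[Theorem 8.1]{petyt_spriano_zalloum22}. En revanche, le papier en démontre une extension au cas complet (non nécessairement propre), à savoir le Théorème~\ref{thm homeomorphism bords XL}, et c'est à cette preuve qu'il est naturel de comparer ta proposition.

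Ton plan est correct et recouvre globalement la démarche de \cite{petyt_spriano_zalloum22} et du Théorème~\ref{thm homeomorphism bords XL}, avec quelques différences d'organisation qui méritent d'être signalées. Pour la bonne définition et la continuité de $\partial_L$, le papier invoque directement que $\iota_L$ est $(1,1)$-grossièrement lipschitzienne et renvoie à \cite[Lemma 6.18]{incerti-medici_zalloum21}, là où tu refais l'argument à la main via le lemme du goulot ; les deux marchent. Pour l'injectivité, le papier procède de manière directe : si $\{h_i\}$ est une $L$-chaine infinie duale à $r_\xi$ et $r_\eta \neq r_\xi$, alors $r_\eta$ reste dans un $h_k^-$ à partir d'un certain rang, donc $d_L(r_\eta, r_\xi(n)) \geq n-k$. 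Ton argument par l'absurde (même limite $\Rightarrow$ $d_L$-borné $\Rightarrow$ asymptotes) est la contraposée et fonctionne, mais le passage \og $d_L$-borné implique asymptotes \fg{} mérite d'être explicité : il faut dire que $d_L$-borné force $r_\eta$ à traverser tous les $h_i$ sauf un nombre fini, puis invoquer la remarque (juste avant la définition de $\mathcal{B}_L$) selon laquelle deux rayons issus de $o$ traversant une même $L$-chaine infinie sont égaux.

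La différence la plus substantielle est la surjectivité. Tu utilises la propreté de $X$ pour extraire un rayon limite par Arzelà--Ascoli, puis une $L$-chaine infinie par un argument de type König. Le papier (et \cite{petyt_spriano_zalloum22}) évite complètement cette extraction : il part d'un rayon quasi-géodésique $P$ de $X_L$ représentant $\zeta$, invoque \cite[Corollary 8.11]{petyt_spriano_zalloum22} (Lemme~\ref{lem dual chain qgeod}) pour obtenir directement une $L$-chaine infinie le long de $P$, puis applique la Proposition~\ref{prop inverse map} pour en déduire que les segments $[o,x_n]$ convergent vers un rayon $\gamma$ qui traverse cette chaine. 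L'intérêt de cette approche est qu'elle ne requiert que la complétude de $X$ (via des suites de Cauchy dans la Proposition~\ref{prop inverse map}) et non la propreté : c'est précisément ce qui permet l'extension du Théorème~\ref{thm homeomorphism bords XL}. Ton argument est correct dans le cas propre, mais ne s'étend pas.

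Pour la continuité de l'inverse, le papier renvoie au point (4) de la preuve de \cite[Proposition 8.9]{petyt_spriano_zalloum22}, qui repose implicitement sur le Théorème~\ref{thm curtain topology} (coïncidence de la topologie conique et de la topologie des rideaux sur $\mathcal{B}_L$) ; ton esquisse via le lemme du goulot en est une reformulation. Enfin, pour la visibilité, ta preuve est essentiellement celle de \cite[Lemma 8.4]{petyt_spriano_zalloum22}, que le papier se contente de citer.
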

	
	Dans la Section \ref{section rw cat}, on étendra ce résultat à tout espace $\cat$(0) complet. 
	
	\chapter{Immeubles affines}\label{chapter immeubles}
	
	On fait dans ce chapitre une brève introduction à la théorie des immeubles, dans le but d'arriver rapidement aux immeubles affines, une classe particulièrement importante d'espaces $\cat$(0). La théorie des immeubles est très riche et les approches multiples, dont nous ne pourrons donner qu'un rapide aperçu. Du point de vue de la théorie des groupes, les immeubles affines sont en analogie avec les espaces symétriques, en ce qu'ils permettent d'étudier les groupes algébriques semi-simples sur des corps non-archimédiens. Nous adoptons ici l'approche des immeubles par les complexes de chambres, tels qu'il ont été originellement introduits par J.~Tits \cite{tits74}. 
	Une approche plus moderne est celle des immeubles en tant que $W$-espaces métriques, où $W$ est un groupe de Coxeter, et nous en donnerons un aperçu. Les références classiques pour cette section sont \cite{garrett97}, \cite{abramenko_brown08}, \cite{weiss09} et \cite{ronan09}. 
	\newline
	\vspace{2em}
	\minitoc
	
	\vspace{1cm}

	\section{Groupes de Coxeter}

	Cette section est consacrée à l'introduction des groupes de Coxeter. Les groupes et systèmes de Coxeter sont l'objet d'une littérature abondante. On en donne une introduction très brève afin d'aller le plus rapidement possible aux immeubles. Une référence très complète est \cite{davis15}.

	\subsection{Systèmes de Coxeter}
	
	\begin{Def}
		Un \textit{système de Coxeter} est un groupe $W$ associé à un système de générateurs $S$, et de présentation 
		\begin{eqnarray}
			W = \langle s \in S \, | \, (s t)^{m_{s,t}} = 1, \ s,t \in S \rangle \nonumber, 
		\end{eqnarray}
		où $m_{s,t} \in \mathbb{N}^\ast \cup \{\infty\}$, et $m_{s,s} = 1$ pour $s \in S$. On dira également que $(W,S)$ est un groupe de Coxeter, ou plus simplement $W$ si le système de générateurs est implicite. Un groupe \textit{diédral} est un système de Coxeter à deux générateurs. 
	\end{Def}
	
	Soit $(W,S)$ un système de Coxeter. Le diagramme de Coxeter associé à $(W, S)$ est le complexe simplicial de dimension 1 dont les sommets sont les éléments de $S$ et tel qu'il existe une arête entre deux sommets $s,t \in S$ si et seulement si $m_{s,t} >2$, auquel cas on étiquette l'arête entre $s$ et $t$ par le nombre $m_{s,t}$. Il est commun de ne pas étiqueter l'arête entre $s$ et $t$ si $m_{s,t}= 3$. Ces diagrammes donnent une manière schématique souvent pratique pour représenter un groupe de Coxeter. 
	
	\begin{ex}[Groupes diédraux finis]
		Soit $L$ une droite dans $\R^2$, et soit $L'$ une seconde droite qui coupe la première en un angle $\pi /m  \in[0, \pi / 2]$. Soit $s_L$ la réflexion par rapport à $L$, et $s_{L'}$ la réflexion par rapport à $L'$. Alors $s_L \circ s_{L'}$ est une rotation d'angle $2 \pi /m$. Le groupe engendré par $s_L, s_{L'}$ est noté $\mathbb{D}_m$. Ce sous-groupe de $O(2)$ est un groupe de Coxeter, pour la présentation 
		\begin{eqnarray}
			\mathbb{D}_m = \langle s,t \, | \, s^2= t^2= (s t)^{2m} = 1 \rangle.\nonumber
		\end{eqnarray}
		Son diagramme est représenté dans la figure \ref{fig diag Dm}
	\end{ex}
	
	\begin{ex}[Groupe diédral infini]
		Le groupe diédral infini $\mathbb{D}_\infty$ peut être construit de la manière suivante. Soit $r(t) = -t$ et $s(t) = 2 - t$ deux isométries affines de $\R$. Alors $\mathbb{D}_\infty$ est le groupe engendré par $s$ et $r$. 
	\end{ex}

	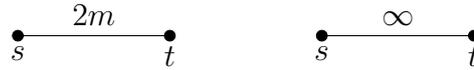
\begin{figure}[!]
		\centering
		\begin{center}
			\begin{tikzpicture}[scale=1]
				\draw (0,0) -- (2,0)  ;
				\draw (4,0) -- (6,0)  ;
				\draw (0,0) node[below]{$s$} ;
				\draw (2,0) node[below]{$t$} ;
				\draw (4,0) node[below]{$s$} ;
				\draw (6,0) node[below]{$t$} ;
				\draw (1,0) node[above]{$2m$} ;
				\draw (5,0) node[above]{$\infty$} ;
				\filldraw[black] (0,0)circle(2pt);
				\filldraw[black] (2,0)circle(2pt);
				\filldraw[black] (4,0)circle(2pt);
				\filldraw[black] (6,0)circle(2pt);
			\end{tikzpicture}
		\end{center}
		\caption{Diagrammes de Coxeter de $\mathbb{D}_m $ et $\mathbb{D}_\infty$}\label{fig diag Dm}
	\end{figure}
	
	Beaucoup de groupes de Coxeter peuvent être vus comme des groupes de pavages de l'espace euclidien $\mathbb{E}^n$, de la sphère $\mathbb{S}^n$ ou de l'espace hyperbolique $\mathbb{H}^n$, voir \cite[Theorem 6.4.3]{davis15} pour une construction à partir d'un polytope donné. Un système de Coxeter $(W, S)$ est dit \textit{sphérique} si c'est un groupe de pavage de la sphère, ou de manière équivalente, si le groupe $W$ est fini \cite[Theorem 6.12.9]{davis15}

	\subsection{Groupes de réflexions affines}\label{réal geom canonique cox}
	
	Dans cette section, on présente brièvement la notion de système de Coxeter affine ainsi que leur réalisation géométrique, qui permettra de les voir (ainsi que plus tard les immeubles) comme des espaces métriques $\cat$(0). 
	
	Soit $V$ un espace vectoriel euclidien de dimension finie. Une \textit{réflexion} $s $ sur $V$ est une isométrie qui stabilise un hyperplan affine $H_s$ de $V$. Réciproquement, à tout hyperplan $H$ on peut associer une réflexion $s_H$ qui stabilise $H$. Une telle réflexion $s$ est la composée d'une partie affine (qui correspond à une translation) et d'une partie linéaire, qu'on appelle la \textit{partie vectorielle} de $s$. 
	
	On dit que $W$ est un \textit{groupe de réflexions} sur $V$ si $W$ est engendré par des réflexions affines et tel que le groupe $\overline{W} < \gl(V)$ engendré par les parties vectorielles des éléments de $W$ est un groupe fini. Le groupe $W$ est dit \textit{linéaire} si $W = \overline{W}$. Un \textit{mur} de $W$ est un hyperplan fixé par une réflexion de $W$. L'ensemble des murs est stabilisé par l'action de $W$. Puisque $\overline{W}$ est fini, les murs vectoriels, c'est-à-dire les hyperplans (linéaires) associés aux parties vectorielles des réflexions de $W$ sont en nombre fini. 
	
	\begin{Def}
		Une composante connexe du complémentaire dans $V$ de la réunion des murs vectoriels est une \textit{chambre de Weyl vectorielle} (ouverte) de $V$ par rapport à $W$. C'est un cône polyédral convexe. Une facette d'une chambre de Weyl vectorielle est une \textit{facette vectorielle} de $V$. 
	\end{Def}
	
	Tout groupe de réflexions affine $(W, V)$ peut s'écrire $W = W_v\ltimes T$, où $T$ est un groupe de translations de $V$ engendré par des vecteurs orthogonaux aux murs de $\overline{W}$, et $W_v$ est un groupe de réflexion fini qui fixe $v\in V$ et isomorphe à $\overline{W}$. On appellera donc \textit{chambres de Weyl} (resp. \textit{murs, faces}) \textit{affines} les translatés des chambres de Weyl (resp. murs, faces) vectorielles par ce groupe de translations affines. 
	
	\begin{Def}
		Un groupe de réflexion $(W,V)$ est dit \textit{affine discret} (ou \textit{simplicial}) si le groupe de translations $T$ est un sous-groupe discret de $V$, et \textit{affine non-discret} (ou dense) si $T$ est un sous-groupe dense de $V$. On dit qu'il est \textit{irréductible} si $W$ agit de manière irréductible sur $V$, i.e. ne stabilise pas de sous-espace propre. 
	\end{Def}
	
	Un groupe de réflexions affine discret est la réalisation concrète d'un groupe de Coxeter $(W,S)$, comme on le montrera dans la section suivante. Un groupe de Coxeter obtenu de la sorte est dit \textit{affine}. Dans toute la suite, sauf mention contraire, un groupe de Coxeter affine est donc supposé discret. 
	\newline
	
	\begin{ex}
		On trouvera dans la figure \ref{figure diag classiques} des exemples classiques de systèmes de Coxeter irréductibles sphériques ($A_n, B_n, D_n$) et affines discrets ($\tilde{A}_n, \tilde{B}_n, \tilde{C}_n, \tilde{D}_n$). Mis à part un nombre fini d'exceptions, il s'agit de tous les systèmes de Coxeter irréductibles sphériques et affines discrets, voir \cite{davis15}. 
		
		\begin{figure}[!]
			\centering
			\begin{center}
				\begin{tikzpicture}[scale=0.7]
					\draw (0,0) -- (1,0) -- (2,0);
					\path (2,0) -- node[auto=false]{\ldots} (3,0);
					\draw (3, 0) -- (4,0);
					\draw (-1,0) node[left]{$A_n:$} ;
					\filldraw[black] (0,0)circle(2pt);
					\filldraw[black] (1,0)circle(2pt);
					\filldraw[black] (2,0)circle(2pt);
					\filldraw[black] (3,0)circle(2pt);
					\filldraw[black] (4,0)circle(2pt);
					
					\draw (7,0) -- (8,0) -- (9,0);
					\path (9,0) -- node[auto=false]{\ldots} (10,0);
					\draw (7,-1) -- (8,-1) -- (9,-1);
					\draw (7, 0) -- (7, -1);
					\draw (10, 0) -- (11,0) -- (11, -1) -- (10, -1); 
					\path (9,-1) -- node[auto=false]{\ldots} (10,-1);
					\draw (6.5,0) node[left]{$\tilde{A_n} :$} ;
					\filldraw[black] (7,0)circle(2pt);
					\filldraw[black] (8,0)circle(2pt);
					\filldraw[black] (9,0)circle(2pt);
					\filldraw[black] (10,0)circle(2pt);
					\filldraw[black] (11,0)circle(2pt);
					\filldraw[black] (11,-1)circle(2pt);
					\filldraw[black] (10, -1)circle(2pt);
					\filldraw[black] (9, -1)circle(2pt);
					\filldraw[black] (7,-1)circle(2pt);
					\filldraw[black] (8, -1)circle(2pt);
					
					\draw (0,-2) -- (1,-2) -- (2,-2);
					\path (2,-2) -- node[auto=false]{\ldots} (3,-2);
					\draw (3, -2) -- (4,-2);
					\draw (-1,-2) node[left]{$B_n: $ } ;
					\filldraw[black] (0,-2)circle(2pt);
					\filldraw[black] (1,-2)circle(2pt);
					\filldraw[black] (2,-2)circle(2pt);
					\filldraw[black] (3,-2)circle(2pt);
					\filldraw[black] (4,-2)circle(2pt);
					\draw (3.5, -2) node[above] {$4$} ;
					
					\draw (7,-2) -- (8,-2) -- (9,-2);
					\path (9,-2) -- node[auto=false]{\ldots} (10,-2);
					\draw (8,-2) -- (8, -3);
					\draw (10, -2) -- (11,-2) -- (12, -2); 
					\path (9,-1) -- node[auto=false]{\ldots} (10,-1);
					\draw (6.5,-2) node[left]{$\tilde{B_n} :$} ;
					\filldraw[black] (7,-2)circle(2pt);
					\filldraw[black] (8,-2)circle(2pt);
					\filldraw[black] (9,-2)circle(2pt);
					\filldraw[black] (10,-2)circle(2pt);
					\filldraw[black] (11,-2)circle(2pt);
					\filldraw[black] (8,-3)circle(2pt);
					\filldraw[black] (12,-2)circle(2pt);
					\draw (11.5, -2) node[above] {$4$} ;
					
					\draw (0,-4) -- (1,-4) -- (2, -4);
					\path (2, -4) -- node[auto=false]{\ldots} (3, -4);
					\draw (3, -4) -- (4, -4);
					\draw (1, -4) -- (1, -5);
					\draw (-1, -4) node[left]{$D_n:$} ;
					\filldraw[black] (0,-4)circle(2pt);
					\filldraw[black] (1,-4)circle(2pt);
					\filldraw[black] (2,-4)circle(2pt);
					\filldraw[black] (3,-4)circle(2pt);
					\filldraw[black] (4,-4)circle(2pt);
					\filldraw[black] (1,-5)circle(2pt);
					
					\draw (7,-4) -- (8,-4) -- (9, -4);
					\path (9, -4) -- node[auto=false]{\ldots} (10, -4);
					\draw (10,-4) -- (11,-4) -- (12, -4); 
					\filldraw[black] (7,-4)circle(2pt);
					\filldraw[black] (8,-4)circle(2pt);
					\filldraw[black] (9,-4)circle(2pt);
					\filldraw[black] (10,-4)circle(2pt);
					\filldraw[black] (11,-4)circle(2pt);
					\filldraw[black] (12,-4)circle(2pt);
					\draw (6.5, -4) node[left]{$\tilde{C_n}:$} ;
					\draw (11.5, -4) node[above] {$4$} ;
					\draw (7.5, -4) node[above] {$4$} ;
					
					\draw (7,-6) -- (8,-6) -- (9,-6);
					\path (9,-6) -- node[auto=false]{\ldots} (10,-6);
					\draw (8,-6) -- (8, -7);
					\draw (11,-6) -- (11, -7);
					\draw (10, -6) -- (11,-6) -- (12, -6); 
					\draw (6.5,-6) node[left]{$\tilde{D_n}:$} ;
					\filldraw[black] (7,-6)circle(2pt);
					\filldraw[black] (8,-6)circle(2pt);
					\filldraw[black] (9,-6)circle(2pt);
					\filldraw[black] (10,-6)circle(2pt);
					\filldraw[black] (11,-6)circle(2pt);
					\filldraw[black] (8,-7)circle(2pt);
					\filldraw[black] (11,-7)circle(2pt);
					\filldraw[black] (12,-6)circle(2pt);
				\end{tikzpicture}
			\end{center}
			\caption{Diagrammes de Coxeter de $A_n \;(n \geq 1);\;  B_n \; (n \geq 2) ; \, D_n \; (n \geq 4) ; \, \tilde{A}_n \; (n \geq 2) ; \; \tilde{B}_n \;(n \geq 3) ; \; \tilde{C}_n \; (n \geq 2) ; \, \tilde{D}_n \; (n \geq 4)$.}\label{figure diag classiques}
		\end{figure}
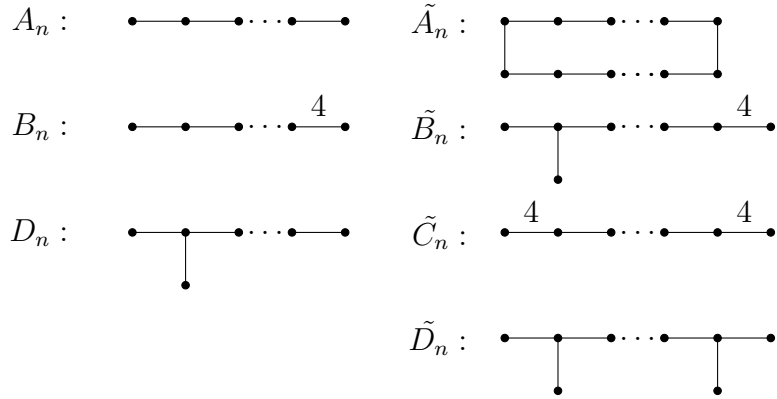
	\end{ex}

	\section{Complexes de chambres}
	
	Soit $V$ un ensemble, et $\Delta $ un sous-ensemble des parties finies de $V$. On dit que $\Delta$ est un \textit{complexe simplicial combinatoire} si c'est un ensemble partiellement ordonné muni d'une relation d'ordre $\subset$ (un poset), tel que pour tout $x \in \Delta$, si $y \subset x$, alors $y \in \Delta$. Les éléments de $\Delta$ sont les \textit{simplexes}, et ceux de $V$ sont les \textit{sommets}. La dimension du simplexe $x$ est $\# (x) - 1$, et dans toute la suite, les complexes simpliciaux seront tous de dimension finie, c'est-à-dire qu'il existe un maximum des dimensions des simplexes de $\Delta$. Si $y\subset x \in \Delta$, alors $y$ est une \textit{face} de $x$, et la codimension de $y$ dans $x$ est $\# (x \setminus y) $. Une face de codimension $1$ de $x$ est une \textit{cloison} de $x$. Un simplexe est dit \textit{maximal} s'il n'est pas lui même une face propre d'un autre simplexe de $\Delta$. On dit que deux simplexes $x$ et $y$ sont \textit{adjacents} s'ils possèdent une cloison commune. Une suite $x_0, \dots x_n$ de simplexes telle que $x_i$ et $x_{i+1}$ sont adjacents est une \textit{galerie} de $\Delta$. 
	
	\begin{Def}
		Un \textit{complexe de chambres} est un complexe simplicial tel que les simplexes maximaux sont tous de même dimension et tel que toute paire de simplexes maximaux est reliée par une galerie. Les simplexes maximaux sont appelés \textit{chambres}, et on notera l'ensemble des chambres $\ch(\Delta)$. On dit que le complexe de chambres est \textit{fin} si toute cloison est une face d'exactement deux chambres et \textit{épais} si toute cloison est une face d'au moins trois chambres. 
	\end{Def}
	
	Sur un complexe de chambres $\Delta$, on peut donner une distance entre toute paire de chambres $C,C'$ : 
	\begin{eqnarray}
		d_\Delta(C, C') = \min \{ |\Gamma|\, | \,  \Gamma \textit{ est une galerie de $C$ à $C'$}\} - 1 \nonumber. 
	\end{eqnarray}
	
	On appelle cette distance la \textit{distance de galerie}. On peut également étendre cette distance aux paires $(x, C') $, où $x$ est un simplexe et $C'$ est une chambre par 
	\begin{eqnarray}
		d_\Delta (x, C')= \min \{ d_\Delta(C,C') \, |\, C \text{ est une chambre qui contient }x\}. \nonumber
	\end{eqnarray}
	
	\subsection{Réalisation géométrique}
	
	Soit $\Delta$ un complexe simplicial sur un ensemble de sommets $V$.  
	\begin{Def}
		La \textit{réalisation géométrique} $|\Delta|$ de $\Delta$ est la famille des fonctions positives ou nulles $f : V \rightarrow \R_+$ telles que 
		\begin{eqnarray}
			\sum_{v \in  V} f(v) = 1 \nonumber,
		\end{eqnarray}
		et telles qu'il existe $x \in \Delta$ de sorte que $f(v ) \neq 0 $ implique $v \in x$. 
	\end{Def}
	
	Une topologie sur $|\Delta|$ est donnée par 
	\begin{eqnarray}
		d(f,g) =  \sup_{v \in V} | f(v ) - g(v)  |, \nonumber
	\end{eqnarray}
	de sorte que l'inclusion d'un simplexe $|x | \rightarrow |\Delta|$ est continue. En d'autres termes, pour un simplexe combinatoire $x = \{x_0, \dots, x_p\}$, la réalisation géométrique de $x$ est 
	\begin{eqnarray}
		|x | = \{ (t_0, \dots, t_p) \; | \; \sum t_i = 1, \; 0\leq t_i \leq 1\}, \nonumber
	\end{eqnarray}
	c'est-à-dire un simplexe géométrique plongé dans $\R^{p+1}$.

	\subsection{Complexe de Coxeter}
	
	Dans cette partie, on montre que pour tout système de Coxeter, il existe un complexe de chambres canonique associé. 
	
	Soit $(W, S)$ un système de Coxeter, et soit $\mathcal{P}$ le poset de tous les sous-ensembles de $W$, où l'ordre est donné par l'inclusion inverse : pour $x, y \in \mathcal{P} $, $x \geq y$ si $x \subseteq y$. Le \textit{complexe de Coxeter} $\Sigma(W, S)$ associé à $(W,S)$ est le complexe simplicial associé au sous-poset de $\mathcal{P}$ des ensembles de la forme $w \langle T \rangle $, pour $w \in W$ et $T$ un sous-ensemble de $S$. Les complexes maximaux de $\Sigma(W, S)$ sont donc les $w\langle \emptyset \rangle = \{w\}, w \in W$. 
	
	Un \textit{étiquetage} d'un complexe simplicial $\Delta$ est une application $\lambda : \Delta \rightarrow L$, où $(L, <)$ est un poset tel que $\lambda x < \lambda y $ si $x $ est une face de $y$. On dit alors que $\Delta$ est étiqueté, ou marqué. Le \textit{type} d'un simplexe $x \in \Delta$ est alors défini comme $\lambda x$. Le théorème suivant montre que les complexes de Coxeter sont des cas particuliers de complexes de chambres. 
	
	\begin{thm}[{\cite[Theorem 3.4]{garrett97}}]\label{lem unicite morphisme ch}
		Soit $(W, S)$ un système de Coxeter. Alors le complexe de Coxeter associé $\Sigma(W,S)$ est un complexe de chambres fin. De plus, $W$ agit sur  $\Sigma(W,S)$ par automorphismes, et transitivement sur l'ensemble des chambres. 
	\end{thm}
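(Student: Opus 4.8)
\emph{Proposition de preuve.} La démonstration est entièrement combinatoire : on ne manipule que le poset des classes à gauche $w W_T$ des sous-groupes paraboliques standards $W_T := \langle T\rangle$ ($T\subseteq S$), ordonné par inclusion inverse, et l'on traduit chaque propriété voulue (dimension homogène, connexité par galeries, finesse, action transitive) en un énoncé élémentaire sur $W$ et ses générateurs $S$. On se place d'emblée dans le cas $S$ fini, comme pour tous les complexes simpliciaux considérés dans ce texte. La première chose à établir, qui est aussi le point le plus délicat à rédiger soigneusement, est que ce poset définit bien un complexe simplicial combinatoire au sens voulu : il faut vérifier que $T\mapsto W_T$ est injective et plus généralement que $w W_T = w' W_{T'}$ entraîne $T = T'$, de sorte que chaque simplexe $wW_T$ ait un \emph{type} bien défini $S\setminus T$, que ses sommets soient sans ambiguïté les classes $wW_{S\setminus\{s\}}$ pour $s\notin T$, et que sa dimension vaille $|S\setminus T|-1$. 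Ceci repose sur le lemme classique de la théorie des groupes de Coxeter selon lequel $W_T\cap W_{T'} = W_{T\cap T'}$ et $W_T$ détermine $T$ ; on le supposera acquis.

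Pour l'action, je remarquerais que la multiplication à gauche par $v\in W$ envoie la classe $wW_T$ sur $(vw)W_T$, préserve donc la famille des simplexes et l'inclusion : elle induit une action de $W$ sur $\Sigma(W,S)$ par automorphismes simpliciaux. Les chambres sont exactement les singletons $\{w\} = wW_\emptyset$ (ce sont les classes les plus petites pour l'inclusion, donc les éléments maximaux du poset), et $v\cdot\{w\} = \{vw\}$ : l'action est donc transitive, et même simplement transitive, sur $\ch(\Sigma(W,S))$.

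Ensuite je décrirais les faces d'une chambre $\{w\}$ : ce sont les classes $wW_T$ ($T\subseteq S$), puisque $w\in wW_T$ pour tout $T$ ; en particulier tout simplexe est face d'une chambre, donc les simplexes maximaux sont précisément les chambres, et comme une chambre $\{w\}$ a pour sommets les $|S|$ classes $wW_{S\setminus\{s\}}$, toutes les chambres ont la même dimension $|S|-1$. La finesse en découle : une cloison de $\{w\}$ est une face de codimension $1$, donc une classe $wW_{\{s\}} = \{w,ws\}$ pour un $s\in S$ (on utilise $s^2=1$), et les chambres la contenant sont les $\{w'\}$ avec $w'\in\{w,ws\}$, c'est-à-dire exactement les deux chambres distinctes $\{w\}$ et $\{ws\}$.

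Il reste la connexité par galeries. Deux chambres $\{v_1\}$ et $\{v_2\}$ partagent une cloison si et seulement si $v_2 = v_1 s$ pour un $s\in S$ (en comparant $\{v_1,v_1s\}$ et $\{v_2,v_2t\}$). Une galerie de $\{w\}$ à $\{w'\}$ correspond donc à une suite $w=u_0,u_1,\dots,u_n=w'$ avec $u_i = u_{i-1}s_i$, $s_i\in S$ ; une telle suite existe puisque $S$ engendre $W$ : il suffit d'écrire $w^{-1}w' = s_1\cdots s_n$ et de poser $u_i = w s_1\cdots s_i$. Donc toute paire de chambres est reliée par une galerie, ce qui achève de montrer que $\Sigma(W,S)$ est un complexe de chambres fin. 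La seule difficulté réelle, comme annoncé, est la mise en place propre de la structure simpliciale (injectivité $T\mapsto W_T$, bonne définition des sommets et des types) ; une fois ce point acquis, chacune des vérifications ci-dessus est immédiate.
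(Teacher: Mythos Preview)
Your proof is correct and follows the standard combinatorial argument; note however that the paper does not actually prove this statement but merely cites it from Garrett's book \cite{garrett97}, so there is no ``paper's own proof'' to compare against. Your outline is precisely the classical one found in that reference (and in Abramenko--Brown): identify simplices with cosets $wW_T$, use the parabolic subgroup lemma $W_T\cap W_{T'}=W_{T\cap T'}$ to get a well-defined simplicial structure, read off thinness from $|wW_{\{s\}}|=2$, and deduce gallery-connectedness from the fact that $S$ generates $W$.
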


	Prenons $C, D \in \ch (\Delta)$ deux chambres de $\Delta$, et une cloison $F$ appartenant simultanément à $C$ et $D$. Cette cloison est caractérisée par la réflexion $s \in S$ qui la fixe, et telle que $C = sD$. On dit alors que \textit{le type} de $F$ est $s$, et que $C$ et $D$ sont $s$-adjacentes. On définit alors un étiquetage sur l'ensemble des cloisons du complexe $\Delta = \Sigma (W,S)$ déterminé par $S$. On peut vérifier (\cite[Lemma 3.68]{abramenko_brown08}) que cet étiquetage est préservé par l'action de $W$. Pour toute galerie $\{C = C_0,\dots , C_n=D\}$ entre deux chambres $C, D$ on peut donc associer un \textit{type} $(s_0, \dots, s_{n-1})$, de sorte que pour tout $i$, $C_i$ et $C_{i+1}$ sont $s_i$-adjacents. 
	On définit alors l'application suivante :
	\begin{eqnarray}
		\delta &:& \ch(\Delta) \times \ch (\Delta)  \rightarrow W \nonumber \\
		& & (C, D) \mapsto s_0\dots s_{n-1} \nonumber, 
	\end{eqnarray}
	où $(s_0\dots s_{n-1})$ est le type d'une galerie entre $C$ et $D$. Cette application est bien définie : elle ne dépend pas de la galerie entre $C$ et $D$, \cite[Section 3.5]{abramenko_brown08}. Cette application est la \textit{distance de Weyl} dans $\Delta$, on en dira plus dans la section \ref{section weyl}. 
	\newline 
	
	Soit $\Delta$ un complexe de chambres, $\lambda : \Delta \rightarrow I$ un étiquetage des cloisons de $\Delta$. Pour $J \subseteq I$, on dira que deux chambres $C,D$ sont \textit{$J$-équivalentes} s'il existe une galerie de type $(i_1, \dots, i_d)$, avec $i_k \in J$ pour tout $k$, qui relie $C$ à $D$. Les classes de chambres pour cette relation d'équivalences sont appelés des \textit{$J$-résidus}. Un \textit{résidu} dans $\Delta$ est un $J$-résidu pour $J \subseteq I$. 
	
	On termine cette section en introduisant la notion de projection dans un complexe de chambres. 
	
	\begin{thm}[{\cite[Theorem 3.22]{weiss03}}]\label{thm proj cox cpx}
		Soit $C$ une chambre dans un complexe de Coxeter $\Delta$, et soit $R$ un résidu de $\Delta$. Alors il existe une unique chambre $D$ dans $R$ qui minimise la distance de galerie entre $C$ et $R$. De plus, toute galerie minimale entre $C$ et une chambre $D' \in R$ passe par $D$ : 
		\begin{eqnarray}
			d_\Delta(C,D') = d_\Delta(C,D) + d_\Delta(D,D') \nonumber. 
		\end{eqnarray}
	\end{thm}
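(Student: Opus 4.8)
\emph{Approach.} The statement is the \emph{propriété de la porte} (gate property) of residues in a Coxeter complex; the plan is to transport the problem into the Coxeter group $(W,S)$ and reduce it to the classical description of minimal-length representatives of cosets of a standard parabolic subgroup. First I would use Theorem~\ref{lem unicite morphisme ch}: $W$ acts on $\Sigma(W,S)$ by type-preserving automorphisms, transitively on $\ch(\Sigma(W,S))$, and this action preserves the galerie distance; so, applying a suitable element of $W$, I may assume that $C$ is the fundamental chamber $C_1$ associated to $1 \in W$ under the canonical bijection $\ch(\Sigma(W,S)) \cong W$. Under this bijection the galerie distance to $C_1$ becomes the word-length function of $(W,S)$, i.e. $d_\Delta(C_1, C_w) = \ell(w)$, and, writing $J \subseteq S$ for the type of the residue $R$, the residue $R$ corresponds to a coset of the standard parabolic subgroup $W_J = \langle J \rangle$ (one must fix the left/right coset convention so that it matches the adjacency convention "$C = sD$" of the definition of $\Sigma(W,S)$; this affects only the bookkeeping below).

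\emph{The parabolic-coset lemma.} The key input is the standard fact that every coset of $W_J$ in $W$ contains a unique element $p$ of minimal length, and that every element $u$ of that coset is, on the appropriate side, a product of $p$ by some $v \in W_J$ with $\ell(u) = \ell(p) + \ell(v)$. I would either cite this or derive it from the Exchange/Deletion conditions: concatenating a reduced word for $v$ onto a reduced word for $p$ produces a word for $u$; were it non-reduced, Deletion would let one shorten $p$ while remaining in the coset, contradicting minimality of $p$; and two minimal representatives differ by an element of $W_J$ of length $0$, hence coincide.

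\emph{Conclusion.} Set $D := C_p$, where $p$ is the minimal representative of the coset corresponding to $R$. Since this representative is unique, $D$ is the unique chamber of $R$ minimising the galerie distance to $C$, which is the first assertion. For an arbitrary $D' \in R$, both $D$ and $D'$ lie in the $J$-residue $R$, so $\delta(D,D') \in W_J$ and $d_\Delta(D,D') = \ell(\delta(D,D'))$; expressing the group element of $D'$ as the product of $p$ with $\delta(D,D')$ and applying the lemma yields
\begin{equation*}
d_\Delta(C, D') = \ell(p) + \ell(\delta(D,D')) = d_\Delta(C, D) + d_\Delta(D, D').
\end{equation*}
In particular, concatenating a minimal galerie from $C$ to $D$ with one from $D$ to $D'$ produces a minimal galerie from $C$ to $D'$, so $D$ lies on a minimal galerie joining $C$ to $D'$, as desired.

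\emph{Main obstacle.} The geometric part — transitivity, the identification $\ch(\Sigma(W,S)) \cong W$, and the identification of residues with parabolic cosets — is routine once the conventions are pinned down; \textbf{the real content is the parabolic-coset lemma}, which is precisely where the combinatorics of the Coxeter system (the Exchange and Deletion conditions) is used. The one point that demands genuine care is keeping the left/right conventions for cosets, for the Weyl distance $\delta$, and for the bijection $\ch(\Sigma(W,S)) \cong W$ mutually consistent, so that the additivity in the last step lands on the correct side.
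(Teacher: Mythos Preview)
The paper does not give its own proof of this statement: it is quoted verbatim as a result from Weiss's book, with only the citation \texttt{[Theorem 3.22]\{weiss03\}} and no argument in the text. So there is nothing in the paper to compare your attempt against.

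That said, your outline is correct and is exactly the standard route to the gate property: reduce to $C = C_1$ via the simply transitive $W$-action on chambers, identify the $J$-residue with a coset of $W_J$, and invoke the minimal-coset-representative lemma $\ell(pv) = \ell(p) + \ell(v)$ for $p$ the shortest element of the coset and $v \in W_J$. Your identification of that lemma as the place where the Exchange/Deletion condition enters is spot on; your one-line sketch of it is a bit brisk (the deletion could a priori hit only the $v$-part), but since you explicitly offer to cite the lemma this is fine for an outline.

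One small remark on the wording. You conclude that $D$ lies on \emph{some} minimal gallery from $C$ to $D'$, which is exactly what the displayed additivity formula says. The paper's French text says ``toute galerie minimale \dots\ passe par $D$'', which read literally is stronger and in fact fails already in $W = S_3$ (with $J = \{s\}$, $C = 1$, $D' = sts = tst$, the reduced gallery of type $(s,t,s)$ avoids the gate $D = ts$). The colon in the paper makes clear that the displayed equation is the intended content, and that is precisely what you prove; you were right not to overclaim.
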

	On appelle la chambre $D$ la \textit{projection} de $C$ sur le résidu $R$, et on la note $\proj_R(C)$. 
	
	\subsection{Réalisation géométrique d'un complexe de Coxeter affine}

	Soit $(W, V)$ un groupe de réflexions affine discret sur un espace vectoriel euclidien $V$, et soit $\mathcal{H}$ l'ensemble des murs (affines) de $(W, V)$. L'ensemble de ces murs donne une réalisation géométrique du groupe de Coxeter associé $W$. En effet, $\mathcal{H}$ partitionne $V$ en un ensemble de cellules convexes qui forme un complexe cellulaire $\Sigma(W, V)$. Le vocabulaire introduit dans les sections précédentes reste le même : les cellules de dimension maximale sont appelés \textit{chambres}, et on définit de même les cloisons, facettes, etc. 
	
	Soit $C$ une chambre et $S$ l'ensemble des réflexions par rapports aux murs qui délimitent $C$. Alors $W$ agit transitivement sur $C$, $S$ engendre $W$, et $(W, S)$ est un groupe de Coxeter \cite[Section 10]{abramenko_brown08}. Le complexe simplicial $|\Sigma(W, S)|$ obtenu par ce pavage sur $V$ muni de sa distance euclidienne est alors une réalisation géométrique du groupe de Coxeter $(W, S)$ associé. 
	
	\begin{thm}[{\cite[Theorem 13.7]{garrett97}}]
		Soit $(W,S)$ et $(W', S')$ des systèmes de Coxeter affines, $|\Sigma (W,S)|$, $|\Sigma(W', S')|$ des réalisations géométriques euclidiennes de ces groupes, et supposons que les chambres dans ces réalisations géométriques ont diamètre 1. S'il existe un isomorphisme de complexes simpliciaux $\phi : \Sigma (W,S) \rightarrow \Sigma(W', S')$, alors  $|\Sigma (W,S)|$ et $|\Sigma(W', S')|$ sont canoniquement isométriques. 
	\end{thm}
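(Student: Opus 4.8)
The plan is to promote the abstract simplicial isomorphism $\phi$ to an honest isometry of the euclidean realizations, in two stages: first extract from $\phi$ the Coxeter-theoretic data it secretly carries, then ``unfold'' the euclidean tiling one alcove at a time.

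First I would recover a diagram isomorphism. The map $\phi$ sends chambers to chambers and panels to panels, and types of simplices in a Coxeter complex are combinatorial invariants, so there is a bijection $\sigma\colon S\to S'$ such that $\phi$ carries type-$T$ simplices to type-$\sigma(T)$ simplices; fixing base chambers $C_0$ and $C_0':=\phi(C_0)$, $\sigma$ matches the type of each panel of $C_0$ with the type of its $\phi$-image. Applying $\phi$ to the $\{s,t\}$-residue through $C_0$ — itself a thin chamber complex, namely the Coxeter complex of the dihedral group $\langle s,t\rangle$, hence a circuit of $2m_{s,t}$ chambers when $m_{s,t}<\infty$ and a bi-infinite line of chambers when $m_{s,t}=\infty$ (the $\widetilde{A}_1$ situation) — shows $m_{s,t}=m'_{\sigma(s),\sigma(t)}$ for all $s,t$, so $\sigma$ is an isomorphism of Coxeter diagrams and extends to a group isomorphism $\widetilde\sigma\colon W\to W'$. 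Using the simple transitivity of $W$ and $W'$ on chambers (Theorem~\ref{lem unicite morphisme ch}) together with type-preservation, I would then check that $\phi$ is $\widetilde\sigma$-equivariant: under the identifications $w\mapsto wC_0$ and $w'\mapsto w'C_0'$, the map induced by $\phi$ on chambers is exactly $\widetilde\sigma$, and $\phi(w\cdot x)=\widetilde\sigma(w)\cdot\phi(x)$ for every simplex $x$.

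Next I would compare the alcoves geometrically and assemble the isometry. In the euclidean realization the dihedral angle of the alcove $C_0\subseteq V$ along the ridge between the walls of types $s$ and $t$ equals $\pi/m_{s,t}$, and the Gram matrix of the outward unit facet normals of a euclidean simplex — whose entries are the $-\cos$ of these dihedral angles — is positive semidefinite of corank one and determines the simplex up to similarity (its kernel recovers the facet weights). Since $m_{s,t}=m'_{\sigma(s),\sigma(t)}$ and both alcoves have diameter $1$, there is a unique isometry $f_0\colon|C_0|\to|C_0'|$ carrying the wall of type $s$ to the wall of type $\sigma(s)$ for every $s$, and this $f_0$ is the affine extension of $\phi$ restricted to the vertices of $C_0$. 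I would then define $F\colon V\to V'$ by $F:=\widetilde\sigma(w)\circ f_0\circ w^{-1}$ on each $|wC_0|$. This is well defined: if $wC_0$ and $w'C_0$ share the face of cotype $J$ then $w^{-1}w'\in W_J$, and $W_J$ (respectively $W_{\sigma(J)}=\widetilde\sigma(W_J)$) fixes that face of $C_0$ (respectively of $C_0'$) pointwise, so the two formulas agree on the common face. By construction $F$ is piecewise an isometry, continuous, and a bijection with inverse built symmetrically from $\phi^{-1}$; and because around every face of $V$ the alcoves meet at exactly the dihedral angles $\pi/m_{s,t}$ with which their images meet around the image face in $V'$, the map $F$ is a local isometry at every point. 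A local isometry preserves the length of every rectifiable path, hence $d(Fx,Fy)\le d(x,y)$ for all $x,y$; the same applied to $F^{-1}$ yields equality, so $F$ is an isometry. It induces $\phi$ simplicially, and since $f_0$ is forced by $\phi$ and its extension by $\widetilde\sigma$-equivariance, $F$ is the unique isometry inducing $\phi$ — this is the asserted canonicity. If $W$ is reducible, $\phi$ respects the decomposition into irreducible factors and one argues factor by factor, applying the diameter normalization on each factor.

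The hard part, and the only genuinely non-formal step, is the first stage: the Coxeter-theoretic rigidity that a bare simplicial isomorphism of Coxeter complexes must respect types up to a diagram automorphism, and hence is equivariant for an actual group isomorphism $W\to W'$. Granting that, the rest is a developing-map argument whose sole quantitative input is that an alcove's dihedral angles are precisely the $\pi/m_{s,t}$, with the diameter normalization removing the last scale ambiguity.
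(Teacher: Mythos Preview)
The paper does not prove this theorem; it is stated with a citation to \cite[Theorem 13.7]{garrett97} and used immediately afterwards to justify speaking of \emph{the} canonical euclidean realization of an affine Coxeter complex, but no argument is given in the thesis itself. So there is no proof in the paper to compare your proposal against.

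That said, your approach is the standard one and is essentially correct: recover a diagram isomorphism from the simplicial isomorphism via the uniqueness of the type function and the cardinalities of rank-$2$ residues, observe that the alcove's dihedral angles $\pi/m_{s,t}$ determine its shape up to similarity (with the diameter-$1$ normalization fixing the scale), and propagate the resulting isometry of fundamental alcoves by $\widetilde\sigma$-equivariance. The well-definedness check on shared faces and the local-to-global isometry step are routine. One small caution: your Gram-matrix claim (positive semidefinite of corank one determining the simplex up to similarity) is precisely the statement that the system is \emph{irreducible} affine; you do note the reducible case at the end, but be aware that in that case the alcove is a product of simplices rather than a simplex, so the ``diameter $1$'' normalization becomes slightly more delicate to interpret factor by factor. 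This is a minor point and does not affect the validity of your overall strategy.
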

	
	Il est donc naturel pour un groupe de Coxeter affine $(W,S)$ de parler de \textit{réalisation géométrique canonique} (euclidienne) $|\Sigma (W,S)|$ de $(W,S)$. 
	
	\section{Immeubles comme complexes de chambres}
	
	Dans cette section, on donne une première définition des immeubles simpliciaux. C'est la définition originale de Tits, et elle a l'avantage d'être immédiatement combinatoire et visuelle. Il est assez facile de la vérifier sur des exemples précis. 
	\subsection{Premières définitions}
	\begin{Def}
		Un \textit{immeuble} (épais) discret est un complexe de chambre $\Delta$ épais muni d'un ensemble $\mathcal{A}$ de sous-complexes de chambres appelés \textit{appartements} tel que tout appartement $A \in \mathcal{A}$ est un complexe de chambres fin et tel que 
		\begin{enumerate}
			\item Pour toute paire de simplexes $x, y \in \Delta$, il existe un appartement $A \in \mathcal{A}$ qui les contient. 
			\item Si deux appartements $A, A' \in \mathcal{A}$ contiennent tous deux un simplexe $x$ et une chambre $C$, alors il existe un isomorphisme de complexes de chambres $\phi : A \rightarrow A'$ qui fixe $x$ et $C$. \label{axiom isom appartements}
		\end{enumerate}
	\end{Def}
	
	\begin{rem}
		Il est souvent précisé dans les axiomes de la définition d'un immeuble que les appartements sont des complexes de Coxeter. Avec cette définition, on peut en fait prouver que c'est bien le cas, ce qui en fait une définition avec des hypothèses faibles \cite[Corollaire 4.3]{garrett97}. 
	\end{rem}
	
	L'axiome \ref{axiom isom appartements} peut être remplacé par la version suivante, plus forte et utile en pratique. 
	
	\begin{lem}[{\cite[Lemma 4.1]{garrett97}}]
		Soit $\Delta$ un immeuble épais avec un système d'appartements $\mathcal{A}$. Si deux appartements $A, A' \in \mathcal{A}$ contiennent tous deux une chambre $C$, alors il existe un isomorphisme de complexes de chambres $\phi : A \rightarrow A'$ qui fixe $A \cap A'$.
	\end{lem}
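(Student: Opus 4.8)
Le plan est d'isoler d'abord un lemme de rigidité pour les complexes de chambres fins, puis de l'appliquer à l'appartement $A$. Le lemme est le suivant : \emph{si $\Sigma$ est un complexe de chambres fin (en particulier un appartement, qui est un complexe de Coxeter d'après la remarque précédente) et si $f : \Sigma \to \Sigma$ est un automorphisme de complexes de chambres fixant point par point une chambre $C$, alors $f = \mathrm{id}_\Sigma$.} Je le démontrerais par récurrence sur la distance de galerie à $C$ : supposons $f$ fixe point par point toute chambre $E$ avec $d_\Sigma(C,E) \leq n$, et donc aussi toutes leurs faces. Soit $D$ une chambre avec $d_\Sigma(C,D) = n+1$, soit $C = E_0, \dots, E_{n+1} = D$ une galerie minimale, et soit $P = E_n \cap D$ la cloison commune à $E_n$ et $D$. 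Par hypothèse de récurrence $f$ fixe $E_n$ point par point, donc $f(P) = P$ point par point ; comme $\Sigma$ est fin, $P$ n'est face que des deux chambres $E_n$ et $D$, et comme $f$ envoie chambres sur chambres, on a $f(D) \in \{E_n, D\}$. L'injectivité de $f$ exclut $f(D) = E_n = f(E_n)$, donc $f(D) = D$ ; comme de plus $f$ fixe $P$ point par point, l'unique sommet de $D$ hors de $P$ est lui aussi fixé, et $f$ fixe $D$ point par point. L'initialisation est le cas de $C$, et comme toute chambre est reliée à $C$ par une galerie et que tout simplexe est face d'une chambre, on obtient $f = \mathrm{id}_\Sigma$.

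Pour le lemme lui-même : puisque $A$ et $A'$ contiennent tous deux la chambre $C$, qui est un simplexe, le deuxième axiome de la définition d'un immeuble (dans sa version avec un simplexe $x$, ici $x = C$) fournit un isomorphisme de complexes de chambres $\phi : A \to A'$ fixant $C$ point par point. Je montrerais que $\phi$ fixe en fait $A \cap A'$ point par point. Soit $y$ un simplexe quelconque de $A \cap A'$. En appliquant à nouveau ce même axiome au couple $A, A'$, qui contiennent tous deux le simplexe $y$ et la chambre $C$, on obtient un isomorphisme $\psi : A \to A'$ fixant $y$ et $C$ point par point. Alors $\phi^{-1} \circ \psi$ est un automorphisme de $A$ fixant $C$ point par point, donc égal à $\mathrm{id}_A$ d'après le lemme de rigidité ci-dessus ; d'où $\psi = \phi$, et en particulier $\phi(y) = \psi(y) = y$. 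Comme $y$ est arbitraire dans $A \cap A'$, l'isomorphisme $\phi$ convient. Le même raisonnement donne au passage l'unicité d'un tel isomorphisme : deux choix diffèrent par un automorphisme de $A$ fixant $C$, donc coïncident.

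Le point délicat sera le lemme de rigidité, et plus précisément l'usage de la finesse de l'appartement — exactement deux chambres de part et d'autre de chaque cloison — pour déterminer l'image d'une chambre voisine, l'injectivité servant à écarter le cas dégénéré où $f$ replierait $D$ sur $E_n$. Une fois ce point acquis, la conclusion n'est qu'un jeu formel sur les axiomes des immeubles ; on pourrait d'ailleurs se contenter de la version faible de l'axiome (isomorphisme fixant seulement $C$) jointe à un argument de convexité de $A \cap A'$ dans $A$, mais la version énoncée ici rend la démonstration plus directe.
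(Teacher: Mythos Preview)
Your proof is correct and follows the standard argument. Note however that the paper does not actually supply a proof of this lemma: it is stated with a citation to \cite[Lemma 4.1]{garrett97} and used as a black box, so there is no in-paper proof to compare against. Your approach --- first establishing the rigidity of thin chamber complexes (an endomorphism fixing a chamber pointwise is the identity), then leveraging this together with the building axiom applied once for each simplex $y \in A \cap A'$ --- is exactly the proof one finds in Garrett's text, so you have correctly reconstructed the cited result.
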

	
	On peut définir une distance de galerie $d_\Delta$ sur les chambres $\ch(\Delta)$ d'un immeuble. Soit $A $ un appartement de $\Delta$ et $d_A$ la distance de galerie sur $A$, vu comme complexe de chambres. En conséquence du Lemme précédent, pour toute paire de chambres $C,D \in \ch(\Delta)$ et tout appartement $A, A' \in \mathcal{A}$ qui contient $C, D$, on a  $d_A(C, D) = d_{A'} (C, D)$. On note cette valeur commune $d_\Delta (C, D)$, et on l'appelle \textit{distance de galerie} sur $\ch(\Delta)$. Plus généralement, soit $x $ un simplexe de l'immeuble $\Delta$, et soit $D$ une chambre, on peut définir la distance de galerie entre $x$ et $D$ par 
	\begin{eqnarray}
		d_\Delta (x, D) = \min \{ d_\Delta (C, D) \; | \; x \subseteq C, \; C \in \ch (\Delta)\}. \nonumber
	\end{eqnarray}
	
	La proposition suivante est fondamentale pour la suite, \cite[Proposition 4.2]{garrett97}
	
	\begin{prop}[Rétraction centrée sur une chambre]
		Soit $\Delta$ un immeuble muni d'un système d'appartements $\mathcal{A}$, et soit $A \in \mathcal{A}$. Pour toute chambre $C \in A$, il y a une rétraction  $\rho = \rho_{A, C} : \Delta \rightarrow A$ qui vérifie les propriétés suivantes : 
		\begin{enumerate}
			\item Pour tout appartement $A' \in \mathcal{A}$ qui contient $C$, $\rho_{|A'}$ est un isomorphisme de complexes de chambres tel que $\rho_{|A \cap A'} $ est l'identité. 
			\item La rétraction $\rho$ est l'unique morphisme de complexes de chambres $\Delta \rightarrow A$ qui fixe $C$ et tel que pour toute facette $x $ de $C$ et toute chambre $D$ de $\Delta$, 
			\begin{eqnarray}
				d_\Delta (x, D) = d_\Delta (x, \rho (D)). \nonumber
			\end{eqnarray}
		\end{enumerate}
	\end{prop}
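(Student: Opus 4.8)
The statement to prove is the existence and characterization of the retraction $\rho = \rho_{A,C} : \Delta \to A$ centered on a chamber $C$ of an apartment $A$. My plan is to construct $\rho$ by "unfolding" galleries: for any chamber $D$ of $\Delta$, pick a minimal gallery $\Gamma = (C = C_0, C_1, \dots, C_n = D)$ from $C$ to $D$, choose an apartment $A'$ containing both $C$ and $D$ (axiom 1), use the strengthened isomorphism lemma (\cite[Lemma 4.1]{garrett97}) to get an isomorphism $\phi_{A'} : A' \to A$ fixing $A \cap A'$, and set $\rho(D) := \phi_{A'}(D)$. The first and most important step is to check this is well-defined: if $A''$ is another apartment containing $C$ and $D$, then $\phi_{A'}$ and $\phi_{A''}$ agree on $A' \cap A''$ (both fix $C$ and, being chamber complex isomorphisms to $A$ agreeing on a chamber, are forced to coincide on the convex hull of $C$ and $D$ inside the intersection — here one uses that a type-preserving automorphism of a Coxeter complex fixing a chamber is the identity, \cite[Theorem 3.4]{garrett97}, applied after transporting to $A$). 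This gives a well-defined map on chambers, which one then promotes to a simplicial map on all of $\Delta$ by the same unfolding applied to simplices $x \subseteq D$.

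Next I would verify the listed properties. Property (1), that $\rho$ restricted to any apartment $A'$ containing $C$ is an isomorphism onto $A$ that is the identity on $A \cap A'$, is essentially built into the construction: $\rho|_{A'} = \phi_{A'}$ by definition and Lemma 4.1 gives exactly this. For the gallery-distance identity $d_\Delta(x, D) = d_\Delta(x, \rho(D))$ for $x$ a face of $C$, the key observation is that a minimal gallery from (a chamber containing) $x$ to $D$, living in an apartment $A'$, is carried by the isomorphism $\phi_{A'}$ to a gallery of the same length from $x$ to $\rho(D)$ in $A$; conversely any gallery in $A$ from $x$ to $\rho(D)$ is also a gallery in $\Delta$, so the distance cannot increase. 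Since $\phi_{A'}$ preserves types and $x$ is a face of the fixed chamber $C$, the image gallery genuinely starts at $x$, giving both inequalities and hence equality. This also shows $\rho$ is a retraction (it fixes $A$ pointwise, taking $A' = A$).

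Finally, for the uniqueness clause I would argue as follows: suppose $\rho'$ is another chamber-complex morphism $\Delta \to A$ fixing $C$ with $d_\Delta(x, D) = d_\Delta(x, \rho'(D))$ for all faces $x$ of $C$ and all chambers $D$. Proceed by induction on $d_\Delta(C, D)$. If $d_\Delta(C,D) = 0$ then $D = C = \rho(D) = \rho'(D)$. For the inductive step, take $D'$ adjacent to $D$ with $d_\Delta(C, D') = d_\Delta(C, D) - 1$, sharing a panel $F$; then $\rho'(D)$ and $\rho'(D') = \rho(D')$ share the panel $\rho'(F)$, and the distance condition (applied with $x$ running over faces of $C$, which suffices to pin down a chamber in the thin complex $A$ relative to a known adjacent chamber) forces $\rho'(D) = \rho(D)$ — using that in the Coxeter complex $A$ a chamber adjacent to a given one across a given panel is determined by its gallery distances to the faces of $C$. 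The main obstacle I anticipate is this last well-definedness/uniqueness bookkeeping: making precise that the gallery-distance data to the faces of $C$ distinguishes the two chambers on either side of a panel in a thin chamber complex, which ultimately rests on the folding/convexity properties of Coxeter complexes and the fact that type-preserving automorphisms fixing a chamber are trivial. Everything else is a routine transport of structure along apartment isomorphisms.
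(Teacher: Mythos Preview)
The paper does not actually give a proof of this proposition: it is stated with a citation to \cite[Proposition 4.2]{garrett97} and immediately followed by the next result. So there is no ``paper's own proof'' to compare against; what you have written is essentially Garrett's standard argument (construct $\rho$ apartment-by-apartment via the isomorphisms of the strengthened axiom, check well-definedness using rigidity of Coxeter complexes, then verify the distance identity and uniqueness by induction on gallery length).

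Your plan is correct. One small point worth tightening: in the well-definedness step, the cleanest formulation is to take a third isomorphism $\theta : A'' \to A'$ fixing $A' \cap A''$ (hence fixing both $C$ and $D$), observe that $\phi_{A'} \circ \theta$ and $\phi_{A''}$ are two type-preserving isomorphisms $A'' \to A$ agreeing on the chamber $C$, and invoke the rigidity fact (a type-preserving automorphism of a Coxeter complex fixing a chamber is the identity) to conclude $\phi_{A'} \circ \theta = \phi_{A''}$, whence $\phi_{A''}(D) = \phi_{A'}(\theta(D)) = \phi_{A'}(D)$. Your phrase ``forced to coincide on the convex hull'' gestures at this but is less precise. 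For the uniqueness clause, note that taking $x = C$ already suffices: in the thin complex $A$, the two chambers on either side of a panel have different gallery distances to $C$, so $\rho'(D)$ is pinned down once $\rho'(D')$ and the shared panel are known.
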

	
	Le théorème suivant est dû à Tits, et énonce que les appartements d'un immeuble épais sont bien des complexes de Coxeter. 
	
	\begin{thm}[{\cite[Theorem 4.131]{abramenko_brown08}}]
		Soit $\Delta$ un immeuble épais, muni d'un système d'appartements $\mathcal{A}$. Alors il existe un système de Coxeter $(W,S)$ tel que tout appartement $A \in \mathcal{A}$ est isomorphe au complexe de Coxeter $\Sigma(W,S)$. 
	\end{thm}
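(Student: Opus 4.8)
Le plan est de fixer un appartement $A \in \mathcal{A}$ et une chambre de base $C_0 \in \ch(A)$, et de montrer que $A$ --- qui est un complexe de chambres \emph{fin} par définition d'un appartement --- est isomorphe à un complexe de Coxeter $\Sigma(W,S)$ ; le même $(W,S)$ conviendra alors pour tous les appartements. En effet, deux appartements $A, A' \in \mathcal{A}$ sont toujours isomorphes comme complexes de chambres : en choisissant $C \in \ch(A)$ et $C' \in \ch(A')$, l'axiome (1) fournit un appartement $A''$ contenant $C$ et $C'$, et la proposition de rétraction centrée sur une chambre (\cite[Proposition 4.2]{garrett97}) assure que $\rho_{A,C}$ (resp. $\rho_{A',C'}$) restreinte à $A''$ est un isomorphisme $A'' \to A$ (resp. $A'' \to A'$), d'où $A \cong A'' \cong A'$. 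Il suffit donc de traiter le cas de l'appartement $A$ fixé.

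La clef est de produire, pour toute cloison $F$ de $A$ portant les deux chambres $C, C'$ de $A$, un \emph{repliement} de $A$ fixant $C$ et envoyant $C'$ sur $C$ (ainsi que celui qui échange $C$ et $C'$). Par épaisseur de $\Delta$, il existe une chambre $D$ de $\Delta$ admettant $F$ pour cloison avec $D \neq C, C'$, donc $D \notin A$. Par l'axiome (1), on prend un appartement $B$ contenant $C'$ et $D$ ; comme $A$ et $B$ contiennent tous deux $C'$, le renforcement de l'axiome d'appartements (\cite[Lemma 4.1]{garrett97}) fournit un isomorphisme $\psi : B \to A$ fixant $A \cap B$ point par point. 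Comme $C', F \subseteq A \cap B$, on a $\psi(C') = C'$ et $\psi(F) = F$, donc $\psi(D)$ est une chambre de $A$ contenant $F$ distincte de $C'$ ; $A$ étant fin, $\psi(D) = C$. On pose $\phi := \rho_{A,C} \circ \psi^{-1} : A \to A$. Puisque $\rho_{A,C}$ fixe $C$ et sa cloison $F$, et qu'il existe (axiome (1)) un appartement contenant $C$ et $D$ sur lequel $\rho_{A,C}$ est un isomorphisme, on a $\rho_{A,C}(D) = C'$ ; ainsi $\phi(C') = \rho_{A,C}(\psi^{-1}(C')) = \rho_{A,C}(C') = C'$ (car $\rho_{A,C}$ est l'identité sur $A$) et $\phi(C) = \rho_{A,C}(\psi^{-1}(C)) = \rho_{A,C}(D) = C'$. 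En échangeant $C$ et $C'$ (et en partant d'un appartement contenant $C$ et $D$), on obtient de même le repliement opposé $\phi'$, avec $\phi'(C) = \phi'(C') = C$. En recollant $\phi$ et $\phi'$ le long du \og mur \fg{} $M_F := \phi(A) \cap \phi'(A)$, on définit la \emph{réflexion} $r_F \in \Aut(A)$, qui échange les deux racines $\phi(A)$ et $\phi'(A)$ et fixe $M_F$.

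On pose ensuite $S := \{ r_F \ : \ F \text{ cloison de } C_0 \}$ et $W := \langle S \rangle \leq \Aut(A)$, et on conclut par la caractérisation classique des complexes de Coxeter : un complexe de chambres fin dont toute paire de chambres adjacentes est reliée par un repliement est isomorphe au complexe de Coxeter $\Sigma(W,S)$, le groupe $W$ agissant simplement transitivement sur ses chambres (voir \cite{abramenko_brown08}). Cela donne $A \cong \Sigma(W,S)$, et comme la première étape rend tout appartement isomorphe à $A$, on obtient $A' \cong \Sigma(W,S)$ pour tout $A' \in \mathcal{A}$ avec le même $(W,S)$.

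Le principal obstacle n'est pas la construction des automorphismes $r_F$, qui découle directement de l'épaisseur et des axiomes d'appartements, mais la vérification que $\phi$ et $\phi'$ sont de \emph{véritables} repliements : idempotence, image une racine (demi-appartement convexe), et caractère deux-vers-un au-dessus de leur image. Ce point repose de façon essentielle sur l'unicité des isomorphismes d'appartements (\cite[Lemma 4.1]{garrett97}) et sur le fait que $\rho_{A,C}$ préserve les distances de galerie issues de son centre, à savoir $d_\Delta(C, D) = d_\Delta(C, \rho_{A,C}(D))$ (proposition de rétraction). Il faut par ailleurs disposer de la caractérisation des complexes de Coxeter par les repliements, qui est un théorème autonome sur les complexes de chambres fins, indépendant de la théorie des immeubles, et que l'on citera de \cite{abramenko_brown08}.
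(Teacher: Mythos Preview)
The paper does not supply its own proof of this theorem: it is stated purely as a citation to \cite[Theorem 4.131]{abramenko_brown08}, preceded by a remark pointing also to \cite[Corollaire 4.3]{garrett97}. There is therefore nothing in the paper to compare your argument against.

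That said, your sketch is the classical Tits argument reproduced in both of those references: reduce to a single apartment via retractions, use thickness to produce a third chamber through each panel and thereby build a pair of opposite foldings of the apartment, and then invoke the characterization of Coxeter complexes as thin chamber complexes in which every adjacent pair of chambers admits such foldings. This is correct and is exactly the route taken in the cited sources. One small labeling slip: the map $\phi = \rho_{A,C}\circ\psi^{-1}$ you construct satisfies $\phi(C') = C'$ and $\phi(C) = C'$, i.e.\ it is the folding onto the $C'$-side, not the one ``fixant $C$ et envoyant $C'$ sur $C$'' announced at the start of that paragraph; your $\phi'$ is the folding onto the $C$-side. This does not affect the substance, since you do construct both foldings.
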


	\begin{Def}
		Un immeuble épais est dit 
		\begin{itemize}
			\item \textit{sphérique} si ses appartements sont des complexes de Coxeter sphériques, c'est-à-dire finis. 
			\item \textit{affine discret} si le système de Coxeter $(W,S)$ associé à ses appartements provient d'un groupe de réflexions affine discret. 
		\end{itemize}
	\end{Def}
	
	En fait, on verra dans la section suivante qu'on peut définir des immeubles affines non discrets, au prix d'une modification des définitions. 
	
	\begin{ex}[Immeuble de type $A_n$]
		On présente ici une famille d'immeubles sphériques très étudiée, dont les complexes de Coxeter associés sont de type $A_n$. Soit $\mathbb{K}$ un corps et $V := \K^{n+1}$ un espace vectoriel sur $\K$ de dimension $n+1$. Définissons l'ensemble des drapeaux complets de sous-espaces vectoriels : 
		\begin{eqnarray}
			V_1 \subseteq V_2 \subseteq \dots V_n \subseteq V, \nonumber
		\end{eqnarray}
		où $V_i$ est de dimension $i$. On donne une structure de complexes de chambres sur cet ensemble en décrétant que deux drapeaux complets $V_1 \subseteq V_2 \subseteq \dots V_n \subseteq V$ et $V'_1 \subseteq V'_2 \subseteq \dots V'_n \subseteq V$ (les chambres) sont $i$-adjacents si et seulement si $V_i \neq V'_i $ et $V_j = V'_j$ pour tout $j \neq i $. Il est clair que le groupe $\gl(n+1, \K)$ agit de manière transitive sur les drapeaux. Soit $B< \gl(n+1, \K)$ le groupe des matrices triangulaires supérieures pour la base canonique sur $V$, et soit $G_i = \stab_G(V_i)$, pour $i = 1, \dots, n$. Remarquons que $B \subseteq G_i$ pour tout $i$. On peut définir l'immeuble $\Delta$ dont les chambres sont les classes $gB, g\in G$ de l'espace quotient $G/B$, et tel que deux chambres $gB, g'B$ sont $s$-équivalentes si et seulement si $gG_s = g'G_s$. Alors $\Delta$ est de type $A_n$, et le groupe de Weyl $W$ est le groupe de permutation $S_{n+1}$. 
	\end{ex}

	On peut montrer qu'un immeuble est sphérique si et seulement si il est de diamètre fini pour la distance de galerie. On dit que deux chambres $C, D \in \ch(\Delta)$ d'un immeuble sphérique sont \textit{opposées} si $d_\Delta (C, D) = \diam_\Delta(C,D)$. 
	
	\begin{prop}
		Soit $C, D \in \Delta$ deux chambres opposées d'un immeuble sphérique $\Delta$, et soit $A$ un appartement qui les contient toutes deux. Alors toute chambre $C' \in A$ appartient à une galerie minimale de $C$ à $D$. 
	\end{prop}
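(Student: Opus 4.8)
The plan is to reduce the statement to a computation inside the apartment $A$, which (since $\Delta$ is spherical) is a finite Coxeter complex, and there translate everything into lengths in the associated Coxeter group $W$.

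First I would use that $C$, $C'$ and $D$ all lie in $A$, together with the fact recalled just before the statement that for chambers lying in a common apartment the gallery distance in $\Delta$ coincides with the gallery distance computed inside that apartment, so in particular $d_\Delta(C,D)=d_A(C,D)$. It therefore suffices to produce a gallery of $A$ from $C$ to $D$ passing through $C'$ whose length equals $d_A(C,D)$: concatenating a minimal gallery of $A$ from $C$ to $C'$ with a minimal gallery of $A$ from $C'$ to $D$ yields a gallery of $\Delta$ from $C$ to $D$, passing through $C'$, of length $d_A(C,C')+d_A(C',D)$, and this gallery is minimal in $\Delta$ as soon as $d_A(C,C')+d_A(C',D)=d_A(C,D)$. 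So the whole problem comes down to proving this additivity of the gallery distance in the Coxeter complex $A$.

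Next I would fix the standard identification $\ch(A)\cong W$ with $C\leftrightarrow 1_W$, under which $d_A(E,E')=\ell(\delta(E,E'))$ and $\delta(C,E)=w$ for $E\leftrightarrow w$. Since $A$ is spherical, $W$ is finite and has a unique element $w_0$ of maximal length, with $\ell(w_0)=\diam A$; as $C$ and $D$ are opposite, $d_A(C,D)=\diam A=\ell(w_0)$, hence $D\leftrightarrow w_0$. Writing $C'\leftrightarrow w$, one has $d_A(C,C')=\ell(w)$ and $d_A(C',D)=\ell(\delta(C',D))=\ell(w^{-1}w_0)$. The desired additivity $\ell(w)+\ell(w^{-1}w_0)=\ell(w_0)$ is then exactly the classical property of the longest element of a finite Coxeter group (equivalently $\ell(vw_0)=\ell(w_0)-\ell(v)$ for every $v\in W$, applied with $v=w^{-1}$ and using $\ell(v)=\ell(v^{-1})$), which I would quote from the standard references on Coxeter groups.

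I do not expect a serious obstacle: the entire content is the longest-element identity, which geometrically just reflects that in the spherical realization of $A$ the antipodal chamber $D=-C$ can be reached from $C$ by a wall-crossing–minimizing geodesic semicircle through any prescribed generic point, so every chamber lies on some minimal gallery from $C$ to $D$. The one point that requires care is the dictionary between galleries, gallery distance and the Weyl distance $\delta$ valued in $W$, and in particular matching ``$D$ opposite $C$'' with ``$\delta(C,D)=w_0$'', which uses sphericity (finiteness of $W$) precisely so that the element of maximal length is unique.
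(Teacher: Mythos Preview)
Your argument is correct and is the standard one: reduce to the apartment (where building and apartment gallery distances agree, as the paper recalls), identify $A$ with the finite Coxeter group $W$ sending $C\mapsto 1$, so that $D\mapsto w_0$, and invoke the identity $\ell(w)+\ell(w^{-1}w_0)=\ell(w_0)$ for the longest element.

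There is nothing to compare against: in the paper this proposition is stated without proof, as a background fact about spherical buildings. Your write-up is exactly the kind of justification one would give; the only cosmetic point is that the identification of ``opposite'' with ``$\delta(C,D)=w_0$'' and the longest-element length identity deserve a one-line reference (e.g.\ to Abramenko--Brown or Davis), which you already indicate you would do.
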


	\subsection{Distance de Weyl}\label{section weyl}
	Dans cette section, on définit la distance de Weyl, un outil très utile en pratique pour étudier les immeubles. 
	
	\begin{prop}[{\cite[Proposition 4.81]{abramenko_brown08}}]
		Soit $(W, S) $ un groupe de Coxeter, et soit $\Delta $ un immeuble de type $(W,S)$. Il existe une fonction $\delta : \ch(\Delta) \times \ch(\Delta) \rightarrow W$ avec les propriétés suivantes : 
		\begin{enumerate}
			\item Pour toute galerie minimale $\gamma = \{ C_0, \dots, C_n\}$ de type $(s_0, \dots , s_{n-1})$, l'élément $w = \delta(C_0, C_n) $ est représenté par $s_0\dots  s_{n-1}$. 
			\item Soit une paire de chambres $C,D \in \ch(\Delta)$, et soit $w : = \delta (C,D)$. Alors l'application qui à une galerie minimale $\gamma$ entre $C$ et $D$ associe son type $s(\gamma)$ donne une bijection entre les galeries minimales entre $C$ et $D$ et les mots réduits $s_0\dots  s_{n-1}, s_i \in S$ qui représentent $w$.  	
		\end{enumerate}
	\end{prop}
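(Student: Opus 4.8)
Le plan est de définir $\delta$ sur $\Delta$ en la lisant, dans chaque appartement, sur la fonction analogue $\delta_A$ déjà disponible sur le complexe de Coxeter $\Sigma(W,S)$, puis de vérifier (1) et (2) en se ramenant à la situation mince d'un appartement grâce aux rétractions. Pour $C, D \in \ch(\Delta)$ je choisirais donc un appartement $A \in \mathcal{A}$ contenant $C$ et $D$ et je poserais $\delta(C,D) := \delta_A(C,D)$. L'indépendance vis-à-vis de $A$ s'obtient en prenant un second appartement $A'$ contenant $C$ et $D$ : la forme forte de l'axiome d'isomorphisme des appartements (le lemme rappelé plus haut) fournit un isomorphisme de complexes de chambres $\phi : A \to A'$ fixant $A \cap A'$, donc $C$ et $D$, et même la chambre $C$ point par point ; un tel isomorphisme préserve l'étiquetage des cloisons, envoie une galerie minimale de $C$ à $D$ dans $A$ sur une galerie minimale de même type dans $A'$, d'où $\delta_{A'}(C,D) = \delta_A(C,D)$.

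\textbf{Propriété (1).} Étant donné une galerie minimale $\gamma = (C_0, \dots, C_n)$ de type $(s_0, \dots, s_{n-1})$, je fixerais un appartement $A$ contenant $C_0$ et $C_n$ et j'appliquerais la rétraction $\rho_{A, C_0}$ : elle préserve l'étiquetage et ne fait pas croître la distance de galerie, et puisque $d_\Delta(C_0, C_n) = d_A(C_0, C_n)$, l'image de $\gamma$ est une galerie de $C_0$ à $C_n$ dans $A$, de longueur au plus $n$, donc une galerie minimale de $A$ de type $(s_0, \dots, s_{n-1})$. Par construction de $\delta$ et la propriété correspondante dans $\Sigma(W,S)$, on obtient $\delta(C_0, C_n) = \delta_A(C_0, C_n) = s_0 \cdots s_{n-1}$ ; comme le type d'une galerie minimale dans un complexe mince est un mot réduit, il vient de plus $\ell(\delta(C_0, C_n)) = n = d_\Delta(C_0, C_n)$, ce qui resservira pour (2).

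\textbf{Propriété (2).} Fixons $C, D$, $w = \delta(C,D)$ et $n = \ell(w)$ ; par (1) on a $n = d_\Delta(C,D)$ et l'application $\gamma \mapsto s(\gamma)$ a son image dans l'ensemble des mots réduits représentant $w$. La surjectivité ne demande que le cas mince : dans un appartement $A$ contenant $C$ et $D$ on a $\delta_A(C,D) = w$, et toute écriture réduite $(s_0, \dots, s_{n-1})$ de $w$ est le type d'une galerie minimale issue de $C$ dans $A$, dont l'extrémité est l'unique chambre $E \in A$ avec $\delta_A(C,E) = w$, c'est-à-dire $D$. Pour l'injectivité, je raisonnerais par récurrence sur $n$ : une galerie minimale $(C_0, \dots, C_n)$ d'extrémités imposées $C = C_0$ et $D = C_n$ et de type fixé est déterminée, car $C_1$ est nécessairement $\proj_R(D)$, où $R$ est le résidu de rang $1$ de type $s_0$ contenant $C$ ; on recommence alors avec la galerie minimale $(C_1, \dots, C_n)$ de type $(s_1, \dots, s_{n-1})$.

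\textbf{Obstacle principal.} Le point délicat est l'injectivité de (2), qui repose sur la propriété de la porte pour les résidus de rang $1$ dans $\Delta$ : la projection $\proj_R(D)$ est l'unique chambre de $R$ réalisant $d_\Delta(R,D)$, les autres chambres de $R$ étant à distance $d_\Delta(R,D)+1$ de $D$ ; c'est cela qui force $C_1$ ci-dessus, via $d_\Delta(C_1, D) = n-1 < n = d_\Delta(C,D)$. Cette propriété de la porte se déduit des rétractions, et l'on peut ici supposer disponible le formalisme des projections et de la convexité combinatoire des appartements développé juste avant. Il faut garder à l'esprit que l'unicité tombe en défaut si l'on ne fixe pas \emph{les deux} extrémités --- l'épaisseur de l'immeuble fournit de nombreuses galeries de type $(s)$ et de longueur $1$ --- et que l'étape de bonne définition requiert la compatibilité des étiquetages par $S$ sous tous les isomorphismes d'appartements.
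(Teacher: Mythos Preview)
The paper does not prove this proposition: it is stated as a background result with a citation to \cite[Proposition 4.81]{abramenko_brown08}, and no argument is given in the text. There is therefore no ``paper's own proof'' to compare against.

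That said, your proposal is essentially the standard proof one finds in the cited reference. Defining $\delta$ apartment-wise and checking well-definedness via the strong isomorphism axiom is exactly the right move; using the retraction $\rho_{A,C_0}$ to push a minimal gallery into $A$ while preserving type and length is the correct mechanism for (1); and for (2), reducing surjectivity to the thin case and proving injectivity by induction via the gate property of rank-1 residues is precisely how Abramenko--Brown proceed. Your identification of the gate property as the crux of injectivity is accurate, and your caveat about needing both endpoints fixed is well observed. The sketch is sound.
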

	
	\begin{Def}
		On appelle l'application $\delta : \ch(\Delta) \times \ch(\Delta) \rightarrow W$ la \textit{distance de Weyl} associée à $\Delta$. 
	\end{Def}

	\begin{prop}[{\cite[Proposition 4.84]{abramenko_brown08}}]
		La distance de Weyl $\delta : \ch(\Delta) \times \ch(\Delta) \rightarrow W$ a les propriétés suivantes : 
		\begin{enumerate}
			\item $\delta(C,D) = 1 $ si et seulement si $C = D$ ; 
			\item $\delta(C,D) = \delta(D,C)^{-1}$ ; 
			\item Si $\delta(C,C') = s \in S$ et $\delta(C', D) = w$, alors $\delta(C,D) = sw$ ou $w$. Si de plus $l(sw) = l(w) + 1$, alors $\delta(C, D )= sw$. 
			\item Si $\delta(C, D) = w$, alors pour tout $s \in S$, il existe une chambre $C'$ telle que $\delta(C', C) = s$ et $\delta(C', D) = sw$. Si de plus $l(sw ) = l(w) -1$, alors il existe une unique telle chambre $C'$. 
		\end{enumerate}
	\end{prop}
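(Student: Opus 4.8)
The plan is to deduce all four assertions from the definition of $\delta$ via gallery types, the two main inputs being \cite[Proposition 4.81]{abramenko_brown08} (the correspondence between minimal galleries and reduced words) and the retraction onto an apartment from \cite[Proposition 4.2]{garrett97}. I would first establish two preliminary facts. The easy one is $d_\Delta(C,D)=l(\delta(C,D))$: a minimal gallery between $C$ and $D$ has type a reduced word representing $w:=\delta(C,D)$, hence has length $l(w)$. The crucial one is the equivalence \emph{a gallery is minimal if and only if its type is a reduced word}: one direction follows at once from \cite[Proposition 4.81]{abramenko_brown08}, and for the other, given a gallery $\gamma$ from $C_0$ to $D$ of reduced type, I would choose an apartment $A$ containing $C_0$ and $D$ and push $\gamma$ forward by the type-preserving retraction $\rho_{A,C_0}$; the image is a gallery of $A$ of the same type which (using that $\rho_{A,C_0}$ preserves distance from $C_0$) does not backtrack, hence in the Coxeter complex $A\cong\Sigma(W,S)$ — where, via \cite[Theorem 3.4]{garrett97} and the length formula $d_A=l\circ\delta$, a gallery is minimal exactly when its type is reduced — it is a minimal gallery from $C_0$ to $D$ of length $|\gamma|$; since $d_A(C_0,D)=d_\Delta(C_0,D)$ (any gallery of $\Delta$ retracts to one of $A$ of equal length), $\gamma$ is minimal in $\Delta$.

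Granting these, (1) and (2) follow immediately: the trivial gallery gives $\delta(C,C)=1$, while $\delta(C,D)=1$ forces $d_\Delta(C,D)=l(1)=0$ so $C=D$; and reversing a minimal gallery of type $(s_0,\dots,s_{n-1})$ from $C$ to $D$ gives a minimal gallery of type $(s_{n-1},\dots,s_0)$ from $D$ to $C$, whence $\delta(D,C)=s_{n-1}\cdots s_0=(s_0\cdots s_{n-1})^{-1}=\delta(C,D)^{-1}$, each $s_i$ being an involution. For (3), assume $\delta(C,C')=s$ and $\delta(C',D)=w$. If $l(sw)=l(w)+1$, I prepend $C$ to a minimal gallery from $C'$ to $D$: the resulting gallery from $C$ to $D$ has type obtained by prefixing $s$ to a reduced word for $w$, which is itself reduced (it represents $sw$, of length $l(sw)$), hence is minimal, so $\delta(C,D)=sw$. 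If $l(sw)=l(w)-1$, then $w$ has a reduced expression beginning with $s$, so by \cite[Proposition 4.81]{abramenko_brown08} there is a minimal gallery $(C'=D_0,D_1,\dots,D_m=D)$ whose type begins with $s$; since $C$ and $D_1$ are both $s$-adjacent to $C'$ and $C'$ has a unique panel of type $s$, either $C=D_1$ — giving $\delta(C,D)=\delta(D_1,D)=sw$ — or $C\neq D_1$, in which case $\delta(C,D_1)=s$, $\delta(D_1,D)=sw$, and $l(s(sw))=l(w)=l(sw)+1$, so the case just treated applied to $(C,D_1,D)$ yields $\delta(C,D)=s(sw)=w$. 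In all cases $\delta(C,D)\in\{w,sw\}$, and it is $sw$ when $l(sw)=l(w)+1$.

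For (4), let $w=\delta(C,D)$ and $s\in S$. If $l(sw)=l(w)-1$, a minimal gallery $(C=C_0,C_1,\dots,C_n=D)$ of type beginning with $s$ provides $C':=C_1$, with $\delta(C',C)=s$ and, its tail being minimal, $\delta(C',D)=sw$. If $l(sw)=l(w)+1$, thickness of $\Delta$ gives a chamber $C'\neq C$ sharing the type-$s$ panel of $C$, so $\delta(C',C)=s$ and (3) forces $\delta(C',D)=sw$, the alternative $w$ being excluded by the length condition. For uniqueness when $l(sw)=l(w)-1$: if $C'$ and $C''$ both work and $C'\neq C''$, they share the type-$s$ panel of $C$, so $\delta(C',C'')=s$; applying (3) to $(C'',C',D)$ gives $\delta(C'',D)\in\{sw,w\}$, and since $l(s(sw))=l(w)=l(sw)+1$ the supplementary clause of (3) forces $\delta(C'',D)=s(sw)=w\neq sw$, contradicting $\delta(C'',D)=sw$; hence $C'=C''$. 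The main obstacle is the preliminary equivalence \emph{reduced type $\iff$ minimal gallery} — concretely, showing that the retraction $\rho_{A,C_0}$ does not backtrack along a reduced-type gallery, which rests on the combinatorics of reduced expressions in the Coxeter group $W$; once this is secured, the four assertions reduce, as above, to elementary manipulations in $W$ (chiefly: $l(sw)<l(w)$ iff $w$ has a reduced expression beginning with $s$) together with \cite[Proposition 4.81]{abramenko_brown08} and the thickness of $\Delta$.
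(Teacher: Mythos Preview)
The paper does not prove this proposition: it is stated with the citation \cite[Proposition 4.84]{abramenko_brown08} and no proof is given, so there is nothing in the paper to compare your argument against. Your outline is the standard one and, once the preliminary equivalence ``minimal gallery $\Longleftrightarrow$ reduced type'' is granted, your deductions of (1)--(4) are correct and cleanly organized; in particular your treatment of the two length cases in (3) and the uniqueness argument in (4) are fine.

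The one genuine soft spot is exactly where you flag it: the claim that $\rho_{A,C_0}$ applied to a gallery of reduced type ``does not backtrack''. The distance-preserving property $d_\Delta(C_0,D)=d_\Delta(C_0,\rho(D))$ alone does not rule out $\rho(C_{i-1})=\rho(C_i)$, since at an intermediate step one can have $d_\Delta(C_0,C_{i-1})=d_\Delta(C_0,C_i)$ without contradiction. What actually forces non-folding is an inductive argument using the gate property of panels (Th\'eor\`eme~\ref{thm proj cox cpx} in the paper, extended to buildings): if $(s_1,\dots,s_{i-1})$ is reduced and $\delta(C_0,C_{i-1})=s_1\cdots s_{i-1}$, then the projection of $C_0$ onto the type-$s_i$ panel of $C_{i-1}$ must be $C_{i-1}$ itself (otherwise one produces a reduced expression of $s_1\cdots s_{i-1}$ ending in $s_i$, contradicting $l(s_1\cdots s_i)=i$), whence every chamber on that panel other than $C_{i-1}$ is at distance $i$ from $C_0$. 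This is the ``combinatorics of reduced expressions'' you allude to; making it explicit (or, alternatively, simply invoking Th\'eor\`eme~\ref{thm weyl dist ch cplx}, which already contains the equivalence you need) would close the gap.
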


	En fait, la donnée d'un immeuble peut se retrouver à partir de la distance de Weyl. Soit $J \subseteq W$ une partie du groupe de Coxeter $W$. On appelle isométrie de $J$ dans les chambres du complexe de chambres $\Delta$ une application $f : J \rightarrow \ch(\Delta)$ telle que $\delta(f(w), f(w')) = w^{-1}w'$ pour tout $w,w' \in J$. 
	
	\begin{thm}[{\cite[Théorème 1]{tits86}}]\label{thm weyl dist ch cplx}
		Soit $(W,S)$ un système de Coxeter, et $\Delta$ un complexe de chambres. Alors $\Delta$ est un immeuble de type $(W,S)$ si et seulement s'il existe une application $\delta : \ch(\Delta) \times \ch(\Delta) \rightarrow W$ telle que pour tout mot réduit $w = s_1 \dots s_n$ (relativement à $S$), et toute paire de chambres $C,D$, il existe une galerie de type $(s_1, \dots, s_n)$ entre $C$ et $D$ si et seulement si $\delta(C, D) = w$. De plus, pour $J \subseteq W$ une partie du groupe de Coxeter $W$, toute isométrie $f : J \rightarrow W$ se prolonge en une isométrie globale $\tilde{f} : W \rightarrow \ch (\Delta)$, et les images de ces isométries forment un système d'appartements pour $\Delta$. 
	\end{thm}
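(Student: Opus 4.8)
\emph{Sens direct.} Si $\Delta$ est un immeuble de type $(W,S)$, on dispose déjà de la distance de Weyl $\delta:\ch(\Delta)\times\ch(\Delta)\to W$ (\cite[Proposition~4.81]{abramenko_brown08}), qui envoie toute galerie minimale de type $(s_0,\dots,s_{n-1})$ sur l'élément représenté par $s_0\cdots s_{n-1}$ et réalise une bijection entre galeries minimales de $C$ à $D$ et décompositions réduites de $w=\delta(C,D)$. En particulier, pour chaque mot réduit $s_1\cdots s_n$ de $w$ il existe une galerie minimale de type $(s_1,\dots,s_n)$ de $C$ à $D$ : c'est le sens facile de l'équivalence de l'énoncé. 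Pour l'autre sens il suffit de voir qu'une galerie de type réduit est automatiquement minimale : en lui appliquant une rétraction centrée sur une chambre \cite[Proposition~4.2]{garrett97} vers un appartement contenant $C$, on préserve le type et on ne fait pas croître la distance de galerie, ce qui ramène l'assertion à un complexe de Coxeter, où elle résulte de la condition d'échange dans $(W,S)$.

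\emph{Sens réciproque, propriétés de $\delta$.} Supposons donnée une application $\delta$ vérifiant la condition de l'énoncé. On en déduit d'abord les propriétés métriques de $\delta$, c'est-à-dire l'analogue de \cite[Proposition~4.84]{abramenko_brown08} : $\delta(C,D)=1$ si et seulement si $C=D$ (galerie vide) ; $\delta(D,C)=\delta(C,D)^{-1}$ (on renverse une galerie de type réduit) ; $\delta(C,D)\in\{1,s\}$ dès que $C$ et $D$ sont $s$-adjacentes ; et les deux propriétés d'échange reliant $\delta(C',C)=s$, $\delta(C',D)$ et $w=\delta(C,D)$ selon que $\ell(sw)$ vaut $\ell(w)+1$ ou $\ell(w)-1$. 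Chacune se ramène à l'existence d'une galerie de type prescrit, qu'on obtient en jouant sur les décompositions réduites via les conditions d'échange et de suppression dans le groupe de Coxeter.

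\emph{Sens réciproque, appartements.} Suivant la définition rappelée avant l'énoncé, une \emph{isométrie} est une application $f:J\to\ch(\Delta)$, $J\subseteq W$, avec $\delta(f(w),f(w'))=w^{-1}w'$ pour tous $w,w'\in J$. Le point crucial est le lemme de prolongement : toute isométrie $f:J\to\ch(\Delta)$ se prolonge en une isométrie globale $\tilde f:W\to\ch(\Delta)$. On l'obtient par le lemme de Zorn appliqué aux prolongements partiels ; l'étape à justifier est l'extension de $f$ à $J\cup\{w_0\}$ pour $w_0\notin J$, qu'on réalise en choisissant $w_1\in J$ minimisant $\ell(w_1^{-1}w_0)$, en écrivant $w_1^{-1}w_0=su$ de façon réduite, et en utilisant la propriété d'échange pour $\delta$ afin de poser $f(w_0)$ égal à la chambre $s$-adjacente adéquate à $f(w_1)$. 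On définit ensuite
\[
\mathcal{A}=\{\,\tilde f(W)\ :\ \tilde f:W\to\ch(\Delta)\ \text{isométrie globale}\,\}.
\]
Comme $\tilde f$ transporte la restriction de $\delta$ sur la distance de Weyl standard de $W$, chaque $\tilde f(W)$, muni de la structure de sous-complexe induite, est isomorphe au complexe de Coxeter $\Sigma(W,S)$, qui est un complexe de chambres fin (théorème~\ref{lem unicite morphisme ch}).

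\emph{Sens réciproque, axiomes d'immeuble.} Il reste à vérifier que $(\Delta,\mathcal{A})$ satisfait les axiomes d'immeuble. Pour l'axiome de couverture, étant donné deux simplexes $x,y$, on choisit des chambres $C\supseteq x$ et $D\supseteq y$ ; l'application $\{1,\delta(C,D)\}\to\ch(\Delta)$ envoyant $1$ sur $C$ et $\delta(C,D)$ sur $D$ est une isométrie (d'après les propriétés métriques de $\delta$), que l'on prolonge en $\tilde f$, et $\tilde f(W)$ est un appartement contenant $x$ et $y$. Pour l'axiome d'isomorphisme, si $\tilde f(W)$ et $\tilde g(W)$ contiennent une même chambre (et un même simplexe), on compose $\tilde f$, $\tilde g$ avec des translations de $W$ pour ramener cette chambre à $\tilde f(1)=\tilde g(1)$ ; alors $\tilde g\circ\tilde f^{-1}$ est un isomorphisme de complexes de chambres entre les deux appartements, et la caractérisation de ces appartements par les valeurs de $\delta$ montre qu'il fixe leur intersection point par point. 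Puisque $\Delta$ est épais, $(\Delta,\mathcal{A})$ est alors un immeuble de type $(W,S)$ ; la dernière assertion de l'énoncé n'est autre que le lemme de prolongement et la description de $\mathcal{A}$ ci-dessus. L'obstacle principal est la cohérence globale dans ce lemme : il faut montrer que la chambre $f(w_0)$, construite localement à partir du seul voisin $f(w_1)$, vérifie $\delta(f(w_0),f(w))=w_0^{-1}w$ pour \emph{tout} $w\in J$ ; cela se fait par récurrence sur $\ell(w_1^{-1}w)$ à l'aide des propriétés d'échange et, si nécessaire, d'un argument de projection ou de convexité de $J$ (quitte à l'agrandir), et c'est cette étape — combinatoire mais subtile — qui concentre l'essentiel de l'argument de Tits \cite{tits86}.
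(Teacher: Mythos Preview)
The paper does not prove this theorem: it is stated as a citation to \cite{tits86} (and implicitly to \cite{abramenko_brown08}) without proof, as background material in the preliminary chapter on buildings. There is therefore no ``paper's own proof'' to compare against.

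Your sketch follows the standard route (Tits, and Chapter~5 of Abramenko--Brown): derive the $W$-metric axioms from the gallery condition, prove the extension lemma for partial isometries via Zorn, define apartments as images of global isometries $W\to\ch(\Delta)$, and verify the building axioms. One small wrinkle in your extension step: rather than choosing $w_1\in J$ minimizing $\ell(w_1^{-1}w_0)$ for an arbitrary $w_0\notin J$, the clean argument picks $w_0$ \emph{adjacent} to $J$ in the Cayley graph (i.e.\ $w_0=w_1s$ with $w_1\in J$), which always exists since the Cayley graph of $(W,S)$ is connected; this makes the one-step extension and the coherence check $\delta(f(w_0),f(w))=w_0^{-1}w$ for all $w\in J$ a direct induction on $\ell(w_1^{-1}w)$ using the exchange properties you listed. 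With that adjustment your outline is correct and is exactly the argument the cited references give.
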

	
	On appelle le système d'appartements défini dans le théorème \ref{thm weyl dist ch cplx} le système d'appartements complet de $\Delta$, car tout système d'appartements est contenu dans celui-ci. Dans la suite, on considère toujours un système d'appartements complet.

	On a vu grâce au théorème \ref{thm proj cox cpx} qu'il existait une application projection bien définie dans chaque appartement de l'immeuble $\Delta$. On peut étendre cette projection à tout l'immeuble : pour un simplexe $x $ et $C$ une chambre, soit $D$ la projection du simplexe $x$ sur la chambre $C$ dans n'importe quel appartement qui contient les deux. Alors par convexité des appartements, $D$ est indépendante du choix de l'appartement, et toute galerie minimale de $x$ à $C$ passe par $D$, \cite[Proposition 4.95]{abramenko_brown08}. On appelle $D$ la projection $\proj_C(x)$ de $x$ sur $C$. Cette projection peut être étendue à tout résidu de $\Delta$, \cite[Theorem 4.97]{abramenko_brown08}. 
	
	\section{BN-paires}
	
	Dans cette section, on explique comment construire de manière naturelle un immeuble sur lequel agit un groupe satisfaisant certaines conditions. Cette approche est fondamentale pour l'étude des groupes de Lie semi-simples sur des corps à valuation discrète. Cependant, nous nous intéresserons surtout aux immeubles exotiques, donc cette présentation sera assez courte. On renvoie à \cite[Chapter 6]{abramenko_brown08}, \cite[Chapter 16]{weiss09} pour une description plus complète.

	Soit $\Delta$ un immeuble épais muni d'un système d'appartements (complet) $\mathcal{A}$, $C$ une chambre et $A \in \mathcal{A}$. On rappelle que chaque appartement est isomorphe comme complexe de chambres à un complexe de Coxeter $(W,S)$, et que l'on peut étiqueter $\Delta$ grâce à $S$, et que cet étiquetage est essentiellement le seul sur $\Delta$. Soit $G$ un groupe d'automorphismes de $\Delta$ qui préserve l'étiquetage.
	\begin{Def}
		On dit que $G$ agit fortement transitivement sur $\Delta$ si $G$ agit transitivement sur les paires $(C,A) \in \ch (\Delta) \times \mathcal{A}$, où $C$ est une chambre de $A$. 
	\end{Def}
	On se donne un appartement $A_0$ et une chambre $C_0$ dans cet appartement. On rappelle que si on note $W $ le groupe d'automorphismes de $A_0$ qui préservent le type, et $S$ les réflexions par rapport aux facettes de $C_0$, alors $(W,S)$ est un système de Coxeter et $A_0$ est isomorphe au complexe de Coxeter associé. Notons 
	\begin{eqnarray}
		B & := & \stab_G(C_0) \nonumber \\
		N & := & \stab_G(A_0) \nonumber \\
		T & := & B \cap N \nonumber. 
	\end{eqnarray}
	
	On peut montrer en utilisant le Lemme d'unicité \ref{lem unicite morphisme ch} que $N/T$ est isomorphe à $W$, \cite[Lemma 5.2]{garrett97}. Pour $S' \subseteq S$, on note $F_{S'}$ la face de $C_0$ dont le stabilisateur dans $W$ est $W_{S'} := \langle S' \rangle $. On note enfin 
	
	\begin{eqnarray}
		P_{S'} := \stab_G (F_{S'}). \nonumber
	\end{eqnarray}
	
	Le groupe $P_{S'}$ est appelé sous-groupe parabolique standard de $G$. Pour $S' =\emptyset$, la face de $C_0$ dont le stabilisateur est l'identité est $C_0$ elle-même. Par conséquent, $P_\emptyset = B$. D'autre part, pour $S' = S$, $P_S = G$. 
	
	Le théorème suivant donne les propriétés fondamentales du système $(G,B,N,S)$. 
	
	\begin{thm}\label{thm bruhat-tits decomposition}
		Soit un système $(G,B,N,S)$ comme précédemment. Alors 
		\begin{enumerate}
			\item Pour tout sous-groupe parabolique standard, on a la décomposition de Bruhat-Tits 
			\begin{eqnarray}
				P_{S'} = \underset{ w \in \langle S' \rangle }{\bigsqcup} BwB
			\end{eqnarray} 
			\item Pour $s \in S, w\in W$, $BwB \, BsB = BwsB $ si $l_S(ws) = l(w) + 1$ et  $BwB \, BsB = BwsB \sqcup BwB $ sinon. 
			\item Pour $s \in S$, $BsB \nsubseteq B$. 
		\end{enumerate}
	\end{thm}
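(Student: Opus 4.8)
\emph{Plan de preuve.} La clef sera d'exploiter le dictionnaire entre les doubles classes $B\backslash G/B$ et les distances de Weyl dans l'immeuble $\Delta$. Comme la transitivité forte entraîne que $G$ agit transitivement sur $\ch(\Delta)$, l'application $gB\mapsto gC_0$ est une bijection $G/B\to\ch(\Delta)$ envoyant les $B$-orbites de $G/B$ sur les $B$-orbites de chambres. On fixe, pour chaque $w\in W\cong N/T$, un relèvement $n_w\in N$ ; comme deux tels relèvements diffèrent d'un élément de $T\subseteq B$, la double classe $BwB:=Bn_wB$ est bien définie. L'étape centrale consistera à montrer que deux chambres $D,D'$ sont dans la même $B$-orbite si et seulement si $\delta(C_0,D)=\delta(C_0,D')$ : le sens direct vient de l'invariance de $\delta$ sous l'action de $G$ (qui préserve l'étiquetage, donc les types de galeries) ; pour la réciproque, on choisirait des appartements $A\ni\{C_0,D\}$ et $A'\ni\{C_0,D'\}$, puis, par transitivité forte, un $g$ tel que $g\cdot(C_0,A)=(C_0,A')$ ; alors $g\in B$, $gD\in A'$ et $\delta(C_0,gD)=\delta(C_0,D')$, d'où $gD=D'$ puisque $\delta(C_0,\cdot)$ est injective sur les chambres d'un appartement — celui-ci est un complexe de Coxeter, fin, et sa distance de Weyl interne coïncide avec $\delta_\Delta$. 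Comme l'action de $n_w$ sur $A_0$ est celle de $w$, on a $\delta(C_0,n_wC_0)=w$ ; on en déduirait
\begin{equation*}
BwB=\{\,g\in G\ :\ \delta(C_0,gC_0)=w\,\},
\end{equation*}
donc que les $BwB$ sont deux à deux disjointes et que $G=\bigsqcup_{w\in W}BwB$ (le cas $S'=S$ de (1)).

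Pour (1) dans le cas général, on utiliserait que $F_{S'}$ est une face de $C_0$, d'où $B\subseteq P_{S'}$, et que $W_{S'}$ stabilise $F_{S'}$, d'où $n_w\in P_{S'}$ pour $w\in W_{S'}$ ; ainsi $\bigsqcup_{w\in W_{S'}}BwB\subseteq P_{S'}$. Réciproquement, si $g\in P_{S'}$, la chambre $gC_0$ contient $gF_{S'}=F_{S'}$, donc appartient au $S'$-résidu de $C_0$ ; or les chambres de ce résidu sont exactement celles dont la distance de Weyl à $C_0$ est dans $W_{S'}$, si bien que $g\in BwB$ pour un $w\in W_{S'}$. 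La disjonction étant déjà acquise, ceci établira (1).

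Pour (2), on traduirait le produit en termes de chambres. Étant donné une chambre $E$ avec $\delta(C_0,E)=w$, les chambres $D$ vérifiant $\delta(E,D)=s$ sont exactement les chambres de la $s$-cloison (le $\{s\}$-résidu) $R$ de $E$ autres que $E$, et $\delta(C_0,D)$ vaut $ws$ pour $D=\proj_R(C_0)$ et $w$ pour les autres chambres de $R$. Si $l_S(ws)=l(w)+1$, alors $\proj_R(C_0)=E$ : toutes les $D$ considérées donnent $\delta(C_0,D)=ws$, d'où $BwB\cdot BsB\subseteq BwsB$ ; l'inclusion réciproque s'obtiendrait en plongeant toute chambre $D'$ avec $\delta(C_0,D')=ws$ dans la $s$-cloison d'une chambre à distance $w$ de $C_0$, d'où l'égalité $BwB\cdot BsB=BwsB$. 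Si $l_S(ws)=l(w)-1$, alors $E\neq\proj_R(C_0)$ et, par épaisseur, $R$ contient au moins une chambre distincte de $E$ et de $\proj_R(C_0)$ ; l'ensemble des valeurs $\delta(C_0,D)$ obtenues est donc $\{ws,w\}$, et un argument d'épaisseur analogue montre que chacune des deux doubles classes est effectivement atteinte, d'où $BwB\cdot BsB=BwsB\sqcup BwB$. Enfin (3) est immédiat : $n_sC_0=sC_0\neq C_0$ car $s$ ne fixe pas $C_0$ dans le complexe de Coxeter $A_0$, donc $n_s\notin B$ ; d'ailleurs $BsB\cap B=\emptyset$ puisque $s\neq 1$, a fortiori $BsB\not\subseteq B$.

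Le point délicat sera l'étape centrale du premier paragraphe, à savoir l'identification des $B$-orbites de chambres aux fibres de $\delta(C_0,\cdot)$ : c'est là qu'interviennent de manière essentielle la transitivité forte de l'action, la finesse des appartements, et la compatibilité entre la distance de Weyl d'un appartement et celle de l'immeuble. Une fois ce fait établi, les assertions (1), (2), (3) ne sont plus que des traductions combinatoires des axiomes de la distance de Weyl ; le seul autre endroit demandant un peu de soin est, dans (2), le recours à l'épaisseur pour obtenir la surjectivité du produit sur $BwB$ dans le cas $l_S(ws)=l(w)-1$ (cet énoncé étant d'ailleurs faux pour un complexe de Coxeter, où le produit se réduit à $\{ws\}$).
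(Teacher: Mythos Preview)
The paper does not prove this theorem: it is stated as a background structural result on $BN$-pairs, followed immediately by the remark ``Il est remarquable que ces conséquences sur la structure du groupe $G$ ne mentionnent pas l'immeuble $\Delta$'', with no proof environment. So there is no ``paper's own proof'' to compare against.

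On its own merits, your approach is the standard and correct one: identify $B\backslash G/B$ with the fibres of $\delta(C_0,\cdot)$ via strong transitivity, then read off (1)--(3) from the combinatorics of the Weyl distance. The central step (two chambers are in the same $B$-orbit iff they have the same Weyl distance from $C_0$) is argued correctly, and your treatment of (1) and (3) is clean.

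One slip to fix in (2): the sentence ``$\delta(C_0,D)$ vaut $ws$ pour $D=\proj_R(C_0)$ et $w$ pour les autres chambres de $R$'' is wrong as stated. The correct dichotomy is: writing $P=\proj_R(C_0)$, one has $\delta(C_0,D)=\delta(C_0,P)\cdot\delta(P,D)$ for every $D\in R$; when $l(ws)=l(w)+1$ one gets $P=E$, so $\delta(C_0,P)=w$ and every $D\neq E$ in $R$ satisfies $\delta(C_0,D)=ws$; when $l(ws)=l(w)-1$ one gets $P\neq E$ with $\delta(C_0,P)=ws$, and every $D\neq P$ in $R$ (including $E$) satisfies $\delta(C_0,D)=w$. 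Your subsequent case analysis is in fact consistent with this corrected statement, so the error is only in that one summary sentence; once rephrased, the argument for (2) goes through, including the use of thickness to hit $BwB$ in the second case.
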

	
	Il est remarquable que ces conséquences sur la structure du groupe $G$ ne mentionnent pas l'immeuble $\Delta$. Réciproquement, soit $G$ un groupe, $B, N< G$ deux sous-groupes tels que $T = B \cap N \vartriangleleft N$ est normal dans $N$. On note $W := N/T$ le groupe quotient, et on se donne un ensemble $S $ de générateurs pour $W$. Si les conséquences du théorème \ref{thm bruhat-tits decomposition} sont vérifiées, on dira que $(G,B,N,S)$ est une \textit{$BN$-paire} pour $G$. 
	
	Soit $(G,B,N,S)$ une $BN$-paire pour $G$. Notons $\Delta(G, B)$ le poset des sous-groupes paraboliques standard $\underset{ w \in \langle S' \rangle }{\bigsqcup} BwB$, ordonnés par inclusion renversée. La notation $\Delta(G,B)$ a un sens car on peut montrer que l'immeuble dépend en fait uniquement de $G$ et de $B$, le système générateur étant uniquement déterminé, \cite[Theorem 6.56]{abramenko_brown08}. La proposition suivante est la réciproque du théorème \ref{thm bruhat-tits decomposition}. 
	
	\begin{thm}[{\cite[Theorem 6.56]{abramenko_brown08}}]
		Soit $(G,B,N,S)$ une $BN$-paire pour $G$. Alors $\Delta(G, B)$  est un immeuble épais sur lequel $G$ agit fortement transitivement. De plus, le stabilisateur d'une chambre est isomorphe à $B$, et $N$ stabilise un appartement. Réciproquement, si $G$ agit fortement transitivement sur un immeuble épais $\Delta$, alors $\Delta$ est canoniquement isomorphe à $\Delta(G,B)$. 
	\end{thm}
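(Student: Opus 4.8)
The plan is to prove the two implications separately, using the Weyl-distance characterisation of buildings (Theorem \ref{thm weyl dist ch cplx}) as the bridge, so as to avoid checking the apartment axioms directly. For the direct implication, I would first record the purely combinatorial consequences of the $BN$-pair axioms gathered in Theorem \ref{thm bruhat-tits decomposition}: the Bruhat decomposition $G = \bigsqcup_{w \in W} BwB$ (the case $S' = S$), the fact that each $P_{S'} := \bigsqcup_{w \in W_{S'}} BwB$ is a subgroup, and --- crucially --- that $S' \mapsto P_{S'}$ is an inclusion-reversing bijection from the subsets of $S$ onto the set of subgroups of $G$ containing $B$. This makes $\Delta(G,B)$, the poset of cosets $gP_{S'}$ with $S' \subsetneq S$ under reverse inclusion, a well-defined finite-dimensional simplicial complex whose maximal simplices (chambers) are exactly the cosets $gB$, in which $gP_{\{s\}}$ is the common codimension-one face ($s$-panel) of the chambers it contains, and which carries a type function valued in $S$; in particular it is a chamber complex. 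I would then define $\delta : G/B \times G/B \to W$ by $\delta(gB,hB) = w \iff g^{-1}h \in BwB$, which is well defined and onto by Bruhat, and check the Weyl-distance axioms: $\delta(C,C) = 1$ and $\delta(C,D) = \delta(D,C)^{-1}$ are immediate, while the folding and extension axioms translate line by line into the product rule $BsB \cdot BwB \subseteq BswB \cup BwB$, with equality to $BswB$ when $\ell(sw) = \ell(w)+1$, supplied by part (2) of Theorem \ref{thm bruhat-tits decomposition}.

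Granting that, Theorem \ref{thm weyl dist ch cplx} tells us $\Delta(G,B)$ is a building of type $(W,S)$ whose apartments are the images of the isometries $W \to \ch(\Delta(G,B))$; in particular $A_0 := \{wB : w \in W\}$ is one apartment, and $N$ stabilises it, since for $n \in N$ and $w \in W$ one has $nwB = w'B$ with $\bar{w}' = \bar{n}\bar{w}$ in $W$ (using $T \trianglelefteq N$ and $T \subseteq B$). The one extra point is thickness: the chambers having $gP_{\{s\}}$ as a face correspond to the cosets in $B \backslash P_{\{s\}}$, so there are $1 + |B\backslash BsB|$ of them; if $|B\backslash BsB| = 1$ for some $s$ then $sBs = B$, hence $BsB = sB$ and $BsB \cdot BsB = B$, contradicting $BsB \cdot BsB = B \sqcup BsB$ (part (2) of Theorem \ref{thm bruhat-tits decomposition}, together with $BsB \ne B$); so every panel is a face of at least three chambers. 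Strong transitivity and the identifications $\stab_G(B) = B$ and $N \subseteq \stab_G(A_0)$ are then short: given a pair $(C,A)$ with $C \in A$, write $A = gA_0$ and $g^{-1}C = wB$, and then the element $gw$ of $G$ carries $(B,A_0)$ to $(C,A)$.

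For the converse, fix a chamber $C_0$ inside an apartment $A_0$ of the thick building $\Delta$, and set $B = \stab_G(C_0)$, $N = \stab_G(A_0)$, $T = B \cap N$ (the action being by type-preserving automorphisms). The key first step is $N/T \cong W$: strong transitivity makes $N$ act transitively on the chambers of the thin complex $A_0 \cong \Sigma(W,S)$, and by the uniqueness Lemma \ref{lem unicite morphisme ch} a type-preserving automorphism of $A_0$ fixing a chamber is the identity, so the kernel of $N \to \Aut_{\mathrm{type}}(A_0) = W$ is exactly the set of elements fixing $A_0$ pointwise, which is $B \cap N = T$; this also yields $T \trianglelefteq N$. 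Taking $S$ to be the reflections in the panels of $C_0$, I would then verify the three conclusions of Theorem \ref{thm bruhat-tits decomposition} for $(G,B,N,S)$. For the Bruhat decomposition: given $g$, pick an apartment through $C_0$ and $gC_0$ and use strong transitivity to move it to $A_0$ while fixing $C_0$, so $gC_0 = \tilde{w}C_0$ for a unique $w$, whence $g \in BwB$; uniqueness of $w$ comes from the $G$-invariance of the Weyl distance $\delta(C_0, \cdot)$ of $\Delta$. The product formula is the folding axiom of $\Delta$ re-read through $\delta(C_0, gC_0) = w \iff g \in BwB$, and $BsB \ne B$ holds because $sC_0 \ne C_0$ already in the thin complex $A_0$, so $s \notin B$. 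Finally $\Delta(G,B) \cong \Delta$ canonically via $gP_{S'} \mapsto g \cdot F_{S'}$, where $F_{S'}$ is the cotype-$S'$ face of $C_0$: this is well defined because $\stab_G(F_{S'}) = P_{S'}$ (again from Bruhat plus strong transitivity), and it is $G$-equivariant, type- and order-preserving, and bijective on simplices of each type since $G$ is transitive on them --- hence an isomorphism of chamber complexes, depending only on the choice of $(C_0,A_0)$.

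The step I expect to be the main obstacle is the forward-direction verification that $\delta$ satisfies all the Weyl-distance axioms, that is, matching the Coxeter-combinatorial content of those axioms (reduced words, galleries, the $\ell(sw) = \ell(w) \pm 1$ dichotomy) precisely against the product rules for the double cosets $BwB$. Everything else is either short group theory (the thickness argument, the identification $N/T \cong W$) or bookkeeping about posets and type functions; once the Weyl-distance axioms are in hand, Theorem \ref{thm weyl dist ch cplx} does the heavy lifting of producing the apartment system and confirming the building axioms, and the converse is largely a matter of reversing the dictionary between double cosets and stabilisers of simplices.
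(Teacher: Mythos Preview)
The paper does not prove this theorem at all: it is stated with a citation to \cite[Theorem 6.56]{abramenko_brown08} and used as a black box, so there is no ``paper's own proof'' to compare against. Your sketch is a reasonable outline of the standard argument (and is close in spirit to the Abramenko--Brown treatment), with the Weyl-distance criterion of Theorem~\ref{thm weyl dist ch cplx} doing the work of producing the building structure and the Bruhat relations supplying the axioms; the thickness and converse steps are handled correctly. Since the paper offers nothing beyond the citation, there is no divergence in approach to discuss.
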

	
	En fait, on peut munir tout immeuble d'une métrique $\cat$(0) complète, ce qui étend un résultat de Moussong sur les complexes de Coxeter, \cite[Theorem 18.3.1]{davis15}. En appliquant le théorème du point-fixe de Bruhat-Tits et la théorie des décompositions pour les groupes ayant une $BN$-paire, on obtient donc le corollaire suivant. 
	\begin{cor}
		Soit $G$ un groupe muni d'une $BN$-paire, et soit $X$ l'immeuble associé sur lequel $G$ agit fortement transitivement. Soit $H \leq G$ un sous-groupe de $G$. Alors les assertions suivantes sont équivalentes. 
		\begin{enumerate}
			\item $H$ a une orbite bornée dans $X$. 
			\item $H$ fixe un point de $X$. 
			\item $H$ fixe un sommet de $X$. 
			\item $H$ est contenu dans un sous-groupe parabolique maximal, c'est-à-dire un sous groupe qui stabilise un sommet et qui est maximal pour l'inclusion. 
		\end{enumerate}
	\end{cor}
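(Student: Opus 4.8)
The plan is to prove $(1)\Rightarrow(2)\Rightarrow(3)\Rightarrow(4)$ and then to close the circle with the chain $(4)\Rightarrow(3)\Rightarrow(2)\Rightarrow(1)$, whose three links are immediate: by definition a maximal parabolic subgroup stabilizes, hence fixes, a vertex, so any $H$ contained in one fixes that vertex ($(4)\Rightarrow(3)$); a fixed vertex is a fixed point ($(3)\Rightarrow(2)$); and a fixed point is a one-element, hence bounded, orbit ($(2)\Rightarrow(1)$). Throughout I fix the canonical complete $\cat$(0) metric on the building $X=|\Delta(G,B)|$ recalled above (\cite[Theorem 18.3.1]{davis15}), on which $G$ acts by simplicial isometries; this action is moreover \emph{type-preserving}, since left translation on $\Delta(G,B)$ sends a coset $gP_{S'}$ to $(hg)P_{S'}$ and thus permutes panels and vertices compatibly with their labels in $S$.

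For $(1)\Rightarrow(2)$: if $H$ has a bounded orbit $Hx\subseteq X$, then $H$ stabilizes the bounded set $Hx$, and since $X$ is complete $\cat$(0) and $H$ acts by isometries, the Bruhat--Tits fixed point theorem (the corollary of Proposition~\ref{prop bruhat tits centre}) supplies a point of $X$ fixed by $H$. For $(2)\Rightarrow(3)$: assume $H$ fixes $p\in X$, and let $\sigma$ be the carrier of $p$, i.e.\ the unique simplex of $\Delta(G,B)$ whose open cell contains $p$. For every $h\in H$ the set $h|\sigma|$ is again the realization of a simplex and contains $hp=p$ in its interior, so uniqueness of the carrier forces $h\sigma=\sigma$; hence $H$ stabilizes $\sigma$ setwise. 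Since the vertices of a simplex of a building carry pairwise distinct types and $H$ preserves types, $H$ fixes each vertex of $\sigma$, and as $\sigma\neq\emptyset$ this produces a vertex of $X$ fixed by $H$.

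For $(3)\Rightarrow(4)$: let $v$ be a vertex fixed by $H$, say of cotype $S\setminus\{s\}$ for some $s\in S$. Transitivity of $G$ on the chambers of $\Delta(G,B)$ forces transitivity on the vertices of each fixed cotype, so $v=gv_s$ for some $g\in G$, where $v_s=F_{S\setminus\{s\}}$ is the vertex of the fundamental chamber $C_0$ of that cotype; as $\stab_G(v_s)=P_{S\setminus\{s\}}$ by definition, $\stab_G(v)=gP_{S\setminus\{s\}}g^{-1}$. From the Bruhat--Tits decomposition one reads off that $P_{S'}\subseteq P_{S''}$ iff $S'\subseteq S''$ and that $P_{S'}\subsetneq G$ for $S'\subsetneq S$, so the subgroups $P_{S\setminus\{s\}}$ are precisely the maximal proper standard parabolics; hence $\stab_G(v)$ is a maximal parabolic subgroup containing $H$, which is $(4)$.

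I expect the only genuinely delicate point to be $(2)\Rightarrow(3)$: it uses that the $\cat$(0) metric is the simplicial Davis metric, so that the carrier of a point is meaningful, together with type-preservation of the $G$-action — without the latter a group could fix an interior point of a simplex while merely permuting its vertices and fixing none of them. All of the remaining steps are bookkeeping inside the $BN$-pair formalism and an application of the Bruhat--Tits fixed point theorem, both already established above.
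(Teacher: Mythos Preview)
Your argument is correct and follows exactly the route the paper indicates: the paper does not give a detailed proof but simply states that the corollary results from the Bruhat--Tits fixed point theorem together with the decomposition theory for groups with a $BN$-pair, and your proof fills in precisely those details. Your care with $(2)\Rightarrow(3)$ (carrier of the fixed point, type-preservation of the $G$-action on $\Delta(G,B)$) is appropriate and is the only place where something beyond bookkeeping is needed.
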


	\begin{ex}
		Considérons le groupe $G = \text{GL}(3, k)$ (ou le sous-groupe spécial linéaire $\text{SL}(3,k)$), pour $k$ un corps. Soit $\Delta$ le complexe de drapeaux associé au plan projectif $\text{P}^2(k)$. Il s'agit d'un immeuble de rang 2, où chaque sommet consiste en un sous-espace linéaire propre et non-trivial de $k^3$, et où une arête relie $L_1$ à $L_2$ si $L_1$ est un sous-espace linéaire de codimension 1 de $L_2$ (ou l'inverse), voir \cite[Section 6.1.2]{abramenko_brown08}. Le groupe $G$ agit fortement transitivement sur $\Delta$. On peut calculer que le groupe stabilisateur d'une chambre $B$ est isomorphe au groupe des matrices triangulaires supérieures :
		\begin{eqnarray}
			\begin{pmatrix}
				\ast & \ast & \ast \\
				0 & \ast & \ast \\
				0 & 0 & \ast
			\end{pmatrix} \nonumber.
		\end{eqnarray}
		Le groupe $N$ stabilisateur d'un appartement est le groupe monomial des matrices ayant exactement une entrée non nulle dans chaque colonne et dans chaque ligne. Enfin, le groupe $T$ consiste en les matrices diagonales, et donc $W$ s'identifie au groupe symétrique sur 3 lettres. Pour le groupe $\text{SL}(3,k)$, $B$, $N$ et $T$ sont seulement les intersections des groupes précédemment donnés avec $\text{SL}(3,k)$, tandis que $W$ reste le groupe symétrique sur 3 lettres. Tout ceci s'étend naturellement pour $n \geq 3$. 
		
	\end{ex}

	\section{Immeubles affines}\label{section immeuble affine}
	
	\subsection{Premières définitions}
	
	Dans cette section, on décrit plus précisément des immeubles lorsque les appartements sont construits à partir de groupes de réflexions affines non nécessairement discrets. Cette approche donne lieu à une définition plus générale que la définition simpliciale classique. Bien que nous ne travaillerons dans le Chapitre \ref{chapter rw immeuble} qu'avec des immeubles affines localement finis, nous pensons que la plupart des résultats présentés restent vrais dans le cadre plus général des immeubles non-discrets, moyennant quelques modifications dans les preuves. De plus, nous pensons que cette approche directement \og métrique\fg{} est plus naturelle pour notre sujet. Les références pour cette section sont \cite{parreau00} et \cite{rousseau09}. 
	
	Soit donc $(W, V)$ un groupe de réflexions affine (non nécessairement discret) sur l'espace vectoriel euclidien $V$. La définition qui suit est issue de \cite[Définition 1.1]{parreau00}
	\begin{Def}
		Soit $X$ un ensemble et $\mathcal{A}$ une famille d'injections de $V$ dans $X$, qu'on appelle \textit{appartements marqués}. On appelle \textit{appartement} l'image d'un appartement marqué. On dit que $(\Delta, \mathcal{A})$ est un \textit{immeuble affine} modelé sur $(W, V)$ si les axiomes suivant sont vérifiés : 
		\begin{enumerate}
			\item[(A1)] Le système d'appartement marqués est invariant par précomposition par $W$. 
			\item[(A2)] Pour tous $f,f' \in \mathcal{A}$, l'ensemble $Y :=(f')^{-1} (f(V))$ est un convexe fermé de $V$. De plus, il existe $w \in W$ tel que $((f')^{-1} \circ f)_{|Y} = w_{|Y}$. 
			\item[(A3)] Pour toute paire de points $x, y \in X$, il existe un appartement qui les contient tous deux, c'est-à-dire il existe $f \in \mathcal{A}$, $u,v \in V$, tels que $f(u ) = x$ et $f(v) =y$. 
		\end{enumerate}
		Pour toute paire de points $x,y \in X$, il existe un appartement marqué $f : V \rightarrow X$ tel que $f(a) = x$ et $f(b)= y$. On pose alors $d_X(x, y) = d_V(a,b)$. La fonction $d$ est une distance sur $X$ \cite[Proposition 1.3]{parreau00}, et ne dépend pas de l'appartement considéré. L'image d'une chambre de Weyl affine de $V$ par un appartement marqué est appelée \textit{chambre de Weyl} (ou \textit{secteur}) de $X$. 
		\begin{enumerate}
			\item[(A4)] Soit $S_1, S_2$ deux chambres de Weyl. Alors il existe des chambres de Weyl $S_1'\subseteq S_1, S_2' \subseteq S_2$ et un appartement $A \in \mathcal{A}$ qui contient $S'_1$ et $S'_2$. 
			\item[(A5)] Pour tout élément $x \in X$, et tout appartement $A$ qui contient $x$, il existe une rétraction $r$ de $X$ sur $A$ (c'est-à-dire une application $r : \Delta \rightarrow A$ telle que $r_{|A} = \Id_{|A}$) qui n'augmente pas la distance $d_\Delta$ et telle que $r^{-1}(x) = x$.
		\end{enumerate}
		Si ces conditions sont réunies, on dit que $X$ est un immeuble affine de \textit{type} $(\overline{W}, V)$, où $\overline{W}$ est le groupe fini engendré par les parties linéaires des éléments de $W$. 
	\end{Def}
	
	Remarquons que cette définition est immédiatement métrique, et non plus simpliciale. La paire $(X, \mathcal{A})$ est la réalisation géométrique d'un immeuble combinatoire affine au sens classique si et seulement si le groupe de réflexions affine $(W,V)$ sur lequel est modelé $X$ est discret et $\overline{W}$ est irréductible.

	\begin{rem}
		Dans toute la suite, on fait l'hypothèse que le système d'appartements marqués $\mathcal{A}$ est maximal, c'est-à-dire qu'il n'existe pas de système d'appartements $\mathcal{A}'$ qui contient strictement $\mathcal{A}$. De ce fait, toute géodésique et tout rayon géodésique est contenu dans un appartement, \cite[Proposition 2.18]{parreau00}.
	\end{rem}
	
	Les immeubles affines forment une classe particulièrement intéressante d'espaces $\cat$(0), comme le montre la proposition suivante. 
	
	\begin{prop}[{\cite[Proposition 2.10]{parreau00}}]
		Un immeuble affine au sens précédent muni de sa métrique $d$ est un espace $\cat$(0). Si l'immeuble $\Delta$ est simplicial, sa réalisation géométrique est de plus complète. 
	\end{prop}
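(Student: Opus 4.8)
\emph{Plan de preuve.} Le but est de se ramener à la caractérisation métrique des espaces $\cat$(0) : on établira d'abord que $(X,d)$ est un espace géodésique, puis qu'il satisfait l'inégalité quadratique de Bruhat--Tits (l'inégalité CN), laquelle équivaut à $\cat$(0) pour un espace géodésique, sans hypothèse de complétude (\cite[Chapitre II.1]{bridson_haefliger99}) ; la complétude dans le cas simplicial sera traitée séparément. Pour la géodésie : étant donnés $x,y \in X$, l'axiome (A3) fournit un appartement marqué $f : V \to X$ et $a,b \in V$ avec $f(a)=x$, $f(b)=y$ ; l'image par $f$ du segment euclidien de $V$ joignant $a$ à $b$ est un chemin de longueur $d_V(a,b)=d(x,y)$ (cette dernière égalité étant la définition même de $d$, licite par \cite[Proposition 1.3]{parreau00} grâce à (A2)), donc une géodésique. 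On en déduira que la restriction de $d$ à tout appartement est euclidienne et, comme deux appartements se coupent le long d'un convexe par (A2), que les géodésiques de $X$ sont uniques.

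Pour l'inégalité CN (trois points de $X$ ne gisant pas en général dans un même appartement, on ne peut se contenter de raisonner dans un appartement) : soient $x,y,z \in X$ et $m$ le milieu du segment $[x,y]$. Par (A3) on choisit un appartement $A$ contenant $x$ et $y$, donc aussi $[x,y]$ et $m$. On considérera la rétraction $r : X \to A$ centrée en $m$ donnée par (A5) : elle ne fait pas croître les distances et vérifie $r^{-1}(\{m\})=\{m\}$ ; le point clé, que l'on extraira de la construction des rétractions (par pliages successifs le long de murs), est qu'une telle rétraction préserve les distances au centre, envoyant isométriquement toute géodésique issue de $m$ sur une géodésique de $A$ issue de $m$ — en particulier $d(m,r(z))=d(m,z)$. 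On aura alors $d(m,r(z))=d(m,z)$, $d(r(z),x)\le d(z,x)$ et $d(r(z),y)\le d(z,y)$. Comme $A$ est isométrique à l'espace euclidien $V$ et que $m$ y est le milieu de $[x,y]$, la formule de la médiane y donne
\begin{equation*}
d(r(z),m)^2 = \tfrac12\, d(r(z),x)^2 + \tfrac12\, d(r(z),y)^2 - \tfrac14\, d(x,y)^2 ,
\end{equation*}
et en reportant les inégalités précédentes on obtient
\begin{equation*}
d(z,m)^2 \le \tfrac12\big(d(z,x)^2 + d(z,y)^2\big) - \tfrac14\, d(x,y)^2 .
\end{equation*}
C'est l'inégalité CN, d'où $(X,d)$ est $\cat$(0).

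Pour la complétude dans le cas simplicial : la réalisation géométrique $|\Delta|$ est un complexe polyédral euclidien dont toutes les cellules de dimension maximale sont isométriques à l'unique alcôve du complexe de Coxeter affine sous-jacent ; il n'y a donc, faces comprises, qu'un nombre fini de types isométriques de cellules. Le critère de Bridson sur les $M_\kappa$-complexes à nombre fini de formes (\cite[Chapitre I.7]{bridson_haefliger99}) entraîne alors qu'un tel complexe connexe est un espace géodésique complet, ce qui conclut.

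La principale difficulté est le contrôle de la rétraction de l'axiome (A5) : l'énoncé ne garantit que la non-croissance des distances et $r^{-1}(\{m\})=\{m\}$, tandis que l'argument a besoin de l'égalité exacte $d(m,r(z))=d(m,z)$. Il faudra donc, soit renforcer (A5) en tirant cette propriété \emph{géodésique au centre} de la construction explicite des rétractions dans \cite{parreau00}, soit, pour $x,y,z$ fixés, exhiber directement un pliage de $X$ sur $A$ fixant $m$ et isométrique en restriction à $[m,z]$. Ce point acquis, l'inégalité CN n'est qu'un calcul euclidien, et la partie complétude est standard.
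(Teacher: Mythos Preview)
The paper does not prove this proposition; it is simply quoted from \cite{parreau00}. Your outline is essentially the standard argument, and is in substance Parreau's. You have correctly isolated the only nontrivial point: the equality $d(m,r(z))=d(m,z)$ is not a formal consequence of (A5) as stated, but comes from the \emph{construction} of the retraction via (A2) and (A3). Given $z$, axiom (A3) provides an apartment $A'$ containing both $m$ and $z$; axiom (A2) then yields an element $w\in W$ identifying the two markings on the overlap, hence an isometry $A'\to A$ fixing $A\cap A'\ni m$ pointwise, and one defines $r$ on $A'$ to be this isometry. Well-definedness (independence of the choice of $A'$) is again (A2), and the preservation of distances to $m$ is then immediate since $r|_{A'}$ is an isometry fixing $m$. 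With this in hand, the median identity in the Euclidean apartment $A$ and the Bridson finite-shapes criterion for completeness go through exactly as you describe.
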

	
	\begin{Def}
		Un automorphisme d'un immeuble affine $(X, \mathcal{A})$ est une bijection $ g : X \rightarrow X$ tel que le système des appartements marqués $\mathcal{A}$ est invariant par $g$, c'est-à-dire que pour tout $f \in \mathcal{A}$, $g \circ f $ et $g^{-1} \circ f$ sont dans $\mathcal{A}$. 
	\end{Def}
	
	Il est clair qu'un automorphisme d'un immeuble affine est une isométrie de cet immeuble.

	\subsection{Bords d'un immeuble affine}\label{section bdinf}
	
	Dans cette section, on discute de différentes bordifications possibles d'un immeuble affine. La diversité de ces constructions provient de ce qu'on peut les voir comme des espaces combinatoires ou comme des espaces métriques.
	
	\subsubsection{Immeuble sphérique à l'infini}

	Soit $(X, \mathcal{A})$ un immeuble affine épais modelé sur $(W, V)$, muni de sa métrique canonique $\cat$(0), et on suppose que $\mathcal{A}$ est un système d'appartements maximal. En tant que tel, on peut considérer la bordification conique $\overline{X}= X \cup \bd X$. L'immeuble sphérique à l'infini $\Delta$ est un immeuble construit à partir des rayons, dont $\bd X$ est une réalisation géométrique. Détaillons sa construction. 
	
	Tout appartement $A \in \mathcal{A}$ est isométrique à $V$. On rappelle qu'une facette vectorielle dans $V$ est une facette du cône polyédral formé par une chambre de Weyl vectorielle dans $V$. Les faces de codimension 1 des chambres de Weyl vectorielles seront appelées \textit{cloisons vectorielles}. L'ensemble des facettes vectorielles forme un complexe simplicial (sphérique) sur $V$. Les chambres de Weyl (resp. facettes, resp. cloisons) affines sont les translatés affines des chambres vectorielles, donc de la forme $v + \bold{F}$, où $v\in V $ et $\bold{F}$ est une chambre de Weyl (resp. facette, cloison) vectorielle. 
	
	\begin{Def}
		Soit $(X, \mathcal{A})$ un appartement modelé sur $(W,V)$. On appellera \textit{chambre de Weyl}, ou \textit{secteur} (resp. \textit{facette}, \textit{cloison}) l'image par un appartement marqué d'une chambre de Weyl (resp. facette, cloison) affine de $V$. Un point $x \in X$ est un sommet \textit{spécial} s'il existe $w \in W$, $f\in \mathcal{A}$ tel que $x = f(w \cdot0_V)$. 
	\end{Def}
	
	Une facette (vectorielle) $F$ de $X$ est donc de la forme $f(v + \bold{F})$, où $v\in V $, $f \in \mathcal{A}$ et $\bold{F}$ est une facette vectorielle de $V$. Le point $a = f(v)$ est appelé le \textit{sommet} de la facette. On dit qu'une facette $F' = f'(v' + \bold{F})$ est \textit{dominée} par une facette $F= f(v + \bold{F})$ si $F$ et $F'  $ ont même sommet $f(v) = f'(v')$ et $F'$ est incluse dans la facette fermée $\overline{F} = f(v + \overline{\bold{F}})$. La relation de domination fait de l'ensemble des facettes issues d'un point $x$ un complexe simplicial. 
	\newline
	
	Discutons maintenant de la notion de type dans l'immeuble $(X, \mathcal{A})$ modelé sur $(W, V)$. Soit $\bold{S}_0$ une chambre de Weyl vectorielle (ouverte) fondamentale dans $V$. Pour toute facette affine $F$ de $X$, il existe un appartement marqué $f$, $v \in V$ et $\bold{F}$ une facette vectorielle de $V$ tels que $F = f(v + \bold{F})$. Puisque $\bold{S}_0$ est un domaine fondamental strict pour l'action de $\overline{W} $ sur $V$, quitte à composer par un élément de $\overline{W}$, on peut supposer que $\bold{F}$ est une facette de la chambre vectorielle fermée $\overline{\bold{S}_0}$. On appelle alors $\bold{F} \subseteq \overline{\bold{S}_0}$ le \textit{type} $\theta(F)$ de $F$. Le type ne dépend pas de l'appartement marqué $f$ choisi. Il est clair que les automorphismes de $(X, \mathcal{A})$ préservent le type, et que le complexe simplicial défini par les facettes affines issues d'un point et la relation de domination (voir paragraphe précédent) est alors étiqueté par le type des facettes. 
	
	De la même manière, soit $[x,y]$ un segment géodésique dans un appartement $f$. Alors le \textit{type} de $[x,y] $ est l'unique vecteur $\theta[x,y]$ de la chambre de Weyl fermée fondamentale $\overline{\bold{S}_0}$ de $V$ telle que $f(v) = x$ et $f( v + \theta[x,y]) = y$ (quitte à composer par $\overline{W})$.
	\newline
	
	Soit $F$ une facette de $X$ issue de $x$, la \textit{facette de $F$ à l'infini} est l'ensemble $F_\infty$ des points  $\xi \in \bd X$ du bord visuel tels que $F$ contient le rayon géodésique $[x, \xi)$. Deux facettes sont dites \textit{asymptotes} si elles ont la même facette à l'infini. De manière équivalente, deux facettes sont asymptotes si elles sont à distance de Hausdorff finie. Dorénavant, une facette à l'infini $F_\infty$ est donc une classe d'équivalence de facettes de $X$ pour la relation \og être asymptote \fg{}. 
	
	On peut faire la construction réciproque de la manière suivante : soit $F_\infty$ une facette à l'infini, et $x \in X$, on note 
	\begin{eqnarray}
		Q(x,F_\infty) := \bigcup \{ [x, \xi), \, \xi \in F_\infty \}\nonumber. 
	\end{eqnarray}
	Alors $Q(x, F_\infty)$ est une facette affine de $X$, de sommet $x$ et de facette à l'infini $F_\infty$. 
	
	On peut montrer que deux chambres de Weyl $S,S'$ sont asymptotes si et seulement $S \cap S'$ contient un sous-secteur, c'est-à-dire une chambre de Weyl contenue dans $S$ et $S'$, \cite[Corollaire 1.6]{parreau00}.
	
	\begin{Def}
		Un \textit{simplexe idéal} est une facette à l'infini $F_\infty \subseteq \bd X$. 
	\end{Def}
	
	Justifions maintenant cette terminologie. Pour $x \in X$, il y a une bijection entre les simplexes idéaux de $\bd X$ et les facettes affines de sommet $x$, \cite[Lemma 11.75]{abramenko_brown08}. On peut donc transférer la relation de domination sur les simplexes idéaux pour en faire un poset. De manière équivalente, la relation de domination sur les facettes à l'infini peut être donnée directement de la manière suivante. Soit $F_\infty$ et $F'_\infty$ deux facettes à l'infini. On dit que $F_\infty$ domine $F'_\infty$ si pour toute paire de facettes affines $F \in F_\infty$, $F' \in F'_\infty$ dans les classes d'équivalence de $F_\infty$ et $F'_\infty$ respectivement, on a 
	\begin{eqnarray}
		\sup_{x \in F'} d(x, F) < \infty \nonumber.
	\end{eqnarray}
	
	\begin{Def}
		\textit{L'immeuble sphérique à l'infini} $\Delta^\infty$ associé à l'immeuble affine $(X,d)$ est le complexe simplicial formé par les simplexes idéaux de $X $, pour la relation d'ordre partiel donnée par la relation de domination précédente. 
	\end{Def}
	
	Le type d'une facette idéale est le type d'une facette affine dans sa classe d'équivalence, ce qui donne un étiquetage bien défini sur $\bdinf$. Le théorème suivant justifie la terminologie d'immeuble sphérique, voir \cite[Theorem 11.79]{abramenko_brown08}, \cite[Corollaire 2.19]{parreau00}. 
	
	\begin{thm}
		Le poset $\Delta^\infty$ est un immeuble sphérique à l'infini. Ses appartements sont les bords des appartements de $X$. Il est étiqueté par le type des facettes. Le bord $\bd X$ muni de la métrique de Tits est une réalisation géométrique de $\Delta^\infty$. 
	\end{thm}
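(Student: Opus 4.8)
Le plan est de vérifier successivement les quatre assertions, en ramenant tout au modèle euclidien à l'intérieur d'un seul appartement, puis en recollant les appartements entre eux. Je commencerais par fixer un appartement $A$ et un appartement marqué $f : V \rightarrow X$ d'image $A$. Comme $f$ est une isométrie sur son image, $\bd A$ muni de la métrique de Tits (angulaire) s'identifie canoniquement à la sphère unité $S(V)$ de l'espace euclidien $V$, et les facettes idéales contenues dans $\bd A$ --- c'est-à-dire les facettes à l'infini des facettes affines de $A$ --- sont exactement les images des facettes vectorielles de $(\overline{W}, V)$. Ainsi $\bd A$, avec son étiquetage par le type $\theta$, est canoniquement le complexe de Coxeter sphérique $\Sigma(\overline{W}, S_0)$, où $S_0$ est l'ensemble des réflexions par rapport aux murs d'une chambre de Weyl vectorielle fermée fondamentale $\overline{\mathbf{S}_0}$ ; en particulier c'est un complexe de chambres fin dont les simplexes maximaux (les chambres idéales) ont tous la même dimension, et $\theta$ se restreint en une injection sur les faces de n'importe quelle chambre idéale, donc définit bien un étiquetage (tout ceci se déduit du fait correspondant pour le complexe de Coxeter vectoriel). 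Il reste alors à montrer que $\Delta^\infty$ est un complexe de chambres --- ce qui résultera du premier axiome d'immeuble ci-dessous --- et à établir les deux axiomes d'immeuble.

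Ensuite je vérifierais ces deux axiomes pour $\Delta^\infty$ avec le système d'appartements $\{\,\bd A : A \in \mathcal{A}\,\}$. Pour le premier axiome : étant donné deux simplexes idéaux $\mathbf{F}_\infty, \mathbf{F}'_\infty$, on choisit $x \in X$, on forme les facettes affines $Q(x, \mathbf{F}_\infty)$ et $Q(x, \mathbf{F}'_\infty)$, on les prolonge en des secteurs $S, S'$ de sommet $x$ (toute facette affine de sommet $x$ est une face d'un secteur de sommet $x$), et l'axiome (A4) fournit des sous-secteurs $S_1 \subseteq S$, $S_2 \subseteq S'$ contenus dans un même appartement $A''$ ; comme un sous-secteur de $S$ est asymptote à $S$, il a la même chambre à l'infini, donc $\bd A''$ contient la chambre idéale $\partial_\infty S$, et a fortiori la face $\mathbf{F}_\infty$, et de même $\mathbf{F}'_\infty$. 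Pour le second axiome : si $\bd A$ et $\bd A'$ contiennent tous deux une chambre idéale $\mathbf{C}_\infty$, le même argument appliqué à deux secteurs dans la classe de $\mathbf{C}_\infty$ produit un appartement $A''$ tel que $A \cap A''$ et $A' \cap A''$ contiennent chacun un secteur ; l'axiome (A2) dit alors que l'application de transition coïncide, sur l'ensemble convexe $Y$ (qui contient un secteur), avec un certain $w \in W$, dont la partie linéaire induit une isométrie simpliciale $\bd A \rightarrow \bd A''$ qui est l'identité sur $\mathbf{C}_\infty$ et qui fixe $\bd A \cap \bd A''$ ; en composant avec l'application analogue pour $A'$ on obtient l'isomorphisme voulu $\bd A \rightarrow \bd A'$ fixant $\bd A \cap \bd A'$, en particulier tout simplexe idéal et toute chambre idéale communs. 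L'épaisseur de $\Delta^\infty$ se déduit enfin de celle de $X$ : une cloison idéale $\mathbf{P}_\infty$ est la facette à l'infini d'une cloison affine, donc liée à un mur affine $M$ d'un appartement $A$ ; par épaisseur de $X$ il existe un demi-appartement $\alpha'$ avec $\alpha' \cap A = M$, donc $\alpha' \cup \beta$ (où $\beta$ est un demi-appartement de $A$ bordé par $M$) est un troisième appartement, et les trois "demi-sphères" $\bd \alpha$, $\bd \beta$, $\bd \alpha'$ fournissent trois chambres idéales distinctes de cloison commune $\mathbf{P}_\infty$.

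Pour l'énoncé métrique, je montrerais que la métrique de longueur naturelle sur la réalisation géométrique $|\Delta^\infty|$ --- dans laquelle chaque chambre idéale est le simplexe sphérique découpé dans $S(V)$ par $\overline{\mathbf{S}_0}$ --- coïncide avec $d_T$ sur $\bd X$. Dans chaque appartement $\bd A$, les deux métriques sont la métrique ronde de la sphère $S(V)$ d'après la première étape, et par le premier axiome d'immeuble deux points idéaux quelconques appartiennent à un même appartement, donc les deux métriques coïncident appartement par appartement ; comme $|\Delta^\infty|$ est un immeuble sphérique, donc $\cat$(1), et que $\bd X$ muni de $d_T$ est $\cat$(1) et complet d'après le Théorème \ref{tits}, et que les appartements y sont convexes, l'application identité entre les deux est une isométrie. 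L'obstacle principal que j'anticipe est précisément cette dernière identification : il faut s'assurer que les $d_T$-géodésiques restent dans les appartements et que les isométries appartement par appartement se recollent sans distorsion --- concrètement, qu'une $d_T$-géodésique entre points idéaux à distance $<\pi$ est unique et contenue dans un même $\bd A$ (ce qu'on tire du Théorème \ref{tits} joint au premier axiome d'immeuble), et traiter le cas des paires à distance $\geq \pi$ via des chaînes d'appartements et la convexité des appartements dans un espace $\cat$(1). Une fois ceci établi, les quatre assertions du théorème sont démontrées.
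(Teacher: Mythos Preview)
The paper does not give its own proof of this statement; it is stated with references to \cite[Theorem 11.79]{abramenko_brown08} and \cite[Corollaire 2.19]{parreau00} and left without argument. Your sketch is essentially the standard proof found in those sources: identify each $\bd A$ with the spherical Coxeter complex of $(\overline{W},V)$, obtain the first building axiom from (A4), the second from (A2), and the metric statement by matching the Tits metric with the round metric apartment by apartment and using the $\cat(1)$ structure.

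Two small remarks. First, in your verification of the second axiom the passage through an auxiliary apartment $A''$ is unnecessary: since two sectors representing the same ideal chamber share a sub-sector (this is stated in the paper just before the definition of simplexe idéal), $A\cap A'$ already contains a sector, and (A2) applied directly to $A,A'$ yields a $w\in W$ whose linear part gives the required isomorphism $\bd A\to\bd A'$; the fact that it fixes the given simplex $x$ follows because asymptotic facets likewise share a sub-facet, so $x$ is represented in $\bd(A\cap A')$. Second, you are right that the metric identification is the delicate step: one must know that the angular metric restricted to a single $\bd A$ really is the Euclidean angle (this uses the $\cat(0)$ comparison angles and the fact that $f$ is an isometric embedding), and that apartments at infinity are convex for $d_T$. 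Once those are in place your recollement argument goes through.
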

	
	L'ensemble des chambres à l'infini $\ch (\bdinf)$ peut être muni d'une topologie compacte et totalement discontinue. Pour tout sommet $x \in X$, et toute chambre à l'infini $C_\infty \in \ch(\bdinf)$, il existe une chambre de Weyl $S$ issue de $x$ qui représente $C_\infty$ \cite[Corollaire 1.10]{parreau00}. Pour $x,y \in X$, une topologie de $\ch(\bdinf)$ est donnée par les ouverts suivants :  
	
	\begin{eqnarray}
		\Omega_x(y) := \{ C_\infty \in \ch(\bdinf) \, | \, y \in Q (x, C_\infty) \}. 
	\end{eqnarray}
	
	Avec cette topologie, tout automorphisme $g : X \rightarrow X$ induit un automorphisme continu de l'immeuble sphérique à l'infini $\Delta^\infty$, \cite[Proposition 2.6]{parreau00}. 
	\newline 
	
	On termine cette section par une propriété importante vérifiée par les chambres de Weyl. Puisque l'immeuble à l'infini est sphérique, il est de diamètre fini pour la distance de galerie associée et deux chambres à l'infini sont opposées si elles sont à distance maximale l'une de l'autre. Deux chambres de Weyl affines $S$ et $S'$ de $X$ sont dites opposées si elles ont même sommet $x \in X$ et qu'elles représentent deux chambres opposées à l'infini. De manière équivalente, elles sont opposées s'il existe $y ,z$ respectivement dans l'intérieur de $S, S'$ tels que $x \in [y,z]$. 
	
	\begin{prop}
		Soit $X$ un immeuble affine et $\Delta^\infty$ son immeuble sphérique à l'infini. Soit $S_\infty$ et $ S'_\infty$ deux chambres idéales de $\Delta^\infty$ qui sont opposées. Alors il existe un unique appartement contenant deux chambres de Weyl $S $ et $S'$ qui sont opposées et dans les classes d'équivalence de $S_\infty$ et $ S'_\infty$ respectivement. 
	\end{prop}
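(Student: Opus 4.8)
L'idée est de se ramener à la géométrie euclidienne d'un appartement fixé et à la structure de l'immeuble sphérique $\bdinf$. Pour l'\emph{existence}, on partirait de deux secteurs quelconques $S_1$ dans la classe de $S_\infty$ et $S_2$ dans la classe de $S'_\infty$ ; l'axiome (A4) fournit des sous-secteurs $S_1' \subseteq S_1$ et $S_2' \subseteq S_2$ et un appartement $A$ contenant $S_1' \cup S_2'$. Comme les appartements de $\bdinf$ sont exactement les bords des appartements de $X$, l'ensemble $\partial A$ est un appartement de $\bdinf$, et il contient les chambres idéales $S_\infty$ et $S'_\infty$ (chambres idéales de $S_1'$ et $S_2'$). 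On choisirait alors un point $x \in A$ quelconque et on poserait $S := Q(x, S_\infty)$ et $S' := Q(x, S'_\infty)$ : comme $x \in A$ et $S_\infty, S'_\infty \in \partial A$, ces secteurs sont contenus dans $A$ ; ils ont le même sommet $x$ et représentent les chambres idéales opposées $S_\infty$ et $S'_\infty$, donc sont opposés au sens voulu ; enfin $S$ (resp. $S'$), ayant pour chambre idéale $S_\infty$ (resp. $S'_\infty$), est asymptote à $S_1'$ (resp. $S_2'$), donc appartient à la classe de $S_\infty$ (resp. de $S'_\infty$).

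Pour l'\emph{unicité}, on se donnerait deux appartements $A$ et $B$ contenant chacun une paire de secteurs opposés dans les classes $(S_\infty, S'_\infty)$. Chacun contient en particulier un secteur de classe $S_\infty$, et ces deux secteurs étant asymptotes, leur intersection contient un sous-secteur commun $S_0$, de chambre idéale $S_\infty$ et de sommet $p$ (\cite[Corollaire 1.6]{parreau00}) ; on a alors $p \in A \cap B$, et $S_0 = Q(p, S_\infty)$ puisqu'un secteur est déterminé par son sommet et sa chambre idéale. Comme $S'_\infty$ est la chambre idéale d'un secteur de $A$ et aussi d'un secteur de $B$, la chambre $S'_\infty$ appartient à $\partial A$ et à $\partial B$ ; $p$ étant dans $A$ et dans $B$, le secteur $Q(p, S'_\infty)$ est donc lui aussi contenu dans $A \cap B$. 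Dans l'appartement $A$, les chambres idéales $S_\infty$ et $S'_\infty$ sont opposées dans $\bdinf$, donc antipodales dans la sphère $\partial A$ : en identifiant $A$ à $V$, les secteurs $S_0$ et $Q(p, S'_\infty)$ correspondent respectivement à $p + \overline{\mathbf{C}}$ et $p - \overline{\mathbf{C}}$ pour une chambre de Weyl vectorielle fermée $\overline{\mathbf{C}}$ de $V$, et l'enveloppe convexe de leur réunion est $A$ tout entier puisque $\overline{\mathbf{C}} - \overline{\mathbf{C}} = V$ dès que $\overline{\mathbf{C}}$ est d'intérieur non vide. Les appartements étant des parties convexes fermées de $X$ (\cite{parreau00}), l'enveloppe convexe d'une partie contenue dans un appartement est la même qu'on la calcule dans $X$, dans $A$ ou dans $B$ ; on conclurait alors
\[
A \;=\; \conv\!\big(S_0 \cup Q(p, S'_\infty)\big) \;=\; B .
\]

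Le seul point vraiment délicat est l'argument d'unicité. Les faits \og deux secteurs asymptotes partagent un sous-secteur\fg{} et \og les appartements sont des convexes fermés\fg{} sont standards (\cite{parreau00}), et l'identité $\conv(\overline{\mathbf{C}} \cup (-\overline{\mathbf{C}})) = V$ est élémentaire ; la vigilance porte surtout sur l'identification propre de $A$ avec $V$ --- en particulier sur le fait que deux chambres idéales opposées de $\bdinf$ situées dans un même appartement $\partial A$ y sont antipodales, ce qui provient de ce que la distance de galerie dans un appartement de $\bdinf$ coïncide avec celle de $\bdinf$ --- ainsi que sur la justification de l'indépendance de l'enveloppe convexe vis-à-vis de l'appartement considéré.
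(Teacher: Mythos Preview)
The paper states this proposition without proof --- it is quoted as a standard structural fact about affine buildings, in the spirit of the results imported from \cite{parreau00} and \cite{abramenko_brown08}. There is therefore no proof in the paper to compare yours against.

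That said, your argument is correct and follows the classical route. A couple of remarks on the one point you flag as delicate. In the uniqueness step, the claim that two chambers of $\bdinf$ which are opposite and lie in a common apartment $\partial A$ are \emph{antipodal} in $\partial A$ is indeed the crux: it holds because, identifying $\partial A$ with the spherical Coxeter complex of $\overline{W}$, the longest element $w_0$ always satisfies $w_0\mathbf{C}=-\mathbf{C}$ (since $w_0$ sends every positive root to a negative one), regardless of whether $-\mathrm{id}\in\overline{W}$. Your justification via equality of gallery distances in the apartment and in the building is exactly the right ingredient to pass from ``opposite in $\bdinf$'' to ``opposite in $\partial A$''.

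For the convex-hull step, your reasoning is sound: apartments are closed and convex in $X$, so for $Y\subseteq A$ the convex hull of $Y$ in $X$ is contained in $A$ and coincides with the Euclidean convex hull computed in $A\simeq V$; the identity $\conv\big((p+\overline{\mathbf C})\cup(p-\overline{\mathbf C})\big)=V$ then forces $A=B$. One small point worth making explicit: the equality $S_0=Q(p,S_\infty)$ uses that a sector is determined by its base vertex and its chamber at infinity, which follows from the very definition $Q(p,S_\infty)=\bigcup_{\xi\in S_\infty}[p,\xi)$ together with the fact that any sector based at $p$ with ideal chamber $S_\infty$ must contain all these rays.
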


	\subsubsection{Immeubles affines à l'infini}\label{section panel trees}
	
	Soit $X$ un immeuble affine modelé sur $(W,V)$ muni d'un système d'appartements (maximal) $\mathcal{A}$, et $\bdinf$ son immeuble sphérique à l'infini. Dans ce paragraphe, pour simplifier la présentation, on suppose que $X$ est la réalisation géométrique d'un immeuble affine discret $\Delta$, de sorte qu'il est possible de parler de faces dans $\Delta$, vu comme complexe de chambres. Tout ce qui suit peut être généralisé au cas d'immeubles non discrets, en substituant les faces par des filtres de sous-ensembles de $X$, voir \cite{rousseau11}. Les références pour cette partie sont \cite{charignon09}, \cite{remy_trojan21} et dans le cas plus général des immeubles denses \cite{ciobotaru_muhlerr_rousseau_20}, \cite{rousseau11}.
	
	Soit $ F = x + F_\infty$ un facette affine issue d'un sommet $x$ de $\Delta$, de direction $F_\infty$. Une sous-facette $ F'$ (c'est-à-dire une facette affine contenue dans $ F$) est dite \textit{pleine} dans $ F $ si $ F$ et $ F'$ ont la même facette à l'infini. Suivant \cite[Section 4.2]{remy_trojan21}, on peut faire la même construction en considérant non plus seulement des facettes issues de points, mais de n'importe quelle face de $\Delta$. Pour une partie $Z \subseteq A$ incluse dans un appartement, la \textit{clôture convexe} $\conv_A(Z)$ de $Z$ est l'intersection de tous les demi-appartements (associés aux réflexions affines de $(W,V)$) de $A$ qui contiennent $Z$. En fait, la clôture convexe de l'ensemble $Z$ ne dépend pas de l'appartement dans lequel il est inclus, on peut donc la noter $\conv(Z)$. 
	
	\begin{Def}
		Une \textit{cheminée} dans un appartement $A \in \mathcal{A}$ est un ensemble de la forme $R= R(C, F_\infty) = \conv(C + F_\infty)$, où $C$ est une face de $A$, $F_\infty$ est une facette de l'immeuble à l'infini (ou la direction d'une facette affine). Le simplexe idéal  $F_\infty$ est la \textit{direction} de la cheminée $R$. Une sous-cheminée $R' \subseteq R$ est \textit{pleine} dans $R'$ si $ R$ et $R'$ ont la même direction et que les cheminées engendrent les mêmes espaces affines dans tout appartement qui les contient. 
	\end{Def}
	
	Bien que cela ne soit pas immédiat dans la définition, la direction d'une cheminée $R$ est bien définie, \cite[Remarque 1.12]{rousseau11}. Enfin, on appellera \textit{germe à l'infini} $\germ_\infty(R)$ d'une cheminée $R$ 
	\begin{eqnarray}
		\germ_\infty(R) := \{ R' \, | \, R \cap R' \text{ contient une cheminée qui est pleine dans $R$ et $R'$} \}. \nonumber
	\end{eqnarray}
	
	Soit une facette $F_\infty\subseteq \bd X$ de l'immeuble sphérique à l'infini $\bdinf$. On appelle $X(F_\infty)$ l'ensemble 
	\begin{eqnarray}
		X(F_\infty) = \{\germ_\infty(F') \, | \, F'\text{ est une facette affine telle que }\bd F' \subseteq F\}. \nonumber
	\end{eqnarray}
	
	Pour toute facette $F_\infty$ à l'infini, $X(F_\infty)$ admet une structure d'immeuble affine \cite[Proposition 8.1.5]{charignon09}, et on dispose d'un morphisme d'immeubles 
	\begin{eqnarray}
		\pi_{F_\infty} : X &\rightarrow &X(F_\infty) \nonumber \\
		x & \mapsto & \germ_\infty(x + F_\infty). \nonumber
	\end{eqnarray}
	On appelle $X(F_\infty)$ \textit{l'immeuble affine à l'infini} de $X$ associé à $F_\infty$. 
	\newline 
	
	Lorsque $F_\infty$ est une cloison (une facette de codimension 1) de l'immeuble sphérique à l'infini $\bdinf$ de $\Delta$, on notera plus souvent $X(F_\infty)= T_{F_\infty}$. L'espace $T_{F_\infty}$ dispose d'une structure d'arbre : c'est \textit{l'arbre-cloison} associé au sommet $F_\infty$. 
	\newline 
	
	Dans le cas particulier où l'immeuble est de dimension 2, les cloisons affines sont des rayons géodésiques. Soit $u$ une cloison de l'immeuble sphérique à l'infini $\bdinf$, et $\gamma $ un rayon géodésique issu d'un sommet $x$ dans la classe d'équivalence de $u$. Alors $\gamma$ est une cheminée de la forme $R(x, u)$. Le germe de $\gamma$ est un élément $c \in T_u$ de l'arbre-cloison sur $u$. Par définition, un autre rayon géodésique $\gamma'$ appartient à $\germ_\infty(\gamma)$ si $\gamma \cap \gamma'$ contient un sous-rayon géodésique. On dit alors que $\gamma$ et $\gamma'$ sont \textit{fortement asymptotiques}. Une distance entre deux germes à l'infini dans $T_u$ est donnée par 
	
	\begin{eqnarray}
		d_{T_u}(\germ_\infty(\gamma), \germ_\infty(\gamma')) := \underset{s}{\inf} \underset{t \rightarrow \infty}{\lim} d(\gamma(t+s), \gamma'(t)). \nonumber
	\end{eqnarray}
	
	Enfin, soit $F_\infty \subseteq \Delta^\infty$ une facette à l'infini, et soit $X(F_\infty)$ l'immeuble affine à l'infini précédemment construit. Il existe une application canonique entre le bord de $X(F_\infty)$ et le résidu de $F_\infty$ dans $\bdinf$. En effet, soit $S,S'$ deux chambres de Weyl tel que $\bd S$ et $\bd S'$ contiennent $F_\infty$ comme facette, et supposons que $S$ et $S'$ soient équivalentes. Alors $\pi_{F_{\infty}}(S) $ et $\pi_{F_\infty}(S')$ contiennent un sous-secteur commun, et sont donc équivalentes dans l'immeuble affine $X(F_\infty)$. En fait, si on munit l'immeuble à l'infini $(X(F_\infty))^\infty$ de $X(F_\infty)$ de la topologie classique des immeubles sphériques à l'infini (voir Section \ref{section bdinf}), cette application est un homéomorphisme.
	
	\begin{prop}[{\cite[Lemma 4.2]{remy_trojan21}}]\label{prop homeo panel tree ch}
		Soit $X$ un immeuble affine discret. Soit $F_\infty \subseteq \Delta^\infty$ une facette à l'infini, et $X(F_\infty)$ l'immeuble affine à l'infini associé à $F_\infty$. Alors la bijection décrite ci-dessus est un homéomorphisme équivariant entre le résidu de $F_\infty$ dans $\bdinf$ et l'immeuble sphérique à l'infini $(X(F_\infty))^\infty$ de $X(F_\infty)$, qui à une chambre à l'infini (vue comme classe d'équivalence de secteurs de $X$) associe une classe d'équivalence de secteurs dans $(X(F_\infty))^\infty$. 
		\begin{eqnarray}
			\phi_F : \res(F_\infty) & \rightarrow & X(F_\infty)^\infty \nonumber \\
			\bigr[S\bigr] & \mapsto & \bigr[\pi_{F_\infty} (S)\bigr] \nonumber
		\end{eqnarray}
	\end{prop}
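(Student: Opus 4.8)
\emph{Plan d'ensemble.} On montre successivement que $\phi_F$ est bien définie et à valeurs dans les chambres de $X(F_\infty)^\infty$, que c'est une bijection préservant la structure d'immeuble, qu'elle est équivariante, puis que c'est un homéomorphisme. Pour la bonne définition, soient $S,S'$ deux secteurs asymptotes de $X$ dont les chambres à l'infini $\bd S$, $\bd S'$ admettent $F_\infty$ pour facette ; par \cite[Corollaire 1.6]{parreau00}, $S\cap S'$ contient un sous-secteur $S''$, et $\pi_{F_\infty}$ étant un morphisme d'immeubles, $\pi_{F_\infty}(S'')$ est un sous-secteur commun à $\pi_{F_\infty}(S)$ et $\pi_{F_\infty}(S')$, qui sont donc asymptotes dans $X(F_\infty)$ : ainsi $\phi_F$ passe aux classes. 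Pour voir que $\pi_{F_\infty}(S)$ est effectivement un secteur de $X(F_\infty)$, donc représente une chambre de $X(F_\infty)^\infty$, on se place dans un appartement $A$ de $X$ contenant $S$ et tel que $F_\infty\subseteq\bd A$ (grâce à (A3)), où $\pi_{F_\infty}$ se restreint en une application affine surjective de $A$ sur un appartement de $X(F_\infty)$ (essentiellement le quotient par l'espace vectoriel engendré par la direction de $F_\infty$), laquelle envoie tout secteur dont la direction admet $F_\infty$ pour face sur un secteur de l'image.

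\emph{Bijectivité et structure.} Pour l'injectivité, supposons $\phi_F([S])=\phi_F([S'])$ ; on choisit (via (A4) appliqué à des cheminées de direction $F_\infty$) un appartement $A$ de $X$ avec $F_\infty\subseteq\bd A$ contenant des sous-secteurs de $S$ et $S'$, ce qui ramène l'énoncé au fait combinatoire que l'application affine $A\to\pi_{F_\infty}(A)$ induit une bijection entre les chambres à l'infini de $A$ appartenant à $\res(F_\infty)$ et les chambres à l'infini de $\pi_{F_\infty}(A)$ — c'est-à-dire l'identification des chambres de Weyl vectorielles de $A$ ayant $\theta(F_\infty)$ pour face avec celles du quotient. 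On en tire $\bd S=\bd S'$, donc $[S]=[S']$ dans $\res(F_\infty)$. Pour la surjectivité, on part d'un secteur $\Sigma$ de $X(F_\infty)$ issu d'un germe $\germ_\infty(F')$ ; comme tout appartement de $X(F_\infty)$ provient d'un appartement de $X$ contenant $F_\infty$ dans son bord et que $\pi_{F_\infty}$ est surjective, on relève $\Sigma$ (galerie par galerie dans un appartement adéquat) en un secteur $S$ de $X$ tel que $F_\infty\subseteq\bd S$ et $\pi_{F_\infty}(S)$ asymptote à $\Sigma$, d'où $\phi_F([S])=[\Sigma]$. Le même travail local montre que $\phi_F$ préserve les types et les adjacences, et se prolonge donc en un isomorphisme d'immeubles sphériques entre $\res(F_\infty)$ et $X(F_\infty)^\infty$.

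\emph{Équivariance et homéomorphie.} L'équivariance est formelle : tout automorphisme $g$ de $X$ fixant $F_\infty$ permute les cheminées de direction $F_\infty$ et leurs germes, d'où un automorphisme $\bar g$ de $X(F_\infty)$ vérifiant $\pi_{F_\infty}\circ g=\bar g\circ\pi_{F_\infty}$, et donc $\phi_F(g\cdot C_\infty)=\bar g\cdot\phi_F(C_\infty)$. Pour l'homéomorphie, on utilise que les topologies sur $\ch(\bdinf)$, sur son sous-espace $\res(F_\infty)$, et sur $\ch(X(F_\infty)^\infty)$ sont engendrées par les ouverts du type $\Omega_x(y)=\{C : y\in Q(x,C)\}$. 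Pour $x,y\in X$, on vérifie que pour $C_\infty\in\res(F_\infty)$ la cheminée $Q(x,C_\infty)$ se projette exactement sur $Q(\pi_{F_\infty}(x),\phi_F(C_\infty))$, de sorte que $y\in Q(x,C_\infty)$ entraîne $\pi_{F_\infty}(y)\in Q(\pi_{F_\infty}(x),\phi_F(C_\infty))$ ; cela donne la continuité de $\phi_F$. En exprimant ensuite une base d'ouverts de $\ch(X(F_\infty)^\infty)$ sous la forme $\Omega_{\pi_{F_\infty}(x)}(\pi_{F_\infty}(y))$ (licite par surjectivité de $\pi_{F_\infty}$) et en établissant la réciproque de l'équivalence ci-dessus modulo choix de relevés, on obtient la continuité de $\phi_F^{-1}$.

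\emph{Principale difficulté.} L'étape la plus délicate est la dernière : les ouverts de base de $X(F_\infty)^\infty$ sont indexés par des points de $X(F_\infty)$, qui sont des \emph{germes de cheminées} et non des points de $X$, et il faut contrôler finement comment les préimages par $\pi_{F_\infty}$ des cheminées de $X(F_\infty)$ se répartissent dans $X$ — autrement dit montrer que $\pi_{F_\infty}$ est assez \og ouverte \fg{} pour que l'équivalence $y\in Q(x,C_\infty)\Leftrightarrow\pi_{F_\infty}(y)\in Q(\pi_{F_\infty}(x),\phi_F(C_\infty))$ vaille dans les deux sens après choix de relevés convenables. La surjectivité demande aussi un relèvement soigné des secteurs avec contrôle de la direction, appuyé sur les appartements compatibles fournis par (A3)--(A4) et sur le relèvement des galeries minimales.
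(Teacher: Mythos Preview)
The paper does not prove this proposition: it is stated with the attribution \cite[Lemma 4.2]{remy_trojan21} and no argument is given in the text. There is therefore no ``paper's proof'' to compare your proposal against; the result is simply quoted from Rémy--Trojan.

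Your outline is a reasonable sketch of how such a statement is established, and the well-definedness, equivariance, and bijectivity arguments are essentially correct (the reduction to a single apartment for injectivity, and the lifting of sectors for surjectivity, are the natural moves). However, the homeomorphism step is not actually carried out. You write that one obtains the continuity of $\phi_F^{-1}$ ``en établissant la réciproque de l'équivalence ci-dessus modulo choix de relevés'', but this is precisely the delicate point you yourself flag in the \emph{Principale difficulté} paragraph, and you do not resolve it: you would need to show concretely that for every basic open $\Omega_{\pi_{F_\infty}(x)}(\pi_{F_\infty}(y))$ in $\ch(X(F_\infty)^\infty)$, its preimage under $\phi_F$ contains a basic open of $\res(F_\infty)$ around each of its points, and the equivalence $y\in Q(x,C_\infty)\Leftrightarrow \pi_{F_\infty}(y)\in Q(\pi_{F_\infty}(x),\phi_F(C_\infty))$ does \emph{not} hold in the reverse direction without further work (different points of $X$ with the same germ can lie in different sectors). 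One clean way around this, which you do not mention, is to observe that when $X$ is locally finite both $\res(F_\infty)$ and $\ch(X(F_\infty)^\infty)$ are compact Hausdorff, so a continuous bijection is automatically a homeomorphism; but your statement only assumes $X$ discrete, so either you need this extra hypothesis or you must genuinely complete the openness argument.
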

	
	\begin{rem}
		Dans le cadre des immeubles non-discrets, tout ce qui précède a un sens sous réserve de redéfinir les facettes et les sommets comme des filtres de sous-ensembles de l'immeuble, voir \cite[Section 5]{rousseau09}. Les immeubles affines à l'infini sont seront alors également non-discrets, voir \cite{ciobotaru_muhlerr_rousseau_20}. 
	\end{rem}
	
	\subsection{Isométries d'un immeuble affine}
	On clôt ce chapitre en discutant des isométries d'un immeuble affine. Soit $(X, \mathcal{A})$ un immeuble affine épais (pas nécessairement discret) modelé sur $(W, V)$, et soit $\bold{S}_0$ une chambre de Weyl vectorielle fondamentale de $V$. 
	\begin{Def}\label{def regular geod immeuble}
		Une géodésique $\gamma$ dans $X$ est dite \textit{régulière} si pour $x \neq y \in \gamma$, le type $\theta[x,y]$ de $[x,y] $ est dans l'intérieur de la chambre de Weyl vectorielle fermée $\overline{\bold{S}_0}$. Autrement, $\theta[x,y]$ appartient au bord de $\bold{S}_0$ et on dit que $\gamma$ est \textit{singulière}.  
	\end{Def}
	
	\begin{prop}[{\cite[Proposition 2.26]{parreau00}}]
		Supposons que l'immeuble à l'infini $\bdinf$ de $X$ est épais et que le système d'appartements est maximal. Alors une géodésique $\gamma$ dans $X$ est régulière si et seulement si elle est contenue dans un unique appartement de $X$. 
	\end{prop}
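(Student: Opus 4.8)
The plan is to read $\gamma$ as a complete geodesic; a ray or a segment can be treated afterwards via its extensions, which I will not dwell on. The starting observation is that maximality of the apartment system already yields at least one apartment containing $\gamma$ (as recalled in the Remark above), so the content of the statement is \emph{uniqueness}. I would first push this to infinity by noting that the map $A\mapsto\bd A$ from apartments of $X$ to apartments of $\bdinf$ is injective: if $\bd A=\bd A'$, pick opposite chambers $C,C'$ in this common apartment of $\bdinf$; choosing a marking $f$ of $A$ and a vectorial Weyl chamber $\mathbf S$ with $\bd f(\mathbf S)=C$, the pair $(f(\mathbf S),f(-\mathbf S))$ is a pair of opposite Weyl chambers of directions $C,C'$ contained in $A$, and $A'$ likewise contains such a pair, so the uniqueness clause of the Proposition on opposite Weyl chambers recalled just above forces $A=A'$. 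Since the apartments of $\bdinf$ are exactly the $\bd A$, the problem becomes: for which $\gamma$ is the apartment of $\bdinf$ compatible with the endpoints of $\gamma$ forced?

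\emph{Regular $\Rightarrow$ unique.} Let $\xi^{\pm}=\gamma(\pm\infty)$. In any apartment $A\supseteq\gamma$ the geodesic is a straight line of regular direction, so each $\xi^{\pm}$ lies in the relative interior of a chamber of the Coxeter complex $\bd A\subseteq\bdinf$, and $\xi^+,\xi^-$ are antipodal in the round sphere $\bd A$. The chamber $C^{\pm}$ whose relative interior contains $\xi^{\pm}$ is the unique simplex of $\bdinf$ with that property, hence intrinsic to $\gamma$, and antipodality says $C^+,C^-$ are opposite chambers of $\bdinf$. For \emph{any} apartment $A\supseteq\gamma$ the subcomplex $\bd A$ then meets the relative interior of each $C^{\pm}$, hence contains $C^{\pm}$. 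But two opposite chambers of a spherical building lie in a unique common apartment, so $\bd A$ is independent of $A$, and by the injectivity above $A$ itself is unique.

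\emph{Singular $\Rightarrow$ not unique} (the interesting direction). Now one endpoint, say $\xi^+$, lies in the relative interior of a face $F^+$ of $\bdinf$ of codimension $\geq 1$; let $F^-$ be the opposite face, so $\xi^-$ lies in the relative interior of $F^-$. Fix $A\ni\gamma$, write $\Sigma=\bd A$ and $p=\gamma(0)$. Since $\bdinf$ is thick, the residue $\res(F^+)$ is a thick spherical building of rank $\geq 1$, so its apartment coming from $\Sigma$ is a proper subcomplex; hence there is a chamber $C_1\geq F^+$ with $C_1\notin\Sigma$, and we let $D_1\geq F^-$ be the unique chamber opposite to $C_1$. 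The forward ray $[p,\xi^+)$ lies in $Q(p,F^+)$, hence in the closed sector $\overline{Q(p,C_1)}$, and likewise $[p,\xi^-)\subseteq\overline{Q(p,D_1)}$. These two sectors share the tip $p$ and point in opposite directions, so they form a pair of opposite Weyl chambers; assembling an apartment $A_1$ containing both gives $\gamma=[p,\xi^-)\cup[p,\xi^+)\subseteq A_1$, while $C_1\in\bd A_1\setminus\Sigma$ gives $A_1\neq A$, so $\gamma$ lies in at least two apartments.

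The single real difficulty is the assembly of $A_1$: the Proposition on opposite Weyl chambers supplies an apartment carrying opposite sectors of the prescribed directions, but a priori with the wrong tip, whereas I need the \emph{specific} sectors $\overline{Q(p,C_1)}$, $\overline{Q(p,D_1)}$ based at $p=\gamma(0)$ to lie in a common apartment — that is, to retain the whole line and not merely its two ideal endpoints. The route I would take is through the parallel set of $\gamma$: the union of all lines of $X$ parallel to $\gamma$ splits metrically as $\gamma_0\times Q$ (flat strip theorem, together with the fact that any flat strip lies in an apartment), and $Q$ is a Euclidean space exactly when $\gamma$ is regular, otherwise a thick building governed by the residue $\res(F^+)$ at infinity — a thick panel tree $T_{F^+}$ when $F^+$ is a panel. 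Thickness of $Q$ then produces a whole family of flats $\gamma_0\times(\text{apartment of }Q)$ through $\gamma$, each contained in an apartment of $X$, which gives the converse and simultaneously re-derives the forward implication. Making this splitting precise — or, failing that, carrying out the regluing along the singular wall by hand with axioms (A2), (A4) and the point-based retraction of (A5) — is where the work lies; the rest is bookkeeping with the cited facts on spherical buildings and on boundaries of apartments.
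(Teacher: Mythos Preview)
The paper does not give its own proof of this proposition: it is simply quoted from \cite[Proposition~2.26]{parreau00} and stated without argument. So there is nothing in the paper to compare your attempt against, and what follows is an assessment of your sketch on its own terms.

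Your forward implication (regular $\Rightarrow$ unique) is clean and correct: the injectivity of $A\mapsto\bd A$ via the uniqueness clause for opposite Weyl chambers, together with the fact that a regular direction singles out a unique chamber of $\bdinf$, pins down $\bd A$ and hence $A$.

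For the converse there are two issues. The minor one is that your chamber $D_1\geq F^-$ opposite to $C_1$ is \emph{not} unique in general (already for an $A_2$ building at infinity there are many), so ``the unique chamber'' should read ``a chamber''; this is harmless since only existence matters. The substantive issue is precisely the one you flag: the quoted proposition on opposite Weyl chambers gives a unique apartment $A_1$ carrying \emph{some} opposite pair of sectors of directions $C_1,D_1$, but you need the \emph{specific} sectors $Q(p,C_1)$ and $Q(p,D_1)$ (equivalently, you need $p\in A_1$), and this does not follow for free. Your proposed remedy via the parallel set of $\gamma$ is the right one --- indeed, what one actually uses is that a singular $\gamma$ lies in a wall $M$ of $A$, and thickness of $\bdinf$ forces $M$ to bound at least three half-apartments in $X$; gluing a third half-apartment along $M$ to either half of $A$ yields a new apartment containing $M\supseteq\gamma$. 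Making this half-apartment argument precise (or, equivalently, the transverse panel-tree splitting you describe) is exactly what Parreau does, and is the step you have left as a sketch.
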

	
	De plus, la proposition suivante montre que les isométries de $X$ agissent comme des automorphismes de l'immeuble à l'infini. 
	
	\begin{prop}[{\cite[Proposition 2.27]{parreau00}}]\label{prop isom autom immeuble}
		Supposons que l'immeuble à l'infini $\bdinf$ est épais. Alors toute isométrie de $X$ préserve les chambres de Weyl. 
	\end{prop}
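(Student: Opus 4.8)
The plan is to push the question to the spherical building at infinity and to read a sector off its tip and its direction. First I would note that an isometry $g$ of $X$ extends to an isometry of $\bd X$ for the angular metric $\angle$ (this is among the properties of the angular metric recorded in Section~\ref{section bord cat}), and, since the Tits metric $d_T$ is by definition the length metric associated to $\angle$, this extension $\bar g$ is in fact an isometry of $\partial_T X$. As recalled in Section~\ref{section bdinf}, $(\partial_T X, d_T)$ is a geometric realization of the thick spherical building $\bdinf$. Thus the whole problem reduces to showing that $\bar g$, being an isometry of the geometric realization of a \emph{thick} spherical building, is induced by a simplicial automorphism of $\bdinf$; in particular that it sends ideal chambers to ideal chambers.

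For that core step I would invoke the classical rigidity of thick spherical buildings: the simplicial structure of $\bdinf$ is recovered from the $\cat(1)$ metric of $|\bdinf|$, i.e. $\iso(|\bdinf|) = \Aut(\bdinf)$. Concretely, for an irreducible building of rank $\geq 2$ one characterizes the walls metrically --- as the fixed-point sets of the isometric involutions fixing a codimension-one piece of $|\bdinf|$, which is exactly where the thickness of $\bdinf$ genuinely enters --- recovers the chambers as the connected components of the complement of the walls, and checks that any metric isometry respects this structure; for a rank-one factor, $|\bdinf|$ is a discrete set of at least three points at pairwise distance $\pi$, and every isometry is a permutation of these points, hence an automorphism; the general, possibly reducible, case follows by combining these over the irreducible factors while allowing permutations of isometric factors. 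In all cases $\bar g$ maps $\ch(\bdinf)$ into itself.

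It remains to translate this back to $X$. A Weyl chamber (sector) $S$ of $X$ has a well-defined tip $x \in X$ and a direction $C_\infty \in \ch(\bdinf)$, and, by the bijection between ideal simplices and affine facets with a fixed tip recalled in Section~\ref{section bdinf}, it is recovered as $S = Q(x, C_\infty) = \bigcup\{[x,\xi) : \xi \in C_\infty\}$. Since an isometry sends each geodesic ray $[x,\xi)$ to the geodesic ray $[g(x), \bar g(\xi))$, one obtains
\[
  g(S) = \bigcup_{\xi \in C_\infty} [g(x), \bar g(\xi)) = Q\big(g(x), \bar g(C_\infty)\big).
\]
Since $\bar g(C_\infty)$ is again an ideal chamber and $g(x) \in X$, the right-hand side is, by the very same construction, a Weyl chamber of $X$; hence $g$ preserves Weyl chambers, which is the claim.

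The main obstacle is the middle paragraph: making the implication ``isometry of $|\bdinf|$ $\Rightarrow$ simplicial automorphism of $\bdinf$'' precise and uniform over all types $(\overline W, V)$ that can occur, and checking that the thickness hypothesis on $\bdinf$ is exactly what rules out exotic metric symmetries (for instance rotations, which appear precisely for the non-thick rank-two spherical building realized by a circle). Everything else --- the extension of $g$ to $\partial_T X$, the identification of $\partial_T X$ with $|\bdinf|$, and the behaviour of $Q(\cdot,\cdot)$ under isometries --- is immediate from the material above.
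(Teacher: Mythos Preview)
The paper does not supply a proof of this proposition: it is quoted verbatim from Parreau \cite[Proposition 2.27]{parreau00} as a black box, with no argument given. So there is no in-text proof to compare your proposal against.

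That said, your outline is sound and is one of the two natural routes. The other route --- and the one suggested by the organization of the paper, which places Parreau's Proposition~2.26 immediately before this statement --- is more ``internal'': one first observes that apartments are exactly the maximal flats of $X$ (a purely metric notion), hence preserved by isometries; then Proposition~2.26 gives a metric characterization of regular geodesics (those lying in a unique apartment), so isometries preserve the regular/singular dichotomy; from there the Weyl chamber structure, which is built out of singular hyperplanes, is forced to be preserved. This avoids invoking the global statement $\iso(|\bdinf|)=\Aut(\bdinf)$ for thick spherical buildings and stays closer to the affine picture.

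Your approach trades that for a cleaner conceptual picture at infinity, at the cost of the ``middle paragraph'' you rightly flag as the crux. The claim that every isometry of the metric realization of a thick spherical building is simplicial is true, but it is exactly the non-trivial step, and your sketch of it (recovering walls as fixed-point sets of reflections, handling rank one separately, passing to products) is correct in spirit but would need care to make airtight --- in particular the reducible case and the identification of walls require a bit more than you wrote. The rest of your argument (extension to $\partial_T X$, the formula $g(Q(x,C_\infty))=Q(g(x),\bar g(C_\infty))$) is indeed routine from the material in Sections~\ref{section bord cat} and~\ref{section bdinf}.
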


	On clôt cette section avec un résultat de classification sur les isométries de $X$.

	\begin{thm}[{\cite[Théorème 4.1]{parreau00}}]\label{thm isom ss immeuble affine}
		Soit $(X, \mathcal{A})$ un immeuble affine complet. Alors toute isométrie de $X$ qui préserve les chambres de Weyl est semi-simple (i.e. elliptique ou axiale). 
	\end{thm}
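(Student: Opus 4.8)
\emph{Stratégie de preuve.} Soit $g$ une isométrie de $X$ préservant les chambres de Weyl (ce qui, d'après la Proposition~\ref{prop isom autom immeuble}, est automatique dès que $\bdinf$ est épais, mais on garde l'hypothèse explicite). Il faut montrer que la fonction de déplacement $\tau_g(x) = d(gx,x)$ atteint sa borne inférieure $|g|$. Si $g$ a une orbite bornée, le théorème du point fixe de Bruhat--Tits (conséquence de la Proposition~\ref{prop bruhat tits centre}) fournit un point fixe et $g$ est elliptique ; on suppose donc désormais toutes les orbites de $g$ non bornées et l'on cherche à construire un axe, c'est-à-dire une droite géodésique le long de laquelle $g$ translate de $|g|$. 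Je procéderais par récurrence sur le rang $r = \dim V$ de $X$ : pour $r=0$ il n'y a rien à faire, et pour $r=1$, $X$ est un $\mathbb{R}$-arbre complet, où la classification des isométries en elliptiques et hyperboliques est classique et utilise la complétude de façon essentielle --- c'est le cas de base.

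On part alors d'une suite minimisante $x_n$, $\tau_g(x_n)\to |g|$. Deux observations élémentaires guident la suite. D'une part, le type $\lambda_n := \theta[x_n, gx_n]\in\overline{\mathbf{S}_0}\subseteq V$ vérifie $\|\lambda_n\| = d(x_n,gx_n)=\tau_g(x_n)\le |g|+1$, donc $(\lambda_n)$ reste dans un borné --- donc dans un compact --- du cône fermé $\overline{\mathbf{S}_0}$, et quitte à extraire $\lambda_n\to\lambda$ avec $\|\lambda\|=|g|$. D'autre part, par convexité de $\tau_g$ (qui découle de la Proposition~\ref{prop convexite cat}) et l'inégalité $\cat$(0), la quasi-minimalité de $x_n$ force l'angle d'Alexandrov en $x_n$ entre $[x_n,g^{-1}x_n]$ et $[x_n,gx_n]$ à tendre vers $\pi$ : le chemin brisé passant par l'orbite $\{g^k x_n\}_k$ est une $(1,\varepsilon_n)$-quasigéodésique, $\varepsilon_n\to 0$, décalée d'un cran par $g$.

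Le cœur de la preuve est de passer à la limite sur ces quasi-axes pour produire un axe véritable, \emph{sans} l'argument d'Arzelà--Ascoli disponible dans le cas propre ; c'est là que les axiomes d'immeuble doivent servir. Chaque segment $[x_n,gx_n]$ est dans un appartement, et via l'axiome (A4) et les rétractions $1$-lipschitziennes (A5) je ramènerais les morceaux finis pertinents dans un appartement fixé $A\cong V$, où une telle quasigéodésique reste uniformément proche d'une droite affine de direction $\lambda$ ; on en tire une droite géodésique $L$ de $X$ avec $d(L(t+|g|),g\,L(t))$ bornée, donc, par convexité, un axe. Une variante que je privilégierais, car elle localise la difficulté : montrer directement que si $|g|$ n'est pas atteint alors $g$ fixe un point $\xi\in\bd X$ (analogue de l'énoncé sur les isométries paraboliques de la Section~\ref{section isom cat}, mais démontré ici pour une isométrie préservant le type d'un immeuble éventuellement non discret, en utilisant la complétude). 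Si $\xi$ peut être choisi non régulier, on note $F_\infty$ la plus petite facette de $\bdinf$ le contenant ; $g$ préservant le type et stabilisant le simplexe $F_\infty$, il le fixe point par point, donc passe au quotient par $\pi_{F_\infty}$ (Section~\ref{section panel trees}, Proposition~\ref{prop homeo panel tree ch}) en une isométrie de l'immeuble affine à l'infini $X(F_\infty)$, de rang strictement moindre ; par récurrence $g$ y est semi-simple, et en remontant cette information le long des cheminées $g$-invariantes (fibres de $\pi_{F_\infty}$) on obtient dans $X$ soit une orbite bornée, soit une droite invariante, contredisant la non-atteinte de $|g|$. Le cas résiduel où $\xi$ est forcément régulier, donc $g$ fixe point par point une chambre à l'infini $C_\infty$, se traite séparément : $g$ translate alors par des constantes les fonctions de Busemann centrées aux sommets de $C_\infty$, et ou bien l'une de ces constantes est non nulle (et $g$ est axiale, d'axe aboutissant dans $C_\infty$), ou bien toutes sont nulles et l'on descend à la cheminée associée via la rétraction centrée en $C_\infty$.

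L'obstacle principal est précisément le défaut de compacité locale : on ne peut ni obtenir l'axe comme limite uniforme sur les compacts de géodésiques brisées, ni invoquer le classique « une isométrie parabolique fixe un point à l'infini », qui n'est énoncé dans la Section~\ref{section isom cat} que sous hypothèse de propreté. Le contenu véritable est donc de faire fonctionner l'un des deux mécanismes de limite ci-dessus à partir des seuls axiomes combinatoires de l'immeuble --- en ramenant chaque passage à la limite à un calcul dans un unique appartement euclidien via les rétractions, ou en établissant directement la dichotomie « fixe un point à l'infini » pour les isométries préservant le type des immeubles affines complets. Tout le reste (le cas elliptique, le passage de « borne inférieure atteinte » à « existence d'un axe », la gestion de la récurrence) est de routine.
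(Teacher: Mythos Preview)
The paper does not give its own proof of this statement: it is quoted verbatim from Parreau \cite[Théorème 4.1]{parreau00} and used as a black box (notably in Lemma~\ref{hyperbolic-creation}). So there is no ``paper's proof'' to compare against, only Parreau's original argument.

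Your proposal is an honest and well-structured \emph{plan} rather than a proof, and you say so yourself. The genuine gap is exactly the one you identify at the end: neither of your two mechanisms for producing the axis is actually carried out. For variant~(a), retracting the broken quasi-axes $\{g^k x_n\}_k$ into a fixed apartment $A\cong V$ does give Euclidean quasi-geodesics, but nothing prevents these images from drifting off to infinity inside $A$ as $n\to\infty$; you would need an anchoring argument specific to the building axioms, and you do not supply one. For variant~(b), the statement ``if $|g|$ is not attained then $g$ fixes a point of $\bd X$'' is precisely what fails for general complete $\cat(0)$ spaces (the standard proof uses local compactness, as the paper notes in Section~\ref{section isom cat}), and you do not indicate how the combinatorial axioms (A1)--(A5) would circumvent this. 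The inductive descent through $X(F_\infty)$ is a nice idea, but it presupposes the existence of the fixed boundary point, so it does not help with the core obstruction. Moreover, lifting semi-simplicity back from $X(F_\infty)$ to $X$ along the fibres of $\pi_{F_\infty}$ is itself nontrivial: an axis in the quotient tree need not lift to an axis in $X$ without further work on the horospherical structure.

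In short: you have correctly located the single hard step and sketched the routine parts around it, but the hard step remains open in your write-up. Parreau's actual argument avoids both the boundary-fixed-point route and the rank induction; it works directly with the vector-valued distance $\theta[x,gx]\in\overline{\mathbf{S}_0}$ and exploits the flat geometry inside apartments together with retractions to show the infimum is attained, without ever needing a compactness-type limit in $X$ itself.
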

	
	\begin{rem}
		Dans le cadre d'un système d'appartements maximal, et si l'immeuble à l'infini $\bdinf$ est complet, alors toute isométrie de $X$ préserve les chambres de Weyl \cite[Proposition 2.27]{parreau00}. En particulier, tout automorphisme de $X$ est semi-simple. 
	\end{rem}
	
	Il y a plusieurs résultats récents de structure et de points fixes sur les groupes agissant par isométries sur des immeubles affines. Par exemple, J. Schillewaert, K. Struyve et A. Thomas montrent dans \cite[Theorem A]{schillewaert_struyve_thomas22} que si $G$ agit sur un immeuble affine complet de type $\tilde{A}_2$ ou $\tilde{C}_2$ tel que chaque élément $g \in G$ est elliptique, alors $G$ a un point fixe global. Osajda et Przytycki montrent dans  \cite{osajda_przytycki21} un résultat similaire, quoique vrai pour tout complexe triangulaire $\cat$(0).

	\chapter{Marches aléatoires et théorie des bords}\label{chapter rw intro}

	Dans ce chapitre, on introduit le vocabulaire des marches aléatoires et certaines techniques de théorie des bords. La Section \ref{section rw generalite} revient sur les marches aléatoires et la notion, fondamentale, de mesure stationnaire pour un système dynamique mesuré. Dans la Section \ref{section bord poisson}, on présente le bord de Poisson-Furstenberg, qui est l'objet le plus important de ce chapitre. On verra qu'il représente les propriétés asymptotiques de la marche aléatoire sur $G$, et qu'il dispose d'une action naturelle par $G$ qui est ergodique et Zimmer-moyennable. Dans la Section \ref{section bords ergodiques}, on introduit d'après Bader et Furman \cite{bader_furman14} la notion de $G$-bord, dont le bord de Poisson-Furstenberg est le modèle privilégié. Enfin, on fait dans la Section \ref{section rw courbure non positive} un bref résumé des résultats récents pour les marches aléatoires dans les espaces hyperboliques et $\cat$(0). 
	\newline
	
	\vspace{1cm}
	\minitoc
	\vspace{1cm}

	\section{Généralités sur les marches aléatoires}\label{section rw generalite}
	
	Dans cette section, on présente la théorie générale des marches aléatoires, ainsi que les premiers résultats de stationnarité, d'ergodicité et les questions qui nous intéresseront. 
	
	\subsection{Premières définitions}	
	Soit $G$ un groupe localement compact et à base dénombrable (ce sont des hypothèses qu'on fera toujours sur $G$), et soit $\mu \in \prob(G)$ une mesure de probabilité sur $G$. Soit $\Omega = G^\mathbb{N}$, muni de la mesure $\mathbb{P} = \mu^{\otimes \mathbb{N}}$. On appellera $\Omega$ \textit{l'espace des incréments} : un élément $\omega = (\omega_0, \omega_1, \dots)$ est une suite de variables aléatoires $\omega_i$ indépendantes et distribuées selon la loi $\mu$. La marche aléatoire sur $G$ associée à $\mu$ est la suite de variables aléatoires $(Z_n(\omega))_{n\in \mathbb{N}}$, où $Z_0 = \Id $ et $Z_n$ est donnée par
	\begin{eqnarray}
		Z_n (\omega) = \omega_1 \omega_2 \dots \omega_n, \nonumber
	\end{eqnarray}
	le produit des $n$ premiers incréments de $\omega = (\omega_i)_{i\in \mathbb{N}}$. En pratique, on pourra omettre d'écrire la dépendance explicite en $\omega $, et simplement écrire $Z_n := Z_n(\omega)$. 
	
	Supposons maintenant que $G$ agisse par isométries sur un espace métrique séparable $(X,d)$, et soit $o \in X$ un point-base. On peut considérer la marche aléatoire $(Z_n o )_{n\in \mathbb{N}}$ dans l'espace $X$. Dans une telle situation, les questions que l'on se posera seront principalement : 
	\begin{enumerate}
		\item que peut-on dire du comportement asymptotique de $(Z_n o )_{n\in \mathbb{N}}$ ? 
		\item en fonction de ce comportement, que peut-on en déduire sur le groupe $G$ ? 
	\end{enumerate}
	
	Le but de ce chapitre est d'apporter du contexte et de la précision à ces deux questions. 
	\newline 
	
	Commençons par quelques définitions. Soit $(X,d)$ un espace métrique séparable muni de sa tribu borélienne, et soit $G \curvearrowright X$ une action continue. Il sera souvent utile de munir $X$ d'une mesure de probabilité borélienne $\nu \in \prob(X)$. Dans toute la suite, on fera l'hypothèse que $X$ est un \textit{espace borélien standard}, c'est-à-dire qu'il existe un isomorphisme borélien de $X$ vers un sous-ensemble borélien dans un espace polonais. En d'autres termes, un espace borélien standard est soit dénombrable (potentiellement fini), soit mesurablement isomorphe au segment $[0,1]$. Dans ce cas, $\prob (X)$ désigne l'espace des mesures de probabilités et boréliennes sur $X$. L'espace $\prob (X)$ est muni de la topologie faible-$\ast$ (métrisable) : une suite de mesures $\mu_n $ converge faiblement-$\ast$ vers $\mu$ si pour toute fonction continue bornée $f$ sur $X$, 
	\begin{eqnarray}
		\int_X f(x) d\mu_n(x) \underset{n \rightarrow \infty}{\longrightarrow} \int_X f(x) d\mu(x). \nonumber
	\end{eqnarray}
	
	On rappelle que si $f : X \rightarrow X$ est une application borélienne, alors $f_\ast\nu$ est une mesure de probabilité sur $X$ donnée par $f_\ast \nu (A) := \nu(f^{-1}A) $ pour tout borélien $A$. Puisque $G$ est localement compact, on peut considérer la mesure de Haar invariante à gauche $m_G$ sur $G$. On dit que l'action de $G$ sur $(X, \nu)$ est \textit{non-singulière} si pour tout $g \in G$, les mesures $\nu$ et $g_\ast \nu$ sont équivalentes sur $X$, c'est-à-dire ont les mêmes boréliens négligeables. On dit alors que $(X, \nu)$ est un \textit{$G$-espace}. En particulier, lorsque $\nu$ est $G$-invariante, c'est-à-dire lorsque pour tout $g \in G$, $g_\ast \nu = \nu $, alors l'action est non-singulière. On dit alors que l'action de $G$ sur $(X,d, \nu)$ \textit{préserve la mesure de probabilité}, ou plus simplement que l'action est \textit{pmp}.

	Soit $G \curvearrowright(X,d, \nu)$ une action continue par isométries, non-singulière. On dit qu'une fonction mesurable $f : X \rightarrow \mathbb{C}$ est \textit{$G$-invariante} (au sens mesurable) si pour tout $g \in G$, pour $\nu$-presque tout $x \in X$, $f(gx )= f(x)$. De même, on dira qu'un ensemble mesurable $Y \subseteq X$ est \textit{$G$-invariant} si la fonction indicatrice $\mathds{1}_Y$ est $G$-invariante. Enfin, soit $Z$ un espace métrique sur lequel $G$  agit de manière borélienne. Une application mesurable $f : X \rightarrow Z$ est dite \textit{$G$-équivariante} si pour tout $g \in G$ et pour $\nu$-presque tout $x \in X$, $f(gx)= g f(x)$ : on dit alors que $f$ est une $G$-application. Le lemme suivant permet de toujours se ramener à des fonctions strictement invariantes. 
	
	\begin{lem}\label{lemme stricte invariance}
		Soit $G \curvearrowright(X,d, \nu)$ une action non-singulière. Alors les assertions suivantes sont vraies. 
		\begin{enumerate}
			\item \label{lemme ensemble}Pour tout ensemble $G$-invariant $Y \subseteq X$, il existe $Z \subseteq X$ tel que pour tout $g\in G$, $x \in Z$, $g Z = Z $ et $\nu (Z \triangle Y)= 0$. 
			\item \label{lemme stricte invariance fonctions}Pour toute fonction mesurable $G$-invariante $f : X \rightarrow \mathbb{C}$, il existe une application mesurable $F: X \rightarrow \mathbb{C}$ telle que $\nu(\{f \neq F\}) = 0$ et pour tout $g \in G$, pour tout $x \in X$, $F(gx ) = F(x)$. 
			\item Pour toute application mesurable $G$-équivariante $f : X \rightarrow Z$, il existe un ensemble de mesure pleine $X_0 \subseteq X$ et application $F  : X_0 \rightarrow Z$ telle que $\nu(\{F \neq f\}) = 0$ et pour tout $g \in G$, pour tout $x \in X_0$, $F(gx) = gF(x)$. 
		\end{enumerate}
	\end{lem}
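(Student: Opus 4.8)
The plan is to deduce all three assertions from one construction, namely \emph{averaging along orbits using Haar measure}. Since $G$ is locally compact and second countable it is $\sigma$-compact, so we may fix once and for all a probability measure $m$ on $G$ equivalent to the left Haar measure $m_G$ (take $m=\varphi\,m_G$ with $\varphi>0$ and $\int\varphi\,dm_G=1$). Two elementary facts will be used repeatedly. First, the measure class of $m_G$, hence of $m$, is invariant under right translations $g\mapsto gh$ (this is the modular identity $m_G(Ah)=\Delta(h)m_G(A)$). Second, since the action $G\curvearrowright X$ is continuous the map $(g,x)\mapsto gx$ is Borel, so for a Borel map $f$ the maps $(g,x)\mapsto\mathds{1}_Y(gx)$ and $(g,x)\mapsto g^{-1}f(gx)$ are Borel; Tonelli's theorem for $m\times\nu$ then applies.

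First I would record the consequence of $G$-invariance. If $Y\subseteq X$ is $G$-invariant then for each $g$ the set $\{x:\mathds{1}_Y(gx)\neq\mathds{1}_Y(x)\}$ is $\nu$-null, so the Borel set $\{(g,x):\mathds{1}_Y(gx)\neq\mathds{1}_Y(x)\}$ is $(m\times\nu)$-null by Tonelli; hence for $\nu$-a.e.\ $x$ one has $gx\in Y$ for $m$-a.e.\ $g$. The identical computation applied to a $G$-equivariant $f:X\to Z$ shows that for $\nu$-a.e.\ $x$ the map $g\mapsto g^{-1}f(gx)$ is equal for $m$-a.e.\ $g$ to the constant $f(x)$.

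For (1), put $Z:=\{x\in X:m(\{g:gx\in Y\})=1\}$. This set is measurable by Tonelli and satisfies $\nu(Z\,\triangle\,Y)=0$ by the previous paragraph. Writing $A_x:=\{g:gx\in Y\}$ one has $A_{hx}=A_xh^{-1}$ for every $h\in G$, and since "having $m$-measure $1$" is insensitive to right translation of sets, $hx\in Z\iff x\in Z$; thus $Z$ is strictly $G$-invariant. For (2) and (3), let $X_0$ be the conull set of $x$ for which $g\mapsto g^{-1}f(gx)$ is $m$-essentially constant, and on $X_0$ let $F(x)$ be that essential value; for (2), where $Z=\mathbb{C}$ carries the trivial action, extend $F$ by $0$ on the $G$-invariant complement $X\setminus X_0$. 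By construction $\nu(\{F\neq f\})=0$. Strict equivariance follows from the change of variable $g=g'h^{-1}$, which turns $g\mapsto g^{-1}f(g\,hx)$ into $g'\mapsto h\,(g'^{-1}f(g'x))$ read against the image of $m$ under $g'\mapsto g'h$; this image is equivalent to $m$, and essential values depend only on the measure class, so $hx\in X_0$ and $F(hx)=h\,F(x)$. This gives (3) with this $X_0$, and, after the extension, (2).

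The one step that is not purely formal, and where I expect the work to lie, is the measurability of $x\mapsto F(x)$: one must check that the essential value of the $m$-essentially constant function $g\mapsto g^{-1}f(gx)$ depends measurably on $x$. For $Z$ standard Borel I would fix a Borel embedding $\iota:Z\hookrightarrow[0,1]$ and use
\[
\iota\bigl(F(x)\bigr)=\sup\Bigl\{t\in\mathbb{Q}:m\bigl(\{g:\iota(g^{-1}f(gx))>t\}\bigr)>0\Bigr\},
\]
whose right-hand side is measurable in $x$ by Tonelli applied to the Borel function $(g,x)\mapsto\iota(g^{-1}f(gx))$; as it lies in $\iota(Z)$ for $x\in X_0$, composing with $\iota^{-1}$ (Borel on its image) gives a measurable $F$. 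For (1) this collapses to the already displayed formula for $Z$, and for (2) one argues with real and imaginary parts. Everything else — Tonelli, the orbit change of variables, and the right-translation invariance of the Haar class — is routine, so no further obstacle is expected.
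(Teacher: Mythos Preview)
Your proof is correct and follows essentially the same approach as the paper: both define $X_0$ as the set of $x$ for which the orbit map $g\mapsto f(gx)$ (resp.\ $g\mapsto g^{-1}f(gx)$) is Haar-essentially constant, take $F(x)$ to be that essential value, and use right-translation invariance of the Haar class to see that $X_0$ is strictly $G$-invariant. The paper only writes out part~(2) and waves at the rest with ``mesurable par Fubini'', whereas you give the right-translation computation and the $\sup$-over-$\mathbb{Q}$ formula for measurability of the essential value explicitly; these are exactly the details the paper elides.
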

	
	\begin{proof}
		On ne montre que \ref{lemme stricte invariance fonctions} car les situations sont analogues. Soit $f : X \rightarrow \mathbb{C}$ une application mesurable $G$-équivariante. Fixons $m_G$ la mesure de Haar invariante à gauche sur $G$. Pour presque tout $x \in X$, on considère l'application $f_x(g) := f(g x) \in \mathbb{C}$. On définit alors
		\begin{eqnarray}
			X_0 := \{ x\in X \, | \, f_x \textit{ est $m_G$-essentiellement constante}\}. \nonumber
		\end{eqnarray}
		Alors $X_0$ est mesurable, de mesure pleine par hypothèse. Notons $F (x)$ la valeur essentielle de $f_x$ pour tout $x \in X_0$, et posons $F(x)= 0$ pour $x \in X \setminus X_0$. Alors $F$ est mesurable par Fubini, et $\nu(\{f \neq F\}) = 0$. Enfin, $X_0$ est strictement $G$-invariant : pour tout $x \in X_0$ et pour tout $g \in G$, $gx \in X_0$. Donc $F$ est strictement $G$-invariante. 
	\end{proof}

	\subsection{Mesure stationnaire}\label{section mesure stat intro}
	Soit $G$ un groupe agissant de manière non-singulière sur un espace probabilisé $(Y, \nu)$, et soit $\mu \in \prob(G)$ une mesure de probabilité sur $G$. La convolution de $\nu$ par $\mu $ est la mesure de probabilité sur $X$ donnée par l'image de $\mu \otimes \nu$ par l'action  $G \times Y \rightarrow  Y $. En d'autres termes, pour $f $ une application mesurable bornée sur $Y$, 
	
	\begin{equation*}
		\int_Y f(y)d(\mu \ast \nu)(y) = \int_G\int_Y f(g \cdot y ) d\mu(g) d\nu (y).
	\end{equation*}
	
	Si $G$ est dénombrable, alors pour tout ensemble mesurable $A$ dans $Y$, 
	
	\begin{equation*}
		\mu \ast \nu (A) = \sum_{g \in G} \mu(g) \nu (g^{-1}A). 
	\end{equation*}
	
	On écrira $\mu_m = \mu^{\ast m}$ la $m$-ème puissance de convolution de $\mu$, où $G$ agit sur lui-même par action à gauche $(g,h) \mapsto  gh$. 
	
	Souvent, il est trop restrictif de s'attendre à une action pmp sur l'espace $Y$. On considèrera surtout des mesures qui sont invariantes \og sous l'action de la marche aléatoire \fg. C'est le sens de la définition suivante. 
	\begin{Def}
		Une mesure de probabilité $\nu \in \prob(Y) $ est \textit{$\mu$-stationnaire} (ou harmonique) si $\mu \ast \nu = \nu $. Si $(Y,\nu)$ est un $G$-espace et $\nu$ est $\mu$-stationnaire, on dit que $(Y,\nu)$ est un \textit{$(G,\mu)$-espace}.  
	\end{Def}
	
	Il est évident que si $(G,\mu)\curvearrowright (Y,\nu)$ est pmp, alors elle est stationnaire. La réciproque est fausse en général, comme le montre l'exemple suivant 
	\begin{ex}[{\cite[Example 8.2]{furstenberg73}}]
		Soit $G = \psl_2(\mathbb{R})$, $X = P^1 (\mathbb{R})$, et $\mu = \frac{1}{2} (\delta_g + \delta_h)$, où 
		\begin{eqnarray}
			g = \begin{pmatrix}
				1 & 2 \\
				0 & 1
			\end{pmatrix}, \ h = \begin{pmatrix}
				1 & 0\\
				2 & 1
			\end{pmatrix} \nonumber
		\end{eqnarray}
		Alors $X$ admet une unique mesure stationnaire, qui n'est pas $G$-invariante. 
	\end{ex}
	
	En revanche, lorsque $(Y,d)$ est un espace métrique compact, alors toute action $(G,\mu)\curvearrowright (Y,\nu)$ admet une mesure stationnaire. Rappelons que dans ce cas, l'espace $\prob(Y)$ des mesures de probabilités est un convexe compact dans le dual $C(Y)$ des fonctions continues sur $Y$, pour la topologie faible-$\ast$.  
	
	\begin{prop}\label{prop existence stationnaire compact}
		Soit $(Y,d)$ un espace métrique compact et soit $G$ un groupe localement compact à base dénombrable agissant continûment sur $Y$ par homéomorphismes. Soit $\mu \in \prob(G) $ une mesure de probabilité sur $G$. Alors il existe une mesure de probabilité $\nu \in \prob(Y)$ sur $Y$ qui est $\mu$-stationnaire. 
	\end{prop}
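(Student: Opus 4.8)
The plan is to realize a $\mu$-stationary measure as a fixed point of the convolution operator on the space of probability measures. Since $Y$ is a compact metric space, $C(Y)$ is separable, so $\prob(Y)$ is a convex, weak-$\ast$ compact, metrizable subset of the dual $C(Y)^\ast$; this compactness is the whole point of the hypothesis on $Y$. First I would check that the map $T : \prob(Y) \to \prob(Y)$, $T\nu := \mu \ast \nu$, is well defined: by definition of the convolution through the action $G \times Y \to Y$, the measure $\mu \ast \nu$ is a probability measure and $\int_Y f \, d(\mu \ast \nu) = \int_G \int_Y f(g \cdot y)\, d\nu(y)\, d\mu(g)$ for every $f \in C(Y)$. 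The operator $T$ is manifestly affine in $\nu$.

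Next I would show that $T$ is continuous for the weak-$\ast$ topology. Fix $f \in C(Y)$ and, for $\nu \in \prob(Y)$, set $\varphi_f^{\nu}(g) := \int_Y f(g \cdot y)\, d\nu(y)$. Because the action $G \curvearrowright Y$ is continuous and $f$ is bounded, $\varphi_f^{\nu}$ is a bounded continuous (hence Borel) function on $G$, so the iterated integral defining $T\nu$ makes sense; and if $\nu_n \to \nu$ weak-$\ast$, then $\varphi_f^{\nu_n}(g) \to \varphi_f^{\nu}(g)$ for every $g \in G$, with $|\varphi_f^{\nu_n}(g)| \le \|f\|_\infty$. Dominated convergence with respect to $\mu$ then yields $\int_Y f \, d(T\nu_n) \to \int_Y f \, d(T\nu)$, that is, $T\nu_n \to T\nu$ weak-$\ast$.

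With these properties in hand I would conclude by the elementary Cesàro-average argument (avoiding any appeal to a fixed point theorem, though Markov--Kakutani applied to the single affine continuous self-map $T$ of the compact convex set $\prob(Y)$ works just as well). Pick any $\nu_0 \in \prob(Y)$ and put $\nu_n := \tfrac{1}{n} \sum_{k=0}^{n-1} T^k \nu_0 \in \prob(Y)$. By compactness there is a subsequence $\nu_{n_j} \to \nu$ weak-$\ast$. Since $T$ is affine and continuous, $T\nu - \nu = \lim_j (T\nu_{n_j} - \nu_{n_j}) = \lim_j \tfrac{1}{n_j}\big(T^{n_j}\nu_0 - \nu_0\big)$, and $T^{n_j}\nu_0 - \nu_0$ is a difference of probability measures, so it has total variation at most $2$; dividing by $n_j \to \infty$ forces the limit to be the zero measure. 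Hence $T\nu = \nu$, i.e. $\nu$ is $\mu$-stationary.

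The only genuinely delicate step is the continuity of $T$: it rests on interchanging the limit in $n$ with the integral over $G$, so the boundedness of $f$ (providing a uniform dominating constant) and the Borel measurability of $g \mapsto \varphi_f^{\nu}(g)$ — guaranteed by continuity of the action together with $G$ being second countable — are exactly the ingredients that make the argument go through. Everything else is soft.
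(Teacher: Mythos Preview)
Your proof is correct and follows essentially the same route as the paper: form Ces\`aro averages of iterated convolutions of an arbitrary starting measure, use weak-$\ast$ compactness of $\prob(Y)$ to extract a limit point, and observe that this limit is stationary. The paper simply cites Markov--Kakutani at the final step, whereas you unpack the telescoping identity $T\nu_n - \nu_n = \tfrac{1}{n}(T^n\nu_0 - \nu_0)$ by hand and give the continuity argument for $T$ explicitly; this is additional detail rather than a different idea.
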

	\begin{proof}
		Soit $\nu$ une mesure de probabilité sur $(Y,d)$. On considère la suite de mesures de probabilités 
		\begin{eqnarray}
			\nu_n = \frac{1}{n} (\nu + \mu \ast \nu + \dots + \mu_{n-1}\ast \nu) \nonumber,
		\end{eqnarray}
		où on rappelle que $\mu_n = \mu^{\ast n}$ est la $n$-ème puissance de convolution de $\mu$. L'espace $\prob(Y)$ étant faiblement-$\ast $ compact, tout point limite de la suite $(\nu_n)$ est une mesure $\mu$-stationnaire par le théorème de Markov-Kakutani. 
	\end{proof}
	
	Le \textit{support} d'une mesure $\nu$ sur un espace topologique $Y$ est le plus petit fermé $C$ tel que $\nu(Y \setminus C)= 0$. En d'autres termes, $y \in \supp(m)$ si et seulement si pour tout ouvert $U $ contenant $y $, $m(U) >0$. On dira que $\mu \in \prob(G)$ est \textit{admissible} si son support engendre $G$ comme semi-groupe et si elle est étale, i.e. il existe $n$ tel $\mu^{\ast n}$ est absolument continue par rapport à la mesure de Haar.

	\begin{Def}
		Soit $G$ un groupe localement compact et $\mu \in \prob(G)$ une probabilité sur $G$. On définit \textit{l'opérateur de Markov} $P_\mu $ associé à $\mu$ comme 
		\begin{eqnarray}\label{convergence harmonique}
			P_\mu(f) : g \in G \mapsto \int_G f(gh)d\mu(h), 
		\end{eqnarray}
		pour toute fonction $f \in L^1 (G, \mu)$. Une fonction $f $ sur $G$ est \textit{$\mu$-harmonique} si $f$ est $P_\mu$-invariante, i.e. $P_\mu (f) = f$.  
	\end{Def}
	
	La proposition suivante montre le lien entre marches aléatoires et fonctions harmoniques sur $G$. 
	
	\begin{prop}\label{prop cv fonction harm}
		Soit $G$ un groupe localement compact à base dénombrable et $\mu \in \prob(G)$ une probabilité sur $G$. Soit $f $ une fonction $\mu$-harmonique bornée sur $G$, et soit $(Z_n)$ la marche aléatoire sur $G$ associée à $\mu $. Alors pour $\mu$-presque tout $g \in G$ et $\mathbb{P}$-presque tout $\omega \in\Omega$, la limite
		\begin{eqnarray}
			\tilde{f}^g(\omega) = \lim_{n \rightarrow \infty} f(g Z_n(\omega)) \nonumber
		\end{eqnarray} 
		existe. De plus, pour  $\mu$-presque tout $g$, l'application $\tilde{f}^g : \omega \in \Omega\mapsto \tilde{f}^g( \omega)$ est une application bornée mesurable. 
	\end{prop}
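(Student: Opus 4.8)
Le plan est d'obtenir ce résultat via le théorème de convergence des martingales bornées. Fixons $g \in G$ et munissons l'espace des incréments $\Omega = G^{\mathbb{N}}$ de la filtration naturelle $\mathcal{F}_n := \sigma(\omega_1, \dots, \omega_n)$. On pose $M_n(\omega) := f(g Z_n(\omega)) = f(g\,\omega_1 \cdots \omega_n)$ ; comme $f$ est bornée, $M_n$ est une variable aléatoire $\mathcal{F}_n$-mesurable et bornée par $\|f\|_\infty$, donc intégrable.

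D'abord, je montrerais que $(M_n)_n$ est une martingale relativement à $(\mathcal{F}_n)_n$. L'incrément $\omega_{n+1}$ étant indépendant de $\mathcal{F}_n$ et de loi $\mu$, on calcule
\begin{eqnarray}
	\mathbb{E}[M_{n+1} \mid \mathcal{F}_n](\omega) = \int_G f\big(g Z_n(\omega) h\big)\, d\mu(h) = P_\mu(f)\big(g Z_n(\omega)\big) = f\big(g Z_n(\omega)\big) = M_n(\omega), \nonumber
\end{eqnarray}
la troisième égalité provenant de l'harmonicité $P_\mu f = f$. C'est ici qu'apparaît le seul point délicat, que j'attends comme principal obstacle : si l'on considère $f$ comme un élément de $L^1$, l'égalité $P_\mu f = f$ n'a lieu qu'en dehors d'un ensemble négligeable, et il faut alors garantir que $g Z_n(\omega)$ évite $\mathbb{P}$-presque sûrement ce mauvais ensemble pour tout $n$ ; c'est précisément ce qui impose de restreindre la conclusion à $\mu$-presque tout point de départ $g$, en contrôlant la loi de $g Z_n$ comme une puissance de convolution de $\mu$. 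Le reste de l'argument est formel.

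Une fois la propriété de martingale établie, $(M_n)_n$ est uniformément bornée, donc uniformément intégrable, et le théorème de convergence de Doob fournit une variable aléatoire $M_\infty$ avec $M_n \to M_\infty$ $\mathbb{P}$-presque sûrement (et dans $L^1$). On pose $\tilde{f}^g(\omega) := M_\infty(\omega)$, ce qui établit l'existence de la limite pour $\mathbb{P}$-presque tout $\omega$ et, vu ce qui précède, pour $\mu$-presque tout $g$. Enfin, $\tilde{f}^g$ est mesurable comme limite presque sûre de fonctions mesurables — on l'étend par $0$ sur l'ensemble négligeable où la limite fait défaut — et bornée par $\|f\|_\infty$, ce qui conclut. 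Au besoin, la mesurabilité jointe en $(g,\omega)$ s'obtient par le théorème de Fubini, exactement comme dans la preuve du Lemme \ref{lemme stricte invariance}.
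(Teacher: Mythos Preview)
Your proposal is correct and follows essentially the same approach as the paper: both establish that $(f(gZ_n(\omega)))_n$ is a bounded martingale with respect to the natural filtration via the harmonicity relation $P_\mu f = f$, then invoke Doob's convergence theorem. The paper defers the passage to $\mu$-almost every $g$ to \cite[Lemme 3.2]{benoist_quint}, whereas you explicitly identify the reason this restriction is needed (the equality $P_\mu f = f$ may only hold almost everywhere), which is a welcome clarification.
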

	\begin{proof}
		Soit $f$ comme dans l'énoncé, et $g \in G$. Soit $\Omega_n$ la tribu engendrée par les variables aléatoires $X_1, \dots ,  X_n $ indépendantes et distribuées selon $\mu$, où $X_n : \omega \in\Omega \mapsto  \omega_n$. Alors 
		\begin{eqnarray}
			\mathbb{E} [f(Z_{n+1}(\omega)) \,| \, \Omega_n ] &=& \int_G f( Z_n(\omega) \omega_{n+1} ) d\mu(\omega_{n+1}) \nonumber \\ 
			& = & f(Z_n(\omega)  ) \text{ par $\mu$-harmonicité de }f \nonumber. 
		\end{eqnarray}
		En particulier, la suite d'applications $f_n : \omega \mapsto f(Z_n(\omega))$ forme une martingale bornée par rapport à la suite croissante de tribus $\Omega_n$. Par le théorème de convergence des martingales de Doob, $f_n$ converge presque sûrement et point par point vers une fonction bornée $\tilde{f} \in L^\infty(\Omega, \mathbb{P})$. Pour la démonstration des limites de l'équation \eqref{convergence harmonique} pour $ \mu$-presque tout $g$ , on renvoie à \cite[Lemme 3.2]{benoist_quint}.
	\end{proof}
	
	L'observation suivante est absolument fondamentale, et remonte à Furstenberg, \cite[Lemma 1.33]{furstenberg73}. Elle marque le début de la théorie des bords. L'idée est d'utiliser le théorème de convergence des martingales pour associer à toute mesure stationnaire sur $X$ une famille mesurable de probabilités sur $X$. 
	
	\begin{thm}[{\cite[Lemme 3.2]{benoist_quint11}}]\label{thm mesures limites}
		Soit $G$ un groupe localement compact à base dénombrable, $\mu$ une probabilité borélienne sur $G$ et $(Y, \nu)$ un espace borélien standard muni d'une action borélienne de $G$. On suppose de plus que $\nu$ est $\mu$-stationnaire. Soit $(Z_n)$ la marche aléatoire sur $G$ associée à $\mu$. Alors il existe une application mesurable $\omega \in\Omega \mapsto \nu_\omega \in \prob(Y)$ telle que pour $\mathbb{P}$-presque tout $\omega \in \Omega$ et pour  $\mu$-presque tout $g \in G$, 
		\begin{eqnarray}
			Z_n (\omega)g_\ast\nu \rightarrow \nu_\omega \nonumber
		\end{eqnarray}
		pour la topologie faible-$\ast$. De plus, on a la décomposition 
		\begin{eqnarray}
			\nu = \int_{\Omega} \nu_\omega d \mathbb{P}(\omega) \nonumber. 
		\end{eqnarray}
		Enfin, pour $\mathbb{P}$-presque tout $\omega = (\omega_0, \omega_1, \dots)$, 
		\begin{eqnarray}\label{lem shift mesures cond}
			\nu_\omega = {\omega_0}_\ast \nu_{S\omega}. 
		\end{eqnarray}
		De plus, l'application $\omega \rightarrow \nu_\omega$ est l'unique application mesurable vérifiant ces propriétés. 
	\end{thm}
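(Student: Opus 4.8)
The plan is to reproduce Furstenberg's martingale construction of the conditional measures, in its measure-valued form. After a harmless reduction (realise $(Y,\nu)$ as a $G$-space sitting in a compact metrizable space, so that $\prob(Y)\subseteq\prob(K)$ with $\prob(K)$ compact metrizable; in all the cases of interest — $\overline X$, $\partial_T X$, $\ch(\bdinf)$ — the space is already compact metric and the action continuous), fix a countable family $(f_k)_{k\in\mathbb{N}}$ of continuous functions that is measure-determining. For each $k$ set
\[
\phi^k_n(\omega)=\int_Y f_k\bigl(Z_n(\omega)y\bigr)\,d\nu(y)=\int_Y f_k\,d\bigl((Z_n(\omega))_*\nu\bigr).
\]
First I would check that $(\phi^k_n)_n$ is a bounded martingale for the filtration $(\mathcal{F}_n)$ generated by the first increments: conditioning on $\mathcal{F}_n$ integrates out the fresh increment, and since $Z_{n+1}(\omega)=Z_n(\omega)\cdot(\text{fresh increment})$ and $Z_n(\omega)$ is $\mathcal{F}_n$-measurable, one gets $\mathbb{E}[\phi^k_{n+1}\mid\mathcal{F}_n]=\int_G\int_Y f_k(Z_n(\omega)hy)\,d\nu(y)\,d\mu(h)=\int_Y (f_k\circ Z_n(\omega))\,d(\mu*\nu)=\phi^k_n(\omega)$, the last step being exactly $\mu$-stationarity of $\nu$. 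Doob's theorem then produces a $\mathbb{P}$-full set $\Omega_0$ on which all the $\phi^k_n(\omega)$ converge (and in $L^1$).

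On $\Omega_0$, the assignment $f_k\mapsto\lim_n\phi^k_n(\omega)$ extends by density and the uniform bound $|\phi^k_n|\le\|f_k\|_\infty$ to a positive normalised linear functional on $C(K)$, hence by Riesz represents a probability measure $\nu_\omega\in\prob(K)$; equivalently $(Z_n(\omega))_*\nu\to\nu_\omega$ weak-$*$. Setting $\nu_\omega=\delta_o$ off $\Omega_0$, the map $\omega\mapsto\nu_\omega$ is a pointwise weak-$*$ limit of the measurable maps $\omega\mapsto(Z_n(\omega))_*\nu$, hence is measurable into the standard Borel space $\prob(Y)$. The refinement involving an auxiliary increment $g$ (the statement $Z_n(\omega)g_*\nu\to\nu_\omega$ for $\mu$-a.e.\ $g$ and $\mathbb{P}$-a.e.\ $\omega$) is the measure-valued analogue of Proposition~\ref{prop cv fonction harm}: it follows by running the very same martingale argument on the path obtained by prepending $g$, together with Fubini in the variable $g$.

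Next I would establish the decomposition $\nu=\int_\Omega\nu_\omega\,d\mathbb{P}(\omega)$. For $f\in C(K)$, dominated convergence plus the martingale identity give $\int_\Omega\bigl(\int_Y f\,d\nu_\omega\bigr)\,d\mathbb{P}(\omega)=\lim_n\int_\Omega\phi^f_n\,d\mathbb{P}=\lim_n\int_G\int_Y f(gy)\,d\nu(y)\,d\mu_n(g)=\lim_n\int_Y f\,d(\mu_n*\nu)=\int_Y f\,d\nu$, since $\mu_n*\nu=\nu$ for every $n$ by induction from stationarity. Extending this identity from $C(K)$ to bounded Borel functions and testing against $\mathbf{1}_{K\setminus Y}$ shows $\nu_\omega(K\setminus Y)=0$ a.s., so $\nu_\omega$ is a genuine probability measure on $Y$ for a.e.\ $\omega$ (no escape of mass). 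The shift relation then comes from the factorisation $(Z_n(\omega))_*\nu=(\omega_0)_*\bigl((Z_{n-1}(S\omega))_*\nu\bigr)$: letting $n\to\infty$ on the full set $\Omega_0\cap S^{-1}\Omega_0$ (recall $S$ preserves $\mathbb{P}$), the left side tends to $\nu_\omega$ while, using continuity of the action of $\omega_0$, the right side tends to $(\omega_0)_*\nu_{S\omega}$; hence $\nu_\omega=(\omega_0)_*\nu_{S\omega}$ almost surely.

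Finally, uniqueness. As $\nu_\omega$ is characterised as an almost-sure weak-$*$ limit, any other map enjoying that convergence coincides with it; to see that the decomposition together with shift-equivariance already pins down the value, I would take a measurable $\omega\mapsto\lambda_\omega$ satisfying $\lambda_\omega=(\omega_0)_*\lambda_{S\omega}$ and $\nu=\int_\Omega\lambda_\omega\,d\mathbb{P}$, iterate to get $\lambda_\omega=(Z_n(\omega))_*\lambda_{S^n\omega}$, note that $\lambda_{S^n\omega}$ is independent of $\mathcal{F}_n$ with barycentre $\int_\Omega\lambda_{\omega'}\,d\mathbb{P}=\nu$, so that $\mathbb{E}\bigl[\int_Y f\,d\lambda_\omega\,\big|\,\mathcal{F}_n\bigr]=\int_Y f(Z_n(\omega)y)\,d\nu(y)=\phi^f_n(\omega)$, and then apply Lévy's upward martingale convergence theorem: the left side converges to $\int_Y f\,d\lambda_\omega$ (this quantity being $\mathcal{F}_\infty$-measurable) while the right side converges to $\int_Y f\,d\nu_\omega$, whence $\lambda_\omega=\nu_\omega$ a.s. The step I expect to demand the most care is not the martingale theory but the topological bookkeeping for a merely Borel $G$-action — producing honest probability measures on $Y$ rather than sub-probabilities on a compactification, and justifying the interchange of $(\omega_0)_*$ with the weak-$*$ limit in the shift relation — which is handled by the barycentre/decomposition identity together with the standard reduction to a continuous action on a Polish space.
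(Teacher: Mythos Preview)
Your proof is correct and follows essentially the same route as the paper's: both run Furstenberg's martingale argument (the paper via Proposition~\ref{prop cv fonction harm}, you directly) on a countable dense family in $C(Y)$, apply Doob and Riesz to build $\nu_\omega$, and obtain the decomposition by dominated convergence plus stationarity. Your write-up is in fact more complete than the paper's sketch, which only treats existence and the decomposition and leaves the shift relation and uniqueness to the cited reference; your arguments for those two points (passing to the limit in $(Z_n(\omega))_*\nu=(\omega_0)_*(Z_{n-1}(S\omega))_*\nu$, and the L\'evy upward argument for uniqueness) are standard and sound.
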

	
	Les probabilités $\nu_\omega$ sont appelées \textit{probabilités conditionnelles} pour $\nu$. Le théorème précédent apparait dans \cite{furstenberg73} pour $(Y, \nu)$ un $(G, \mu)$-espace compact.

	\begin{proof}
		Puisque $(Y,\nu)$ est compact métrisable, l'ensemble $C(Y)$ est séparable. Soit $A \subseteq C(X)$ un sous-ensemble dense dénombrable et soit $\Omega_0 \subseteq \Omega$ tel que pour tout $f \in A$, pour tout $\omega \in \Omega_0$, la conclusion de la proposition \ref{prop cv fonction harm} est vraie : $Z_n(\omega)_\ast \nu (f) $ converge vers $\tilde{f}(\omega)$. Pour tout $\omega \in \Omega_0$ et pour tout $g \in G$, on a donc une application 
		\begin{eqnarray}
			f \in A & \rightarrow & \tilde{f}(\omega) \in \mathbb{C} \nonumber
		\end{eqnarray}
		qui s'étend en une unique fonctionnelle positive linéaire de norme 1
		\begin{eqnarray}
			f \in C(Y) & \rightarrow & \tilde{f}(\omega) \in \mathbb{C}. \nonumber
		\end{eqnarray}
		Par le théorème de représentation de Riesz, cette fonctionnelle est représentée par une mesure de probabilité $\nu_\omega \in \prob(Y)$, et donc $Z_n (\omega)g_\ast\nu \rightarrow \nu_\omega $. L'application $w \in \Omega \mapsto \nu_\omega$ est mesurable et définie $\mathbb{P}$-presque partout.
		
		Pour montrer la décomposition, pour toute fonction $f \in C(X)$, le théorème de convergence dominée donne  
		\begin{eqnarray}
			\int_\Omega \nu_\omega(f) d\mathbb{P}(\omega) & = & \int_\Omega \tilde{f}(\omega) d\mathbb{P}(\omega) \nonumber \\
			& = & \int_\Omega \lim_n Z_n(\omega)_\ast \nu (f) d\mathbb{P}(\omega) \nonumber \\
			& = & \lim_n \int_\Omega \int_X f (Z_n(\omega)x) d\nu (x) d\mathbb{P}(\omega) \nonumber \\
			& = & \int_X f(x) d\nu(x) \text{ par $\mu$-stationnarité.}\nonumber
		\end{eqnarray}
	\end{proof}
	L'observation suivante montre qu'il existe une correspondance bi-univoque entre les mesures stationnaires sur $Y$ et les applications de bord $\omega \rightarrow \nu_\omega \in \prob(Y)$ vérifiant la propriété \eqref{lem shift mesures cond}. 
	 
	\begin{cor}\label{cor application de bord mesure stat}
		Soit $\phi : \Omega \rightarrow \prob(Y)$ une application mesurable telle que pour $\mathbb{P}$-presque tout $\omega = (\omega_0, \omega_1, \dots)$, 
		\begin{eqnarray}
			\phi(\omega) = {\omega_0}_\ast \phi(S\omega) \nonumber. 
		\end{eqnarray}
		Alors la mesure $\nu = \int_\Omega \phi(\omega) d\mathbb{P}(\omega)$ est $\mu$-stationnaire. 
	\end{cor}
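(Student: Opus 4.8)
Corollaire à prouver : si $\phi : \Omega \to \prob(Y)$ vérifie $\phi(\omega) = {\omega_0}_\ast \phi(S\omega)$ presque sûrement, alors $\nu = \int_\Omega \phi(\omega)\, d\mathbb{P}(\omega)$ est $\mu$-stationnaire.

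Plan de preuve.

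D'abord, notons $\Omega = G^\mathbb{N}$ avec $\mathbb{P} = \mu^{\otimes \mathbb{N}}$, et le décalage $S : (\omega_0, \omega_1, \dots) \mapsto (\omega_1, \omega_2, \dots)$. L'idée centrale est que $S$ ne préserve pas $\mathbb{P}$, mais que la désintégration de $\mathbb{P}$ selon la première coordonnée $\omega_0$ (de loi $\mu$) redonne $\mathbb{P}$ sur les coordonnées restantes : pour toute fonction mesurable bornée $\Psi$ sur $\Omega$,
\[
\int_\Omega \Psi(\omega)\, d\mathbb{P}(\omega) = \int_G \int_\Omega \Psi(g, S\omega)\, d\mathbb{P}(\omega)\, d\mu(g),
\]
où l'on écrit $(g, S\omega)$ pour la suite dont la première coordonnée est $g$ et dont le reste est $S\omega$. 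C'est simplement la définition de la mesure produit. Cette identité est l'unique ingrédient non formel.

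Ensuite, je calcule $\mu \ast \nu$ en testant contre une fonction continue bornée $f$ sur $Y$. Par définition de la convolution et par la formule de désintégration de $\nu$,
\[
(\mu \ast \nu)(f) = \int_G \int_Y f(g \cdot y)\, d\nu(y)\, d\mu(g) = \int_G \int_\Omega (g_\ast \phi(\omega))(f)\, d\mathbb{P}(\omega)\, d\mu(g).
\]
Maintenant j'applique l'identité de désintégration de $\mathbb{P}$ ci-dessus, en prenant $\Psi(\omega) = (\omega_{0\,\ast} \phi(S\omega))(f)$ : comme $g$ joue le rôle de $\omega_0$ et $S\omega$ celui du reste,
\[
\int_G \int_\Omega (g_\ast \phi(S\omega))(f)\, d\mathbb{P}(\omega)\, d\mu(g) = \int_\Omega (\omega_{0\,\ast}\phi(S\omega))(f)\, d\mathbb{P}(\omega).
\]
En utilisant l'hypothèse $\phi(\omega) = \omega_{0\,\ast}\phi(S\omega)$ pour $\mathbb{P}$-presque tout $\omega$, le membre de droite est $\int_\Omega \phi(\omega)(f)\, d\mathbb{P}(\omega) = \nu(f)$. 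Il reste à vérifier que les deux expressions initiales coïncident : dans la première ligne on a $\int_G \int_\Omega (g_\ast \phi(\omega))(f)$, et dans la seconde $\int_G \int_\Omega (g_\ast \phi(S\omega))(f)$ — mais par invariance de $\mathbb{P}$ sous... non : ici il faut être prudent, car $S$ ne préserve pas $\mathbb{P}$. En réalité la bonne manœuvre est de partir directement de $\int_G\int_\Omega(g_\ast\phi(\omega))(f)\,d\mathbb{P}(\omega)\,d\mu(g)$ et de reconnaître, via Fubini et la structure produit de $\mathbb{P} = \mu \otimes \mathbb{P}$, que c'est exactement $\int_\Omega (\omega_{0\,\ast}\phi(S\omega))(f)\,d\mathbb{P}(\omega)$, puisque sous $\mathbb{P}$ la variable $\omega_0$ est de loi $\mu$ et indépendante de $S\omega$ qui est de loi $\mathbb{P}$ ; il n'y a donc aucun changement de variable par $S$ à faire, seulement une lecture de la mesure produit. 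On conclut alors par l'hypothèse sur $\phi$.

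L'argument n'a pas de réelle difficulté : le seul point à soigner est de bien identifier que, sous $\mathbb{P} = \mu^{\otimes \mathbb{N}}$, le couple $(\omega_0, S\omega)$ a pour loi $\mu \otimes \mathbb{P}$, de sorte que $\int_G \int_\Omega \Phi(g, S\omega)\, d\mathbb{P}(\omega)\, d\mu(g) = \int_\Omega \Phi(\omega_0, S\omega)\, d\mathbb{P}(\omega)$ pour toute $\Phi$ mesurable bornée sur $G \times \Omega$. Appliqué à $\Phi(g, \omega') = (g_\ast \phi(\omega'))(f)$, ceci transforme directement $(\mu \ast \nu)(f)$ en $\int_\Omega (\omega_{0\,\ast}\phi(S\omega))(f)\, d\mathbb{P}(\omega)$, qui vaut $\nu(f)$ par la relation d'équivariance supposée. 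Comme $f$ parcourt un ensemble de fonctions continues bornées séparant les mesures sur l'espace borélien standard $Y$, on obtient $\mu \ast \nu = \nu$. On remarquera que ce corollaire est exactement la réciproque de la dernière assertion du Théorème précédent (« l'application $\omega \mapsto \nu_\omega$ est l'unique application mesurable vérifiant ces propriétés ») : toute solution de l'équation de cohomologie $\phi = \omega_{0\,\ast}(\phi \circ S)$ provient d'une mesure stationnaire, et réciproquement.
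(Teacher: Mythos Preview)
Your proof is correct and follows the same approach as the paper's: one recognizes that under $\mathbb{P}=\mu^{\otimes\mathbb{N}}$ the pair $(\omega_0,S\omega)$ has law $\mu\otimes\mathbb{P}$, which turns $\int_G\int_\Omega (g_\ast\phi(\omega'))(f)\,d\mathbb{P}(\omega')\,d\mu(g)$ into $\int_\Omega (\omega_{0\ast}\phi(S\omega))(f)\,d\mathbb{P}(\omega)$, and the hypothesis finishes the job. One correction worth making: you twice assert that $S$ ne préserve pas $\mathbb{P}$, but it does --- the one-sided Bernoulli shift satisfies $S_\ast\mathbb{P}=\mathbb{P}$ (your own identity $\int_\Omega\Psi(g,S\omega)\,d\mathbb{P}(\omega)=\int_\Omega\Psi(g,\omega')\,d\mathbb{P}(\omega')$ is exactly this fact); this misconception causes the hesitation in your middle paragraph but, as you correctly note, is not needed for the clean argument you give at the end.
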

	
	\begin{proof}
		On a en effet 
		\begin{eqnarray}
			\mu \ast \nu & = & \int_G \int_\Omega g_\ast\phi(\omega) d\mu(g)d\mathbb{P}(\omega) \nonumber \\
			& = &  \int_G \int_\Omega {\omega_0}_\ast\phi(T\omega) d\mathbb{P}(\omega) \nonumber\\
			& =& \int_\Omega \phi(\omega) d\mathbb{P}(\omega) = \nu \nonumber. 
		\end{eqnarray}
	\end{proof}

	\section{Bord de Poisson-Furstenberg}\label{section bord poisson}
	
	Dans cette section, on construit le bord de Poisson-Furstenberg associé à une mesure de probabilité $\mu$ sur un groupe $G$. Il existe plusieurs constructions équivalentes, qui sont résumées dans \cite{kaimanovich_vershik}. 
	
	\subsection{Construction et propriétés}
	
	Notons $\Har(G, \mu)$ l'ensemble des fonctions bornées $\mu$-harmoniques sur $G$, c'est-à-dire les fonctions bornées mesurables et $P_\mu$ invariantes. Soit $(A, \eta)$ un $(G,\mu)$-espace. Pour toute fonction $f \in L^\infty (A, \eta)$, on peut construire une application $\mu$-harmonique par 
	\begin{eqnarray}
		\phi_\mu (f) : g \in G \mapsto \int_A f(gx) d\eta(x) \nonumber. 
	\end{eqnarray}
	En effet, par Fubini,
	\begin{eqnarray}
		P_\mu(\phi_\mu(f)) (g) & =& \int_G \int_X f(gg'x) d\eta(x) d \mu(g') \nonumber\\
		&  =& \int_X f(gx) d\eta(x) \text{ car $\eta$ est $\mu$-stationnaire}\nonumber \\
		& =& \phi_\mu(f)(g). \nonumber
	\end{eqnarray}
	Donc $\phi_\mu : L^\infty(A, \eta) \rightarrow \Har(G, \mu) $ est une contraction positive linéaire $G$-équivariante. On appelle cette application \textit{la transformée de Poisson} associée à $\mu$. Le \textit{bord de Poisson-Furstenberg} est l'unique $(G,\mu)$-espace (à isomorphisme mesurable près) pour lequel $\phi_\mu$ est une isométrie surjective.  
	
	Soit donc $\mu \in \prob(G) $ une probabilité sur un groupe localement compact à base dénombrable $G$. Dans ce paragraphe, on suppose pour simplifier l'argument que $\mu$ est équivalente à la mesure de Haar sur $G$, mais tout ce qui suit reste valable si on suppose seulement $\mu$ \emph{étalée} (c'est-à-dire telle que $\mu^n$ est non-singulière par rapport à Haar pour un certain $n$) et \emph{non-dégénérée}, i.e. telle que tout sous-groupe fermé $S$ tel que $\mu(S) = 1$ est $G$, voir \cite{kaimanovich03}. On rappelle que $(\Omega,\mathbb{P}) = (G^\mathbb{N}, \mu^{\otimes \mathbb{N}})$ est l'espace des incréments sur $G$, munie de la tribu borélienne produit. On se donne l'action suivante de $G$ sur $\Omega$ : pour $g \in G$, pour $\omega \in \Omega$, 
	\begin{eqnarray}
		g. (\omega_1, \omega_2, \dots) & =& (g\omega_1, \omega_2, \dots). \nonumber
	\end{eqnarray}
	Cette action est non-singulière car $g_\ast \mathbb{P} = g_\ast \mu \otimes \bigotimes_{n=1}^\infty \mu$ et $g_\ast \mu \sim \mu$ car $\mu \sim \haar_G$. De même, on considère l'action de $\mathbb{N}$ sur $\Omega$ par
	\begin{eqnarray}
		T : \omega  = (\omega_1, \omega_2, \dots) \in \Omega & \mapsto & (\omega_1\omega_2, \omega_3, \dots) \in\Omega \nonumber
	\end{eqnarray}
	L'action de $T$ sur $\Omega$ est également non-singulière. Il est clair que l'action de $G$ et celle de $\mathbb{N}$ commutent. Considérons maintenant l'application 
	\begin{eqnarray}
		\alpha : G \times \Omega &\rightarrow &G^\mathbb{N} \nonumber \\
		(g, \omega) & \mapsto & (g Z_n(\omega))_{n\in \mathbb{N}} \nonumber 
	\end{eqnarray}
	Munissons $G \times \Omega $ de la mesure $(\haar_G \otimes \mathbb{P})$, et soit $\tilde{m}$ la mesure image $\alpha_\ast (\haar \otimes \mathbb{P})$ sur $G^\mathbb{N} $. Quitte à prendre une mesure dans la classe d'équivalence de $\tilde{m}$, on peut la supposer finie. On note l'espace image $\tilde{\Omega} := G^\mathbb{N} $ muni de $\tilde{m}$ et on l'appelle \textit{l'espace des trajectoires} de la marche aléatoire : ses éléments sont les marches aléatoires qui partent d'un point $g_0 \in G$, distribué selon $\haar_G$. L'espace $(\tilde{\Omega}, \tilde{m} ) $ est un espace de Lebesgue standard, sur lequel $\mathbb{N}$ agit par la translation de Bernoulli $S : \tilde{\Omega} \rightarrow \tilde{\Omega}$ définie par  
	\begin{eqnarray}
		S (Z_n)_{n\in \mathbb{N}} = (Z_{n+1})_{n\in \mathbb{N}} \nonumber. 
	\end{eqnarray}
	
	Cette action est non-singulière (c'est l'image par $\alpha$ de l'action naturelle de $T$ sur $\Omega$). De même, $G$ agit sur $\tilde{\Omega}$ par $g \cdot (Z_n)_{n \in \mathbb{N}} = (gZ_n)$, et les actions de $G$ et $S$ commutent. Le bord de Poisson-Furstenberg $(B(G, \mu), \nu)$ est l'espace des composants ergodiques de l'action de $\mathbb{N}$ sur $\tilde{\Omega}$. Plus précisément, considérons la relation d'équivalence $\sim$ sur $\tilde{\Omega}$ 
	\begin{eqnarray}
		(Z_n)_{n \in \mathbb{N}} \sim (Z'_n)_{n \in \mathbb{N}} \text{ s'il existe $m,k_0 \in \mathbb{N} $ tels que } S^m\cdot (Z_n)_{n \geq k_0} = (Z'_n)_{n\geq k_0}  \nonumber.
	\end{eqnarray} 
	
	On considère la tribu $\mathcal{A}_S$ définie de la manière suivante. Un ensemble mesurable $A \subseteq \tilde{\Omega}$ est dit $S$-stationnaire (à ensemble négligeable près) si pour $\tilde{m}$-presque toute trajectoire $ x \in A$, $A$ contient également tous les éléments de la classe de $x$ pour la relation d'équivalence $\sim$. La tribu $\mathcal{A}_S $ est la tribu engendrée par les ensembles $S$-stationnaires (à mesure nulle près). Cette tribu forme une $\sigma$-algèbre de Boole standard. On peut donc appliquer le théorème de réalisation de Mackey \cite[Section 3]{mackey1962} : il existe un espace probabilisé standard $(B = B(G, \mu), \nu)$ munie d'une action non-singulière de $G$ et qui réalise l'espace quotient de $\tilde{\Omega}$ par la partition mesurable des ensembles stationnaires $\mathcal{A}_S$. Cet espace est de plus essentiellement unique \cite[Section 5]{mackey1962}. De plus, il existe une application mesurable canonique 
	\begin{eqnarray}
		\bnd : \tilde{\Omega} \rightarrow B \nonumber
	\end{eqnarray}
	telle que $\nu = \bnd_\ast \tilde{m}$ est $\mu$-stationnaire.  
	
	\begin{Def}
		L'espace probabilisé $B(G, \mu)$ est le \textit{bord de Poisson-Furstenberg} associé à $(G,\mu)$. 
	\end{Def}
	
	Le bord de Poisson décrit en quelque sorte le comportement asymptotique de la marche aléatoire associée à $\mu$ sur $G$. 
	
	Le théorème suivant vient de \cite{furstenberg63}. Il montre que le bord de Poisson représente les fonctions $\mu$-harmoniques sur $G$. On peut en trouver une démonstration dans \cite[Theorem 3.16]{houdayer21}.
	\begin{thm}
		La transformée de Poisson associée à $\mu$ 
		\begin{eqnarray}
			\phi_\mu : L^\infty (B, \nu) &\rightarrow &\Har(G, \mu) \nonumber \\
			f & \mapsto & \phi_\mu(f) : g \in G \mapsto \int_B f(gx) d\nu(x) \nonumber
		\end{eqnarray}
		est une isométrie linéaire $G$-équivariante et surjective. 
	\end{thm}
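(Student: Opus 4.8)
The plan is to verify that the space $B = B(G,\mu)$ built above --- the space of ergodic components of the shift $S$ on the trajectory space $(\tilde{\Omega}, \tilde{m})$, together with the measure $\nu = \bnd_* \tilde{m}$ --- has the announced universal property, in four stages: $(i)$ $\phi_\mu$ is a well-defined $G$-equivariant linear contraction into $\Har(G,\mu)$; $(ii)$ every bounded $\mu$-harmonic function admits a boundary value on $B$; $(iii)$ the boundary-value map inverts $\phi_\mu$, which gives surjectivity; $(iv)$ $\phi_\mu$ is isometric, which in particular gives injectivity. For $(i)$, the fact that $\phi_\mu(f)$ is $\mu$-harmonic for $f \in L^\infty(B,\nu)$ is exactly the Fubini computation carried out just before the statement, which uses only $\mu$-stationarity of $\nu$; the $G$-equivariance is the analogous one-line computation $\phi_\mu(g\cdot f)(k) = \int_B f(g^{-1}kx)\, d\nu(x) = \phi_\mu(f)(g^{-1}k)$; and $\|\phi_\mu(f)\|_\infty \le \|f\|_\infty$ is immediate since $\phi_\mu(f)(g)$ is an average of values of $f$.

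For $(ii)$, let $h \in \Har(G,\mu)$ be bounded. By Proposition \ref{prop cv fonction harm}, $\tilde{h}(\omega) := \lim_n h(Z_n(\omega))$ exists for $\mathbb{P}$-almost every $\omega$ and defines a bounded measurable function with $\|\tilde{h}\|_\infty \le \|h\|_\infty$. Transporting $\tilde{h}$ to $\tilde{\Omega}$ via the map $\alpha$, one checks that $\tilde{h}$ is invariant under the shift $S$ and, more precisely, depends only on the $\sim$-equivalence class of a trajectory: two trajectories that eventually coincide up to a shift yield the same limit. Hence $\tilde{h}$ is $\mathcal{A}_S$-measurable, and by the defining property of the Mackey realization of $\mathcal{A}_S$ it factors as $\tilde{h} = f_h \circ \bnd$ for a unique $f_h \in L^\infty(B,\nu)$ with $\|f_h\|_\infty \le \|h\|_\infty$. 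For $(iii)$, fix $g \in G$: by $\mu$-harmonicity of $h$ the sequence $n \mapsto h(gZ_n(\omega))$ is a bounded $\mathbb{P}$-martingale (as in the proof of Proposition \ref{prop cv fonction harm}), so $h(g) = \mathbb{E}_{\mathbb{P}}[\lim_n h(gZ_n(\omega))]$; using the $G$-equivariance of $\alpha$ and $\bnd$ together with $\nu = \bnd_* \tilde{m}$, the right-hand side is $\int_B f_h(gx)\, d\nu(x) = \phi_\mu(f_h)(g)$. Thus $\phi_\mu(f_h) = h$, and $\phi_\mu$ is onto.

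For $(iv)$ it remains to upgrade the contraction to an isometry (which also forces injectivity). Here one applies Theorem \ref{thm mesures limites} to the $(G,\mu)$-space $(B,\nu)$: the conditional probabilities $\nu_\omega \in \prob(B)$ satisfy $Z_n(\omega)_*\nu \to \nu_\omega$ weakly-$*$. The decisive point is that these conditional measures are Dirac masses, $\nu_\omega = \delta_{\bnd(\omega)}$; granting this, for $h = \phi_\mu(f)$ one gets $h(Z_n(\omega)) = \bigl(Z_n(\omega)_*\nu\bigr)(f) \to \nu_\omega(f) = f(\bnd(\omega))$, so $f_h = f$ almost everywhere, whence $\|f\|_\infty = \|\tilde{h}\|_\infty = \|h\|_\infty = \|\phi_\mu(f)\|_\infty$ by $(ii)$.

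The identification $\nu_\omega = \delta_{\bnd(\omega)}$ is the heart of the argument and the step I expect to be the main obstacle: it expresses that the $\sigma$-algebra $\mathcal{A}_S$ (equivalently, the tail of the random walk) is exactly generated by $\bnd$, i.e. that the ergodic-component space we constructed is maximal among $\mu$-boundaries. I would handle it by realizing $B$ inside a compact metric space so that the weak-$*$ convergence of Theorem \ref{thm mesures limites} can be tested against a countable dense family $\{f_k\}$ of continuous functions; each pull-back $h_k = \phi_\mu(f_k)$ is bounded $\mu$-harmonic, and by $(ii)$ its martingale limit $\widetilde{h_k}(\omega)$ is a function of the $\sim$-class of $\omega$ alone, so $\nu_\omega(f_k) = \widetilde{h_k}(\omega)$ depends only on $\bnd(\omega)$. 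Since the $f_k$ separate points and measures on $B$, the limit measure $\nu_\omega$ is determined by $\bnd(\omega)$; a standard $S$-invariance and extremality argument (the conditional measures of the Poisson boundary cannot be further decomposed) then forces $\nu_\omega$ to be the point mass at $\bnd(\omega)$. Once this is in place, stages $(i)$--$(iv)$ assemble into the claimed statement.
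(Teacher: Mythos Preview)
The paper does not prove this theorem: it attributes the result to Furstenberg and refers to \cite[Theorem 3.16]{houdayer21} for a proof, so there is no ``paper's own proof'' to compare against.

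Your outline is sound and follows the standard route. Steps $(i)$--$(iii)$ are correct as written; the computation just above the theorem already gives you that $\phi_\mu$ lands in $\Har(G,\mu)$, and the martingale argument of Proposition~\ref{prop cv fonction harm} together with the construction of $B$ as the Mackey realization of $\mathcal{A}_S$ produces the boundary-value map and the surjectivity exactly as you say.

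For step $(iv)$, your approach via conditional measures works but is more elaborate than necessary, and the ``extremality argument'' you invoke at the end is left as a black box. A more direct route, closer to what one finds in the reference cited, avoids Theorem~\ref{thm mesures limites} altogether: for $f\in L^\infty(B,\nu)$, pull back to $\hat f=f\circ\bnd$ on the trajectory space. By construction of $B$ this is an $\mathcal{A}_S$-measurable bounded function with $\|\hat f\|_\infty=\|f\|_\infty$. One checks that the conditional expectation of $\hat f$ with respect to the $\sigma$-algebra generated by the first $n$ coordinates is precisely $\phi_\mu(f)(Z_n)$; the martingale convergence theorem then gives $\phi_\mu(f)(Z_n)\to\hat f$ almost surely, so $\|\phi_\mu(f)\|_\infty\ge\|\hat f\|_\infty=\|f\|_\infty$, and you are done. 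This sidesteps the need to identify $\nu_\omega$ as a Dirac mass (though of course that identification follows a posteriori).
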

	
	Donnons une première propriété d'ergodicité du bord de Poisson. On verra par $B(G, \mu)$ satisfait des conditions d'ergodicité beaucoup plus fortes, notamment grâce aux travaux de Kaimanovich \cite{kaimanovich03}, et Bader-Furman \cite{bader_furman14}. On rappelle que l'action non-singulière d'un groupe $G$ sur un espace mesurable $(S,\eta)$ est \textit{ergodique} si pour toute fonction mesurable $G$-invariante $f : S \rightarrow \mathbb{C}$ est $\eta$-presque partout constante. De même, l'action non-singulière d'un groupe $G$ sur $(S,\eta)$ est \textit{doublement ergodique} si son action diagonale sur $(S \times S, \eta \times \eta)$ est ergodique.
	
	\begin{cor}
		L'action de $G$ sur son bord de Poisson-Furstenberg $(B(G,\mu), \nu)$ est ergodique. 
	\end{cor}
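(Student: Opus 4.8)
L'action de $G$ sur son bord de Poisson-Furstenberg $(B(G,\mu), \nu)$ est ergodique.

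Je veux montrer que toute fonction mesurable $G$-invariante sur $B$ est essentiellement constante. Laissez-moi réfléchir à la stratégie.

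Le bord de Poisson $B$ est construit comme l'espace des composantes ergodiques de l'action de $\mathbb{N}$ (via $S$) sur l'espace des trajectoires $\tilde{\Omega}$. L'application $\bnd : \tilde{\Omega} \to B$ est $G$-équivariante et $\bnd_* \tilde{m} = \nu$. Donc une fonction $G$-invariante $f$ sur $B$ se relève en $f \circ \bnd$ sur $\tilde{\Omega}$, qui est $G$-invariante et $S$-invariante (car $\bnd$ est constante sur les classes de $\sim$).

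L'idée principale : il faut montrer que l'action conjointe de $G \times \mathbb{N}$ (via $g$ et $S$) sur $\tilde{\Omega}$ est ergodique, puisque alors une fonction à la fois $G$- et $S$-invariante sur $\tilde{\Omega}$ est constante, d'où $f$ constante sur $B$. Or $\tilde{\Omega}$ est l'image par $\alpha$ de $G \times \Omega$ muni de $\haar_G \otimes \mathbb{P}$, et l'action de $S$ sur $\tilde{\Omega}$ correspond à l'action de $T$ sur $\Omega$ (décalage $T(\omega_1,\omega_2,\dots) = (\omega_1\omega_2, \omega_3,\dots)$), tandis que l'action de $G$ sur $\tilde{\Omega}$ correspond à l'action de $G$ sur la première coordonnée de $G \times \Omega$. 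L'action conjointe de $G$ (sur le facteur $G$) et de $T$ (sur $\Omega$) sur $G \times \Omega$ est ergodique : en effet, modulo l'isomorphisme $\alpha$, une fonction $G \times T$-invariante descend en une fonction sur l'espace quotient de $\Omega$ par $T$, laquelle est ergodique pour $T$ puisque $\mathbb{P} = \mu^{\otimes\mathbb{N}}$ est une mesure produit et $T$ agit comme un décalage unilatéral — c'est un système de Bernoulli, donc ergodique (en fait mélangeant). Pour être soigneux, on remarque que $\alpha : (G \times \Omega, \haar_G \otimes \mathbb{P}) \to (\tilde\Omega, \tilde m)$ est un isomorphisme d'espaces mesurés $G$- et ($T\leftrightarrow S$)-équivariant (c'est exactement la façon dont $\tilde m$ est définie), et que l'action de $G$ sur le facteur $G$ de $G \times \Omega$ par translation à gauche est elle-même ergodique (orbite unique à mesure nulle près), donc l'action produit $G \times T$ sur $G \times \Omega$ est ergodique par le théorème de Fubini appliqué aux ensembles invariants.

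Étapes dans l'ordre : (1) relever $f$ $G$-invariante sur $B$ en $\tilde f := f \circ \bnd$ sur $\tilde\Omega$ ; vérifier que $\tilde f$ est $G$-invariante (par équivariance de $\bnd$) et $S$-invariante (car $\bnd$ est $\mathcal{A}_S$-mesurable, i.e. constante sur les $\sim$-classes). (2) Transporter par $\alpha$ : $\tilde f \circ \alpha$ est une fonction sur $G \times \Omega$ invariante à la fois sous l'action de $G$ sur le premier facteur et sous $\mathrm{id}\times T$. (3) Utiliser l'ergodicité de l'action de $G$ sur $(G, \haar_G)$ par translation et l'ergodicité de $T$ sur $(\Omega, \mathbb{P})$ (Bernoulli), combinées via Fubini, pour conclure que $\tilde f \circ \alpha$ est $(\haar_G \otimes \mathbb{P})$-presque partout constante. (4) Redescendre : $\tilde f$ est $\tilde m$-presque partout constante, donc $f = (\tilde f$ poussé par $\bnd)$ est $\nu$-presque partout constante.

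Le point le plus délicat est l'étape (2)–(3) : justifier proprement que $\alpha$ est bien un isomorphisme mesuré équivariant (et non seulement surjectif), ce qui repose sur la définition même de $\tilde m$ comme $\alpha_*(\haar_G \otimes \mathbb{P})$ et sur le fait que deux trajectoires avec le même point de départ et les mêmes incréments coïncident — c'est une vérification directe mais il faut prendre garde à la mesurabilité. Alternativement, si l'on ne veut pas passer par $\alpha$, on peut invoquer directement que par construction $B$ est l'espace des composantes ergodiques de $S$ sur $\tilde\Omega$, et que l'action de $S$ sur $\tilde\Omega$ étant une extension de Bernoulli du système trivial, elle est ergodique comme action de $G \times \mathbb{N}$ dès que $G$ agit de façon ergodique « transversalement », ce qui est ici automatique. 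Je privilégierais néanmoins la preuve via $\alpha$ car elle est la plus autonome.

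\begin{proof}
Soit $f \in L^\infty(B,\nu)$ une fonction $G$-invariante. Posons $\tilde{f} = f \circ \bnd \in L^\infty(\tilde{\Omega}, \tilde{m})$. Puisque $\bnd$ est $G$-équivariante, $\tilde{f}$ est $G$-invariante ; puisque $\bnd$ est mesurable pour la tribu $\mathcal{A}_S$ (elle est constante sur les classes de la relation $\sim$), $\tilde{f}$ est $S$-invariante.

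Rappelons que $\tilde{m} = \alpha_*(\haar_G \otimes \mathbb{P})$, où $\alpha(g,\omega) = (gZ_n(\omega))_{n\in\mathbb{N}}$. L'application $\alpha$ est un isomorphisme d'espaces mesurés : elle est injective modulo les ensembles négligeables (le point de départ $g$ et la suite des incréments $(\omega_n)$ se récupèrent à partir de la trajectoire), et par définition elle transporte $\haar_G \otimes \mathbb{P}$ sur $\tilde{m}$. De plus $\alpha$ est équivariante pour l'action de $G$ (qui agit par translation à gauche sur le premier facteur de $G\times\Omega$ et par $g\cdot(Z_n) = (gZ_n)$ sur $\tilde{\Omega}$) et entrelace l'action de $T$ sur $\Omega$ avec celle de $S$ sur $\tilde{\Omega}$. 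Ainsi $\tilde{f}\circ\alpha$ est une fonction bornée sur $(G\times\Omega, \haar_G\otimes\mathbb{P})$ invariante à la fois sous l'action de $G$ sur le premier facteur et sous $\mathrm{id}\times T$.

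L'action de $G$ sur $(G,\haar_G)$ par translation à gauche est ergodique, et l'action de $T$ sur $(\Omega,\mathbb{P}) = (G^{\mathbb{N}}, \mu^{\otimes\mathbb{N}})$ est celle d'un décalage unilatéral de Bernoulli, donc ergodique. Par le théorème de Fubini, une fonction sur $G\times\Omega$ invariante sous $G$ dans la première variable est $\haar_G$-presque partout égale à une fonction ne dépendant que de la seconde variable ; cette dernière étant $T$-invariante, elle est $\mathbb{P}$-presque partout constante. Donc $\tilde{f}\circ\alpha$ est $(\haar_G\otimes\mathbb{P})$-presque partout constante, d'où $\tilde{f}$ est $\tilde{m}$-presque partout constante.

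Comme $\nu = \bnd_*\tilde{m}$ et $\tilde{f} = f\circ\bnd$, on conclut que $f$ est $\nu$-presque partout constante. L'action de $G$ sur $(B(G,\mu),\nu)$ est donc ergodique.
\end{proof}
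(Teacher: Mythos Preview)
Your argument has a genuine gap. You assert that ``l'action de $T$ sur $(\Omega,\mathbb{P})$ est celle d'un décalage unilatéral de Bernoulli, donc ergodique'', but with the paper's definition $T(\omega_1,\omega_2,\dots)=(\omega_1\omega_2,\omega_3,\dots)$ this is \emph{not} the Bernoulli shift, and $T$ is not ergodic in general: the map $\omega\mapsto\bnd(\alpha(e,\omega))$ is $T$-invariant (since $\alpha(e,T\omega)\sim\alpha(e,\omega)$ in $\tilde\Omega$), so every function on $B$ pulls back to a $T$-invariant function on $\Omega$. Whenever the Poisson boundary is non-trivial you therefore get non-constant $T$-invariant functions, and your argument becomes circular. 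The underlying issue is that $\alpha$ does not intertwine $\mathrm{id}\times T$ with $S$ as you claim; the correct identity is $S\circ\alpha(g,\omega)=\alpha(g\omega_1,S'\omega)$, where $S'(\omega_1,\omega_2,\dots)=(\omega_2,\omega_3,\dots)$ is the standard shift. Using this, $\tilde f\circ\alpha$ is invariant under $\tilde T(g,\omega):=(g\omega_1,S'\omega)$; after reducing by $G$-invariance in the first variable to a function $h_0(\omega)$, the $\tilde T$-invariance reads $h_0(\omega)=h_0(S'\omega)$, and now Kolmogorov's zero-one law for the i.i.d.\ increments gives that $h_0$ is constant. So your route can be salvaged with this correction.

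For comparison, the paper's proof is entirely different and much shorter: it exploits the immediately preceding theorem that the Poisson transform $\phi_\mu:L^\infty(B,\nu)\to\Har(G,\mu)$ is a linear isometry. For a $G$-invariant measurable set $F\subseteq B$ one has $\phi_\mu(\mathds{1}_F)(g)=\int_B\mathds{1}_F(gx)\,d\nu(x)=\nu(F)$, a constant harmonic function; injectivity of $\phi_\mu$ then forces $\mathds{1}_F=\nu(F)\mathds{1}_B$ almost everywhere, hence $\nu(F)\in\{0,1\}$.
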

	\begin{proof}
		Soit $F \subseteq B$ un ensemble mesurable $G$-invariant. Alors si on note $\mathds{1}_F$ la fonction caractéristique de $F$, $\phi_\mu(\mathds{1}_F)$ est une application $\mu$-harmonique constante égale à $\nu(F)$. Par injectivité de l'application $\phi_\mu$, $\mathds{1}_F = \nu(F) \mathds{1}_B$, ce qui implique que $\nu (F) = 0 $ ou $\nu(F) =1$. 
	\end{proof}
	
	Le théorème suivant donne la propriété universelle que satisfait le bord de Poisson parmi les $(G, \mu)$-espaces. Une application mesurable $\pi : (S, \eta) \rightarrow (A, \beta)$ est un \textit{facteur} si $\pi_\ast \eta = \beta$. 
	
	\begin{thm}[{\cite[Theorem 3.1]{furstenberg63}}]\label{thm furst mu bord}
		Soit $(S, \eta)$ un $(G, \mu)$-espace compact et métrisable. Alors les propositions suivantes sont équivalentes. 
		\begin{enumerate}
			\item[i)] Les mesures limites $\eta_\omega = \lim Z_n(\omega)_\ast \eta$ données par le théorème \ref{thm mesures limites} sont $\mathbb{P}$-presque sûrement des mesures de Dirac $\delta_{\phi(\omega)}$, pour $\phi : \Omega \rightarrow S$ une application mesurable $G$-équivariante. 
			\item[ii)] Il existe un facteur mesurable $G$-équivariant $\pi : (B(G, \mu), \nu) \rightarrow (S, \eta)$
		\end{enumerate}
	\end{thm}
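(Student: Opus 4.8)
The plan is to prove the two implications separately, using as a black box the fundamental fact that $(B(G,\mu),\nu)$ is a $\mu$-bord: composing $\omega\mapsto\alpha(e,\omega)$ with the canonical projection $\bnd:\tilde\Omega\to B$ yields a measurable $G$-equivariant map $\bnd_e:\Omega\to B$ with $(\bnd_e)_\ast\mathbb{P}=\nu$, and for every $F\in L^\infty(B,\nu)$ the bounded harmonic function $\phi_\mu(F)$ satisfies $\phi_\mu(F)(Z_n(\omega))\to F(\bnd_e(\omega))$ for $\mathbb{P}$-almost every $\omega$. This radial-convergence statement follows from the construction of $B$, Proposition \ref{prop cv fonction harm} (Doob's martingale theorem) and the Poisson representation theorem above; see \cite{kaimanovich_vershik}, \cite{furstenberg63}. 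Testing it against a countable dense subset of $C(B)$, it gives in particular that the conditional measures of $\nu$ provided by Theorem \ref{thm mesures limites} are $\nu_\omega=\delta_{\bnd_e(\omega)}$ almost surely ($G$-equivariance of $\bnd_e$ follows from $G$-equivariance of $\bnd$ and the fact that $\bnd$ is constant along shift-orbits).

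\textbf{Direction ii) $\Rightarrow$ i).} Given a $G$-equivariant measurable factor $\pi:(B,\nu)\to(S,\eta)$ (so $\pi_\ast\nu=\eta$), I would set $\phi:=\pi\circ\bnd_e:\Omega\to S$, which is $G$-equivariant. For $f\in C(S)$ and a.e.\ $\omega$, Theorem \ref{thm mesures limites} gives
\[\eta_\omega(f)=\lim_n\int_S f(Z_n(\omega)s)\,d\eta(s);\]
using $\eta=\pi_\ast\nu$ and then the equivariance of $\pi$, this equals $\lim_n\int_B(f\circ\pi)(Z_n(\omega)b)\,d\nu(b)=\lim_n\phi_\mu(f\circ\pi)(Z_n(\omega))$, which by the black box is $(f\circ\pi)(\bnd_e(\omega))=f(\phi(\omega))$. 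Varying $f$ over a countable dense subset of $C(S)$ yields $\eta_\omega=\delta_{\phi(\omega)}$ a.s., i.e.\ i).

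\textbf{Direction i) $\Rightarrow$ ii).} Suppose $\eta_\omega=\delta_{\phi(\omega)}$ with $\phi:\Omega\to S$ measurable and $G$-equivariant. Relation \eqref{lem shift mesures cond} of Theorem \ref{thm mesures limites} forces $\delta_{\phi(\omega)}=(\omega_0)_\ast\delta_{\phi(S\omega)}$, hence the cocycle identity $\phi(\omega)=\omega_0\cdot\phi(S\omega)$ a.s., and so $\phi(\omega)=Z_n(\omega)\cdot\phi(S^n\omega)$. Since $\alpha:G\times\Omega\to\tilde\Omega$, $(g,\omega)\mapsto(gZ_n(\omega))_{n\in\mathbb{N}}$, is injective on a conull set, I can define a measurable map $\tilde\phi:\tilde\Omega\to S$ by $\tilde\phi(\alpha(g,\omega)):=g\cdot\phi(\omega)$. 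From $Z_{n+1}(\omega)=\omega_0\,Z_n(S\omega)$ one gets $S\alpha(g,\omega)=\alpha(g\omega_0,S\omega)$, so the cocycle identity gives $\tilde\phi(S\alpha(g,\omega))=(g\omega_0)\cdot\phi(S\omega)=g\cdot\phi(\omega)=\tilde\phi(\alpha(g,\omega))$, i.e.\ $\tilde\phi$ is $S$-invariant a.e.; and $g'\cdot\alpha(g,\omega)=\alpha(g'g,\omega)$ makes $\tilde\phi$ $G$-equivariant. Being $S$-invariant, $\tilde\phi$ is measurable for the $\sigma$-algebra $\mathcal{A}_S$ of $S$-stationary sets; since Mackey's realization identifies $\mathcal{A}_S$ (mod null sets) with the pullback of the Borel $\sigma$-algebra of $B$ under $\bnd$, there is a measurable $\pi:B\to S$ with $\tilde\phi=\pi\circ\bnd$ a.e., and $\pi$ inherits $G$-equivariance from $\tilde\phi$ and $\bnd$. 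Finally $\bnd_e=\bnd\circ\alpha(e,\cdot)$ gives $\pi(\bnd_e(\omega))=\phi(\omega)$, so
\[\pi_\ast\nu=\pi_\ast(\bnd_e)_\ast\mathbb{P}=\phi_\ast\mathbb{P}=\int_\Omega\delta_{\phi(\omega)}\,d\mathbb{P}(\omega)=\int_\Omega\eta_\omega\,d\mathbb{P}(\omega)=\eta,\]
using the decomposition in Theorem \ref{thm mesures limites}; thus $\pi$ is the required $G$-equivariant factor.

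\textbf{Main difficulty.} The actual content lies entirely in the black box, namely the radial convergence $\phi_\mu(F)(Z_n(\omega))\to F(\bnd_e(\omega))$ — equivalently, that the conditional measures of $\nu$ on $B$ are Dirac — which rests on identifying the martingale limit of a bounded harmonic function with the value of its boundary representative at the asymptotic point of the trajectory. Everything else is equivariance and measurability bookkeeping, the only mildly delicate point being, in i) $\Rightarrow$ ii), the descent of $\tilde\phi$ through the quotient of $\tilde\Omega$ by the ergodic components of the shift.
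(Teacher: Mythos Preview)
The paper does not give a proof of this theorem; it is stated with a citation to \cite{furstenberg63} and immediately followed by the definition of a $\mu$-bord, so there is no argument in the text to compare against.

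Your proof is correct and follows the standard route. You rightly isolate as the essential black box the fact that the conditional measures of $\nu$ on $B(G,\mu)$ are the Dirac measures $\delta_{\bnd_e(\omega)}$ --- equivalently, that $\phi_\mu(F)(Z_n(\omega))\to F(\bnd_e(\omega))$ for every $F\in L^\infty(B,\nu)$. This is precisely the defining feature of the Poisson--Furstenberg boundary; the paper's preceding results (Proposition~\ref{prop cv fonction harm} and the Poisson representation theorem) support it but do not state it explicitly, so invoking it as a black box is appropriate. Both directions then become clean bookkeeping: ii)~$\Rightarrow$~i) is just pushing the Dirac conditional measures on $B$ forward through $\pi$ (and your care in testing against $f\circ\pi\in L^\infty(B,\nu)$ rather than relying on weak-$\ast$ convergence is the right reflex, since $B$ carries no intrinsic topology); i)~$\Rightarrow$~ii) uses the shift relation~\eqref{lem shift mesures cond} to show $\phi$ is compatible with the shift, lifts it to $\tilde\Omega$ via the injective parametrization $\alpha$, and then descends through the Mackey realization. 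The verification that $\pi_\ast\nu=\eta$ via the decomposition of Theorem~\ref{thm mesures limites} is exactly what is needed.
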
 
	
	Si $(S, \eta)$ vérifie l'une des conditions équivalentes précédentes, on dit que c'est un \textit{$\mu$-bord} de $G$. 
	
	On termine cette section en construisant une première application de bord. Ce type de résultat fait référence aux théorème \ref{thm mesures limites} et Corollaire \ref{cor application de bord mesure stat}, et sera étendu dans la section suivante. 
	\begin{thm}[{\cite[Theorem 3.2]{furstenberg63}}]\label{thm bdry map = stat measure}
		Soit $(S, \eta)$ un $(G, \mu)$-espace compact métrisable. Alors il existe essentiellement une unique application mesurable $G$-équivariante $\phi : (B(G, \mu), \nu) \rightarrow \prob(S)$, qui vérifie la décomposition
		\begin{eqnarray}
			\eta = \int_B \phi(b) d\nu(b) \nonumber. 
		\end{eqnarray}
		Réciproquement, pour toute application mesurable $G$-équivariante $\phi' : (B(G, \mu), \nu) \rightarrow \prob(S)$, la mesure de probabilité sur $Y$
		\begin{eqnarray}
			\eta' = \int_B \phi'(b) d\nu(b) \nonumber. 
		\end{eqnarray}
		est $\mu$-stationnaire. 
	\end{thm}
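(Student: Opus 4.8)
Je traiterais d'abord la réciproque, qui est élémentaire. Étant donnée $\phi' : (B,\nu) \to \prob(S)$ mesurable et $G$-équivariante, on pose $\eta' := \int_B \phi'(b)\, d\nu(b)$ au sens faible (c'est-à-dire $\int_S f\, d\eta' = \int_B \big(\int_S f\, d\phi'(b)\big)\, d\nu(b)$ pour toute $f \in C(S)$ ; l'intégrande est mesurable et bornée car $\mu \mapsto \int_S f\, d\mu$ est faiblement-$\ast$ continue sur $\prob(S)$). Comme $\nu$ est $\mu$-stationnaire sur $B$ et comme $g_\ast \phi'(b) = \phi'(gb)$ par équivariance, le calcul
\[
\mu \ast \eta' = \int_G g_\ast \eta'\, d\mu(g) = \int_G \int_B \phi'(gb)\, d\nu(b)\, d\mu(g) = \int_B \phi'(b)\, d(\mu \ast \nu)(b) = \int_B \phi'(b)\, d\nu(b) = \eta'
\]
montre que $\eta'$ est $\mu$-stationnaire. (On peut aussi obtenir ceci en relevant $\phi'$ le long de l'application de sortie de la marche et en invoquant le Corollaire \ref{cor application de bord mesure stat}.)

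Pour l'\emph{existence}, le plan est d'appliquer le Théorème \ref{thm mesures limites} à $(Y,\nu) = (S,\eta)$ : il fournit une application mesurable $\omega \mapsto \eta_\omega \in \prob(S)$ avec $Z_n(\omega)_\ast \eta \to \eta_\omega$ faiblement-$\ast$, vérifiant $\eta_\omega = {\omega_0}_\ast \eta_{S\omega}$ pour $\mathbb{P}$-presque tout $\omega$, et telle que $\eta = \int_\Omega \eta_\omega\, d\mathbb{P}(\omega)$. L'étape décisive de l'argument, et l'obstacle principal, est de descendre $\eta_\omega$ sur le bord $B$. Pour cela je passerais à l'espace des trajectoires $\tilde{\Omega}$ : la fonction $F : (Z_n)_n \mapsto \lim_n (Z_n)_\ast \eta$, définie $\tilde{m}$-presque partout, est \emph{constante sur les classes d'équivalence} $\sim$ servant à définir $B(G,\mu)$, car si $Z'_n = Z_{n+m}$ pour $n \geq k_0$ alors $\lim_n (Z'_n)_\ast \eta = \lim_n (Z_n)_\ast \eta$ ; ses ensembles de niveau sont donc $S$-stationnaires, $F$ est $\mathcal{A}_S$-mesurable, et comme $B(G,\mu)$ réalise le quotient de $\tilde{\Omega}$ par $\mathcal{A}_S$ via $\bnd$, on obtient $F = \phi \circ \bnd$ pour une application mesurable $\phi : B \to \prob(S)$, donc $\eta_\omega = \phi(\bnd((Z_n(\omega))_n))$ presque sûrement. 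L'identité $F(g \cdot x) = g_\ast F(x)$ et la $G$-équivariance de $\bnd$ entraînent que $\phi$ est $G$-équivariante (quitte à la rectifier sur un ensemble $\nu$-négligeable grâce au Lemme \ref{lemme stricte invariance}) ; et la décomposition $\eta = \int_\Omega \eta_\omega\, d\mathbb{P}(\omega)$ devient $\eta = \int_B \phi(b)\, d\nu(b)$ par changement de variables, puisque la loi du point de sortie de la marche est précisément $\nu$.

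Il resterait l'\emph{unicité essentielle}, que je prouverais via l'injectivité de la transformée de Poisson. Si $\phi_1, \phi_2 : (B,\nu) \to \prob(S)$ sont mesurables, $G$-équivariantes et vérifient $\int_B \phi_i\, d\nu = \eta$, alors pour chaque $f \in C(S)$ les fonctions $u_i := \big[\, b \mapsto \int_S f\, d\phi_i(b)\,\big] \in L^\infty(B,\nu)$ ont la même transformée de Poisson :
\[
\phi_\mu(u_i)(g) = \int_B u_i(gb)\, d\nu(b) = \int_B \int_S f(gy)\, d\phi_i(b)(y)\, d\nu(b) = \int_S f(gy)\, d\eta(y),
\]
indépendamment de $i$. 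Comme $\phi_\mu : L^\infty(B,\nu) \to \Har(G,\mu)$ est une isométrie, donc injective, on a $u_1 = u_2$ dans $L^\infty(B,\nu)$ ; en faisant parcourir à $f$ une partie dénombrable dense de $C(S)$ et en intersectant les ensembles de mesure pleine associés, on conclut $\phi_1(b) = \phi_2(b)$ pour $\nu$-presque tout $b$. Ainsi, mis à part l'argument de factorisation à travers $B$ — qui repose de façon essentielle sur la description de $B(G,\mu)$ comme réalisation de Mackey de la tribu des ensembles stationnaires de la marche — la preuve se ramène à une combinaison du Théorème \ref{thm mesures limites}, de la $\mu$-stationnarité de $\nu$ et de la séparabilité de $C(S)$.
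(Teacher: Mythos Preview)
The paper does not actually supply its own proof of this theorem: it is stated with the attribution \cite[Theorem 3.2]{furstenberg63} and immediately followed by the next lemma, so there is nothing to compare your argument against line by line. That said, your sketch is correct and follows the standard route one would expect here. The converse is exactly Corollaire~\ref{cor application de bord mesure stat} rephrased at the level of $B$; the existence part correctly invokes Théorème~\ref{thm mesures limites} and then uses the defining property of $B(G,\mu)$ as the Mackey realisation of the stationary $\sigma$-algebra to factor $\omega \mapsto \eta_\omega$ through $\bnd$; and your uniqueness argument via the injectivity of the Poisson transform $\phi_\mu$ together with the separability of $C(S)$ is the clean way to conclude.

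One small point worth making explicit in your existence step: the paper's construction of $B$ goes through the trajectory space $\tilde{\Omega}$ equipped with the image measure of $\haar_G \otimes \mathbb{P}$ under $\alpha$, not directly through $(\Omega,\mathbb{P})$. Your factorisation argument is cleanest if you first pass to $\tilde{\Omega}$ (as you do), check that $F$ is $\mathcal{A}_S$-measurable there, and then pull back to $\Omega$ via the map $\omega \mapsto (Z_n(\omega))_n$; the paper's setup makes this two-step passage natural, and you have handled it correctly.
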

	
	On conclut cette section par un lemme classique sur les mesures atomiques. On rappelle qu'une mesure $\nu \in \prob(Y)$ est \textit{atomique} s'il existe $\xi \in Y$ tel que $\nu(\{\xi\}) > 0$. On dit alors que $\xi$ est un \textit{atome} de $\nu$. 
	
	\begin{lem}\label{lem non atomic}
		Soit $G$ un groupe dénombrable discret et $\mu$ une mesure admissible sur $G$. Soit $Y$ un $G$-espace sans orbite finie et soit $\nu$ une mesure $\mu$-stationnaire sur $Y$. Alors $\nu$ est non-atomique. 
	\end{lem}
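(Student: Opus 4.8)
Soit $G$ un groupe dénombrable discret et $\mu$ une mesure admissible sur $G$. Soit $Y$ un $G$-espace sans orbite finie et soit $\nu$ une mesure $\mu$-stationnaire sur $Y$. Alors $\nu$ est non-atomique.

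Le plan est de raisonner par l'absurde en supposant que $\nu$ possède un atome, puis d'en extraire une orbite finie de $Y$, ce qui contredira l'hypothèse.

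Première étape : contrôler les atomes de masse maximale. Pour tout $\varepsilon >0$, l'ensemble des $y \in Y$ tels que $\nu(\{y\}) \geq \varepsilon$ est de cardinal au plus $1/\varepsilon$ (sinon la masse totale dépasserait $1$). En particulier, si $\nu$ admet un atome, alors $\alpha := \sup_{y \in Y} \nu(\{y\})$ est strictement positif, et ce supremum est atteint : les atomes de masse $\geq \alpha/2$ forment un ensemble fini qui contient tous les atomes de masse proche de $\alpha$, et un ensemble fini atteint son maximum. Je poserais alors $A := \{y \in Y \, | \, \nu(\{y\}) = \alpha\}$, qui est donc fini et non vide.

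Deuxième étape : montrer que $A$ est invariant sous le support de $\mu$. J'utiliserais la stationnarité sous la forme $\nu(E) = \sum_{g \in G} \mu(g) \nu(g^{-1}E)$, appliquée à $E = \{y\}$ avec $y \in A$ : on obtient $\alpha = \sum_{g \in G} \mu(g)\, \nu(\{g^{-1}y\})$. Comme $\nu(\{g^{-1}y\}) \leq \alpha$ pour tout $g$ et $\sum_g \mu(g) = 1$, cette égalité force $\nu(\{g^{-1}y\}) = \alpha$, c'est-à-dire $g^{-1}y \in A$, pour tout $g \in \supp(\mu)$. Ainsi $s^{-1}A \subseteq A$ pour tout $s \in \supp(\mu)$, et la finitude de $A$ transforme l'inclusion en égalité : $sA = A$ pour tout $s \in \supp(\mu)$.

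Troisième étape : conclure. Par admissibilité de $\mu$, le support de $\mu$ engendre $G$ comme semi-groupe, donc tout $g \in G$ s'écrit $g = s_1 \cdots s_n$ avec $s_i \in \supp(\mu)$, d'où $gA = s_1 \cdots s_n A = A$. L'ensemble $A$ est donc fini, non vide et $G$-invariant, c'est-à-dire une réunion finie d'orbites finies — ce qui contredit l'absence d'orbite finie dans $Y$. Le point le plus subtil, quoique élémentaire, est précisément ce passage de l'invariance sous $\supp(\mu)$ à l'invariance sous $G$ : il repose d'une part sur la finitude de $A$ (qui rend l'inclusion $s^{-1}A \subseteq A$ équivalente à une égalité) et d'autre part sur le fait que le semi-groupe engendré par $\supp(\mu)$, et donc aussi celui engendré par $\supp(\mu)^{-1}$, est $G$ tout entier.
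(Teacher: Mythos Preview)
Your proof is correct and follows essentially the same approach as the paper: both consider the finite non-empty set of atoms of maximal mass, use stationarity to show it is stable under $\supp(\mu)^{-1}$, and then invoke admissibility to obtain a $G$-invariant finite set. Your write-up is slightly more careful in justifying why the supremum is attained and in making explicit the passage from $s^{-1}A \subseteq A$ to $sA = A$ via finiteness, but the argument is the same.
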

	\begin{proof}
		Supposons qu'il existe un atome pour $\nu$. Soit $m :=  \max\{ \nu(x) : x \in Y\}$ la masse maximale d'un atome est $Y_m = \{ x \in \overline{X} : \nu (x) = m\}$. L'ensemble $Y_m $ est non-vide par hypothèse, et fini car $\nu(Y) = 1$.  
		Soit $x \in Y_m$. Puisque $\nu $ est $\mu$-stationnaire,  $\mu \ast \nu (x ) = \nu (x)$. En d'autres termes, 
		\begin{equation}
			\sum_{g \in G} \mu(g) \nu(g^{-1} x ) = m \nonumber. 
		\end{equation}
		Ainsi, pour tout $g \in \supp(\mu)$, $\nu(g^{-1} x) = m$. Puisque $\mu$ est admissible, son support engendre $G$ comme semi-groupe, donc $Y_m $ est $G$-invariant, fini et non-vide, ce qui contredit l'hypothèse que $G$ n'a pas d'orbite finie.
	\end{proof}
	
	\subsection{Actions moyennables}\label{section action moyennable}
	Dans cette section, on revient sur la notion d'action moyennable au sens de Zimmer \cite{zimmer84}, qui se définit comme une propriété de points fixes pour les actions affines. On rappelle la caractérisation suivante de la moyennabilité. Soit $G$ un groupe localement compact à base dénombrable, et soit $E$ un espace de Banach séparable. Soit $G \curvearrowright E$ une action continue (pour la topologie forte des opérateurs) par isométries affines. On munit son dual topologique $E^\ast $ de la topologie faible-$\ast$, et de l'action naturelle de $G$. Si $C \subseteq E^\ast_1$ est un ensemble compact convexe $G$-invariant dans la boule unité de $E^\ast$, on dit que l'action de $G$ sur $C$ est une \textit{action affine continue}. En particulier, si $G$ agit continûment par isométries sur un compact métrisable $(Y,d)$, alors l'action naturelle de $G$ sur $\prob(Y)$ est une action continue affine. 
	\begin{Def}
		Un groupe $G$ est \textit{moyennable} si toute $G$-action affine continue sur un compact convexe non vide $C$ admet un $G$-point fixe dans $C$. 
	\end{Def} 
	
	En se fondant sur cette caractérisation, on donne la définition d'une action moyennable. 
	Soit $(S, \eta)$ un $G$-espace, et soit $E$ un espace de Banach séparable. On se donne un cocycle $\alpha : G \times S \rightarrow \iso(E)$, à valeurs dans les isométries affines de $E$, ce qui signifie que pour tout $g,h \in G$, pour $\eta$-presque tout $s \in S$, 
	\begin{eqnarray}
		\alpha(gh, s) = \alpha(g,hs)\alpha(h, s) \nonumber
	\end{eqnarray}
	Soit maintenant $\{A_s\}_{s\in S}$ une famille de compacts convexes de $E^\ast_1$ telle que $\{(s, A_s)\}_{s\in S}$ est un borélien de $S \times E^\ast_1$. On suppose enfin que pour tout $(g, s) \in G \times S$, $\alpha(g,s)A_s \subseteq A_{gs}$. On dira que $\mathbb{S} =\{A_s\}_{s\in S}$ est un \textit{$G$-fibré affine} au-dessus de $S$ (pour l'action de $\alpha$). Une \textit{section mesurable $G$-équivariante} $\phi : S \rightarrow \mathbb{S}$ est une application mesurable qui satisfait la propriété naturelle : pour tout $g \in G$, presque tout $s \in S$, 
	\begin{eqnarray}
		\phi(gs)  = \alpha (g,s) \phi(s) \nonumber
	\end{eqnarray}
	\begin{Def}
		L'action de $G$ sur $(S,\eta)$  est \textit{moyennable} si pour tout $G$-fibré affine $\mathbb{S}$ au-dessus de $(S, \eta)$, il existe une section mesurable $G$-équivariante $S\rightarrow \mathbb{S}$. 
	\end{Def}
	
	En pratique, il faut comprendre la définition précédente comme une généralisation de la propriété des applications de bord, dont on donne maintenant la définition. 
	
	\begin{Def}
		L'action de $G$ sur $(A,\eta)$ a la \textit{propriété des applications de bords} si pour toute action continue affine de $G$ sur un compact convexe $C$, il existe une application mesurable $G$-équivariante $S \rightarrow C$. 
	\end{Def}
	
	\begin{rem}
		Dans la définition précédente, on se ramène en fait aux fibrés affines de la forme $S \times C$, où $C$ est un $G$-espace affine. 
	\end{rem}
	
	Si $G$ est un groupe moyennable, alors toute action non-singulière $G \curvearrowright (S, \eta)$ est moyennable. Réciproquement, un groupe est moyennable si et seulement si son action sur un point est moyennable. La propriété suivante est fondamentale pour la suite. 
	
	\begin{prop}[Application de bord]\label{prop application de bord}
		Soit $(S, \eta)$ un $G$-espace de Lebesgue tel que $G \curvearrowright S$ est moyennable, et soit $(Y, d)$ un $G$-espace compact métrisable. Alors il existe une application mesurable $G$-équivariante $(S, \eta) \rightarrow \prob(Y)$. 
	\end{prop}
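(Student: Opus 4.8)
Le plan est de reconnaître l'énoncé comme une spécialisation immédiate de la définition de moyennabilité d'une action, appliquée au $G$-fibré affine \emph{constant} de fibre $\prob(Y)$ au-dessus de $(S,\eta)$.

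D'abord, je partirais de l'espace de Banach séparable $E := \mathcal{C}(Y)$ (séparable puisque $Y$ est compact métrisable), sur lequel $G$ agit par les isométries \emph{linéaires} $(g\cdot f)(y) := f(g^{-1}y)$, action continue pour la topologie forte des opérateurs. Par le théorème de représentation de Riesz, $\prob(Y)$ s'identifie à un sous-ensemble convexe, $G$-invariant et compact (Banach-Alaoglu) de la boule unité $E^\ast_1$ du dual munie de la topologie faible-$\ast$; l'action adjointe de $G$ sur $E^\ast$, restreinte à $\prob(Y)$, est précisément l'action $\nu \mapsto g_\ast \nu$.

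Ensuite, je formerais le fibré affine constant: on pose $A_s := \prob(Y)$ pour tout $s \in S$, et $\alpha(g,s)$ l'isométrie linéaire de $E$ donnée par l'action de $g$ ci-dessus, indépendante de $s$. L'identité de cocycle est alors triviale, la famille $\{(s,A_s)\}_{s\in S} = S\times\prob(Y)$ est borélienne dans $S\times E^\ast_1$, et $\alpha(g,s)A_s = g_\ast\prob(Y) = \prob(Y) = A_{gs}$; ainsi $\mathbb{S} := \{A_s\}_{s\in S}$ est bien un $G$-fibré affine au-dessus de $(S,\eta)$ au sens de la définition précédente. Enfin, la moyennabilité de $G\curvearrowright(S,\eta)$ fournit une section mesurable $G$-équivariante $\phi : S \rightarrow \mathbb{S}$, c'est-à-dire une application mesurable $\phi : S \rightarrow \prob(Y)$ telle que $\phi(gs) = g_\ast\phi(s)$ pour tout $g$ et $\eta$-presque tout $s$; quitte à invoquer le Lemme~\ref{lemme stricte invariance}, on la rend strictement équivariante sur un ensemble de mesure pleine, ce qui conclut.

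Je ne m'attends pas à rencontrer d'obstacle substantiel, la démonstration étant essentiellement une relecture des définitions. Les seuls points à soigner, tous de routine, sont l'identification de $\prob(Y)$ à un convexe faible-$\ast$ compact de la boule duale, la vérification (immédiate, le fibré étant constant) de la condition de mesurabilité borélienne sur $\mathbb{S}$, et la compatibilité des conventions de mesurabilité (espaces de Lebesgue standard, structure borélienne standard de $\prob(Y)$) avec les résultats invoqués en amont.
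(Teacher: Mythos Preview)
Your proposal is correct and follows exactly the same approach as the paper's proof: apply the definition of amenability to the trivial (constant) $G$-fibré affine $S \times \prob(Y)$ and read off the desired equivariant section. The paper's proof is simply a terser version of what you wrote, leaving implicit the identification of $\prob(Y)$ as a convex weak-$\ast$ compact subset of the dual ball of $\mathcal{C}(Y)$ that you spelled out.
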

	
	\begin{proof}
		Soit $(S, \eta) $, $(Y, d)$ comme dans l'énoncé. On dispose d'un fibré affine (trivial) $S \times \prob(Y)$ au-dessus de $S$, munie de l'action (mesurable) diagonale de $G$. Par moyennabilité de l'action, il existe une section $G$-équivariante mesurable $S \rightarrow S \times \prob(Y) $, qui fournit l'application de bord souhaitée. 
	\end{proof}
	
	L'exemple fondamental d'une action moyennable est fournie par le bord de Poisson-Furstenberg d'un groupe $G$.  
	
	\begin{thm}[{\cite[Theorem 3.3]{zimmer78}}]
		Soit $G$ un groupe localement compact à base dénombrable, et soit $\mu$ une mesure admissible sur $G$. Alors l'action de $G$ sur son bord de Poisson $(B(G, \mu), \nu)$ est moyennable. 
	\end{thm}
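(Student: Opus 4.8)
Je suivrais l'argument original de Zimmer \cite{zimmer78}, qui repose sur trois ingrédients et sur la réalisation de $B(G,\mu)$ comme facteur $G$-équivariant de l'espace des trajectoires $\tilde{\Omega}$ décrit plus haut.

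Premier point : je montrerais que l'action de $G$ sur lui-même, muni de la mesure de Haar $m_G$, par translations à gauche est moyennable. C'est direct : étant donné un $G$-fibré affine $\{A_g\}_{g \in G}$ de cocycle $\alpha : G \times G \to \iso(E)$ au-dessus de $(G, m_G)$, et $c \in A_e$ fixé, l'application $\psi : g \mapsto \alpha(g,e)\cdot c$ est mesurable, prend ses valeurs dans $A_g$ puisque $\alpha(g,e)A_e \subseteq A_{g}$, et l'identité de cocycle donne $\psi(hg) = \alpha(hg,e)c = \alpha(h,g)\alpha(g,e)c = \alpha(h,g)\psi(g)$, donc $\psi$ est une section mesurable $G$-équivariante. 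Ensuite, comme $\tilde{\Omega} \cong G \times \Omega$ avec $G$ agissant par translations sur le premier facteur et trivialement sur $(\Omega, \mathbb{P})$, une sélection mesurable en $\omega$ des sections précédentes (appliquées au fibré restreint à $G \times \{\omega\}$) fournit une section $G$-équivariante au-dessus de $\tilde{\Omega}$ tout entier : l'action $G \curvearrowright (\tilde{\Omega}, \tilde{m})$ est moyennable. Seule la classe de $m_G$ importe ; $\mu$ étant étalée, $\mu^{*n} \sim m_G$ pour un certain $n$, ce qui est compatible avec la normalisation de $\tilde{m}$.

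Deuxième point : je remarquerais que les actions de $G$ et du décalage $S$ sur $\tilde{\Omega}$ commutent, donc la sous-tribu $\mathcal{A}_S$ des ensembles $S$-stationnaires est $G$-invariante et l'application quotient $\bnd : \tilde{\Omega} \to B$ est un facteur mesurable $G$-équivariant vérifiant $\bnd_\ast \tilde{m} = \nu$. Il resterait alors à faire descendre la moyennabilité le long de ce facteur. Pour cela je désintégrerais $\tilde{m} = \int_B \tilde{m}_b \, d\nu(b)$ au-dessus de $\bnd$ ; pour un $G$-fibré affine $\{C_b\}_{b \in B}$ de cocycle $\beta$ au-dessus de $(B,\nu)$, je le tirerais en arrière en le $G$-fibré $\{C_{\bnd(x)}\}_{x \in \tilde{\Omega}}$ de cocycle $(g,x) \mapsto \beta(g,\bnd(x))$, j'en prendrais une section mesurable $G$-équivariante $\phi$ donnée par le premier point (donc $\phi(x) \in C_{\bnd(x)}$), puis je poserais $\bar{\phi}(b) = \int_{\tilde{\Omega}} \phi(x)\, d\tilde{m}_b(x) \in C_b$, l'intégrale ayant un sens car $C_b$ est convexe compact et $\phi$ est à valeurs dans $C_b$ sur la fibre $\bnd^{-1}(b)$. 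L'équivariance de $\bar{\phi}$ découle de $g_\ast \tilde{m}_b = \tilde{m}_{gb}$, de $\beta(g,\bnd(x)) = \beta(g,b)$ pour $\tilde{m}_b$-presque tout $x$, et de l'affinité de $\beta(g,b)$ ; cela fournit la section cherchée au-dessus de $(B,\nu)$.

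Le point délicat sera précisément ce passage au quotient dans le cadre non-singulier (les mesures $\tilde{m}$ et $\nu$ n'étant pas $G$-invariantes) : il faut justifier que la désintégration peut être choisie de sorte que $g_\ast \tilde{m}_b = \tilde{m}_{gb}$ exactement, et non seulement à cocycle de Radon–Nikodym près, contrôler la mesurabilité de $b \mapsto \bar{\phi}(b)$, et vérifier les hypothèses des théorèmes de sélection mesurable invoqués au premier point. Tout le reste est formel une fois acquis que $B$ est bien le quotient de $\tilde{\Omega}$ par la partition mesurable associée à $\mathcal{A}_S$, ce qui fait partie de la construction rappelée dans cette section (réalisation de Mackey).
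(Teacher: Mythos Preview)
The paper does not give its own proof of this theorem; it is stated with a citation to Zimmer's original paper and used as a black box. So there is no ``paper's proof'' to compare against, and your sketch is essentially an attempt to reconstruct Zimmer's argument.

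Your first step is fine: the left regular action $G\curvearrowright (G,m_G)$ is amenable for trivial reasons, and this extends to $G\curvearrowright (G\times\Omega, m_G\otimes\mathbb{P})\simeq(\tilde\Omega,\tilde m)$.

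The genuine difficulty is exactly the one you flag at the end, and your sketch does not resolve it. Amenability does \emph{not} pass to $G$-factors in general: $G\curvearrowright(G,m_G)$ is always amenable while $G\curvearrowright\{\ast\}$ is amenable only when $G$ is. Your averaging argument $\bar\phi(b)=\int\phi(x)\,d\tilde m_b(x)$ requires $g_\ast\tilde m_b=\tilde m_{gb}$, but this fails: if you keep the $G$-invariant $\sigma$-finite measure $m_G\otimes\mathbb{P}$, the fiber measures are typically infinite (there is no $G$-invariant probability on $B$ when $G$ is non-amenable, so the pushforward cannot be finite), and if you renormalize to a probability you lose $G$-invariance and pick up exactly the Radon--Nikodym cocycle you worry about. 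Either way the barycenter construction breaks.

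What makes the Poisson boundary special, and what your sketch is missing, is an explanation of why this particular quotient behaves well. Zimmer's original argument does not proceed by naive fiber-averaging; it uses the specific structure of the random walk (the Markov property and the relation between $L^\infty(B)$ and bounded harmonic functions via the Poisson transform) to produce the required equivariant sections. You should either supply that argument, or identify a structural property of the factor map $\tilde\Omega\to B$ (for instance, that it comes from quotienting by an action commuting with $G$ whose orbits carry canonical probability measures transforming correctly) that allows amenability to descend. As written, the ``point d\'elicat'' is the whole proof.
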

	
	On conclut cette section en donnant une caractérisation importante des groupes moyennables, démontrée indépendamment par Kaimanovich-Vershik \cite{kaimanovich_vershik} et par Rosenblatt \cite{rosenblatt81}. 
	
	\begin{thm}
		Soit $G$ un groupe localement compact à base dénombrable. Alors $G$ est moyennable si et seulement si il existe une mesure de probabilité admissible $\mu \in \prob(G)$ pour laquelle le bord de Poisson $B(G, \mu)$ est trivial. En d'autres termes, $G$ est moyennable si et seulement si il existe une mesure de probabilité admissible $\mu \in \prob(G)$ pour laquelle les seules fonctions $\mu$-harmoniques sur $G$ sont constantes. 
	\end{thm}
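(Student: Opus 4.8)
Je traiterais séparément les deux implications, la première découlant presque immédiatement du matériel déjà exposé.

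\emph{Sens : bord de Poisson trivial $\Rightarrow$ moyennabilité.} J'invoquerais directement le théorème de Zimmer cité ci-dessus. Si $\mu \in \prob(G)$ est admissible et $B(G,\mu)$ est (essentiellement) réduit à un point, alors l'action de $G$ sur $(B(G,\mu),\nu)$ est moyennable ; mais cette action est l'action (triviale) de $G$ sur un point. En déroulant les définitions de $G$-fibré affine et de section mesurable $G$-équivariante, dire que l'action triviale sur un point est moyennable signifie exactement que toute action affine continue de $G$ sur un convexe compact non vide admet un point fixe, c'est-à-dire que $G$ est moyennable (c'est la remarque suivant la définition d'action moyennable). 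On peut aussi noter que la trivialité de $B(G,\mu)$ équivaut, via la transformée de Poisson $\phi_\mu$, à $\Har(G,\mu) = \mathbb{C}$ ; c'est la forme quantitative de cette propriété qui servira dans l'autre sens.

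\emph{Sens : moyennabilité $\Rightarrow$ existence de $\mu$ admissible à bord trivial.} L'idée est de fabriquer, à partir de la moyennabilité, une marche aléatoire qui oublie asymptotiquement son point de départ, puis d'en déduire la trivialité du bord. Je commencerais par un critère : si $\mu$ est admissible et si
\[
\big\| \delta_g \ast \mu^{\ast n} - \mu^{\ast n} \big\|_{\mathrm{TV}} \underset{n \to \infty}{\longrightarrow} 0
\]
pour tout $g$ dans une partie génératrice de $G$, alors $B(G,\mu)$ est trivial. En effet, pour $f \in \Har(G,\mu)$ bornée et tout $n$, l'harmonicité itérée donne $f(g) = \int_G f \, d(\delta_g \ast \mu^{\ast n})$ et $f(e) = \int_G f \, d\mu^{\ast n}$, donc $|f(g) - f(e)| \leq \|f\|_\infty \, \| \delta_g \ast \mu^{\ast n} - \mu^{\ast n} \|_{\mathrm{TV}} \to 0$ ; de plus l'ensemble des $g$ satisfaisant la condition ci-dessus est un sous-groupe de $G$ (inégalité triangulaire et invariance de la norme de variation totale par translation à gauche), donc $G$ tout entier, si bien que $f$ est constante. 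On en déduit $\Har(G,\mu) = \mathbb{C}$, puis la trivialité de $B(G,\mu)$ via l'isométrie $\phi_\mu$.

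Il reste à construire un tel $\mu$. Comme $G$ est moyennable, il vérifie la propriété de Reiter : il existe des densités de probabilité $\kappa_k \in L^1(G, m_G)$ telles que $\| \delta_g \ast \kappa_k - \kappa_k \|_1 \to 0$ uniformément sur tout compact de $G$. Je poserais $\mu = \sum_k c_k \kappa_k$ avec $c_k > 0$ et $\sum_k c_k = 1$, en choisissant les blocs $\kappa_k$ (par exemple des moyennes sur des parties de F{\o}lner, éventuellement dans un sous-groupe dénombrable dense si $G$ n'est pas discret) et les poids $c_k$ de façon inductive, de sorte que $\mu$ soit admissible --- support générateur, et étalée puisque $\mu \ll m_G$ --- et vérifie le critère ci-dessus. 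L'obstacle principal est précisément ce choix inductif : un seul vecteur de Reiter, ou des poids fixés, ne suffisent pas, car majorer $\| \delta_g \ast \mu^{\ast n} - \mu^{\ast n} \|$ \emph{simultanément pour tout $n$} impose que chaque nouveau bloc $\kappa_{k+1}$ soit bien plus invariant, non seulement sous $G$ mais sous les mesures déjà construites à partir de $\kappa_1, \dots, \kappa_k$ ; c'est ce que réalise le procédé inductif de Kaimanovich--Vershik, qui emboîte des parties de F{\o}lner de plus en plus invariantes (et, dualement, l'argument fonctionnel de Rosenblatt). De façon équivalente, pour $G$ dénombrable, on peut passer par l'entropie d'Avez : montrer que $h(\mu) = 0$ si et seulement si $B(G,\mu)$ est trivial (pour $\mu$ d'entropie de Shannon finie), puis construire $\mu$ d'entropie nulle à partir d'une suite de F{\o}lner --- la difficulté technique, le contrôle du taux d'entropie des $\mu^{\ast n}$, restant la même.
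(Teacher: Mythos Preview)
Le papier ne démontre pas ce théorème : il le cite simplement comme un résultat dû indépendamment à Kaimanovich--Vershik et à Rosenblatt, sans en donner de preuve. Il n'y a donc pas de \og preuve du papier \fg{} à laquelle comparer ta proposition.

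Cela dit, ton esquisse est correcte et suit fidèlement l'approche classique. Le sens facile (bord trivial $\Rightarrow$ moyennable) est bien traité via le théorème de Zimmer : l'action sur un point étant moyennable, $G$ l'est. Pour le sens difficile, ton critère $\|\delta_g \ast \mu^{\ast n} - \mu^{\ast n}\|_{\mathrm{TV}} \to 0$ et sa preuve sont corrects, et la construction de $\mu$ comme combinaison convexe de densités de Reiter de plus en plus invariantes est exactement le schéma de Kaimanovich--Vershik. Tu identifies honnêtement la vraie difficulté technique --- le choix inductif des blocs $\kappa_k$ et des poids $c_k$ pour garantir la convergence \emph{uniformément en $n$} --- sans la résoudre complètement, ce qui est raisonnable pour une esquisse. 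La variante par l'entropie d'Avez que tu mentionnes est elle aussi standard et équivalente. Rien à redire sur le fond ; pour une preuve complète il faudrait simplement dérouler l'induction ou renvoyer précisément à l'article original.
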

	
	L'étude du lien entre les propriétés asymptotiques de groupe et la trivialité du bord de Poisson sont subtiles et riches. Par exemple, il existe des analogies fortes entre non-trivialité du bord et croissance exponentielle dans le groupe pour la distance des mots, voir par exemple \cite{kaimanovich_vershik} et plus récemment \cite{erschler_bartholdi17}.

	\section{Propriétés ergodiques des bords}\label{section bords ergodiques}
	
	Dans cette section, on décrit les propriétés ergodiques du bord de Poisson-Furstenberg. On verra que ces propriétés imposent des conditions de rigidité très fortes. Ce qui suit provient de résultats de Bader et Furman \cite{bader_furman14}, qui étendent des résultats et idées de Kaimanovich \cite{kaimanovich03}. L'article \cite[Section 4]{duchesne_lecureux_pozetti23}  et les notes de C. Houdayer \cite[Section 2]{houdayer23} offrent également une introduction détaillée à ces notions. 
	
	\subsection{Ergodicité métrique relative}
	Commençons cette section par une définition, qui renforce la notion classique d'ergodicité. Dans toute la suite, $G$ désigne un groupe localement compact, à base dénombrable et les espaces mesurables que l'on considère sont des espaces de Lebesgue standard. On rappelle qu'un $G$-espace mesurable est un espace de Lebesgue standard $(S, \eta)$ munie d'une action non-singulière de $G$. 
	
	\begin{Def}
		Soit $(S, \eta)$ un $G$-espace, où $\eta \in \prob(S)$ est une probabilité sur $S$. On dit que l'action de $G$ sur $S$ est \textit{isométriquement ergodique} si pour tout espace métrique séparable $(Y, d)$ muni d'une action par isométries de $G$, et pour toute application mesurable $G$-équivariante $f: S \rightarrow Y$, alors $f$ est essentiellement constante. En d'autres termes, il n'existe pas de $G$-application mesurable non triviale  
		\[
		\begin{tikzcd}
			f : (S, \eta ) \arrow[r, "G"]  & (Y, d). 
		\end{tikzcd}
		\]
		Si l'action diagonale de $G$ sur $(S \times S, \eta \times \eta)$ est isométriquement ergodique, on dit que $G \curvearrowright S$ est \textit{doublement isométriquement ergodique}. 
	\end{Def}
	
	Soit $\Map_G(S, Y)$ l'espace des classes d'équivalences des applications mesurables $G$-équivariantes $f : (S, \eta) \rightarrow (Y, d) $, avec identification $f \sim f'$ si $f$ et $f'$ sont presque partout égales. L'action de $G$ sur $(S, \eta)$ est donc isométriquement ergodique si pour tout espace métrique séparable $(Y, d)$ sur lequel $G$ agit par isométries, 
	\begin{eqnarray}
		\Map_G(S, Y) = \Map_G(\{\ast\}, Y). \nonumber
	\end{eqnarray}
	
	L'ergodicité isométrique est une propriété plus forte que l'ergodicité classique. En effet, soit $G \curvearrowright (S, \eta)$ une action isométriquement ergodique, et soit $f : S \rightarrow [0,1]$ une application $G$-invariante. Prenons l'action triviale de $G$ sur $[0,1]$, qui est isométrique, et qui fait de $f$ une application $G$-équivariante, donc constante par ergodicité isométrique. En revanche, la propriété d'isométrie ergodique est plus faible que celle de double ergodicité. 
	
	\begin{prop}
		Soit $G \curvearrowright (S, \eta)$ une action doublement ergodique. Alors $G~\curvearrowright~(S, \eta)$ est isométriquement ergodique. 
	\end{prop}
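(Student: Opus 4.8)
The plan is to deduce isometric ergodicity from the ordinary ergodicity of the diagonal action, using the distance function as a source of invariant functions. First I would fix a separable metric space $(Y,d)$ equipped with an isometric $G$-action, together with a measurable $G$-equivariant map $f : (S,\eta) \rightarrow (Y,d)$; the goal is to show that $f$ is essentially constant. The key object is the function
\[
F : S \times S \rightarrow [0,\infty), \qquad F(s_1,s_2) = d(f(s_1),f(s_2)).
\]
Since $d$ is continuous and $Y$ is separable, the Borel structure on $Y \times Y$ coincides with the product Borel structure, so $d$ is jointly Borel and $F = d \circ (f\times f)$ is measurable. Moreover $F$ is $G$-invariant for the diagonal action: for $\eta\times\eta$-almost every $(s_1,s_2)$ and every $g \in G$, the $G$-equivariance of $f$ together with the fact that $g$ acts isometrically on $Y$ give $F(gs_1,gs_2) = d(gf(s_1),gf(s_2)) = d(f(s_1),f(s_2)) = F(s_1,s_2)$.

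Next I would invoke the hypothesis that $G \curvearrowright (S,\eta)$ is doubly ergodic, i.e. the diagonal action on $(S\times S,\eta\times\eta)$ is ergodic. The $G$-invariant measurable function $F$ is therefore essentially equal to a constant $c \geq 0$, and it remains to prove that $c = 0$. For this, push $\eta$ forward by $f$ to obtain a probability measure $\theta = f_\ast\eta \in \prob(Y)$; then $d(y_1,y_2) = c$ for $\theta\times\theta$-almost every $(y_1,y_2)$. If $c > 0$, choose a point $y_0$ in the support of $\theta$, so that the ball $B(y_0,c/3)$ has positive $\theta$-measure; then $B(y_0,c/3)\times B(y_0,c/3)$ has positive $\theta\times\theta$-measure while $d(y_1,y_2) < 2c/3 < c$ on it by the triangle inequality, contradicting $d \equiv c$ almost everywhere. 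Hence $c = 0$, so $d(f(s_1),f(s_2)) = 0$ for $\eta\times\eta$-almost every $(s_1,s_2)$, and Fubini's theorem yields that $f$ is essentially constant.

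I do not expect a genuine obstacle here: the statement is essentially formal once the definitions are unwound. The only delicate points are the measurability of $F$, which is precisely where the separability of $Y$ is used, and the deduction $c = 0$, which relies on the elementary fact that a probability measure on a metric space cannot be concentrated, even diagonally, on pairs sitting at a fixed positive distance. Once that is done, the equality $\Map_G(S,Y) = \Map_G(\{\ast\},Y)$ is immediate for every isometric $G$-space $(Y,d)$, which is the definition of isometric ergodicity.
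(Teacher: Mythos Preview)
Your proof is correct and follows essentially the same approach as the paper: both define the $G$-invariant function $(s_1,s_2)\mapsto d(f(s_1),f(s_2))$, use double ergodicity to make it constant, and then derive a contradiction from a positive constant via a small ball in (the support of) $f_\ast\eta$. Your treatment is slightly more explicit about why separability ensures joint measurability of $d$, which is a nice touch; otherwise the arguments are interchangeable.
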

	\begin{proof}
		Soit $G \curvearrowright (Y,d)$ une action isométrique sur un espace métrique séparable, et soit $f: S \rightarrow Y$ une application $G$ équivariante. Soit l'application $G$-invariante 
		
		\begin{eqnarray}
			\phi : S \times S &\longrightarrow  & \mathbb{R} \nonumber \\
			(a,b) & \longmapsto & d(f(a),f(b)), \nonumber
		\end{eqnarray}
		qui est essentiellement constante par double ergodicité. Supposons que $\phi = \alpha > 0$ presque sûrement. Alors il existe $y \in Y$ tel que $U:= f^{-1}(B(y, \alpha/2))$ vérifie $\eta(U) > 0$. Dans ce cas, pour tout $(a,b) \in  U \times U $, $d(f(a), f(b)) < \alpha$ par inégalité triangulaire, une contradiction. En conclusion, par le théorème de Fubini, on peut choisir $a \in S$ tel que pour $\eta$-presque tout $b \in S$, $f(a ) = f(b)$, et on a prouvé l'ergodicité isométrique. 
	\end{proof}
	
	Le théorème suivant est dû à Kaimanovich. 
	
	\begin{thm}[{\cite[Theorem 3]{kaimanovich03}}]\label{thm double ergod}
		Soit $G$ un groupe localement compact à base dénombrable, et soit $\mu$ une mesure de probabilité symétrique admissible sur $G$. Alors l'action de $G$ sur le bord de Poisson-Furstenberg $(B(G, \mu), \nu) $ est doublement ergodique.
	\end{thm}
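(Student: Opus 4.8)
Le plan est de réaliser le produit $(B\times B,\nu\times\nu)$, muni de l'action diagonale de $G$, comme quotient mesurable d'un espace de trajectoires \emph{bilatères} sur lequel $G$ agit conjointement avec un décalage de Bernoulli commutant, puis de ramener l'énoncé à l'ergodicité de ce décalage. Je considérerais l'espace $\widehat{\Omega}=G^{\mathbb{Z}}$ des trajectoires bi-infinies $(Z_n)_{n\in\mathbb{Z}}$, muni de la mesure $\sigma$-finie $\widehat{m}$ pour laquelle $Z_0$ suit une mesure de Haar à gauche $\haar_G$ et les incréments $\omega_n:=Z_{n-1}^{-1}Z_n$ sont indépendants, de loi $\mu$, indépendants de $Z_0$ ; ainsi $(Z_n)_{n\geq 0}$ est une $\mu$-marche partant de $Z_0$ et $Z_{-k}=Z_0\,\omega_0^{-1}\omega_{-1}^{-1}\cdots\omega_{-k+1}^{-1}$. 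Le groupe $G$ agit sur $\widehat{\Omega}$ par translations à gauche, en préservant $\widehat{m}$, et $\mathbb{Z}$ agit par le décalage $S:(Z_n)_n\mapsto(Z_{n+1})_n$ ; ces deux actions commutent. L'application de bord $\bnd$ de la construction du bord de Poisson, appliquée à la trajectoire tronquée $(Z_n)_{n\geq 0}$, fournit une application $G$-équivariante et $S$-invariante $\bnd_+:\widehat{\Omega}\to B$ ; comme $\mu$ est symétrique, lire une trajectoire à l'envers redonne une $\mu$-marche, de sorte qu'on dispose de même d'une application $G$-équivariante et $S$-invariante $\bnd_-:\widehat{\Omega}\to B$ à valeurs dans le même $G$-espace $(B,\nu)$. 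On pose $\beta=(\bnd_-,\bnd_+):\widehat{\Omega}\to B\times B$, qui est $G$-équivariante pour l'action diagonale au but et $S$-invariante.

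La première étape est l'ergodicité de l'action jointe de $G\times\mathbb{Z}$ sur $(\widehat{\Omega},\widehat{m})$. Une fonction mesurable $G$-invariante sur $\widehat{\Omega}$ ne dépend que de la suite d'incréments $(\omega_n)_{n\in\mathbb{Z}}$, car la translation à gauche fixe les incréments ; le quotient $\widehat{\Omega}/G$ s'identifie donc mesurablement à l'espace de Bernoulli $(G^{\mathbb{Z}},\mu^{\otimes\mathbb{Z}})$, sur lequel $S$ descend en le décalage de Bernoulli. Ce dernier étant ergodique (et même mélangeant), toute fonction sur $\widehat{\Omega}$ invariante à la fois par $G$ et par $S$ est $\widehat{m}$-essentiellement constante.

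La seconde étape détermine la classe de la mesure image $\theta:=\beta_\ast\widehat{m}$ sur $B\times B$. On désintègre $\widehat{m}$ selon $Z_0$ : conditionnellement à $Z_0$, la limite future $\bnd_+$ ne dépend que des incréments d'indices $\geq 1$ et la limite passée $\bnd_-$ que des incréments d'indices $\leq 0$ — ce dernier point, après reparamétrisation de la trajectoire passée en une $\mu$-marche à droite ordinaire dans les incréments réfléchis, utilise $\mui=\mu$ ; ces deux familles d'incréments étant indépendantes et chaque limite ayant pour loi conditionnelle $(Z_0)_\ast\nu$, la loi conditionnelle de $(\bnd_-,\bnd_+)$ sachant $Z_0$ est $(Z_0)_\ast(\nu\times\nu)$. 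En intégrant sur le point de départ, on obtient $\theta=\int_G g_\ast(\nu\times\nu)\,d\haar_G(g)$, mesure $G$-invariante et $\sigma$-finie sur $B\times B$. Comme $\nu$ est quasi-invariante ($\mu$-stationnaire et $\mu$ admissible), l'action diagonale de $G$ sur $(B\times B,\nu\times\nu)$ est non-singulière, donc $\theta$ et $\nu\times\nu$ possèdent les mêmes ensembles négligeables. On conclut : si $f\in L^\infty(B\times B,\nu\times\nu)$ est $G$-invariante, alors $f\circ\beta$ est $G$-invariante (équivariance de $\beta$, invariance de $f$) et $S$-invariante, donc $\widehat{m}$-essentiellement constante par la première étape ; par suite $f$ est $\theta$-essentiellement constante, donc $\nu\times\nu$-essentiellement constante. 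C'est exactement la double ergodicité de $G\curvearrowright(B,\nu)$.

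Le point délicat n'est pas la réduction au décalage de Bernoulli, routinière une fois le cadre posé, mais la mise en place de l'image bilatère : il faut justifier que $\bnd_-$ est bien définie $\widehat{m}$-presque partout et à valeurs dans $(B,\nu)$ à isomorphisme mesurable près — c'est précisément là que la symétrie de $\mu$ est essentielle, via l'identification $B(G,\mui)\cong B(G,\mu)$ — ainsi que gérer soigneusement le caractère infini de $\widehat{m}$ (en travaillant systématiquement avec la désintégration selon $Z_0$) et le passage des énoncés \og $\theta$-presque partout \fg{} aux énoncés \og $\nu\times\nu$-presque partout \fg{} via la non-singularité.
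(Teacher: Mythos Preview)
The paper does not prove this theorem: it is stated with a citation to \cite{kaimanovich03} and used as a black box, so there is no ``paper's own proof'' to compare against. Your proposal is a faithful reconstruction of the classical argument underlying Kaimanovich's result: realise $B\times B$ as a $(G\times\mathbb{Z})$-quotient of the bilateral path space $(G^{\mathbb{Z}},\widehat{m})$, reduce $G$-ergodicity on $B\times B$ to ergodicity of the Bernoulli shift on the increment space $G^{\mathbb{Z}}/G\cong(G^{\mathbb{Z}},\mu^{\otimes\mathbb{Z}})$, and match measure classes via quasi-invariance of $\nu$. The sketch is correct in its architecture, and you rightly flag the two genuinely delicate points: that symmetry of $\mu$ is what makes $\bnd_-$ land in the \emph{same} boundary $(B,\nu)$ rather than in $B(G,\check\mu)$, and that the $\sigma$-finiteness of $\widehat{m}$ forces one to work through the disintegration over $Z_0$ to compare $\theta$ and $\nu\times\nu$.
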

	
	En réalité, le bord de Poisson-Furstenberg satisfait des propriétés ergodiques plus fortes que la double ergodicité. C'est ce qu'on rend explicite maintenant. 
	
	Soit $\mathbb{X}$ et $A$ deux $G$-espaces de Lebesgue standard, et soit $p : \mathbb{X} \rightarrow A$ une application mesurable $G$-équivariante. On dénote par $\mathbb{X} \times_p \mathbb{X} $ le \textit{produit fibré au-dessus de $p $}, défini par $\mathbb{X} \times_p \mathbb{X} := \{(x, y ) \in \mathbb{X} \times \mathbb{X} \, | \, p(x ) = p(y)\} $. Le produit fibré $\mathbb{X} \times_p \mathbb{X}$ est naturellement muni d'une structure d'espace borélien issue de $\mathbb{X} \times \mathbb{X}$. Les mesurables non vides $p^{ -1}(a)$, $a\in A$ sont appelés les fibres de $p$, et on les notera $\mathbb{X}_a \subseteq \mathbb{X}$. 
	\begin{Def}
		On dit que $p  : \mathbb{X} \rightarrow A$ admet une \textit{action fibrée isométrique de $G$} s'il existe une application borélienne $G$-invariante $d : \mathbb{X}\times_p \mathbb{X} \rightarrow \R $ telle que pour toute fibre $\mathbb{X}_a \subseteq \mathbb{X}$, $d_a $ est une distance séparable sur $\mathbb{X}_a$. On appellera $d$ la métrique sur le fibré $\mathbb{X} \times_p \mathbb{X}$. 
	\end{Def}

	Par définition, l'action d'un élément $g \in G$ sur le fibré $\mathbb{X} \times_p \mathbb{X}$ induit des isométries $g : \mathbb{X}_a \rightarrow \mathbb{X}_{ga}$, qui vérifient donc $d_{ga}(gx, gy) = d_{a}(x,y)$. 
	
	\begin{Def}
		Une application mesurable $\pi : S \rightarrow Q $ entre $G$-espaces de Lebesgue est dite \textit{relativement isométriquement ergodique} si pour toute $G$-action fibrée isométrique sur une $G$-application mesurable $p : \mathbb{X} \rightarrow A$ telle qu'il existe des applications $G$-équivariantes $f : S \rightarrow \mathbb{X}$ et $f_0 : Q\rightarrow  A$, telles que $p \circ f = f_0 \circ \pi$, il existe une $G$-application mesurable $\phi : Q  \rightarrow \mathbb{X} $ telle que presque sûrement, $f = \phi \circ \pi$ et $f_0 = p \circ \phi$. En d'autres termes, si le diagramme suivant commute 
		\[
		\begin{tikzcd}
			S \arrow[d, " \pi" ] \arrow[r,  " f"] & \mathbb{X} \arrow[d, " p" ] \\
			Q \arrow[r,  " f_0"] & A
		\end{tikzcd}
		\]

		il y a une application mesurable $G$-équivariante $\phi : Q  \rightarrow \mathbb{X} $ telle que le diagramme suivant commute.
		\[
		\begin{tikzcd}
			S \arrow[d, " \pi" ] \arrow[r,  " f"] & \mathbb{X} \arrow[d, " p" ] \\
			Q \arrow[r,  " f_0"] \arrow[ur, dashed, "\phi"] & A
		\end{tikzcd}
		\]
		Dans la suite, on dira que $\pi $ satisfait la propriété d'ergodicité isométrique relative (RIE). 
	\end{Def}
	
	\begin{rem}
		Soit $(S, \eta)$ un $G$-espace de Lebesgue tel que la projection sur le premier facteur $\pi_1 : S \times S \rightarrow S$  est relativement isométriquement ergodique, alors l'action de $G$ sur $S$ est isométriquement ergodique. En effet, soit $f : S \rightarrow (Y,d) $ une application mesurable $G$-équivariante vers un espace métrique séparable $(Y,d)$ sur lequel $G$ agit par isométries. Alors soit $\tilde{f} : (b,b') \in B \times B \mapsto f(b') \in Y$ est une $G$-application mesurable, et on peut construire un fibré trivial 
		\[
		\begin{tikzcd}
			S \times S \arrow[d, " \pi_1" ] \arrow[r,  " f"] & Y \arrow[d, " p" ] \\
			S\arrow[r] & \{\ast\}
		\end{tikzcd}
		\]
		Par RIE, il existe une application mesurable $G$-équivariante $\phi : S \rightarrow Y$ telle que pour presque tout $(b,b')$, $\phi(b) = f(b')$. Par Fubini, cela revient à dire que $f$ est essentiellement constante. 
	\end{rem}
	
	La définition qui suit est fondamentale. 
	
	\begin{Def}
		Soit $G$ un groupe localement compact à base dénombrable et soit $(S_-, S_+) $ deux $G$-espace de Lebesgue. On dit que $(S_-, S_+) $ forme une \textit{$G$-paire de bords} si
		\begin{enumerate}
			\item les actions de $G$ sur $S_-$ et sur $S_+$ sont moyennables ;
			\item les projections sur chaque facteur $\pi_- : S_- \times S_+ \rightarrow S_{-}$ et  $\pi_+ : S_- \times S_+ \rightarrow S_{+}$ sont relativement isométriquement ergodiques. 
		\end{enumerate} 
		Un $G$-espace $(S, \eta)$ est un \textit{$G$-bord} si $(S, S)$ est une $G$-paire de bords. 
	\end{Def}
	
	Soit $\mu \in \prob(G)$, on dénote par $\mui$ la mesure de probabilité donnée par $\mui  = \iota_\ast \mu$, où $\iota (g) = g^{-1} $. Tout groupe $G$ localement compact à base dénombrable admet un $G$-bord, comme le montre le théorème suivant. 
	
	\begin{thm}[{\cite[Theorem 2.7]{bader_furman14}}]\label{thm paire de bords}
		Soit $G$ un groupe localement compact à base dénombrable, et soit $\mu \in \prob(G)$ une mesure de probabilité admissible sur $G$. Notons $(B_+,\nu_+)$ et $(B_-,\nu_-)$ les bords de Poisson-Furstenberg associés à $\mu$ et $\mui$ respectivement. Alors $(B_+, B_-)$ est une $G$-paire de bords. Si on suppose de plus $\mu$ symétrique, alors $B(G, \mu)$ est un $G$-bord.
	\end{thm}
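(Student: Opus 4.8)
\emph{Esquisse.} La première condition de la définition d'une $G$-paire de bords --- la moyennabilité des actions $G \curvearrowright B_+$ et $G \curvearrowright B_-$ --- n'est autre que le théorème de Zimmer rappelé plus haut, appliqué successivement aux mesures admissibles $\mu$ et $\mui$. Tout le contenu réside donc dans la seconde condition : l'ergodicité isométrique relative des deux projections $\pi_+ : B_- \times B_+ \to B_+$ et $\pi_- : B_- \times B_+ \to B_-$. Or la notion de $G$-paire de bords ne fait pas intervenir $\mu$, et remplacer $\mu$ par $\mui$ échange $B_+$ et $B_-$ ainsi que les deux projections ; on se ramène ainsi à établir, pour une mesure admissible quelconque, l'ergodicité isométrique relative de $\pi_+ : B_- \times B_+ \to B_+$, $(b_-, b_+) \mapsto b_+$.

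L'ingrédient structurel est la description \og bilatérale \fg{} du bord double, due à Kaimanovich. On considère l'espace des trajectoires bilatérales de la marche, issu des incréments bilatéraux $\mu^{\otimes \mathbb{Z}}$, sur lequel $G$ agit par multiplication à gauche et $\mathbb{Z}$ par décalage. À une telle trajectoire on associe sa limite \emph{future}, qui vit dans $B_+$ comme dans la construction du bord de Poisson pour $\mu$, et sa limite \emph{passée}, qui vit dans $B_-$, la suite des incréments renversée étant une $\mui$-marche. Futur et passé étant gouvernés par des incréments disjoints, ils sont indépendants, de lois respectives $\nu_+$ et $\nu_-$ ; on obtient ainsi une application mesurable $G$-équivariante de l'espace des trajectoires bilatérales sur $(B_- \times B_+, \nu_- \otimes \nu_+)$, compatible avec $\pi_+$, cette dernière revenant à ne retenir que la limite future, c'est-à-dire à passer au quotient par la tribu du futur.

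Pour l'ergodicité isométrique relative de $\pi_+$, on se donne une $G$-action fibrée isométrique $p : \mathbb{X} \to A$ et des $G$-applications $f : B_- \times B_+ \to \mathbb{X}$, $f_0 : B_+ \to A$ vérifiant $p \circ f = f_0 \circ \pi_+$ ; il faut montrer que $f$ se factorise par $\pi_+$, c'est-à-dire que pour $\nu_+$-presque tout $b_+$ l'application $f(\cdot, b_+) : (B_-, \nu_-) \to (\mathbb{X}_{f_0(b_+)}, d_{f_0(b_+)})$ est essentiellement constante ; sa valeur sera $\phi(b_+)$, et la relation $p \circ \phi = f_0$ en découlera. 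On remonte $f$ à l'espace des trajectoires bilatérales : la fonction obtenue est mesurable par rapport à la tribu engendrée par les limites passée et future, tandis que $p \circ f$ ne dépend que de la limite future. Le cœur de l'argument est alors une loi du tout ou rien à la Kolmogorov, dans l'esprit de Kaimanovich, reposant sur la structure markovienne de la marche et sur la contraction des mesures harmoniques sur $B_-$ le long d'une trajectoire : c'est ici que le caractère isométrique --- et non seulement mesurable --- de l'action fibrée de $G$ sur $\mathbb{X}$ est essentiel, puisqu'il permet de transférer cette contraction à des mesures portées par les fibres de $p$, et de conclure que $f$ ne dépend en réalité que de la limite future.

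Il reste le cas symétrique : si $\mu = \mui$, alors $B_-$ et $B_+$ s'identifient tous deux à $B(G, \mu)$ comme $(G, \mu)$-espaces, et la partie précédente affirme que $(B(G, \mu), B(G, \mu))$ est une $G$-paire de bords, c'est-à-dire précisément que $B(G, \mu)$ est un $G$-bord. L'obstacle principal est concentré dans la preuve de l'ergodicité isométrique relative de $\pi_+$ : la difficulté est la mise en œuvre rigoureuse de l'argument de contraction, à savoir le contrôle des mesures conditionnelles sur $B_-$ au-dessus de $B_+$ et de leur comportement sous la marche, ainsi que les questions de mesurabilité liant l'espace des trajectoires bilatérales à son quotient $B_- \times B_+$ ; c'est le cœur technique des résultats de Kaimanovich repris par Bader et Furman.
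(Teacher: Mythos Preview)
Le papier ne démontre pas ce théorème : il est simplement énoncé avec renvoi à \cite[Theorem 2.7]{bader_furman14}, sans preuve ni esquisse dans le texte. Il n'y a donc rien à comparer côté papier.

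Cela dit, ton esquisse est une description fidèle de la stratégie de Bader--Furman, elle-même appuyée sur les idées de Kaimanovich : réduction par symétrie $\mu \leftrightarrow \mui$, moyennabilité via Zimmer, puis passage par l'espace des trajectoires bilatérales pour réaliser $B_- \times B_+$ comme quotient et obtenir l'ergodicité isométrique relative par un argument de type loi du tout ou rien exploitant la structure markovienne. Tu identifies correctement le point délicat --- le contrôle des mesures conditionnelles et le rôle de l'hypothèse isométrique sur les fibres --- même si ton esquisse reste, comme tu le reconnais, au niveau heuristique sur ce point précis.
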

	
	\subsection{Rigidité et bords}
	
	La théorie des $G$-bords permet de prouver des résultats de rigidité pour le groupe $G$, parmi lesquels l'existence d'applications de Furstenberg. On rappelle qu'une action de groupe $G\curvearrowright (X,d)$ sur un espace $\cat$(0) est élémentaire s'il existe un plat invariant (possiblement dégénéré en un point de $X$) ou un point fixe à l'infini $\xi \in \bd X$. Le théorème suivant a été démontré par Bader, Duchesne et Lécureux, et montre l'existence d'applications de bords dans le cadre d'actions $\cat$(0) non-élémentaires. 
	
	\begin{thm}[{\cite[Theorem 1.1]{bader_duchesne_lecureux16}}]\label{thm furst map}
		Soit $X$ un espace $\cat $(0) de dimension télescopique finie et soit $G$ un groupe localement compact agissant par isométries et non-élémentairement sur $X$. Soit $(B, \nu)$ un $G$-bord, alors il existe une application mesurable $G$-équivariante $B \rightarrow \bd X$. 
	\end{thm}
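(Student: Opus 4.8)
Le plan est d'abord d'utiliser la moyennabilité de l'action $G \curvearrowright B$ pour produire une application à valeurs dans les mesures, puis d'« effondrer » cette application sur le bord visuel en exploitant la géométrie des espaces $\cat$(0) de dimension télescopique finie --- essentiellement l'existence de circumcentres canoniques --- conjuguée à l'ergodicité isométrique de $B$. Quitte à remplacer $X$ par l'enveloppe convexe fermée $\overline{\conv}(G\cdot o)$ d'une orbite (qui est $G$-invariante, convexe fermée, complète, de dimension télescopique finie, séparable car $G$ est à base dénombrable, et sur laquelle l'action reste non-élémentaire), on peut supposer $X$ séparable. On fixe alors une compactification métrisable $\widehat{X}$ de $X$ sur laquelle $\iso(X)$ agit continûment : si $X$ est propre, la compactification visuelle $\overline{X}$ convient ; sinon, on travaille avec la compactification par les horofonctions, qui reste compacte et métrisable dans le cas séparable. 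Comme $(B,\nu)$ est un $G$-bord, l'action $G \curvearrowright B$ est moyennable, et la propriété des applications de bord (Proposition \ref{prop application de bord}) fournit une application mesurable $G$-équivariante $\Phi : B \to \prob(\widehat{X})$.

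\textbf{De $\prob(\widehat{X})$ vers un point du bord.} L'étape suivante consiste à extraire de $\Phi(b)$, de manière canonique --- donc $G$-équivariante --- et mesurablement en $b$, un objet géométrique ponctuel. On dispose pour cela, en dimension télescopique finie, de plusieurs constructions : le circumcentre de Bruhat--Tits (Proposition \ref{prop bruhat tits centre}) d'une partie bornée de $X$ ; et surtout le fait, dû à Balser--Lytchak et reposant sur la finitude de la dimension du bord de Tits établie par Caprace--Lytchak, qu'une partie du bord de Tits $\partial_T X$ de circumrayon strictement inférieur à $\pi/2$ admet un unique circumcentre dans $\bd X$, ainsi que la décomposition canonique de $\partial_T X$ en facteurs de join (et éventuellement en un sous-immeuble sphérique) lorsque le circumrayon atteint $\pi/2$. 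Selon la « forme » du support de $\Phi(b)$ --- concentré dans une région bornée de $X$, ou s'échappant à l'infini --- l'une de ces constructions s'applique et produit soit un point de $X$, soit un point de $\bd X$, soit un plat invariant ou une « pièce ronde » à l'infini sur lesquels on itère le procédé, la finitude de la dimension assurant l'arrêt de l'itération.

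\textbf{Élimination des cas dégénérés par non-élémentarité.} Toutes ces constructions étant canoniques, l'ergodicité isométrique de $B$ (conséquence de la propriété d'ergodicité isométrique relative d'un $G$-bord) impose qu'un même cas se produise pour $\nu$-presque tout $b$ ; dans les étapes délicates de l'itération, c'est l'ergodicité isométrique \emph{relative} complète qui est employée, via le formalisme des actions fibrées isométriques. On conclut alors grâce à l'hypothèse de non-élémentarité : si le procédé aboutissait à une application mesurable $G$-équivariante $c : B \to X$, celle-ci serait essentiellement constante par ergodicité isométrique, fournissant un point fixe de $G$ dans $X$, ce qui est exclu ; de même, une application à valeurs dans les plats de $X$ ou dans les points fixes à l'infini serait essentiellement constante et contredirait la non-élémentarité. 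Le seul cas restant est celui où l'on obtient une application mesurable $G$-équivariante $B \to \bd X$, ce qui est exactement la conclusion voulue.

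\textbf{Principale difficulté.} Le cœur de la démonstration est l'étape intermédiaire : établir que les parties de $\partial_T X$ qui apparaissent ont un circumrayon $< \pi/2$, ou bien se décomposent canoniquement, et que l'ensemble du procédé --- circumcentre, décomposition en facteurs, passage aux pièces sphériques --- est mesurable, $G$-équivariant et s'achève en un nombre fini d'étapes. C'est là qu'intervient de façon essentielle l'analyse de Caprace--Lytchak de la géométrie à l'infini des espaces $\cat$(0) de dimension télescopique finie. Une difficulté secondaire, de nature plus technique, est le traitement du cas où $X$ n'est pas propre à la première étape : s'assurer qu'une compactification métrisable convenable existe, et que les points à l'infini qu'elle ajoute « pointent » effectivement vers le bord visuel, de sorte que l'information extraite de $\Phi$ vive bien dans $\bd X$.
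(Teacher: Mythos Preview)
The paper does not contain a proof of this statement: it is quoted verbatim from \cite[Theorem 1.1]{bader_duchesne_lecureux16} (and restated in English as Theorem~\ref{furst map}), and is used as a black box. There is therefore no ``paper's own proof'' to compare your proposal to.

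That said, your outline is a fair summary of the overall architecture of the Bader--Duchesne--L\'ecureux argument: amenability of $G\curvearrowright B$ produces a $G$-map to probability measures on a suitable compactification, and one then uses the geometry of finite telescopic dimension together with (relative) isometric ergodicity to collapse this to a boundary map, ruling out the alternatives via non-elementarity. Two points where your description could be sharpened relative to the actual source: first, the key technical engine is not an ad~hoc iteration of circumcentre constructions but a measurable equivariant Adams--Ballmann-type dichotomy for fields of $\cat(0)$ spaces of finite telescopic dimension (this is Duchesne's theorem, stated in the present paper as Theorem~\ref{thm meas adams ballmann}); it is this result that outputs either an invariant section of the boundary field or an invariant Euclidean subfield, and the latter is then eliminated. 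Second, the natural setting throughout is that of measurable metric fields rather than a single compactification, which is what allows the argument to work without properness assumptions on~$X$; your remark about the horofunction compactification in the non-proper case is in the right direction but the measurable-field formalism is what makes the details go through cleanly.
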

	
	On utilisera ce théorème dans le Chapitre \ref{chapter rw immeuble} afin de construire des mesures stationnaires sur le bord dans le contexte d'une action de groupe sur un immeuble affine. 
	
	Dans la Section \ref{section rw hyp}, on construira une application de bord $B \rightarrow \bdg X$ dans le cadre d'une action de groupe $G \curvearrowright X$ sur un espace Gromov-hyperbolique. On prouvera de plus qu'une telle application est essentiellement unique.

	\section{Marches aléatoires en courbure non-positive}\label{section rw courbure non positive}
	
	Lors de l'étude de marches aléatoires en présence d'éléments contractants sur des espaces $\cat (0)$, on sera souvent amené à identifier et profiter de comportements hyperboliques. Dans cette section, on revient sur quelques résultats importants de lois limites dans des espaces hyperboliques ou à courbure non-positive. La théorie est très riche, et nous ne prétendons pas en faire un inventaire exhaustif. 
	
	\subsection{Convergence de la marche aléatoire}
	
	Soit $G$ un groupe dénombrable agissant par isométries sur un espace Gromov-hyperbolique séparable. On suppose que l'action est non-élémentaire, c'est-à-dire qu'il existe une paire d'isométries loxodromiques $g_1, g_2 \in G$. Soit $\mu$ une mesure admissible sur $G$. Dans cette section, on résume quelques résultats de lois limites sur la marche aléatoire $(Z_n o )_{n \in \mathbb{N}}$ dans ce contexte.

	Le premier résultat que l'on mentionne est que la marche aléatoire converge vers un point du bord. Notons que cette convergence était connue dans le contexte d'un espace hyperbolique propre, alors qu'ici Maher et Tiozzo montrent la convergence en toute généralité, et sans hypothèse de premier moment sur la mesure $\mu$. 
	
	\begin{thm}[{\cite[Theorem 1.1]{maher_tiozzo18}}] \label{thm maher tiozzo conv}
		Soit $G$ un groupe dénombrable et $X$ un espace hyperbolique séparable sur lequel $G$ agit par isométries, de manière non-élémentaire. Soit $\mu$ une probabilité admissible sur $G$, et $o \in Y$ un point-base. Alors la marche aléatoire $(Z_n (\omega)o )_n $ induite par $\mu$ converge presque sûrement vers $z^{+} (\omega)\in \bd X$, et la mesure limite $\nu$ est l'unique mesure $\mu$-stationnaire sur le bord $\bd X$. 
	\end{thm}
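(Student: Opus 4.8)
Le plan est d'établir ce résultat — dû à Maher et Tiozzo \cite{maher_tiozzo18} — en exploitant la moyennabilité et l'ergodicité du bord de Poisson-Furstenberg, dans l'esprit de \cite{bader_caprace_furman_sisto22}, plutôt que par des méthodes de déviation. Comme l'action de $G$ sur son bord de Poisson $(B,\nu_B) = B(G,\mu)$ est moyennable (\cite{zimmer78}) et que la compactification horofonctionnelle $\overline{X}^h$ est un $G$-espace compact métrisable (Arzelà-Ascoli), la Proposition~\ref{prop application de bord} fournit une application mesurable $G$-équivariante $\phi : B \to \prob(\overline{X}^h)$. Le cœur de la preuve consiste à montrer que $\phi$ est essentiellement à valeurs dans les masses de Dirac concentrées sur la partie de l'horobord qui se projette sur $\bdg X$ : cela entraînera \emph{simultanément} la convergence presque sûre et l'unicité de la mesure stationnaire.

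\emph{Première étape : non-atomicité, puis passage à une application ponctuelle.} Par le Théorème~\ref{thm bdry map = stat measure}, la mesure $\nu := \int_B \phi(b)\,d\nu_B(b)$ est $\mu$-stationnaire sur $\overline{X}^h$. Comme l'action est non-élémentaire, $G$ n'a pas d'orbite finie dans $\overline{X}^h$ (une telle orbite produirait un point fixe au bord ou un ensemble borné invariant), donc le Lemme~\ref{lem non atomic} montre que $\nu$ est non-atomique ; un argument classique de géométrie hyperbolique donne de plus que $\nu$ est portée par le borélien de $\partial^h X$ sur lequel la projection canonique $p : \partial^h X \to \bdg X$ est définie, et l'on se ramène ainsi à une application $G$-équivariante $\phi : B \to \prob(\bdg X)$ d'intégrale non-atomique. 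Il reste à voir que $\phi$ est Dirac. On introduit pour cela une $G$-paire de bords $(B_-,B_+)$, où $B_\pm$ sont les bords de Poisson de $\mui$ et $\mu$ (Théorème~\ref{thm paire de bords}), munie d'applications de bord $\phi_\pm : B_\pm \to \prob(\bdg X)$ obtenues comme ci-dessus. L'espace des géodésiques de $X$ — une géodésique bi-infinie étant déterminée par une paire de points distincts du bord — porte une structure $G$-équivariante d'invariants de type produit de Gromov issue de la $\delta$-hyperbolicité ; l'ergodicité isométrique relative des projections $B_- \times B_+ \to B_\pm$ force alors $\phi_-(b_-)$ et $\phi_+(b_+)$ à être des Dirac pour presque tout $(b_-,b_+)$. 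C'est la version du lemme de rigidité de \cite{bader_caprace_furman_sisto22} pour les actions hyperboliques non-élémentaires, dont l'ingrédient géométrique est l'existence de paires d'isométries loxodromiques jouant au ping-pong ; on en tire de surcroît l'unicité essentielle des $G$-applications mesurables $B \to \bdg X$.

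\emph{Deuxième étape : de l'application de Dirac à la convergence presque sûre.} L'application ponctuelle $G$-équivariante $\psi : B \to \bdg X$ ainsi obtenue (telle que $\phi = \delta_\psi$) exhibe $(\bdg X,\nu)$, avec $\nu := \psi_\ast \nu_B$, comme un $\mu$-bord (Théorème~\ref{thm furst mu bord}) ; en composant avec l'application canonique $\Omega \to B$ on obtient $\overline{\psi} : \Omega \to \bdg X$, $G$-équivariante, telle que les mesures conditionnelles du Théorème~\ref{thm mesures limites} soient des Dirac, $\nu_\omega = \delta_{\overline{\psi}(\omega)}$, autrement dit $Z_n(\omega)_\ast \nu \to \delta_{\overline{\psi}(\omega)}$ faiblement-$\ast$ pour $\mathbb{P}$-presque tout $\omega$. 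On conclut par un lemme géométrique : dans un espace hyperbolique, si $(g_n)$ est une suite d'isométries avec $g_n o \to \zeta^+$ et $g_n^{-1} o \to \zeta^-$ dans $\overline{X}^h$, et si $m$ est une probabilité ne chargeant pas $\zeta^-$, alors $g_n{}_\ast m \to \delta_{\zeta^+}$ ; appliqué aux valeurs d'adhérence de $(Z_n(\omega)o)$ dans le compact $\overline{X}^h$ et à $m=\nu$ (non-atomique), ceci montre que toute valeur d'adhérence de $(Z_n(\omega)o)$ coïncide avec $\overline{\psi}(\omega)$, d'où $Z_n(\omega)o \to \overline{\psi}(\omega) =: z^+(\omega) \in \bdg X$ presque sûrement. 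La désintégration $\nu = \int_\Omega \nu_\omega\,d\mathbb{P} = z^+_\ast \mathbb{P}$ identifie alors $\nu$ à la mesure de sortie $E \mapsto \mathbb{P}(z^+ \in E)$.

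\emph{Troisième étape : unicité.} Si $\nu'$ est une mesure $\mu$-stationnaire quelconque sur $\bdg X$, le Théorème~\ref{thm bdry map = stat measure} lui associe une $G$-application $\phi' : B \to \prob(\bdg X)$ avec $\nu' = \int_B \phi'\,d\nu_B$ ; par l'étape de rigidité, $\phi'$ est Dirac, égale à $\delta_{\psi'}$ pour une $G$-application $\psi' : B \to \bdg X$, et l'unicité essentielle de telles applications donne $\psi' = \psi$ presque partout, donc $\nu' = \nu$. \textbf{Le point délicat} est l'étape de rigidité, c'est-à-dire le passage d'une application de bord à valeurs mesures à une application ponctuelle : c'est là qu'interviennent de façon essentielle la non-élémentarité (via le ping-pong loxodromique) et la géométrie propre aux bords hyperboliques, et sa mise en œuvre soignée requiert toute la machinerie d'ergodicité isométrique relative des $G$-paires de bords (ou, alternativement, un argument de type Furstenberg sur les triplets de points du bord). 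Un soin particulier est par ailleurs nécessaire pour gérer la non-propreté de $X$ — le bord de Gromov $\bdg X$ n'étant alors pas compact, on travaille dans $\overline{X}^h$ en suivant la projection borélienne $\partial^h X \to \bdg X$, notamment pour justifier que toute mesure $\mu$-stationnaire y est portée.
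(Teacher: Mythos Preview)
Your proposal is correct and follows essentially the same boundary-theoretic strategy that the paper itself develops in Section~\ref{section rw hyp} (culminating in Theorem~\ref{thm cv rw hyp}): amenability of the Poisson boundary yields a $G$-map to $\prob(\Xhc)$, the rigidity machinery of \cite{bader_caprace_furman_sisto22} via the $G$-paire de bords $(B_-,B_+)$ forces these measures to be Dirac on $\bdg X$ (the paper uses explicitly the map $\bdg X^{(3)}\to\bdd(X)$ given by coarse triangle centers, Lemma~\ref{lem gmap bdg3 X bdd}, which is the ``argument de type Furstenberg sur les triplets'' you mention), and then one deduces orbit convergence from $Z_n(\omega)_\ast\nu\to\delta_{\overline{\psi}(\omega)}$. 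The only noticeable difference is in this last geometric step: you invoke a convergence-type lemma on $\overline{X}^h$, whereas the paper argues directly with shadows and their weak convexity (Corollary~\ref{cor conv shadows}), which avoids having to control accumulation points of $(Z_n o)$ lying in $\Xhbc$ --- a small technicality your formulation leaves implicit.
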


	\subsection{Vitesse de fuite}\label{section drift}
	Puisque d'après le théorème \ref{thm maher tiozzo conv}, on sait que la marche aléatoire converge vers le bord, la question suivante est de déterminer la vitesse moyenne à laquelle cette convergence a lieu. En d'autres termes, on cherche à estimer la limite de $\frac{1}{n}d(Z_n o, o)$ lorsque $n \rightarrow \infty$. A priori, cette limite n'est pas définie, mais dans l'hypothèse où $\mu $ a un premier moment fini $\int d(go, o) d\mu(g) < \infty$, elle existe. 
	
	Soit $(\Omega, \mathbb{P})$ un espace probabilisé, et $T : \Omega \rightarrow \Omega$ une application mesurable pmp et ergodique. On se donne une fonction $a : \mathbb{N} \times \Omega \rightarrow \R $ qui vérifie la propriété de cocycle sous-additif : pour tout $n,m \in \mathbb{N}$, pour presque tout $\omega \in \Omega$, 
	\begin{eqnarray}
		a(n+m , \omega) \leq a(n, w) + a(m, T^n \omega).  \nonumber
	\end{eqnarray}
	On dit que $a$ est \textit{intégrable} si $\int_\Omega |a(1, \omega)| d\mathbb{P}(\omega) < \infty$. On définit alors
	\begin{eqnarray}
		\lambda := \inf_n \frac{1}{n} \int_\Omega a(n, \omega) d\mathbb{P}(\omega) \in [- \infty, \infty)\nonumber.
	\end{eqnarray}
	\begin{thm}[{Théorème sous-additif de Kingman, \cite{kingman68}}]\label{thm kingman}
		
		Pour $\mathbb{P}$-presque tout $\omega \in \Omega$, 
		\begin{eqnarray}
			\frac{1}{n}a(n,\omega) \underset{n\rightarrow\infty}{\longrightarrow} \lambda \nonumber. 
		\end{eqnarray}
		De plus, si $ \lambda > - \infty$, la convergence a lieu dans $L^1 (\Omega, \mathbb{P})$. 
	\end{thm}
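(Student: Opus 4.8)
The plan is to prove the theorem following the stopping-time method of Katznelson and Weiss. Set $\overline{a}(\omega) = \limsup_n \frac1n a(n,\omega)$ and $\underline{a}(\omega) = \liminf_n \frac1n a(n,\omega)$. Two elementary preliminaries come first: iterating subadditivity gives $a(n,\omega) \le \sum_{k=0}^{n-1} a(1,T^k\omega)$, so by Birkhoff's ergodic theorem $\overline{a} \le \mathbb{E}[a(1,\cdot)] < \infty$ almost surely; and the sequence $u_n := \int_\Omega a(n,\omega)\,d\mathbb{P}(\omega)$ is subadditive (because $T$ preserves $\mathbb{P}$), so Fekete's lemma yields $\frac1n u_n \to \lambda = \inf_n \frac1n u_n$. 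Next I would check that $\overline{a}$ and $\underline{a}$ are $T$-invariant: the relation $a(n,T\omega) \ge a(n+1,\omega) - a(1,\omega)$ gives $\overline{a}\circ T \ge \overline{a}$ and $\underline{a}\circ T \ge \underline{a}$, and since both functions are bounded above by the integrable constant $\mathbb{E}[a(1,\cdot)]$, truncating from below and using invariance of $\mathbb{P}$ forces equality a.e.; by ergodicity $\overline{a}$ and $\underline{a}$ equal constants $\overline{\lambda},\underline{\lambda} \in [-\infty,\infty)$, with $\underline{\lambda} \le \overline{\lambda}$ trivially.

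The inequality $\overline{\lambda} \le \lambda$ is the easy half. Fix $m \ge 1$ and write $n = qm + r$ with $0 \le r < m$. Iterating subadditivity along the orbit of $T^m$ gives $a(qm+r,\omega) \le \sum_{j=0}^{q-1} a(m, T^{jm}\omega) + a(r, T^{qm}\omega)$; dividing by $n$, letting $q \to \infty$, applying Birkhoff's theorem for $T^m$, and using $\frac1n a(r, T^{qm}\omega) \to 0$ (since $r$ stays bounded), one gets $\overline{a} \le \frac1m\,\mathbb{E}[a(m,\cdot) \mid \mathcal{I}_m]$ a.e., where $\mathcal{I}_m$ is the $T^m$-invariant $\sigma$-algebra. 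Integrating, $\overline{\lambda} \le \frac1m u_m$ for all $m$, hence $\overline{\lambda} \le \lambda$.

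The crux is the reverse inequality $\lambda \le \underline{\lambda}$. Assume first $\underline{\lambda} > -\infty$, fix $\varepsilon > 0$, and set $\tau(\omega) = \min\{ n \ge 1 : a(n,\omega) \le n(\underline{\lambda}+\varepsilon)\}$, which is $\mathbb{P}$-a.e. finite since $\liminf_n \frac1n a(n,\omega) = \underline{\lambda}$. Pick $N$ so large that $\mathbb{P}(\tau > N) < \varepsilon$ and $\mathbb{E}[\,|a(1,\cdot)|\,\mathds{1}_{\{\tau > N\}}] < \varepsilon$. For a.e. $\omega$, decompose $[0,n)$ greedily: from an index $k$, take a good block of length $\tau(T^k\omega)$ — costing at most $\tau(T^k\omega)(\underline{\lambda}+\varepsilon)$ — whenever $\tau(T^k\omega) \le N$, and a bad step of length $1$ costing $a(1,T^k\omega)$ otherwise, leaving a remainder of length $r < N$. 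Subadditivity then bounds $a(n,\omega)$ by $(\underline{\lambda}+\varepsilon)(n - L_b - r) + \sum_{k\in B_n} a(1,T^k\omega)^+ + \rho_n$, where $B_n$ collects the bad indices, $L_b = \#B_n$, and $\rho_n = o(n)$ a.e. is the crudely estimated remainder cost. Since every bad index satisfies $\tau(T^k\omega) > N$, Birkhoff's theorem gives $\limsup_n L_b/n \le \mathbb{P}(\tau>N)$ and $\limsup_n \frac1n\sum_{B_n} a(1,T^k\omega)^+ \le \mathbb{E}[|a(1,\cdot)|\mathds{1}_{\{\tau>N\}}]$, while $r/n \to 0$; whether $\underline{\lambda}+\varepsilon$ is positive or negative, $\frac1n(\underline{\lambda}+\varepsilon)(n-L_b-r) \to \underline{\lambda}+\varepsilon$ up to a term bounded by $|\underline{\lambda}+\varepsilon|\,\mathbb{P}(\tau>N)$. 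Dividing by $n$, integrating, and letting $n\to\infty$ then $N\to\infty$ gives $\lambda = \lim_n \frac1n u_n \le \underline{\lambda}+\varepsilon$, hence $\lambda \le \underline{\lambda}$. If instead $\underline{\lambda} = -\infty$, the same computation with $\underline{\lambda}+\varepsilon$ replaced by an arbitrary $M \in \mathbb{R}$ gives $\lambda \le M$ for all $M$, so $\lambda = -\infty$. Combining the three inequalities, $\overline{\lambda} = \underline{\lambda} = \lambda$, which is the a.e. convergence.

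For the $L^1$ statement when $\lambda > -\infty$, observe that $\big(\tfrac1n a(n,\cdot)\big)^{+} \le \frac1n\sum_{k=0}^{n-1} a(1,T^k\cdot)^{+}$, a family that converges in $L^1$ by the ergodic theorem and is therefore uniformly integrable; together with a.e. convergence to the integrable constant $\lambda$ and with $\int_\Omega \frac1n a(n,\cdot)\,d\mathbb{P} = \frac1n u_n \to \lambda$, a routine application of Vitali's theorem to the positive parts followed by Scheffé's lemma for the negative parts upgrades the convergence to $L^1$. The main obstacle is the third step: the greedy decomposition must be made measurable in $\omega$, one has to bookkeep the sign of $\underline{\lambda}+\varepsilon$ (a negative drift makes shrinking the good length \emph{increase} the bound, forcing control of the bad length) together with the end remainder, and the degenerate case $\underline{\lambda} = -\infty$ must be isolated.
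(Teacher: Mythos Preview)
The paper does not actually prove this statement: Kingman's theorem is stated as a classical result with a citation to \cite{kingman68}, followed only by a remark pointing to the refinement of Gou\"ezel--Karlsson. There is therefore no ``paper's own proof'' to compare against.

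That said, your argument is a correct and well-organised rendition of the standard Katznelson--Weiss stopping-time proof. The preliminary reductions (Fekete for $u_n$, Birkhoff for the crude upper bound, $T$-invariance of $\overline a$ and $\underline a$ via the one-sided inequality plus integrability of the positive part) are handled properly. The easy inequality $\overline\lambda\le\lambda$ via blocking along $T^m$ is fine; the remainder term there could be spelled out slightly more (one usually averages over the $m$ possible starting offsets to make the endpoint contribution manifestly $o(n)$), but this is routine. The hard inequality $\lambda\le\underline\lambda$ via the greedy decomposition into good blocks of length $\tau(T^k\omega)\le N$ and bad single steps is correct; your bookkeeping of the sign of $\underline\lambda+\varepsilon$ and of the bad-length fraction via Birkhoff is the right way to handle it, and the separate treatment of $\underline\lambda=-\infty$ is necessary and done correctly. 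The $L^1$ upgrade via uniform integrability of the positive parts and Scheff\'e for the negative parts is also the standard route. In short: your proof is sound, and simply goes beyond what the paper chose to include.
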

	
	\begin{rem}
		Ce théorème a été grandement amélioré dans \cite[Theorem 1.1]{gouezel_karlsson20}, dans lequel les auteurs montrent que pour presque tout $\omega$, il existe une suite d'entiers $n_i(\omega) \rightarrow \infty$ tels que le cocycle $a(l, \omega)$ est proche (en un sens précis) d'être additif pour tout $l \leq n_i$. Néanmoins, nous n'utiliserons pas ce résultat. 
	\end{rem}
	Soit maintenant $G$ un groupe localement compact agissant par isométries sur un espace métrique $(X,d)$, et soit $\mu$ une probabilité admissible sur $G$. Soit $\Omega$ l'espace des incréments sur $G$ induits par $\mu$, et soit $o \in X$ un point-base. On définit l'application $a : \mathbb{N} \times \Omega \rightarrow \R $ par 
	\begin{eqnarray}
		a(n, \omega) = d(Z_n(\omega) o , o) \nonumber. 
	\end{eqnarray}
	Un rapide calcul montre que $a$ est un cocycle sous-additif. 
	\begin{Def}
		On dit que $\mu$ a un \textit{premier moment fini} si le cocycle $a$ est intégrable, i.e. 
		\begin{eqnarray}
			\int_G d(go, o) d\mu(g) < \infty \nonumber.
		\end{eqnarray}
	\end{Def}
	
	D'après le théorème sous-additif de Kingman, si $\mu$ a un premier moment fini, alors presque sûrement 
	\begin{eqnarray}
		\frac{1}{n}d(Z_n(\omega) o , o) \underset{n\rightarrow\infty}{\longrightarrow} \lambda:= \inf_n \frac{1}{n} \int d(Z_n(\omega) o, o) d\mathbb{P}(\omega) \in [0, \infty)\nonumber. 
	\end{eqnarray}
	On appelle la limite $\lambda$ la \textit{vitesse de fuite} (ou le \textit{drift}) de la marche aléatoire $(Z_n)$. Elle ne dépend pas du point-base $o \in X$. 
	Si $\mu $ a un premier moment infini, on posera $\lambda:= \infty$. 
	Dans le contexte d'une marche aléatoire dans un espace hyperbolique, Gouëzel montre que la vitesse de fuite est presque sûrement positive.
	
	\begin{thm}[{\cite[Theorem 1.1]{gouezel22}}]\label{thm gouezel drift}
		Soit $G$ un groupe dénombrable et $X$ un espace hyperbolique séparable sur lequel $G$ agit par isométries, de manière non-élémentaire. Soit $\mu$ une probabilité admissible sur $G$, et $o \in Y$ un point-base. Alors la vitesse de fuite $\lambda$ de la marche aléatoire est presque sûrement strictement positive. 
		
		De plus, pour tout $\lambda' < \lambda$, il existe $\kappa >0$ tel que 
		\begin{eqnarray}
			\mathbb{P}\big( \omega \in \Omega \, : \, d_Y(Z_n(\omega) o, o) \leq \lambda'n \big)< e^{-\kappa n }.
		\end{eqnarray}
	\end{thm}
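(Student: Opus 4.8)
On suit la stratégie de Gouëzel, fondée sur la construction de \emph{temps pivots} le long de la trajectoire : des instants où la marche progresse d'une quantité définie qui ne pourra plus être annulée par les incréments ultérieurs, et ce sans aucune hypothèse de moment sur $\mu$. La non-élémentarité fournit une paire d'isométries loxodromiques indépendantes ; en passant à des puissances assez grandes et à des conjugués, on en tire un \emph{ensemble de Schottky} fini $S \subseteq G$ : il existe des constantes telles que toute concaténation $s_1 \cdots s_k$ d'éléments de $S$ vérifiant une condition d'incompatibilité fournit une quasi-géodésique uniforme $o, s_1 o, s_1 s_2 o, \dots$, et cette propriété d'alignement (contrôle des produits de Gromov aux jonctions) est \emph{robuste} sous pré- et post-composition par des isométries arbitraires de $X$. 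C'est ici qu'intervient l'hyperbolicité, à travers le lemme de Morse et le caractère local-global des quasi-géodésiques. Comme $\mu$ est admissible, il existe $N$ tel que $\mu^{\ast N}$ charge chaque $s \in S$ ; quitte à remplacer $\mu$ par $\mu^{\ast N}$ — ce qui multiplie la vitesse de fuite par $N$ sans changer sa positivité — on peut supposer $\mu = \alpha\,\eta_S + (1-\alpha)\,\mu'$ avec $\alpha > 0$ et $\eta_S$ la loi uniforme sur $S$. Un pas de la marche se réalise alors en tirant, avec probabilité $\alpha$, un élément uniforme de $S$, et sinon un élément selon $\mu'$ ; on note $\theta_k$ le type du $k$-ième incrément (Schottky, et lequel, ou bien de type $\mu'$).

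Le point central est la construction récursive d'un ensemble aléatoire de temps pivots $1 \le n_1 < n_2 < \dots$, ne dépendant que de données locales du mot des types $(\theta_k)$, tel que : (i) conditionnellement à tout le reste — les pas de type $\mu'$ et les pas de Schottky non pivots — l'élément de $S$ situé à un temps pivot reste uniforme dans $S$ ; (ii) le processus des temps pivots possède une structure markovienne, ce qui fournit une borne de grande déviation : il existe $\rho > 0$ et $\kappa_0 > 0$ tels que $\mathbb{P}\big(\#\{j : n_j \le n\} < \rho n\big) \le e^{-\kappa_0 n}$ pour tout $n$.

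Il reste à convertir les temps pivots en déplacement. Par la propriété de Schottky, la sous-suite $(Z_{n_j} o)_j$ est une quasi-géodésique uniforme et l'alignement survit à l'insertion des segments non pivots, aussi longs soient-ils ; le lemme de Morse donne alors $d(o, Z_n o) \ge \delta\,\#\{j : n_j \le n\} - C$ pour des constantes $\delta, C > 0$. Combiné à (ii), ceci donne $\mathbb{P}(d(o, Z_n o) \le \delta' n) \le e^{-\kappa n}$ pour un $\delta' > 0$ convenable et tout $n$, d'où $\liminf_n \frac{1}{n} d(o, Z_n o) \ge \delta'$ presque sûrement. Lorsque $\mu$ a un premier moment fini, le théorème de Kingman \ref{thm kingman} identifie cette limite inférieure à la vitesse de fuite $\lambda$, qui est donc strictement positive, et tout $\lambda' < \lambda$ satisfait l'inégalité exponentielle annoncée ; en l'absence d'hypothèse de moment, $\lambda = \infty$ et l'inégalité vaut pour tout $\lambda'$.

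Le principal obstacle est la construction et l'analyse des temps pivots : il faut organiser le pivotage pour qu'il ne dépende que d'une fenêtre bornée du mot des types — condition nécessaire à la markovianité, donc à la concentration exponentielle — tout en préservant l'uniformité conditionnelle de l'élément de Schottky, car c'est elle qui, jointe à la robustesse de l'alignement, interdit le retour en arrière. La non-compacité de $X$ et l'absence d'hypothèse de moment obligent à mener tous les contrôles géométriques au moyen des produits de Gromov et du lemme de Morse plutôt que d'arguments de compacité, et à vérifier soigneusement que les pas de type $\mu'$, potentiellement de grande longueur, ne détruisent pas l'alignement le long des temps pivots.
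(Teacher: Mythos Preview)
The paper does not provide a proof of this statement: it is quoted as a result from \cite{gouezel22} and used as a black box, with only the brief remark that Gouëzel's argument relies on a powerful pivot technique. Your sketch is a faithful outline of Gouëzel's actual strategy in that reference --- Schottky set from independent loxodromics, decomposition $\mu = \alpha\eta_S + (1-\alpha)\mu'$, Markovian pivot times with exponential concentration, and conversion of pivots into displacement via alignment/Morse --- so there is nothing to compare against in the paper itself.
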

	
	Il est à noter ici que ce théorème reste vrai si on suppose seulement $G$ localement compact, et sans l'hypothèse que $X$ soit géodésique. Ce résultat était connu de Maher et Tiozzo si $\mu$ a un premier moment fini \cite[Theorems 1.2 and 1.3]{maher_tiozzo18}. Pour prouver ce résultat, Gouëzel utilise une technique de pivots très puissante, et qui a été utilisée par la suite dans \cite{choi22a} pour prouver de nombreuses lois limites de marches aléatoires en présence d'isométries contractantes dans divers espaces métriques. 
	
	Ce théorème doit se comprendre comme une loi des grands nombres dans un contexte non-commutatif : presque sûrement, les variables aléatoires $\{d(o, Z_no )\} $ vérifient $$d(o, Z_no ) \sim n \lambda.$$ 
	
	\subsubsection{Dérive et convergence dans les espaces $\cat$(0)}
	Dans le cadre d'espaces $\cat (0)$, un résultat fondamental de convergence est dû à Karlsson et Margulis \cite{karlsson_margulis}. Soit $(X,d)$ un espace $\cat$(0) et $G$ un groupe agissant par isométries sur $X$. On se donne un espace probabilisé $(\Omega, \mathbb{P})$ et $T : \Omega \rightarrow \Omega$ une application ergodique pmp. Soit $f : \Omega \rightarrow G$ une application mesurable. Le cocycle ergodique $u : \mathbb{N} \times \Omega \rightarrow G$ est défini par 
	\begin{eqnarray}
		u(n, \omega ) = f(\omega) f(T\omega) \dots f(T^{n-1}\omega) \nonumber. 
	\end{eqnarray}
	On dit que le cocycle $u$ est \textit{intégrable} si 
	\begin{eqnarray}
		\int_\Omega d(f(\omega)o, o) d\mathbb{P}(\omega) < \infty. \nonumber
	\end{eqnarray}
	Si $u$ est intégrable, alors par le théorème de Kingman, la limite $\lim \frac{1}{n} d(u(n, \omega)o, o)$ existe presque sûrement : il s'agit de la vitesse de fuite du cocycle. 
	\begin{thm}[{\cite[Theorem 2.1]{karlsson_margulis}}]
		Soit $(\Omega, \mathbb{P}, T)$, $f : \Omega \rightarrow G$ et $(X,d)$ comme dans la situation précédente. On suppose que le cocycle $u(n, \omega)$ est intégrable, et que sa vitesse de fuite $\lambda$ est strictement positive. Alors $\mathbb{P}$ presque tout $\omega$, il existe un unique rayon géodésique $\gamma^\omega$dans $X$ issu de $o $ tel que 
		\begin{eqnarray}
			\lim_{n \rightarrow \infty} \frac{1}{n} d(u(n, \omega) o ,  \gamma^\omega (\lambda n)) = 0. \nonumber
		\end{eqnarray} 
	\end{thm}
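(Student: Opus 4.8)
Posons $a(n,\omega):=d(o,u(n,\omega)o)$. L'identité de cocycle $u(n,\omega)=u(k,\omega)\,u(n-k,T^k\omega)$ et le fait que les $u(k,\omega)$ sont des isométries donnent $d\big(u(k,\omega)o,u(n,\omega)o\big)=a(n-k,T^k\omega)$ pour $0\le k\le n$, d'où $a(n+m,\omega)\le a(n,\omega)+a(m,T^n\omega)$ : $a$ est un cocycle sous-additif intégrable. Le Théorème sous-additif de Kingman (Théorème \ref{thm kingman}) fournit alors un ensemble de mesure pleine sur lequel $\tfrac1n a(n,\omega)\to\lambda>0$ ; sur cet ensemble on a $\rho(k):=|a(k,\omega)-\lambda k|=o(k)$. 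Le plan est de construire $\gamma^\omega$ comme limite, le long d'une sous-suite de temps bien choisie, des segments géodésiques $[o,u(n,\omega)o]$ — la compacité de $\overline X$ étant indisponible puisque $X$ n'est pas supposé localement compact, on la remplace par les inégalités de convexité des espaces $\cat$(0).

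\textbf{L'ingrédient ergodique.} Le cœur de la preuve est un lemme de Karlsson et Margulis : pour presque tout $\omega$, le cocycle est \og presque additif \fg\ le long d'une suite adaptée de temps. Précisément, il existe $n_j\uparrow\infty$ tel que, pour tout $\varepsilon>0$, on ait $a(n_j-k,T^k\omega)\le a(n_j,\omega)-(\lambda-\varepsilon)k$ pour tous $0\le k\le n_j$ dès que $j$ est assez grand (à un terme d'erreur additif $o(k)$ près, absorbé plus bas). Joint à l'inégalité triangulaire inverse $a(n_j-k,T^k\omega)\ge a(n_j,\omega)-a(k,\omega)$, ceci exprime qu'aux temps $n_j$ la trajectoire $(u(k,\omega)o)_{0\le k\le n_j}$ est \og presque géodésique \fg. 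La démonstration de ce lemme est de nature purement ergodique — un argument de type maximal, par télescopage, reposant sur la sous-additivité, l'intégrabilité et l'ergodicité de $T$ — et c'est de loin la partie la plus délicate ; je la traiterais comme une boîte noire, en renvoyant à \cite{karlsson_margulis} (et \cite{karlsson_ledrappier11} pour des énoncés voisins).

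\textbf{Déduction géométrique.} Fixons $\omega$ générique et notons $\gamma_n$ la paramétrisation par longueur d'arc de $[o,u(n,\omega)o]$. Pour un temps $n$ fourni ci-dessus relativement à $\varepsilon$ et pour $0\le k\le n$, le lemme et $a(k,\omega)\le\lambda k+\rho(k)$ donnent
\[
 d(o,u(k,\omega)o)+d\big(u(k,\omega)o,u(n,\omega)o\big)\;\le\; d(o,u(n,\omega)o)+\varepsilon k+\rho(k),
\]
c'est-à-dire que le triangle $\big(o,\,u(k,\omega)o,\,u(n,\omega)o\big)$ est $\delta_k$-dégénéré avec $\delta_k=\varepsilon k+\rho(k)$. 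Un calcul de comparaison euclidien à partir de l'inégalité $\cat$(0) montre alors que le projeté de $u(k,\omega)o$ sur $\gamma_n$ (Proposition \ref{prop projection cat}) est à distance $\lambda k+o(k)$ de $o$ le long de $\gamma_n$ et que $d\big(u(k,\omega)o,\gamma_n(\lambda k)\big)\le C\sqrt{\delta_k\cdot\min(\lambda k,\lambda(n-k))}+o(k)$. On choisit à présent $\varepsilon_m=2^{-2m}$ et, récursivement, des temps $n_m\uparrow\infty$ de la suite du lemme, avec $n_{m+1}\ge 2n_m$, $n_m$ étant $\varepsilon_m$-régulier, et tels que $\rho(n_m)\le\varepsilon_m n_m$. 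En appliquant l'estimation précédente à $\gamma_{n_{m+1}}$ (qui est $\varepsilon_{m+1}$-régulier) avec $k=n_m$ (où $n_m\le\tfrac12 n_{m+1}$, de sorte que le minimum vaut $\lambda n_m$), on obtient $d\big(u(n_m,\omega)o,\gamma_{n_{m+1}}(\lambda n_m)\big)\le\theta_m n_m$ avec $\sum_m\theta_m<\infty$ ; puisque $\gamma_{n_m}$ et $\gamma_{n_{m+1}}$ émanent de $o$, la convexité de $t\mapsto d(\gamma_{n_m}(t),\gamma_{n_{m+1}}(t))$ (Proposition \ref{prop convexite cat}), nulle en $0$, la majore par $\tfrac{t}{\lambda}\theta_m$ sur $[0,\lambda n_m]$. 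Les $\gamma_{n_m}$ sont donc uniformément de Cauchy sur tout compact de $[0,\infty)$ ; par complétude de $X$ elles convergent vers un rayon géodésique $\gamma^\omega$ issu de $o$ (une limite uniforme sur les compacts de segments géodésiques est géodésique). En sommant les $\theta_k$, $k\ge m$, il vient $d\big(u(n_m,\omega)o,\gamma^\omega(\lambda n_m)\big)=o(n_m)$ ; et pour un entier $n$ quelconque, en prenant $m$ tel que $n\le\tfrac12 n_m$, l'estimation appliquée à $\gamma_{n_m}$ avec $k=n$ puis $d\big(\gamma_{n_m}(\lambda n),\gamma^\omega(\lambda n)\big)=o(n)$ donnent $\tfrac1n d\big(u(n,\omega)o,\gamma^\omega(\lambda n)\big)\to 0$.

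\textbf{Unicité et variante.} Si $\gamma_1,\gamma_2$ sont deux rayons issus de $o$ vérifiant la conclusion, alors $d(\gamma_1(\lambda n),\gamma_2(\lambda n))\le d(\gamma_1(\lambda n),u(n,\omega)o)+d(u(n,\omega)o,\gamma_2(\lambda n))=o(n)$ ; la fonction convexe $t\mapsto d(\gamma_1(t),\gamma_2(t))$, nulle en $0$, a alors un quotient $t^{-1}d(\gamma_1(t),\gamma_2(t))$ croissant, de limite nulle (valeur atteinte le long de $t=\lambda n$), donc elle est identiquement nulle : $\gamma_1=\gamma_2$. On peut aussi mener la construction du rayon via la bordification par les horofonctions $\overline{X}^h$, compacte même si $X$ n'est pas propre : une valeur d'adhérence des fonctions $d(\cdot,u(n,\omega)o)-a(n,\omega)$ le long des temps $n_j$ est une horofonction $h_\omega$ vérifiant $h_\omega(u(k,\omega)o)=-\lambda k+o(k)$, mais il faut ensuite vérifier que $h_\omega$ est la fonction de Busemann d'un vrai rayon géodésique — ce qui n'est pas automatique dans le cadre non propre et ramène essentiellement à la même estimation $\cat$(0). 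La difficulté principale demeure donc le lemme ergodique ci-dessus.
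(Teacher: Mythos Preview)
The thesis does not give its own proof of this theorem: it is quoted verbatim from \cite[Theorem 2.1]{karlsson_margulis} as a foundational input, and is only \emph{applied} later (e.g.\ in Corollary~\ref{cor sublinear tracking}) once the positivity of the drift has been established. There is therefore no ``paper's own proof'' to compare against.

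That said, your sketch is a faithful outline of the original Karlsson--Margulis argument. The two pillars --- the ergodic sub-lemma asserting that along a good subsequence $n_j$ the cocycle is almost additive uniformly in $0\le k\le n_j$, and the $\cat(0)$ convexity argument turning near-degenerate triangles into Cauchy sequences of geodesic segments --- are exactly their ingredients, and you correctly identify the first as the heart of the matter. Two small remarks: (i) your displayed bound $C\sqrt{\delta_k\cdot\min(\lambda k,\lambda(n-k))}+o(k)$ is slightly imprecise, since the residual ``$+\,o(k)$'' is really $O(\varepsilon k+\rho(k))$ (not $o(k)$ for fixed $\varepsilon$); this gets absorbed anyway once you let $\varepsilon_m\to 0$, but it would be cleaner to track it explicitly; (ii) in the uniqueness paragraph, the quotient $t^{-1}d(\gamma_1(t),\gamma_2(t))$ is nondecreasing by convexity, hence its $\liminf$ being zero forces it to vanish identically --- your phrasing ``de limite nulle'' is fine once one observes the quotient is monotone. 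The alternative route via the horofunction compactification that you mention in the final paragraph is indeed viable and is closer in spirit to Karlsson's later work \cite{karlsson_ledrappier11}, \cite{gouezel_karlsson20}.
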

	Ce théorème implique en particulier la convergence du cocycle ergodique vers le bord visuel de $X$. Il permet en plus d'affirmer qu'il existe un rayon qui approxime sous-linéairement $u(n, \omega)$. En fait, ce théorème reste vrai dans une classe d'espaces qui contient les espaces $\cat$(0) (tout espace uniformément convexe complet satisfaisant l'inégalité de Busemann), et si $G$ est seulement un semigroupe d'applications non expansives, i.e. tel que pour tout $g \in G$, $x,y \in X$, 
	\begin{eqnarray}
		d(gx, gy) \leq d(x, y) \nonumber. 
	\end{eqnarray}
	
	La condition de positivité de la vitesse de fuite est ici essentielle. Néanmoins, H.~Izeki a récemment démontré dans \cite{izeki22} que sous une certaines conditions de moments, la dérive ne peut être nulle que si l'action du groupe $G$ est élémentaire. On rappelle qu'un plat dans un espace $\cat (0)$ est un sous-ensemble convexe de $X$ qui est isométrique à $\R^n $, $n \geq 0$. 
	
	\begin{thm}[{\cite[Theorem A]{izeki22}}]
		Soit $(X,d)$ un espace $\cat$(0) propre ou de dimension télescopique finie, et $G$ un groupe de type fini agissant sur $X$ par isométries sans point fixe dans $\bd X$. Soit $\mu \in \prob(G)$ une mesure de probabilité sur $G$ de second moment fini, et soit $\lambda := \lim \frac{1}{n} d(Z_n o, o)$ la vitesse de fuite de la marche aléatoire induite par $\mu $ sur $X$. Si $\lambda = 0$, alors il existe un plat $G$-invariant dans $X$. 
	\end{thm}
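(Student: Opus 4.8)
\emph{Plan of the proof.} The plan is to argue by contraposition: assuming $G$ preserves no flat in $X$, I would show $\lambda>0$. First, a $G$-fixed point of $X$ would be a degenerate ($0$-dimensional) $G$-invariant flat, so under our assumption $G$ also fixes no point of $X$; together with the hypothesis that $G$ fixes no point of $\bd X$ this makes the action non-elementary (no fixed point of $\overline X$, no stabilized flat). I would also assume $\mu$ admissible, so that $\langle\supp\mu\rangle=G$ --- otherwise one replaces $G$ by this subgroup, any flat it preserves being $G$-invariant by admissibility.

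Next I would produce a $\mu$-stationary measure on the visual boundary. Since the Poisson boundary $B=B(G,\mu)$ is $G$-amenable (Zimmer) and $\bigl(B(G,\mu),B(G,\check{\mu})\bigr)$ is a $G$-pair of boundaries (Theorem~\ref{thm paire de bords}), the structure theorem of Bader--Duchesne--Lécureux (Theorem~\ref{thm furst map}), which uses the finite telescopic dimension of $X$ essentially, supplies a $G$-equivariant measurable map $\bnd\colon B\to\bd X$. Composing with the canonical map from the path space and using Theorem~\ref{thm mesures limites}, I would obtain a $\mu$-stationary $\nu\in\prob(\bd X)$ together with the a.s.\ fact that the conditional measures $\nu_\omega$ are Dirac masses $\delta_{\xi_\infty(\omega)}$: the walk converges to a boundary point $\xi_\infty(\omega)$ of law $\nu$, with $\xi_\infty(\omega)=Z_k(\omega)\,\xi_\infty(S^k\omega)$ for every $k$.

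The core step is a drift formula. Using the Busemann cocycle identity $b_\xi(gy)=b_{g^{-1}\xi}(y)+b_\xi(go)$ and $Z_k^{-1}\xi_\infty=\xi_\infty(S^k\omega)$, I would telescope $-b_{\xi_\infty}(Z_no)=\sum_{k=0}^{n-1}\bigl(-b_{Z_k^{-1}\xi_\infty}(\omega_{k+1}o)\bigr)$; with $d(Z_no,o)\ge -b_{\xi_\infty}(Z_no)$ and Birkhoff's ergodic theorem for the Bernoulli shift on $\Omega$ (the $L^1$-integrability coming from the first moment, hence from the second), this gives
\[
\lambda=\inf_n\tfrac1n\!\int_\Omega d(Z_n(\omega)o,o)\,d\mathbb{P}(\omega)\ \geq\ \int_G\!\int_{\bd X}b_\xi(g^{-1}o)\,d\nu(\xi)\,d\mu(g)=:I,
\]
and a formula of Karlsson and Ledrappier (\cite{karlsson_ledrappier11}) yields the reverse inequality, so $\lambda=I$; in particular $\lambda=0$ forces $I=0$.

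The hard part will be to produce the invariant flat out of $I=0$. The function $\Psi(x)=\int_{\bd X}b_\xi(x)\,d\nu(\xi)$ is convex, $1$-Lipschitz, with $\Psi(o)=0$, and $I=0$ constrains $\Psi$ and $\supp\nu$ severely (morally, $\Psi$ is harmonic along the walk). Here the \emph{second} moment should be decisive: it controls the $L^2$-fluctuation $\mathbb{E}[\Psi(Z_no)^2]$, not merely the linear rate, and a parallelogram/Jensen estimate in the CAT(0) space should force the Busemann functions $\{b_\xi:\xi\in\supp\nu\}$ to be pairwise parallel --- any two points of $\supp\nu$ bounding a flat strip, so that $\supp\nu$ is a bounded subset of $\partial_T X$ of controlled radius and, by the flat strip theorem, spans a single Euclidean subspace. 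Feeding this into the structure theory of bounded subsets of the Tits boundary of a CAT(0) space of finite telescopic dimension (circumcenters and the associated decompositions, \cite{bader_duchesne_lecureux16}), I would extract a canonical finite-dimensional flat $F\subseteq X$ attached to $\nu$; as $\nu$ is $\mu$-stationary and $G$ fixes no point at infinity, $F$ would be $G$-invariant, contradicting the assumption. I expect this rigidity step --- turning drift zero into the exact statement pinning down $F$, and in particular the reliance on the second moment rather than the first --- to be the main obstacle. In the proper case it streamlines: $\overline X$ is compact, so $\nu$ exists without Theorem~\ref{thm furst map}, and the Caprace--Monod structure theory for $\iso(X)$ can replace the telescopic-dimension input.
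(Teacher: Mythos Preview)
The paper does not prove this theorem: it is stated as a quotation of \cite[Theorem A]{izeki22} in the survey Section~\ref{section rw courbure non positive}, with no argument supplied. There is therefore no ``paper's own proof'' to compare your proposal against.

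That said, your sketch has a genuine gap precisely where you flag it. The passage from $I=0$ to the existence of an invariant flat is the entire content of the theorem, and your outline there (``a parallelogram/Jensen estimate \dots\ should force the Busemann functions to be pairwise parallel'') is not a proof. It is not at all clear that vanishing of the integrated Busemann drift forces $\supp\nu$ to lie in a single round sphere of the Tits boundary, nor that the second moment alone upgrades the martingale control to the rigidity you need. Izeki's actual argument goes through $\mu$-equivariant harmonic maps and an energy-minimization scheme rather than stationary measures on $\bd X$; the finite second moment enters to guarantee existence of a finite-energy equivariant map, and the flat emerges from the structure of energy minimizers, not from Tits-boundary geometry of $\supp\nu$. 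Your side remark on admissibility is also wrong as stated: replacing $G$ by the semigroup generated by $\supp\mu$ does not ensure that a flat invariant under that semigroup is $G$-invariant.
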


	\subsection{Théorème de la limite centrale}
	
	Soit $G$ un groupe localement compact agissant par isométries sur un espace métrique $(X,d)$, $\mu$ une probabilité admissible sur $G$ avec un premier moment fini, et $o \in Y$ un point-base. D'après la section précédente, la marche aléatoire $(Z_n o )_n$ vérifie une loi des grands nombres. Dans \cite{benoist_quint16}, Benoist et Quint montrent que si $X$ est un espace hyperbolique propre, alors sous des conditions naturelles, $(Z_n o )_n$ satisfait un théorème central limite. On dit que $\mu \in \prob(G)$ a un second moment fini si 
	\begin{eqnarray}
		\int_G d(go, o)^2 d\mu(g) < \infty \nonumber 
	\end{eqnarray}
	
	\begin{thm}[{\cite[Theorem 1.1]{benoist_quint16}}]
		Soit $G$ un groupe dénombrable et $X$ un espace hyperbolique propre séparable sur lequel $G$ agit par isométries, de manière non-élémentaire. Soit $\mu$ une probabilité admissible sur $G$ de second moment fini, et $o \in Y$ un point-base. Soit $\lambda $ la vitesse de fuite de $(Z_n)_n$ dans $X$ (strictement positive grâce au théorème \ref{thm gouezel drift}). 
		
		Alors les variables aléatoires $\frac{1}{\sqrt{n}}(d(Z_n o, o) - n \lambda) $ convergent en loi vers une distribution gaussienne non-dégénérée $N_\mu$. 
	\end{thm}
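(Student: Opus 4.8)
The plan is to follow the method of Y.~Benoist and J.-F.~Quint, in which the spectral-gap arguments of the classical proofs are replaced by the ergodic theory of the boundary action together with $\delta$-hyperbolic comparison estimates, so that only a finite \emph{second} moment is needed.

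\textbf{Step 1: reduction to the Busemann cocycle.} By Theorem~\ref{thm maher tiozzo conv}, $(Z_n o)$ converges almost surely to a point $\xi^+=\xi^+(\omega)\in\bd X$ whose law is the unique $\mu$-stationary measure $\nu$ on $\bd X$, and by Theorem~\ref{thm gouezel drift} the drift $\lambda$ is positive, so that (by sublinear tracking) almost every trajectory $(Z_n o)$ stays, uniformly in $n$, within bounded distance of a geodesic ray from $o$ to $\xi^+$. Let $\sigma:G\times\bd X\to\mathbb{R}$, $\sigma(g,\xi)=\beta_\xi(o,go)$, be the Busemann cocycle, where $\beta_\xi(x,y)=\lim_{t\to\infty}\big(d(x,\gamma(t))-d(y,\gamma(t))\big)$ for a ray $\gamma$ to $\xi$; it satisfies $\sigma(gh,\xi)=\sigma(g,\xi)+\sigma(h,g^{-1}\xi)$ and $|\sigma(g,\xi)|\le d(o,go)$. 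A thin-triangle estimate gives $\sigma(Z_n,\xi^+)=\beta_{\xi^+}(o,Z_n o)=d(o,Z_n o)+O(1)$ a.s.\ uniformly in $n$, so it suffices to prove a CLT for $\tfrac1{\sqrt n}\big(\sigma(Z_n,\xi^+)-n\lambda\big)$.

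\textbf{Step 2: the cocycle sum.} Writing $Z_n=w_1\cdots w_n$, telescoping the cocycle identity, and using $Z_{k-1}^{-1}\xi^+(\omega)=\xi^+(\theta^{k-1}\omega)$ for the shift $\theta$ on $(\Omega,\mathbb{P})$, one obtains
\[
\sigma(Z_n,\xi^+)=\sum_{k=0}^{n-1}F(\theta^k\omega),\qquad F(\omega):=\beta_{\xi^+(\omega)}(o,w_1 o),
\]
i.e.\ $\sigma(Z_n,\xi^+)$ is a Birkhoff sum of the fixed function $F$ over the ergodic measure-preserving system $(\Omega,\mathbb{P},\theta)$. As $|F|\le d(o,w_1 o)$ and $\mu$ has finite second moment, $F\in L^2$; by Birkhoff's and Kingman's theorems (Theorem~\ref{thm kingman}) together with Step~1, $\mathbb{E}[F]=\lambda$. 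A CLT for $\tfrac1{\sqrt n}\sum_{k<n}(F\circ\theta^k-\lambda)$ is not automatic, and one works with the \emph{decreasing} filtration $\mathcal{G}_k=\sigma(w_{k+1},w_{k+2},\dots)$, w.r.t.\ which $F\circ\theta^k$ is measurable, looking for a decomposition $F-\lambda=H+(G-G\circ\theta)$ with $H$ a square-integrable $\mathcal{G}$-reverse-martingale difference and $G\in L^2$.

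\textbf{Step 3: centering (the crux).} Such a decomposition is equivalent to \emph{centerability} of $\sigma$ in the sense of Benoist--Quint: the fibrewise drift $\varphi(\xi):=\int_G\sigma(g,\xi)\,d\mu(g)-\lambda$ on $\bd X$ must be an $L^2$-coboundary for the Markov operator $P$ on $(\bd X,\nu)$, i.e.\ $\varphi=\psi-P\psi$ with $\psi\in L^2(\nu)$. Hyperbolicity enters here decisively: first, $\varphi$ is \emph{bounded}, because in a $\delta$-hyperbolic space the Busemann functions based at two boundary points differ by a uniformly controlled amount and the resulting Gromov-product error is integrable against the non-atomic measure $\nu$ (Lemma~\ref{lem non atomic}); second, the primitive $\psi$ is produced not by the Neumann series $\sum_k P^k\varphi$ (which would need a spectral gap we do not have) but by the Benoist--Quint martingale argument, which converts the bounded defect into an $L^2$ coboundary using only the second moment and the qualitative ergodic properties of $(\bd X,\nu)$ as a $\mu$-boundary. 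Granting centerability, Gordin's martingale approximation together with the martingale CLT yields $\tfrac1{\sqrt n}\big(\sigma(Z_n,\xi^+)-n\lambda\big)\to N_\mu=\mathcal{N}(0,\Phi)$, and by Step~1 the same holds for $\tfrac1{\sqrt n}\big(d(o,Z_n o)-n\lambda\big)$.

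\textbf{Step 4: non-degeneracy, and the main obstacle.} Finally $\Phi>0$: if not, $\sigma$ is cohomologous to the constant cocycle $(g,\xi)\mapsto\lambda$; evaluating at the repelling fixed point of a loxodromic $g$ then forces the translation length $|g|=\lambda$ for \emph{every} loxodromic, whereas a ping-pong element $g_1^N g_2^N$ built from the two independent loxodromics provided by non-elementarity would satisfy both $|g_1^N g_2^N|=\lambda$ and $|g_1^N g_2^N|=N|g_1|+N|g_2|+O(\delta)=2N\lambda+O(\delta)$, impossible for large $N$ since $\lambda>0$. I expect Step~3 to be the real difficulty: the drift defect $\varphi$ is only bounded and measurable, not continuous, so obtaining $\psi\in L^2$ genuinely requires the Benoist--Quint device and a careful use of the ergodicity of the boundary action, not any quantitative mixing or spectral gap; Steps~1, 2 and 4 are, by contrast, either standard $\delta$-hyperbolic geometry or the classical martingale CLT.
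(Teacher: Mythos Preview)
Your overall architecture is the right one---reduce to a Busemann-type cocycle, show centerability, invoke the martingale CLT of \cite[Theorem 3.4]{benoist_quint16CLTlineargroups}, and check non-degeneracy---but Step~1 contains a genuine error, and Step~3 glosses over precisely the part that carries the content.

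\textbf{Step~1 is false as stated.} You claim that $(Z_n o)$ stays within \emph{bounded} distance of the ray $[o,\xi^+(\omega))$, and hence $\sigma(Z_n,\xi^+)=d(o,Z_n o)+O(1)$. Sublinear tracking only gives $d(Z_n o,[o,\xi^+))=o(n)$, not $O(1)$; in fact $d(Z_n o,[o,\xi^+))$ is a stationary sequence under the shift which is almost surely \emph{unbounded} (think of the free group: the length of the suffix of $Z_n$ past the branching point with $\xi^+$ takes arbitrarily large values). Since a thin-triangle computation gives $\sigma(Z_n,\xi^+)-d(o,Z_n o)\approx -2\,d(Z_n o,[o,\xi^+))$, your $O(1)$ claim fails. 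One can salvage the transfer by arguing that this stationary sequence is $o(\sqrt n)$, but that is not what you wrote, and it is not what Benoist--Quint do. Their (and the paper's) choice is to evaluate the cocycle at a \emph{fixed} boundary point $x$ rather than at the random $\xi^+(\omega)$: for $\nu$-a.e.\ $x$ one has $x\neq\xi^+(\omega)$ a.s., and then a genuine, $\omega$-dependent but $n$-uniform bound $|b_x(Z_n o)-d(o,Z_n o)|\le C$ holds (this is the paper's Proposition~\ref{prop approx bornee horof}). Working at $\xi^+(\omega)$ is the wrong base point.

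\textbf{Step~3 hides the real work.} You correctly identify that centerability amounts to solving $\varphi=\psi-P\psi$ with $\psi$ bounded, but ``the Benoist--Quint martingale argument'' is not a black box one can invoke here. What Benoist--Quint actually do (and what the paper reproduces in Section~\ref{section clt cat}) is write down the explicit candidate
\[
\psi(x)=-2\int_{\partial X}(x\,|\,y)_o\,d\check\nu(y),
\]
and then prove it is bounded by establishing summable probabilistic estimates of the form $\mathbb P\big((Z_n x\,|\,y)_o\ge an\big)\le C_n$ with $(C_n)\in\ell^1$; see Propositions~\ref{estimates} and~\ref{proba control}. These estimates are where hyperbolicity, non-atomicity of $\nu$, and the second moment are actually used, and they are the heart of the proof. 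Your Step~3 would need to produce this formula and these estimates.

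Your Step~4 (non-degeneracy via ping-pong) is a valid alternative to the paper's argument, which instead shows that degeneracy forces $\ell(g)=m\lambda$ whenever $g\in\mathrm{supp}(\mu^{*m})$, and obtains a contradiction from $g\in\mathrm{supp}(\mu^{*m})\cap\mathrm{supp}(\mu^{*(m+q)})$; both work.
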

	
	Ce résultat était connu de Björklund \cite{bjorklund09} si $\mu$ a un moment exponentiel fini. Ici, l'hypothèse de moment est considérablement affaiblie. La stratégie de Benoist et Quint repose sur l'étude d'un certain cocycle (le cocycle de Busemann), dont les propriétés asymptotiques sont comparables à celles des variables aléatoires $d(Z_n o, o)$. En améliorant un résultat de Brown \cite{brown71}, les auteurs montrent que sous certaines conditions, le cocycle satisfait un théorème central limite, \cite[Theorem 3.4]{benoist_quint16CLTlineargroups}. La clef est évidemment de montrer que les conditions sont réunies.  
	\newline
	
	Récemment, I. Choi a adapté les techniques de pivots de Gouëzel \cite{gouezel22} pour étudier de nombreuses lois limites dans des contextes divers. Les hypothèses étaient les suivantes : 
	
	\begin{enumerate}
		\item $(X,d) $ est un espace métrique géodésique; 
		\item $G$ est un groupe dénombrable d'isométries de $X$;
		\item $G$ contient deux isométries contractantes indépendantes, au sens de la Définition \ref{def isom indé}. 
	\end{enumerate}
	
	Parmi d'autres lois limites, Choi démontre dans ce contexte un théorème de la limite centrale et une loi du logarithme itéré. 
	
	\begin{thm}[{\cite[Theorem C]{choi22a}}]
		Soit $(X,G)$ satisfaisant les hypothèses précédentes, et soit $(Z_n)$ la marche aléatoire induite par une mesure de probabilité admissible sur $G$. Supposons que $\mu$ a un second moment fini et notons $o \in X$ un point-base, et $\lambda$ la dérive de $(Z_n) $ sur $X$. Alors les variables aléatoires $\frac{1}{\sqrt{n}}(d(Z_n o, o) - n \lambda) $ convergent en loi vers une distribution gaussienne non-dégénérée $N(0,\sigma(\mu)^2)$, où $\sigma(\mu)^2$ est la variance de la loi gaussienne limite. De plus, $(Z_n)_n$ satisfait la loi des logarithmes itérés : presque sûrement, 
		\begin{eqnarray}
			\limsup_{n \rightarrow \infty} \pm \frac{d(Z_n o, o) - n \lambda}{\sqrt{2 n \log \log n }} = \sigma(\mu) \nonumber. 
		\end{eqnarray} 
	\end{thm}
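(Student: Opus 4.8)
The plan is to follow the pivotal-time technique of Gouëzel, as adapted by Choi, which upgrades the positivity of the drift (Theorem \ref{thm gouezel drift}) into a comparison between the displacement $d(o,Z_no)$ and a sum of essentially i.i.d.\ increments. First I would use the pair of independent contracting isometries $g_1,g_2\in G$ (in the sense of Definition \ref{def isom indé}) to construct a finite \emph{Schottky set} $S\subseteq G$: a large collection of elements of the form $g_1^{a}g_2^{b}$ whose axes are uniformly contracting and pairwise in general position, so that for any word $s_1\cdots s_k$ with $s_j\in S$ the geodesic $[o,s_1\cdots s_k o]$ fellow-travels the concatenation $[o,s_1 o]\cup s_1[o,s_2 o]\cup\cdots$ up to a uniform error and $d(o,s_1\cdots s_k o)=\sum_j d(o,s_j o)+O(k)$. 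The contraction estimates needed here are exactly those behind Theorem \ref{thm contract loxodromic}: a contracting isometry of $X$ is loxodromic on a hyperbolic model $X_L$, so the alignment statements in $X$ reduce to Morse/bottleneck properties that can be read off from $X_L$.

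Next — and this is the technical core — I would run Gouëzel's inductive construction of \emph{pivotal times} along the trajectory $\omega\mapsto Z_n(\omega)=\omega_1\cdots\omega_n$. The idea is to isolate a random increasing sequence $i_1<i_2<\cdots$ of indices at which the increments look like Schottky elements and are well placed relative to past and future, so that the $i_j$-th relative position is never destroyed by later cancellation. The two estimates to establish, uniformly in the conditioning, are: the number $N_n$ of pivotal times before $n$ grows linearly, $N_n/n\to c>0$ almost surely; and the gaps $i_{j+1}-i_j$ have an exponential tail, $\mathbb{P}(i_{j+1}-i_j>k\mid\mathcal{F}_{i_j})\le C\rho^k$ with $\rho<1$. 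Since we only assume a finite \emph{second} moment for $\mu$, this step requires a preliminary truncation of the increments à la Benoist--Quint before the pivoting machinery is applied, together with a Borel--Cantelli argument to control the rare large increments.

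Granting the pivoting lemma, the alignment property yields a decomposition $d(o,Z_no)=\sum_{j=1}^{N_n}\ell_j+R_n$, where the $\ell_j$ are the displacement contributions of the segments between consecutive pivots and $R_n=o(\sqrt n)$ almost surely and in $L^2$, by the exponential gap tails and the second moment of $\mu$. After the usual decoupling the sequence $(\ell_j)$ behaves like a sum of i.i.d.\ variables with finite second moment and variance $\sigma(\mu)^2$; applying the Hartman--Wintner law of the iterated logarithm and the Lindeberg central limit theorem to $\sum_{j\le m}\ell_j$, then composing with the renewal time-change $m=N_n$ (Anscombe's theorem, using $N_n/n\to c$) and absorbing $R_n$, gives both the CLT $\frac1{\sqrt n}(d(o,Z_no)-n\lambda)\Rightarrow N(0,\sigma(\mu)^2)$ with $\lambda=c\,\mathbb{E}[\ell_1]$ and the stated LIL. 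Non-degeneracy $\sigma(\mu)^2>0$ follows because the per-pivot increment $\ell_1$ is genuinely random: were it almost surely constant, the trajectory would be forced to track a single geodesic ray, contradicting the independence of $g_1$ and $g_2$.

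The main obstacle is unquestionably the second paragraph: running the pivoting construction and proving the linear-growth and exponential-gap estimates in a space that is only assumed geodesic — not hyperbolic, not proper, not locally compact — where the sole geometric input is the uniform contraction constant of the Schottky axes. One keeps the argument afloat by transporting the bottleneck estimates through the hyperbolic models $X_L$ of Theorem \ref{theorem hyperbolic models intro}; reconciling these with the original metric $d$ (necessary since the conclusion concerns $d(o,Z_no)$, not $d_L(o,Z_no)$) and feeding the mere second-moment hypothesis into a machinery originally designed for exponential moments are the delicate points.
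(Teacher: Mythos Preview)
This statement is not proven in the paper: it is \emph{cited} from \cite{choi22a} as background in the survey Section~\ref{section rw courbure non positive}, and the paper only sketches Choi's strategy in two lines (``elle consiste à étudier des inégalités de déviations locales, et à appliquer des techniques de pivots de Gouëzel''). Your outline is a reasonable summary of that pivoting approach and broadly matches what the paper attributes to Choi, so in that sense it is consistent with the paper's description.

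That said, you are blending two toolkits that the paper keeps carefully separate. Choi's argument works directly in the geodesic space $X$ using only the contraction of the Schottky axes; the hyperbolic models $X_L$ of Theorem~\ref{theorem hyperbolic models intro} and Theorem~\ref{thm contract loxodromic} are \emph{not} part of his proof, and importing them into the pivoting machinery creates exactly the difficulty you flag at the end (translating $d_L$-estimates back to $d$). The paper's own CLT, Theorem~\ref{thm CLT cat}, does use the models $X_L$, but via an entirely different route: it shows the Busemann cocycle $\beta(g,x)=b_x(g^{-1}o)$ is centerable by solving the cohomological equation with $\psi(x)=-2\int (x|y)_o\,d\check\nu(y)$, then applies the Benoist--Quint martingale CLT (Theorem~\ref{thm criterium clt benoist quint}). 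That approach requires the uniqueness of the stationary measure and gives no LIL, whereas the pivoting method yields both CLT and LIL simultaneously but needs the full exponential-tail pivot estimates you describe. So your proposal is essentially correct as a sketch of Choi's proof, but be aware that the paper itself provides no proof of this particular statement and that its own CLT follows a genuinely different line.
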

	
	La stratégie de preuve s'inspire beaucoup des idées de \cite{boulanger_mathieu_sisto21} et \cite{mathieu_sisto20}. Elle consiste à étudier des inégalités de déviations locales, et à appliquer des techniques de pivots de Gouëzel. Dans la Section \ref{section clt cat}, on montre un résultat similaire pour une action sur un espace $\cat$(0) complet, en adoptant une approche entièrement différente.

	\subsection{Mesure limite}
	
	Soit $G$ un groupe localement compact agissant par isométries sur un espace métrique $(X,d)$, $\mu$ une probabilité admissible sur $G$, et $o \in Y$ un point-base. Si la marche aléatoire $(Z_n o )_n$ induite par $\mu$ converge presque sûrement vers le bord de $X$, on peut munir $\bd X$ de cette distribution limite $\nu$ : pour un borélien $F \subseteq \bd X$ 
	\begin{eqnarray}
		\nu (F) = \mathbb{P}\big(\{\omega \in \Omega \, | \, Z_n(\omega ) o \underset{n}{\rightarrow} \xi \in F \}\big) \nonumber. 
	\end{eqnarray} 
	Une autre manière de voir cette mesure est la suivante. Pour presque tout $\omega \in \Omega$, notons $\tilde{\phi}(\omega) = \lim_n Z_n(\omega) o  \in \bd X$. Par convergence de la marche aléatoire, cette fonction est mesurable, définie presque partout et $G$-équivariante. De plus, par construction du bord de Poisson-Furstenberg, la limite de la marche aléatoire est $T$-invariante, et on obtient une application mesurable $G$-équivariante du bord de Poisson-Furstenberg $(B = B(G, \mu), \nu_B)$ vers le bord visuel $\bd X $
	\begin{eqnarray}
		\phi : B &\longrightarrow& \bd X \nonumber \\
		b &\longmapsto& \lim_n Z_n(b) o \nonumber.
	\end{eqnarray}
	La \textit{mesure limite} $\nu$ est donc définie par 
	\begin{eqnarray}
		\nu = \int_B \delta_{\phi(b)} d\nu_B(b) \nonumber. 
	\end{eqnarray}
	
	Cette mesure est $\mu$-stationnaire car issue d'une application $G$-équivariante. En particulier, $(\bd X, \nu)$ est un $\mu$-bord, et en vue du théorème \ref{thm furst mu bord}, on peut se demander si $(\bd X, \nu)$ est isomorphe au bord de Poisson-Furstenberg de $(G, \mu)$. Dans cette section, on revient sur ce type de résultats concernant cette mesure limite, ainsi que sur la description de son support. 
	\newline
	
	Le premier résultat concerne le support de la mesure limite. Soit $G$ un groupe d'isométries non-élémentaire d'un espace $\cat$(0) propre et soit $\mu \in \prob(G)$ une mesure admissible. En présence d'éléments de rang 1 de nombreux résultats montrent que les points limites ne couvrent pas uniformément le bord de l'espace, seulement les \og secteurs hyperboliques \fg. Plus précisément, Gekhtman, Qing et Rafi montrent que dans dans certaines situations, la marche aléatoire converge presque sûrement vers un point du bord de Morse sous-linéaire. On dit que l'action de $G$ sur $X$ est \textit{tempérée} si le nombre de points dans une orbite de $G$ qui appartient à une boule donnée croît au plus exponentiellement avec son rayon.  
	
	\begin{thm}[{\cite[Theorem 1.2]{gekhtman_qing_rafi22}}]
		Soit $G$ un groupe de type fini agissant par isométries, proprement discontinument sur un espace $\cat$(0) propre et géodésiquement complet $(X,d)$, $\mu$ une probabilité de support fini et non-élémentaire sur $G$, et $o \in Y$ un point-base. On suppose de plus que l'action est tempérée et que $G$ possède un élément de rang 1. Alors la mesure limite $\nu$ de la marche aléatoire $(Z_n o)$ induite par $\mu$ donne une mesure pleine au bord de Morse sous-linéaire. 
	\end{thm}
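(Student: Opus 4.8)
\medskip
\noindent\emph{Stratégie générale.} Le plan est de montrer que, presque sûrement, la marche aléatoire $(Z_n o)$ converge vers une direction $\xi(\omega) \in \bd X$ telle que le rayon géodésique $[o, \xi(\omega))$ est $\kappa$-Morse au sens sous-linéaire, pour une fonction sous-linéaire $\kappa$ fixée (le choix $\kappa(t) = \log t$ convient). Une fois ceci établi, comme la mesure limite s'écrit $\nu = \int_B \delta_{\phi(b)}\, d\nu_B(b)$ avec $\phi(b) = \lim_n Z_n(b)o$, on obtient immédiatement $\nu(\partial_\kappa X) = 1$, donc $\nu$ donne une mesure pleine au bord de Morse sous-linéaire.

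\medskip
\noindent\emph{Convergence et suivi sous-linéaire.} Puisque l'action de $G$ sur l'espace $\cat$(0) propre $X$ est non-élémentaire et possède une isométrie de rang $1$, la Proposition \ref{non elem caprace fuj intro} fournit deux isométries de rang $1$ indépendantes, contractantes d'après le Théorème \ref{bestv fuj rank one contracting intro}. En poussant la marche sur un modèle hyperbolique $X_L$ adapté (Théorèmes \ref{theorem hyperbolic models intro} et \ref{thm contract loxodromic}), on obtient une action non-élémentaire sur un espace Gromov-hyperbolique : le Théorème \ref{thm maher tiozzo conv} donne la convergence presque sûre de $(Z_n o)$ vers un point de $\bdg X_L$, qui s'identifie à une direction $\xi(\omega) \in \bd X$. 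Le Théorème \ref{thm gouezel drift} assure que la dérive de $(Z_n o)$ dans $X_L$ est strictement positive, avec décroissance exponentielle des grandes déviations ; comme $d_L \leq d + 1$, la dérive $\lambda$ dans $X$ est aussi strictement positive. Enfin, $\mu$ étant à support fini elle a un premier moment fini dans $X$, et le Théorème de Karlsson--Margulis (\cite[Theorem 2.1]{karlsson_margulis}) fournit pour presque tout $\omega$ un rayon géodésique $\gamma^\omega$ issu de $o$ avec $d(Z_n(\omega)o, \gamma^\omega(\lambda n)) = o(n)$.

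\medskip
\noindent\emph{Le rayon limite est $\kappa$-Morse.} C'est le cœur de l'argument. On fixe $\kappa(t) = \log t$. Il s'agit de montrer que, presque sûrement, tout chemin quasi-géodésique de paramètres fixés dont les extrémités sont à distance bornée de $\gamma^\omega$ demeure dans le $\kappa$-voisinage de $\gamma^\omega$ ; par séparabilité et discrétude de l'orbite, il suffit de tester contre les quasi-géodésiques reliant deux points de $Go$. On combine alors deux ingrédients. D'une part, des \emph{inégalités de déviation} : en travaillant dans le modèle hyperbolique $X_L$ et en utilisant les estimations de \cite{gouezel22} (ou la technique de pivots de \cite{choi22a}), la probabilité que la trajectoire $\{Z_k o\}_{k\leq n}$ s'écarte de plus de $C$ de $[o, Z_n o]$ décroît exponentiellement en $C$, et $\gamma^\omega$ traverse presque sûrement une $L$-chaine de rideaux de longueur $\geq \varepsilon n$. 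D'autre part, la \emph{tempérance} : le nombre de points de $Go$ dans une boule de rayon $r$ est majoré par $A e^{\delta r}$. Ces deux faits se combinent dans un argument de Borel--Cantelli : l'ensemble des $\omega$ pour lesquels une quasi-géodésique-témoin à l'échelle $n$ s'éloigne de plus de $\kappa(n) = \log n$ de $\gamma^\omega$ est recouvert par $O(n^c)$ évènements de probabilité $\leq n^{-\kappa'}$ ; en prenant $\kappa$ (i.e. la constante devant $\log$) assez grande, les probabilités sont sommables. On conclut que $\gamma^\omega$ est $\kappa$-Morse presque sûrement, donc $\xi(\omega) \in \partial_\kappa X$, ce qui achève la preuve.

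\medskip
\noindent\emph{Le point délicat.} L'obstacle principal est la troisième étape, et plus précisément l'articulation entre le point de vue probabiliste — les inégalités de déviation vivent naturellement dans le modèle hyperbolique $X_L$, où « rayon géodésique » se lit au sens des $L$-chaines de rideaux — et la caractérisation purement géométrique des géodésiques $\kappa$-Morse dans l'espace $\cat$(0) initial. Il faudra vérifier avec soin que le suivi sous-linéaire par $\gamma^\omega$ est compatible avec la structure de rideaux (cf. le Théorème \ref{equivariant embedding of boundaries}), c'est-à-dire que $\gamma^\omega$ traverse une $L$-chaine assez longue à chaque échelle, et quantifier la tempérance de sorte qu'elle l'emporte exactement sur la décroissance exponentielle des déviations — ce qui impose le taux logarithmique $\kappa(t) = \log t$, et pas mieux.
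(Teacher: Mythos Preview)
Ce théorème n'est pas démontré dans le papier : il s'agit d'un résultat externe, cité comme \cite[Theorem 1.2]{gekhtman_qing_rafi22}, et présenté dans la Section~\ref{section rw courbure non positive} uniquement à titre de contexte sur l'état de l'art concernant la mesure harmonique. Il n'y a donc aucune preuve dans le papier à laquelle comparer ton approche.

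Cela dit, ton esquisse contient l'idée directrice correcte de l'argument original de Gekhtman--Qing--Rafi, à savoir combiner des inégalités de déviation exponentielles pour la marche avec la tempérance de l'action via Borel--Cantelli, de façon à garantir un taux de contraction logarithmique le long du rayon suivi. Deux points méritent attention si tu veux transformer l'esquisse en preuve. Premièrement, la réduction aux quasi-géodésiques entre points de l'orbite $Go$ ne suffit pas à établir la propriété $\kappa$-Morse, qui est une condition sur \emph{toutes} les quasi-géodésiques à extrémités proches du rayon ; l'argument effectif passe plutôt par un contrôle du caractère contractant des sous-segments $[\gamma^\omega(s), \gamma^\omega(t)]$ à toutes les échelles, via un critère du type \og projections bornées des boules \fg. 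Deuxièmement, l'énoncé \og $O(n^c)$ évènements de probabilité $\leq n^{-\kappa'}$ \fg{} est trop vague : il faut identifier précisément quels évènements on somme (typiquement, pour chaque échelle $R$, les points $g o$ dans une boule de rayon comparable à $R$ dont la projection sur un segment de $\gamma^\omega$ est anormalement grande), et c'est là que la tempérance donne exactement le facteur polynomial qu'il faut compenser.
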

	
	Plus récemment, Chawla, Forghani, Frisch et Tiozzo ont démontré dans des conditions très générales que le bord visuel muni de la mesure limite était bien une réalisation topologique du bord de Poisson-Furstenberg $B(G, \mu)$. Ce résultat fait suite à des théorèmes similaires dans des cadres moins généraux, voir \cite{kaimanovich00}, \cite{maher_tiozzo18}, \cite{karlsson_margulis}. Ce qui est particulièrement fort dans le théorème suivant est l'absence de conditions de moment. On rappelle que \textit{l'entropie} de $\mu$ sur un groupe dénombrable $G$ est définie par 
	\begin{eqnarray}
		H(\mu) := -\sum_G \log(\mu(g)) \mu(g) \nonumber. 
	\end{eqnarray}
	
	On renvoie à la Section \ref{section acylindrique cat} pour la définition d'un élément WPD. 
	\begin{thm}[{\cite{chawla_forghani_frisch_tiozzo22}}]
		Soit $G$ un groupe dénombrable agissant de manière non-élémentaire sur un espace géodésique hyperbolique $X$ avec un élément WPD. Soit $\mu \in \prob(G)$ une mesure de probabilité admissible sur $G$ d'entropie finie, et $o \in X$. Alors la mesure limite  $\nu$ de la marche aléatoire $(Z_n o ) $ sur $X$ fait de $(\bd X, \nu)$ une réalisation topologique du bord de Poisson-Furstenberg $B(G, \mu)$. 
	\end{thm}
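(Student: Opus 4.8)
Le plan est d'identifier $(\bd X,\nu)$ au bord de Poisson-Furstenberg $B(G,\mu)$ en combinant la convergence de la marche vers le bord avec un critère d'entropie ; le point délicat sera de contourner l'absence d'hypothèse de moment sur $\mu$, ce qui rend inopérant le critère des bandes de Kaimanovich sous sa forme usuelle.

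Je commencerais par rappeler que $(\bd X,\nu)$ est déjà un $\mu$-bord. D'après le théorème \ref{thm maher tiozzo conv} de Maher et Tiozzo, pour $\mathbb{P}$-presque tout $\omega$ la suite $Z_n(\omega)o$ converge vers un point de $\bd X$, la mesure limite $\nu$ est l'unique mesure $\mu$-stationnaire sur $\bd X$, et l'application $G$-équivariante $\phi:(B(G,\mu),\nu_B)\to(\bd X,\nu)$, $b\mapsto\lim_n Z_n(b)o$ (l'application décrite ci-dessus), est un facteur mesurable ; ainsi $(\bd X,\nu)$ est un $\mu$-bord au sens du théorème \ref{thm furst mu bord}. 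Comme $\bd X$ est un espace topologique métrisable muni d'une action de $G$ par homéomorphismes, il suffit de prouver que $\phi$ est un isomorphisme d'espaces mesurés pour obtenir la réalisation topologique voulue.

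Pour cela j'utiliserais le critère d'entropie de Kaimanovich-Vershik. Puisque $H(\mu)<\infty$, l'entropie asymptotique $h=\lim_n\frac1n H(\mu^{\ast n})$ est finie, et la maximalité de $(\bd X,\nu)$ équivaut à l'égalité entre l'entropie de Furstenberg $h_\mu(\bd X,\nu)$ et $h$ ; l'inégalité $h_\mu(\bd X,\nu)\le h$ étant automatique, tout revient à établir $h_\mu(\bd X,\nu)\ge h$. La subtilité est que la jauge pertinente en l'absence de moment n'est pas $d(o,Z_no)$ — dont la dérive peut être infinie — mais la jauge entropique : par le théorème de Shannon-McMillan-Breiman pour les marches, $-\frac1n\log\mu^{\ast n}(Z_n)\to h$ presque sûrement, et la boule $\{g:\mu^{\ast n}(g)\ge e^{-n(h+\varepsilon)}\}$ a au plus $e^{n(h+\varepsilon)}$ éléments. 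Soit $\nui$ la mesure de sortie de la marche retournée (de loi $\mui$) sur $\bd X$. Pour $\nui\times\nu$-presque tout couple $(\xi^-,\xi^+)$ de points distincts du bord je fixerais une géodésique bi-infinie $\gamma_{\xi^-,\xi^+}$ joignant $\xi^-$ à $\xi^+$ (essentiellement unique à distance bornée près, par hyperbolicité) et poserais
\[
S(\xi^-,\xi^+)=\{\,g\in G:\ go\in\mathcal N_C(\gamma_{\xi^-,\xi^+})\,\},
\]
$C$ ne dépendant que de la constante d'hyperbolicité ; l'assignation est $G$-équivariante. L'estimée décisive, qui se substitue à l'hypothèse de moment, serait la \emph{finesse} de ces bandes : la présence d'un élément WPD (cf. section \ref{section acylindrique cat}) assure que les stabilisateurs grossiers des longs sous-segments de $\gamma$ sont finis, d'où — en approchant une géodésique générique par des axes de loxodromiques WPD, dont les couples de points fixes sont denses dans $\bd X\times\bd X$ — que le nombre d'éléments $g$ avec $go$ longeant un segment de longueur $\ell$ croît au plus linéairement en $\ell$. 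D'autre part la technique des pivots de Gouëzel, celle employée dans la preuve du théorème \ref{thm gouezel drift}, appliquée simultanément à la marche et à sa retournée, montre que le long d'une suite aléatoire de temps de densité positive $Z_n(\omega)o$ longe la géodésique $\gamma_{\xi^-,\xi^+}$ (avec $\xi^+$ le point limite de $(Z_no)$ et $\xi^-$ celui d'une marche retournée indépendante) avec une probabilité d'échec exponentiellement petite. Le but serait de combiner ces deux faits avec la jauge entropique pour obtenir, pour $\nui\times\nu$-presque tout $(\xi^-,\xi^+)$,
\[
\tfrac1n\log\#\{\,s\in S(\xi^-,\xi^+):\ -\log\mu^{\ast n}(s)\le n(h+\varepsilon)\,\}\ \xrightarrow[\ n\to\infty\ ]{}\ 0,
\]
ce qui, injecté dans le critère des bandes — dont la démonstration n'utilise que cette convergence, la sous-additivité du cocycle et $H(\mu)<\infty$ — fournit $h_\mu(\bd X,\nu)\ge h$, donc l'égalité, donc l'isomorphisme $\phi$.

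Le principal obstacle sera précisément cette étape des bandes, à savoir l'estimée de finesse \emph{sans} hypothèse de moment. Le critère des bandes classique est calibré sur une jauge de dérive linéaire finie, ce qui fait défaut ici ; il faut donc à la fois travailler avec la jauge entropique et montrer que les bandes géodésiques croissent assez lentement relativement à la marche. C'est là le rôle conjugué des pivots de Gouëzel — qui produisent l'alignement de $Z_{n_k}(\omega)o$ le long de la géodésique limite avec une erreur de probabilité exponentiellement petite — et de la propriété WPD, qui maintient finis les stabilisateurs grossiers des longs segments et garde ainsi (sous-)linéaire le comptage dans les bandes. Le point le plus technique sera de rendre ces estimées assez uniformes en $(\xi^-,\xi^+)$ pour pouvoir les intégrer contre $\nui\times\nu$ et conclure.
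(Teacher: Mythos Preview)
The paper does not prove this statement: it is quoted verbatim from \cite{chawla_forghani_frisch_tiozzo22} in the survey Section~\ref{section rw courbure non positive} (``Mesure limite''), with no proof or even sketch supplied. There is therefore no ``paper's own proof'' to compare your proposal against.

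That said, your outline is a faithful summary of the strategy actually carried out in the cited reference: establish that $(\bd X,\nu)$ is a $\mu$-bord via Maher--Tiozzo, then prove maximality through the entropy criterion, replacing Kaimanovich's moment-based strip criterion by an entropic version. You have correctly identified the two ingredients that make this work without a moment hypothesis --- Gouëzel's pivot technique to produce aligned times with exponentially small failure probability, and the WPD condition to keep strips thin (sublinear growth of the number of orbit points in a tube around a geodesic). The one point where your sketch is vaguer than the actual argument is the passage from ``WPD'' to ``linear growth of strips'': in \cite{chawla_forghani_frisch_tiozzo22} this step goes through acylindricity (via Osin's theorem, cf.\ Section~\ref{section acylindrique cat}) on a suitable hyperbolic space, which gives a uniform bound rather than the density argument you suggest. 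Your final paragraph correctly flags this as the technically hardest part.
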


	\part{Résultats de recherche}
	\selectlanguage{english}
	\mtcselectlanguage{english}
	\chapter{Random walks in complete $\cat$(0) spaces}\label{chapter rw cat}

	This chapter is dedicated to the study of a random walk in a separable and complete $\cat$(0) space in presence of contracting isometries. Many of the results presented here appeared in \cite{LeBars22} and \cite{le-bars22b}, although the $\cat$(0) space was assumed proper. 
	
	In Section \ref{section prelim hyp}, we review some definitions and basic properties of hyperbolic spaces in the sense of Gromov. Although these notions are probably well-known, the spaces we consider are not assumed proper nor geodesic, and it seemed useful to present them in the greatest generality. In Section \ref{section rw hyp}, we use the theory of boundary pairs and their ergodic properties to derive the convergence of the random walk to the Gromov boundary in a hyperbolic space. This result was already known to Maher and Tiozzo \cite{maher_tiozzo18} and to Gouëzel \cite{gouezel22}, but the techniques presented rely on boundary theory \cite{bader_caprace_furman_sisto22}. These could prove useful if we study the more general ergodic cocycles instead of random walks. Section \ref{section rw cat} is dedicated to studying a random walk in a complete $ \cat$(0) space. With the help of the hyperbolic models presented in Section \ref{section modèle hyp}, we are able to derive many interesting limit laws: convergence to the boundary, uniqueness of the stationary measure, positive drift and description of the limit points. In Section \ref{section clt cat}, we prove that under a finite second moment assumption, the random walk satisfies a central limit theorem. 
	\newline
	
	\minitoc
	\vspace{1cm}
	
	\section{Preliminaries on hyperbolic spaces}\label{section prelim hyp}
	In this section we recall some basic facts about hyperbolic spaces. All what follows is standard, but since most references deal about geodesic, and sometimes even proper, hyperbolic spaces, we will present this more general setting. References for geodesic hyperbolic spaces include \cite{ghys_dlharpe90}, \cite[Chapter III.H]{bridson_haefliger99}, \cite[Chapter 11]{drutu_kapovich18} and \cite{bonk_schramm00}, the last two dealing also with non-geodesic spaces. 
	
	\subsection{First Definitions} 
	
	Let $(X,d)$ be a metric space. Given $x,y,o \in X$, the Gromov product $x$ and $y$ with respect to $o $ is given by 
	\begin{eqnarray}
		(x|y)_o = \frac{1}{2} (d(w, o ) + d(y, o ) - d(x,y)) \nonumber. 
	\end{eqnarray}
	Note that if we take another basepoint $o' $, we have 
	\begin{eqnarray}
		|(x|y)_o - (x|y)_{o'} | \leq d(o, o') \nonumber. 
	\end{eqnarray}
	
	The following definition is due to Gromov \cite{gromov87}. 
	\begin{eDef}
		We say that the metric space $(X,d)$ is \textit{$\delta$-hyperbolic} for $ 0\leq \delta < \infty$, if for all $x,y,z, o \in X$, 
		\begin{eqnarray}
			(x|y)_o \geq \min\{(x|z)_o, (z|y)_o\} - \delta. \nonumber
		\end{eqnarray}
		If $X$ is $\delta$-hyperbolic for some $\delta$, we say that it is \textit{Gromov-hyperbolic}. 
	\end{eDef}
	
	Let $(X,d)$ be a Gromov-hyperbolic space. We say that a sequence of points $(x_i)_{i \in \mathbb{N}}$ \textit{converges at infinity } if 
	\begin{eqnarray}
		\lim_{i,j \rightarrow \infty} (x_i|x_j)_o =  \infty \nonumber, 
	\end{eqnarray}
	and that two sequences that converge at infinity are equivalent if 
	\begin{eqnarray}
		\lim_{i \rightarrow \infty} (x_i|y_i)_o =  \infty \nonumber. 
	\end{eqnarray}
	
	This is an equivalence relation, and this definition does not depend on the basepoint. The Gromov boundary $\bdg X$ of $X$ is then defined as the set of equivalence classes of sequences $(x_i)$ that converge at infinity, for this equivalence relation. We denote by $\XG= X \cup \bdg X$ this bordification.

	We extend the Gromov product to points of the boundary in the following way. For $\xi, \eta \in \bdg X$, 
	\begin{eqnarray}
		(\xi| \eta)_o := \sup \{ \liminf_{i\rightarrow \infty} (x_i| y_i)_o \; : \; \{x_i \}\in \xi, \{y_i \}\in \eta\}. \nonumber
	\end{eqnarray}
	
	Taking the supremum and $\liminf$ in this Definition is not the only choice. However, all four possible choices only differ by $2 \delta$. Note that $(\xi|\eta)_o = \infty$ if and only if $\xi = \eta \in \bdg X$.
	
	As a consequence of the definitions, if $(X,d)$ is a $\delta$-hyperbolic space, there exists a constant $C= C(\delta)$ depending only on $\delta$ such that for all $x, y , z \in X \cup \bdg X$, 
	
	\begin{eqnarray}
		(x|y)_o \geq \min\{(x|z)_o, (z|y)_o\} - C. \nonumber
	\end{eqnarray}
	
	The topology on $\bdg X $ is given by the basis of open sets 
	\begin{eqnarray}
		U_{\xi, R} = \{ x \in \XG \; : \; (\xi| x)_o > R\} \nonumber, 
	\end{eqnarray}
	so that $x_n \rightarrow \xi \in \bdg X$ if and only if $(\xi|x_n)_o \rightarrow \infty$. Moreover, if $(x_n)$ is a sequence in $X$, $(x_n)$ converges to $\xi \in \bdg X$ in this sense if and only if $(x_n) $ converges at infinity and the equivalence class $[(x_i)]$ of $(x_i)$ is $ \xi$, \cite[Lemma 11.101]{drutu_kapovich18}. For $Y\subseteq X$ a subset, we will write $\overline{Y}^{\text{Grom}}$ the closure of $Y$ in $\XG$ with this topology. 
	\newline 
	
	\subsection{Coarse geometry}
	The hyperbolic spaces we are going to consider are not geodesic, so let's introduce some terminology in coarse geometry. 
	
	Let $f : X \rightarrow Y$ be a map between metric spaces, and let $Q \geq 1, C \geq 0$. We say that $f$ is a \textit{$(Q,C)$-quasi-isometric embedding} if for all $x, y \in X$, 
	\begin{eqnarray}
		\frac{1}{Q} d_X(x, y) - C \leq d_Y(f(x) , f(y)) \leq Q d_X(x,y) + C \nonumber. 
	\end{eqnarray}
	If moreover, there exists $A>0$ such that every point $y \in Y$ is at distance less than $A$ from $f(X)$, we say that $f$ is a \textit{$(Q,C)$-quasi-isometry}. In the particular case where $I \subseteq \R$ is an interval, and $f : I \rightarrow (X,d)$ is a $(Q,C)$-quasi-isometric embedding, we say that $f$ (or $f(I)$) is a \textit{quasigeodesic path}. If $I = [a, b]$ is compact, resp. $I = [a, \infty)$, resp. $I = \R$, we say that $f$ is a \textit{quasigeodesic segment}, resp. \textit{quasigeodesic ray}, resp. \textit{quasigeodesic line}. 
	
	Last, we say that a space is \textit{$(Q,C)$-quasigeodesic} if for every $x,y \in X$, there is a $(Q,C)$-quasigeodesic segment $\gamma : [a, b] \rightarrow X$ that joins $x$ to $y$, i.e. such that $\gamma(a) = x$ and $\gamma(b)= y  $. 
	
	\begin{eDef}
		A metric space $(X,d)$ is \textit{$\alpha$-almost geodesic} if for every $ x, y \in X$, and every $t \in [0, d(x,y)]$, there is $z \in X $ such that 
		\begin{eqnarray}
			|d(x,z) - t | \leq \alpha \text{ and } |d(y, z ) - (d(x,y) - t)|\leq \alpha \nonumber.
		\end{eqnarray}
	\end{eDef}
	
	In almost geodesic hyperbolic spaces, we have the following. 
	
	\begin{eprop}[{\cite[Proposition 5.2]{bonk_schramm00}}]\label{prop exist qgeod hyp}
		Let $(X,d)$ be a $\delta$-hyperbolic, $\alpha$-almost geodesic metric space. Then there exists a universal constant $C= C(\delta, \alpha)$ such that:
		\begin{enumerate}
			\item \label{rough geodesic} for all $ x, y \in X$, there exists a $(1, C)$ quasigeodesic segment $\gamma : [a, b] \rightarrow X$ such that $\gamma(a) = x$ and $\gamma(b) = y$. 
			\item for all $ x\in X, y \in \bdg X$, there exists a $(1, C)$ quasigeodesic ray $\gamma : [0, \infty) \rightarrow X$ such that $\gamma(0) = x$ and $\lim_{t \rightarrow \infty} \gamma(t) = y$. 
			\item for all $ x, y\in \bdg X$, $x\neq y$, there exists a $(1, C)$ quasigeodesic line $\gamma : \R \rightarrow X$ such that $\lim_{t \rightarrow -\infty} \gamma(t)= x$ and $\lim_{t \rightarrow \infty} \gamma(t) = y$. 
		\end{enumerate}
	\end{eprop}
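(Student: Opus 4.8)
\textbf{Proof proposal for Proposition~\ref{prop exist qgeod hyp}.}

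The plan is to reduce everything to a single construction of a $(1,C)$-quasigeodesic segment between two points of $X$, and then obtain the rays and lines by a diagonal/limiting argument. First I would concentrate on item~\eqref{rough geodesic}. Fix $x,y\in X$ and set $D=d(x,y)$. Using $\alpha$-almost geodesicity, for each $t\in[0,D]$ choose a point $z_t\in X$ with $|d(x,z_t)-t|\le\alpha$ and $|d(y,z_t)-(D-t)|\le\alpha$. Evaluating on a discrete net of parameters $t_k = k$ for $k=0,\dots,\lfloor D\rfloor$ (plus the endpoint $D$), one gets a finite sequence of points $z_0=x, z_1,\dots, z_N=y$; the triangle inequality together with the two defining inequalities forces $d(z_k,z_{k+1})\le 1+2\alpha$ and more generally $d(z_j,z_k)\le |j-k|+2\alpha$, while the lower bound $d(z_j,z_k)\ge d(x,z_k)-d(x,z_j) - \text{(error)}\ge |j-k|-\text{const}$ follows the same way. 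Interpolating this discrete sequence by constant speed on each unit interval yields a map $\gamma:[0,N']\to X$ (with $N'$ comparable to $D$) which is a $(1,C_0)$-quasigeodesic segment for a constant $C_0$ depending only on $\alpha$; note hyperbolicity is not even needed for this first point, only $\alpha$-almost geodesicity. (One should be mildly careful to reparametrize so that the domain has length exactly $d(x,y)$ up to an additive error, but this only changes $C_0$.)

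For item (2), fix $x\in X$ and $\xi\in\bdg X$ and pick a sequence $(y_i)$ in $X$ with $[(y_i)]=\xi$ and $d(x,y_i)\to\infty$; passing to a subsequence we may assume $(y_i|y_{i+1})_x\to\infty$ fast, say $(y_i|y_{i+1})_x \ge i$. Apply item (1) to get $(1,C_0)$-quasigeodesic segments $\gamma_i$ from $x$ to $y_i$. The key point is a standard ``stability'' estimate in hyperbolic spaces: in a $\delta$-hyperbolic space, any two $(1,C_0)$-quasigeodesics with the same initial point $x$ and endpoints $y_i, y_j$ that are ``far apart in the direction of $\xi$'' fellow-travel, up to a constant $C_1=C_1(\delta,C_0)$, for an initial length roughly $\min\{(y_i|\xi)_x,(y_j|\xi)_x\}$; this is where hyperbolicity enters, via the usual thin-quadrilateral argument comparing Gromov products along quasigeodesics. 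Consequently the restrictions $\gamma_i|_{[0,n]}$ form, for each fixed $n$, a Cauchy-like family: the points $\gamma_i(n)$ all lie in a bounded set (of diameter $\le 2C_1$) once $i$ is large. The hard part of this step — and I expect it to be the main obstacle — is that $X$ is not assumed proper, so I cannot extract a convergent subsequence of $\gamma_i(n)$ pointwise. The remedy is \emph{not} to take a limit of points but to build $\gamma$ directly by concatenating coherent finite pieces: choose $\gamma=\gamma_{i_0}$ restricted to a large initial segment, then extend using $\gamma_{i_1}$ for $i_1\gg i_0$ on the next block (the fellow-traveling estimate guarantees the concatenation remains a $(1,C)$-quasigeodesic after adjusting the constant), iterate. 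One checks that $\gamma(t)\to\xi$ as $t\to\infty$ because $(\gamma(t)|y_{i_k})_x$ stays large, hence $(\gamma(t)|\xi)_x\to\infty$ by the extended Gromov inequality and the definition of $(\cdot|\cdot)_x$ on $\bdg X$.

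For item (3), given distinct $x,y\in\bdg X$, first produce by item (2) a $(1,C)$-quasigeodesic ray $\rho^+$ from some basepoint $o$ to $y$ and a ray $\rho^-$ from $o$ to $x$; since $x\neq y$ we have $(x|y)_o<\infty$, so these two rays diverge after an initial segment of length about $(x|y)_o+O(\delta)$. Then reparametrize $\rho^-$ by $t\mapsto\rho^-(-t)$ and glue it to $\rho^+$ along a bounded overlap near $o$; the thin-triangle estimate shows the resulting bi-infinite path is a $(1,C')$-quasigeodesic line with the two ends converging to $x$ and $y$ respectively. Throughout, the constants $C_0,C_1,\dots$ depend only on $\delta$ and $\alpha$, so one may take $C=C(\delta,\alpha)$ to be the maximum of those appearing in the three constructions. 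In writing this up carefully, the only genuinely delicate points are (a) the fellow-traveling lemma for $(1,C_0)$-quasigeodesics in non-geodesic $\delta$-hyperbolic spaces — which is exactly the kind of statement recorded in \cite{bonk_schramm00} and I would cite or reprove it there — and (b) making the concatenation arguments in items (2) and (3) rigorous without local compactness, which I sketched above via explicit coherent gluing rather than subsequential limits.
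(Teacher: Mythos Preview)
The paper does not give its own proof of this proposition; it is simply quoted from \cite[Proposition~5.2]{bonk_schramm00}. So there is no paper's proof to compare against, and your proposal is effectively a sketch of how one might reprove the Bonk--Schramm result.

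Your argument for item~(1) contains a genuine gap. You claim that ``the triangle inequality together with the two defining inequalities forces $d(z_k,z_{k+1})\le 1+2\alpha$'' and then that ``hyperbolicity is not even needed for this first point''. This is false. The two constraints $|d(x,z_t)-t|\le\alpha$ and $|d(y,z_t)-(D-t)|\le\alpha$ only say that each $z_t$ lies $2\alpha$-close to \emph{some} geodesic from $x$ to $y$; they do not force two independently chosen points $z_j,z_k$ to lie near the \emph{same} one. Concretely, take the theta-graph consisting of two arcs of length $D$ joining $x$ to $y$: this space is geodesic (so $\alpha=0$), the constraints are satisfied by points on either arc, yet choosing $z_j$ and $z_{j+1}$ on different arcs gives $d(z_j,z_{j+1})$ of order $D$, not $1+2\alpha$. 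Of course this theta-graph is only $\delta$-hyperbolic for $\delta$ of order $D$, and that is exactly the point: the upper bound genuinely requires the four-point inequality. The fix is easy --- from $(x|y)_{z_j}\le\alpha$ one gets $(z_j|y)_x\ge j-2\alpha$, and then $(z_j|z_k)_x\ge\min\{(z_j|y)_x,(z_k|y)_x\}-\delta$ yields $d(z_j,z_k)\le |j-k|+6\alpha+2\delta$ --- but you should not present the step as hyperbolicity-free.

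Your approaches to items~(2) and~(3) via fellow-travelling plus explicit concatenation (to sidestep the lack of local compactness) are reasonable and close in spirit to how Bonk--Schramm proceed; the delicate ingredient you flag, namely stability of $(1,C_0)$-quasigeodesics in non-geodesic hyperbolic spaces, is indeed what carries the argument, and you would be citing \cite{bonk_schramm00} for it in any case.
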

	
	A metric space satisfying the conclusion of \ref{rough geodesic} is said to be \textit{$C$-roughly geodesic}. The spaces we are going to deal with are $\delta$-hyperbolic and $\alpha$-almost geodesic, for some $\delta, \alpha \geq 0$. In particular, they will be quasigeodesic. 
	\newline
	
	In a geodesic $\delta$-hyperbolic space, there is a universal constant $C = C(\delta, Q, C)$ such that if $\gamma$ is a $(Q,C)$-quasigeodesic segment, then any geodesic segment between $x = \gamma(a)$ and $y = \gamma(b) $ lies in the $C$-neighbourhood of $\gamma$. This is called \textit{geodesic stability}. It turns out this results remains true for general hyperbolic spaces, thanks to the following theorem. 
	
	\begin{ethm}[{\cite[Theorem 4.1]{bonk_schramm00}}]\label{thm isom embed geod hyp space}
		Let $(X,d)$ be a $\delta$-hyperbolic space. Then there is an isometric embedding $ \iota : X \rightarrow Y $ of $X$ into a complete $\delta$-hyperbolic geodesic metric space.
	\end{ethm}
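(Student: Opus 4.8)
The plan is to realize $Y$ as the \emph{injective hull} (Isbell's tight span) $E(X)$ of $X$. Recall that $E(X)$ consists of the \emph{extremal functions} $f \colon X \to \mathbb{R}$, i.e.\ those satisfying $f(x) + f(y) \ge d(x,y)$ for all $x,y \in X$ and which are pointwise minimal with this property; equivalently, $f(x) = \sup_{y \in X}\big(d(x,y) - f(y)\big)$ for every $x$. One equips $E(X)$ with the sup metric $d_E(f,g) = \sup_{x \in X}|f(x) - g(x)|$. The Kuratowski map $\kappa \colon x \mapsto d(x,\cdot)$ sends $X$ into $E(X)$ and is isometric, since $\|d(x,\cdot) - d(y,\cdot)\|_\infty = d(x,y)$. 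It is classical that $E(X)$ is an injective (hyperconvex) metric space; in particular it is complete, and it is geodesic -- indeed, any two of its points are joined by a geodesic segment. So $\kappa$ is the desired embedding, \emph{provided} we check that $E(X)$ is $\delta$-hyperbolic (at worst $\delta$-hyperbolic up to a universal additive constant, which is all that is needed for the applications in this chapter). Note that no separate completion step is required: $\kappa$ is isometric and $E(X)$ is complete regardless of whether $X$ is.

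To establish hyperbolicity of $E(X)$ I would argue as follows. The extremality identity $f(x) = \sup_y\big(d(x,y) - f(y)\big)$ shows that for every $f \in E(X)$, every $x_0 \in X$ and every $\varepsilon > 0$ there is $y \in X$ with $f(x_0) + f(y) \le d(x_0,y) + \varepsilon$; iterating this, distances and Gromov products in $(E(X), d_E)$ get approximated up to $\varepsilon$ by expressions involving only finitely many points of $X$. Concretely, given $f,g,h,k \in E(X)$ one expands the relevant pairwise sup-distances, selects points of $X$ realizing the suprema up to $\varepsilon$, and then feeds these points into the four-point inequality for $X$ (which holds with constant $\delta$). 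Pushing the errors through and letting $\varepsilon \to 0$ yields the four-point inequality for $E(X)$, hence $\delta$-hyperbolicity. This is essentially Lang's argument that the tight span of a $\delta$-hyperbolic space is again $\delta$-hyperbolic; Bonk and Schramm reach the same conclusion by a different route, constructing the geodesic space directly by filling in geodesic segments.

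The main obstacle is exactly this last verification. Elements of $E(X)$ are defined by a \emph{double} extremal condition and $d_E$ is a supremum norm, so translating a statement about $d_E$-distances and $d_E$-Gromov products of a handful of functions into a statement about finitely many points of $X$ requires choosing those points \emph{compatibly} across the various suprema and controlling all the resulting error terms simultaneously; a careless choice loses control of the hyperbolicity constant. Once this bookkeeping is carried out, everything else is formal: completeness and geodesicity of $E(X)$ follow from injectivity with no further work, and the restriction of $d_E$ to $\kappa(X)$ equals $d$ by construction, so $\iota := \kappa$ is the required isometric embedding.
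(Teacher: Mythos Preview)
The paper does not actually prove this theorem: it is quoted verbatim from Bonk--Schramm and used as a black box, with no argument supplied. So there is no ``paper's own proof'' to compare against beyond the original reference.

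That said, your approach via the injective hull is correct and is a genuinely different route from Bonk--Schramm's. Their construction proceeds by directly adjoining geodesic segments to $X$ (forming $Z = X \times X \times [0,1]$ with an appropriate pseudometric, then quotienting and completing), checking at each step that the four-point condition is preserved with the same $\delta$. Your proposal instead appeals to the tight span $E(X)$, which is automatically complete and geodesic by hyperconvexity, shifting all the work into verifying hyperbolicity of $E(X)$. This is precisely Lang's theorem (Proposition~1.3 in \emph{Injective hulls of certain discrete metric spaces and groups}, J.\ Topol.\ Anal.\ 2013), and his argument does give the \emph{same} constant $\delta$, not merely $\delta$ plus a universal additive error --- so your hedge is unnecessary. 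The trade-off is exactly as you identify: Bonk--Schramm's filling-in argument is more hands-on but keeps the hyperbolicity verification local to pairs of added segments, whereas the injective-hull route gets the ambient properties for free but requires the global extremality bookkeeping you flag as the main obstacle. For the purposes of this chapter either approach is perfectly adequate, and since the statement is only used as input to Corollaries~\ref{cor stab qgeod hyp} and~\ref{cor conv shadows}, citing either Bonk--Schramm or Lang without reproducing the proof is the sensible choice.
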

	
	The analogue of geodesic stability in the general context is then the following. 
	
	\begin{ecor}[{\cite[Proposition 5.4]{bonk_schramm00}}]\label{cor stab qgeod hyp}
		Let $(X,d)$ be a $\delta$-hyperbolic metric space and $\gamma_1, \gamma_2$ be $(Q,C)$-quasigeodesic paths in $X$ with the same endpoints. Then there exists $C' = C'(\delta, Q, C)$ such that $d_\text{Haus}(\gamma_1, \gamma_2) \leq C'$. 
	\end{ecor}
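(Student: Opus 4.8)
The plan is to bootstrap from the geodesic case, where the statement is the classical Morse lemma, using the isometric embedding provided just above. First I would invoke Theorem \ref{thm isom embed geod hyp space} to fix an isometric embedding $\iota : X \hookrightarrow Y$ of $X$ into a complete geodesic $\delta$-hyperbolic space $Y$. Since $\iota$ preserves distances exactly, $\iota \circ \gamma_1$ and $\iota \circ \gamma_2$ remain $(Q,C)$-quasigeodesic paths in $Y$, and they still have the same endpoints $x := \iota(\gamma_1(a_1)) = \iota(\gamma_2(a_2))$ and $y := \iota(\gamma_1(b_1)) = \iota(\gamma_2(b_2))$. It suffices to treat quasigeodesic segments: for quasigeodesic rays or lines one exhausts $\gamma_1, \gamma_2$ by segments and passes to the limit, the Hausdorff bound being uniform.

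Next I would apply geodesic stability inside the geodesic hyperbolic space $Y$. By the Morse lemma for quasi-isometric embeddings of intervals, \cite[Theorem III.H.1.7]{bridson_haefliger99}, there is a constant $C_0 = C_0(\delta, Q, C)$ such that for $i \in \{1,2\}$, choosing any geodesic segment $[x,y]$ in $Y$,
\[
d_{\text{Haus}}(\iota \circ \gamma_i,\, [x,y]) \leq C_0.
\]
The triangle inequality for the Hausdorff distance then yields $d_{\text{Haus}}(\iota \circ \gamma_1, \iota \circ \gamma_2) \leq 2 C_0$.

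Finally, since $\iota$ is an isometric embedding and both $\gamma_1, \gamma_2$ have image in $X$, the Hausdorff distance between $\gamma_1$ and $\gamma_2$ measured in $X$ coincides with the Hausdorff distance between $\iota \circ \gamma_1$ and $\iota \circ \gamma_2$ measured in $Y$. Hence $d_{\text{Haus}}(\gamma_1, \gamma_2) \leq 2 C_0 =: C'$, and $C'$ depends only on $\delta, Q, C$, as required.

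The only step needing genuine care — and the one I would flag as the main obstacle — is ensuring that the invoked Morse lemma applies to \emph{arbitrary} $(Q,C)$-quasi-isometric embeddings of intervals (not necessarily continuous, possibly defined only on a subinterval), and that the constant it produces depends on nothing but $\delta$ and the quasi-isometry constants. This is precisely the generality in which the statement is proved in \cite[Chapter III.H]{bridson_haefliger99}, so no additional argument is needed; the uniformity of $C_0$ in the endpoints is exactly what lets $C'$ be a universal constant.
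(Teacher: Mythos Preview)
Your proof is correct and follows exactly the route the paper intends: the corollary is stated immediately after Theorem \ref{thm isom embed geod hyp space} precisely so that one embeds into a geodesic $\delta$-hyperbolic space and invokes the classical Morse lemma there, as you do. The paper does not spell out the argument but merely cites \cite[Proposition 5.4]{bonk_schramm00}, so your write-up is in fact more detailed than what appears in the text.
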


	We define the $R$-shadow of a point $x $ seen from $o \in X$ by 
	\begin{eqnarray}
		S_o (x, R) := \{ y \in X \; : \; (y|o)_x \leq R\}. \nonumber
	\end{eqnarray}
	
	There are several equivalent definitions of a shadow. For instance, Maher and Tiozzo define a shadow by 
	\begin{eqnarray}
		S'_o(x, R) = \{y \in X \; : \; (y|x)_o \geq d(x,o ) - R\}, \nonumber
	\end{eqnarray}
	but a quick computation shows that $y \in S_o (x, 2R) $ if and only if $y \in S'_o(x, R)$. Rewriting the inequality of the Gromov product gives that for all $y, z \in \overline{S_o (x, 2R)}^{\text{Grom}}$, we have 
	\begin{eqnarray}
		(y|z)_o \geq d(o, x) - R + O(\delta) \nonumber, 
	\end{eqnarray}
	where $O(\delta)$ is a universal constant depending only on $\delta$. 
	
	A very useful fact about shadows in geodesic hyperbolic metric spaces is that they are weakly convex, in the following sense: there exists a constant $C= C(\delta) $ such that for every shadow $S = S_o (x, R)$, if $y, z \in S$, then any geodesic segment $[y,z]$ between them belongs to $S_o(x, R+ C)$, \cite[Corollary 3.21]{maher_tiozzo18}. Thanks to Theorem \ref{thm isom embed geod hyp space}, one can extend this result to arbitrary hyperbolic spaces. 
	
	\begin{ecor}\label{cor conv shadows}
		Let $(X,d)$ be a $\delta$-hyperbolic space. Then for every $Q\geq 1, C\geq 0$, there exists a constant $C = C(\delta, Q, C)$ such that:
		\begin{enumerate}
			\item if $y, z$ belong to the shadow $S_o (x, R)$, then any $(Q,C)$-quasigeodesic segment between $y$ and $z$ belongs to $S_o (x, R+C)$. 
			\item if $y, z$  do not belong to the shadow $S_o (x, R)$, then any $(Q,C)$-quasigeodesic segment between $y$ and $z$ does not intersect $S_o (x, R-C)$.
		\end{enumerate}
	\end{ecor}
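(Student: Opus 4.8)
The plan is to deduce both assertions from the geodesic case, which is already available in the literature, by transporting the situation through the isometric embedding of Theorem~\ref{thm isom embed geod hyp space} and then controlling the resulting additive errors with quasigeodesic stability.

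First I would fix, using Theorem~\ref{thm isom embed geod hyp space}, an isometric embedding $\iota : X \hookrightarrow Y$ into a complete geodesic $\delta$-hyperbolic space. Because $\iota$ is isometric, Gromov products are unchanged, so for every $w \in X$ and every $R \geq 0$ one has $S_o(w,R) = S_o^Y(w,R) \cap X$, where $S_o^Y$ denotes the shadow computed in $Y$; in particular $y$ and $z$ lie in (resp.\ outside) $S_o(x,R)$ in $X$ if and only if they do in $Y$. A $(Q,C)$-quasigeodesic segment $\gamma$ from $y$ to $z$ in $X$ is still a $(Q,C)$-quasigeodesic segment in $Y$ with the same endpoints, so by Corollary~\ref{cor stab qgeod hyp} there is a constant $C_0 = C_0(\delta,Q,C)$ and a geodesic segment $\eta$ from $y$ to $z$ in $Y$ with $d_{\text{Haus}}(\gamma,\eta) \leq C_0$. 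Thus each point $p$ of $\gamma$ is within $C_0$ of a point $p'$ of $\eta$, and since $q \mapsto (q|o)_x$ is $1$-Lipschitz in $q$, $|(p|o)_x - (p'|o)_x| \leq C_0$. It therefore suffices to prove both statements for points $p'$ lying on the genuine geodesic $\eta \subseteq Y$, at the cost of enlarging the final additive constant by $C_0$.

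For the first assertion I would invoke the weak convexity of shadows in geodesic hyperbolic spaces (\cite[Corollary 3.21]{maher_tiozzo18}): since $y,z \in S_o^Y(x,R)$, the geodesic $\eta$ lies in $S_o^Y(x, R+C_1)$ for some $C_1 = C_1(\delta)$, whence $p' \in S_o^Y(x,R+C_1)$ and $p \in S_o(x, R+C_1+C_0)$. For the second assertion I would use the coarse projection property of geodesics in hyperbolic spaces, in the form of the inequality
\begin{eqnarray}
	(p'|o)_x \geq \min\{(y|o)_x, (z|o)_x\} - C_2 \nonumber
\end{eqnarray}
valid for any $p'$ on the geodesic $\eta = [y,z]_Y$, with $C_2 = C_2(\delta)$: writing $m$ for a coarse projection of $o$ to $\eta$, the subsegment $[o,p']_Y$ coarsely decomposes as $[o,m]_Y \cup [m,p']_Y$, where $[o,m]_Y$ fellow-travels both $[o,y]_Y$ and $[o,z]_Y$ and where $[m,p']_Y \subseteq \eta$ lies coarsely on one of $[o,y]_Y$, $[o,z]_Y$. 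Consequently, if $y \notin S_o(x,R)$ and $z \notin S_o(x,R)$, i.e.\ $(y|o)_x > R$ and $(z|o)_x > R$, then $(p'|o)_x > R - C_2$, hence $(p|o)_x > R - C_2 - C_0$, so $\gamma$ avoids $S_o(x, R - C_2 - C_0)$. Taking $C := C_0 + \max\{C_1, C_2\}$ then yields the corollary.

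The genuinely routine parts are the verification that shadows pull back along the isometric embedding and the tallying of the additive constants $C_0, C_1, C_2$. The one point that deserves a little care, and which I expect to be the main (though still elementary) obstacle, is the coarse projection inequality used in the second assertion: one must make sure the tree-like decomposition of $[o,p']_Y$ is carried out with a bound depending only on $\delta$, independently of $R$ and of the distances involved — this is precisely what prevents a multiplicative loss in $R$ and is what makes the additive constant $C$ in the statement uniform.
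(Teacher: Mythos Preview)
Your approach is essentially the same as the paper's: embed $X$ isometrically into a geodesic $\delta$-hyperbolic space $Y$ via Theorem~\ref{thm isom embed geod hyp space}, observe that shadows pull back along the isometric embedding, and reduce to the geodesic case \cite[Corollary 3.21]{maher_tiozzo18}. The paper's proof is more terse---it simply invokes Maher--Tiozzo's corollary for both assertions after the embedding---whereas you make the quasigeodesic-to-geodesic reduction explicit via Corollary~\ref{cor stab qgeod hyp} and sketch a direct tree-approximation argument for part~2 instead of citing it; both of these are reasonable elaborations but not genuinely different ideas.
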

	
	\begin{proof}
		This result is known for geodesic hyperbolic spaces, \cite[Corollary 3.21]{maher_tiozzo18}. In the general case, due to Theorem \ref{thm isom embed geod hyp space}, there is an isometric embedding $f : (X, d) \rightarrow (Y, d_Y)$. Check that $y \in S_o(x, R) $ if and only if $f(y)$ belongs to the shadow $S' = S^Y_{f(o)}(f(x), R) $ in $Y$. Applying \cite[Corollary 3.21]{maher_tiozzo18} on the shadow $S'$, we obtain the result. 
	\end{proof}

	\subsection{Horofunction boundary}
	Let $(X, d)$ be a separable metric space, and $o \in X$. 
	
	Let $\R^X$ be the set of all functions on $X$ with values in $\R$, and endow it with the topology of uniform convergence on compact sets. Let $\mathds{1}$ be the constant function equal to $1$ and consider the quotient topological vector subspace $\R^X / \R \cdot\mathds{1}$, with the quotient topology. 
	
	Denote by 
	\begin{eqnarray}
		\Lip^1_{o} (X) := \{ f : X \rightarrow \R \; : \; f \text{ is $1$-Lipschitz and }f(o) = 0\} \nonumber.
	\end{eqnarray}
	
	With the topology induced from $\R^X$, $\Lip^1_{o} (X)$ is compact, Hausdorff and second countable, \cite[Proposition 3.1]{maher_tiozzo18}.

	\begin{eDef}
		For $o \in X$ a basepoint, the horofunction map is 
		\begin{eqnarray}
			b^o : &&X \rightarrow \R^X \nonumber \\
			&& x \mapsto b_x^o : y \mapsto d(x,y) - d(o, x). \nonumber
		\end{eqnarray}
	\end{eDef}
	
	A quick computation shows that the target space is actually $\Lip_o^1(X)$, and that $b^o $ is continuous injective. 
	
	\begin{eDef}
		The \textit{horoclosure} $\Xhc$ of $X$ (with basepoint $o \in X$) is the closure of $b^o (X)$ in $\R^X / \R \mathds{1}$. It is clear that $\Xhc$ does not depend on $o  \in X$. The set $\Xh$ is the preimage of $\Xhc$ by the natural quotient $\R^X \rightarrow \R^X / \R \mathds{1}$. We will call \textit{horofunctions} the elements of $\Xhc$  (and by abuse of notations, of $\Xh$ too). We call $\partial^h X = \Xhc \setminus b^o (X)$ the \textit{horoboundary} of $X$. 
	\end{eDef} 
	
	\begin{erem}
		The space $\Xhc$ corresponds to considering the closure of $X$ when we forget the basepoint, while $\Xh$ corresponds to the closure of all horofunction maps with every possible basepoint. 
	\end{erem}
	
	For $o \in X$, we shall denote the fibre $\Xh_o = \{ h \in \Xh \; : \; h(o)= 0\}$. The following lemma follows from the definitions. 
	\begin{elem}[{\cite[Lemma 2.4]{bader_caprace_furman_sisto22}}]
		The space $\Xhc$ is a compact metrizable space. For a fixed $o \in X$, the quotient map $\Xh_o \rightarrow \Xhc$ is a homeomorphism. 
	\end{elem}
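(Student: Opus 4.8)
The plan is to carry out everything inside the compact space $\Lip^1_o(X)$ and to exploit the fact that, our functions being pinned down at the basepoint $o$, the quotient map $q \colon \R^X \to \R^X/\R\mathds{1}$ is injective on $\Lip^1_o(X)$. First I would record the ambient facts: by \cite[Proposition 3.1]{maher_tiozzo18} the set $\Lip^1_o(X)$, with the topology induced from $\R^X$ (uniform convergence on compact sets), is compact, Hausdorff and second countable, and one clearly has $b^o(X) \subseteq \Lip^1_o(X)$. Moreover $\R^X$ is Hausdorff (if $f \ne g$ then $f(x) \ne g(x)$ for some $x$, and $\{x\}$ is compact), so its one-dimensional, hence closed, subspace $\R\mathds{1}$ has a Hausdorff quotient $\R^X/\R\mathds{1}$, and $q$ is continuous onto it.

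The key step is that $q|_{\Lip^1_o(X)}$ is a topological embedding. It is injective: if $f,g \in \Lip^1_o(X)$ and $f - g = c\,\mathds{1}$ for some $c \in \R$, then $c = f(o) - g(o) = 0$, so $f = g$. A continuous injection from a compact space into a Hausdorff space is a homeomorphism onto its image, so $q|_{\Lip^1_o(X)}$ identifies $\Lip^1_o(X)$ with $K := q\big(\Lip^1_o(X)\big)$; in particular $K$ is compact, Hausdorff and second countable, hence metrizable by Urysohn's theorem, and, being compact in a Hausdorff space, closed in $\R^X/\R\mathds{1}$.

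I would then conclude both statements at once. By definition $\Xhc$ is the closure of $q(b^o(X))$ in $\R^X/\R\mathds{1}$; since $q(b^o(X)) \subseteq K$ and $K$ is closed, $\Xhc$ is a closed subset of the compact metrizable space $K$, hence itself compact metrizable. For the homeomorphism, I would first check that $\Xh_o = \big(q|_{\Lip^1_o(X)}\big)^{-1}(\Xhc)$: if $h \in \Xh_o$ then $h \in q^{-1}(\Xhc)$ with $h(o) = 0$, and writing $q(h) = q(f)$ for some $f \in \Lip^1_o(X)$ (possible as $\Xhc \subseteq K$) gives $h - f = c\,\mathds{1}$ with $c = h(o) - f(o) = 0$, so $h = f \in \Lip^1_o(X)$; the reverse inclusion is immediate. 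The restriction of the embedding $q|_{\Lip^1_o(X)}$ to the preimage $\Xh_o$ of $\Xhc$ is then the desired homeomorphism $\Xh_o \to \Xhc$.

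This is pure soft topology, so there is no serious obstacle; the only two points requiring genuine care are the Hausdorffness of the target $\R^X/\R\mathds{1}$ and the injectivity of $q$ on $\Lip^1_o(X)$, and both boil down to evaluating functions at the basepoint $o$. One should also make sure to invoke \cite[Proposition 3.1]{maher_tiozzo18} (or a direct Arzelà–Ascoli argument, using the separability of $X$ for second countability) for the compactness and second countability of $\Lip^1_o(X)$, on which the rest of the argument rests.
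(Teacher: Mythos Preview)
The proposal is correct and is precisely the careful unpacking of definitions that the paper alludes to when it writes that the lemma ``follows from the definitions''; the paper itself gives no proof beyond that remark and a citation. Your argument identifies the right soft-topology ingredients (injectivity of $q$ on $\Lip^1_o(X)$ via evaluation at $o$, compactness from \cite[Proposition 3.1]{maher_tiozzo18}, and the compact-to-Hausdorff embedding lemma) and assembles them cleanly.
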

	
	We denote by $\Xhbc := \{h \in \Xhc \; | \; \inf (h) > -\infty\}$ and $\Xhuc := \{h \in \Xhc \; | \; \inf (h) = -\infty\}$. This decomposition is constant on the fibres in $\Xh$ so it gives a decomposition $\Xh = \Xhb \cup \Xhu$. Moreover, $b^o (X) \subseteq \Xhbc$, hence $\Xhbc$ is dense in $\Xhc$.

	\begin{elem}[{\cite[Lemma 2.5]{bader_caprace_furman_sisto22}}]
		The decompositions $\Xh = \Xhu \cup \Xhb$  and $\Xhc = \Xhbc \cup \Xhuc$ are measurable and $\iso(X) $-equivariant. 
	\end{elem}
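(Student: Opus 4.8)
The plan is to treat the two assertions --- measurability and $\iso(X)$-equivariance --- separately, reducing everything to statements about the canonical fibre $\Xh_o$, on which, by the preceding lemma, the quotient map $\Xh_o \to \Xhc$ is a homeomorphism. An element of $\Xh_o$ is a $1$-Lipschitz function $h$ on $X$ with $h(o) = 0$, and the two conditions ``$\inf h > -\infty$'' and ``$\inf h = -\infty$'' are invariant under adding a constant, so they descend unambiguously to $\Xhc$; the point will be that they are also preserved by the $\iso(X)$-action and are cut out by countably many closed conditions.

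For measurability I would first fix a countable dense subset $\{x_n\}_{n \in \mathbb{N}}$ of $X$, which exists since $X$ is separable. As every $h \in \Xh_o$ is continuous, $\inf_X h = \inf_n h(x_n)$. Singletons being compact, evaluation $h \mapsto h(x_n)$ is continuous on $\Xh_o$ (with the topology of uniform convergence on compact sets), so $\{h \in \Xh_o : h(x_n) \geq -k\}$ is closed for all $k$. Hence
\[
\Xhbc \cap \Xh_o = \bigcup_{k \in \mathbb{N}} \bigcap_{n \in \mathbb{N}} \{h \in \Xh_o : h(x_n) \geq -k\}
\]
is an $F_\sigma$ subset of $\Xh_o$, so Borel, and its complement $\Xhuc \cap \Xh_o$ is Borel too. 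Transporting along the homeomorphism $\Xh_o \cong \Xhc$ shows $\Xhbc$ and $\Xhuc$ are Borel in $\Xhc$. For $\Xh$ itself, I would observe that $\Xhb$ (resp.\ $\Xhu$) is the preimage of $\Xhbc$ (resp.\ $\Xhuc$) under the continuous projection $\Xh \to \Xhc$ --- the decomposition being constant on the fibres --- so both are Borel in $\Xh$.

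For $\iso(X)$-equivariance, recall that $\iso(X)$ acts on $\R^X$ by the continuous linear maps $(g \cdot f)(x) = f(g^{-1}x)$, that this descends to $\R^X / \R \mathds{1}$, and that it preserves $\Xhc$ and $\Xh$: a direct computation gives $g \cdot b^o_x = b^o_{gx} + \bigl(d(gx,o) - d(x,o)\bigr)$, so $g$ sends $b^o(X)$ into itself in the quotient and preserves the closure by continuity. Now if $f$ is bounded below on $X$, then $g \cdot f = f \circ g^{-1}$ is bounded below with the same infimum, since $g^{-1}$ is a bijection of $X$; the converse applies to $g^{-1}$. Therefore $g \cdot h \in \Xhbc$ if and only if $h \in \Xhbc$, and similarly for $\Xhuc$, $\Xhb$, $\Xhu$, which is precisely equivariance of the two decompositions.

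The argument is essentially bookkeeping, so I do not expect a genuine obstacle; the one point I would double-check is the passage between $\Xhc$ (a quotient, where $\inf h$ is not defined) and the fibre $\Xh_o$ (where it is), together with the fact that the countable union over $k$ combined with density of $\{x_n\}$ really captures ``$\inf h > -\infty$''. This is exactly where separability of $X$ enters; without it $\Xhbc$ need not be Borel.
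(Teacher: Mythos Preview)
Your proof is correct. The paper does not give its own proof of this lemma (it is simply cited from \cite{bader_caprace_furman_sisto22}), but the argument you use --- fixing a countable dense subset $X_0 \subseteq X$ and writing $\inf h = \inf_{x \in X_0} h(x)$ by continuity --- is exactly the one the paper sketches a few lines later to show that the infimum function $h \mapsto \inf h$ on $\Xhb$ is measurable and $\iso(X)$-equivariant, so your approach is in line with the surrounding text.
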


	For the rest of this section $(X, d)$ is a $\delta$-hyperbolic, $\alpha$-almost geodesic space. 
	
	The next Lemma is folklore, but most of the proofs available concern geodesic spaces so we provide one for self-containment. 
	
	\begin{elem}\label{lem horobdry gromov bdry}
		Let $(x_n)$ be a sequence of points in $X$ which converge in $\Xhc$ to the horofunction $\hat{h}$. Then $(x_n)$ converges to a point $\xi $ of the Gromov boundary $\bdg X$ if and only if $\hat{h} \in \Xhuc$. In this case, if $(x'_n)$ is another sequence such that $\lim x'_n = \hat{h}$, then $(x_n| x'_n )_o \rightarrow \infty$. 
	\end{elem}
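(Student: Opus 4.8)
The plan is to relate the horofunction boundary $\Xhc$ to the Gromov boundary $\bdg X$ via the Gromov product, exploiting that in a $\delta$-hyperbolic, $\alpha$-almost geodesic space the two notions of "going to infinity" essentially coincide up to bounded error. The key observation is the identity, valid up to a universal additive constant depending only on $\delta$ and $\alpha$: for $x, y \in X$,
\[
(x|y)_o \;=\; \frac{1}{2}\bigl(d(x,o) + d(y,o) - d(x,y)\bigr) \;=\; -\frac{1}{2}\bigl(b^o_x(y) + d(x,o) - d(y,o) - d(x,o)\bigr) + \tfrac12 d(y,o),
\]
so more usefully $b^o_{x_n}(y) = d(x_n, y) - d(x_n, o)$ and a short rearrangement gives $(x_n | y)_o = \tfrac12\bigl(d(y,o) - b^o_{x_n}(y)\bigr)$ for each fixed $y$. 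This is the bridge between the two boundaries.

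First I would prove the forward direction. Suppose $x_n \to \hat h$ in $\Xhc$; lift to representatives $h_n = b^o_{x_n} \in \Xh_o$ converging uniformly on compacts to $h \in \Xh_o$ representing $\hat h$. Assume first $\hat h \in \Xhuc$, i.e. $\inf h = -\infty$. I want to show $(x_n)$ converges at infinity, i.e. $(x_i | x_j)_o \to \infty$. Pick a sequence $y_k \in X$ with $h(y_k) \to -\infty$; since $h_n(y_k) \to h(y_k)$, for $n$ large $d(x_n, y_k) - d(x_n, o)$ is close to $h(y_k)$, which forces $(x_n | y_k)_o = \tfrac12(d(y_k, o) - b^o_{x_n}(y_k))$ to be large once $h(y_k)$ is very negative — more precisely, for fixed large $k$, $(x_n|y_k)_o \geq \tfrac12(d(y_k,o) - h(y_k)) - \varepsilon$ for $n$ large. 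Using $\delta$-hyperbolicity, $(x_i|x_j)_o \geq \min\{(x_i|y_k)_o, (y_k|x_j)_o\} - \delta$, which can be made arbitrarily large by first choosing $k$ with $d(y_k,o) - h(y_k)$ large and then $i,j$ large. Hence $(x_n)$ converges at infinity to some $\xi \in \bdg X$, and by \cite[Lemma 11.101]{drutu_kapovich18} it converges to $\xi$ in the Gromov topology. The same computation applied to two sequences $(x_n), (x'_n)$ both converging to $\hat h$ shows $(x_n | x'_n)_o \to \infty$, giving the last assertion; note this also shows the limit point $\xi$ is well-defined independently of the chosen sequence.

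For the converse, suppose $x_n \to \hat h$ in $\Xhc$ but $\hat h \in \Xhbc$, i.e. $m := \inf h > -\infty$ with $h$ a representative normalized by $h(o) = 0$ (so in fact $m \leq 0$). Then for every $y$, $b^o_{x_n}(y) \to h(y) \geq m$, so $(x_n|y)_o = \tfrac12(d(y,o) - b^o_{x_n}(y)) \to \tfrac12(d(y,o) - h(y)) \leq \tfrac12 d(y,o) - \tfrac{m}{2}$, which stays bounded as $n \to \infty$ for each fixed $y$. If $(x_n)$ converged to a point $\xi \in \bdg X$, then taking two equivalent subsequences (or the sequence against itself) we would need $(x_i|x_j)_o \to \infty$; but $(x_i | x_j)_o \geq \min\{(x_i|y)_o, (y|x_j)_o\} - \delta$ is not the right direction — instead use that $(x_i|x_j)_o \leq (x_i|y)_o + (y | x_j)_o$ fails too, so the cleanest route is: convergence of $(x_n)$ to $\xi \in \bdg X$ means $(x_n|x_m)_o \to \infty$, and by $\delta$-hyperbolicity applied to the boundedness of all $(x_n|y)_o$ one derives a contradiction — concretely, fix $y$; then $\liminf_{n} (x_n | y)_o \geq \min\{\liminf(x_n|\xi)_o,\ (\xi|y)_o\} - C$; if $(x_n) \to \xi$ then $(x_n|\xi)_o \to \infty$ so $\liminf_n(x_n|y)_o \geq (\xi|y)_o - C$, but letting $y$ range and using that $(\xi | y)_o \to \infty$ as $y \to \xi$ along any sequence in $\xi$, we get $\liminf_n(x_n|y)_o$ unbounded as we vary $y$, which is consistent; the actual contradiction comes from the horofunction side, since $\inf h > -\infty$ bounds $(x_n|y)_o$ \emph{uniformly in both $n$ and $y$}, whereas $\xi \in \bdg X$ would force some $y_k$ with $(x_n|y_k)_o$ simultaneously large for all large $n$. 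The main obstacle I anticipate is precisely pinning down this last step cleanly in the non-geodesic setting: one must be careful that the $\alpha$-almost-geodesic hypothesis and Proposition \ref{prop exist qgeod hyp} are used to guarantee that a point $\xi \in \bdg X$ is represented by a quasigeodesic ray $\gamma$ from $o$, and then evaluating $h$ along $\gamma(t)$ forces $h(\gamma(t)) \to -\infty$ (since $b^o_{x_n}(\gamma(t)) \approx -t$ for $x_n$ far along the ray), contradicting $\inf h > -\infty$. This reduction to quasigeodesic rays, and the uniform control of the error terms along them, is the technical heart of the argument.
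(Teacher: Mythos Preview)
Your direction ``$\hat h\in\Xhuc \Rightarrow (x_n)$ converges at infinity'' is correct and is essentially the content of the Maher--Tiozzo inequality the paper invokes: the identity $(x_n|y)_o=\tfrac12(d(y,o)-b^o_{x_n}(y))$ together with one application of the four-point inequality does the job, and the ``last assertion'' about $(x_n|x'_n)_o\to\infty$ follows the same way. This part is fine and matches the paper.

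The other direction, however, contains a genuine error. You assert that $\inf h>-\infty$ bounds $(x_n|y)_o$ \emph{uniformly in both $n$ and $y$}. It does not: from $(x_n|y)_o=\tfrac12(d(y,o)-b^o_{x_n}(y))$ and $b^o_{x_n}(y)\to h(y)\geq m$ you only get $(x_n|y)_o\leq \tfrac12(d(y,o)-m)+\varepsilon$, which blows up with $d(y,o)$. The meandering attempts before this (reversing the hyperbolic inequality, the non-inequality $(x_i|x_j)_o\leq (x_i|y)_o+(y|x_j)_o$) are symptoms of the same issue: a contrapositive argument through a fixed test point $y$ cannot produce a contradiction.

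Your closing suggestion --- take a quasigeodesic ray $\gamma$ from $o$ to $\xi$ via Proposition~\ref{prop exist qgeod hyp} and show $h(\gamma(t))\to -\infty$ --- is the right repair, and can be made to work once you justify $(x_n|\gamma(t))_o\gtrsim t$ via hyperbolicity rather than the vague ``$x_n$ far along the ray''. The paper's proof is more economical: it argues the direct implication (Gromov convergence $\Rightarrow \inf h=-\infty$) without any ray. Given $A>0$, pick $m$ large so that $(x_n|x_m)_o\geq A$ for all large $n$; by $\alpha$-almost-geodesicity choose a single point $z$ with $d(o,z)\approx A$ and $d(z,x_m)\approx d(o,x_m)-A$, so that $(x_m|z)_o\approx A$; then hyperbolicity gives $(x_n|z)_o\geq A-\delta$ for all large $n$, whence $h(z)=\lim_n(d(o,z)-2(x_n|z)_o)\leq -A+2\delta+\alpha$. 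Since $A$ is arbitrary, $\inf h=-\infty$. This avoids both the contrapositive detour and the appeal to quasigeodesic rays.
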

	
	\begin{proof}
		Let $(x_n)$, $\hat{h}$ as in the Lemma, and denote by $h$ the lift of $\hat{h}$ in $\Xh_o$. We must show that $h \in \Xhu$. Assume that $(x_n)$ converges to infinity, that is $\underset{{n,m}}{\lim} (x_n|x_m)_o = \infty$. Fix $A > 0 $ and $N $ such that for $n, m > N$, $(x_n| x_m )_o \geq A$. For $m > N$, $(x_m|x_m)_o \geq A$ hence $d(x_m, o) \geq A$. By $\alpha$-almost geodesicity, take $z$ such that 
		\begin{eqnarray}
			|d(o,z)- A| \leq \alpha \text{ and } |d(z,x_m ) - (d(o,x_m) - A)|\leq \alpha \nonumber
		\end{eqnarray}
		By hyperbolicity, for all $n \geq N$, 
		\begin{eqnarray}
			(x_n| z)_o \geq \min \{ (x_n|x_m)_o , (x_m| z)_o\} - \delta. \nonumber
		\end{eqnarray}
		But by definition, $d(x_n, z) - d(x_n, o)= d(o, z) - 2 (z|x_n)_o $, hence
		\begin{eqnarray}
			h(z) &=& \lim_n (d(x_n, z) - d(x_n, o) ) \nonumber \\
			&\leq & d(o, z) + 2 \delta - 2(x_m| z)_o \nonumber \\
			& \leq & 2 \delta + \alpha - A \nonumber. 
		\end{eqnarray}
		As $A$ was arbitrary, $h \in \Xhu_o$.
		
		The converse follows from \cite[Lemma 3.8]{maher_tiozzo18}: for any $h' \in \Xh_o$, and any $x, y \in X$, 
		\begin{eqnarray}
			\min\{- h'(x), -h'(y) \} \leq (x|y)_o + O(\delta). 
		\end{eqnarray}
		Using this inequality, one deduces the last part of the proof. 
	\end{proof}
	
	We can then define the map $\pi : \Xhuc \rightarrow \bdg X$ so that if $x_n \rightarrow h \in \Xhuc$,  $\pi (\lim_n x_n) = \xi  \in \bdg X$, where $\xi$ is the equivalence class of $(x_n)$. By \cite[Lemma 2.11]{bader_caprace_furman_sisto22}, this map is continuous and $\iso(X)$-equivariant. 
	\newline
	
	Denote by $\bdd(X)$ the set of closed non-empty bounded subsets of $X$, and endow it with the Hausdorff metric. According to \cite[Lemma 2.6]{bader_caprace_furman_sisto22}, the Borel $\sigma$-algebra on $\bdd(X)$ is generated by the collection
	\begin{eqnarray}
		\{ K \in \bdd(X) \; | \;  \text{there exists $U$ open in $X$ such that }K \subseteq U\}. 
	\end{eqnarray}
	
	We consider the infimum function on $\Xhb$, defined by:
	\begin{eqnarray}
		\inf : h \in \Xhb \mapsto \inf\{h(x) \; | \; x \in X\} \nonumber. 
	\end{eqnarray}
	
	It is clearly $\iso(X)$-equivariant. It is moreover measurable. Indeed, fix a dense countable subset $X_0 \subseteq X$. By continuity of the functions in $\Xh$, $\inf(h) = \inf\{h(x) \; | \; x \in X_0\}$. 
	\newline
	
	The next Lemma is proven in \cite[Lemma 2.7]{bader_caprace_furman_sisto22} only in the geodesic case, but extends to almost geodesic hyperbolic spaces. 
	
	\begin{elem}\label{lemma bdd subset}
		Let $(X,d)$ be a $\delta$-hyperbolic, $\alpha$-almost geodesic separable metric space. For every $h \in \Xhb$, the set 
		\begin{eqnarray}
			\tilde{I} (h) := \overline{\{x \in X \; | \; h(x ) < \inf(h) +1 \}}\nonumber
		\end{eqnarray}
		is bounded. This gives a measurable, $\iso(X)$-equivariant map $\tilde{I} : \Xhb \rightarrow \bdd(X)$, and this map factors measurably and $\iso(X)$-equivariantly via $\Xhb \rightarrow \Xhbc$, defining a measurable $\iso(X)$-map $I :  \Xhbc \rightarrow \bdd(X)$. 
	\end{elem}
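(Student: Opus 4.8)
The only substantial point is the boundedness assertion; measurability, equivariance, and the factorisation through $\Xhbc$ are then formal. The plan is to establish a quantitative ``strict quasiconvexity'' estimate for horofunctions in $\alpha$-almost geodesic $\delta$-hyperbolic spaces, and to deduce from it that the set of approximate minimisers of any horofunction has uniformly bounded diameter.

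\textbf{Step 1 (the key estimate).} I would first prove: there is a constant $C_0 = C_0(\delta,\alpha)$ such that for every $h \in \Xh$, every $x,y \in X$, and every point $w$ lying on a $(1,C)$-quasigeodesic from $x$ to $y$ (with $C = C(\delta,\alpha)$ as in Proposition \ref{prop exist qgeod hyp}) and satisfying $d(x,w) \ge \tfrac12 d(x,y) - C_0$ and $d(y,w) \ge \tfrac12 d(x,y) - C_0$, one has
\[ h(w) \le \max\{h(x),h(y)\} - \tfrac12\, d(x,y) + C_0 . \]
Such a $w$ exists by Proposition \ref{prop exist qgeod hyp}(\ref{rough geodesic}). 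To prove the inequality, write $h = \lim_n b^o_{x_n}$ in $\Xhc$ and use $o$-normalised representatives, so that $b^o_{x_n}(z) \to h(z)$ for $z$ in the finite set $\{o,x,y,w\}$. Join $x$ to $x_n$ and $x_n$ to $y$ by $(1,C)$-quasigeodesics; since quasigeodesic triangles in $\delta$-hyperbolic spaces are uniformly thin (a consequence of Theorem \ref{thm isom embed geod hyp space} together with Corollary \ref{cor stab qgeod hyp}), $w$ lies within a constant $C'(\delta,C)$ of a point $w'$ on one of these two sides, say the side from $x$ to $x_n$. Then $d(x_n,w) \le d(x_n,w') + C' \le d(x,x_n) - d(x,w') + O(\delta,C)$ by the quasigeodesic inequality, and $d(x,w') \ge d(x,w) - C' \ge \tfrac12 d(x,y) - O(\delta,\alpha)$; hence $b^o_{x_n}(w) = d(x_n,w) - d(x_n,o) \le b^o_{x_n}(x) - \tfrac12 d(x,y) + O(\delta,\alpha)$, and letting $n\to\infty$ yields the estimate (with $h(y)$ in place of $h(x)$ if $w'$ lies on the side from $x_n$ to $y$). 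This is the step I expect to be the crux: the Gromov-product inequalities available for horofunctions (such as the one used in the proof of Lemma \ref{lem horobdry gromov bdry}) only give quasiconvexity, without the $-\tfrac12 d(x,y)$ gain, and it is precisely the thin-triangle comparison with the auxiliary vertex $x_n$ that produces it. Alternatively, Step 1 can be obtained by transferring the geodesic case verbatim through the isometric embedding of Theorem \ref{thm isom embed geod hyp space}.

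\textbf{Step 2 (boundedness).} Let $h \in \Xhb$ and set $m := \inf h > -\infty$; then $\tilde I(h)$ is non-empty, since $m$ is approached. If $x,y \in X$ satisfy $h(x),h(y) < m+1$ and $w$ is a point as in Step 1, then $m \le h(w) \le \max\{h(x),h(y)\} - \tfrac12 d(x,y) + C_0 < m + 1 + C_0 - \tfrac12 d(x,y)$, whence $d(x,y) < 2(1+C_0)$. Thus $\{x \in X : h(x) < m+1\}$ has diameter at most $2(1+C_0)$, hence so does its closure $\tilde I(h)$, and therefore $\tilde I(h) \in \bdd(X)$, with a diameter bound independent of $h$.

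\textbf{Step 3 (the formal part).} The set $\tilde I(h)$ depends only on the differences $h(x)-h(y)$, so it is unchanged when a constant is added to $h$; hence $\tilde I$ is constant on the fibres of the quotient $\Xhb \to \Xhbc$, which immediately gives the factorisation $I : \Xhbc \to \bdd(X)$. Since the $\iso(X)$-action on horofunctions modulo constants is $h \mapsto h\circ g^{-1}$, we get $\tilde I(g\cdot h) = \overline{\{x : h(g^{-1}x) < \inf h + 1\}} = g\cdot \tilde I(h)$, i.e. $\iso(X)$-equivariance of $\tilde I$, and hence of $I$. For measurability: $h \mapsto \inf h$ is measurable, being equal to $\inf_{x\in X_0} h(x)$ for a fixed countable dense $X_0 \subseteq X$ (each $h\mapsto h(x)$ being continuous), so $\{(h,x)\in\Xhb\times X : h(x) < \inf h + 1\}$ is measurable; as for every open $U\subseteq X$ one has $\tilde I(h)\cap U \neq \emptyset$ if and only if there is $x \in X_0\cap U$ with $h(x) < \inf h + 1$, the set $\{h : \tilde I(h)\cap U \neq \emptyset\}$ is a countable union of measurable sets, and such sets generate the Borel $\sigma$-algebra of $\bdd(X)$ by \cite[Lemma 2.6]{bader_caprace_furman_sisto22}. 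Measurability of $I$ then follows since $\Xhb \to \Xhbc$ is a measurable $\iso(X)$-map admitting the measurable section $h \mapsto h - h(o)$.
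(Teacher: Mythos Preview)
Your proposal is correct and follows essentially the same approach as the paper: both arguments pick two approximate minimisers, an approximate midpoint, and an auxiliary point $x_n$ (the paper's $y$) approximating $h$, then use hyperbolicity to show $h$ at the midpoint drops by roughly $\tfrac12 d(x,y)$, forcing a uniform diameter bound. The paper implements this via the four-point Gromov-product inequality $(x,x')_y \ge \min\{(x,z)_y,(x',z)_y\}-\delta$ rather than thin triangles, and for Step~3 simply cites \cite[Lemma~2.7]{bader_caprace_furman_sisto22} without reproducing the measurability argument you spell out.
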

	
	\begin{proof}
		Let $h \in \Xhb$, and assume without loss of generality that $\inf(h) = 0 $. Consider $x, x' \in \tilde{I}(h) $ and define $A :=d(x,x')$. As $X$ is $\alpha$-almost geodesic, take $z \in X$ such that 
		\begin{eqnarray}
			|d(x,z)- A/2| \leq \alpha \text{ and } |d(x', z ) - A/2 |\leq \alpha \nonumber.
		\end{eqnarray}
		Let $\hat{h}$ be the image of $h $ in $\Xhc$. By definition, there is a sequence $\{y_n\}$ in $X$ such that $\hat{h}$ is the image of the pointwise limit of elements of the form
		\begin{eqnarray}
			b_{y_n} : x \in X \mapsto d(x,y_n) \nonumber
		\end{eqnarray}
		in the quotient space $\R^X / \R \mathds{1}$. In particular, there exists $y \in X$ such that 
		
		\[  \left\{ \begin{array}{ll}
			|d(y,z) - h(z) - (d(y, x) - h(x))| < 1, \text{      and } \\
			|d(y,z) - h(z) - (d(y, x') - h(x'))| < 1.\end{array} \right. \] 
		
		Rewriting the previous system, this yields 
		
		\[  \left\{ \begin{array}{ll}
			h(z) < 1 + d(y , z ) + h(x) - d(y, x) \\
			h(z) < 1 + d(y , z ) + h(x') - d(y, x').\end{array} \right. \] 
		Since $h(x), h(x') < 1$, we obtain  
		\[  \left\{ \begin{array}{ll}
			h(z) < 2 + d(y , z ) - d(y, x) \\
			h(z) < 2 + d(y , z ) - d(y, x').\end{array} \right. \] 
		
		Fix such a $y \in X$. By Gromov hyperbolicity:
		\begin{eqnarray}
			(x,x')_y \geq \min \{(x,z)_y, (x', z)_y\} - \delta \nonumber. 
		\end{eqnarray}
		Rewriting this, we have two possibilities. If $(x,z)_y \geq (x', z)_y$, 
		\begin{eqnarray}
			&&d(x,y) + d(x', y) - d(x, x') \geq d(x', y) + d(z, y) - d(x', z) -2 \delta \nonumber \\
			\Leftrightarrow && d(z, y ) \leq d(x,y) + d(x', z) - d(x,x') + 2 \delta. \label{eq geq}
		\end{eqnarray}
		If $(x,z)_y \leq (x', z)_y$, the same computation gives
		\begin{eqnarray}
			d(z, y ) \leq d(x',y) + d(x, z) - d(x,x') + 2 \delta. \label{eq leq}
		\end{eqnarray}

		Now if $(x,z)_y \geq (x', z)_y$, we have in particular by equation \eqref{eq geq}:
		\begin{eqnarray}
			h(z) &<& 2 + d(y , z ) - d(y, x) \nonumber\\
			& \leq& 2 + d(x,y) + d(x', z) - d(x,x') + 2 \delta - d(x,y) \nonumber \\
			&\leq &2 + 2 \delta + d(x', z ) - d(x,x') \label{ineq geq}. 
		\end{eqnarray}
		If $(x,z)_y \leq (x', z)_y$, equation \eqref{eq leq} implies 
		\begin{eqnarray}
			h(z) < 2 + 2 \delta + d(x, z) - d(x,x') \nonumber
		\end{eqnarray}
		
		But by definition of $z$, 
		\begin{eqnarray}
			d(x, z) - d(x,x') \leq \alpha - A/2  \text{ and } d(x', z) - d(x,x') \leq  \alpha - A/2 \nonumber
		\end{eqnarray}
		Hence, taking $A > 2\alpha + 4 + 4 \delta $ gives $h(z) < 0 $, a contradiction with the fact that $\inf(h) = 0$. Then $A$ is (uniformly) bounded. 
		
		Now for the rest of the proof, the demonstration of \cite[Lemma 2.7]{bader_caprace_furman_sisto22} does not use any geodesicity assumption, hence we can proceed in the exact same way. 
	\end{proof}
	
	\subsection{Constructing subsequences}
	We end this section by presenting a useful way of constructing subsequences that converge to a horofunction $h \in \Xhuc$. These ideas come from Maher and Tiozzo, \cite{maher_tiozzo18}. Let $S = S_o(x, R)$ be a shadow. A straightforward computation shows that 
	\begin{eqnarray}
		(y|o)_x \leq R & \Leftrightarrow & b_y^o (x ) \leq 2R - d(o, x)  \nonumber. 
	\end{eqnarray}
	We call the quantity $2R - d(o, x) $ the \emph{depth} $\dep(S)$ of the shadow $S$. This computation shows that $y \in S$ if and only if $b_y^o (x) \leq \dep(S)$. As a consequence, the horoclosure of $S$ in $\Xhc$ is given by 
	\begin{eqnarray}
		\overline{S} = \{h \in \Xhc \; | \; h(x)\leq \dep (S)\}. \nonumber
	\end{eqnarray}
	
	Similarly, we define the open shadow $S^\circ_o (x, R) $ by:
	
	\begin{eqnarray}
		S^\circ_o (x, R)  = \{h \in \Xhc \; | \; h(x)< \dep (S)\}, \nonumber
	\end{eqnarray}
	which is an open set in $\Xhc $, contained in the interior of $\widehat{S}$. The following Lemma uses the separability of $X$. 
	\begin{elem}[{\cite[Lemma 4.8]{maher_tiozzo18}}]
		For all $M < 0$, the set $\Xhuc$ is contained in a countable collection of open shadows of depth $\leq M$.  
	\end{elem}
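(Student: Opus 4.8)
The plan is to produce the required family explicitly, indexed by a countable dense subset of $X$, in the spirit of Maher and Tiozzo. Fix $M<0$ and a countable dense subset $X_0\subseteq X$. Throughout we identify $\Xhc$ with the fibre $\Xh_o$ via the homeomorphism $\Xh_o\to\Xhc$, so that every horofunction is normalised by $h(o)=0$; in particular each $h\in\Xhc$ is $1$-Lipschitz and $|h(x)|\leq d(o,x)$ for all $x\in X$, and the depth of a shadow $S=S_o(x,R)$ is $\dep(S)=2R-d(o,x)$, as recalled just before the statement.

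First I would single out the countable index set $\mathcal{X}:=\{\,x\in X_0 \; : \; d(o,x)>|M|\,\}$. For $x\in\mathcal{X}$, set $R_x:=\frac{1}{2}\bigl(M+d(o,x)\bigr)$; since $d(o,x)>|M|=-M$ we get $R_x>0$, so $S_x:=S_o(x,R_x)$ is a nonempty shadow of depth $\dep(S_x)=2R_x-d(o,x)=M$. Consequently the open shadows $S^\circ_o(x,R_x)=\{\,h\in\Xhc \; : \; h(x)<M\,\}$, for $x\in\mathcal{X}$, form a countable collection, each of depth exactly $M$, hence $\leq M$.

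It then remains to verify that $\Xhuc\subseteq\bigcup_{x\in\mathcal{X}}S^\circ_o(x,R_x)$. Given $h\in\Xhuc$, the condition $\inf(h)=-\infty$ provides $y\in X$ with $h(y)<M-1$. Since $h$ is $1$-Lipschitz with $h(o)=0$, this forces $d(o,y)\geq|h(y)|>|M|+1$. By density of $X_0$ choose $x\in X_0$ with $d(x,y)<\frac{1}{2}$; then $d(o,x)>|M|+\frac{1}{2}>|M|$, so $x\in\mathcal{X}$, and $h(x)\leq h(y)+d(x,y)<(M-1)+\frac{1}{2}<M$, i.e. $h\in S^\circ_o(x,R_x)$. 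This establishes the inclusion and finishes the proof.

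The argument is entirely soft, so I do not expect any obstacle of substance: only the separability of $X$ and the fact that normalised horofunctions are $1$-Lipschitz are used, and neither hyperbolicity nor almost geodesicity plays any role. The only mild care needed is bookkeeping — restricting the index set to the points $x$ with $d(o,x)>|M|$ guarantees simultaneously that the radii $R_x$ are positive (so that the $S_x$ are honest shadows) and that the point $x$ produced in the covering step indeed lies in $\mathcal{X}$.
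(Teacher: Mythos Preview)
Your argument is correct and is essentially the standard proof from \cite{maher_tiozzo18}; the paper itself does not spell out a proof here but simply quotes the cited lemma, so there is nothing further to compare.
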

	
	\begin{eDef}
		A \emph{descending shadow sequence} is a sequence $\mathcal{S} = (\mathcal{S}_M)_{M \in \mathbb{N}}$, such that for all $M$, $\mathcal{S}_M$ is a finite collection of shadows of depth $\leq - M $. 
	\end{eDef}
	
	Given a descending shadow sequence, we say that $C $ is a \emph{cylinder of size $M$} if 
	\begin{eqnarray}
		C = S^\circ_1 \cap S^\circ_2 \cap \dots \cap S^\circ_M, \nonumber
	\end{eqnarray}
	where, for every $i = 1, \dots, M$, $S_i \in \mathcal{S}_i$, i.e. has depth $\leq -i$. Note that $C$ is open as the intersection of a finite number of open sets. Given $M \in \mathbb{N}$, define 
	\begin{eqnarray}
		\Sigma_M := \bigcup \, \{C \; | \; C \text{ is a cylinder of size M}\}.  \nonumber
	\end{eqnarray}
	It is clear that $\Sigma_M$ is open, and that $\Sigma_{M+1} \subseteq \Sigma_M$. Last, observe that if $h \in \Sigma_M $, then $\inf(h) \leq -M $, hence 
	\begin{eqnarray}
		\underset{M}{\bigcap} \, \Sigma_M = \Xhuc \nonumber. 
	\end{eqnarray}
	
	The following Lemma will help us finding sequences that converge to a horofunction of the horoboundary. 
	
	\begin{elem}[{\cite[Lemma 4.9]{maher_tiozzo18}}]\label{lem shadow subsequence}
		Let $(y_n)$ be a sequence of points in $X$, and let $\mathcal{S} = (\mathcal{S}_M)_{M \in \mathbb{N}}$ be a descending shadow sequence. If $(b^o_{y_n})_n $ intersects $\Sigma_M$ for every $M$, then there exists a subsequence $n_k$ such that $(b_{y_{n_k}})_k$ converges to a horofunction in $\Xhuc$. 
	\end{elem}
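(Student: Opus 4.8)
The plan is to produce the required subsequence by a compactness argument on the tree of cylinders associated to the descending shadow sequence $\mathcal{S}$, and then to recognise its limit as an element of $\Xhuc$ using that point evaluations are continuous on the fibre $\Xh_o$.

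First we would record a consequence of the hypothesis: for every $M$, the set $N_M := \{n \in \mathbb{N} \; : \; b^o_{y_n} \in \Sigma_M\}$ is infinite. Indeed, by hypothesis there is an index $n(M)$ with $b^o_{y_{n(M)}} \in \Sigma_M$. If the integers $n(M)$ were bounded, then some index $n_0$ would satisfy $b^o_{y_{n_0}} \in \Sigma_M$ for infinitely many $M$, hence for all $M$ since $\Sigma_{M+1} \subseteq \Sigma_M$, so that $b^o_{y_{n_0}} \in \bigcap_M \Sigma_M = \Xhuc$; but $b^o_{y_{n_0}}(z) = d(y_{n_0}, z) - d(o, y_{n_0}) \geq -d(o, y_{n_0})$ has finite infimum, a contradiction. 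Thus $\{n(M)\}_M$ is unbounded, and then $N_{M_0} \supseteq \{n(M) \; : \; M \geq M_0\}$ is infinite for every $M_0$.

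Next we would build a nested sequence of cylinders $C^{(1)} \supseteq C^{(2)} \supseteq \cdots$, with $C^{(M)}$ of size $M$, such that $\{n \; : \; b^o_{y_n} \in C^{(M)}\}$ is infinite for each $M$. The naive iterated pigeonhole fails here --- choosing a size-$M$ cylinder carrying infinitely many terms of the sequence does not guarantee that one of its size-$(M+1)$ descendants still carries infinitely many terms --- so instead we argue globally. Since $\Sigma_M$ is a finite union of size-$M$ cylinders, for each $M$ there is a size-$M$ cylinder $D_M$, determined by a choice $(S^M_1, \dots, S^M_M) \in \mathcal{S}_1 \times \cdots \times \mathcal{S}_M$, with $\{n \; : \; b^o_{y_n} \in D_M\}$ infinite. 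Extending each choice arbitrarily to an element of the compact space $\prod_{i \geq 1} \mathcal{S}_i$ (a product of finite discrete sets, compact by Tychonoff), we extract a convergent subsequence $(D_{M_j})_j$ and let $(T_1, T_2, \dots)$ denote its limit; define $C^{(M)}$ to be the size-$M$ cylinder determined by $(T_1, \dots, T_M)$. These cylinders are nested by construction. For a fixed $M$ and $j$ large, the first $M$ coordinates of $D_{M_j}$ equal $(T_1, \dots, T_M)$ and $M_j \geq M$, hence $D_{M_j} \subseteq C^{(M)}$ (passing to the ancestor at level $M$), and therefore $\{n \; : \; b^o_{y_n} \in C^{(M)}\}$ contains the infinite set $\{n \; : \; b^o_{y_n} \in D_{M_j}\}$.

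Finally we would extract the subsequence and identify the limit. Choosing $n_1 < n_2 < \cdots$ with $b^o_{y_{n_k}} \in C^{(k)}$, which is possible by the previous step, we get $b^o_{y_{n_k}} \in C^{(M)}$ for all $k \geq M$. Since $\Xhc$ is compact and metrizable, we pass to a further subsequence converging to some $h \in \Xhc$. For each $M$, all but finitely many terms of this subsequence lie in $C^{(M)}$, so $h \in \overline{C^{(M)}}$. Writing $C^{(M)} = \bigcap_{i=1}^M \{ f \in \Xhc \; : \; f(x_i) < \dep(S_i)\}$, where $S_i \in \mathcal{S}_i$ is a shadow centred at $x_i$ and thus has $\dep(S_i) \leq -i$, and using that the evaluation $f \mapsto f(x_i)$ is continuous on $\Xh_o \cong \Xhc$, we obtain $h(x_i) \leq \dep(S_i) \leq -i$ for $i = 1, \dots, M$; in particular $\inf(h) \leq h(x_M) \leq -M$. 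As this holds for every $M$, we conclude $\inf(h) = -\infty$, that is $h \in \Xhuc$, as required. The main obstacle is precisely the middle step: arranging the cylinders to be nested without losing the infinitude of captured indices, for which the iterated pigeonhole must be replaced by the single compactness argument in $\prod_i \mathcal{S}_i$ (equivalently, an application of König's lemma to the finitely-branching tree of cylinders).
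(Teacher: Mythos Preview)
The paper does not give a proof of this lemma; it only cites \cite[Lemma 4.9]{maher_tiozzo18}. Your argument is correct and is essentially the standard König/compactness argument one expects here: reduce to infinitely many indices in each $\Sigma_M$, then extract a nested branch of cylinders via compactness of $\prod_i \mathcal{S}_i$, and read off $\inf(h)=-\infty$ from the depth condition on the shadows. One small cosmetic point: you invoke $\bigcap_M \Sigma_M = \Xhuc$, but only the inclusion $\bigcap_M \Sigma_M \subseteq \Xhuc$ is needed (and is all that follows from the depth bound); the reverse inclusion would require the descending shadow sequence to cover $\Xhuc$ at each level, which is not part of the definition.
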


	\section{Random walks on Gromov hyperbolic spaces}\label{section rw hyp}
	In this section, we present some concepts about boundary theory, and see how they can be applied to random walks in Gromov hyperbolic spaces. 
	
	Let $G$ be a discrete group, and let $\mu$ be an admissible probability measure on $G$, i.e. whose support generates $G$ as a semigroup. Let $(\Omega, \mathbb{P}) $ be the probability space $(G^{\mathbb{N}}, \mu^{\otimes \mathbb{N}})$, with the product $\sigma$-algebra. The space $\Omega$ is called the space of (forward) increments. Again denote by $S : \Omega \rightarrow \Omega $ the shift $S((\omega_i)_i) = (\omega_{i+1})_i$. The application 
	\begin{equation*}
		(n, \omega) \in \mathbb{N} \times \Omega \mapsto Z_n(\omega) = \omega_1 \omega_2 \dots \omega_n,
	\end{equation*}
	defines the random walk on $G$ generated by the measure $\mu$. 
	
	Let $\check{\mu}$ be the measure on $G$ defined by $\check{\mu} = \iota_\ast\mu $, where $\iota(g) =g^{-1}$, and define $\check{\Omega}$ the space $(G^{\mathbb{N}}, \check{\mu}^{\otimes \mathbb{N}})$. Let $(\check{Z}_n)$ be the corresponding random walk. As in Section \ref{section bord poisson}, define $(B_+, \nu_+)$ the Poisson-Furstenberg boundary associated to the random walk $(Z_n)$ and $(B_-, \nu_-) $ the Poisson-Furstenberg boundary associated to $(\check{Z}_n)$. Recall that by Theorem \ref{thm paire de bords}, $(B_-, B_+)  $ is a $G$-boundary pair, and that they come equipped with natural $G$-equivariant measurable factors 
	\begin{eqnarray}
		\bnd_+ : \Omega \rightarrow B_+ \text{ and } \bnd_- : \check{\Omega} \rightarrow B_- \nonumber. 
	\end{eqnarray}
	
	\subsection{Convergence to the boundary}
	
	Let $(X,d)$ be a separable $\delta$-hyperbolic metric space in the sense of Gromov, and assume that $(X,d)$ is $\alpha$-almost geodesic. Assume that $G$ acts on $X$ by isometries, and that the action of $G$ on $X$ is \textit{non-elementary}, i.e. $G$ does not fix a bounded subset of $X$ and does not fix a point nor a pair of points in $\bdg X$. Let $o \in X$ be a basepoint. 
	The goal of this section is to prove that $\mathbb{P}$-almost surely, the random walk $(Z_n o)_n$ converges at infinity, to a point $\xi \in \bdg X$. This is known to be true in geodesic spaces by Maher-Tiozzo \cite[Theorem 1.1]{maher_tiozzo18}, and the extension to almost geodesic spaces is likely known to the authors. Furthermore, Gouëzel shows that it holds in general Gromov hyperbolic spaces, and that the random walk has almost surely positive drift \cite[Theorem 1.2]{gouezel22}, without moment condition. There is no novelty in the result that we present in this section, but the proof is completely different from what was done in \cite{gouezel22}, and the simplicity of the argument might prove useful if one considers other situations like ergodic cocycles. The methods we present here are described in \cite[Section 2]{bader_caprace_furman_sisto22}. 
	\newline
	
	Recall that $\Xhc$ is a compact metrizable space, hence by Proposition \ref{prop existence stationnaire compact}, there exists a measure $\nu \in \prob(\Xhc)$ that is $\mu$-stationary. As a consequence of Theorem \ref{thm mesures limites}, we have the following. 
	
	\begin{eprop}\label{prop mesures limites hyp}
		There exists a measurable map $\tilde{\psi}: \omega \in \Omega \rightarrow \prob(\Xhc)$ such that for $\mathbb{P}$-almost every $\omega \in \Omega$, and for every $g \in G$, 
		\begin{eqnarray}
			Z_n (\omega)g_\ast\nu \underset{n \to \infty}{\longrightarrow} \tilde{\psi}(\omega) \nonumber
		\end{eqnarray}
		in the weak-$\ast$ topology. Moreover, we have the decomposition $\nu = \int_{\Omega} \tilde{\psi}(\omega) d \mathbb{P}(\omega)$. 
	\end{eprop}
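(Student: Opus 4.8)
The plan is to deduce this directly from Theorem~\ref{thm mesures limites}, the martingale lemma of Furstenberg–Benoist–Quint, applied to the compact $G$-space $Y=\Xhc$. First I would check that $\Xhc$ falls within the scope of that theorem. By \cite[Lemma 2.4]{bader_caprace_furman_sisto22} (recalled above) it is a compact metrizable space, hence a standard Borel space, and the isometric $G$-action on $(X,d)$ extends to it: $G$ acts on $\R^X/\R\mathds{1}$ by $g\cdot[h]=[h\circ g^{-1}]$, this preserves the image $b^o(X)$ since $[b^o_x]\mapsto[b^o_{gx}]$, hence also its closure $\Xhc$, each $g$ acting as a homeomorphism, and, $G$ being countable, the action map $G\times\Xhc\to\Xhc$ is Borel. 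Thus $\Xhc$ is a compact metrizable $G$-space on which $G$ acts by homeomorphisms. Next, since $\Xhc$ is compact metrizable and the $G$-action is continuous, Proposition~\ref{prop existence stationnaire compact} furnishes a $\mu$-stationary measure $\nu\in\prob(\Xhc)$; fix one such $\nu$.

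Applying Theorem~\ref{thm mesures limites} to $(\Xhc,\nu)$ then produces a measurable map $\omega\mapsto\nu_\omega\in\prob(\Xhc)$ with $Z_n(\omega)g_\ast\nu\to\nu_\omega$ in the weak-$\ast$ topology for $\mathbb{P}$-almost every $\omega$ and $\mu$-almost every $g$, together with the decomposition $\nu=\int_\Omega\nu_\omega\,d\mathbb{P}(\omega)$. Setting $\tilde\psi:=(\omega\mapsto\nu_\omega)$ already yields the decomposition asserted in the Proposition and the stated convergence for every $g$ in $\supp(\mu)$.

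It remains to upgrade ``$\mu$-almost every $g$'' to ``every $g\in G$''. For $g=e$ this is immediate: testing against $f\in C(\Xhc)$, the function $x\mapsto\int f\,d(x_\ast\nu)$ is bounded and $\mu$-harmonic by stationarity of $\nu$ (cf.\ Proposition~\ref{prop cv fonction harm}), so $n\mapsto Z_n(\omega)_\ast\nu$ is a martingale and converges $\mathbb{P}$-almost surely; writing $\nu=\sum_{h\in\supp(\mu)}\mu(h)\,h_\ast\nu$ and using that $Z_n(\omega)h_\ast\nu\to\nu_\omega$ for each $h\in\supp(\mu)$ identifies the limit as $\nu_\omega$. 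For an arbitrary $g\in G$, admissibility of $\mu$ gives $g\in\supp(\mu^{\ast k})$ for some $k$; since $\nu$ is also $\mu^{\ast k}$-stationary and the $\mu^{\ast k}$-walk is the subsequence $(Z_{kn})$, Theorem~\ref{thm mesures limites} applied to $\mu^{\ast k}$ gives convergence along that subsequence, and a routine bootstrap on the boundedly many intermediate increments (using weak-$\ast$ continuity of $h_\ast$, $h\in\supp(\mu)$, and the stationarity relation $\nu=\sum_h\mu(h)h_\ast\nu$) promotes this to convergence of the full sequence $Z_n(\omega)g_\ast\nu\to\nu_\omega=\tilde\psi(\omega)$.

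I do not expect any genuine obstacle: essentially all the content sits in Theorem~\ref{thm mesures limites}, itself a martingale-convergence statement. The only step requiring a little care is this last one — extending the set of allowed group elements, and checking that measurability is preserved through all these operations — but both are routine once one works inside the standard Borel space $\Xhc$.
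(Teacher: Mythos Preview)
Your approach is correct and matches the paper's, which simply declares the proposition to be ``a consequence of Theorem~\ref{thm mesures limites}'' without further argument. You are in fact more careful than the paper: you correctly notice that the theorem only gives convergence for $\mu$-almost every $g$ (i.e.\ $g\in\supp(\mu)$ in the discrete case), whereas the proposition claims it for every $g\in G$, and you fill that gap.

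One small correction to your bootstrap from the subsequence $(Z_{kn})$ to the full sequence: the ingredients you name (weak-$\ast$ continuity of $h_\ast$ and the relation $\nu=\sum_h\mu(h)h_\ast\nu$) are not quite what is needed. The clean way is to use the shift property $\nu_\omega=(\omega_1)_\ast\nu_{S\omega}$ recorded in Theorem~\ref{thm mesures limites}. Writing $m=kn+r$ with $0\le r<k$, one has $Z_m(\omega)=Z_r(\omega)\,Z_{kn}(S^r\omega)$, hence
\[
Z_m(\omega)g_\ast\nu = Z_r(\omega)_\ast\bigl(Z_{kn}(S^r\omega)g_\ast\nu\bigr)\longrightarrow Z_r(\omega)_\ast\nu_{S^r\omega}=\nu_\omega,
\]
the last equality by iterating the shift relation. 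Doing this for each of the finitely many residues $r$ gives convergence of the full sequence.
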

	
	By construction of the limit measures, for $\omega = (\omega_1, \omega_2, \dots) \in \Omega$, we have 
	\begin{eqnarray}
		{\omega_{1}}_\ast \tilde{\psi}(S\omega) =  \tilde{\psi}(\omega) \nonumber. 
	\end{eqnarray}
	
	In particular, $\tilde{\psi}$ passes to the quotient on $B_+$, and gives a $G$-equivariant measurable map 
	\begin{eqnarray}
		\psi_+ :  (B_+, \nu_+) &\rightarrow &\prob(\Xh) \nonumber \\
		b_+ &\mapsto& \tilde{\psi}(\omega) \nonumber, 
	\end{eqnarray}
	where $\omega$ is any trajectory in the class of $b_+$. 
	\newline 
	
	The following is now a rewriting of some results in \cite{bader_caprace_furman_sisto22}. Recall that 
	\begin{eqnarray}
		\Map_G (B_- \times B_+, \prob(\Xh)) \nonumber
	\end{eqnarray}
	denotes the set of measurable $G$-equivariant maps between $B_- \times B_+$ and $\prob(\Xh)$. We are going to show that the fact that $(B_-, B_+)$ is a boundary pair gives many restrictions to the boundary map $\psi_+$. 
	
	The proof of the following result uses the existence of a barycenter map, see \cite[Section 2.C]{bader_caprace_furman_sisto22}. For every $ \varepsilon \in (0, \frac{1}{2})$, there exists a measurable $G$-equivariant application 
	\begin{eqnarray}
		\beta_\varepsilon : \prob(\bdd(X)) \rightarrow \bdd(X). \nonumber
	\end{eqnarray}
	
	\begin{eprop}[{\cite[Claim 2.14]{bader_caprace_furman_sisto22}}]\label{prop no gmap bdd}
		In our situation, 
		\begin{eqnarray}
			\Map_G (B_- \times B_+, \prob(\bdd(X))) = \emptyset \nonumber. 
		\end{eqnarray}
		And therefore, 
		\begin{eqnarray}
			\Map_G ( B_+, \prob(\bdd(X))) = \emptyset . \nonumber
		\end{eqnarray}
	\end{eprop}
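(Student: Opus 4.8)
The plan is to argue by contradiction, and to exploit the two defining features of a $G$-boundary pair — amenability (through the barycenter map) together with relative isometric ergodicity of \emph{both} projections — in order to manufacture a non-empty bounded subset of $X$ that is stabilized by $G$, which is impossible since the action is non-elementary.

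First I would fix $\varepsilon\in(0,\tfrac12)$ and assume that some $\Phi\in\Map_G(B_-\times B_+,\prob(\bdd(X)))$ exists. Post-composing $\Phi$ with the barycenter map $\beta_\varepsilon:\prob(\bdd(X))\to\bdd(X)$ produces a measurable $G$-equivariant map $D:=\beta_\varepsilon\circ\Phi:B_-\times B_+\to\bdd(X)$. Now $\bdd(X)$, equipped with the Hausdorff metric, carries an isometric action of $\iso(X)$, so $D$ is a $G$-map into a metric space on which $G$ acts by isometries. I would then feed this into the relative isometric ergodicity of $\pi_+:B_-\times B_+\to B_+$: consider the trivial fibered isometric $G$-space $p=\mathrm{pr}_1:B_+\times\bdd(X)\to B_+$, where $G$ acts diagonally and the fibre over $b_+$ carries the Hausdorff metric (which is $G$-invariant because $G$ acts on $X$ by isometries). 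The maps $f:=(\pi_+,D):B_-\times B_+\to B_+\times\bdd(X)$ and $f_0=\Id_{B_+}$ satisfy $p\circ f=\pi_+=f_0\circ\pi_+$, so relative isometric ergodicity yields a $G$-map $\phi_+:B_+\to B_+\times\bdd(X)$ with $f=\phi_+\circ\pi_+$ and $f_0=p\circ\phi_+$; reading off the two coordinates, this says exactly that $D(b_-,b_+)$ is essentially independent of $b_-$, i.e. $D=D_+\circ\pi_+$ for a measurable $G$-map $D_+:B_+\to\bdd(X)$. Running the symmetric argument with $\pi_-:B_-\times B_+\to B_-$ (also relatively isometrically ergodic, since $(B_-,B_+)$ is a boundary pair) gives $D=D_-\circ\pi_-$ for a $G$-map $D_-:B_-\to\bdd(X)$. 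Hence $D_+(b_+)=D_-(b_-)$ for $(\nu_-\times\nu_+)$-almost every $(b_-,b_+)$, so by Fubini $D_+$ and $D_-$ are essentially constant, equal to a fixed $D_0\in\bdd(X)$; as $D_-$ is $G$-equivariant, $gD_0=D_0$ for every $g\in G$. Thus $G$ stabilizes the non-empty bounded set $D_0\subseteq X$, contradicting non-elementarity of the action. The second assertion is then immediate: any $\Phi_0\in\Map_G(B_+,\prob(\bdd(X)))$ would give $\Phi_0\circ\pi_+\in\Map_G(B_-\times B_+,\prob(\bdd(X)))$, which we have just shown to be empty.

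The main obstacle is the measure-theoretic bookkeeping required to apply relative isometric ergodicity to $\bdd(X)$: one must verify that $\bdd(X)$ with the Hausdorff metric (or, after restricting to the essentially separable range of the map $D$, a suitable separable replacement) genuinely fits the framework of a $G$-fibered isometric space — that the fibrewise distance is Borel on the fibered product and separable on the fibres — and this is where separability of $X$ and the fact that the barycenter outputs sets of controlled diameter enter. Everything else is formal; I would defer the construction of $\beta_\varepsilon$ and these measurability verifications to \cite{bader_caprace_furman_sisto22}, the argument above being a transcription of \cite[Claim 2.14]{bader_caprace_furman_sisto22} to the present setting.
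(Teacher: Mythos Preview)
Your argument is correct and follows essentially the same route as the paper: compose with the barycenter $\beta_\varepsilon$ to obtain a $G$-map $B_-\times B_+\to\bdd(X)$, then use the ergodic properties of the boundary pair to force this map to be essentially constant, contradicting non-elementarity. The only difference is cosmetic: the paper invokes ``metric ergodicity'' of the $G$-action on $B_-\times B_+$ in one line (this being a direct consequence of the boundary-pair axioms), whereas you unpack that step explicitly by applying relative isometric ergodicity to each projection $\pi_\pm$ in turn --- which is precisely how one proves isometric ergodicity of the product in the first place.
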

	
	\begin{proof}
		If there existed a measurable $G$-map $f : B_- \times B_+ \rightarrow  \prob(\bdd(X))$, then composing with $\beta_\varepsilon$ would give a measurable $G$-map $B_- \times B_+ \rightarrow  \bdd(X)$. But recall that $\bdd(X)$ is a metric space (with the Hausdorff topology) on which $G$ acts by isometries. Then by metric ergodicity, any such map is essentially constant. As a consequence, there exists a bounded set in $X$ that is stabilized by the action of $G$. This is a contradiction with the non-elementarity of the action. 
	\end{proof}

	\begin{ecor}
		\begin{eqnarray}
			\Map_G ( B_+, \prob(\Xhbc)) = \emptyset \text{ and } \Map_G ( B_-, \prob(\Xhbc)) = \emptyset. \nonumber
		\end{eqnarray}
	\end{ecor}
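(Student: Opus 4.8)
The plan is to reduce to Proposition \ref{prop no gmap bdd} via the map $I : \Xhbc \to \bdd(X)$ provided by Lemma \ref{lemma bdd subset}, which is measurable and $\iso(X)$-equivariant. Suppose for contradiction that there exists $\phi \in \Map_G(B_+, \prob(\Xhbc))$. Pushing measures forward along $I$ defines a measurable map $I_\ast : \prob(\Xhbc) \to \prob(\bdd(X))$, which is $\iso(X)$-equivariant since $I$ is. Hence the composition
\begin{eqnarray}
	b_+ \in B_+ \longmapsto I_\ast \phi(b_+) \in \prob(\bdd(X)) \nonumber
\end{eqnarray}
is a measurable $G$-equivariant map, i.e. an element of $\Map_G(B_+, \prob(\bdd(X)))$. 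This contradicts Proposition \ref{prop no gmap bdd}, so $\Map_G(B_+, \prob(\Xhbc)) = \emptyset$.

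The same argument applies verbatim to $B_-$: recall that $(B_-, B_+)$ is a $G$-boundary pair by Theorem \ref{thm paire de bords}, so in particular the action of $G$ on $B_-$ is metrically ergodic, and the proof of Proposition \ref{prop no gmap bdd} (which only uses the emptiness of $\Map_G(B_- \times B_+, \prob(\bdd(X)))$, together with the barycenter map $\beta_\varepsilon$ and metric ergodicity) yields $\Map_G(B_-, \prob(\bdd(X))) = \emptyset$ as well. Composing a hypothetical element of $\Map_G(B_-, \prob(\Xhbc))$ with $I_\ast$ gives the same contradiction, so $\Map_G(B_-, \prob(\Xhbc)) = \emptyset$.

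There is essentially no obstacle here beyond checking that $I_\ast$ is measurable and $G$-equivariant, which is immediate from the corresponding properties of $I$; the substance of the statement is entirely contained in Lemma \ref{lemma bdd subset} and Proposition \ref{prop no gmap bdd}. The point of isolating this corollary is that it identifies $\Xhuc$ (rather than the bounded part $\Xhbc$) as the only place where boundary maps valued in $\prob(\Xhc)$ can concentrate mass, which is what will let us transfer the stationary measure $\nu$ to the Gromov boundary via the map $\pi : \Xhuc \to \bdg X$.
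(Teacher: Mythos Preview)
Your proof is correct and follows the same approach as the paper: compose a hypothetical $G$-map $B_\pm \to \prob(\Xhbc)$ with the pushforward $I_\ast$ coming from Lemma \ref{lemma bdd subset} to contradict Proposition \ref{prop no gmap bdd}. You are slightly more explicit than the paper about the $B_-$ case (the paper's proof only spells out $B_+$), but the argument is identical.
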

	
	\begin{proof}
		If there existed a measurable $G$-map $B_+\rightarrow \prob(\Xhbc)$, composing with the $G$-equivariant measurable map $I : \Xhbc \rightarrow  \bdd(X)$ given by Lemma \ref{lemma bdd subset} would get a measurable $G$-map $B_+ \rightarrow \prob(\bdd(X))$, which is forbidden by Proposition \ref{prop no gmap bdd}. 
	\end{proof}
	
	As a consequence, for $\nu_{\pm}$ almost every $b_{\pm} \in B_{\pm}$, $\tilde{\psi}(b_{\pm})$ is actually a probability measure on $\Xhuc$. We can then compose by the $\iso(X)$-equivariant continuous map $\pi : \Xhuc \rightarrow \bdg X $ to obtain a measurable $G$-map 
	\begin{eqnarray}
		\psi_{\pm} : B_{\pm}\rightarrow \prob (\bdg X) \nonumber. 
	\end{eqnarray}
	
	In particular, there exists a $\mu$-stationary measure $\nu \in \prob(\bdg X)$. Indeed, the measure 
	\begin{eqnarray}
		\nu = \int \psi_+(b) d\nu_+ (b) \nonumber
	\end{eqnarray}
	is $\mu$-stationary by Proposition \ref{prop application de bord}.
	
	Now with the use of ergodicity arguments, we can prove that this map is essentially unique. The following Lemma is proven in \cite[Lemma 2.12]{bader_caprace_furman_sisto22}, and does not use any geodesicity argument. The idea is that given $3$ distinct boundary points $\xi_1, \xi_2, \xi_3 \in \bdg X$, one can associate the coarse center of the ideal triangle $\Delta(\xi_1, \xi_2, \xi_3)$ in an equivariant and measurable way. 
	
	\begin{elem}\label{lem gmap bdg3 X bdd}
		Let $G < \iso(X)$ be a countable group. Then there is a Borel $G$-map 
		\begin{eqnarray}
			\tau : \bdg X^{(3)} \rightarrow \bdd(X) \nonumber. 
		\end{eqnarray}
	\end{elem}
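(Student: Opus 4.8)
The plan is to take for $\tau(\xi_1,\xi_2,\xi_3)$ the coarse incenter of the ideal triangle with vertices $\xi_1,\xi_2,\xi_3$, defined purely in terms of Gromov products so that no choice of (quasi)geodesic enters. First I would fix, using Theorem \ref{thm isom embed geod hyp space} to pass to a complete geodesic $\delta$-hyperbolic space $Y\supseteq X$ (this preserves the Gromov product and gives $\bdg X\subseteq\bdg Y$), a universal constant $M=M(\delta)$ such that in any geodesic $\delta$-hyperbolic space every ideal triangle has a point within $O(\delta)$ of all three sides; translating this through the standard comparison between $(\eta|\eta')_x$ and $d(x,[\eta,\eta'])$, it says that for every triple of pairwise distinct boundary points there is a point $x$ with $\max_{i<j}(\xi_i|\xi_j)_x\le M$. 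I then set
\[
\tau(\xi_1,\xi_2,\xi_3)\ :=\ \bigl\{\, x\in X \ :\ \max_{i<j}(\xi_i|\xi_j)_x\le M \,\bigr\}.
\]
Since $x\mapsto(\xi_i|\xi_j)_x$ is $1$-Lipschitz, this set is closed; by the previous remark it is non-empty; and because the threshold $M$ is a fixed constant while $(g\xi_i|g\xi_j)_{gx}=(\xi_i|\xi_j)_x$, one gets $\tau(g\xi_1,g\xi_2,g\xi_3)=g\,\tau(\xi_1,\xi_2,\xi_3)$, i.e. the map is $\iso(X)$-equivariant, in particular $G$-equivariant.

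The one genuine geometric point is that $\tau(\xi_1,\xi_2,\xi_3)$ is bounded, so that it really defines an element of $\bdd(X)$. I would argue this in the geodesic model $Y$: a point $x$ with $\max_{i<j}(\xi_i|\xi_j)_x\le M$ lies within a uniformly bounded distance of each of the three sides $[\xi_i,\xi_j]$ of the ideal triangle; two sides sharing a vertex, say $[\xi_1,\xi_2]$ and $[\xi_1,\xi_3]$, fellow-travel along a ray towards $\xi_1$ and diverge away from it, so the set of points close to both of them is the union of a bounded region (depending on the configuration) and a neighbourhood of that common ray; being also close to $[\xi_2,\xi_3]$, which does not accumulate on $\xi_1$, rules out the unbounded part, leaving $\tau(\xi_1,\xi_2,\xi_3)$ inside a bounded subset of $Y$, hence of $X$. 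Note that no bound uniform in the triple is needed. Alternatively one can run this argument directly in $X$ using the $(1,C)$-quasigeodesics of Proposition \ref{prop exist qgeod hyp} and their uniform fellow-travelling from Corollary \ref{cor stab qgeod hyp}, which is what explains why the Gromov-product formulation, rather than ``pick a quasigeodesic'', is the right way to make the construction canonical.

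It remains to check that $\tau:\bdg X^{(3)}\to\bdd(X)$ is Borel. Here I would use the description of the Borel $\sigma$-algebra on $\bdd(X)$ recalled above (generated by the sets $\{K:K\subseteq U\}$ with $U\subseteq X$ open), the fact that the extended Gromov product $(\cdot|\cdot)_x$ is Borel on $\bdg X\times\bdg X$ for each fixed $x$, and a countable dense subset $X_0\subseteq X$. The set $\{(\xi_1,\xi_2,\xi_3,x):x\in\tau(\xi_1,\xi_2,\xi_3)\}$ is then Borel in $\bdg X^{(3)}\times X$, and one deduces in the standard way (this is exactly \cite[Lemma 2.12]{bader_caprace_furman_sisto22}, whose argument does not use geodesicity once the above inputs are available) that each $\{(\xi_1,\xi_2,\xi_3):\tau(\xi_1,\xi_2,\xi_3)\subseteq U\}$ is Borel, using continuity of $x\mapsto\max_{i<j}(\xi_i|\xi_j)_x$ to reduce membership questions to $X_0$. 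I expect the main obstacle to be precisely this measurability bookkeeping — being careful that passing between the closed incenter set and its trace on $X_0$ is harmless, and that the boundary-extended Gromov product is genuinely jointly Borel — whereas the geometric content (non-emptiness, boundedness, equivariance) is short once one works in the geodesic model provided by Theorem \ref{thm isom embed geod hyp space}.
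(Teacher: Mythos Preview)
Your construction is exactly the coarse-center map the paper has in mind (the paper simply cites \cite[Lemma 2.12]{bader_caprace_furman_sisto22} for the proof and describes the idea in one sentence), and your treatment of closedness, equivariance, boundedness and Borel measurability is sound.

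There is one gap to close. For non-emptiness you pass to the geodesic envelope $Y\supseteq X$ of Theorem \ref{thm isom embed geod hyp space} and locate a center $y\in Y$ with $\max_{i<j}(\xi_i|\xi_j)_y\le M'$; you then assert ``there is a point $x$ with $\max_{i<j}(\xi_i|\xi_j)_x\le M$'' as if $x\in X$. But $\tau(\xi_1,\xi_2,\xi_3)$ is by definition a subset of $X$, and the isometric embedding $X\hookrightarrow Y$ is not assumed to have coarsely dense image, so you cannot pull $y$ back into $X$ for free. The fix uses the hypothesis you only invoke later for boundedness: $X$ is $\alpha$-almost geodesic, so Proposition \ref{prop exist qgeod hyp} gives a $(1,C)$-quasigeodesic line in $X$ from $\xi_1$ to $\xi_2$, which by Corollary \ref{cor stab qgeod hyp} stays within a uniform distance of the $Y$-geodesic between the same endpoints. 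Since $(\xi_1|\xi_2)_y$ is small, $y$ lies near that $Y$-geodesic, hence near some $x\in X$ on the quasigeodesic, and then $\max_{i<j}(\xi_i|\xi_j)_x\le M'+d(x,y)$ is still uniformly bounded. Enlarging $M$ by this amount makes $\tau(\xi_1,\xi_2,\xi_3)\subseteq X$ non-empty and your argument goes through.
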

	
	We can now prove that the maps $\psi_{\pm} : B_{\pm}\rightarrow \prob (\bdg X)$ have values in the set of Dirac measures. 
	
	\begin{elem}\label{lem image dirac measure}
		For almost every $b \in B_{\pm}$, we have that $\psi_{\pm}(b) = \delta_{\xi^+(b)}$, with $\xi^+(b) \in \bdg X$. 
	\end{elem}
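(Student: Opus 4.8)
The plan is to show that each $\psi_\pm$ takes values in Dirac masses by a standard "three-points" argument, using relative isometric ergodicity of the boundary pair $(B_-,B_+)$. I will only carry out the argument for $\psi_+$, since the case of $\psi_-$ is symmetric (exchanging the roles of $\mu$ and $\mui$). Suppose for contradiction that $\psi_+(b)$ is not almost surely a Dirac mass. The natural way to produce a contradiction is to use the fact that a non-atomic or multi-atomic probability measure on $\bdg X$ "sees" enough of the boundary to build a $G$-map into a metric space on which $G$ acts by isometries, contradicting non-elementarity.

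First I would set up the following. Consider the fibre product $B_-\times B_+$ with its two projections $\pi_\pm$, which are relatively isometrically ergodic by Theorem \ref{thm paire de bords}. From $\psi_+$ (pulled back to $B_-\times B_+$ via $\pi_+$) and $\psi_-$ (via $\pi_-$) we obtain two $G$-maps $B_-\times B_+\to\prob(\bdg X)$. The goal is to upgrade these to genuine boundary maps into $\bdg X$ itself. The key mechanism, exactly as in \cite{bader_caprace_furman_sisto22}, is: if $\psi_+$ had image outside the Dirac masses on a set of positive measure, then the "conditional measure" construction together with relative isometric ergodicity of $\pi_+:B_-\times B_+\to B_+$ forces $\psi_+$ to descend through a $G$-equivariant map. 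Concretely, one considers on $B_-\times B_+$ the $G$-map $(b_-,b_+)\mapsto(\psi_-(b_-),\psi_+(b_+))\in\prob(\bdg X)^2$; pushing forward the product of conditional measures and using the barycentric/convexity structure, relative isometric ergodicity (applied to the fibred metric space built from the weak-$*$ metric on $\prob(\bdg X)$ over $B_+$) shows that $\psi_+$ must in fact be measurable with respect to a point, i.e. essentially $B_+\to\bdg X$; but $\bdg X$ with a metric inducing its topology is not a setting where $G$ acts by isometries in general, so instead one argues directly with atoms.

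The cleanest route, which I would follow, is the atomic-measure route. Let $m$ be the maximal atom size of $\psi_+(b)$, which is an $\mathbb{R}$-valued $G$-invariant (hence a.e. constant by ergodicity of $B_+$) function of $b$; call its value $m_0$. If $m_0=0$, then $\psi_+(b)$ is non-atomic a.e. If $m_0>0$, the set of atoms of maximal mass $A(b)\subset\bdg X$ is a finite nonempty $G$-equivariant family, of bounded cardinality $\leq 1/m_0$. In the first subcase, $\psi_+(b)\times\psi_+(b)\times\psi_+(b)$ restricted to the complement of the diagonals is a positive-measure measure on $\bdg X^{(3)}$, and composing with the Borel $G$-map $\tau:\bdg X^{(3)}\to\bdd(X)$ of Lemma \ref{lem gmap bdg3 X bdd} and then pushing forward yields a $G$-map $B_+\to\prob(\bdd(X))$, contradicting the Corollary to Proposition \ref{prop no gmap bdd}. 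In the second subcase, if $\#A(b)\geq 3$ one again applies $\tau$ to triples from $A(b)$ to land in $\prob(\bdd(X))$, a contradiction; if $\#A(b)=1$ we are done; and the cases $\#A(b)=2$ are handled by observing that the convex hull (or the pair, via $\bdd(X)$ applied to a quasigeodesic joining the two points using Proposition \ref{prop exist qgeod hyp} and Corollary \ref{cor stab qgeod hyp}) again gives a $G$-map into $\prob(\bdd(X))$, forbidden by the same Corollary. Hence $\#A(b)=1$, i.e. $\psi_+(b)=\delta_{\xi^+(b)}$ for a measurable $G$-map $\xi^+:B_+\to\bdg X$.

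The main obstacle I anticipate is the bookkeeping in the two-atom case and, more subtly, ruling out non-atomic measures with "thin" support that might not immediately give three distinct points: one must be careful that a non-atomic measure on $\bdg X$ genuinely charges three-element subsets, i.e. that $(\psi_+(b)^{\otimes 3})(\bdg X^{(3)})>0$, which holds precisely because $\psi_+(b)$ non-atomic implies its support is infinite. A secondary delicate point is measurability of $b\mapsto A(b)$ and of the maximal-atom function; this is routine (fix a countable generating family of Borel sets, or use that $b\mapsto\psi_+(b)$ is measurable into $\prob(\bdg X)$ with the weak-$*$ Borel structure) but must be stated. Once $\xi^+$ and symmetrically $\xi^-$ are constructed, the convergence statement $Z_n(\omega)o\to\xi^+(\bnd_+(\omega))$ will follow in the next step from unfolding the definition of the limit measures $\tilde\psi(\omega)=\delta_{\xi^+(\bnd_+(\omega))}$ via Proposition \ref{prop mesures limites hyp} and Lemma \ref{lem horobdry gromov bdry}.
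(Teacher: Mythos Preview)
Your approach has two genuine gaps that prevent it from going through as written.

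First, the case $\#A(b)=2$ is wrong. A quasigeodesic joining two distinct points of $\bdg X$ is a bi-infinite quasi-line, hence \emph{unbounded} in $X$; it does not give an element of $\bdd(X)$. There is no $G$-equivariant way to attach a bounded subset of $X$ to an unordered pair of boundary points of a hyperbolic space --- this is precisely why one needs at least three points to invoke Lemma~\ref{lem gmap bdg3 X bdd}. Working on $B_+$ alone you cannot dispose of the two-atom case by this route.

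Second, the case $\#A(b)=1$ is incomplete. Having a unique atom of maximal mass $m_0$ does \emph{not} imply that $\psi_+(b)$ is a Dirac mass: one could have $m_0<1$ with the remaining mass spread out non-atomically or on smaller atoms. You would need to iterate (pass to the normalized remainder $\frac{1}{1-m_0}(\psi_+(b)-m_0\delta_{\xi(b)})$ and repeat), and argue that this process terminates, which you do not do.

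The paper avoids both obstacles by working on the product $B_-\times B_+$ rather than on $B_+$ alone. It considers the single $G$-map
\[
\Psi(b_-,b_+)=\psi_-(b_-)\otimes\psi_+(b_+)\otimes\tfrac{1}{2}\bigl(\psi_-(b_-)+\psi_+(b_+)\bigr)\in\prob\bigl((\bdg X)^3\bigr).
\]
By ergodicity of $G\curvearrowright B_-\times B_+$, the image lies either essentially in $\prob((\bdg X)^{(3)})$ or in $\prob(\Delta(\bdg X))$; the former is excluded by composing with $\tau$ and Proposition~\ref{prop no gmap bdd}. Thus $\Psi(b_-,b_+)$ is supported on triples with a coincidence, and unwinding this forces both $\psi_-(b_-)$ and $\psi_+(b_+)$ to be Dirac masses in one stroke --- no separate two-atom or iteration argument is needed. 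Your max-atom route can be salvaged, but only by bringing in $\psi_-$ and the product $B_-\times B_+$ to handle the two-point case (e.g.\ combining the pair $A(b_+)$ with the point coming from $\psi_-(b_-)$ to get three distinct points), at which point you are essentially redoing the paper's argument.
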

	
	\begin{proof}
		Let $\Psi : B_- \times B_+ \rightarrow \prob((\bdg X)^3)$ given by 
		\begin{eqnarray}
			\Psi(b_-, b_+) = \psi_- (b_-) \otimes \psi_+ (b_+) \otimes \frac{1}{2} (\psi_- (b_-) + \psi_+ (b_+)). \nonumber
		\end{eqnarray}
		Since $G$ acts on $B_- \times B_+$ ergodically, the image of $\psi$ is essentially contained in $\prob((\bdg X)^{(3)}) $ or in $\prob(\Delta(\bdg X))$, where $\Delta(\bdg X)$ is the subset of $(\bdg X)^3$ for which at least two boundary points coincide. If $\Psi$ has values in $\prob((\bdg X)^{(3)}) $, then composing with the $G$-map $\tau $ of Lemma \ref{lem gmap bdg3 X bdd} gives a measurable $G$-map 
		\begin{eqnarray}
			B_- \times B_+ \rightarrow \prob(\bdd(X)) \nonumber, 
		\end{eqnarray}
		which is impossible by Proposition \ref{prop no gmap bdd}. 
		
		Therefore, for almost every $(b_-, b_+)$, $\Psi(b_-, b_+)$ must be atomic with at most two atoms and this implies that $\psi_{\pm}(b)$ is almost surely a Dirac measure. 
	\end{proof}
	
	Therefore, there exist measurable $G$-maps 
	\begin{eqnarray}
		\phi_{\pm} : B_{\pm} \rightarrow \bdg X . \nonumber
	\end{eqnarray}
	
	Let us prove that these maps are actually essentially unique. First, check that the measurable $G$-map 
	\begin{eqnarray}
		\phi_{\bowtie} &:& B_{-}\times B_+ \rightarrow (\bdg X)^2 \nonumber \\
		&& (b_-, b_+) \mapsto (\phi_-(b_-), \phi_+(b_+)) \nonumber
	\end{eqnarray}
	has essential values in $\bdg X^{(2)}$. Indeed, the set 
	\begin{eqnarray}
		\{(b_-, b_+) \; | \; \phi_+(b_+)= \phi_-(b_-)\} \nonumber
	\end{eqnarray}
	has measure $0$ or $1$ by ergodicity. Suppose it has full measure, If we let the coordinates vary separately, then it would imply that $\phi_-$ and $\phi_+$ are both essentially constant. As $\phi_+$ and $\phi_-$ are $G$-maps, this would give a fixed point in $\bdg X$, which is impossible because the action is non-elementary. 
	
	Let $\phi'_+ : B_+ \rightarrow \bdg X$ be another measurable $G$-map. Again, by ergodicity, $\phi_+$ and $\phi'_+$ are either essentially the same or essentially distinct. If they are essentially distinct, the measurable $G$-map 
	\begin{eqnarray}
		b \in B_+ \mapsto \frac{1}{2} (\delta_{\phi_+(b)} + \delta_{\phi'_+(b)}) \nonumber
	\end{eqnarray}
	has values in $\prob(\bdg X)$, which is impossible because any such map has essential values in the set of Dirac measures by Lemma \ref{lem image dirac measure}. As a consequence, $\phi_+$ and $\phi_-$ are essentially unique. 
	\newline 
	
	Notice that in the argument above, we never used the fact that $(B_-, B_+)$ are Poisson-Furstenberg boundaries. Instead, we only used ergodic arguments, and amenability of the actions $G\curvearrowright B_{\pm}$. Indeed, the following Theorem shows that these results remain true for any $G$-boundary pair. 
	
	\begin{ethm}[{\cite[Theorem 2.3]{bader_caprace_furman_sisto22}}]\label{thm uniq bdry map hyp}
		Let $(B_-, B_+) $ be any $G$-boundary pair, and $G \curvearrowright X$ as above. Then there exists $\phi_- : B_- \rightarrow \bdg X$ and $\phi_+ : B_+ \rightarrow \bdg X$ such that the map 
		\begin{eqnarray}
			\phi_{\bowtie} : (b_-, b_+ )\in (B_- \times B_+) \mapsto (\phi_-(b_-), \phi_+(b_+))
		\end{eqnarray} 
		is essentially contained is the set of distinct pairs of points of the boundary $\bdg(X)^{(2)}$. Moreover: 
		\begin{enumerate}
			\item[(i)] $\Map_G ( B_-, \prob(\bdg X)) = \{\delta \circ \phi_-\}$ and $\Map_G ( B_+, \prob(\bdg X)) = \{\delta \circ \phi_+\}$. 
			\item[(ii)] $\Map_G ( B_- \times B_+, \bdg X) = \{\phi_-\circ \text{pr}_-, \phi_+\circ \text{pr}_+\}$ 
			\item[(iii)] $\Map_G ( B_- \times B_+, \bdg X^{(2)}) = \{\phi_{\bowtie}, \tau\circ\phi_{\bowtie}\}$, where $\tau (\xi,\eta) = (\eta, \xi)$. 
		\end{enumerate} 
	\end{ethm}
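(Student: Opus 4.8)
The plan is to observe that the proof carried out above for the Poisson--Furstenberg pair $(B_-, B_+)$ used nothing specific to Poisson boundaries: the only inputs were amenability of the actions $G \curvearrowright B_\pm$, metric ergodicity (in fact relative isometric ergodicity) of the projections $B_- \times B_+ \to B_\pm$, and the geometric toolkit attached to $X$ --- namely the barycenter map $\beta_\varepsilon : \prob(\bdd(X)) \to \bdd(X)$, the measurable $\iso(X)$-equivariant decomposition $\Xhc = \Xhbc \sqcup \Xhuc$ together with $I : \Xhbc \to \bdd(X)$ (Lemma~\ref{lemma bdd subset}), the continuous equivariant projection $\pi : \Xhuc \to \bdg X$, and the coarse-center map $\tau : \bdg X^{(3)} \to \bdd(X)$ (Lemma~\ref{lem gmap bdg3 X bdd}). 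Since all of these survive replacing $(B_-, B_+)$ by an arbitrary $G$-boundary pair, the argument goes through verbatim; this is exactly Theorem~2.3 of~\cite{bader_caprace_furman_sisto22}, and I would present it as such, essentially reorganizing the steps already spelled out in the previous subsection.

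Concretely, first I would use amenability of $G \curvearrowright B_+$ together with compactness and metrizability of $\Xhc$ to obtain, via Proposition~\ref{prop application de bord}, a measurable $G$-map $\psi_+ : B_+ \to \prob(\Xhc)$, and likewise $\psi_- : B_- \to \prob(\Xhc)$. Next I would show that $\psi_\pm$ is essentially valued in $\prob(\Xhuc)$: otherwise, restricting to the $\Xhbc$-part and composing with $I$ and then $\beta_\varepsilon$ would produce a $G$-map $B_\pm \to \bdd(X)$, hence a $G$-invariant bounded subset of $X$ by metric ergodicity, contradicting non-elementarity. Here one should record that $\Map_G(B_- \times B_+, \prob(\bdd(X))) = \emptyset$ and $\Map_G(B_\pm, \prob(\bdd(X))) = \emptyset$ for \emph{any} $G$-boundary pair, by the same proof as Proposition~\ref{prop no gmap bdd}. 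Composing with $\pi_\ast : \prob(\Xhuc) \to \prob(\bdg X)$ then yields $G$-maps $\psi_\pm : B_\pm \to \prob(\bdg X)$, and in particular a $\mu$-free existence statement for boundary maps into $\bdg X$.

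Then I would run the atomicity argument: form $\Psi : B_- \times B_+ \to \prob((\bdg X)^{3})$ by $\Psi(b_-, b_+) = \psi_-(b_-) \otimes \psi_+(b_+) \otimes \tfrac{1}{2}(\psi_-(b_-) + \psi_+(b_+))$; by ergodicity of the diagonal action its essential image lies either in $\prob((\bdg X)^{(3)})$ or in $\prob(\Delta(\bdg X))$, the former being impossible (compose with $\tau$ and contradict the vanishing of $\Map_G(-, \prob(\bdd(X)))$ just established), so $\Psi$ is valued in measures supported on degenerate triples, which forces each $\psi_\pm$ to be Dirac and produces $\phi_\pm : B_\pm \to \bdg X$. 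That $\phi_{\bowtie}$ lands essentially in $\bdg X^{(2)}$, and the uniqueness statement~(i), follow exactly as in the argument following Lemma~\ref{lem image dirac measure}: two competing $G$-maps $B_+ \to \bdg X$ are, by ergodicity, either essentially equal or essentially everywhere distinct, and in the latter case their average would be a non-Dirac $\prob(\bdg X)$-valued $G$-map, which is forbidden by Lemma~\ref{lem image dirac measure}; likewise for $B_-$.

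The part that genuinely requires more than metric ergodicity is~(ii) and~(iii), i.e. identifying \emph{all} $G$-maps $B_- \times B_+ \to \bdg X$ (resp. $\to \bdg X^{(2)}$) with the listed ones. This is where relative isometric ergodicity of the projections $\pi_\pm : B_- \times B_+ \to B_\pm$ must really be used, through the fibered-isometric-action formalism in its very definition, to descend a given $f : B_- \times B_+ \to \bdg X$ to a map defined on a single factor $B_\pm$ and then invoke the uniqueness already proved there; statement~(iii) is then a bookkeeping consequence of~(ii) applied to each coordinate together with the fact that $\phi_{\bowtie}$ avoids the diagonal. I expect this descent step to be the main obstacle: one has to build, from $f$, an appropriate $G$-equivariant fibered isometric structure over the relevant factor map, verify the commuting-square hypothesis in the definition of relative isometric ergodicity, and deal with the measurability of all the maps involved. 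A recurring technical point throughout is that $\bdg X$ itself is not compact, so every compactness-dependent step (amenability, existence of stationary/invariant measures, barycenters) has to be performed on $\Xhc$ or on $\bdd(X)$ and only then transported to $\bdg X$ via $\pi$ and $\tau$.
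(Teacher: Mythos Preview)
Your proposal is correct and matches the paper's approach exactly: the paper does not give an independent proof of this theorem but simply observes, in the paragraph preceding the statement, that the arguments already carried out for the Poisson--Furstenberg pair used only amenability of $G \curvearrowright B_\pm$ and ergodicity of the diagonal action, hence carry over to any $G$-boundary pair, and then cites \cite{bader_caprace_furman_sisto22} for the full statement including~(ii) and~(iii). Your sketch is in fact more detailed than what the paper provides, particularly in flagging that the descent step for~(ii) and~(iii) genuinely requires relative isometric ergodicity rather than mere double ergodicity.
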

	
	In view of Proposition \ref{prop mesures limites hyp}, Theorem \ref{thm uniq bdry map hyp} yields the following result. 
	
	\begin{ecor}\label{cor lim measures hyp}
		For $\mathbb{P}$-almost every $\omega \in \Omega$, 		
		\begin{eqnarray}
			Z_n (\omega)_\ast \nu \underset{n\rightarrow \infty}{\longrightarrow}\delta_{\phi_+(b)} \nonumber,  
		\end{eqnarray}
		where $b = \bnd_+(\omega)$. 
	\end{ecor}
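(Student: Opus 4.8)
The plan is to obtain the statement by threading together the ingredients already assembled in this section, so the bulk of the work is organisational rather than computational. First I would recall from Proposition \ref{prop mesures limites hyp} (applied with $g=e$) that for $\mathbb{P}$-almost every $\omega\in\Omega$ the measures $Z_n(\omega)_\ast\nu$ converge weak-$\ast$ in $\prob(\Xhc)$ to $\tilde\psi(\omega)$, where here $\nu$ is the $\mu$-stationary measure on $\Xhc$ furnished by Proposition \ref{prop existence stationnaire compact}. Since $\tilde\psi$ satisfies ${\omega_1}_\ast\tilde\psi(S\omega)=\tilde\psi(\omega)$ it descends to the $G$-equivariant map $\psi_+\colon(B_+,\nu_+)\to\prob(\Xhc)$ with $\tilde\psi(\omega)=\psi_+(\bnd_+(\omega))$. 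The corollaries following Proposition \ref{prop no gmap bdd} give $\psi_+(b)\in\prob(\Xhuc)$ for $\nu_+$-almost every $b$, so one may post-compose with the continuous $\iso(X)$-equivariant map $\pi\colon\Xhuc\to\bdg X$ to get $\psi_+\colon B_+\to\prob(\bdg X)$; and by Theorem \ref{thm uniq bdry map hyp}(i) (equivalently Lemma \ref{lem image dirac measure}) this composed map equals $\delta\circ\phi_+$. In particular $\pi_\ast\tilde\psi(\omega)=\delta_{\phi_+(b)}$ with $b=\bnd_+(\omega)$, and $\pi_\ast\nu$ is precisely the $\mu$-stationary measure $(\phi_+)_\ast\nu_+$ on $\bdg X$ that appears in the statement.

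It then remains to carry the weak-$\ast$ convergence in $\prob(\Xhc)$ through $\pi$. Because $\nu$ is supported on $\Xhuc$ and the decomposition $\Xhc=\Xhbc\cup\Xhuc$ is $\iso(X)$-equivariant, every $Z_n(\omega)_\ast\nu$ is carried by $\Xhuc$, and so is the limit $\tilde\psi(\omega)$ — in fact by the fibre $\pi^{-1}(\phi_+(b))$. Using $\iso(X)$-equivariance of $\pi$ one has $Z_n(\omega)_\ast(\pi_\ast\nu)=\pi_\ast\big(Z_n(\omega)_\ast\nu\big)$, and I would prove that this sequence tends to $\delta_{\phi_+(b)}$ in $\prob(\bdg X)$ by the usual criterion for convergence to a Dirac mass in a metrizable space: it suffices to check that for each open $U\ni\phi_+(b)$ in $\bdg X$, the mass $\pi_\ast\big(Z_n(\omega)_\ast\nu\big)(U)=Z_n(\omega)_\ast\nu(\pi^{-1}(U))$ tends to $1$. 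Writing $\pi^{-1}(U)=V\cap\Xhuc$ with $V$ open in $\Xhc$, and using that all measures in play live on $\Xhuc$, this mass equals $Z_n(\omega)_\ast\nu(V)$; meanwhile $\tilde\psi(\omega)(V)=\tilde\psi(\omega)(\pi^{-1}(U))=\big(\pi_\ast\tilde\psi(\omega)\big)(U)=\delta_{\phi_+(b)}(U)=1$. The portmanteau theorem applied to the open set $V$ then yields $\liminf_n Z_n(\omega)_\ast\nu(V)\geq\tilde\psi(\omega)(V)=1$, which is the claim.

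The one genuinely delicate point is precisely this last transport: $\pi$ is continuous only on the non-compact piece $\Xhuc$, so one cannot push forward the weak-$\ast$ limit formally. The device above — testing against open neighbourhoods of the limiting boundary point, rewriting $\pi^{-1}(U)$ as the trace on $\Xhuc$ of an open subset of the compact space $\Xhc$, and then invoking portmanteau — circumvents this, and it uses crucially that $\nu$ gives full mass to $\Xhuc$ (which is itself the content of the corollaries of Proposition \ref{prop no gmap bdd}, hence of non-elementarity). Everything else is a direct reading of Proposition \ref{prop mesures limites hyp} and Theorem \ref{thm uniq bdry map hyp}.
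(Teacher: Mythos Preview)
Your proposal is correct and follows the same approach the paper intends: the corollary is presented there as an immediate consequence of Proposition~\ref{prop mesures limites hyp} together with Theorem~\ref{thm uniq bdry map hyp}, without further argument. You have simply made explicit the one point the paper glosses over, namely how to pass the weak-$\ast$ convergence from $\prob(\Xhc)$ to $\prob(\bdg X)$ through the map $\pi$ that is only continuous on the (non-closed) piece $\Xhuc$; your portmanteau argument using that $\nu$ and all its translates are carried by $\Xhuc$ is the right way to do this.
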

	
	In \cite{maher_tiozzo18}, a great part of the proof for the convergence of the random walk to the boundary amounts to showing that Corollary \ref{cor lim measures hyp} holds. Indeed, the conclusion will follow from the following argument. 
	
	\begin{eprop}
		Let $G$ be a discrete countable group acting non-elementarily by isometries on an almost geodesic, $\delta$-hyperbolic separable metric space $(X,d)$. Let $\mu$ be an admissible probability measure on $G$, and let $\nu$ be a $\mu$-stationary measure on $\bdg X$. If $\mathbb{P}$-almost surely, ${Z_n}_\ast \nu$  converges to a Dirac measure $\delta_\lambda$ for $\lambda \in \bdg X$, then for any $o \in X$, $Z_n o \rightarrow \lambda$. 
	\end{eprop}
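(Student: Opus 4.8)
The plan is to deduce the statement from two facts only: that the stationary measure $\nu$ is non-atomic, and that the Gromov product is $\iso(X)$-equivariant. I would first record that $\nu$ is non-atomic: since the action is non-elementary it contains two independent loxodromic isometries, so the $G$-action on $\bdg X$ has no finite orbit, and Lemma \ref{lem non atomic} applies. Then I would fix $\omega$ in the full-measure set where ${Z_n}_\ast\nu$ converges to a Dirac mass $\delta_\lambda$, with $\lambda=\lambda(\omega)\in\bdg X$, writing $Z_n=Z_n(\omega)$ and noting that it is enough to prove $(Z_n o \mid \lambda)_o\to\infty$.

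Next I would use non-atomicity to produce, for each large $n$, two boundary points near $\lambda$ whose Gromov product at $Z_n o$ is already bounded. Concretely, $(\nu\otimes\nu)$ gives no mass to the diagonal of $\bdg X\times\bdg X$, that is to $\{(\xi,\eta):(\xi\mid\eta)_o=\infty\}$, so there is $D>0$ with $(\nu\otimes\nu)\{(\xi\mid\eta)_o\le D\}>1/2$. Pushing forward by $Z_n$ and using ${Z_n}_\ast\nu\otimes{Z_n}_\ast\nu\to\delta_{(\lambda,\lambda)}$ together with the equivariance $(Z_n\xi\mid Z_n\eta)_{Z_n o}=(\xi\mid\eta)_o$, the set $\{(\xi',\eta'):(\xi'\mid\eta')_{Z_n o}\le D\}$, which carries more than half the mass of ${Z_n}_\ast\nu\otimes{Z_n}_\ast\nu$, must meet every neighbourhood of $(\lambda,\lambda)$; shrinking neighbourhoods and extracting diagonally yields $\eta_n^{(1)}=Z_n\zeta_n^{(1)}$ and $\eta_n^{(2)}=Z_n\zeta_n^{(2)}$ in $\bdg X$ with $\eta_n^{(i)}\to\lambda$ and $(\eta_n^{(1)}\mid\eta_n^{(2)})_{Z_n o}\le D$.

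The geometric heart is then a bounded-computation argument. Set $L_n:=(\eta_n^{(1)}\mid\eta_n^{(2)})_o$; since $\eta_n^{(1)},\eta_n^{(2)}\to\lambda$ we get $L_n\to\infty$, and the near-invariance $|(\xi\mid\eta)_o-(\xi\mid\eta)_{Z_n o}|\le d(o,Z_n o)+O(\delta)$ forces $d(o,Z_n o)\ge L_n-D-O(\delta)\to\infty$. I would fix a $(1,C)$-quasigeodesic $\gamma_n$ from $\eta_n^{(1)}$ to $\eta_n^{(2)}$ (Proposition \ref{prop exist qgeod hyp}); the standard comparison $d(w,\gamma_n)=(\eta_n^{(1)}\mid\eta_n^{(2)})_w+O(\delta,C)$, valid in almost-geodesic hyperbolic spaces by Theorem \ref{thm isom embed geod hyp space}, gives $d(o,\gamma_n)=L_n+O(1)$ and $d(Z_n o,\gamma_n)\le D+O(1)$. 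Letting $w_n$ be a point of $\gamma_n$ nearest $Z_n o$, a short hyperbolicity estimate gives both $(Z_n o\mid w_n)_o\to\infty$ (the two points lie at distance $\ge L_n-O(1)$ from $o$ but within $O(1)$ of each other) and $(w_n\mid\eta_n^{(1)})_o\to\infty$ (the subpath of $\gamma_n$ from $w_n$ to $\eta_n^{(1)}$ stays at distance $\ge L_n-O(1)$ from $o$). Hence $(Z_n o\mid\eta_n^{(1)})_o\to\infty$, and combining with $(\eta_n^{(1)}\mid\lambda)_o\to\infty$ through the hyperbolic inequality gives $(Z_n o\mid\lambda)_o\to\infty$, i.e. $Z_n o\to\lambda$ in $\XG$. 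Since this holds for $\mathbb{P}$-a.e. $\omega$ and Gromov products change by a bounded amount under a change of basepoint, the conclusion holds for every $o\in X$.

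The hard part will be carrying these geometric inputs out in the right generality: $X$ is not proper, so one cannot extract a limit of $(Z_n o)$ in a compactification and must route everything through the measures ${Z_n}_\ast\nu$; and $X$ is only almost geodesic, so the existence of quasigeodesics between boundary points, the comparison $d(w,\gamma)\approx(\cdot\mid\cdot)_w$, and quasigeodesic stability (Corollary \ref{cor stab qgeod hyp}) must be invoked through the isometric embedding into a geodesic hyperbolic space. A minor but necessary point is the measurability and regularity ensuring that $(\nu\otimes\nu)$ of the diagonal is zero and that the diagonal extraction is legitimate, which come from non-atomicity of $\nu$ and second countability of $\bdg X$.
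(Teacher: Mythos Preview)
Your proposal is correct and follows essentially the same route as the paper: both arguments exploit non-atomicity of $\nu$ to find, for each large $n$, two boundary points $\eta_n^{(1)},\eta_n^{(2)}$ close to $\lambda$ whose Gromov product at $Z_no$ is uniformly bounded, and then use a $(1,C)$-quasigeodesic between them to trap $Z_no$ near $\lambda$. The only packaging difference is that the paper phrases this via shadows (fixing two disjoint shadows $U_1,U_2$ of positive $\nu$-measure, pulling back a shadow around $\lambda$, and invoking weak convexity of shadows from Corollary~\ref{cor conv shadows}), whereas you work directly with Gromov products and the comparison $d(w,\gamma)\approx(\xi\mid\eta)_w$; these are equivalent hyperbolic tools and neither argument is materially simpler than the other.
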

	
	\begin{proof}
		The action is non-elementary, hence the stationary measure $\nu$ is non-atomic by Lemma \ref{lem non atomic}. As the support of $\nu$ is a non-empty closed subset of $ \bdg X $, there exists $\lambda_1, \lambda_2 \in \bdg X$ a pair of distinct points in the support of $\nu $. Take shadows $S_1$ and $S_2$ such that $\lambda_i $ is contained in the interior of the closure $U_i = \overline{S_i} \subseteq \bdg X$. Because $\lambda_1, \lambda_2$ are in the support of $\nu$, $\nu(U_i ) >0$. Moreover, we can choose such shadows so that $U_i $ are disjoint. 
		
		Set $\varepsilon := \min \{\nu(U_1 ), \nu(U_2 )\}$. Let $S = S_o (x, R)$ be a shadow such that $V:= \overline{S}$ contains $\lambda$ in its interior. Then by convergence of the measures $Z_n \nu \rightarrow \delta_\lambda$, there exists $n_0$ such that for any $n \geq n_0 $, $Z_n \nu (V) \geq 1 - \varepsilon/2$. 
		
		In particular, for any $n \geq n_0$, ${Z_n}^{-1}V $ intersects both $U_1$ and $U_2$. As a consequence, there exists $\xi_1 \in U_1$, $\xi_2 \in U_2$ such that for all $n \geq n_0 $, $Z_n \xi_1$ and $Z_n \xi_ 2$ belong to $V$.
		
		As $X$ is $\alpha$-almost geodesic, Proposition \ref{prop exist qgeod hyp} implies that there exists a $(1,C)$-quasigeodesic line from $\xi_1$ to $\xi_2$, where $C = C(\delta, \alpha)$, which we call $\gamma$. By property of the Gromov product, there is a constant $C = C(\delta, \alpha)$ such that 
		\begin{eqnarray}\label{eq dist shadow}
			d(o, \gamma) = (\xi_1, \xi_2)_o + C(\delta, \alpha) =: A
		\end{eqnarray} Now by weak convexity of shadow \ref{cor conv shadows}, there is a constant $C'(\delta, \alpha)$ such that any $(1, C)$ quasigeodesic between points of $S$ is contained in the slightly larger shadow $S_o (x, R + C'(\alpha, \delta))$. Combining this fact with \eqref{eq dist shadow} shows that $Z_n o $ is contained in the larger shadow $S_o (x, R+A+ C'(\delta, \alpha))$. As this holds for every shadow $S$, we have the convergence. 
	\end{proof}

	We have proved the following theorem. 
	\begin{ethm}\label{thm cv rw hyp}
		Let $G$ be any discrete countable group and $G \curvearrowright (X,d)$ be a non-elementary action on a $\alpha$-almost geodesic, Gromov-hyperbolic space. Let $\mu$ be an admissible measure on $G$. Then for any basepoint $o \in X$, the random walk $(Z_n o)$ generated by $\mu$ converges almost surely to a boundary point $\psi_+(b) \in \bdg X$. Moreover, the hitting measure is the unique stationary measure, and is given by 
		\begin{eqnarray}
			\nu = \int_{B_+} \delta_{\psi_+(b)} d\nu_+(b) \nonumber. 
		\end{eqnarray}
	\end{ethm}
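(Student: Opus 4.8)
The plan is to assemble the ingredients developed throughout this section, most of which have already been proven. First I would apply Proposition \ref{prop existence stationnaire compact}: since $\Xhc$ is compact metrizable and $G$ acts on it by homeomorphisms, there is a $\mu$-stationary measure $\nu \in \prob(\Xhc)$ — crucially this uses only compactness, so the lack of properness of $(X,d)$ is no obstruction. Feeding this stationary measure into Theorem \ref{thm mesures limites} produces the measurable family $\omega \mapsto \tilde{\psi}(\omega) \in \prob(\Xhc)$ with $Z_n(\omega)_\ast \nu \to \tilde{\psi}(\omega)$ weakly-$\ast$ and the equivariance relation ${\omega_1}_\ast \tilde{\psi}(S\omega) = \tilde{\psi}(\omega)$, which descends to a measurable $G$-map $\psi_+ : (B_+,\nu_+) \to \prob(\Xhc)$ (this is exactly Proposition \ref{prop mesures limites hyp}).

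Next I would localize the limit measures. The Corollary following Proposition \ref{prop no gmap bdd} states $\Map_G(B_+, \prob(\Xhbc)) = \emptyset$; since the decomposition $\Xhc = \Xhbc \sqcup \Xhuc$ is measurable and $\iso(X)$-equivariant, it follows that $\tilde{\psi}(\omega)$ is $\mathbb{P}$-almost surely carried by $\Xhuc$. Composing with the continuous $\iso(X)$-equivariant map $\pi : \Xhuc \to \bdg X$ then yields a measurable $G$-map $B_+ \to \prob(\bdg X)$, hence (by Proposition \ref{prop application de bord}, or directly) a $\mu$-stationary measure on $\bdg X$. Lemma \ref{lem image dirac measure}, whose proof invokes the boundary pair $(B_-,B_+)$, the center-of-ideal-triangle map of Lemma \ref{lem gmap bdg3 X bdd}, and the non-existence result Proposition \ref{prop no gmap bdd}, shows this map has essential values in the Dirac measures, giving $\phi_+ : B_+ \to \bdg X$. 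Theorem \ref{thm uniq bdry map hyp} then guarantees $\phi_+$ is essentially unique and that $\Map_G(B_+,\prob(\bdg X)) = \{\delta \circ \phi_+\}$; combined with the correspondence between $\mu$-stationary measures on $\bdg X$ and $G$-equivariant maps $B_+ \to \prob(\bdg X)$ from Theorem \ref{thm bdry map = stat measure}, this forces $\nu = \int_{B_+} \delta_{\phi_+(b)}\, d\nu_+(b)$ to be the \emph{unique} $\mu$-stationary measure on $\bdg X$.

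Finally I would deduce the almost sure convergence $Z_n o \to \phi_+(b)$ in $\XG$. By Corollary \ref{cor lim measures hyp}, $Z_n(\omega)_\ast \nu \to \delta_{\phi_+(b)}$ with $b = \bnd_+(\omega)$, for $\nu$ now viewed on $\bdg X$. The last Proposition of the section converts this into convergence of the orbit: using non-atomicity of $\nu$ (Lemma \ref{lem non atomic}, valid since the action is non-elementary) pick $\lambda_1 \neq \lambda_2$ in $\supp(\nu)$ sitting inside disjoint shadows $U_1, U_2$ of positive $\nu$-mass; for $n$ large a fraction $\geq 1 - \varepsilon/2$ of the mass of $Z_n\nu$ concentrates in a shadow $V$ around $\phi_+(b)$, so $Z_n^{-1}V$ meets both $U_1$ and $U_2$, producing $\xi_1 \in U_1, \xi_2 \in U_2$ with $Z_n\xi_1, Z_n\xi_2 \in V$; a $(1,C)$-quasigeodesic from $\xi_1$ to $\xi_2$ passes within bounded distance of $o$ by the Gromov product estimate, and weak convexity of shadows (Corollary \ref{cor conv shadows}) pins $Z_n o$ into a uniformly enlarged shadow — since the shadow was arbitrary, $Z_n o \to \phi_+(b)$.

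The only places requiring genuine care — as opposed to bookkeeping — are the two transitions across compactifications: checking that the limit horofunctions genuinely escape to $\Xhuc$ (so that $\pi$ and hence $\phi_+$ are defined almost everywhere), and verifying that the shadow geometry (depth, weak convexity, the Gromov product estimate $d(o,\gamma) = (\xi_1|\xi_2)_o + O(\delta,\alpha)$) survives the passage from genuinely geodesic spaces to the $\alpha$-almost geodesic setting; the latter is handled uniformly via the isometric embedding of Theorem \ref{thm isom embed geod hyp space} into a geodesic $\delta$-hyperbolic space. Everything else is a direct citation of the ergodic-theoretic machinery established above.
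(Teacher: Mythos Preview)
Your proposal is correct and follows essentially the same route as the paper: the theorem is stated as a summary of the section, and the paper's argument is precisely the sequence you outline — existence of a stationary measure on the horocompactification, passage to $\Xhuc$ via the non-existence of $G$-maps to $\prob(\bdd(X))$, reduction to Dirac measures through Lemma \ref{lem image dirac measure}, uniqueness via Theorem \ref{thm uniq bdry map hyp}, and finally the shadow-geometry argument converting weak convergence of measures into pointwise convergence of the orbit. Your identification of the two delicate points (escape to $\Xhuc$ and the survival of shadow estimates in the almost-geodesic setting) matches where the paper takes care as well.
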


	We end this section by proving that almost surely, there exists a subsequence $n_k $ such that $(Z_{n_k}^{-1} (\omega)o)_k $ converges to a point of the Gromov boundary $\bdg X$. We will need this fact in Section \ref{section uniq stat cat}.

	First, recall that the measure $\mui$ is defined by $\mui = \iota_\ast \mu$, where $\iota (g) = g^{-1}$. Define $\mui_n = \mui^{\ast n} $ as the $n$-th convolution product of $\mui$ with itself. Let $o \in (X, d_L)$, and consider the probability measure $\delta_o  \in \prob (\overline{X_L})$ on the horocompactification $\overline{X_L}$. 
	For all $n \geq 1$, define $\tilde{\mu}_n = \mui_n \ast \delta_o $, which is a probability measure on $\overline{X_L}$. Consider the Cesàro averages 
	\begin{eqnarray}
		\overline{\mu}_n := \frac{1}{n} (\tilde{\mu}_1 + \dots + \tilde{\mu}_n) \nonumber. 
	\end{eqnarray}
	As the space $\prob(\Xhc)$ is weakly-$\ast$ compact, there exists a subsequence $n_k$ such  that $\overline{\mu}_{n_k}$ converges to a measure $\nui$, and this measure is $\mui$-stationary by construction, \cite[Lemma 4.3]{maher_tiozzo18}. Due to Theorem \ref{thm cv rw hyp}, the measure $\nui $ is the unique $\mui$-stationary measure and 
	\begin{eqnarray}
		\nui = \int_{\Omega} \delta_{\psi_-(\omega)} d\check{P}(\omega)\nonumber
	\end{eqnarray}
	By Theorem \ref{thm uniq bdry map hyp}, $\check{\nu}$ charges only $\Xhuc$. 
	
	The following is known as the Portmanteau Lemma, and is a classical result in measure theory.
	
	\begin{elem}\label{lem portmanteau}
		Let $Y$ be a metric space, $P_n$ a sequence of probability measures on $Y$, and $P$ a probability measure on $Y$. Then the following are equivalent: 
		\begin{itemize}
			\item $P_n \rightarrow_n P$  in the weak$-\ast$ topology; 
			\item $\underset{n \rightarrow \infty}{\liminf} \ P_n(O) \geq P(O)$ for every open set $O \subseteq Y$. 
			\item $\underset{n \rightarrow \infty}{\limsup} \ P_n(F) \leq P(F)$ for every closed set $F \subseteq Y$.
		\end{itemize}
	\end{elem}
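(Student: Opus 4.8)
The plan is to prove the three equivalences in the standard textbook way, passing through bounded continuous test functions. First I would note that the statement about open sets and the statement about closed sets are purely formal restatements of one another: if $F \subseteq Y$ is closed then $O := Y \setminus F$ is open, and since $P_n(F) = 1 - P_n(O)$ and $P(F) = 1 - P(O)$, the inequality $\liminf_n P_n(O) \geq P(O)$ for all open $O$ is equivalent to $\limsup_n P_n(F) \leq P(F)$ for all closed $F$. It therefore suffices to establish two implications: that weak-$\ast$ convergence implies the $\liminf$ inequality on open sets, and that the $\liminf$ inequality on open sets (equivalently, together with its closed-set counterpart) implies weak-$\ast$ convergence.

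For the first implication, given an open set $O \subseteq Y$, I would approximate $\mathds{1}_O$ from below by the bounded continuous functions $f_k(y) := \min\{1,\, k\, d(y, Y \setminus O)\}$ (with the convention $d(y,\emptyset) = +\infty$), which are $k$-Lipschitz, satisfy $0 \leq f_k \leq \mathds{1}_O$, and increase pointwise to $\mathds{1}_O$ as $k \to \infty$. Then for each fixed $k$ one has $\liminf_n P_n(O) \geq \liminf_n \int_Y f_k \, dP_n = \int_Y f_k \, dP$, using weak-$\ast$ convergence in the equality; letting $k \to \infty$ and invoking the monotone convergence theorem yields $\liminf_n P_n(O) \geq P(O)$.

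For the converse, I would take a bounded continuous $f : Y \to \mathbb{R}$, perform the affine normalization $f \mapsto (f - \inf f)/(\sup f - \inf f)$ so that $0 \leq f \leq 1$ (this transforms every integral in the same controlled way), and use the layer-cake identity $\int_Y f \, d\mu = \int_0^1 \mu(\{f > t\}) \, dt$, valid for any probability measure $\mu$. Since $\{f > t\}$ is open and $\{f \geq t\}$ is closed for every $t$, Fatou's lemma gives $\liminf_n \int_Y f \, dP_n \geq \int_0^1 \liminf_n P_n(\{f > t\}) \, dt \geq \int_0^1 P(\{f > t\}) \, dt = \int_Y f \, dP$, while the reverse Fatou lemma — legitimate because the integrands are bounded by $1$ — applied to the closed sets $\{f \geq t\}$ gives $\limsup_n \int_Y f \, dP_n \leq \int_Y f \, dP$. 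Combining the two inequalities yields $\int_Y f \, dP_n \to \int_Y f \, dP$, which is precisely weak-$\ast$ convergence.

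I do not expect a genuine obstacle here, as this is a classical argument. The only points deserving a little care are the interchange of $\liminf$ (resp. $\limsup$) with the integral over $t$ in the converse direction, which is exactly what Fatou's lemma and its bounded reverse version provide, and the harmless observation that $\{f > t\}$ and $\{f \geq t\}$ differ for at most countably many $t$, so one may freely pass between them inside the $dt$-integral.
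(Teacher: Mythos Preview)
Your proof is correct and is the standard textbook argument. The paper itself does not prove this lemma at all: it simply introduces it as ``a classical result in measure theory'' and states it without proof, so there is no approach in the paper to compare against.
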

	
	We are now ready to prove the following. 
	\begin{eprop}\label{prop subseq inverse}
		For $\mathbb{P}$-almost every sample path $\omega = (\omega_i) \in \Omega$, there exists a subsequence $n_k $ such that $(Z_{n_k}^{-1} (\omega)o)_k $ converges to a point of the Gromov boundary $\bdg X_L$. 
	\end{eprop}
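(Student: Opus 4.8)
The plan is to exploit the fact that the measure $\check{\nu}$ on $\Xhc$ is $\check{\mu}$-stationary and charges only the unbounded horofunctions $\Xhuc$, together with the shadow machinery of Maher--Tiozzo recalled in the previous section. First I would recall that, by Theorem \ref{thm cv rw hyp} applied to $\check{\mu}$, the reverse random walk $(\check{Z}_n(\omega)o)$ converges $\check{P}$-almost surely to a point of $\bdg X_L$, and that the hitting measure is exactly $\check{\nu}$, which gives full mass to $\Xhuc$ via the map $\pi : \Xhuc \to \bdg X_L$. The point of the proposition is to transfer this statement from the law of the \emph{reverse} walk $\check{Z}_n$ to the \emph{inverses} $Z_n^{-1}$ of the forward walk. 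The key elementary observation is that for fixed $n$, the distribution of $Z_n^{-1} = \omega_n^{-1}\cdots\omega_1^{-1}$ under $\mathbb{P}$ coincides with the distribution of $\check{Z}_n$ under $\check{P}$; indeed each $\omega_i^{-1}$ is distributed according to $\check{\mu}$ and the word is read in reverse order, which is the same as a fresh $\check{\mu}$-random word. So each single random variable $Z_n^{-1}(\omega)o$ has law $\check{\mu}_n \ast \delta_o = \tilde{\mu}_n$.

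The main obstacle is that this distributional identity holds for each fixed $n$ but not jointly: $(Z_n^{-1}o)_n$ is \emph{not} the trajectory of a Markov chain, so one cannot directly invoke almost sure convergence of $\check{Z}_n$. This is exactly why the statement only claims the existence of a \emph{subsequence}. To get around this, I would use the Cesàro averages $\overline{\mu}_n = \frac1n(\tilde{\mu}_1 + \cdots + \tilde{\mu}_n)$ and the subsequence $n_k$ along which $\overline{\mu}_{n_k} \to \check{\nu}$ weakly-$\ast$ in $\prob(\Xhc)$, as set up just before the statement. Fix a descending shadow sequence $\mathcal{S} = (\mathcal{S}_M)_M$ as in the paragraph preceding Lemma \ref{lem shadow subsequence}, so that $\bigcap_M \Sigma_M = \Xhuc$ and each $\Sigma_M$ is open. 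Since $\check{\nu}(\Xhuc) = 1$ and $\Sigma_M$ is open, the Portmanteau Lemma \ref{lem portmanteau} gives $\liminf_k \overline{\mu}_{n_k}(\Sigma_M) \geq \check{\nu}(\Sigma_M) = 1$, hence $\overline{\mu}_{n_k}(\Sigma_M) \to 1$ for every $M$. Unwinding the definition of the Cesàro average, $\overline{\mu}_{n_k}(\Sigma_M) = \frac{1}{n_k}\sum_{j=1}^{n_k}\mathbb{P}(Z_j^{-1}(\omega)o \in \Sigma_M)$, so a positive fraction of the indices $j \leq n_k$ satisfy $\mathbb{P}(Z_j^{-1}o \in \Sigma_M)$ close to $1$.

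From here I would run a Borel--Cantelli / diagonal argument to produce, for $\mathbb{P}$-almost every $\omega$, an increasing sequence of indices $m_k = m_k(\omega)$ with $Z_{m_k}^{-1}(\omega)o \in \Sigma_k$ (the intersection being over the first $k$ cylinder levels). Concretely, for each $M$ choose a sparse enough block of indices so that the events $\{Z_j^{-1}o \in \Sigma_M\}$, while not independent, have probabilities summing to infinity along a suitable sub-collection; or more cleanly, since $\overline{\mu}_{n_k}(\Sigma_M)\to 1$ one can for each $M$ find arbitrarily large $j$ with $\mathbb{P}(Z_j^{-1}o \in \Sigma_M) > 1 - 2^{-M}$, and then a Borel--Cantelli argument over such $j$'s (one per level $M$) shows that almost surely $Z_j^{-1}(\omega)o$ lands in $\Sigma_M$ for infinitely many $j$, for every $M$ simultaneously. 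Extracting a diagonal subsequence $n_k$ along which $Z_{n_k}^{-1}(\omega)o \in \Sigma_k$, the sequence of horofunctions $b^o_{Z_{n_k}^{-1}(\omega)o}$ eventually enters every $\Sigma_M$, so by Lemma \ref{lem shadow subsequence} it has a further subsequence converging to a horofunction $h \in \Xhuc$. Applying $\pi : \Xhuc \to \bdg X_L$ and Lemma \ref{lem horobdry gromov bdry}, this means $(Z_{n_k}^{-1}(\omega)o)$ (after passing to the sub-subsequence) converges to the point $\pi(h) \in \bdg X_L$, which is what we wanted. The one technical care needed is measurability of the relevant events in $\omega$ and the fact that the shadow collection can be chosen countable (Lemma \cite[Lemma 4.8]{maher_tiozzo18}), so the diagonal extraction is legitimate for a full-measure set of $\omega$.
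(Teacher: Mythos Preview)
Your approach is correct and uses the same ingredients as the paper: the marginal identity $\mathbb{P}(b^o_{Z_n^{-1}o}\in A)=\tilde\mu_n(A)$, the descending shadow machinery, Portmanteau, and Lemma~\ref{lem shadow subsequence}. The difference is only in the last step. You apply Portmanteau to the \emph{open} sets $\Sigma_M$, deduce that $\limsup_j \tilde\mu_j(\Sigma_M)=1$, pick indices $j_1<j_2<\cdots$ with $\tilde\mu_{j_M}(\Sigma_M)>1-2^{-M}$, and run first Borel--Cantelli to get almost surely $Z_{j_M}^{-1}o\in\Sigma_M$ for all large $M$; the resulting sequence then meets every $\Sigma_{M'}$ and Lemma~\ref{lem shadow subsequence} finishes. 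The paper instead bounds the \emph{bad} set $Z=\{\omega: (b^o_{Z_n^{-1}(\omega)o})_n \text{ has no limit point in }\Xhuc\}$ directly: by the contrapositive of Lemma~\ref{lem shadow subsequence} one has $Z\subseteq\bigcup_M\bigcap_n\{b^o_{Z_n^{-1}o}\notin\Sigma_M\}$, and since these events are monotone in $M$ this gives $\mathbb{P}(Z)\le\sup_M\inf_n\tilde\mu_n(Y_M)$ with $Y_M=\Xhc\setminus\Sigma_M$ closed; Portmanteau on closed sets then yields $\inf_n\tilde\mu_n(Y_M)\le\limsup_k\overline\mu_{n_k}(Y_M)\le\check\nu(Y_M)\le\varepsilon$, using \cite[Lemma 4.10]{maher_tiozzo18} to choose the shadow sequence. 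This avoids the Borel--Cantelli/diagonal extraction entirely. One small point: your version needs $\check\nu(\Sigma_M)=1$, i.e.\ $\Xhuc\subseteq\Sigma_M$, which requires taking each $\mathcal S_M$ to be a countable cover as in \cite[Lemma 4.8]{maher_tiozzo18} rather than an arbitrary finite collection.
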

	
	\begin{proof}
		Fix $\varepsilon >0$. By \cite[Lemma 4.10]{maher_tiozzo18}, there exists a finite descending shadow $\mathcal{S} = (\mathcal{S}_M)_M$ such that for each $M$, $\check{\nu}(\Sigma_M) \geq 1 - \varepsilon$. Let $Z$ be the set 
		\begin{eqnarray}
			Z := \{\omega \in \Omega \; | \; b_{Z_n^{-1}(\omega)o} \text{ does not have limit points in }\Xhuc\} \nonumber. 
		\end{eqnarray}
		By Lemma \ref{lem shadow subsequence}, if $(\omega_i)$ is such that $b_{Z_n^{-1}(\omega)} $ does not have limit points in $\Xhuc$, then there exists $M\geq 0$ such that $b_{Z_n^{-1}(\omega)o} $ does not belong to $\Sigma_M$ for any $M$. As a consequence, we have 
		\begin{eqnarray}
			Z \subseteq \underset{M}{\bigcup}\,  \underset{n}{\bigcap}\,  \{ \omega \; | \; b_{Z_n^{-1}(\omega ) o} \notin \Sigma_M\}\nonumber. 
		\end{eqnarray}
		By definition, for any  measurable set $A \subseteq \Xhuc$, 
		\begin{eqnarray}
			\mathbb{P}(b_{Z_n^{-1}o} \in A) &=& \mathbb{P}( (\omega_i)\in \Omega \; | \; b_{\omega_n^{-1}\dots \omega_n^{-1} o} \in A) \nonumber\\
			& = & \tilde{\mu}_n (A). \nonumber
		\end{eqnarray}
		As a consequence, if we set $Y_M = \Xhuc \setminus \Sigma_M$, $\mathbb{P}(Z) \leq \sup\inf \tilde{\mu}_n (Y_M)$. By Definition of the Cesàro averages, $ \inf_n \overline{\mu}_n (Y_M) \geq \inf_n \tilde{\mu}_n (Y_M)$, hence 
		\begin{eqnarray}
			\mathbb{P}(Z) \leq \sup\inf \tilde{\mu}_n (Y_M). \nonumber
		\end{eqnarray}
		There exists a subsequence $n_k$ such that $\overline{\mu}_{n_k}$ converges to $\check {\nu}$ for the weak-$\ast$ topology. By the Portmanteau Lemma \ref{lem portmanteau}, and since $Y_M $ is closed, we obtain for every $M$:
		\begin{eqnarray}
			\limsup_{k \rightarrow \infty} \overline{\mu}_{n_k} (Y_M) \leq \check{\nu}(Y_M) \nonumber.
		\end{eqnarray}
		Therefore, $\mathbb{P}(Z) \leq \check{\nu}(Y_M)$. But we chose $\mathcal{S}$ such that for each $M$, $\check{\nu}(\Sigma_M) \geq 1 - \varepsilon$. As a consequence $\mathbb{P}(Z) \leq \varepsilon $. As $\varepsilon$ was arbitrary, the result follows from Lemma \ref{lem horobdry gromov bdry}. 
	\end{proof}

	\section{Random walks in $\cat$(0) spaces}\label{section rw cat}
	
	Let $G$ be a discrete countable group, and let $\mu \in \prob(G)$ be an admissible probability measure on $G$. Let $(X,d)$ be a complete and separable $\cat$(0) space, and let $G \curvearrowright X$ be an action by isometries. We assume furthermore that $G$ admits a pair of elements that act on $X$ as contracting isometries with disjoint fixed points, in the sense of Definition \ref{def isom contract}. In particular, the action $G \curvearrowright X$ is non-elementary in the usual sense: there is no $G$-invariant flat in $X$. 
	
	Recall that thanks to the curtain models constructed by Petyt, Spriano and Zalloum in \cite{petyt_spriano_zalloum22}, there exists a family of hyperbolic spaces $X_L = (X,d_L)$ on which $G$ acts by isometries, see Theorem \ref{theorem hyperbolic models intro}. Since $G$ acts non-elementarily on $X$ with a contracting element, Theorem \ref{thm contract loxodromic} states that there exists $L$ such that $G$ acts on $X_L$ with two independent loxodromic isometries. For the rest of the section, fix such a $L > 0$. We recall that Theorem \ref{theorem hyperbolic models intro} states that $(X,d_L)$ is $\delta$-Gromov hyperbolic and $\alpha$-almost geodesic, for constants $\delta, \alpha$ depending only on $L$. To avoid any confusion, in what follows, a geodesic segment $[x,y]$ is always assumed to be in the $\cat$(0) space $(X,d)$, and $B(o, R)$ denotes the open metric ball of center $o$ and radius $R  >0$ for the $\cat$(0) metric $d$. 
	
	\subsection{Boundaries of the hyperbolic models}
	
	In this section, we prove that the random walk $(Z_n o )$ converges almost surely to the visual boundary $\bd X$. This result will follow from Theorem \ref{thm cv rw hyp}, as soon as we link the Gromov boundary $\bdg X_L$ of $X_L$ with the visual boundary $\bd X$. We are led to extend Theorem \ref{equivariant embedding of boundaries} to the case where $X$ is just a (possibly non-proper) complete $\cat$(0) space. We refer to Section \ref{section modèle hyp} for the vocabulary about the hyperbolic models $(X, d_L)$. 
	
	Let us introduce some definitions. 
	\begin{eDef}
		We say that a geodesic ray $\gamma : [0, \infty) \rightarrow X$ \textit{crosses} a curtain $h $ if there exists $t_0 \in [0, \infty)$ such that $h$ \emph{separates} $\gamma(0) $ from $\gamma ([t_0, \infty))$. Alternatively, we may say that $h$ separates $\gamma(0)$ from $\gamma(\infty)$. Similarly, we say that a geodesic line $\gamma: \mathbb{R} \rightarrow X$ crosses a hyperplane $h$ if there exists $t_1, t_2 \in \mathbb{R}$ such that $h$ separates $\gamma ((-\infty, t_1])$ from $\gamma ([t_2, \infty))$. We say that $\gamma $ crosses a chain $c = \{h_i\}$ if it crosses each individual curtain $h_i$. 
	\end{eDef}
	
	Recall that the space $B_L$ is defined to be the set of all geodesic rays $\gamma : [0, \infty) \rightarrow X$ emanating from $o \in X$ that cross an infinite $L$-chain. We can define a more useful topology on $B_L$ with the following basis of open sets. For a geodesic ray $\gamma$ emanating from $o $ such that $\gamma(\infty) = \xi \in \bd X$, and $h $ a curtain dual to $\gamma$, define 
	\begin{eqnarray}
		U_h(\xi) := \{ \eta \in \bd X \; | \; \gamma_o^\eta \text{ crosses $h$}\}, \nonumber
	\end{eqnarray}
	where $\gamma_o^\eta$ is the unique geodesic ray based at $o$, in the class of $\eta$. We say that $U \subseteq X$ is open for the \textit{curtain topology} if for every $\xi \in U$, there exists a curtain $h $ dual to $\gamma_o^\xi $ such that $U_h(\xi) \subseteq U$. The following result links this topology with the usual visual topology on $\bd X$. 
	
	\begin{ethm}[{\cite[Theorem 8.8]{petyt_spriano_zalloum22}}]\label{thm curtain topology}
		The identity map 
		\begin{eqnarray}
			(\bd X, \mathcal{T}_{\text{cone}}) \rightarrow (\bd X,\mathcal{T}_{\text{curtain}})\
		\end{eqnarray} 
		is continuous, and the topologies are the same on any $B_L$.
	\end{ethm}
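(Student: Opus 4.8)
The plan is to compare $\mathcal T_{\mathrm{cone}}$ and $\mathcal T_{\mathrm{curtain}}$ through their neighbourhood filters, using that $\{U(o,\xi,r,\varepsilon)\}_{r,\varepsilon>0}$ is a basis of cone-neighbourhoods of $\xi$ and $\{U_h(\xi):h\text{ a curtain dual to }\gamma_o^\xi\}$ is a basis of curtain-neighbourhoods of $\xi$. Continuity of $\mathrm{id}:(\bd X,\mathcal T_{\mathrm{cone}})\to(\bd X,\mathcal T_{\mathrm{curtain}})$ amounts to saying that for every $\xi$, every curtain-neighbourhood of $\xi$ contains a cone-neighbourhood of $\xi$. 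First I would fix $\xi$ and a curtain $h=h_{\gamma,t}$ dual to $\gamma:=\gamma_o^\xi$; since $\pi_\gamma(o)=\gamma(0)$ has parameter $0<t+\tfrac12$, the basepoint $o$ never lies in $h^{+}$. I claim $U(o,\xi,t+1,\varepsilon)\subseteq U_h(\xi)$ as soon as $\varepsilon<\tfrac12$. Indeed, let $\eta$ satisfy $d(\gamma_o^\eta(t+1),\gamma(t+1))<\varepsilon$. For every $s\geq t+1$, the triangle inequality together with the fact that $\pi_\gamma$ is $1$-Lipschitz (Proposition \ref{prop projection cat}) gives $d(\gamma_o^\eta(s),\gamma)\leq d(\gamma_o^\eta(s),\gamma_o^\eta(t+1))+d(\gamma_o^\eta(t+1),\gamma(t+1))<(s-t-1)+\varepsilon$, so the arclength parameter of $\pi_\gamma(\gamma_o^\eta(s))$ is at least $d(o,\gamma_o^\eta(s))-d(\gamma_o^\eta(s),\gamma)>s-(s-t-1+\varepsilon)=t+1-\varepsilon>t+\tfrac12$. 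Hence $\gamma_o^\eta([t+1,\infty))\subseteq h^{+}$, so $h$ separates $\gamma_o^\eta(0)$ from $\gamma_o^\eta([t+1,\infty))$, i.e. $\gamma_o^\eta$ crosses $h$ and $\eta\in U_h(\xi)$. This settles continuity, and shows $\mathcal T_{\mathrm{cone}}$ is finer than $\mathcal T_{\mathrm{curtain}}$ everywhere.

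For the equality of the two topologies on $B_L$ it remains to prove the reverse inclusion of filters at points of $B_L$: for $\xi\in B_L$, every cone-neighbourhood of $\xi$ contains a curtain-neighbourhood of $\xi$. Here the input is that, by the bottleneck geometry of \cite{petyt_spriano_zalloum22} (Lemma \ref{lem bottleneck intro} and the star-convexity of curtains, Proposition \ref{star convexity}), for $\xi\in B_L$ one may fix an infinite $L$-chain $\{h_i\}_{i\geq1}$ that is \emph{dual to} $\gamma=\gamma_o^\xi$ and crossed by $\gamma$; writing $s_i$ for the centre of the pole of $h_i$, the $s_i$ are strictly increasing with gaps at least $1$, so $s_i\to\infty$. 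Given a cone-neighbourhood $U(o,\xi,r,\varepsilon)$, pick $N$ with $s_N>(4L+3)r/\varepsilon$ and $s_{N-1}>\tfrac12$; I claim $U_{h_{N+1}}(\xi)\subseteq U(o,\xi,r,\varepsilon)$. If $\eta\in U_{h_{N+1}}(\xi)$, then $\gamma_o^\eta$ has a tail in $h_{N+1}^{+}$; since $h_{N-1},h_N,h_{N+1}$ are dual to $\gamma$ and nested, this tail also lies in $h_N^{+}$ and $h_{N-1}^{+}$, and as $o\notin h_i^{+}$ the ray $\gamma_o^\eta$ crosses each of $h_{N-1},h_N,h_{N+1}$. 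Now apply the bottleneck lemma \ref{lem bottleneck intro} to the $L$-chain $\{h_{N-1},h_N,h_{N+1}\}$, dual to $[o,\gamma(T)]$ for $T$ large, with $A=h_{N-1}^{-}\ni o$ and $B=h_{N+1}^{+}$ containing points far along $\gamma$ and along $\gamma_o^\eta$: any $q\in h_N\cap\gamma_o^\eta$ satisfies $d(q,\pi_\gamma(q))\leq 2L+1$, and since $\pi_\gamma(q)$ lies in the pole of $h_N$ (diameter $1$, centred at $\gamma(s_N)$) one gets $d(\gamma_o^\eta(s_N),\gamma(s_N))\leq 4L+3$ after comparing arclengths. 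Finally $s\mapsto d(\gamma_o^\eta(s),\gamma_o^\xi(s))$ is convex (Proposition \ref{prop convexite cat}) and vanishes at $s=0$, whence $d(\gamma_o^\eta(r),\gamma_o^\xi(r))\leq\frac{r}{s_N}(4L+3)<\varepsilon$, i.e. $\eta\in U(o,\xi,r,\varepsilon)$. Combined with the first paragraph, the neighbourhood filters agree at every point of $B_L$, so the subspace topologies agree there; and $\iso(X)$-equivariance of the identification is immediate since $\iso(X)$ permutes curtains and geodesic rays.

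\textbf{Main obstacle.} The routine points (that the displayed families are genuine neighbourhood bases, the minor edge case where $o$ lies in the pole of $h$, and the precise convention for ``separates'' in \cite{petyt_spriano_zalloum22}) are harmless. The real difficulty is concentrated in the second step, in two places: extracting from the bare hypothesis ``$\gamma_o^\xi$ crosses some infinite $L$-chain'' an infinite $L$-chain that is actually dual to $\gamma_o^\xi$ with controlled separation constant — which is exactly what forces one to invoke the bottleneck machinery and star-convexity of curtains of \cite{petyt_spriano_zalloum22} — and then converting the crossing of one deep curtain into quantitative fellow-travelling, where the ``bottleneck estimate $+$ $\cat$(0) convexity'' mechanism above does the work. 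The first step, by contrast, is elementary and uses only that nearest-point projection onto a geodesic is $1$-Lipschitz.
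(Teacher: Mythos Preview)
The paper does not give its own proof of this statement: it is quoted verbatim as \cite[Theorem 8.8]{petyt_spriano_zalloum22} and used as a black box, so there is no in-paper argument to compare against. Your proof is correct and is in fact the natural one (and essentially the one in \cite{petyt_spriano_zalloum22}): the forward continuity uses only that $\pi_\gamma$ is $1$-Lipschitz, and the converse on $B_L$ is exactly the bottleneck-plus-convexity mechanism of Lemma~\ref{lem bottleneck intro} and Proposition~\ref{prop convexite cat}, as in the proof of Proposition~\ref{prop inverse map}.

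Two minor comments. In the first step you need $o\in h^{-}$, not merely $o\notin h^{+}$; this forces $t>\tfrac12$, but that is precisely the range in which $\xi\in U_h(\xi)$, so the restriction is harmless. In the second step you do not actually need an \emph{infinite} $L$-chain dual to $\gamma_o^\xi$: for a fixed $U(o,\xi,r,\varepsilon)$ it suffices to produce a single triple of $L$-separated curtains dual to $\gamma_o^\xi$ whose middle pole lies beyond $(4L+3)r/\varepsilon$, and this follows directly from Lemma~\ref{lem dual chain} applied to a long enough finite piece of the given $L$-chain and a long enough segment $[o,\gamma_o^\xi(T)]$. This sidesteps the compactness/diagonal issue you flag as the main obstacle.
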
 
	
	We will use the following Lemma a several times. 
	\begin{elem}[{\cite[Lemma 2.21]{petyt_spriano_zalloum22}}]\label{lem dual chain}
		Let $L, n \in \mathbb{N}$, and let $\{h_1, \dots, h_{(4L +10)n}\}$ be an $L$-chain separating $A$, $B \subseteq X$. Take $x \in A$, $y \in B$. Then $A$ and $B$ are separated by an $L$-chain of size $\geq n+1 $ dual to $[x,y]$. 
	\end{elem}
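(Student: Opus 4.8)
The plan is to build the desired chain out of curtains dual to $[x,y]$ whose poles are planted at a suitably sparse subsequence of the points at which $[x,y]$ crosses the $h_i$; keeping only one crossing per block of roughly $4L+10$ consecutive $h_i$'s is exactly the slack needed to absorb the fact that the $h_i$ are a priori dual to geodesics other than $[x,y]$.

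First I would record the basic geometry. After fixing orientations so that $h_i^-$ contains $h_1,\dots,h_{i-1}$ and $h_i^+$ contains $h_{i+1},\dots,h_m$ (here $m=(4L+10)n$), one has $h_1^-\cup h_1\subseteq h_i^-$ and $h_m^+\cup h_m\subseteq h_i^+$, so from $x\in A\subseteq h_1^-$ and $y\in B\subseteq h_m^+$ every $h_i$ separates $x$ from $y$. Since $\{h_i^-,h_i,h_i^+\}$ partitions $X$ into two open sets and a closed set, the path $[x,y]$ must meet $h_i$; set $s_i=\inf\{\,s:[x,y](s)\notin h_i^-\,\}$ and $p_i=[x,y](s_i)\in h_i$. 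The inclusions $h_j^-\subseteq h_{j+1}^-$ give $0<s_1\le s_2\le\cdots\le s_m<d(x,y)$, the strict inequalities at the ends coming from $x,y$ lying in the open sets $h_1^-$ and $h_m^+$.

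The technical heart is a spacing estimate: there is $c=c(L)\le 4L+10$ such that $s_{i+c}\ge s_i+2$ whenever $i+c\le m$, with comfortable room to spare. The tools are the Morse-type control of $L$-chains encoded in the bottleneck lemma (Lemma \ref{lem bottleneck intro}) and the remark following it — the crossing points of a fixed curtain with distinct geodesics to a common endpoint lie within $4L+3$ of each other — together with the thickness $d(h^-,h^+)=1$ of each curtain and the star-convexity of curtains (Proposition \ref{star convexity}). Morally, if more than $4L+10$ of the $h_i$ crossed $[x,y]$ inside an arc of length $<2$, one could extract a short sub-$L$-chain whose middle curtain $h$, dual to some geodesic $\gamma$, would be crossed by $[x,y]$ very close to $\pi_\gamma$ of that point, forcing a stack of $\ge 4L+10$ mutually $L$-separated thick curtains to fit into too small a region — a contradiction. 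Converting this heuristic into the precise constant is the step I expect to be the main obstacle: it is exactly where the coarse behaviour of the curtain models has to be made quantitative, and it is the bookkeeping here that produces the factor $4L+10$ in the statement.

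Granting the spacing estimate, I would choose indices $1=i_1<i_2<\cdots<i_{n+1}$ with consecutive gaps equal to $c$ (which fit inside $\{1,\dots,(4L+10)n\}$, leaving a short block of the $h_i$ in reserve at the top), put $t_k=s_{i_k}+\tfrac12$, and let $\hat h_k=h_{[x,y],t_k}$ be the curtain dual to $[x,y]$ with pole $[x,y]([s_{i_k},s_{i_k}+1])$. The estimate $s_{i_{k+1}}\ge s_{i_k}+2$ makes the poles disjoint and monotonically ordered along $[x,y]$, so $\hat h_k$ separates $\hat h_{k-1}$ from $\hat h_{k+1}$, i.e. $\{\hat h_1,\dots,\hat h_{n+1}\}$ is a chain dual to $[x,y]$; and $0<s_{i_1}$ together with $s_{i_{n+1}}+1<d(x,y)$ (using the reserved block) give that it separates $x$ from $y$. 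To upgrade this to "separates $A$ from $B$" and to "$L$-chain", I would compare half-spaces: the spacing estimate forces the geodesic underlying each $h_{i_k}$ to fellow-travel $[x,y]$ near $p_{i_k}$, which gives $A\subseteq h_a^-\subseteq\hat h_k^-$ and $B\subseteq h_b^+\subseteq\hat h_k^+$ for suitable input curtains $h_a,h_b$ with $a<i_k<b$, and which squeezes any chain all of whose members are transverse to both $\hat h_k$ and $\hat h_{k+1}$ into being transverse to two input curtains lying strictly between them; since those two input curtains belong to the $L$-chain $\{h_1,\dots,h_m\}$ and are therefore $L$-separated, such a chain has length at most $L$. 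This yields the required $L$-chain of size $n+1$ dual to $[x,y]$ separating $A$ from $B$.
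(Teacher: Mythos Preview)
The paper does not prove this lemma: it is quoted verbatim from \cite[Lemma 2.21]{petyt_spriano_zalloum22} and used as a black box, so there is no in-paper argument to compare against.

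Your outline has the right shape --- plant curtains dual to $[x,y]$ at a sparse subsequence of the crossing points, then argue they form an $L$-chain --- but there is a genuine gap beyond the spacing estimate you already flag. In your final paragraph you assert inclusions of the form $h_a^{-}\subseteq \hat h_k^{-}$ and, for the $L$-separation, that any curtain transverse to both $\hat h_k$ and $\hat h_{k+1}$ must be transverse to two of the original $h_i$ lying between them. Both statements compare half-spaces defined by nearest-point projections to \emph{different} geodesics (the $\gamma_i$ underlying the $h_i$ versus $[x,y]$), and neither follows from the fellow-travelling you invoke: the bottleneck lemma controls where $[x,y]$ meets $h_i$, not where arbitrary points of $h_a^{-}$ project to $[x,y]$. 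Since curtains and their half-spaces are not convex, a point in $h_a^{-}$ could in principle project to $[x,y]$ far past $p_a$, so the containment $h_a^{-}\subseteq \hat h_k^{-}$ needs a separate argument. This is precisely the step where the proof in \cite{petyt_spriano_zalloum22} does real work, and without it you have a chain of curtains dual to $[x,y]$ separating $x$ from $y$, but not yet an $L$-chain, and not yet one separating $A$ from $B$.
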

	
	As a consequence of Lemma \ref{lem bottleneck intro} and Lemma \ref{lem dual chain}, if two geodesic rays with the same starting point cross an infinite $L$-chain $c$, then they are asymptotic, and hence equal. 
	
	\begin{erem}\label{infinite dual chain}
		Since curtains are not convex, it is not obvious that any geodesic ray $\gamma$ meeting a given curtain $h$ must cross it ($\gamma$ could meet $h$ infinitely often). However, by \cite[Corollary 3.2]{petyt_spriano_zalloum22} if $\gamma$ is a geodesic ray that meets every element of an infinite $L$-chain $c = \{h_i\}_{i \in \mathbb{N}}$, then $\gamma$ must cross $c$: for every $i$, there exists $t_i \in [0, \infty)$ such that $h_i$ separates $\gamma(0) $ from $\gamma ([t_0, \infty))$.
	\end{erem}
	
	In the case of the contact graph associated to a $\cat$(0) cube complex $X$, we had the existence of an $\iso(X)$-equivariant embedding of the boundary of the contact graph into the Roller boundary $\partial_{\mathcal{R}}X$, see Theorem \ref{bord contact}. The following result is the analogue in the context of $\cat$(0) spaces. 
	
	\begin{ethm}\label{thm homeomorphism bords XL}
		Let $X$ be a complete $\cat$(0) space. Then the identity map $\iota_L : (X,d) \rightarrow (X,d_L) $ induces an $\iso(X)$-equivariant homeomorphism of the boundary $\partial_L : B_L \rightarrow \bdg X_L$. 
	\end{ethm}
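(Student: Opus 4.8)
The plan is to extend the proof of Theorem~\ref{equivariant embedding of boundaries} (the proper case, \cite[Theorem 8.1]{petyt_spriano_zalloum22}) by replacing every use of local compactness with the coarse geometry tools of Section~\ref{section prelim hyp}, in particular Proposition~\ref{prop exist qgeod hyp} (existence of $(1,C)$-quasigeodesics), Corollary~\ref{cor stab qgeod hyp} (stability of quasigeodesics) and the bottleneck Lemma~\ref{lem bottleneck intro}. First I would construct the map $\partial_L : B_L \to \bdg X_L$. Given $\xi \in B_L$, represented by a geodesic ray $\gamma = \gamma_o^\xi$ in $(X,d)$ crossing an infinite $L$-chain $c = \{h_i\}_{i\in\mathbb{N}}$, I claim $\gamma$ is an unbounded quasigeodesic for $d_L$: since the curtains $h_i$ are $L$-separated and $\gamma$ crosses all of them, the bottleneck lemma forces any $d_L$-path, hence any $d$-subsegment of $\gamma$, to pass within $2L+1$ of the poles, which yields $d_L(\gamma(s),\gamma(t)) \to \infty$ as $|s-t|\to\infty$; combined with $d_L \le \lceil d\rceil + 1$ this gives a $d_L$-quasigeodesic ray. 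Define $\partial_L(\xi)$ to be its endpoint in $\bdg X_L$. This is well-defined and injective: two geodesic rays from $o$ crossing the \emph{same} infinite $L$-chain are asymptotic in $(X,d)$ (Lemma~\ref{lem bottleneck intro} together with Lemma~\ref{lem dual chain}, as noted in the remark after Lemma~\ref{lem dual chain}), and conversely two $d_L$-quasigeodesic rays from $o$ with the same $\bdg X_L$-endpoint stay $d_L$-bounded distance apart, so they eventually fail to be separated by any $L$-chain, forcing the underlying $\cat$(0) rays to represent the same point of $B_L$.

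Next I would show $\partial_L$ is surjective onto $\bdg X_L$. Take $\eta \in \bdg X_L$ and a sequence $x_n \in X$ with $x_n \to \eta$ in $X_L$, so $(x_n | x_m)_o^{d_L} \to \infty$. For each $n$ consider the $\cat$(0) geodesic $[o,x_n]$; by the standard Arzelà–Ascoli-free argument in complete $\cat$(0) spaces (using convexity of the metric, Proposition~\ref{prop convexite cat}, rather than properness) one extracts a geodesic ray $\gamma$ from $o$ that is a ``limit'' of the $[o,x_n]$ in the sense that it agrees with the chosen geodesics on longer and longer initial segments — more precisely, one builds $\gamma$ by a diagonal argument on the rays $\gamma_o^{x_n}$ using that in a complete uniquely geodesic space the geodesics $[o,x_n]$ have a subsequence converging uniformly on compacts. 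One then checks, using that $d_L(o,x_n) = 1 + |c_n|$ for some $L$-chain $c_n$ dual to $[o,x_n]$ (the analogue of \cite[Lemma 2.10]{petyt_spriano_zalloum22} together with the definition of $d_L$) and that $d_L$-quasigeodesics from $o$ to $x_n$ fellow-travel $[o,x_n]$ by stability, that $\gamma$ crosses an infinite $L$-chain; the key point is that the $L$-chains $c_n$ realizing the growing $d_L$-distances, after passing to a subsequence, stabilize curtain by curtain to an infinite $L$-chain dual to $\gamma$ (here Lemma~\ref{lem dual chain} lets us pass from $L$-chains separating $o$ from $x_n$ to $L$-chains dual to $[o,x_n]$, and the remark in \ref{infinite dual chain} via \cite[Corollary 3.2]{petyt_spriano_zalloum22} upgrades ``meets'' to ``crosses''). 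Thus $\gamma$ represents a point of $B_L$ mapping to $\eta$.

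Then I would establish that $\partial_L$ is a homeomorphism. By Theorem~\ref{thm curtain topology} the cone and curtain topologies agree on $B_L$, so it suffices to match the curtain topology on $B_L$ with the Gromov topology on $\bdg X_L$. In one direction: a basic curtain neighbourhood $U_h(\xi) \cap B_L$ consists of $\cat$(0) rays crossing a fixed curtain $h$ dual to $\gamma_o^\xi$; since crossing a curtain ``far out'' along $\gamma_o^\xi$ translates, via the bottleneck lemma, into having large $d_L$-Gromov product with $\xi$ at $o$, the image $\partial_L(U_h(\xi))$ contains a Gromov-ball around $\partial_L(\xi)$, and conversely a Gromov-ball pulls back inside some $U_h(\xi)$ by the same estimate. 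Continuity of $\partial_L$ and of its inverse follow. Equivariance is immediate: every $g \in \iso(X)$ is an isometry of each $X_L$ (Theorem~\ref{theorem hyperbolic models intro}), it permutes curtains and $L$-chains, and it commutes with $\iota_L = \mathrm{Id}$, so $\partial_L(g\xi) = g\,\partial_L(\xi)$ on the level of endpoints. Finally, the statement that points of $B_L$ are points of visibility of $X$ is the analogue of the last assertion of Theorem~\ref{equivariant embedding of boundaries} and is proved as there, using that any two rays crossing a common infinite $L$-chain in opposite directions can be joined by a bi-infinite $d$-geodesic via the bottleneck lemma.

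\textbf{Main obstacle.} The delicate step is surjectivity, specifically extracting from a $d_L$-convergent sequence $x_n \to \eta$ a \emph{single} $\cat$(0) geodesic ray from $o$ crossing an infinite $L$-chain, since local compactness — the usual engine for such limits — is unavailable. The remedy is that $d$-geodesics $[o,x_n]$ still converge subsequentially on compacts in a complete $\cat$(0) space (no compactness of $\overline X$ needed, only convexity of the metric and completeness), and the combinatorial data witnessing $d_L(o,x_n)\to\infty$, namely the $L$-chains dual to $[o,x_n]$, can be arranged to converge curtain-by-curtain to an infinite $L$-chain dual to the limit ray; carefully organizing this diagonal extraction, and checking that the resulting chain is genuinely crossed (not merely met) by the limit ray, is where the real work lies.
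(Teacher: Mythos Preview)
Your overall plan is close to the paper's, and injectivity, equivariance, and the continuity arguments are fine. The gap is in surjectivity, precisely at the step you flag as the main obstacle.

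You assert twice that in a complete $\cat$(0) space the geodesics $[o,x_n]$ ``have a subsequence converging uniformly on compacts'' using only convexity and completeness. This is false without properness: convexity of the metric gives no precompactness of families of geodesics, and there is no Arzelà--Ascoli-free extraction in a general complete $\cat$(0) space. Your ``curtain-by-curtain stabilization'' of the chains $c_n$ dual to $[o,x_n]$ runs into the same problem---there is no reason individual curtains from different $c_n$ should converge or even be comparable.

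The paper's replacement (Proposition~\ref{prop inverse map}) avoids any subsequence extraction. Given $\eta\in\bdg X_L$, take a $(Q,C)$-quasigeodesic ray $P$ in $X_L$ from $o$ to $\eta$ (Proposition~\ref{prop exist qgeod hyp}). Lemma~\ref{lem dual chain qgeod} produces a \emph{single fixed} infinite $L$-chain $\{c_i\}$ with $\{c_1,\dots,c_n\}$ separating $o$ from points far out on $P$; since $x_n\to\eta$ in $X_L$, each $x_m$ is eventually separated from $o$ by $\{c_1,\dots,c_{(4L+10)n}\}$. Lemma~\ref{lem dual chain} then gives an $L$-chain $\{h_1,\dots,h_{n+1}\}$ dual to $[o,x_{p_n}]$, and the bottleneck Lemma~\ref{lem bottleneck intro} forces every $[o,x_m]$ with $m\geq p_n$ to pass within $4L+3$ of one another inside $h_n$. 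Since $h_n$ lies at $d$-distance at least $n$ from $o$, convexity of the $\cat$(0) metric gives $d(\gamma_m(R),\gamma_{m'}(R))\leq \frac{R}{n}(4L+3)$ for all $m,m'\geq p_n$. Thus $(\gamma_m(R))_m$ is \emph{Cauchy} for each fixed $R$, and completeness yields the limit ray directly---no compactness, no diagonal argument, and no limiting procedure on curtains.
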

	
	The proof is similar to that of \cite[Proposition 8.9]{petyt_spriano_zalloum22}, where the authors only treat the proper case. We reproduce their argument, but extend the result to any complete $\cat$(0) space. 
	
	We begin with a Lemma. 
	\begin{elem}[{\cite[Corollary 8.11]{petyt_spriano_zalloum22}}]\label{lem dual chain qgeod}
		Let $P : [0, \infty) \rightarrow X$ be a $(Q,C)$-quasigeodesic ray of $X_L $. Then there is a sequence $(x_i)_{i \in \mathbb{N}} \subseteq P $ and a $L$-chain $\{c_i\}_{i \in \mathbb{N}}$ such that for every $n \in \mathbb{N}$, $\{c_1, \dots, c_n\} $ separates $o $ from $x_n $. 
	\end{elem}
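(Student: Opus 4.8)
The plan is to follow \cite[Corollary 8.11]{petyt_spriano_zalloum22}, extracting the chain from a family of arbitrarily long $L$-chains produced along $P$ and then gluing these into a single infinite chain via the Morse property of $L$-chains; the only place where properness of $X$ enters in the original argument has to be circumvented. The starting observation is that, $P$ being a $(Q,C)$-quasigeodesic ray of $X_L$, one has $d_L(o,P(t)) \geq d_L(P(0),P(t)) - d_L(P(0),o) \geq t/Q - C - d_L(P(0),o) \to \infty$. Hence for every threshold $R$ there is a parameter $t$ and an $L$-chain of size at least $R$ separating $o$ from $P(t)$; applying Lemma \ref{lem dual chain} to such a chain, we may moreover take it dual to the $\cat$(0) geodesic segment $[o,P(t)]$ at the cost of dividing its size by $4L+10$. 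Fixing an increasing sequence $s_1 < s_2 < \cdots \to \infty$ along which $d_L(o,P(s_n))$ grows quickly, this produces, for each $n$, an $L$-chain $\tau_n$ dual to $[o,P(s_n)]$, of size at least $n$, separating $o$ from $P(s_n)$.

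First I would replace the family $(\tau_n)_n$ by one infinite $L$-chain. The engine is the bottleneck lemma \ref{lem bottleneck intro}: a curtain $h$ occurring with both of its neighbours inside an $L$-chain dual to $[o,P(s_n)]$ is Morse, meaning that any $\cat$(0) geodesic from the near half-space of $h$ to its far half-space crosses $h$ within distance $2L+1$ of its pole on $[o,P(s_n)]$. On the other hand $P$ is a quasigeodesic ray in the $\delta$-hyperbolic space $X_L$, so stability of quasigeodesics (Corollary \ref{cor stab qgeod hyp}) yields a uniform $E = E(\delta,Q,C)$ with $(P(s_n)\,|\,P(s_m))_o \geq d_L(o,P(s_n)) - E$ for all $m > n$; this forces each $P(s_m)$ to sit, up to bounded error, beyond $P(s_n)$ in the direction of $\tau_n$, hence on the far side of all but a bounded initial portion of the curtains of $\tau_n$. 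Feeding $o$ and $P(s_m)$ into the bottleneck lemma then shows that every geodesic $[o,P(s_m)]$ passes within $2L+1$ of the poles of the deep curtains of $\tau_n$, in the same order as along $[o,P(s_n)]$. A Cantor diagonal argument over $n$ now produces an infinite $L$-chain $\{c_1,c_2,\dots\}$ and a subsequence $s_{\sigma(1)} < s_{\sigma(2)} < \cdots$ such that $\{c_1,\dots,c_n\}$ separates $o$ from $P(s_{\sigma(n)})$; setting $x_n := P(s_{\sigma(n)})$ and re-indexing gives the statement.

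I expect the matching and ordering argument of the second paragraph to be the main obstacle, and it is precisely where the hypothesis that $X$ be merely complete, rather than proper, bites: in the proper case one invokes Arzelà--Ascoli to let the geodesics $[o,P(s_n)]$ subconverge to a geodesic ray and simply reads the infinite $L$-chain off its dual curtains, which is how \cite[Proposition 8.9]{petyt_spriano_zalloum22} argues; without local compactness the chain must be assembled at the finite stages. The delicate points are then to control how much the quasigeodesic $P$ can re-enter a half-space it has already left (so that only the Morse curtains deep inside $\tau_n$ are usable), to make ``in the same order'' precise using the Morse property together with the thinness $d(h^-,h^+)=1$ of curtains (Remark \ref{remark curtains thick intro}), and to verify that $L$-separation of consecutive extracted curtains survives. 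Once these coherence facts are pinned down, the choice of growth rate for $(s_n)$ and the diagonal extraction are routine; and if the argument of \cite[Corollary 8.11]{petyt_spriano_zalloum22} is checked to make no use of properness, the whole lemma is just a citation.
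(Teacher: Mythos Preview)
The paper gives no proof of this lemma at all: it is stated with the citation \cite[Corollary 8.11]{petyt_spriano_zalloum22} and used as a black box. Your proposal therefore goes considerably further than the paper does, sketching an actual argument (arbitrarily long dual $L$-chains along $P$, bottleneck control to make them coherent, diagonal extraction) and correctly isolating the properness issue, which is pertinent since the surrounding section is precisely about removing the properness hypothesis from \cite{petyt_spriano_zalloum22}. Your closing sentence captures exactly the paper's stance: it treats the cited corollary as not requiring properness and simply invokes it.
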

	
	The following proposition will be essential. 
	
	\begin{eprop}\label{prop inverse map}
		Let $(x_n) $ be a sequence in $X$ such that $(x_n)$ converges to a point $\xi_L$ in the Gromov boundary $\bdg X_L$ (for the hyperbolic topology). Then $(x_n)$ converges to a point $\xi \in \bd X$ in the visual boundary of $X$. 
	\end{eprop}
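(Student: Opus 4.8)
The plan is to reconstruct, out of the combinatorial data carried by the $L$-chains, a genuine $\cat(0)$ geodesic ray whose endpoint in $\bd X$ will be the limit $\xi$, and then to transfer the convergence $x_n\to\xi_L$ in $X_L$ into convergence $x_n\to\xi$ for the cone topology on $\overline X$. The key difficulty is that $X$ need not be proper, so Arzelà–Ascoli is unavailable: the limiting ray must be produced by hand, and the bottleneck lemma (Lemma \ref{lem bottleneck intro}) together with convexity of the $\cat(0)$ metric (Proposition \ref{prop convexite cat}) and completeness of $X$ will play the role of local compactness.

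First I would extract an infinite $L$-chain pointing towards $\xi_L$. Since $(x_n)$ converges to $\xi_L\in\bdg X_L$ and $X_L$ is $\delta$-hyperbolic and $\alpha$-almost geodesic (Theorem \ref{theorem hyperbolic models intro}), Proposition \ref{prop exist qgeod hyp} gives a $(1,C)$-quasigeodesic ray $P\colon[0,\infty)\to X$ in $X_L$ with $P(0)=o$ and $P(t)\to\xi_L$. Lemma \ref{lem dual chain qgeod} then produces an infinite $L$-chain $c=\{c_i\}_{i\in\mathbb N}$ and points $z_n\in P$ such that $\{c_1,\dots,c_n\}$ separates $o$ from $z_n$ for every $n$. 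Because $d_L\le d_\infty\le\lceil d\rceil$ and $d_L(o,z_n)\ge n+1$, we get $d(o,z_n)\to\infty$. Moreover each half-space $c_i^{\pm}$ is open (it is the preimage of an open sub-ray of the pole under the continuous projection of Proposition \ref{prop projection cat}) while $c_i$ itself is closed (Remark \ref{remark curtains thick intro}), so the $\cat(0)$ geodesic segment $\sigma_n:=[o,z_n]$ must meet $c_i$ whenever $i\le n$.

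Next I would build the ray. Fixing a block of three consecutive curtains of $c$ and refining it via Lemma \ref{lem dual chain} into a length-three $L$-chain dual to a geodesic $[o,z_{n_0}]$, the bottleneck lemma forces every geodesic joining $o$ to a point past the third curtain to cross the middle one within a uniformly bounded distance of its (diameter-one) pole. Since $c$ is infinite, this yields infinitely many such "bottleneck regions'' $p_1,p_2,\dots$ through which all the $\sigma_n$ pass within a fixed distance $D$, with $p_i$ occurring at $\sigma_n$-parameter $t_i\to\infty$ uniformly in $n$. Convexity of $s\mapsto d(\sigma_m(s),\sigma_n(s))$, which vanishes at $s=0$ and is $\le 2D$ near $s=t_i$, gives $d(\sigma_m(s),\sigma_n(s))\le 2Ds/t_i$ on initial segments; hence $(\sigma_n)$ is uniformly Cauchy on compact sets and, $X$ being complete, converges to a geodesic ray $\gamma\colon[0,\infty)\to X$ with $\gamma(0)=o$. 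By construction $\gamma$ meets every $c_i$, so by Remark \ref{infinite dual chain} it crosses the infinite $L$-chain $c$; in particular $\xi:=\gamma(\infty)$ lies in $B_L\subseteq\bd X$, and the same estimate shows $\gamma(t)\to\xi_L$ in $\bdg X_L$.

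Finally I would transfer the convergence. Because $\gamma$ crosses $c$, for each fixed $i$ we have $\gamma(t)\in c_i^{+}$ for all large $t$; I would then argue, using the hyperbolic geometry of $X_L$ (the points $x_n$ and the points $\gamma(t)$ both converge to $\xi_L$, so their $d_L$-Gromov products blow up, and the bottleneck lemma turns this into a statement about which side of the curtain they lie on), that $x_n\in c_i^{+}$ for all $n$ large. Consequently $[o,x_n]$ crosses $c_1,\dots,c_{k(n)}$ with $k(n)\to\infty$, and re-running the bottleneck-plus-convexity estimate above — now comparing $[o,x_n]$ with $\gamma$, both crossing the same $L$-chain — gives $[o,x_n]\to\gamma$ uniformly on compact sets, which is exactly the assertion that $x_n\to\xi$ in the cone topology; independence of $\xi$ from the choices made is automatic from uniqueness of limits in this (Hausdorff) topology, and Theorem \ref{thm curtain topology} is what guarantees these curtain neighbourhoods of $\xi$ are cofinal among cone neighbourhoods. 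The main obstacle throughout is precisely the lack of properness: every point at which one would normally pass to a convergent subsequence of geodesics is replaced by the quantitative fellow-travelling forced by an infinite $L$-chain through Lemma \ref{lem bottleneck intro}, and the single most technical step is verifying that $\xi_L$-convergence in the hyperbolic model already pins $x_n$ to the correct side of each curtain of the chain.
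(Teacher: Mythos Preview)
Your approach is correct and uses exactly the same core ingredients as the paper: the infinite $L$-chain along a quasigeodesic to $\xi_L$ (Lemma~\ref{lem dual chain qgeod}), the bottleneck Lemma~\ref{lem bottleneck intro}, convexity of the $\cat(0)$ metric, and completeness of $X$ in place of properness. The difference is purely organizational. You first construct the limiting ray $\gamma$ from auxiliary points $z_n$ on the quasigeodesic $P$, and then in a separate ``transfer'' step show that $[o,x_n]\to\gamma$ by rerunning the same bottleneck--convexity estimate; this effectively does the main computation twice. The paper instead works directly with the segments $\gamma_n=[o,x_n]$: once one knows that for every $N$ there is $p_N$ with $\{c_1,\dots,c_{(4L+10)N}\}$ separating $o$ from $x_m$ for all $m\ge p_N$ (precisely the step you flag as most technical, and which the paper also asserts without detailed justification), Lemma~\ref{lem dual chain} produces an $L$-chain of length $N+1$ dual to $[o,x_{p_N}]$, and the bottleneck-plus-convexity bound gives $d(\gamma_m(R),\gamma_{m'}(R))\le (4L+3)R/N$ for $m,m'\ge p_N$. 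Thus $(\gamma_n(R))_n$ is Cauchy for every $R$, and completeness finishes in one pass. Since the conclusion is exactly that $[o,x_n]$ converges uniformly on compacts, the paper's shortcut is simply to prove that directly rather than via an intermediate ray; your detour through $P$ and $\gamma$ is harmless but unnecessary.
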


	\begin{proof}[Proof of Proposition \ref{prop inverse map}]
		As $X_L$ is a $\alpha $-almost geodesic space, Proposition \ref{prop exist qgeod hyp} implies that $X_L$ is a $(Q,C) $ quasigeodesic space. In particular there exists a $(Q,C)$-quasigeodesic $P : [0 , \infty) \rightarrow X_L $ emanating from $o$ and representing $\xi_L$. By Lemma \ref{lem dual chain qgeod}, there exists a sequence $(y_i) $ in $P$ and an infinite $L$-chain $\{c_i\}$ such that for every $n \in \mathbb{N}$, $\{c_1, \dots, c_n\} $ separate $o $ from $ y_n$. Since $x_n \rightarrow \xi_L$, for every $n \in \mathbb{N}$, there exists $p_n \geq 0 $ such that for all $m \geq p_n $, $\{c_0, \dots, c_{(4L + 10)n} \} $ is a $L$-chain separating $o $ from $x_m$. 
		
		Now, define the geodesic segments $\gamma_n = [o, x_n]$ (for the $\cat$(0) distance) parametrized by arc-length. In other words for every $t \in [0, \infty)$, $\gamma_n(t) $ is the point at distance $t$ from $o$ on the geodesic segment $[0, x_n]$ if $t \leq d(o, x_n)$ and $\gamma_n (t) = x_n$ if $t \geq d(o, x_n)$. We are going to prove that for every $R> 0$, $\gamma_n(R)$ is a Cauchy sequence. 
		
		Let $R>0$ be fixed. By Lemma \ref{lem dual chain}, there exists a $L$-chain of size $n+1$, dual to the geodesic segment $[o, x_{p_n}]$, and separating $c_1 $ from $c_{(4L+10)n}$. Denote this chain by $\{h_1, \dots, h_{n+1}\}$. Since curtains are thick, the number of curtains in $\{c'_i\}$ that intersect the metric ball $B(o, R) $ is less than $R+1$. In particular, for $n $ large enough and for every $m \geq p_n$, the number of curtains in $\{h_1, \dots, h_{n+1}\}$ separating $\gamma_m (R)$ and $x_m $ is greater than $n -R$. Let $n \geq R + 3$, so that $\{h_{n-R+1}, \dots, h_{n+1}\}$ is a $L$-chain separating $B(o, R)$ from $x_m $ for every $m \geq p_n$. 
		
		Let $m \geq p_n$, and take $t^n_m \in [0, d(o, x_m)]$ such that $\gamma_m(t^n_m) \in h_n$, which is possible since $\gamma_m$ crosses $h_n$, see Figure \ref{figure prop inverse map}. By the ``bottleneck'' Lemma \ref{lem bottleneck intro}, for all $m , m'\geq p_n$, $d(\gamma_m(t^n_m), \gamma_{m'}(t^n_{m'})) \leq 4L+3$. Without loss of generality, we can assume that $t^n_m \leq t^n_{m'} $. Note that because curtains are thick, and since the projections do not increase distances (Proposition \ref{prop projection cat}), $d(o, \gamma_m(t_m^n)) \geq d(o, h_n)$, hence $t^n_m \geq n$. By convexity of the $\cat$(0) metric (Proposition \ref{prop convexite cat}), we have 
		\begin{eqnarray}
			d(\gamma_m(t^n_m), \gamma_{m'}(t^n_{m})) \leq 4L+3. \nonumber
		\end{eqnarray}
		
		The function $t \rightarrow d(\gamma_m(t), \gamma_{m'}(t))$ is convex, hence 
		\begin{eqnarray}
			d(\gamma_m(R), \gamma_{m'}(R)) &\leq& \frac{R}{t^n_m} d(\gamma_m(t^n_m), \gamma_{m'}(t^n_{m}))\nonumber \\
			& \leq & \frac{R}{t^n_m} (4L+3) \nonumber \\
			& \leq & \frac{R}{n} (4L+3) \nonumber. 
		\end{eqnarray}
		
		As a consequence, for every $\varepsilon >0$, there exists $p_\varepsilon \in \mathbb{N}$ large enough such that for all $m, m' \geq p_n$, $d(\gamma_m(R), \gamma_{m'}(R)) \leq \varepsilon$. Therefore, for all $R >0$, $\gamma_m(R)$ is a Cauchy sequence, and because $X$ is complete, the sequence $(\gamma_n)_n$ converges to a geodesic ray. 
	\end{proof}
	
	\begin{figure}
		\centering
		\begin{center}
			\begin{tikzpicture}[scale=1]
				\draw (-3,0) -- (4,0)  ;
				\draw (2.8,2) -- (2.8,-1.5)  ;
				\draw (-2.5,2) -- (-2.5,-1.5)  ;
				\draw (1.2,2) -- (1.2,-1.5)  ;
				\draw (2,2) -- (2,-1.5)  ;
				\draw (2,0.5) node[right]{$\leq 4L+3$} ;
				\draw (-3, 0) to[bend right = 15] (5, 3);
				\draw (-3, 0) to[bend left = 15] (4.5, -2 );
				\draw (-3,0) node[below left]{$o$} ;
				\draw (2, -1.5) node[below]{$h_{n-1}$} ;
				\draw (1.2, -1.5) node[below]{$h_n$} ;
				\draw (-2.5, -1.5) node[below]{$h_1$} ;
				\draw (2.8, -1.5) node[below]{$h_{n+1}$} ;
				\draw (4,0) node[below right]{$x_{p_n}$} ;	
				\draw (2,-0.8) node[above right]{$\gamma(t^n_{m'})$} ;
				\draw (2,1.2) node[above left]{$\gamma(t^n_{m})$} ;
				\draw (5,3) node[right]{$x_{m}$};
				\draw (4.5, -2 ) node[right]{$x_{m'}$};
				\filldraw[black] (2,-0.8) circle(1.5pt);
				\filldraw[black] (2,1.2) circle(1.5pt);
				\filldraw[black] (4.5,-2) circle(1.5pt);
				\filldraw[black] (4,0) circle(1.5pt);
				\filldraw[black] (5,3) circle(1.5pt);
			\end{tikzpicture}
		\end{center}
		\caption{Proof of Proposition \ref{prop inverse map}.}\label{figure prop inverse map}
	\end{figure}
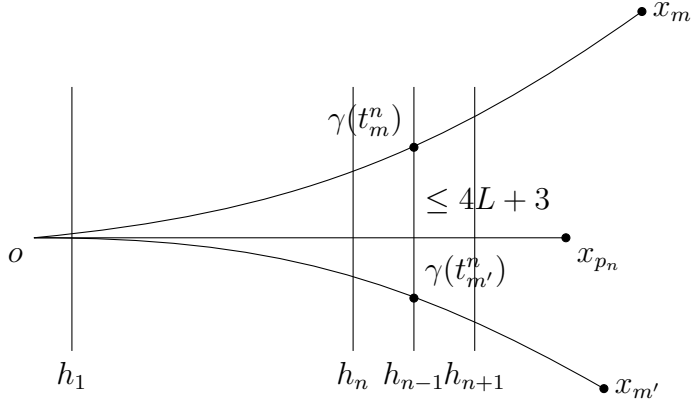

	The rest of the proof of Theorem \ref{thm homeomorphism bords XL} is now very similar to what was done in \cite[Proposition 8.9]{petyt_spriano_zalloum22}. We include it for clarity. 	
	
	\begin{proof}[Proof of Theorem \ref{thm homeomorphism bords XL}]
		The map $\iota_L : (X,d) \rightarrow (X,d_L)$ is $(1,1)$-coarsely Lipschitz, hence by \cite[Lemma 6.18]{incerti-medici_zalloum21}, the existence and continuity of $\partial_L$ follows from the fact that $X_L$ is $\alpha$-almost geodesic. 
		\newline
		
		If $\gamma$ crosses an infinite $L$-chain, then there is an infinite $L$-chain $\{h_i\}$ dual to $\gamma$. Orient this chain so that $o \in h^{-1}_i $ for all $i$. Let $\gamma '$ be a distinct geodesic ray, emanating from $o $. Then by Lemma \ref{lem bottleneck intro} and there must be a $k $ such that $\gamma' \subseteq h_k^{-1}$. In particular, if $x_n \in h_n \cap \gamma$, then $d_L(\gamma', x_n) \geq n-k$, and $\gamma'$ does not lie in a finite $X_L$ neighbourhood of $\gamma$. This argument shows that the map $\partial_L$ is injective.
		\newline
		
		Let $P : [0, \infty) \rightarrow X_L$ be a quasigeodesic ray (for the metric $d_L$) emanating from $o$, and let $(x_n)$ be an unbounded sequence on $P$. By Lemma \ref{lem dual chain qgeod}, there exists an infinite $L$-chain $\{c_i\}$ such that for every $n \in \mathbb{N}$, $\{c_1, \dots, c_n\} $ separate $o $ from $ x_n$. Proposition \ref{prop inverse map} shows that $[o, x_n]$ converges to a geodesic ray $\gamma$. By construction, $\gamma$ crosses infinitely many curtains $\{c_i\}$, hence belongs to $B_L$. Note that by Proposition \ref{cor stab qgeod hyp}, any $(1,C)$-quasigeodesic ray representing the same endpoint $\xi_L$ as $P$ lies in a uniform neighbourhood of $P$, with universal constants. Therefore, $\iota(\gamma)$ is an unparametrized rough geodesic, and lies in a uniform neighbourhood of any quasigeodesic ray between $\iota(o)$ and $\xi_L$. Then, the choice of $\gamma$ does not depend upon the choice of $P$, and the application $\partial_L$ is surjective. 
		\newline
		
		It remains to prove that the inverse map is continuous. This part is completely similar to what is presented in $(4)$ of the proof of \cite[Proposition 8.9]{petyt_spriano_zalloum22}, which does not use any properness assumption on $X$. 
	\end{proof}

	\subsection{Convergence to the boundary}
	
	In order to use the results concerning random walks in hyperbolic spaces, we must show that the action of a group $G$ on a proper $\cat$(0) space with rank one isometries induces a non-elementary action on some hyperbolic model $(X, d_L)$. 
	
	\begin{eprop}\label{prop non elem}
		Let $G$ be a group acting by isometries on a complete $\cat$(0) space $(X,d)$, and assume that $G$ contains a pair of independent contracting element for this action. Then there exists $L \in \mathbb{N}$ such that $G$ acts on the hyperbolic space $(X, d_L)$ by isometries, with a pair of independent loxodromic isometries. 
	\end{eprop}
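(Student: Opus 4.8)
The plan is to deduce this from Theorem \ref{thm contract loxodromic}, which establishes the equivalence between being contracting in $X$ and being loxodromic in some model $X_L$, together with Definition \ref{def isom indé} of independence. The only subtlety is that Theorem \ref{thm contract loxodromic} gives, for each contracting isometry separately, a level $L$ such that it becomes loxodromic in $X_L$; we need a single $L$ that works for both elements simultaneously, and we need the loxodromic pair in $X_L$ to be independent in the sense used for hyperbolic spaces (distinct fixed points on $\bdg X_L$, or equivalently a proper orbit map on $\mathbb{Z}\times\mathbb{Z}$).

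First I would let $g_1, g_2 \in G$ be the given pair of independent contracting isometries of $X$ with disjoint fixed point sets in $\bd X$. By Theorem \ref{thm contract loxodromic}, there exist $L_1, L_2 \in \mathbb{N}$ such that $g_i$ traverses a pair of $L_i$-separated curtains, hence is loxodromic in $X_{L_i}$. Since an $L$-chain is in particular an $L'$-chain for every $L' \geq L$ (the defining cardinality bound only gets weaker), a pair of $L_i$-separated curtains is $L$-separated for $L := \max(L_1, L_2)$. Therefore both $g_1$ and $g_2$ traverse a pair of $L$-separated curtains, and by the equivalence (1) $\Leftrightarrow$ (3) of Theorem \ref{thm contract loxodromic} applied in $X_L$, both $g_1$ and $g_2$ are loxodromic isometries of $X_L$. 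That $G$ acts on $X_L$ by isometries is exactly the last assertion of Theorem \ref{theorem hyperbolic models intro}.

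It remains to check that $g_1$ and $g_2$ are \emph{independent} loxodromic isometries of $X_L$, i.e. that their fixed points on $\bdg X_L$ are four distinct points (equivalently, no common power, equivalently the orbit map $(m,n) \mapsto d_L(g_1^m o, g_2^n o)$ is proper). Here I would use Theorem \ref{thm homeomorphism bords XL}: the identity map induces an $\iso(X)$-equivariant homeomorphic embedding $\partial_L : B_L \hookrightarrow \bdg X_L$, where $B_L \subseteq \bd X$. A loxodromic isometry $h$ of $X_L$ has an axis which is a $(1,C)$-quasigeodesic line crossing an infinite $L$-chain, so its two endpoints $h^{\pm}$ in $\bdg X_L$ are the $\partial_L$-images of points of $B_L$; moreover, since $h$ is contracting in $X$, these correspond precisely to the attracting/repelling fixed points $h^{\pm} \in \bd X$ of $h$ acting on the visual boundary (by the North–South dynamics of Theorem \ref{thm contract dyn NS}, combined with the continuity of $\partial_L$ and its inverse). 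Because $g_1$ and $g_2$ are independent in $X$, their fixed point sets $\{g_1^{\pm}\}$ and $\{g_2^{\pm}\}$ in $\bd X$ are disjoint; pulling back through the injective equivariant map $\partial_L$, the corresponding four points of $\bdg X_L$ are distinct. Hence $g_1, g_2$ are independent loxodromics of $X_L$, and the action $G \curvearrowright X_L$ is non-elementary.

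The main obstacle is the bookkeeping in the last step: making rigorous the identification of the $\bdg X_L$-fixed points of a contracting isometry with its $\bd X$-fixed points, via the embedding $\partial_L$ and its equivariance. One has to verify that $\partial_L$ intertwines the two dynamical actions — that an axis of $g_i$ in the $d_L$-metric limits, under the identity map, to a geodesic ray representing $g_i^{+} \in \bd X$ — which uses that the $\cat$(0) axis of $g_i$ crosses an infinite $L$-chain (a consequence of $g_i$ traversing $L$-separated curtains together with Lemma \ref{lem dual chain}) and the uniqueness of limit rays crossing an infinite $L$-chain (from the bottleneck Lemma \ref{lem bottleneck intro}). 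Everything else is a direct appeal to the cited results.
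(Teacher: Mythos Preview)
Your proposal is correct and follows essentially the same route as the paper's proof: apply Theorem \ref{thm contract loxodromic} to get loxodromicity in some $X_L$, then use the equivariant homeomorphism $\partial_L : B_L \to \bdg X_L$ of Theorem \ref{thm homeomorphism bords XL} to transport the four distinct fixed points from $\bd X$ to $\bdg X_L$. You are in fact more careful than the paper on two points it leaves implicit: the $L=\max(L_1,L_2)$ argument ensuring a single level works for both isometries, and the verification that the $\bdg X_L$-fixed points of $g_i$ really are the $\partial_L$-images of its $\bd X$-fixed points.
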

	
	\begin{proof}
		The action $G \curvearrowright (X,d)$ contains a pair of independent contracting elements $g,h \in G$. By Theorem \ref{thm contract loxodromic}, there exists $L \in \mathbb{N}$ such that $g$ and $h$ act on $(X, d_L)$ as loxodromic isometries. As $g $ and $h$ are independent, their fixed points form four distinct points of the visual boundary $\bd X$. Now seen in $X_L = (X, d_L)$, their fixed points sets must also form four distinct points of $\bdg X_L$ because of the homeomorphism $\partial_L : B_L \longrightarrow \bdg X_L$ in Theorem \ref{thm homeomorphism bords XL}. This means that the action $G \curvearrowright X_L$ is non-elementary. 
	\end{proof}
	
	We can now prove the convergence to the boundary. In the following, even when not specified, the $\cat$(0) spaces we consider are always assumed separable. Indeed, we need the visual boundary to be a standard Borel set.
	
	\begin{ethm}\label{thm cv cat}
		Let $G$ be a discrete countable group and $G \curvearrowright X$ an action by isometries on a complete separable $\cat$(0) space $X$. Let $\mu \in \prob(G) $ be an admissible probability measure on $G$, and assume that $G $ contains a pair of independent contracting isometries. Then for every $o \in X$, and for $\mathbb{P}$-almost every $\omega \in \Omega$, the random walk $(Z_n (\omega) o)_n $ converges to a boundary point $z^{+}(\omega) \in \bd X$. Moreover, there exists $L \geq 0 $ such that almost surely, $z^+(\omega) \in B_L$. 
	\end{ethm}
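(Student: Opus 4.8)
The plan is to deduce this from the convergence theorem for random walks on Gromov-hyperbolic spaces (Theorem~\ref{thm cv rw hyp}) applied to a well-chosen hyperbolic model $X_L$, and then to transfer the conclusion back to the visual boundary $\bd X$ via the boundary identification of Theorem~\ref{thm homeomorphism bords XL}.

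First I would invoke Proposition~\ref{prop non elem}: since $G$ contains a pair of independent contracting isometries of $(X,d)$, there is an integer $L\geq 0$ such that $G$ acts by isometries on the hyperbolic model $X_L=(X,d_L)$ with a pair of independent loxodromic isometries. By Theorem~\ref{theorem hyperbolic models intro}, $X_L$ is $\delta$-Gromov-hyperbolic and $\alpha$-almost geodesic for constants depending only on $L$; moreover $X_L$ is separable, since $d_L(x,y)\leq \lceil d(x,y)\rceil$ forces any countable $d$-dense subset of $X$ to be $d_L$-dense, so $\bdg X_L$ is a standard Borel space and the boundary theory of Section~\ref{section rw hyp} applies. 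Thus $G\curvearrowright X_L$ is a non-elementary action on an almost geodesic, separable, Gromov-hyperbolic space, to which Theorem~\ref{thm cv rw hyp} applies with the admissible measure $\mu$ and the basepoint $o$.

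Next I would fix the $\mathbb{P}$-conull set $\Omega_0\subseteq\Omega$ provided by Theorem~\ref{thm cv rw hyp} on which $Z_n(\omega)o$ converges, in the Gromov topology of $X_L$, to a point $\xi_L(\omega)\in\bdg X_L$; recall that the corresponding hitting measure is the unique $\mu$-stationary measure on $\bdg X_L$, equal to $\int_{B_+}\delta_{\psi_+(b)}\,d\nu_+(b)$. For each $\omega\in\Omega_0$, setting $x_n:=Z_n(\omega)o$, Proposition~\ref{prop inverse map} applies verbatim: the sequence $(x_n)$, which converges to $\xi_L(\omega)$ in $\bdg X_L$, converges in the visual boundary of $X$ to a point $z^+(\omega)\in\bd X$. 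More precisely, the proof of Proposition~\ref{prop inverse map}, together with Lemma~\ref{lem dual chain qgeod}, produces this limit as a geodesic ray based at $o$ crossing an infinite $L$-chain, so $z^+(\omega)\in B_L$ and $z^+(\omega)=\partial_L^{-1}(\xi_L(\omega))$ for the homeomorphism $\partial_L\colon B_L\to\bdg X_L$ of Theorem~\ref{thm homeomorphism bords XL}; measurability of $\omega\mapsto z^+(\omega)$ follows from that of $\omega\mapsto\xi_L(\omega)$ and the continuity of $\partial_L^{-1}$.

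As for the main obstacle: all the genuinely delicate geometry has already been carried out, namely the construction of the hyperbolic models and, above all, Proposition~\ref{prop inverse map}, whose proof uses the bottleneck Lemma~\ref{lem bottleneck intro}, Lemma~\ref{lem dual chain} and convexity of the $\cat$(0) metric (Proposition~\ref{prop convexite cat}) to show that the geodesic segments $[o,Z_n(\omega)o]$ form, evaluated at each radius $R$, a Cauchy sequence, hence converge in the complete space $X$. Given that, the only remaining points require care rather than ideas: checking the separability of $X_L$ so that the ergodic machinery behind Theorem~\ref{thm cv rw hyp} is genuinely available, and verifying that the full-measure set of trajectories converging in $\bdg X_L$ is exactly the one on which Proposition~\ref{prop inverse map} is then invoked pointwise. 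I would also emphasize that this scheme, by design, reduces a statement about $\cat$(0) spaces to one about Gromov-hyperbolic spaces — the very purpose of the models $X_L$ — and that the same strategy will be reused below for uniqueness of the stationary measure and positivity of the drift.
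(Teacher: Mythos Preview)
Your proposal is correct and follows essentially the same route as the paper: invoke Proposition~\ref{prop non elem} to obtain an $L$ with a non-elementary action on $X_L$, apply Theorem~\ref{thm cv rw hyp} to get almost sure convergence to $\bdg X_L$, and then use Proposition~\ref{prop inverse map} (together with the identification $\partial_L^{-1}$ of Theorem~\ref{thm homeomorphism bords XL}) to transfer this to convergence in $\bd X$ with limit in $B_L$. Your additional remarks on separability of $X_L$ and measurability of $\omega\mapsto z^+(\omega)$ are welcome sanity checks that the paper leaves implicit.
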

	
	\begin{proof}
		Due to Proposition \ref{prop non elem}, there exists $L$ such that $G$ acts on the hyperbolic model $X_L$ by isometries, with a pair of loxodromic isometries. By Theorem \ref{thm cv rw hyp}, the random walk $(Z_n o )_n$ converges to a point of the boundary $z_L^+(\omega)\in \bdg X_L$ almost surely. Now due to Proposition \ref{prop inverse map},$(Z_n o )$ converges almost surely to a point of the boundary $z^+(\omega) = \partial_L^{-1}(z_L^+(\omega))\in \bd X$. 
	\end{proof}
	
	Moreover, as a corollary of Theorems \ref{thm uniq bdry map hyp} and \ref{thm homeomorphism bords XL}, we have the following result. 
	
	\begin{ecor}
		Let $G$ be a discrete countable group and $G \curvearrowright X$ an action by isometries on a complete $\cat (0)$ space $X$. Let $L$ be such that $G$ acts on $X_L$ non-elementarily. Let $(B_-, B_+)$ be a $G$-boundary pair. Then there exist essentially unique measurable $G$-equivariant maps $\phi_- : B_- \rightarrow B_L$ and $\phi_+ : B_+ \rightarrow B_L$ such that the map 
		\begin{eqnarray}
			\phi_{\bowtie} : (b_-, b_+ )\in (B_- \times B_+) \mapsto (\phi_-(b_-), \phi_+(b_+))
		\end{eqnarray} 
		is essentially contained is the set of distinct pairs of points of the boundary $(B_L)^{(2)}$. Moreover: 
		\begin{enumerate}
			\item[(i)] $\Map_G ( B_-, \prob(B_L)) = \{\delta \circ \phi_-\}$ and $\Map_G ( B_+, \prob(B_L)) = \{\delta \circ \phi_+\}$. 
			\item[(ii)] $\Map_G ( B_- \times B_+, B_L) = \{\phi_-\circ \text{pr}_-, \phi_+\circ \text{pr}_+\}$ 
			\item[(iii)] $\Map_G ( B_- \times B_+, (B_L)^{(2)}) = \{\phi_{\bowtie}, \tau\circ\phi_{\bowtie}\}$, where $\tau (\xi,\eta) = (\eta, \xi)$. 
		\end{enumerate} 
	\end{ecor}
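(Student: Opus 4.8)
The plan is to transfer the statement of Theorem \ref{thm uniq bdry map hyp} along the $\iso(X)$-equivariant homeomorphism $\partial_L : B_L \to \bdg X_L$ provided by Theorem \ref{thm homeomorphism bords XL}. First I would recall that Proposition \ref{prop non elem} guarantees, from the existence of a pair of independent contracting elements in $G$, that there is an $L \in \mathbb{N}$ for which the action $G \curvearrowright X_L$ is non-elementary; $X_L$ is $\delta$-hyperbolic and $\alpha$-almost geodesic by Theorem \ref{theorem hyperbolic models intro}, so all the hypotheses of Theorem \ref{thm uniq bdry map hyp} are satisfied by $G \curvearrowright X_L$. Applying that theorem to the given $G$-boundary pair $(B_-, B_+)$ yields essentially unique $G$-equivariant measurable maps $\phi_-^L : B_- \to \bdg X_L$ and $\phi_+^L : B_+ \to \bdg X_L$ together with the three identifications of $\Map_G$-sets listed there.

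Then I would compose with the homeomorphism. Set $\phi_{\pm} := \partial_L^{-1} \circ \phi_{\pm}^L : B_{\pm} \to B_L$. Since $\partial_L$ is a homeomorphism that is moreover $\iso(X)$-equivariant, hence $G$-equivariant, the maps $\phi_{\pm}$ are measurable and $G$-equivariant, and $\phi_{\bowtie} = (\partial_L^{-1} \times \partial_L^{-1}) \circ \phi_{\bowtie}^L$ has essential values in the set of distinct pairs $(B_L)^{(2)}$ precisely because $\phi_{\bowtie}^L$ has essential values in $\bdg X_L^{(2)}$ and $\partial_L^{-1} \times \partial_L^{-1}$ maps $\bdg X_L^{(2)}$ bijectively onto $(B_L)^{(2)}$. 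For the three displayed equalities, the key point is that post-composition with a $G$-equivariant homeomorphism induces a bijection $\Map_G(-, \bdg X_L) \xrightarrow{\sim} \Map_G(-, B_L)$, and likewise for $\prob(\bdg X_L)$ via $(\partial_L^{-1})_\ast : \prob(\bdg X_L) \to \prob(B_L)$ (a $G$-equivariant homeomorphism for the weak-$\ast$ topology), and for $\bdg X_L^{(2)}$ via $\partial_L^{-1} \times \partial_L^{-1}$. Under these bijections $\delta \circ \phi_{\pm}^L$ corresponds to $\delta \circ \phi_{\pm}$, the pair $\{\phi_-^L \circ \mathrm{pr}_-, \phi_+^L \circ \mathrm{pr}_+\}$ corresponds to $\{\phi_- \circ \mathrm{pr}_-, \phi_+ \circ \mathrm{pr}_+\}$, and the flip $\tau$ on $\bdg X_L^{(2)}$ conjugates to the flip $\tau$ on $(B_L)^{(2)}$, so the equalities (i), (ii), (iii) follow verbatim.

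The only genuinely delicate point — and the one I would take care to state cleanly — is that the measurable structure on $B_L$ used implicitly here is the one it inherits as a subset of $\bd X$ (equivalently, via $\partial_L$, as the standard Borel space $\bdg X_L$); I would remark that since $X$ is assumed separable, $\bd X$ is a standard Borel set and $B_L \subseteq \bd X$ is Borel, so the composition with the Borel isomorphism $\partial_L^{-1}$ stays within the category of standard Borel $G$-spaces and the ergodicity/amenability arguments underlying Theorem \ref{thm uniq bdry map hyp} apply without modification. Everything else is a formal transport of structure, so I do not anticipate a substantive obstacle beyond being careful that $\partial_L$ is indeed a homeomorphism of the relevant topologies (which is exactly the content of Theorem \ref{thm homeomorphism bords XL}) and hence a Borel isomorphism.
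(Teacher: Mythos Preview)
Your approach is correct and matches the paper's: the corollary is stated there without explicit proof, simply as an immediate consequence of Theorems \ref{thm uniq bdry map hyp} and \ref{thm homeomorphism bords XL}, which is exactly the transfer along $\partial_L$ that you carry out. One minor remark: the hypothesis already hands you an $L$ with non-elementary action on $X_L$, so invoking Proposition \ref{prop non elem} is unnecessary (though harmless).
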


	We end this section with a geometric result, that shows that the random walk converges to points that are isolated in the angular metric. 
	
	\begin{eprop}\label{prop angle B_L}
		Let $\xi \in \bd X$ be a point of the subspace $B_L$, and let $\eta \neq \xi $ be any other point in $\bd X$. Then $\angle (\xi, \eta) = \pi$. 
	\end{eprop}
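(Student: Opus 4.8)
The plan is to show that for every $\varepsilon>0$ there is a point $x\in X$ with $\angle_x(\xi,\eta)\ge\pi-\varepsilon$; since $\angle(\xi,\eta)=\sup_{x\in X}\angle_x(\xi,\eta)$ and $\angle_x(\xi,\eta)\le\pi$ always, this forces $\angle(\xi,\eta)=\pi$. The points $x$ will be produced by the curtain machinery of Section~\ref{section modèle hyp}.

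First I would set up the data. Since $\xi\in B_L$, the ray $\gamma:=\gamma_o^\xi$ crosses an infinite $L$-chain, and by Lemma~\ref{lem dual chain} one may assume this chain $\{h_i\}_{i\ge1}$ is \emph{dual} to $\gamma$; orient it so that $o\in h_i^-$ for all $i$, and let $\gamma(t_i)$ be the centre of the pole of $h_i$, so $t_i\to\infty$ and $\gamma(t)\in h_i^+$ for $t$ large. Writing $\gamma':=\gamma_o^\eta$, the bottleneck Lemma~\ref{lem bottleneck intro} — applied exactly as in the injectivity step of the proof of Theorem~\ref{thm homeomorphism bords XL} — shows that the distinct ray $\gamma'$ cannot cross infinitely many of the $h_i$, so there is $k$ with $\gamma'\subseteq h_k^-$, hence $\gamma'\subseteq h_j^-$ for all $j\ge k$. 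Now fix a large integer $m\ge k+2$, take $s>0$ arbitrary and $t$ large enough that $\gamma(t)\in h_{m+1}^+$. Since $\gamma'(s)\in h_k^-\subseteq h_m^-$ and $\gamma(t)\in h_m^+$, the $\cat$(0)-geodesic segment $[\gamma'(s),\gamma(t)]$ meets the curtain $h_m$; choose a crossing point $q=q_m(s,t)$. Applying Lemma~\ref{lem bottleneck intro} to the length-$3$ subchain $\{h_{m-1},h_m,h_{m+1}\}$ (dual to the geodesic $[o,\gamma(T)]$ for $T$ huge, with $o,\gamma'(s)$ on the $h_{m-1}^-$ side and $\gamma(T),\gamma(t)$ on the $h_{m+1}^+$ side) yields $d(q,\pi_\gamma(q))\le 2L+1$; and since $\pi_\gamma(q)$ lies on the pole of $h_m$, which has diameter $\le1$, we get the \emph{uniform} bound $d\bigl(q,\gamma(t_m)\bigr)\le 2L+2$. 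The essential point is that $2L+2$ depends only on $L$, not on $m,s,t$: all these segments run within bounded, $L$-controlled distance of the points $\gamma(t_m)$ of $\gamma$.

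From this I would conclude as follows. Take $x:=q_m(s,t)$. On the one hand $q$ lies on the geodesic segment $[\gamma'(s),\gamma(t)]$, so the Euclidean comparison angle $\overline{\angle}_q(\gamma'(s),\gamma(t))$ is exactly $\pi$. On the other hand, $d(q,\gamma(t_m))\le 2L+2$, so by convexity of the $\cat$(0) metric (Proposition~\ref{prop convexite cat}) and uniqueness of rays (Proposition~\ref{prop rayon geod}) the ray $\gamma_q^\xi$ stays within $2L+2$ of $\gamma|_{[t_m,\infty)}$, hence within $2L+2$ of the sub-segment $[q,\gamma(t)]$, up to parameter $\approx t-t_m$; a companion bounded-fellow-travelling estimate on the $\eta$-side, combined with the uniform bound above, lets one replace $\gamma'(s)$ and $\gamma(t)$ by points genuinely lying on $\gamma_q^\eta$ and $\gamma_q^\xi$ at a common large parameter $\rho$ while perturbing the comparison angle by only $O((2L+2)/\rho)$. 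Since $\angle_q(\xi,\eta)$ dominates every comparison angle of the form $\overline{\angle}_q\bigl(\gamma_q^\xi(\rho),\gamma_q^\eta(\rho)\bigr)$, letting $m$ — and then $t,s,\rho$ — grow gives $\angle_q(\xi,\eta)\ge\pi-\varepsilon$, which is what we wanted.

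The step I expect to be the main obstacle is exactly this last one, in the absence of local compactness: in the proper case one would simply extract a limiting bi-infinite geodesic from $\eta$ to $\xi$, recovering the visibility statement of Theorem~\ref{equivariant embedding of boundaries}, but here no such subsequential limit is available, so the closeness of $\angle_q(\xi,\eta)$ to $\pi$ must be squeezed out purely from the quantitative, dimension-free estimate of Lemma~\ref{lem bottleneck intro} via elementary comparison-triangle computations. Everything else — producing the dual $L$-chain, the orientation bookkeeping $\gamma'\subseteq h_j^-$, and the bound $d(q,\gamma(t_m))\le 2L+2$ — is routine once that lemma is in hand. Note finally that, once $\angle(\xi,\eta)=\pi$ is known for all $\eta\neq\xi$, Theorem~\ref{tits} shows in particular that a ray representing a point of $B_L$ cannot bound a half-flat, in keeping with the hyperbolic character of these directions.
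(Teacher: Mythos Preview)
Your argument has a sign error at the crucial step. In a $\cat$(0) space the comparison angle $\overline{\angle}_q(\gamma_q^\xi(\rho),\gamma_q^\eta(\rho))$ is \emph{non-decreasing} in $\rho$, and the Alexandrov angle $\angle_q(\xi,\eta)$ is its limit as $\rho\to 0$, i.e.\ its infimum. So the inequality you invoke is backwards: one always has $\angle_q(\xi,\eta)\le\overline{\angle}_q(\gamma_q^\xi(\rho),\gamma_q^\eta(\rho))$, and showing the right-hand side is close to $\pi$ for large $\rho$ says nothing about $\angle_q$. A second, independent problem is your ``companion bounded-fellow-travelling estimate on the $\eta$-side'': you have only bounded $d(q,\gamma)$, but $\eta$ is an arbitrary point of $\bd X$ (not assumed in $B_L$), and there is no curtain structure forcing the ray $\gamma_q^\eta$ to track the segment $[q,\gamma'(s)]$; the distance from $q$ to the $\eta$-ray through $o$ can be as large as $d(q,o)$, which grows with $m$.

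The paper sidesteps both issues by never moving the basepoint. It uses the identity $\angle(\xi,\eta)=\lim_{t,t'\to\infty}\overline{\angle}_o(\gamma(t),\gamma'(t'))$ from \cite[Proposition~II.9.8]{bridson_haefliger99}, so it suffices to compute this limit at the fixed point $o$. The bottleneck Lemma~\ref{lem bottleneck intro} is applied only once, at a \emph{single} curtain $h_k$ (any index past which $\gamma'$ no longer crosses the chain): for all large $t$ and all $t'$, the segment $[\gamma(t),\gamma'(t')]$ meets $h_k$ at a point within $2L+1$ of the pole, a fixed bounded subset of $\gamma$. Triangle inequalities then give $|d(\gamma(t),\gamma'(t'))-(t+t')|\le C$ with $C$ independent of $t,t'$, and the Euclidean law of cosines in the comparison triangle $\overline\Delta(\overline o,\overline{\gamma(t)},\overline{\gamma'(t')})$ forces $\cos\overline{\angle}_o(\gamma(t),\gamma'(t'))\to -1$. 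No varying basepoint and no limit $m\to\infty$ are needed, and the argument works verbatim without properness.
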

	
	\begin{proof}
		Let $o \in X$, and let $\gamma : [0, \infty) \rightarrow X$ be a geodesic ray based at $o $ representing $\xi$. By \cite[Lemma 2.21]{petyt_spriano_zalloum22}, there is an infinite $L$-chain $\{h_i\}_{i \in \mathbb{N}}$ dual to $\gamma$. Let $\gamma' : [0, \infty) \rightarrow X$ be a geodesic ray representing $\eta$ and such that $\gamma'(0) = o $. By \cite[Lemma 3.1]{petyt_spriano_zalloum22}, if $\gamma'$ meets an infinite number of curtains in $\{h_i\}$, it must cross the chain, and hence $\xi = \eta$. As a consequence, there exists $k \geq 1$ such that $\gamma' \subseteq h_{k-1}^{-}$. Let $t_0 >0$ such that for all $t \geq t_0$, $\gamma(t) \in h_{k+1}$. By Lemma \ref{lem bottleneck intro}, for all $t \geq t_0$ and $t' \in [0, \infty)$, there exists a point $p_{t,t'} \in [\gamma(t), \gamma(t')] \cap h_{k}$ such that, if $q_{t,t'} := \pi_\gamma(p_{t,t'})$, we have $d(p_{t,t'}, q_{t,t'}) \leq 2L+1$. 
		
		Now recall that by \cite[Proposition II.9.8]{bridson_haefliger99}, 
		\begin{eqnarray}
			\angle (\xi, \eta) = \lim_{t,t' \rightarrow \infty} \overline{\angle}_o (\gamma(t), \gamma'(t'))  \nonumber. 
		\end{eqnarray}
		
		Denote $\overline{\angle}_o (\gamma(t), \gamma'(t')) $ by $\alpha_{t,t'}$. Let $\overline{\Delta} := \overline{\Delta} (\overline{o}, \overline{\gamma(t)}, \overline{\gamma'(t')})$ be the Euclidean comparison triangle of $\Delta (o, \gamma(t), \gamma'(t'))$. By the Euclidean law of cosines, 
		\begin{eqnarray}\label{eq al kashi}
			d(\gamma(t), \gamma'(t'))^2 &=& d(o, \gamma(t))^2 +d(o, \gamma'(t'))^2 - 2 d(o, \gamma(t))d(o, \gamma'(t')) \cos(\alpha_{t,t'}) \nonumber \\
			&  = & t^2 + t'^2 - 2t t' \cos(\alpha_{t,t'})
		\end{eqnarray}
		Since $q_{t,t'}$ belongs to the pole of $h_k$ there exists $A \geq  0$ such that $d(q_{t,t'}, o ) \in  [A, A+1]$ for all $t \geq t_0, t' \geq 0$. By the ``bottleneck'' lemma, 
		\begin{eqnarray}
			t' - (A + 2L + 1) \leq d(\gamma'(t'), p_{t,t'}) \leq t' + A +1 + 2 L+1 \nonumber, 
		\end{eqnarray}
		and 
		\begin{eqnarray}
			t-( A +1 + 2L + 1 )\leq d(\gamma(t), p_{t,t'}) \leq t+ A+ 2 L+1 \nonumber
		\end{eqnarray}
		As a consequence, 
		\begin{eqnarray}
			t +t' - (2A+ 4L + 3) \leq d(\gamma'(t'), \gamma(t)) \leq t +t' + 2A + 4 L+3 \nonumber, 
		\end{eqnarray}
		Let $C := 2A + 4L +3$. For $t,t' $ large enough, we then have 
		\begin{eqnarray}
			t^2 + t'^2 + C^2 + 2 tt' - 2C(t + t') \leq d(\gamma'(t'), \gamma(t))^2 
			\leq t^2 + t'^2 + C^2 + 2 tt' + 2C(t + t')  \nonumber. 
		\end{eqnarray}
		Combining this with \eqref{eq al kashi}, we get
		\begin{eqnarray}
			C^2 + 2 tt' - 2C(t+t') \leq - 2 tt' \cos(\alpha_{t,t'})
			\leq  C^2 + 2 tt' + 2C(t+t')  \nonumber. 
		\end{eqnarray}
		
		As $t, t' \rightarrow \infty$, we obtain that $\cos(\alpha_{t,t'}) \rightarrow -1$, hence $\alpha_{t,t'} \rightarrow \pi$, which proves the proposition. 
		
	\end{proof}
	
	In fact, if the space $X$ is proper, we can show that any point $\xi_L \in B_L$ is a visibility point, see \cite[Lemma 8.4]{petyt_spriano_zalloum22}.  
	
	\begin{ecor}\label{cor angle lim pts}
		Let $\nu \in \prob(\bd X)$ be the hitting measure of the random walk $(Z_n o)$. Then for any pair of points of distinct points $\xi \neq \eta$, then $\nu \otimes \nu$-almost surely, $\angle(\xi, \eta) = \pi$. If moreover $X $ is proper, then $\xi $ and $\eta$ are almost surely joined by a geodesic line. 
	\end{ecor}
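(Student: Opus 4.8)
The plan is to read this off from the two preceding results together with the fact, already established, that the exit measure is carried by $B_L$. First I would invoke Theorem \ref{thm cv cat}: there is an $L\geq 0$ such that $\mathbb{P}$-almost surely $Z_n(\omega)o\to z^+(\omega)\in B_L$. Since $\nu$ is by definition the law of $z^+$, this says exactly that $\nu(B_L)=1$.

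Next I would check that $\nu$ is non-atomic, so that a $\nu\otimes\nu$-random pair is automatically a pair of \emph{distinct} points. By hypothesis $G$ acts with two independent contracting isometries; since a contracting isometry has exactly two fixed points in $\bd X$ (Definition \ref{def isom contract} and the classification of isometries), $\bd X$ can carry no finite $G$-orbit: a finite orbit would be fixed pointwise by a finite-index subgroup, which still contains two independent contracting isometries, and this would force the four distinct fixed points of such a pair to collapse — a contradiction with Definition \ref{def isom indé}. Hence Lemma \ref{lem non atomic} applies and $\nu$ is non-atomic. Combining this with $\nu(B_L)=1$, for $\nu\otimes\nu$-almost every $(\xi,\eta)$ we have $\xi\neq\eta$ and $\xi\in B_L$.

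It then remains to feed this pair into the geometric statements. Proposition \ref{prop angle B_L} says that $\angle(\xi,\eta)=\pi$ whenever $\xi\in B_L$ and $\eta\neq\xi$, which gives the first assertion. For the second, assume $X$ proper: by \cite[Lemma 8.4]{petyt_spriano_zalloum22} every point of $B_L$ is a visibility point, i.e.\ for each $\eta\neq\xi$ there is a bi-infinite geodesic line in $X$ with endpoints $\xi$ and $\eta$; applying this to $\nu\otimes\nu$-almost every pair finishes the proof. The only step that is not a completely mechanical citation is the non-atomicity of $\nu$, namely ruling out finite $G$-orbits in $\bd X$ from the presence of independent contracting isometries; everything else is a direct assembly of Theorem \ref{thm cv cat}, Proposition \ref{prop angle B_L}, and the visibility result of \cite{petyt_spriano_zalloum22}.
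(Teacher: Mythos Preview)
Your proof is correct and follows the same route as the paper: the paper's proof simply cites Theorem \ref{thm cv cat} (giving $\nu(B_L)=1$) together with Proposition \ref{prop angle B_L} for the angle statement, and \cite[Lemma 8.4]{petyt_spriano_zalloum22} for visibility in the proper case. Your additional non-atomicity step is not strictly required, since the statement already restricts to distinct pairs $\xi\neq\eta$, but it does no harm and your justification via North--South dynamics of contracting isometries is sound.
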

	
	\begin{proof}
		The first result is just a combination of Theorem \ref{thm cv cat} and Proposition \ref{prop angle B_L}. The second assertion uses the fact that if moreover the space is proper, points of $B_L$ are visibility points \cite[Lemma 8.4]{petyt_spriano_zalloum22}.
	\end{proof}
	
	\begin{erem}
		If $X$ is proper, then actually almost every pair of boundary points is joined by a rank one geodesic line. It was done in \cite[Corollary 4.7]{LeBars22}.
	\end{erem}

	\subsection{Uniqueness of the stationary measure}\label{section uniq stat cat}

	From now on, let $G$ be a discrete countable group and $G \curvearrowright X$ an action by isometries on a complete $\cat (0)$ space $X$, and assume that $G$ admits a pair of independent contracting isometries. Let $\mu \in \prob(G) $ be an admissible probability measure on $G$. Fix $L > 0$ such that $G$ acts on the hyperbolic space $X_L = (X,d_L)$ with a pair of independent loxodromic isometries. By Theorem~\ref{thm cv cat}, there exists a stationary measure $\tilde{\nu}$ on $\bd X$ given by the hitting measure of the random walk. The goal of this section is to show that` $\tilde{\nu}$ is the unique $\mu$-stationary measure on $\overline{X} = X \cup \bd X$. For the rest of the section, let $\nu$ be any stationary measure on $\bd X$. In order to prove that $\nu$ is unique, we show that the measures $\nu_\omega $ given by the Theorem \ref{thm mesures limites} are in fact Dirac measures $\delta_{\psi_+(\omega)}$, and that they do not depend on $\nu $. 
	\newline
	
	The following result was proven by Karlsson, and establishes a weak notion of convergence action for $\cat$(0) spaces. 
	
	\begin{ethm}[{\cite[Theorem 27]{karlsson05}}]\label{thm pi/2 cv karlsson}
		Let $X$ be a complete $\cat$(0) space. Let $o \in X$, and let $g_n$ be a sequence of isometries such that $g_n o \rightarrow \xi^+ \in \bd X$ and $g_n^{-1} o \rightarrow \xi^{-}\in \bd X$. Then for any $\eta \in \bd X $ with $\angle (\eta, \xi^{-}) > \pi/2 $, we have 
		\begin{eqnarray}
			\limsup_n \angle (g_n \eta, \xi^+) \leq \pi/2 \nonumber. 
		\end{eqnarray}
	\end{ethm}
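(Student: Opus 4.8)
The plan is to transport the whole configuration by the isometry $g_n$ to the point $p_n:=g_n^{-1}o$, which tends to $\xi^-$, and then to run a single convexity estimate for a Busemann function. First I would use that angles are $\iso(X)$-invariant: $\angle_o(g_n\eta,\xi^+)=\angle_{p_n}(\eta,g_n^{-1}\xi^+)$ and $\angle_{p_n}(o,g_n^{-1}\xi^+)=\angle_o(g_no,\xi^+)$, the last quantity tending to $0$ because $g_no\to\xi^+$ (convergence to a boundary point means precisely that the segments $[o,g_no]$ converge to $\gamma_o^{\xi^+}$ uniformly on compacta, hence their initial directions at $o$ do, so the Alexandrov angle at $o$ goes to $0$). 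By the triangle inequality for Alexandrov angles at $p_n$ it therefore suffices to prove
\[
\limsup_{n\to\infty}\angle_{p_n}(\eta,o)\le\tfrac{\pi}{2}.
\]

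For this I would set $b:=b_\eta^o$, the Busemann function based at $o$ toward $\eta$. The hypothesis $\angle(\eta,\xi^-)>\pi/2$ means, via the standard identification of the asymptotic slope of a Busemann function along a ray with $-\cos$ of the corresponding Tits angle (using convexity of $b$ along geodesics, Proposition \ref{prop convexite cat}, and monotonicity of the slope of a convex function), that $b$ grows along $\gamma_o^{\xi^-}$ at a definite linear rate: there are $c>0$ and $C\ge 0$ with $b(\gamma_o^{\xi^-}(t))\ge ct-C$ for all $t\ge 0$. Writing $D_n:=d(o,p_n)\to\infty$, conical convergence of $p_n$ to $\xi^-$ forces $d(p_n,\gamma_o^{\xi^-})=o(D_n)$, and since $b$ is $1$-Lipschitz, $b(p_n)\ge (c-o(1))D_n$. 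Now I would restrict $b$ to the geodesic segment $[p_n,o]$, parametrised from $p_n$: this is a convex function on $[0,D_n]$ equal to $b(p_n)\ge (c-o(1))D_n$ at the left end and to $b(o)=0$ at the right end, so its right derivative at $p_n$ is at most the average slope $-b(p_n)/D_n\le-(c-o(1))$. By the first-variation formula for Busemann functions that right derivative is $-\cos\angle_{p_n}(\eta,o)$, whence $\cos\angle_{p_n}(\eta,o)\ge c-o(1)$ and $\angle_{p_n}(\eta,o)\le\arccos(c-o(1))$, which is $<\pi/2$ for $n$ large. Combined with the reduction above this yields $\limsup_n\angle_o(g_n\eta,\xi^+)\le\arccos(c)<\pi/2$, in particular the asserted bound; one checks on $\mathbb{E}^2$ and $\mathbb{H}^2$ that $\arccos(c)$ is in fact sharp.

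The main obstacle is twofold. First, since $X$ is only complete and not proper, the orbit points $g_n^{\pm1}o$ need not stay a bounded distance from the rays $\gamma_o^{\xi^\pm}$ — only a \emph{sublinear} distance $o(D_n)$ — so one must check carefully that the approximations $b(p_n)=(c+o(1))D_n$ and $\angle_{p_n}(o,g_n^{-1}\xi^+)=\angle_o(g_no,\xi^+)\to0$ degrade by only $o(1)$; this is routine but uses convexity of the metric (Proposition \ref{prop convexite cat}) in an essential way. Second, and more substantively, one must establish (or invoke) the positivity of the asymptotic slope of $b_\eta^o$ along $\gamma_o^{\xi^-}$ from the hypothesis $\angle(\eta,\xi^-)>\pi/2$; this is exactly the point where the strict threshold $\pi/2$ enters, and it rests on the description of the Tits metric in Theorem \ref{tits} together with convexity of Busemann functions. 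Everything else is the $\cat$(0) comparison and first-variation machinery already recalled in the text.
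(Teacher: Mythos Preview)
The paper does not prove this statement; it is quoted from Karlsson and used as a black box. So there is no ``paper's proof'' to compare against, and the question is simply whether your argument is correct.

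There is a genuine gap, and it sits at the very first step. The theorem is about the \emph{angular metric} $\angle(g_n\eta,\xi^+)=\sup_{x\in X}\angle_x(g_n\eta,\xi^+)$, whereas your entire argument only bounds the Alexandrov angle at the single basepoint $o$. Your identity $\angle_o(g_n\eta,\xi^+)=\angle_{p_n}(\eta,g_n^{-1}\xi^+)$ is correct, and the triangle inequality you apply is the triangle inequality for Alexandrov angles at the point $p_n$; composing, you obtain $\limsup_n\angle_o(g_n\eta,\xi^+)<\pi/2$. But since $\angle_o\le\angle$ in general, this does not imply $\limsup_n\angle(g_n\eta,\xi^+)\le\pi/2$. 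Replacing $o$ by another point $x$ just moves the problem: you would need to control $\angle_{g_n^{-1}x}(\eta,o)$ uniformly over all $x$, and the Busemann/first-variation estimate you run at $p_n$ gives no such uniformity. Karlsson's argument avoids this by working directly with Busemann functions and the characterisation of $\angle(\eta,\zeta)$ via the asymptotic slope of $b_\eta$ along a ray to $\zeta$, rather than via angles at a single point.

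There is also a secondary problem. Your claim that cone convergence $p_n\to\xi^-$ forces $d(p_n,\gamma_o^{\xi^-})=o(D_n)$ is false already in proper $\cat(0)$ spaces. Take the tree obtained from the half-line $[0,\infty)$ by attaching at each integer $n$ a ray; let $o=0$, $\xi^-$ the end of the main half-line, and $p_n$ the point at distance $n$ along the $n$-th attached ray. Then $\gamma_o^{p_n}(R)=\gamma_o^{\xi^-}(R)$ for every fixed $R\le n$, so $p_n\to\xi^-$ conically, yet $d(p_n,\gamma_o^{\xi^-})=n$ while $D_n=2n$. What you actually need, $b_\eta(p_n)\ge(c-o(1))D_n$, may still be salvageable by a different route, but not through the sublinear-distance claim as stated.
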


	Using this result, we can prove that there is a unique $\mu$-stationary measure on the boundary.

	\begin{ethm}\label{thm uniqueness stat measure cat}	
		Let $G$ be a discrete countable group and $G \curvearrowright X$ a an action by isometries on a complete $\cat $(0) space $X$ such that $G$ contains a pair of independent contracting elements. Let $\mu \in \prob(G) $ be an admissible probability measure on $G$. Then there exists a unique $\mu$-stationary measure $\nu \in \prob(\overline{X})$, which is the hitting measure of the random walk $(Z_n (\omega)o)_n$.  
	\end{ethm}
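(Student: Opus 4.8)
The plan is to reduce the uniqueness statement to the corresponding fact on the hyperbolic model $X_L$, which was already obtained via boundary theory in the previous section, and then to transfer it back to $\overline{X}$ using the boundary embedding $\partial_L : B_L \to \bdg X_L$ of Theorem \ref{thm homeomorphism bords XL}. The crucial subtlety is that a stationary measure on $\overline{X}$ could a priori put mass either on the ``interior'' $X$, on the part of $\bd X$ not covered by $B_L$, or on $B_L$ itself; I need to rule out the first two possibilities.

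First I would recall from Theorem \ref{thm mesures limites} that for any $\mu$-stationary $\nu \in \prob(\overline X)$ there is a measurable map $\omega \mapsto \nu_\omega$ with $Z_n(\omega)_\ast \nu \to \nu_\omega$ weakly-$\ast$ and $\nu = \int_\Omega \nu_\omega \, d\mathbb P(\omega)$. The heart of the argument is to show that $\nu_\omega = \delta_{z^+(\omega)}$ for $\mathbb P$-almost every $\omega$, where $z^+(\omega) \in B_L$ is the almost sure limit of the random walk provided by Theorem \ref{thm cv cat}; since this description does not involve $\nu$, the decomposition formula then forces $\nu = \int_\Omega \delta_{z^+(\omega)}\, d\mathbb P(\omega) = \tilde\nu$. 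To prove the Dirac property, I would argue as follows. By Theorem \ref{thm cv cat} and Proposition \ref{prop subseq inverse}, for almost every $\omega$ there is a subsequence $n_k$ with $Z_{n_k}(\omega) o \to z^+(\omega) \in B_L$ and $Z_{n_k}(\omega)^{-1} o \to z^-(\omega) \in B_L$ (in the visual boundary, via the homeomorphism $\partial_L$). Proposition \ref{prop angle B_L} gives $\angle(z^-(\omega), \eta) = \pi > \pi/2$ for every $\eta \neq z^-(\omega)$, so Karlsson's Theorem \ref{thm pi/2 cv karlsson} applies with $g_n = Z_{n_k}(\omega)$: for every $\eta \in \bd X$ with $\eta \neq z^-(\omega)$ we get $\limsup_k \angle(Z_{n_k}(\omega)\eta, z^+(\omega)) \leq \pi/2$. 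On the other hand, again by Proposition \ref{prop angle B_L}, a point of $\bd X$ at angular distance $\leq \pi/2$ from $z^+(\omega) \in B_L$ must equal $z^+(\omega)$. Hence $Z_{n_k}(\omega)\eta \to z^+(\omega)$ for all $\eta \in \bd X \setminus \{z^-(\omega)\}$. Since $\nu$ is non-atomic on $\bd X$ (the action has no finite orbit on $\bd X$ because it contains independent contracting isometries, so Lemma \ref{lem non atomic} applies) and, more carefully, assigns zero mass to $\{z^-(\omega)\}$ for $\mathbb P$-almost every $\omega$ (because the distribution of $z^-$ is itself non-atomic, being the hitting measure of the reflected walk), the dominated convergence theorem gives $Z_{n_k}(\omega)_\ast (\nu|_{\bd X}) \to \delta_{z^+(\omega)}$.

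It remains to handle the mass that $\nu$ might put on $X$ rather than $\bd X$. For $x \in X$ one has $d(o, Z_{n_k}(\omega) x) \geq d(o, Z_{n_k}(\omega) o) - d(o,x)$, and since $d(o, Z_{n_k}(\omega) o) \to \infty$ almost surely (the drift is positive by the analogue of Theorem \ref{thm gouezel drift} on $X_L$, or simply because $Z_n o$ converges to a boundary point), any accumulation point of $Z_{n_k}(\omega) x$ lies in $\bd X$; a shadow/angle argument identical to the one above, using that the geodesic $[o, Z_{n_k}(\omega)x]$ eventually crosses the same $L$-chain dual to $[o, Z_{n_k}(\omega)o]$, shows $Z_{n_k}(\omega) x \to z^+(\omega)$ as well, uniformly on compact subsets of $X$ — or, more cleanly, one observes that $\nu_\omega$ is supported on $\bd X$ because the pushforwards escape every ball, and then the previous paragraph determines it. Combining, $\nu_{\omega} = \lim_k Z_{n_k}(\omega)_\ast \nu = \delta_{z^+(\omega)}$ for almost every $\omega$, which completes the argument.

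I expect the main obstacle to be the bookkeeping around the $\nu$-negligibility of the ``repelling'' point $z^-(\omega)$ and the transfer of the angular estimates through the identification $\partial_L\colon B_L \to \bdg X_L$ in the non-proper setting: one must be careful that Proposition \ref{prop angle B_L}, Proposition \ref{prop subseq inverse} and Theorem \ref{thm pi/2 cv karlsson} are all available for a merely complete, separable (not proper) $\cat$(0) space, and that the measurability needed to apply Fubini-type arguments to $(\omega, \eta) \mapsto \angle(Z_{n_k}(\omega)\eta, z^+(\omega))$ holds. The rest is a routine assembly of results already established in the excerpt.
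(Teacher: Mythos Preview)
Your proposal is correct and follows essentially the same route as the paper's proof: convergence of $(Z_n o)$ to $z^+(\omega)\in B_L$ via Theorem \ref{thm cv cat}, subsequential convergence of $(Z_n^{-1}o)$ to $z^-(\omega)\in B_L$ via Proposition \ref{prop subseq inverse}, angular isolation of $B_L$-points via Proposition \ref{prop angle B_L}, Karlsson's Theorem \ref{thm pi/2 cv karlsson} to force $Z_{n_k}(\omega)_\ast\nu\to\delta_{z^+(\omega)}$, and finally the decomposition from Theorem \ref{thm mesures limites}. Two small remarks: the $\nu$-negligibility of $\{z^-(\omega)\}$ follows directly from $\nu$ being non-atomic (Lemma \ref{lem non atomic}), so you need not invoke the distribution of $z^-$; and your explicit treatment of possible mass on $X$ is a point the paper leaves implicit, though it is handled by the same observation that $Z_n x\to z^+(\omega)$ for every $x\in X$.
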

	
	\begin{proof}[Proof of Theorem \ref{thm uniqueness stat measure cat}]
		Let $L$ be such that $G$ acts non-elementarily by isometries on $X_L$. By Theorem \ref{thm cv cat}, the sequence $(Z_{n_k} o)_n $ converges almost surely to a point of the boundary $\xi^+(\omega) \in B_L \subseteq \bd X$. By Lemma \ref{prop subseq inverse}, for $\mathbb{P}$-almost every $\omega$, there exists a subsequence $n_k$ such that the the sequence $(Z_n^{-1}o)$ in $X_L$ converges to a point of the Gromov boundary $\bdg X_L$. By Proposition \ref{prop inverse map}, this implies that $\mathbb{P}$-almost surely, there exists a subsequence such that the sequence $(Z_n^{-1}o)$ in $X$ converges to a point $\xi^- (\omega) \in B_L$ of the visual boundary $\bdg X_L$. Now by Proposition \ref{prop angle B_L}, we have that 
		\begin{eqnarray}
			\{\eta \in \bd X \; | \; \angle(\eta, \xi^+(\omega)) < \pi\} = \{\xi^+(\omega)\}  \nonumber
		\end{eqnarray}
		and 
		
		\begin{eqnarray}
			\{\eta \in \bd X \; | \; \angle(\eta, \xi^-(\omega))< \pi\} = \{\xi^-(\omega)\} \nonumber.
		\end{eqnarray}

		Let $\nu$ be a $\mu$-stationary measure. For $\mathbb{P}$-almost every $\omega \in \Omega$, let $\xi^+(\omega), \xi^-(\omega) \in \bd X$ be such that $Z_n (\omega) o \rightarrow \xi^+ (\omega) $ and $Z_{n_k}^{-1}(\omega)o \rightarrow_k \xi^- (\omega) \in \bd X$. By Theorem \ref{thm pi/2 cv karlsson}, for all measurable set $F \subseteq \bd X \setminus \{\xi^- (\omega)\}$, and for all open set $U$ containing $\xi^+(\omega)$, we have that $Z_{n_k}(\omega) F \rightarrow_k U$ in the sense that there exists $k_0 \geq 0$ such that for all $k \geq k_0$, $Z_{n_k} (\omega) F \subseteq U$.  As $\nu$ is $\mu$-stationary, it is non-atomic by Lemma \ref{lem non atomic} and $\nu(\xi^-(\omega)) = 0$. As a consequence, for all measurable set $F$ of $\bd X$,  $Z_{n_k}(\omega)_\ast\nu(F)$ converges to $1$ if $\xi^{+}(\omega) \in F$ and to $0$ otherwise. In other words, $Z_{n_k}(\omega) \nu \rightarrow_k \delta_{\xi^{+}(\omega)}$ in the weak-$\ast$ topology. 
		
		Now by Theorem \ref{thm mesures limites}, there exists an (essentially well-defined) measurable $G$-map $\omega \mapsto \nu_\omega$ such that $\mathbb{P}$-almost surely 
		\begin{eqnarray}
			Z_n(\omega)_\ast\nu \rightarrow \nu_\omega \in \prob(\bd X) \nonumber 
		\end{eqnarray}
		in the weak-$\ast$ topology. By uniqueness of the limit, $\nu_\omega = \delta_{\xi^{+}(\omega)}$ almost surely. Moreover, $\nu$ satisfies the decomposition 
		\begin{eqnarray}
			\nu = \int_{\Omega} \nu_\omega d \mathbb{P}(\omega) \nonumber. 
		\end{eqnarray}
		As a consequence, $\nu$ can be written 
		\begin{eqnarray}
			\nu = \int_{\Omega} \delta_{\xi^{+}(\omega)} d \mathbb{P}(\omega) \nonumber, 
		\end{eqnarray}
		where $\delta_{\xi^+(\omega)} $ does not depend on $\nu$. The theorem follows. 
	\end{proof}

	\subsection{Positive drift and consequences}
	
	In this section, study the speed at which the random walk converges to the boundary. 
	Let $G$ be a discrete countable group, $G \curvearrowright X$ an action by isometries on a complete $\cat (0)$ space $X$, and assume that $G$ admits a pair of independent contracting isometries. Let $\mu \in \prob(G) $ be an admissible probability measure on $G$. Fix $L > 0$ such that $G$ acts on the hyperbolic space $X_L = (X,d_L)$ with a pair of independent loxodromic isometries. By Theorem~\ref{thm cv cat}, there exists a stationary measure $\tilde{\nu}$ on $\bd X$ given by the hitting measure of the random walk. 
	
	We refer to Section \ref{section drift} for the definitions. Assume that $\mu$ has finite first moment $\int d(go, o) d\mu(g) < \infty$. As a consequence of Kingman subadditive Theorem \ref{thm kingman}, there is a constant $\lambda$ such that for $\mathbb{P}$-almost every sample path $(Z_n(\omega)x)_n$ we have 
	\begin{eqnarray}
		\frac{1}{n}d(Z_n(\omega) o , o) \underset{n\rightarrow\infty}{\longrightarrow} \lambda:= \inf_n \frac{1}{n} \int d(Z_n(\omega) o, o) d\mathbb{P}(\omega) \in [0, \infty)\nonumber. 
	\end{eqnarray}
	In general, the \textit{escape rate} (or \textit{drift}) of the random walk is defined as 
	\begin{eqnarray}
		l_X(\mu) :=\lim_{n\rightarrow \infty} \frac{1}{n}\int_\Omega d(Z_n(\omega)o, o) d \mathbb{P}(\omega) \nonumber.
	\end{eqnarray}
	if $\mu$ has finite first moment, and $l_X(\mu)= \infty $ otherwise. 
	We recall the following result, due to Gouëzel. 
	
	\begin{ethm}[{\cite[Theorem 1.2]{gouezel22}}]\label{thm gouezel 1.2}
		Let $G$ be a discrete countable group and $G \curvearrowright (Y,d_Y)$ be a non-elementary action on a Gromov-hyperbolic space $Y$, and let $o \in Y$. Let $\mu$ be an admissible probability measure on $G$. Recall that the drift $ l_Y(\mu)$ is positive. Let $r < l_Y(\mu)$. Then there exists $\kappa > 0 $ such that 
		\begin{eqnarray}
			\mathbb{P}(d_Y(o, Z_n (\omega) o ) \leq r n) \leq \exp(-\kappa n) \nonumber. 
		\end{eqnarray}
	\end{ethm}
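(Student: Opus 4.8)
The plan is to follow Gou\"ezel's \emph{pivoting} technique from \cite{gouezel22}: with no moment hypothesis on $\mu$, neither Kingman's subadditive theorem nor the martingale/shadow arguments of Maher--Tiozzo are available, so the linear growth of $d_Y(o, Z_n o)$ together with its exponential concentration has to be produced by hand, from a combinatorial construction along the sample paths.

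First I would exploit non-elementarity to build a finite \emph{Schottky set} $F = \{g_1, \dots, g_k\} \subseteq G$: using that $G$ contains two independent loxodromic isometries of $Y$ and passing to high powers of suitable conjugates, one arranges that the $2k$ points $g_1 o, g_1^{-1}o, \dots, g_k o, g_k^{-1} o$ are pairwise at small Gromov product based at $o$ and that each $g_i$ has large translation length. Since $\mu$ is admissible, some convolution power $\mu^{*m}$ gives positive mass to each $g_i$; replacing $\mu$ by $\mu^{*m}$ only rescales $n$ and $l_Y(\mu)$ by $m$, so I may assume $\mu(g_i) > 0$ for all $i$. Then I would set up the pivot times: along a trajectory one maintains an increasing, dynamically updated list of indices $n_1 < n_2 < \cdots$ at which the increment is a Schottky element carrying $Z_{n_j} o$ in a direction transverse both to the already-pivoted past and, conditionally, to the generic future. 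The technical core — and the step I expect to be the main obstacle — is the combinatorial lemma showing that the process which creates and, when later increments are ``bad'', cancels pivots stochastically dominates a biased nearest-neighbour walk on $\mathbb{Z}_{\ge 0}$ with strictly positive drift; this is exactly what replaces the quantitative control one would normally get from moment bounds. It yields constants $c > 0$ and $\kappa_1 > 0$ with $\mathbb{P}(N_n \le cn) \le e^{-\kappa_1 n}$, where $N_n$ is the number of confirmed pivots at time $n$.

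On the geometric side I would show $d_Y(o, Z_n o) \ge \varepsilon N_n$ for a fixed $\varepsilon > 0$: a configuration of points joined by Schottky pieces in transverse position fellow-travels a quasigeodesic by a local-to-global (bottleneck/Morse) argument in the $\delta$-hyperbolic space $Y$, which is precisely the content of the quasigeodesic stability and weak-convexity-of-shadows statements recalled in the preliminaries. Combining with the pivot bound gives $\mathbb{P}(d_Y(o,Z_n o) \le r_0 n) \le e^{-\kappa_1 n}$ for the single value $r_0 = \varepsilon c$. The last point is to reach \emph{every} $r < l_Y(\mu)$: here one bootstraps, feeding the exponential lower tail just obtained back into the construction carried out at the coarser scale of the confirmed pivot skeleton (a renewal of the argument), so that the drift captured by the pivots can be pushed arbitrarily close to $l_Y(\mu)$, at the price of a larger Schottky set and a smaller $\kappa$. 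All of this is soft once the pivot combinatorics of \cite{gouezel22} is in place; the remainder is hyperbolic geometry of the kind already developed above.
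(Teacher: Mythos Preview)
The paper does not prove this theorem: it is stated with a citation to \cite[Theorem 1.2]{gouezel22} and used as a black box (the only comment following the statement is that there is no moment assumption on $\mu$). So there is no ``paper's own proof'' to compare against.

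That said, your sketch is a faithful outline of Gou\"ezel's actual argument in \cite{gouezel22}: the Schottky set, the pivot process dominated by a biased random walk yielding exponential decay for $\{N_n \le cn\}$, the geometric lower bound $d_Y(o,Z_n o) \ge \varepsilon N_n$ from local-to-global in hyperbolic spaces, and the bootstrap to reach every $r < l_Y(\mu)$. One small inaccuracy: you cannot simply ``replace $\mu$ by $\mu^{*m}$'' to get the Schottky elements into the support, since the theorem concerns the walk at all times $n$, not just multiples of $m$; Gou\"ezel handles this by decomposing $\mu$ as a convex combination $\mu = \alpha \mu_F + (1-\alpha)\nu$ where $\mu_F$ is supported on the Schottky set, and running the pivot argument with respect to this decomposition. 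Apart from that, your plan matches the source.
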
	
	Note that there is no moment assumption on $\mu$ in Theorem \ref{thm gouezel 1.2}. In particular, the escape rate $l_Y(\mu)$ is positive (possibly infinite). 
	
	Using the fact that between any two points $x,y \in X$, $d_L(x,y) \leq d(x,y)$, the drift of the random walk in $X$ is positive. 
	
	\begin{ethm}\label{thm drift cat}
		Let $G$ be a discrete countable group and $G \curvearrowright X$ a an action by isometries on a complete $\cat $(0) space $X$ such that $G$ contains a pair of independent contracting elements. Let $\mu \in \prob(G) $ be an admissible probability measure on $G$. Then almost surely, $l_X(\mu) > 0$. If moreover $\mu$ has finite first moment, then $l_X(\mu) < \infty $, and almost surely
		\begin{eqnarray}
			\frac{1}{n}d(o, Z_n o) \underset{n}{\rightarrow} l_X(\mu) \nonumber. 
		\end{eqnarray}
	\end{ethm}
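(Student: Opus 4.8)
The plan is to transport the problem to the hyperbolic model $X_L$ and invoke the positivity of the drift there, then upgrade this to the $\cat$(0) metric. First I would fix $L>0$ as in Proposition \ref{prop non elem}, so that $G$ acts non-elementarily by isometries on the Gromov-hyperbolic space $X_L = (X,d_L)$. By Theorem \ref{thm gouezel 1.2} (Gouëzel), the escape rate $l_{X_L}(\mu)$ of the random walk in $X_L$ is strictly positive, and moreover for any $r < l_{X_L}(\mu)$ there is $\kappa>0$ with $\mathbb{P}(d_L(o, Z_n(\omega)o) \leq rn) \leq \exp(-\kappa n)$ --- crucially with no moment hypothesis. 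Since the identity map $\iota_L : (X,d) \rightarrow (X,d_L)$ is $1$-Lipschitz, i.e.\ $d_L(x,y) \leq d(x,y)$ for all $x,y\in X$ (Remark \ref{remark curtains thick intro} and the line preceding Lemma \ref{lem dual chain intro}, or directly \cite[Lemma 2.10]{petyt_spriano_zalloum22}), we get for all $n$ and all $\omega$ that $d_L(o, Z_n(\omega)o) \leq d(o, Z_n(\omega)o)$. Hence, picking any $0 < r < l_{X_L}(\mu)$,
\begin{eqnarray}
	\mathbb{P}\big(d(o, Z_n(\omega)o) \leq rn\big) \leq \mathbb{P}\big(d_L(o, Z_n(\omega)o) \leq rn\big) \leq \exp(-\kappa n). \nonumber
\end{eqnarray}
By Borel--Cantelli, almost surely $d(o, Z_n(\omega)o) > rn$ for all large $n$, so $\liminf_n \frac{1}{n} d(o, Z_n(\omega)o) \geq r > 0$ almost surely. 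This already shows $l_X(\mu) \geq r > 0$ with the $\liminf$ definition, which handles the general (possibly infinite-moment) case.

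For the second assertion, suppose $\mu$ has finite first moment $\int_G d(go,o)\, d\mu(g) < \infty$. Then the cocycle $a(n,\omega) = d(Z_n(\omega)o, o)$ is subadditive and integrable (a routine check using the triangle inequality and isometry invariance, exactly as in Section \ref{section drift}), so Kingman's subadditive ergodic theorem (Theorem \ref{thm kingman}) applies: the limit $\lim_n \frac{1}{n} d(o, Z_n(\omega)o)$ exists $\mathbb{P}$-almost surely and equals the constant $l_X(\mu) = \inf_n \frac{1}{n}\int_\Omega d(Z_n(\omega)o,o)\,d\mathbb{P}(\omega) \in [0,\infty)$; finiteness is immediate since $\frac{1}{n}\int_\Omega d(Z_n o,o)\,d\mathbb{P} \leq \int_G d(go,o)\,d\mu(g) < \infty$ by subadditivity and the triangle inequality. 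Combining this with the almost sure lower bound $\liminf_n \frac{1}{n}d(o, Z_n o) \geq r > 0$ from the first part forces $l_X(\mu) \geq r > 0$, which gives the strict positivity and the almost sure convergence $\frac{1}{n}d(o, Z_n o) \to l_X(\mu)$.

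I do not expect a serious obstacle here: the two ingredients --- Gouëzel's moment-free positive-drift theorem applied in $X_L$ and the Lipschitz comparison $d_L \leq d$ --- fit together almost immediately, and Kingman's theorem is standard. The only point requiring a little care is that the Gouëzel exponential estimate is stated for the $d_L$-displacements, so one must be slightly attentive that the event $\{d(o,Z_n o) \leq rn\}$ is contained in $\{d_L(o, Z_n o) \leq rn\}$ rather than the reverse; but this is exactly the direction the inequality $d_L \leq d$ provides. One should also recall that the almost-sure convergence $\frac{1}{n}d(o, Z_n o) \to l_X(\mu)$ in the finite-first-moment case makes the Karlsson--Margulis theorem \cite[Theorem 2.1]{karlsson_margulis} applicable, yielding immediately the sublinear tracking corollary stated in the introduction, though that is a separate statement.
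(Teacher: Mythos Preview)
Your proposal is correct and follows essentially the same approach as the paper: pass to the hyperbolic model $X_L$ via Proposition \ref{prop non elem}, invoke Gou\"ezel's positive-drift theorem there, and transfer back using $d_L \leq d$. The paper's proof is terser --- it compares the integrals defining $l_{X_L}(\mu)$ and $l_X(\mu)$ directly rather than going through the exponential tail bound and Borel--Cantelli, and leaves the finite-first-moment case to the earlier discussion of Kingman's theorem --- but the substance is identical.
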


	\begin{proof}
		Let $L \geq 0$ be such that $G$ acts on $X_L$ by isometries with a pair of independent loxodromic isometries. By Gouëzel's Theorem \ref{thm gouezel 1.2}, $l_{X_L}(\mu)$ is almost surely positive. In other words 
		\begin{eqnarray}
			\liminf_n \frac{1}{n}\int_\Omega d_L(Z_n(\omega)o, o) d \mathbb{P}(\omega) > 0\nonumber.
		\end{eqnarray}
		But by definition of the chain metric $d_L \leq d$, and the result follows. 
	\end{proof}

	We can now add an application that is a reformulation of \cite[Theorem 2.1]{karlsson_margulis}, now that we know that the drift is positive. It states that we have a geodesic tracking of the random walk. 
	
	\begin{ecor}[{Sublinear tracking}]\label{cor sublinear tracking}
		Let $G$ be a discrete countable group and $G \curvearrowright X$ be an action by isometries on a complete $\cat (0)$ space $X$ such that $G$ contains a pair of independent contracting elements. Let $\mu \in \prob(G) $ be an admissible probability measure with finite first moment on $G$. Then for almost every $\omega \in \Omega$, there is a unique geodesic ray $\gamma^\omega : [0, \infty) \rightarrow X$ starting at $o$ such that 
		\begin{equation}
			\lim_{n\rightarrow \infty} \frac{1}{n} d(\gamma^\omega(\lambda n), Z_n(\omega)o) = 0, \nonumber
		\end{equation}
		where $\lambda $ is the (positive) drift of the random walk. 
	\end{ecor}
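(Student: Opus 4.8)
The plan is to apply the Karlsson--Margulis theorem (\cite[Theorem 2.1]{karlsson_margulis}) directly, since all of its hypotheses are now available. Recall that that theorem is stated for an ergodic measure-preserving transformation $T$ on a probability space together with an integrable cocycle into a semigroup of non-expansive maps of a complete $\cat$(0) space, and it produces, for almost every $\omega$, a unique geodesic ray $\gamma^\omega$ from a fixed basepoint with $\frac{1}{n} d(\gamma^\omega(\lambda n), u(n,\omega)o) \to 0$, provided the drift $\lambda$ is strictly positive. So the work is simply to fit the random walk into this framework and to verify each hypothesis.

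First I would set up the dynamical system: take $(\Omega, \mathbb{P}) = (G^{\mathbb{N}}, \mu^{\otimes \mathbb{N}})$ with the shift $S$, which is measure-preserving and ergodic (it is a Bernoulli shift), and let $f : \Omega \to G$ be $f(\omega) = \omega_1$, so that the associated cocycle is $u(n,\omega) = \omega_1 \cdots \omega_n = Z_n(\omega)$. Since $G$ acts on $X$ by isometries, each $Z_n(\omega)$ is in particular a non-expansive map of $X$, so the cocycle takes values in the right semigroup. The integrability hypothesis $\int_\Omega d(f(\omega)o, o)\, d\mathbb{P}(\omega) = \int_G d(go,o)\, d\mu(g) < \infty$ is exactly the assumption that $\mu$ has finite first moment, which we are given. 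Next, by Theorem~\ref{thm drift cat}, the drift $\lambda = l_X(\mu)$ of this cocycle is finite (by the first moment assumption) and strictly positive (since $G$ contains a pair of independent contracting isometries); this is the crucial positivity input that makes Karlsson--Margulis applicable and is the whole reason this corollary follows only now. With integrability and $\lambda > 0$ in hand, \cite[Theorem 2.1]{karlsson_margulis} yields, for $\mathbb{P}$-almost every $\omega$, a unique geodesic ray $\gamma^\omega$ from $o$ with
\begin{equation}
	\lim_{n \to \infty} \frac{1}{n} d(\gamma^\omega(\lambda n), Z_n(\omega) o) = 0, \nonumber
\end{equation}
which is precisely the assertion of the corollary.

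There is essentially no obstacle here; the proof is a matter of invoking the right earlier results in the right order. The only point deserving a sentence of care is the observation that isometries are non-expansive, so that the Karlsson--Margulis setting (which is stated for semigroups of non-expansive maps of spaces satisfying Busemann's inequality, a class containing $\cat$(0) spaces) genuinely covers our situation; and that the integrable ergodic cocycle $n \mapsto Z_n(\omega)$ coming from the i.i.d.\ increments is a legitimate instance of their general cocycle. One may also remark that the drift $\lambda$ is independent of the basepoint $o$ because the action is isometric, so the statement is well posed for any choice of $o$.
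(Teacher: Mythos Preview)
Your proof is correct and follows exactly the paper's approach: the corollary is stated as a direct reformulation of \cite[Theorem 2.1]{karlsson_margulis}, applicable now that Theorem~\ref{thm drift cat} guarantees the positivity of the drift. The only difference is that you spell out the verification of the Karlsson--Margulis hypotheses (ergodic Bernoulli shift, integrable cocycle from the finite first moment, isometries as non-expansive maps), whereas the paper simply invokes the theorem without further comment.
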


	\subsection{Proportion of contracting elements}
	
	In this Section, we prove that the probability that the step $(Z_n)$ is a contracting element goes to 1 as $n $ goes to infinity. Let then $G$ be a discrete countable group and $G \curvearrowright X$ an action by isometries on a complete $\cat (0)$ space $(X,d)$, and assume that $G$ admits a pair of independent contracting isometries for this action. Let $\mu \in \prob(G) $ be an admissible probability measure on $G$, and let $L$ be such that $G$ acts on the hyperbolic space $X_L$ in a non-elementary way. For random walks in geodesic hyperbolic spaces, Maher and Tiozzo show that the proportion of loxodromic isometries in the random walk $(Z_n)$ goes to 1 \cite[Theorem 1.4]{maher_tiozzo18}, but thanks to Bonk and Schramm \cite{bonk_schramm00}, this result extends to non-geodesic hyperbolic spaces. Combining this with Theorem \ref{thm contract loxodromic}, stating that a an element of $G$ that acts on $X_L$ as a loxodromic isometry is a contracting element of $X$, we prove the following result. 
	
	\begin{ethm}[{Proportion of contracting elements}]\label{thm prop contracting}
		Let $G$ be a discrete countable group and $G \curvearrowright X$ an action by isometries on a complete $\cat (0)$ space $(X,d)$, and assume that $G$ admits a pair of independent contracting isometries for this action. Let $\mu \in \prob(G) $ be an admissible probability measure on $G$. Then the proportion of contracting elements in $(Z_n)_n$ goes to $1$ as $n $ goes to $\infty$, i.e. 
		\begin{eqnarray}
			\mathbb{P}(\omega \, : \, Z_n (\omega) \text{ is a contracting isometry }) \underset{n\rightarrow \infty}{\rightarrow} 1 \nonumber.
		\end{eqnarray}
		If the support of $\mu$ is bounded in $X$, then there exist $c < 1$ and $ K, L>0$ such that for all $n $, 
		\begin{eqnarray}
			\mathbb{P}(\omega \, : \, Z_n (\omega) \text{ is not contracting }) \leq Kc^n  \nonumber.
		\end{eqnarray}
	\end{ethm}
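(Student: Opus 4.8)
The plan is to transport the problem to the hyperbolic model $X_L$ and invoke the corresponding result for random walks in hyperbolic spaces, then translate back using Theorem \ref{thm contract loxodromic}. First I would fix, by Proposition \ref{prop non elem}, an integer $L$ such that $G$ acts on the hyperbolic space $X_L = (X, d_L)$ by isometries with a pair of independent loxodromic isometries; this makes the action $G \curvearrowright X_L$ non-elementary. The key input from the hyperbolic world is the analogue of \cite[Theorem 1.4]{maher_tiozzo18}: in a separable, $\alpha$-almost geodesic, $\delta$-hyperbolic space on which $G$ acts non-elementarily, the translation length $|Z_n|_{X_L}$ of the random walk step grows at least linearly in $n$ almost surely, and in particular
\begin{eqnarray}
	\mathbb{P}(\omega \, : \, Z_n(\omega) \text{ is loxodromic on } X_L) \underset{n \rightarrow \infty}{\longrightarrow} 1. \nonumber
\end{eqnarray}
Maher and Tiozzo prove this for geodesic hyperbolic spaces; as indicated in the statement, one passes to the general (almost geodesic, non-geodesic) case via the isometric embedding of Theorem \ref{thm isom embed geod hyp space} into a complete geodesic $\delta$-hyperbolic space $Y$, noting that translation lengths and loxodromicity are preserved under an isometric embedding that extends to an equivariant embedding of boundaries — alternatively one invokes Gouëzel's drift estimate (Theorem \ref{thm gouezel 1.2}) together with the standard fact that a hyperbolic isometry moving the basepoint a distance comparable to $n$ along a quasigeodesic has translation length $\gtrsim n$.

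Once I have $\mathbb{P}(Z_n \text{ loxodromic on } X_L) \to 1$, the first assertion follows immediately from Theorem \ref{thm contract loxodromic}: an isometry of $X$ that is semisimple and acts loxodromically on some $X_L$ is a contracting isometry of $X$. One only has to know that the random walk steps $Z_n$ are semisimple isometries of $X$; since an element acting loxodromically on $X_L$ is in particular not elliptic-with-bounded-orbit-of-every-power and has an axis in $X_L$, Theorem \ref{thm contract loxodromic} (whose hypothesis is that $g$ is semisimple) applies once we know $Z_n$ is semisimple — and a loxodromic element of $X_L$ traverses a pair of $L$-separated curtains, which by Theorem \ref{thm contract loxodromic} is equivalent to being a contracting, hence axial (semisimple), isometry of $X$. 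So $\{Z_n \text{ loxodromic on } X_L\} \subseteq \{Z_n \text{ contracting on } X\}$, giving
\begin{eqnarray}
	\mathbb{P}(\omega \, : \, Z_n(\omega) \text{ is contracting on } X) \geq \mathbb{P}(\omega \, : \, Z_n(\omega) \text{ loxodromic on } X_L) \underset{n \rightarrow \infty}{\longrightarrow} 1. \nonumber
\end{eqnarray}

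For the exponential bound under the assumption that $\supp(\mu)$ is bounded in $X$, I would again reduce to $X_L$: since $d_L \leq d$, a bounded support in $(X,d)$ is bounded in $X_L$. The exponential decay $\mathbb{P}(Z_n \text{ not loxodromic on } X_L) \leq K c^n$ for bounded-support measures on hyperbolic spaces is due to Maher–Tiozzo (bounded support gives a finite first moment and more; their Theorem 1.4 with the sharper deviation estimates, or Gouëzel's Theorem \ref{thm gouezel 1.2} combined with a Borel–Cantelli / large-deviation argument controlling the Gromov products $(o \mid Z_n^{-1} o)_{Z_n o}$ that detect non-loxodromicity) yields the claim; as before one upgrades from geodesic to almost geodesic via Theorem \ref{thm isom embed geod hyp space}. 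The inclusion of events then transfers the bound verbatim to $X$. The main obstacle is the careful verification that the hyperbolic-space statement of \cite{maher_tiozzo18} — stated there for proper or at least geodesic spaces — genuinely survives passage to the almost geodesic, non-locally-compact setting; this is exactly the kind of routine-but-delicate extension already carried out for convergence to the boundary in Section \ref{section rw hyp}, and I would cite Bonk–Schramm \cite{bonk_schramm00} and the embedding theorem to justify it rather than reprove the deviation inequalities.
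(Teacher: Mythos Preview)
Your proposal is correct and follows essentially the same route as the paper: pass to the hyperbolic model $X_L$ via Proposition~\ref{prop non elem}, invoke the Maher--Tiozzo linear growth of translation length (extended to almost geodesic spaces through the Bonk--Schramm embedding, exactly as the paper does in the proposition preceding Theorem~\ref{thm trans length hyp}), and conclude via Theorem~\ref{thm contract loxodromic} that loxodromic on $X_L$ implies contracting on $X$. Your discussion of the semisimplicity hypothesis in Theorem~\ref{thm contract loxodromic} is slightly circular, but the paper's own proof glosses over this point in the same way; the honest resolution is that the implication ``loxodromic on $X_L$ $\Rightarrow$ contracting on $X$'' in \cite[Theorem~4.9]{petyt_spriano_zalloum22} does not actually require semisimplicity as an input.
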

	
	Let $(Y, d_Y)$ be a $\delta$-hyperbolic metric space in the sense of Gromov, and let $g \in \iso(Y)$ be an isometry of $Y$. Let $o \in Y$ be a basepoint. The \textit{translation length} of $g$ is defined to be 
	\begin{eqnarray}
		\ell(g) := \lim_{n \rightarrow \infty} \frac{1}{n}d_Y(o, g^n o) \nonumber. 
	\end{eqnarray}
	This limit is well-defined, and does not depend on the choice of the basepoint. An isometry $g $ is loxodromic if and only if its translation length $\ell(g)$ is $>0$. 
	
	In a hyperbolic space, we can estimate the translation length of an isometry in terms of its displacement on a given point, and a well-chosen Gromov product. 
	
	\begin{eprop}
		Let $(Y, d) $ be a $\delta$-Gromov hyperbolic space, and $o \in Y$ a point. Then there exists a constant $C:= C(\delta)$ depending only on $\delta$ such that the following holds. If $g \in \iso(Y)$ is an isometry satisfying the inequality 
		\begin{eqnarray}
			d_Y(o, go) \geq 2 (g o |g^{-1}o)_o + C,\nonumber
		\end{eqnarray}
		then the translation length of $g$ can be estimated 
		\begin{eqnarray}
			\ell(g) \geq d_Y(o, go) - 2 (g o |g^{-1}o)_o + O(\delta). \nonumber
		\end{eqnarray}
	\end{eprop}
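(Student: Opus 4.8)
The plan is to compare the orbit map $n \mapsto g^n o$ with a quasigeodesic and use a stability argument. First I would estimate $d_Y(o, g^n o)$ from below by induction on $n$, using the $\delta$-hyperbolic inequality
\[
(g^{n+1}o \mid g^{-1}o)_o \leq \max\{(g^{n+1}o \mid go)_o, (go \mid g^{-1}o)_o\} + \delta,
\]
together with the fact that $g$ is an isometry, so that Gromov products translate: $(g^{n+1}o \mid go)_o = (g^n o \mid o)_{g^{-1}o} = (g^n o \mid o)_o + \text{(error involving } d(o,g^{-1}o) \text{ and } (g^{-1}o\mid g^n o)_o)$. More precisely, the standard way to phrase this: set $a := d_Y(o, go)$ and $b := (go \mid g^{-1}o)_o$, and the hypothesis says $a \geq 2b + C$. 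The point is that the concatenation of the segments (or rough geodesics) $[g^k o, g^{k+1}o]$ for $k = 0, \dots, n-1$ is a $(1, C')$-quasigeodesic for a constant $C'$ depending only on $\delta$: the hypothesis $a \geq 2b + C$ guarantees that consecutive segments overlap by at most $b + O(\delta)$, which is small relative to their length $a$, and the "local-to-global" principle for quasigeodesics in hyperbolic spaces (via Theorem~\ref{thm isom embed geod hyp space} reducing to the geodesic case) upgrades this to a global quasigeodesic estimate.

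The key steps, in order: (1) rewrite $d_Y(g^k o, g^{k+1}o) = a$ and $(g^k o \mid g^{k+2} o)_{g^{k+1}o} = (g^{-1}o \mid go)_o = b$ by isometry-invariance of the Gromov product; (2) observe that the broken path through $o, go, g^2 o, \dots, g^n o$ has the property that at each interior vertex the incoming and outgoing directions make a "large angle" in the sense that the Gromov product at the vertex is $\leq b$, while each edge has length $a \geq 2b + C$; (3) invoke the local characterization of quasigeodesics in hyperbolic spaces to conclude that this path is a $(1, C'(\delta))$-quasigeodesic, so $d_Y(o, g^n o) \geq na - (n-1)\cdot 2b - n\,O(\delta) = n(a - 2b + O(\delta))$ — actually one gets $d_Y(o,g^n o) \geq na - 2(n-1)b - C''(\delta)$, which is what matters; (4) divide by $n$ and let $n \to \infty$ to get $\ell(g) = \lim_n \tfrac1n d_Y(o, g^n o) \geq a - 2b + O(\delta)$, which is exactly the claimed inequality $\ell(g) \geq d_Y(o, go) - 2(go \mid g^{-1}o)_o + O(\delta)$.

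The main obstacle is making step (3) precise in the non-geodesic, merely almost-geodesic setting: the clean statements of the local-to-global property for quasigeodesics are usually stated for geodesic spaces. I would handle this exactly as Corollary~\ref{cor stab qgeod hyp} and Corollary~\ref{cor conv shadows} do — embed $Y$ isometrically into a complete geodesic $\delta$-hyperbolic space $\overline{Y}$ via Theorem~\ref{thm isom embed geod hyp space}, replace each $[g^k o, g^{k+1}o]$ by a genuine geodesic of $\overline{Y}$ with the same endpoints (the Gromov products and distances between the relevant points are unchanged), run the local-to-global argument there, and then note that distances between points of $Y$ are the same whether computed in $Y$ or $\overline{Y}$. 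The only remaining care is bookkeeping the $O(\delta)$ and $O(C)$ error terms so that the hypothesis $a \geq 2b + C$ with the specific $C = C(\delta)$ is precisely what is needed to make the overlaps controlled; I expect this to be a short but slightly fiddly computation with the $4$-point $\delta$-inequality, of the kind that is routine once the geometric picture (a "staircase" of long segments turning sharply at each step) is fixed.
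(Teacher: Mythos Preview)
Your proposal is correct and follows essentially the same route as the paper: both reduce to the geodesic case via the Bonk--Schramm embedding (Theorem~\ref{thm isom embed geod hyp space}) and then run the standard ``orbit is a local quasigeodesic'' argument, which the paper simply cites as \cite[Proposition 5.8]{maher_tiozzo18} while you sketch it out. The only cosmetic difference is that the paper treats the geodesic case as a black box and notes that the relevant distances and Gromov products are preserved under the embedding, whereas you unpack the inductive/local-to-global content of that black box; both arrive at the same place.
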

	
	\begin{proof}
		If $(Y, d) $ is a geodesic hyperbolic space, this result is exactly \cite[Proposition 5.8]{maher_tiozzo18}. Denote by $C$ this universal constant. 
		
		Now in the general case, let $\iota : Y \rightarrow (Z, d_Z) $ be an isometric embedding of $Y$ into a geodesic, complete and $\delta$-hyperbolic metric space given by Theorem \ref{thm isom embed geod hyp space}. For convenience, we write $\widetilde{g} : \iota(Y) \rightarrow Z$, the isometry $\widetilde{g} := \iota \circ g \circ \iota^{-1}$, $\widetilde{g^{-1}} := \iota \circ g^{-1} \circ \iota^{-1}$ and $z := \iota(o) \in Z$.  
		
		By definition, $\iota(go) = \widetilde{g}z$ and $\widetilde{g^{-1}} = (\widetilde{g})^{-1}$. Moreover, since $\iota$ is an isometric embedding, 
		\begin{eqnarray}
			d_Z(\widetilde{g}z, z) = d_Y(go, o) \nonumber
		\end{eqnarray}
		and
		\begin{eqnarray}
			(\widetilde{g}z|(\widetilde{g})^{-1}z)_z = (go|g^{-1}o)_o \nonumber.
		\end{eqnarray}
		In particular, if one assumes that 
		\begin{eqnarray}
			d_Y(go, o) - 2  (g o |g^{-1}o)_o \geq C \nonumber, 
		\end{eqnarray}
		then 
		\begin{eqnarray}
			d_Z(\widetilde{g}z, z) - 2  (\widetilde{g}z|(\widetilde{g})^{-1}z)_z \geq C \nonumber. 
		\end{eqnarray}
		Following the same lines as in the proof of \cite[Proposition 5.8]{maher_tiozzo18}, we can conclude that 
		\begin{eqnarray}
			\lim_n \frac{1}{n}d_Z(z, \widetilde{g}^n z) \geq d_Z(\widetilde{g}z, z) - 2  (\widetilde{g}z|(\widetilde{g})^{-1}z)_z + O(\delta) . \nonumber
		\end{eqnarray}
		In particular, we have the lower bound 
		\begin{eqnarray}
			\ell(g) = \lim_n \frac{1}{n}d_Y(o, g^no) \geq d_Y(go, o) - 2  (g o |g^{-1}o)_o + O (\delta) \nonumber. 
		\end{eqnarray}
		
	\end{proof}
	
	With this in hand, the rest of the proof of \cite[Theorem 1.4]{maher_tiozzo18} remains true for almost geodesic, $\delta$-hyperbolic metric spaces. 
	
	\begin{ethm}[{\cite[Theorem 1.4]{maher_tiozzo18}}]\label{thm trans length hyp}
		Let $G$ be a discrete countable group and $G \curvearrowright Y$ a non-elementary action by isometries on a $\delta$-hyperbolic, almost geodesic metric space $(Y,d)$. Let $\mu \in \prob(G) $ be an admissible probability measure on $G$. Then the translation length $\ell(Z_n(\omega))$ of the group element $Z_n(\omega)$ grows almost surely at least linearly, i.e. there exists $L > 0$ such that  
		\begin{eqnarray}
			\mathbb{P}(\omega \, : \, \ell(Z_n (\omega)) \leq Ln ) \underset{n\rightarrow \infty}{\rightarrow} 0 \nonumber.
		\end{eqnarray}
		If $G$ is finitely generated, then there exist $c < 1$ and $ K, L>0$ such that for all $n $, 
		\begin{eqnarray}
			\mathbb{P}(\omega \, : \, \ell(Z_n (\omega)) \leq Ln ) \leq Kc^n  \nonumber.
		\end{eqnarray}
	\end{ethm}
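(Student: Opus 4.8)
The plan is to deduce Theorem \ref{thm trans length hyp} by combining the convergence of the random walk to the Gromov boundary (Theorem \ref{thm cv rw hyp}) with the geometric estimate on translation lengths proven just above, exactly as in \cite[Theorem 1.4]{maher_tiozzo18}, but keeping track of the fact that the ambient space $Y$ is only assumed $\delta$-hyperbolic and $\alpha$-almost geodesic rather than geodesic. First I would recall the estimate: there is $C = C(\delta)$ such that whenever $d_Y(o, go) \geq 2(go|g^{-1}o)_o + C$, one has $\ell(g) \geq d_Y(o,go) - 2(go|g^{-1}o)_o + O(\delta)$. So it suffices to show that, almost surely, $d_Y(o, Z_n o)$ grows linearly while the Gromov product $(Z_n o \mid Z_n^{-1} o)_o$ stays sublinear; then $\ell(Z_n) \gtrsim (l_Y(\mu) - o(1))\, n$, which gives the result with any $L < l_Y(\mu)$.

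The first ingredient, linear growth of $d_Y(o, Z_n o)$, is Gouëzel's Theorem \ref{thm gouezel drift} (equivalently \ref{thm gouezel 1.2}): for any $r < l_Y(\mu)$, $\mathbb{P}(d_Y(o, Z_n o) \leq rn) \leq e^{-\kappa n}$, with $l_Y(\mu) > 0$. The second ingredient — sublinearity of $(Z_n o \mid Z_n^{-1} o)_o$ — is where I would spend the effort. The point is that $Z_n o$ converges almost surely to a boundary point $z^+(\omega) \in \bdg Y$ by Theorem \ref{thm cv rw hyp}, and the reversed walk $Z_n^{-1} o = (\omega_n^{-1}\cdots\omega_1^{-1}) o$, which is a random walk driven by $\mui$, converges almost surely to a boundary point $z^-(\omega) \in \bdg Y$; moreover the joint limit $(z^-, z^+)$ lies in $\bdg Y^{(2)}$, i.e. $z^+(\omega) \neq z^-(\omega)$ almost surely (this is Theorem \ref{thm uniq bdry map hyp}, using non-elementarity and that $\nu$ is non-atomic by Lemma \ref{lem non atomic}). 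Since $(x_n | y_n)_o$ stays bounded whenever $x_n \to \xi$, $y_n \to \eta$ with $\xi \neq \eta$ in $\bdg Y$ — here one uses the definition of $(\xi|\eta)_o$ as a supremum of $\liminf$'s of Gromov products, finite off the diagonal up to $2\delta$, together with the characterisation of boundary convergence via Gromov products — we get that $(Z_n o \mid Z_n^{-1} o)_o$ is almost surely bounded, hence $o(n)$. Combining with the linear lower bound on $d_Y(o, Z_n o)$ and the translation-length estimate yields $\ell(Z_n(\omega)) \geq Ln$ eventually, for any fixed $L < l_Y(\mu)$, which proves the first assertion.

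For the exponential bound under the assumption that $G$ is finitely generated, I would follow Maher–Tiozzo's argument: finite generation gives a word metric with respect to which $\mu$ has exponential tails decay in the sense needed, and one upgrades the qualitative boundedness of $(Z_n o \mid Z_n^{-1} o)_o$ to a statement of the form $\mathbb{P}\big((Z_n o \mid Z_n^{-1} o)_o \geq \varepsilon n\big) \leq K' c'^n$ for suitable constants, using that a sample path reaching a deep shadow and then backtracking has exponentially small probability (this is the content of \cite[Section 5]{maher_tiozzo18}, which only uses shadow estimates, weak convexity of shadows, and Gouëzel's exponential deviation bound — all available in the almost-geodesic setting by Corollary \ref{cor conv shadows} and Theorem \ref{thm gouezel 1.2}). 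Feeding this together with $\mathbb{P}(d_Y(o,Z_n o) \leq rn) \leq e^{-\kappa n}$ into the translation-length estimate gives $\mathbb{P}(\ell(Z_n) \leq Ln) \leq K c^n$.

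The main obstacle is making sure that every step of the Maher–Tiozzo proof survives the passage from geodesic to $\alpha$-almost geodesic spaces. This is precisely why the preliminary section was set up: Theorem \ref{thm isom embed geod hyp space} embeds $Y$ isometrically into a complete geodesic $\delta$-hyperbolic space $Z$, and every inequality needed (the translation-length estimate, weak convexity of shadows, the shadow–subsequence lemmas) is either proven directly in this generality above or obtained by transporting the geodesic statement across the embedding $\iota$, noting that Gromov products, shadows and displacement are preserved by $\iota$. So concretely I would (i) reduce to the geodesic case via $\iota$ wherever a geodesic hypothesis is invoked in \cite{maher_tiozzo18}, (ii) invoke Theorem \ref{thm cv rw hyp} for both the forward and backward walks and Theorem \ref{thm uniq bdry map hyp} for distinctness of the limits, (iii) invoke Theorem \ref{thm gouezel 1.2} for the linear (and exponential-deviation) control of $d_Y(o, Z_n o)$, and (iv) assemble these via the translation-length estimate. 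The routine part is bookkeeping the constants $\delta, \alpha, Q, C$; no genuinely new idea beyond \cite{maher_tiozzo18} and \cite{bonk_schramm00} is required.
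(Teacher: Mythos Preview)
Your overall structure matches the paper's: combine Gou\"ezel's linear drift bound with an upper bound on the Gromov product $(Z_n o \mid Z_n^{-1} o)_o$, then feed both into the translation-length estimate. The paper likewise omits the details and points to \cite[Lemmas 5.9--5.11]{maher_tiozzo18} (and \cite{maher12} for the exponential decay), noting that only minor modifications are needed in the almost-geodesic setting.

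However, your argument for controlling $(Z_n o \mid Z_n^{-1} o)_o$ contains a genuine gap. You assert that the sequence $Z_n^{-1} o = \omega_n^{-1}\cdots\omega_1^{-1}\, o$ is ``a random walk driven by $\mui$'' and therefore converges almost surely to some $z^-(\omega)\in\bdg Y$. This is not correct: while for each \emph{fixed} $n$ the element $Z_n^{-1}$ has the same \emph{law} as the $n$-th step $\check Z_n$ of the $\mui$-walk, the \emph{sequence} $(Z_n^{-1})_n$ is not a right random walk --- the recursion is $Z_{n+1}^{-1} = \omega_{n+1}^{-1} Z_n^{-1}$, multiplication on the left --- and in general does \emph{not} converge almost surely to a boundary point. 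This is precisely why the paper only proves the weaker Proposition~\ref{prop subseq inverse} (existence of a convergent \emph{subsequence}), and why Theorem~\ref{thm uniq bdry map hyp} cannot be invoked to get a joint limit $(z^-,z^+)$ along the full sequence. Consequently your ``$(Z_n o\mid Z_n^{-1}o)_o$ is almost surely bounded'' step fails.

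The correct route, which the paper points to, is distributional rather than pathwise: one uses that $Z_n^{-1}$ and $\check Z_n$ are equidistributed for each $n$ to transfer shadow estimates, and shows directly that $\mathbb{P}\big((Z_n o\mid Z_n^{-1}o)_o \geq \varepsilon n\big)\to 0$ (resp.\ decays exponentially) via the decay of stationary measures on shrinking shadows, as in \cite[Lemmas 5.9--5.11]{maher_tiozzo18}. No almost-sure convergence of $Z_n^{-1}o$ is needed or available. Your reduction to the geodesic case via Bonk--Schramm is fine and is indeed how the almost-geodesic issue is handled, but you must replace the pathwise boundedness argument by these shadow estimates.
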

	
	The key in proving this Theorem is to give a lower bound on $d(o, Z_n o )$, which is given by the positive drift \cite[Theorem 1.3]{gouezel22}, and an upper bound for the Gromov product $(Z_n^{-1}o | Z_n )_o$. We omit this proof, since it would just reproduce the argument given by Maher and Tiozzo in \cite[Lemmas 5.9, 5.10 and 5.11]{maher_tiozzo18}, with minor modifications to fit the context of almost geodesic spaces. If $G$ is finitely generated, the strategy given by Maher in \cite{maher12} gives the linear growth of the translation length with exponential decay. 
	
	\begin{proof}[Proof of Theorem \ref{thm prop contracting}]
		By Proposition \ref{prop non elem}, there exists $L > 0$ such that $G$ acts on $X_L = (X, d_L)$ by isometries and non-elementarily. The space $X_L$ is almost geodesic, $\delta$-hyperbolic by Theorem \ref{theorem hyperbolic models intro}, and we can apply Theorem \ref{thm trans length hyp}: seen as isometries of $X_L$, the translation length of $(Z_n) $ grows almost surely at least linearly. By Theorem \ref{thm contract loxodromic}, an element $g \in G$ that acts as a loxodromic isometry on $X_L$ is a contracting isometry of $X$. This gives the result. 
	\end{proof}

	\section{Central limit Theorem}\label{section clt cat}
	Let $G$ be a discrete countable group and $G \curvearrowright X$ an action by isometries on a complete $\cat (0)$ space $(X,d)$, and assume that $G$ admits a pair of independent contracting isometries for this action. Let $\mu \in \prob(G) $ be an admissible probability measure on $G$. Fix $L > 0$ such that $G$ acts on the hyperbolic space $X_L = (X,d_L)$ with a pair of independent loxodromic isometries. This section is dedicated to study further limit laws for the random walk $(Z_n o)$ on $X$ generated by the measure $\mu$. Namely, we prove that if $\mu$ has finite second moment, $(Z_n o)$ satisfies a central limit theorem. 
	
	\subsection{General strategy} \label{strategy}
	
	In order to prove our main result, we use a strategy that is largely inspired by the works of Benoist and Quint on linear spaces \cite{benoist_quint16CLTlineargroups} and hyperbolic spaces \cite{benoist_quint16}. They developed a method for proving central limit theorems for cocycles, relying on results due to Brown in the case of martingales \cite{brown71}. In this section, we introduce some background and describe this strategy. 
	
	\subsubsection{Centerable cocycle} 
	
	Let $G$ be a discrete group, $Z$ a measurable compact $G$-space and $c$ a cocycle $c : G \times Z \rightarrow \mathbb{R}$, meaning that $c(g_1g_2, x) = c(g_1, g_2 x ) + c(g_2, x)$, and assume that $c$ is continuous. Let $\mu$ be a probability measure on $G$. 
	
	\begin{eDef}
		Let $c$ be a continuous cocycle $c : G \times Z \rightarrow \mathbb{R}$. We say that $c$ has \textit{constant drift} $c_\mu$ if $c_\mu = \int_G c(g, x) d\mu(g)$ is essentially constant in $x$. We say that $c$ is \textit{centerable} if there exists a bounded measurable map $\psi : Z \rightarrow \mathbb{R} $ and a cocycle $c_{0} : G \times Z \rightarrow \mathbb{R}$ with constant drift $c_{0, \mu} = \int_G c_0(g, x) d\mu(g)$ such that 
		\begin{eqnarray}
			c(g, x) = c_0(g,x) + \psi(x) - \psi(gx).
		\end{eqnarray}
		We say that $c$ and $c_0$ are cohomologous. In this case, the \textit{average} of $c$ is defined to be $c_{0, \mu}$. 
	\end{eDef}

	\begin{erem}\label{rem average cocycle}
		Let $\nu \in \prob(Z)$ be a $\mu$-stationary measure, and let $c : G \times Z \rightarrow \mathbb{R}$ be a centerable continuous cocycle: for $g \in G, x \in Z$, $c(g, x) = c_0(g,x) + \psi(x) - \psi(gx)$ with $c_0$ having constant drift and $\psi$ bounded measurable. Then 
		\begin{eqnarray}
			\int_{G\times Z} c(g, x) d\mu(g) d\nu (x) &  = & \int_{G\times Z} c_0(g, x) d\mu(g) d\nu (x) + \int_Z \psi(x) d\nu(x)  \nonumber \\
			& & - \int_{G \times Z} \psi(gx) d\mu(g) d\nu(x) \nonumber \\
			& = & \int_{G} c_0(g, x) d\mu(g) + \int_Z \psi(x) d\nu(x)  - \int_{G \times Z} \psi(gx) d\mu(g) d\nu(x) \nonumber \\
			& = &  \int_{G} c_0(g, x) d\mu(g) + \int_Z \psi(x) d\nu(x) \nonumber \\
			& &- \int_{ Z} \psi(x) d\nu(x) \text{ because $\nu$ is $\mu$-stationary} \nonumber \\
			& = & c_{0, \mu} \text{ because $c_0$ has constant drift}. \nonumber
		\end{eqnarray}
		Hence the average of $c$ is given by $ c_{0, \mu} =\int c(g, x) d\mu(g) d\nu(x) $, which explains the terminology and shows that the value $c_{0, \mu}$ does not depend on the choices of $c_0$ and $\psi$. 
	\end{erem}
	
	The reason why we study limit laws on cocycles is the following result. This version is borrowed from Benoist and Quint, who improved previous results from Brown about central limit theorems for martingales \cite{brown71}.
	
	\begin{ethm}[{\cite[Theorem 3.4]{benoist_quint16CLTlineargroups}}]\label{thm criterium clt benoist quint}
		Let $G$ be a discrete countable group acting by homeomorphisms on a compact metrizable space $Z$. Let $c : G \times Z \rightarrow \mathbb{R}$ be a continuous cocycle such that $\int_G \sup_{x \in Z} | c(g,x)|^2 d\mu (g) < \infty $. Let $\mu $ be a Borel probability measure on $G$. Assume that $c$ is centerable with average $\lambda_c$ and that there exists a unique $\mu$-stationary probability measure $\nu $ on $Z$. 
		
		Then the random variables $\frac{1}{\sqrt{n}}(c(Z_n, x) - n \lambda_c) $ converge in law to a Gaussian law $N_\mu$. In other words, 
		for any bounded continuous function $F$ on $\mathbb{R}$, one has
		\begin{eqnarray}
			\int_G F \big( \frac{c(g,x) - n \lambda_c}{\sqrt{n}}\big) d(\mu^{\ast n})(g) \longrightarrow \int_{\mathbb{R}} F(t) dN_\mu (t) \nonumber. 
		\end{eqnarray}
		Moreover, if we write $c (g, z) = c_0(g, z) + \psi(z) - \psi(g z) $ with $\psi $ continuous and $c_0$ with constant drift $c_\mu$, then the covariance 2-tensor of the limit law is 
		\begin{eqnarray}
			\int_{G \times Z} (c_0(g, z) - c_\mu)^2 d\mu(g) d\nu(z) \nonumber. 
		\end{eqnarray}
	\end{ethm}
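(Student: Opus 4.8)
The plan is to follow the martingale approximation scheme of Benoist and Quint, reducing the statement to a martingale central limit theorem of the type established by Brown. First I would reduce to a cocycle with constant drift. Writing $c(g,x) = c_0(g,x) + \psi(x) - \psi(gx)$ with $\psi$ bounded and $c_0$ of constant drift $\lambda_c = c_{0,\mu}$, one has $c(Z_n,x) - n\lambda_c = (c_0(Z_n,x) - n\lambda_c) + (\psi(x) - \psi(Z_n x))$. Since $\psi$ is bounded, the telescoping term is uniformly bounded, so $\tfrac{1}{\sqrt{n}}(\psi(x) - \psi(Z_n x)) \to 0$ uniformly; by Slutsky's lemma the limit law of $\tfrac{1}{\sqrt{n}}(c(Z_n,x) - n\lambda_c)$ coincides with that of $\tfrac{1}{\sqrt{n}}(c_0(Z_n,x) - n\lambda_c)$. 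As noted in Remark \ref{rem average cocycle}, the average $\lambda_c$ is intrinsic, so it suffices to treat $c_0$, i.e. to assume $\int_G c_0(g,y)\, d\mu(g) = \lambda_c$ for $\nu$-a.e.\ $y$; by continuity of $c_0$ and of the resulting drift function, this holds for every $y$ in the support $\supp\nu$.

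Next I would set up the martingale. Because the increments $\omega_i$ are i.i.d., the product $Z_n = \omega_1\cdots\omega_n$ and its reversal $Z_n' = \omega_n\cdots\omega_1$ have the same law $\mu^{\ast n}$; as the statement concerns only the distribution of $c_0(Z_n,x)$, I may replace $Z_n$ by $Z_n'$, for which the cocycle relation telescopes into a genuine sum $c_0(Z_n',x) = \sum_{k=1}^n c_0(\omega_k, Z_{k-1}' x)$. Setting $\mathcal{F}_k = \sigma(\omega_1,\dots,\omega_k)$ and $D_k = c_0(\omega_k, Z_{k-1}'x) - \lambda_c$, the variable $Z_{k-1}'x$ is $\mathcal{F}_{k-1}$-measurable while $\omega_k$ is independent of $\mathcal{F}_{k-1}$ with law $\mu$. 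Starting from $x \in \supp\nu$, the points $Z_{k-1}'x$ remain in $\supp\nu$ (as $g\supp\nu \subseteq \supp\nu$ for $g \in \supp\mu$), so the constant-drift hypothesis gives $\mathbb{E}[D_k \mid \mathcal{F}_{k-1}] = \int_G c_0(g, Z_{k-1}'x)\, d\mu(g) - \lambda_c = 0$. Hence $M_n := c_0(Z_n',x) - n\lambda_c = \sum_{k=1}^n D_k$ is an $(\mathcal{F}_k)$-martingale, and the moment hypothesis $\int_G \sup_x |c(g,x)|^2\, d\mu(g) < \infty$ (unaffected by subtracting the bounded coboundary) ensures $D_k \in L^2$.

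I would then invoke Brown's martingale central limit theorem, whose hypotheses are the convergence of the normalized conditional variance and a Lindeberg condition. Here $\mathbb{E}[D_k^2 \mid \mathcal{F}_{k-1}] = h(Z_{k-1}'x)$ with $h(y) = \int_G (c_0(g,y) - \lambda_c)^2\, d\mu(g)$, so $\tfrac{1}{n}\sum_{k=1}^n \mathbb{E}[D_k^2 \mid \mathcal{F}_{k-1}]$ is a Birkhoff average of $h$ along the orbit of the skew-product transformation $T(\omega,y) = (S\omega, \omega_1 y)$ on $(\Omega \times Z, \mathbb{P}\otimes\nu)$. Uniqueness of the $\mu$-stationary measure $\nu$ implies ergodicity of $T$, whence Birkhoff's theorem yields convergence of this average to the \emph{constant} $\sigma^2 = \int_{G\times Z}(c_0(g,z) - \lambda_c)^2\, d\mu(g)\, d\nu(z)$, the claimed covariance. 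The Lindeberg condition follows from the uniform square-integrability of the increments provided by the $L^2$ moment assumption, and Brown's theorem then gives $M_n/\sqrt{n} \to N(0,\sigma^2)$.

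The main obstacle is promoting the ergodic-theoretic variance computation from a $(\mathbb{P}\otimes\nu)$-almost-everywhere statement to one valid for the single fixed base point $x$ appearing in the theorem, and controlling the Lindeberg sum for that same $x$. Birkhoff's theorem governs $\tfrac{1}{n}\sum_k h(Z_{k-1}'x)$ only for $\nu$-a.e.\ starting point, whereas the conclusion is required for every $x$; bridging this gap is where the equidistribution of the empirical measures $\tfrac{1}{n}\sum_k \delta_{Z_k' x}$ towards $\nu$ — again a consequence of the uniqueness of the stationary measure — must be combined with the continuity of $h$ to secure convergence in probability of the conditional variance for the prescribed $x$. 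This, rather than the formal martingale identity, is the delicate heart of the argument, and the reason the uniqueness of $\nu$ established in Section \ref{section uniq stat cat} is indispensable.
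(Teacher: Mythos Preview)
The paper does not prove this theorem at all: it is quoted verbatim as \cite[Theorem 3.4]{benoist_quint16CLTlineargroups} and used as a black box in the proof of Theorem~\ref{thm CLT cat}. There is therefore no ``paper's own proof'' against which to compare your attempt.

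That said, your sketch is a faithful outline of the Benoist--Quint argument that the citation points to: reduction to constant drift via the coboundary, rewriting the reversed walk as a genuine martingale difference sequence adapted to the increments, and an appeal to Brown's martingale CLT once the conditional variance is shown to converge. Your identification of the crux is also accurate: the Birkhoff-type convergence of $\tfrac{1}{n}\sum_k h(Z_{k-1}'x)$ to the claimed variance is only available for $\nu$-almost every starting point, and one must upgrade this to every $x$ via a Breiman-type law of large numbers, which is precisely where uniqueness of the stationary measure enters. One small comment: the paper's definition of ``constant drift'' is that $\int_G c_0(g,x)\,d\mu(g)$ is constant in $x$ (not merely $\nu$-a.e.), so your detour through $\supp\nu$ to secure the martingale property is unnecessary---the conditional expectation $\mathbb{E}[D_k\mid\mathcal{F}_{k-1}]$ vanishes for every starting point $x$, and the only place where uniqueness of $\nu$ is genuinely needed is the variance computation you flagged.
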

	
	\begin{erem}
		In Theorem \ref{thm criterium clt benoist quint}, the fact that $Z$ is compact is only used in order to say that the continuous function $\psi$ is bounded. It remains true if one considers a standard Borel space $Z$, and $\psi $ a bounded measurable function $Z \rightarrow \R$ such that  
		\begin{eqnarray}
			c (g, z) = c_0(g, z) + \psi(z) - \psi(g z), \nonumber
		\end{eqnarray}
		with $c_0$ a cocycle with constant drift, see \cite[Remark 5.6]{fernos_lecureux_matheus21}. 
	\end{erem}
	
	\subsubsection{Busemann cocycle and strategy}
	
	For the rest of the section, let $G$ be a discrete group and $G \curvearrowright X$ a non-elementary action by isometries on a complete $\cat (0)$ space $X$. Let $\mu \in \prob(G) $ be an admissible probability measure on $G$ with finite second moment, and assume that $G $ contains a pair of independent contracting isometries. Let $o \in X$ be a basepoint of the random walk. Theorems \ref{thm cv cat} and \ref{thm drift cat} ensure that the random walk $(Z_n(\omega)o)_n$ converges to a point of the boundary and that the drift $\lambda = \lim_n \frac{1}{n} d(Z_n(\omega) o , o)$ is well-defined and almost surely positive. 
	
	We denote by $\mui$ the probability measure on $G$ defined by $\mui(g) = \mu(g^{-1})$. Let $(\check{Z}_n)_n$ be the right random walk associated to $\mui$. Since $\mu $ is admissible and has finite second moment, so does $\mui$. We can then apply Theorems \ref{thm uniqueness stat measure cat}, \ref{thm cv cat} and \ref{thm drift cat} to $\mui$. We will denote by $\nui $ the unique $\mui$-stationary measure on $\overline{X} = X \cup \bd X$, and by $\check{\lambda} $ the positive drift of the random walk $(\check{Z}_n o)_n$. 
	
	\begin{erem}
		One can check that
		\begin{eqnarray}
			\check{\lambda} &=& \inf_n \frac{1}{n} \int d(go, o) d\mui^{\ast n}(g) \nonumber \\
			& = & \inf_n \frac{1}{n} \int d(o, g^{-1} o) d\mui^{\ast n}(g) \nonumber \\
			& = & \inf_n \frac{1}{n} \int d(o, g o) d\mu^{\ast n}(g) \nonumber,
		\end{eqnarray}
		hence $\lambda = \check{\lambda}$.
	\end{erem}
	
	In our context, the continuous cocycle that we consider is the Busemann cocycle on the visual bordification of the $\cat$(0) space $X$: for $x \in \overline{X}, \ g \in G$ and $o \in X$ a basepoint, 
	
	\begin{equation*}
		\beta(g,x) = b_x (g^{-1} o).
	\end{equation*}
	
	It is straightforward to show that $\beta$ is continuous. Observe that for all $g_1, g_2 \in G, \, \xi \in Y$, horofunctions satisfy a cocycle relation: 
	\begin{eqnarray}
		b_\xi (g_1g_2 o ) & = &  \lim_{x_n \rightarrow \xi}d(g_1g_2 , x_n) - d(x_n, o)  \nonumber \\
		& = & \lim_{x_n \rightarrow \xi}d(g_2 , g_1^{-1}x_n) - d(g_1 o, x_n ) +   d(g_1 o, x_n ) - d(x_n, o) \nonumber \\ 
		& = & \lim_{x_n \rightarrow \xi}d(g_2o , g_1^{-1}x_n) - d(o, g_1^{-1}x_n ) +   d(g_1 o, x_n ) - d(x_n, o) \nonumber \\ 
		& = & b_{g_1^{-1}\xi} (g_2 o) + b_\xi (g_1 o). \label{cocycle horof}
	\end{eqnarray}
	By Equation \eqref{cocycle horof}, $\beta$ satisfies the cocycle relation $\beta(g_1g_2, x) = \beta(g_1, g_2 x ) + \beta(g_2, x)$. 
	\newline 
	
	We use the Busemann cocycle because of the following result. 
	
	\begin{eprop}\label{prop approx bornee horof}
		Let $(x_n )$ be a sequence that converge to $\xi \in B_L $. Let $\eta \neq \xi $ be another boundary point in $B_L$. Then there exists $C$ such that for all $n $
		\begin{eqnarray}
			|b_\eta (x_n) - d(o, x_n) | \leq C. \nonumber
		\end{eqnarray}
	\end{eprop}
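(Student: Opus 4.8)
The plan is to run, in the $\cat$(0) metric, a ``bottleneck'' argument in the spirit of the proof of Proposition \ref{prop angle B_L}. Since the Busemann function $b_\eta$ is $1$-Lipschitz and $b_\eta(o)=0$, one always has $b_\eta(x_n)\le d(o,x_n)$, so the whole content is the reverse estimate $b_\eta(x_n)\ge d(o,x_n)-C$ for every $n$. It suffices to prove this for all $n$ beyond some threshold $n_0$: the remaining finitely many indices satisfy $d(o,x_n)\le M$ for some $M$, and then $|b_\eta(x_n)-d(o,x_n)|\le 2M$, so one may take $C=\max(C_0,2M)$.

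Fix the geodesic rays $\gamma=\gamma_o^\xi$ and $\gamma'=\gamma_o^\eta$. As $\xi\in B_L$, by \cite[Lemma 2.21]{petyt_spriano_zalloum22} there is an infinite $L$-chain $\{h_i\}_{i\ge 1}$ dual to $\gamma$, oriented so that $o\in h_i^-$ for all $i$. Since $\eta\neq\xi$, the ray $\gamma'$ cannot meet infinitely many curtains of this chain (otherwise it would cross it and be asymptotic to $\gamma$, by Remark \ref{infinite dual chain}); hence there is $k\ge 1$ with $\gamma'\subseteq h_k^-$, so in particular $\gamma'(t)\in h_{k+1}^-$ for all $t\ge 0$. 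On the other side, convergence $x_n\to\xi$ together with the fact that $\gamma$ crosses every $h_i$ gives $x_n\in h_{k+3}^+$ for $n$ large. Choosing $s$ large enough that $\gamma(s)\in h_{k+3}^+$ and setting $\sigma:=[o,\gamma(s)]$ (a sub-segment of $\gamma$, dual to $\{h_{k+1},h_{k+2},h_{k+3}\}$), I would apply Lemma \ref{lem bottleneck intro} with $A=h_{k+1}^-$, $B=h_{k+3}^+$, $x_1=o$, $y_1=\gamma(s)$, $x_2=\gamma'(t)$, $y_2=x_n$: the geodesic $[\gamma'(t),x_n]$ passes from one side of $h_{k+2}$ to the other, hence contains a point $p=p_{t,n}\in h_{k+2}$ with $d(p,\pi_\sigma(p))\le 2L+1$. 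Since $\pi_\sigma(p)$ lies in the pole of $h_{k+2}$, which has diameter $1$ and sits at a distance from $o$ depending only on $\xi,\eta,L$ (and $k$), there is a constant $R_1=R_1(\xi,\eta,L)$ with $d(o,p_{t,n})\le R_1$ for all $t$ and all large $n$.

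Then, because $p\in[\gamma'(t),x_n]$ and $d(o,\gamma'(t))=t$, the triangle inequality gives $d(\gamma'(t),x_n)=d(\gamma'(t),p)+d(p,x_n)\ge (t-R_1)+(d(o,x_n)-R_1)$. Letting $t\to\infty$ yields $b_\eta(x_n)=\lim_t\bigl(d(\gamma'(t),x_n)-t\bigr)\ge d(o,x_n)-2R_1$, and combined with $b_\eta(x_n)\le d(o,x_n)$ this gives the statement with $C_0=2R_1$.

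The main obstacle is the step ``$x_n\in h_{k+3}^+$ for $n$ large'', i.e. converting convergence of $x_n$ to $\xi$ in the cone topology into the combinatorial statement that a fixed curtain of the chain eventually separates $o$ from $x_n$. I expect this to come from the fact that the half-spaces of a curtain are open (each $h^-\cup h$ is the $\pi_\gamma$-preimage of a compact sub-arc of $\gamma$, cf. Remark \ref{remark curtains thick intro}), together with the fact that $d(o,\pi_\gamma(x_n))\to\infty$ along $\gamma$ (itself to be extracted from cone convergence of the segments $[o,x_n]$ to $\gamma$); alternatively one could invoke the comparison of the cone and curtain topologies on $B_L$ (Theorem \ref{thm curtain topology}) after upgrading it from boundary points to points of $X$. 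Everything else is a routine bottleneck computation.
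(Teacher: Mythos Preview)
Your argument is correct and follows the same bottleneck strategy as the paper, but is in fact cleaner. The paper uses \emph{two} $L$-chains, one dual to $\gamma^\xi$ and one dual to $\gamma^\eta$, picks two bottleneck points $p\in h_{i+1}\cap[\gamma^\eta(t),x_n]$ and $q\in h'_{i+1}\cap[\gamma^\eta(t),x_n]$, and splits $d(\gamma^\eta(t),x_n)$ into three pieces. You use only the chain dual to $\gamma^\xi$ and a single bottleneck point $p$, then the two-piece estimate $d(\gamma'(t),x_n)=d(\gamma'(t),p)+d(p,x_n)\ge (t-R_1)+(d(o,x_n)-R_1)$ suffices. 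A pleasant by-product is that your proof never invokes the hypothesis $\eta\in B_L$; only $\xi\in B_L$ and $\eta\neq\xi$ are used.

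Regarding the step you flag as an obstacle, the paper glosses over it as well (``without loss of generality, $x_n\in h_{i+2}^+$''). Your suggested fix via $d(o,\pi_\gamma(x_n))\to\infty$ works and is short: for any $R$, cone convergence gives $d(\gamma_n(2R),\gamma(2R))<R/2$ for $n$ large, hence $d(x_n,\gamma)\le d(x_n,\gamma(2R))\le d(o,x_n)-3R/2$; combined with $d(o,x_n)\le d(o,\pi_\gamma(x_n))+d(\pi_\gamma(x_n),x_n)$ this forces $d(o,\pi_\gamma(x_n))>R$. Since the $h_i$ are all dual to $\gamma$, their half-spaces are nested preimages under $\pi_\gamma$ of sub-rays of $\gamma$, so this immediately gives $x_n\in h_{k+3}^+$ eventually.
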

	
	\begin{proof}
		Let $\gamma^\xi : [0, \infty)  \rightarrow X$ and $\gamma^\eta : [0, \infty)  \rightarrow X$ be the geodesic rays based at $o$, representing $\xi$ and $\eta$ respectively. By definition of the horofunction $b_\eta  $, for all $n $:
		\begin{eqnarray}
			b_\eta (x_n) = \lim_t d(\gamma^\eta(t), x_n) - d(\gamma^\eta(t), o) \nonumber. 
		\end{eqnarray}
		By the triangular inequality, for all $n  \in \mathbb{N}$, 
		\begin{eqnarray}
			d(o, x_n) \geq d(\gamma^\eta(t), x_n) - d(\gamma^\eta(t), o)\nonumber,
		\end{eqnarray}
		hence $d(o, x_n) \geq b_\eta (x_n)$. 
		
		As $\xi $ and $\eta $ belong to $B_L$, Lemma \ref{lem dual chain} shows that there exist infinite $L$-chains $\{h_i\}$ and $\{h'_i\}$ dual to $\gamma^\xi $ and $\gamma^\eta$ respectively. Orient these chains such that for all $i $, $h_i \subseteq h^{-}_{i+1} $ and $h'_i \subseteq h'^{-}_{i+1} $. As $\xi \neq \eta$, there exists $i \in \mathbb{N}$ and $n_0$ such that $\gamma^\eta \subseteq h^-_i$ and $x_n \in h'^{-}_i$ for all $n \geq n_0$. Without loss of generality, we can assume that for all $n \geq n_0$, $x_n \in h^+_{i+2}$. Also, let $t_0$ be such that $\gamma^\eta(t) \in h'^{+}_{i+2}$ for all $t \geq t_0$. 
		
		Let $t \geq t_0$ and $n \geq n_0$. Let $q \in [\gamma^\eta(t), x_n]\cap h'_{i+1}$ and 
		$p \in [\gamma^\eta(t), x_n]\cap h_{i+1}$. By the ``bottleneck'' Lemma \ref{lem bottleneck intro}, we have 
		\begin{eqnarray}
			d(q, \pi_{\gamma^\eta}(q)) \leq 2L+1 \nonumber
		\end{eqnarray}
		and 
		\begin{eqnarray}
			d(p, \pi_{\gamma^\xi}(p)) \leq 2L+1 \nonumber.
		\end{eqnarray}
		Note that $\pi_{\gamma^\xi}(p) $ and $\pi_{\gamma^\eta}(q)$ belong to the poles of $h_{i+1}$ and $h'_{i+1}$ respectively. Define $A := d(\pi_{\gamma^\eta}(q),o)$ and $B := d(\pi_{\gamma^\xi}(p),o)$. We get 
		\begin{eqnarray}
			d(\gamma^\eta(t), x_n) & = & d(\gamma^\eta(t), q) + d(q, p) + d(p, x_n) \nonumber\\
			&\geq & d(\gamma^\eta(t), o) - d(o, q)+ d(q, p) + d(o, x_n) - d(o, p) \nonumber\\
			& \geq & d(\gamma^\eta(t), o) - (A + 2L+1)+ d(\pi_{\gamma^\eta}(q), \pi_{\gamma^\xi}(p)) -(4L+2)  \nonumber \\
			& & +d(o, x_n) - (B+2L +1). \nonumber
		\end{eqnarray}
		Denote $d(\pi_{\gamma^\eta}(q), \pi_{\gamma^\xi}(p))$ by $C$. Note that because poles have diameter 1, if $y $ belongs to the pole of $h_{i+1}$, then $d(o, y) \in [B-1, B+1]$, and similarly if $z $ belongs to the pole of $h'_{i+1}$, then $d(o, y) \in [A-1, A+1]$. Also, in this case $d(y, z) \in [C - 2, C +2]$. As a consequence, for all $t \geq t_0$, $n \geq n_0$, 
		\begin{eqnarray}
			d(\gamma^\eta(t), x_n) \geq d(\gamma^\eta(t), o) - (A + 2L+2)+ C-2 -(4L+2) + d(o, x_n) - (B+2L +2) \nonumber. 
		\end{eqnarray}
		Therefore, for all $t \geq t_0$, $n \geq n_0$, 
		\begin{eqnarray}
			d(\gamma^\eta(t), x_n) - d(\gamma^\eta(t), o) - d(o, x_n) \geq  D\nonumber, 
		\end{eqnarray}
		for $D$ a real number that does not depend on $t, n$. Passing to the limit, we obtain
		\begin{eqnarray}
			b_\eta(x_n) - d(o, x_n) \geq  D\nonumber 
		\end{eqnarray}
		and the result follows. 		
	\end{proof}

	\begin{eprop}\label{prop approx displacement horo}
		For every $o \in X$, $\mathbb{P}$-almost every $\omega \in \Omega $ and $\nu$-almost every $\xi \in \bd X$, we have that 
		\begin{equation*}
			\lambda = \lim_{n \rightarrow \infty} \frac{1}{n} b_\xi(Z_n(\omega) o).
		\end{equation*}
	\end{eprop}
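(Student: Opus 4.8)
The plan is to compare the Busemann function $b_\xi(Z_n(\omega)o)$ with the displacement $d(o, Z_n(\omega)o)$, and then to conclude from the positivity of the drift. Since $\mu$ has finite second moment it has finite first moment, so Theorem~\ref{thm drift cat} gives, for $\mathbb{P}$-almost every $\omega$, $\frac1n d(o, Z_n(\omega) o) \to \lambda$ with $0 < \lambda < \infty$. By Theorem~\ref{thm cv cat} there is $L \geq 0$ so that, $\mathbb{P}$-almost surely, the sequence $(Z_n(\omega)o)_n$ converges in the visual topology of $\bd X$ to a point $z^+(\omega) \in B_L$. Finally, by Theorem~\ref{thm uniqueness stat measure cat} the measure $\nu$ is the hitting measure, $\nu = \int_\Omega \delta_{z^+(\omega)}\, d\mathbb{P}(\omega)$; hence $\nu(B_L) = 1$, and $\nu$ is non-atomic by Lemma~\ref{lem non atomic} since the action has no finite orbit at infinity.

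First I would pass to the product space $(\Omega \times \bd X,\, \mathbb{P} \times \nu)$. By Fubini together with the non-atomicity of $\nu$, for $(\mathbb{P}\times\nu)$-almost every pair $(\omega,\xi)$ one has $\xi \neq z^+(\omega)$; moreover $\xi \in B_L$, $z^+(\omega) \in B_L$, and the drift convergence holds for this $\omega$. Fixing such a pair, I would apply Proposition~\ref{prop approx bornee horof} to the sequence $x_n := Z_n(\omega) o$, which converges to $z^+(\omega) \in B_L$, and to the point $\xi \in B_L \setminus \{z^+(\omega)\}$. This yields a finite constant $C = C(\omega,\xi)$ such that
\begin{equation*}
	\bigl| b_\xi(Z_n(\omega) o) - d(o, Z_n(\omega) o) \bigr| \leq C \qquad \text{for every } n .
\end{equation*}

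Dividing by $n$ and letting $n\to\infty$ yields $\frac1n b_\xi(Z_n(\omega) o) \to \lambda$ for $(\mathbb{P}\times\nu)$-almost every $(\omega,\xi)$. One final application of Fubini then produces a $\mathbb{P}$-conull set of samples $\omega$ on which the convergence holds for $\nu$-almost every $\xi$, which is the assertion; the whole argument is insensitive to the choice of basepoint $o$.

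The one point that needs care is the handling of the quantifiers: one must arrange that the random endpoint $z^+(\omega)$ of the walk and the $\nu$-generic boundary point $\xi$ are \emph{simultaneously} distinct and both contained in $B_L$, since Proposition~\ref{prop approx bornee horof} requires a pair of distinct points of $B_L$. This is precisely what $\nu(B_L)=1$ and the non-atomicity of $\nu$ provide, after moving to the product space. No new geometric estimate about curtains or the hyperbolic models $X_L$ is needed at this stage --- only facts already established.
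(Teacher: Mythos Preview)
Your proof is correct and follows essentially the same line as the paper's own argument: both reduce to Proposition~\ref{prop approx bornee horof} applied to the sequence $Z_n(\omega)o \to z^+(\omega)\in B_L$ and a $\nu$-generic $\xi\in B_L\setminus\{z^+(\omega)\}$, then divide by $n$ and use Theorem~\ref{thm drift cat}. If anything, your handling of the quantifiers via the product space $(\Omega\times\bd X,\mathbb{P}\times\nu)$ and Fubini is slightly more explicit than the paper's, which states the same conclusion more informally.
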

	
	\begin{proof}
		We know by Theorem \ref{thm cv cat} that $(Z_n o) $ converges almost surely to a point $\xi^+ (\omega) \in  B_L$. Therefore the hitting measure $\nu$ charges only $B_L$. By Theorem \ref{thm drift cat}, we know that almost surely 
		\begin{eqnarray}
			\lim_{n \rightarrow \infty} \frac{1}{n} d(o, Z_n o ) = \lambda >0 \nonumber. 
		\end{eqnarray}
		Now by Proposition \ref{prop approx bornee horof}, for any other other point $\eta \neq \xi^+(\omega)\in B_L$, there exists $C$ such that almost surely
		\begin{eqnarray}
			|b^o_\eta (Z_n o) - d(o, Z_n o) | \leq C. \nonumber
		\end{eqnarray}
		Since $\nu$ is non-atomic by Lemma \ref{lem non atomic}, we then have that for $\nu$-almost every $\eta \in \bd X$,  
		\begin{equation}\label{equation approx lambda horo}
			\lambda = \lim_{n \rightarrow \infty} \frac{1}{n} b^o_\xi (Z_n(\omega) o) \nonumber
		\end{equation}
		almost surely. 
		
		Now if we take a different basepoint $z \in X$,
		\begin{eqnarray}
			d(Z_nz, z ) &\leq &d(Z_n z, Z_no) + d(Z_no, o ) + d(x, z ) \nonumber \\
			& \leq & d(Z_nx, x ) + 2 d(o, z ),  \nonumber
		\end{eqnarray}
		hence $|d(Z_n z, z ) - d(Z_no, o )| \leq 2 d(o, z )$.
		Similarly, $|h^x(Z_nz) - h^x_\xi(Z_nx)| \leq 2 d(o, z )$ and if we change the basepoint, $|h^x_\xi - h^z_\xi | \leq d(o, z)$, so equation \eqref{equation approx lambda horo} does not depend on the choice of the basepoint. 
	\end{proof}

	Proposition \eqref{prop approx displacement horo} shows that the cocycle $\beta(Z_n(\omega),x)$ "behaves" like $d(Z_n(\omega) o,o)$. Thus it makes sense to try and apply Theorem \ref{thm criterium clt benoist quint} to the Busemann cocycle $\beta(g,x)$. 
	\newline 
	
	Henceforth, we will assume that $\mu$ is an admissible probability measure on $G$ with finite second moment $\int_{G} d(go, o)^2 d\mu(g) < \infty$.
	
	The following proposition summarizes some properties of the Busemann cocycle. It shows that obtaining a central limit theorem on $\beta$ will imply our main result. 
	
	\begin{eprop}\label{Busemann cocyle prop}
		Let $G$ be a discrete group and $G \curvearrowright X$ an action by isometries on a complete $\cat (0)$ space $X$, and assume that $G$ contains a pair of independent contracting elements. Let $\mu \in \prob(G) $ be an admissible probability measure on $G$ with finite second moment. Let $o \in X$ be a basepoint of the random walk. Let $\lambda$ be the (positive) drift of the random walk, and $\beta : G \times \overline{X} \rightarrow \mathbb{R}$ be the Busemann cocycle $\beta(g, x) = b_x(g^{-1} o)$. Then 
		\begin{enumerate}
			\item $\int_{G} \sup_{x \in \overline{X}}|\beta (g, x)|^2 d\mu(g) < \infty$ and $\int_{G} \sup_{x \in \overline{X}}|\beta (g, x)|^2 d\mui(g) < \infty$;\label{moment}
			\item For $\nu$-almost every $\xi \in \bd X$, $\lambda = \lim_n \frac{1}{n} \beta (Z_n (\omega), \xi)$ $\mathbb{P}$-almost surely; \label{drift cocycle}
			\item $\mathbb{P}$-almost surely, $\lambda = \int_{G \times \overline{X}} \beta(g, x) d\mu(g)d\nu(x) = \int_{G \times \overline{X}} \beta(g, x) d\mui(g)d\nui(x)$. \label{average Busemann}
		\end{enumerate}
	\end{eprop}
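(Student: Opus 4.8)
The plan is to prove the three items of Proposition \ref{Busemann cocyle prop} in order, as each feeds into the next and into the verification of the hypotheses of Theorem \ref{thm criterium clt benoist quint}. First I would establish the moment bound \eqref{moment}. Since $\beta(g,x) = b_x(g^{-1}o)$ and each horofunction $b_x$ is $1$-Lipschitz with $b_x(o) = 0$, one has the uniform estimate $|\beta(g,x)| = |b_x(g^{-1}o)| \leq d(g^{-1}o, o) = d(o, go)$ for every $x \in \overline{X}$. Hence $\sup_{x \in \overline{X}} |\beta(g,x)|^2 \leq d(o, go)^2$, and integrating against $\mu$ gives $\int_G \sup_x |\beta(g,x)|^2 \, d\mu(g) \leq \int_G d(o, go)^2 \, d\mu(g) < \infty$ by the finite second moment assumption. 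The same computation applies verbatim to $\mui$, since $d(o, g^{-1}o) = d(go, o)$, so $\mui$ also has finite second moment; this settles \eqref{moment}.

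Next, item \eqref{drift cocycle} is essentially a restatement of Proposition \ref{prop approx displacement horo}. Indeed $\beta(Z_n(\omega), \xi) = b_\xi(Z_n(\omega)^{-1} o)$, which at first glance involves the \emph{inverse} walk. However, one should be careful here: the cocycle evaluated along the forward walk and the limit laws are linked through the convergence of $(Z_n o)$ to $\xi^+(\omega) \in B_L$, and the hitting measure $\nu$ charges only $B_L$. I would invoke Proposition \ref{prop approx displacement horo} directly, which states precisely that $\lambda = \lim_n \frac{1}{n} b_\xi(Z_n(\omega) o)$ for $\nu$-almost every $\xi$ and $\mathbb{P}$-almost every $\omega$; rewriting $b_\xi(Z_n(\omega) o)$ in terms of $\beta$ requires matching conventions, and where a subtlety about forward versus backward walk arises, it is handled by the fact that $\lambda = \check\lambda$ (the drift of the $\mui$-walk equals that of the $\mu$-walk, as noted in the remark preceding the proposition) together with the symmetric roles of $\nu, \nui$. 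So \eqref{drift cocycle} follows by combining Proposition \ref{prop approx displacement horo} with this identification.

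For item \eqref{average Busemann}, the key observation is that $\beta$ is a centerable cocycle (this is exactly what needs checking for Theorem \ref{thm criterium clt benoist quint} to apply), so by Remark \ref{rem average cocycle} its average equals $\int_{G \times \overline{X}} \beta(g,x) \, d\mu(g) \, d\nu(x)$, where $\nu$ is the unique $\mu$-stationary measure guaranteed by Theorem \ref{thm uniqueness stat measure cat}. On the other hand, by a standard martingale / ergodic argument (or directly from \eqref{drift cocycle} combined with the dominated convergence justified by the second moment bound in \eqref{moment}), the Cesàro limit $\lim_n \frac{1}{n} \beta(Z_n(\omega), \xi)$ equals $\int_{G \times \overline{X}} \beta(g,x)\, d\mu(g)\, d\nu(x)$ almost surely; comparing with \eqref{drift cocycle} gives $\lambda = \int_{G \times \overline{X}} \beta(g,x)\, d\mu(g)\, d\nu(x)$. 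The second equality, with $\mui$ and $\nui$, follows by applying the identical reasoning to the reflected walk and using $\lambda = \check\lambda$.

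The main obstacle I anticipate is \textbf{not} any of these three items individually — each is a fairly direct consequence of the Lipschitz property of horofunctions, Proposition \ref{prop approx displacement horo}, and Remark \ref{rem average cocycle}. The genuinely hard part, which the proposition is setting up rather than proving, is showing that $\beta$ is \emph{centerable}: one must produce a bounded measurable $\psi : \overline{X} \to \mathbb{R}$ and a constant-drift cocycle $c_0$ with $\beta(g,x) = c_0(g,x) + \psi(x) - \psi(gx)$. This is where the geometry of the hyperbolic models $X_L$ and the uniqueness of the stationary measure (Theorem \ref{thm uniqueness stat measure cat}) will be used decisively — following the Benoist--Quint template, one typically takes $\psi(x) = \int_G \big(\beta(g,x) - \lambda\big)$ summed along the walk, or more precisely defines $c_0$ via the stationary measure and shows the difference is a bounded coboundary using the contracting geometry to control Gromov products. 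I would isolate that centerability statement as the next lemma after this proposition and devote the bulk of the technical work there, using the bottleneck Lemma \ref{lem bottleneck intro} and Proposition \ref{prop approx bornee horof} to get the necessary uniform bounds.
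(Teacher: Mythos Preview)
Your treatment of items \eqref{moment} and \eqref{drift cocycle} is fine; in fact your argument for \eqref{moment} via the $1$-Lipschitz property of horofunctions and $b_x(o)=0$ is more direct than the paper's, which routes through Proposition~\ref{prop approx bornee horof}. For \eqref{drift cocycle} you correctly identify Proposition~\ref{prop approx displacement horo} and the forward/backward subtlety, which the paper glosses over.

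The problem is in item \eqref{average Busemann}. Your first approach is circular: you invoke that $\beta$ is centerable and then apply Remark~\ref{rem average cocycle}. But centerability is only established later, in Corollary~\ref{cor centerable cocycle}, and the proof of that corollary \emph{uses} item \eqref{average Busemann} of the present proposition to identify the constant drift of $\beta_0$ as $\lambda$. Moreover, even granting centerability, Remark~\ref{rem average cocycle} only tells you that the average $c_{0,\mu}$ equals $\int \beta\, d\mu\, d\nu$; it does not by itself identify this number as $\lambda$. Your second approach (``a standard martingale/ergodic argument, or dominated convergence from \eqref{drift cocycle}'') points in the right direction but is too vague to be a proof: dominated convergence will not turn an almost-sure limit of $\frac{1}{n}\beta(Z_n,\xi)$ into the double integral against $\mu\otimes\nu$.

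What the paper actually does is a concrete Birkhoff argument on the skew-product. Define $T:(\Omega\times\overline X,\mathbb P\times\nui)\to(\Omega\times\overline X,\mathbb P\times\nui)$ by $T(\omega,\xi)=(S\omega,\omega_1^{-1}\xi)$. Because $\nui$ is $\mui$-stationary, $T$ is measure-preserving and ergodic (this is \cite[Proposition~1.14]{benoist_quint}, and is the step that needs justification). Set $H(\omega,\xi)=b_\xi(\omega_1 o)=\beta(\omega_1^{-1},\xi)$, integrable by \eqref{moment}. The horofunction cocycle identity \eqref{cocycle horof} gives $b_\xi(Z_n o)=\sum_{k=1}^n H(T^k(\omega,\xi))$, so Birkhoff's theorem yields
\[
\lim_n \tfrac{1}{n} b_\xi(Z_n o)=\int H\, d(\mathbb P\times\nui)=\int_{G\times\overline X}\beta(g,\xi)\, d\mui(g)\, d\nui(\xi).
\]
Comparing with \eqref{drift cocycle} identifies this as $\lambda$; the $(\mu,\nu)$ equality is obtained symmetrically. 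You should replace the appeal to centerability by this explicit skew-product argument.
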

	
	\begin{proof}
		As a consequence of Proposition \ref{prop approx bornee horof}, for $\nu$-almost every $x  \in \bd X$, and $\mathbb{P}$-almost every $\omega \in \Omega$, there exists $C > 0 $ such that for all $n \geq 0$ we have 
		\begin{eqnarray}
			|\beta (Z_n(\omega)^{-1}, x) - d(Z_n(\omega) o,o)| < C.  \nonumber
		\end{eqnarray}
		Because the action is isometric and $\mu$ has finite second moment $\int_{G} d(g o, o)^2 d\mu(g)~<~\infty$, we obtain 
		\begin{eqnarray}
			\int_{G} \sup_{x \in \overline{X}}|\beta (g, x)|^2 d\mu(g)<\infty. \nonumber
		\end{eqnarray} 
		With the same argument: 
		\begin{eqnarray}
			\int_{G} \sup_{x \in \overline{X}}|\beta (g, x)|^2 d\mui(g) < \infty. \nonumber
		\end{eqnarray} 
		
		Item \ref{drift cocycle} is just Proposition \ref{prop approx displacement horo}. 
		\newline 
		
		The ideas in the proof of \ref{average Busemann} are classical. We give the details for the convenience of the reader. 
		
		Let $T : (\Omega \times \overline{X}, \mathbb{P} \times \nui ) \rightarrow (\Omega \times \overline{X}, \mathbb{P} \times \nui )$ be defined by $T(\omega, \xi ) \mapsto (S\omega, \omega_0^{-1} \xi)$, with $S((\omega_i)_{i \in \mathbb{N}}) = (\omega_{i+1})_{i \in \mathbb{N}}$ the usual shift on $\Omega$. Because $\nui$ is the unique $\mui$-stationary measure, we can apply \cite[Proposition 1.14]{benoist_quint}, which says that $T$ preserves the measure $\mathbb{P}\times \check{\nu}$  and is an ergodic transformation. Define $H : \Omega \times \overline{X} \rightarrow \mathbb{R}$ by 
		\begin{equation*}
			H(\omega, \xi) = b_\xi (\omega_0 o) = \beta(\omega_0^{-1}, \xi). 
		\end{equation*}
		By \ref{moment}, it is clear that $\int |H(\omega, \xi)| d\mathbb{P}(\omega)d\nui(\xi) < \infty$. 
		
		By cocycle relation \eqref{cocycle horof} one gets that 
		\begin{eqnarray}
			b_\xi (Z_n o) =  \sum_{k=1}^{n} b_{Z_k^{-1}\xi} (\omega_k o )  = \sum_{k=1}^{n} H(T^k (\omega, \xi)) \nonumber. 
		\end{eqnarray}
		
		Then $\beta(Z_n(\omega)^{-1}, \xi) = \sum_{k=1}^{n} H(T^k (\omega, \xi))$, and by \ref{drift cocycle}, 
		\begin{eqnarray}
			\lambda = \lim_n \frac{1}{n} \sum_{k=1}^{n} H(T^k (\omega, \xi)).\nonumber
		\end{eqnarray}
		
		Now, by Birkhoff ergodic theorem, one obtains that almost surely, 
		\begin{eqnarray}
			\lambda &=& \int_{\Omega \times \overline{X}} H(\omega, \xi) d\mathbb{P}(\omega)d\nui(x). \nonumber \\
			& = & \int_{\Omega \times \overline{X}} b_\xi(\omega_0 o) d\mathbb{P}(\omega)d\nui(x) \nonumber \\
			& = & \int_{G \times \overline{X}} \beta(g^{-1}, \xi) d\mu(g)d\nui(x) \nonumber \\
			& =& \int_{G \times \overline{X}} \beta(g, \xi) d\mui(g)d\nui(x) \nonumber.
		\end{eqnarray} 
		
		The previous computations can be done similarly for $\mu $ and $\nu $, hence we also have that 
		\begin{eqnarray}
			\lambda = \int_{G \times \overline{X}} \beta(g, x) d\mu(g)d\nu(x). \nonumber
		\end{eqnarray} 
	\end{proof}
	
	In order to apply Theorem \ref{thm criterium clt benoist quint} on the Busemann cocycle $\beta$, it remains to show that $\beta$ is centerable. If this is the case, by \ref{average Busemann} and Remark \ref{rem average cocycle}, its average must be the positive drift $\lambda$. In other words, we need to show that there exists a bounded measurable function $\psi : \overline X \rightarrow \mathbb{R}$ such that the cohomological equation
	\begin{eqnarray}
		\beta(g, x) = \beta_0(g,x) + \psi(x) - \psi(gx). \label{cohom equation}
	\end{eqnarray}
	is verified. Then, proving the central limit theorem in our context amounts to finding such a $\psi $ that is well defined and bounded. This will be done by computing nice estimates on the random walk, with the help of the hyperbolic model $X_L$.

	\subsection{Geometric estimates}\label{Section geom estimates}

	In this section, we prove our main Theorem, following the strategy explained in Section \ref{strategy}. First, we will compute some geometric estimates on the random walk that we will need later on. This is where we use the specific contraction properties provided by the curtains and the hyperbolic models exposed in Section \ref{section modèle hyp}. The goal is ultimately to prove that the candidate $\psi$ for the cohomological equation is well-defined and bounded. 
	
	Let $G$ be a discrete group and $G \curvearrowright X$ a non-elementary action by isometries on a complete $\cat (0)$ space $X$, and assume that $G $ contains a contracting element. Let $o \in X$ be a basepoint of the random walk. Recall that $B_L$ is defined to be the subspace of $\bd X$ consisting of all geodesic rays $\gamma : [0, \infty) \rightarrow X$ starting from $o$ and such that there exists an infinite $L$-chain crossed by $\gamma$. By Theorem \ref{thm homeomorphism bords XL}, there exists an $\iso(X)$-equivariant embedding $\mathcal{I}_L : \bdg X_L \rightarrow \bd X$, whose image lies in $B_L$. 
	
	\begin{eprop}\label{geometric lemma}
		Let $(g_n)$ be a sequence of isometries of $G$, and let $o \in X$, $x, y \in \bd X$. Assume that there exists $\lambda, \epsilon, A >0 $ such that:
		\begin{enumerate}[label= (\roman*)]
			\item $ \{g_n o \}_n $ converges in $(\overline{X_L}, d_L)$ to a point of the boundary $z_L \in \bdg X_L$, whose image in $\bd X $ by the embedding $\mathcal{I}_L$ is not $y$; \label{assum cv}
			\item $d_L(g_n o, o) \geq An$; \label{assum drift hyp}
			\item $|b_x (g_n^{-1} o ) - n \lambda | \leq \varepsilon n$; \label{assum gnx}
			\item $|b_y (g_n o ) - n \lambda | \leq \varepsilon n$; \label{assum y}
			\item $|d(g_n o , o) - n \lambda | \leq \varepsilon n$. \label{assum drift cat}
		\end{enumerate}	
		Then, one obtains: 
		\begin{enumerate}
			\item $(g_nx |g_n o )_o \geq (\lambda - \varepsilon) n$; \label{geom estimate 1}
			\item $(y | g_n o )_o \leq \varepsilon n$.\label{geom estimate 2} 
		\end{enumerate}
		If moreover $A \geq 2(4L+10) \varepsilon$, then we have: 
		\begin{enumerate}[resume]
			\item $(y| g_n x )_o \leq \varepsilon n + (2L+ 1)$. \label{geom estimate 3}
		\end{enumerate}
	\end{eprop}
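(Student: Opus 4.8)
The plan is to work entirely in the Gromov-hyperbolic setting, translating each of the five hypotheses into statements about Gromov products in $X_L$, and then applying the standard four-point inequality. Throughout I write $(\cdot|\cdot)_o$ for the Gromov product in $X_L$, $\delta$ for the hyperbolicity constant, and I freely use that $d_L \leq d$ together with Proposition \ref{prop approx bornee horof} so that the Busemann functions $b_x, b_y$ on the $\cat$(0) boundary, restricted to points of $B_L$, differ from $d_L(o, \cdot)$ only by a bounded amount along the relevant sequences. The first task is to rewrite each assumption: assumption \ref{assum drift hyp} gives $d_L(g_n o, o) \geq An$; assumptions \ref{assum gnx}, \ref{assum y}, \ref{assum drift cat} combined with Proposition \ref{prop approx bornee horof} say that $b_x(g_n^{-1}o)$, $b_y(g_n o)$ and $d(g_n o, o)$ are all within $\varepsilon n + O(1)$ of $n\lambda$, and hence within $\varepsilon n + O(1)$ of $d_L(g_n o, o)$; and assumption \ref{assum cv} says that $g_n o \to z_L$ in $\bdg X_L$ with $z_L \neq y$ in $B_L$, so by Lemma \ref{lem horobdry gromov bdry} the Gromov products $(g_n o | g_m o)_o \to \infty$ while $(y | g_n o)_o$ stays bounded as we will quantify.

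For \ref{geom estimate 1}, I would unwind the definition of the Gromov product: $(g_n x | g_n o)_o = \frac12\big(d_L(g_n x, o) + d_L(g_n o, o) - d_L(g_n x, g_n o)\big)$, where I interpret $d_L(g_n x, \cdot)$ via horofunction/shadow bookkeeping as in Section \ref{section prelim hyp}. Using $G$-equivariance, $d_L(g_n x, g_n o) $ relates to the ``distance from $o$ to $x$ as seen after translating'', and the key point is that $b_x(g_n^{-1} o)$ is exactly (up to the bounded error) the quantity $d_L(g_n x, o) - d_L(g_n x, g_n o)$ by the cocycle relation \eqref{cocycle horof} for horofunctions. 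Substituting assumption \ref{assum gnx} then gives $(g_n x | g_n o)_o \geq \frac12\big((n\lambda - \varepsilon n) + (n\lambda - \varepsilon n)\big) + O(1)$ after also using \ref{assum drift cat}; cleaning up constants yields $(g_n x | g_n o)_o \geq (\lambda - \varepsilon) n$ (absorbing the $O(1)$, or stating it with an additive $O(\delta)$ — I expect the paper wants it clean, so I would check that the $\varepsilon n$ slack already swallows the constant for $n$ large, or simply restate with $-C$). For \ref{geom estimate 2}, I would argue symmetrically: $(y | g_n o)_o = \frac12\big(b_y(g_n o) + d(g_n o, o)\big)$ up to $O(1)$ is the wrong sign, so instead I use $(y|g_n o)_o = d_L(g_n o, o) - (b_y(g_n o))$-type identity, i.e. $b_y(g_n o) = d_L(g_n o, o) - 2(y | g_n o)_o + O(\delta)$ (this is the standard relation between a Busemann function and the Gromov product), whence $2(y|g_n o)_o = d_L(g_n o, o) - b_y(g_n o) + O(\delta) \leq (n\lambda + O(1)) - (n\lambda - \varepsilon n) + O(\delta) = \varepsilon n + O(1)$; dividing by $2$ gives even better than claimed, so \ref{geom estimate 2} follows.

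Part \ref{geom estimate 3} is the one where the extra hypothesis $A \geq 2(4L+10)\varepsilon$ enters, and I expect it to be the main obstacle. The natural move is the four-point inequality $(y | g_n x)_o \geq \min\{(y | g_n o)_o, (g_n o | g_n x)_o\} - \delta$, but that gives a \emph{lower} bound on $(y|g_nx)_o$, which is the opposite of what we want. So instead I would apply the inequality in the form $(y | g_n o)_o \geq \min\{(y | g_n x)_o, (g_n x | g_n o)_o\} - \delta$: by \ref{geom estimate 1} we have $(g_n x | g_n o)_o \geq (\lambda - \varepsilon) n$, which (since $\lambda \geq A \gg \varepsilon$, or more carefully since $A \geq 2(4L+10)\varepsilon$ forces $\lambda - \varepsilon$ to dominate $\varepsilon n$) exceeds the bound $\varepsilon n + O(1)$ from \ref{geom estimate 2}; therefore the minimum must be realized by $(y | g_n x)_o$, giving $(y | g_n o)_o \geq (y | g_n x)_o - \delta$, i.e. $(y | g_n x)_o \leq (y | g_n o)_o + \delta \leq \varepsilon n + O(\delta)$. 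The role of $A \geq 2(4L+10)\varepsilon$ is precisely to guarantee the separation $(g_n x | g_n o)_o > (y|g_n o)_o + \delta$ for all large $n$, and I would also use the curtain ``bottleneck'' Lemma \ref{lem bottleneck intro} (the source of the $2L+1$ in the statement) to replace the $O(\delta)$ by the explicit $2L+1$: here one finds the $L$-chain dual to the geodesic $[o, g_n x]$ realizing the Gromov product, locates the point of $[o, g_n x]$ in the middle curtain, and uses that it lies within $2L+1$ of the projection, exactly as in Proposition \ref{prop inverse map}. The delicate bookkeeping — matching the hyperbolic-space estimates with the curtain estimates so that the final constant is $2L+1$ and not some unspecified $C(\delta, L)$ — is where I would spend the most care; everything else is routine manipulation of Gromov products.
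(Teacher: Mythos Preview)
There is a fundamental misreading at the outset: the Gromov products in the statement are taken with respect to the $\cat$(0) metric $d$, not with respect to $d_L$. The Busemann functions $b_x, b_y$ are likewise the $\cat$(0) horofunctions. Moreover $x,y$ are arbitrary points of $\bd X$ and need not lie in $B_L\simeq \bdg X_L$, so your $X_L$-Gromov products involving $x$ or $y$ are not even defined in general.

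Once this is straightened out, parts \ref{geom estimate 1} and \ref{geom estimate 2} become much easier than you make them: in the $\cat$(0) space one has the exact identities
\[
(g_n x \,|\, g_n o)_o \;=\; \tfrac12\big(b_x(g_n^{-1}o)+d(g_n o,o)\big),\qquad
(y \,|\, g_n o)_o \;=\; \tfrac12\big(d(g_n o,o)-b_y(g_n o)\big),
\]
and plugging in assumptions \ref{assum gnx}--\ref{assum drift cat} gives the two bounds immediately, with no $O(1)$ or $O(\delta)$ errors. Proposition \ref{prop approx bornee horof} is not needed here.

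For part \ref{geom estimate 3} your approach breaks down completely. The four-point inequality $(y|g_n o)_o \geq \min\{(y|g_n x)_o,(g_n x|g_n o)_o\}-\delta$ is a feature of hyperbolic spaces; the $\cat$(0) space $(X,d)$ is not hyperbolic, so there is no such inequality available and nothing like it can be recovered from $d_L\leq d$ alone. The paper's argument is precisely a substitute for the missing four-point inequality, built from the curtain machinery: one takes a maximal $L$-chain $c_n$ dual to $[o,g_n o]$ (of length $\gtrsim An/(4L+10)$), shows via assumption \ref{assum cv} that only boundedly many curtains of $c_n$ fail to separate $\{o,y\}$ from $g_n o$, and shows via part \ref{geom estimate 1} together with $A>2(4L+10)\varepsilon$ that unboundedly many curtains of $c_n$ separate $o$ from $\{g_n o, g_n x\}$. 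For $n$ large one therefore finds three $L$-separated curtains separating $\{o,y\}$ from $\{g_n o, g_n x\}$; the bottleneck Lemma \ref{lem bottleneck intro} then yields the genuine $\cat$(0) inequality $(y|g_n x)_o \leq (y|g_n o)_o + (2L+1)$, whence \ref{geom estimate 3} follows from \ref{geom estimate 2}. The constant $2L+1$ is not a disguised $O(\delta)$; it comes directly from the bottleneck estimate in $(X,d)$.
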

	
	The proof of points \ref{geom estimate 1} and \ref{geom estimate 2} is straightforward, so we begin by these. 
	
	\begin{proof}[Proof of estimates \ref{geom estimate 1} and \ref{geom estimate 2}]
		A simple computation gives that 
		\begin{eqnarray}
			(g_n x | g_n o)_o = \frac{1}{2}(b_x (g^{-1}_n o) + d(g_no, o)) \nonumber
		\end{eqnarray}
		Then using assumptions \ref{assum gnx} and \ref{assum drift cat} gives immediately that $(g_nx | g_no ) \geq (\lambda - \epsilon ) n $, which proves \ref{geom estimate 1}. 	
		
		Now, by definition,  
		\begin{eqnarray}
			(y |g_n o ) = \frac{1}{2}(d(g_no, o) - b_y (g_no))\nonumber
		\end{eqnarray}
		Then by assumptions \ref{assum y} and \ref{assum drift cat}, we obtain \ref{geom estimate 2}. 
	\end{proof}
	
	The proof of point \ref{geom estimate 3} is the hard part. We prove it in two steps. First, we show that under the assumptions, for $n$ large enough, there exist at least three $L$-separated curtains dual to $[o, g_no]$ separating $\{g_n o, g_nx\}$ on the one side and $\{o, y \}$ on the other, see Figure \ref{figure separating hyperplanes}. Then we show that the presence of these hyperplanes implies the result. 
	
	\begin{figure}
		\centering
		\begin{center}
			\begin{tikzpicture}[scale=1.2]
				\draw (0,0) -- (4,0)  ;
				\draw (2.8,2) -- (2.8,-1)  ;
				\draw (1.2,2) -- (1.2,-1)  ;
				\draw (2,2) -- (2,-1)  ;
				\draw (0,0) node[below left]{$o$} ;
				\draw (4,0) [-stealth] to[bend left = 20] (5, 3);
				\draw (0,0) [-stealth] to[bend right = 20] (-1, 2.5);
				\draw (-1, 2.5) to[bend right = 55] (5,3);
				\draw (-1, 2.5) node[above left]{$y$} ;
				\draw (4,0) node[below right]{$g_no$} ;	
				\draw (5,3) node[above]{$g_n x$};
			\end{tikzpicture}
		\end{center}
		\caption{A "hyperbolic-like" 4 points inequality in Proposition \ref{geometric lemma}.}\label{figure separating hyperplanes}
	\end{figure}
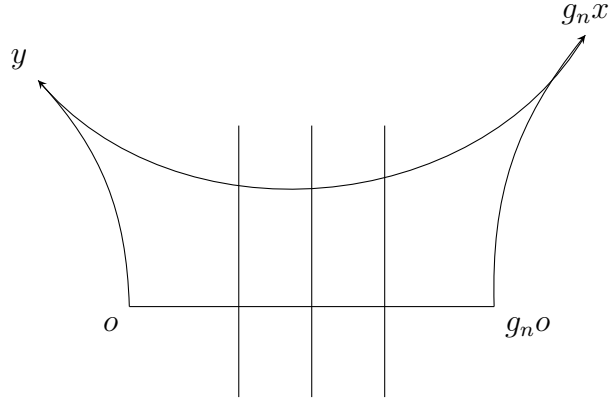
	
	By assumption \ref{assum drift hyp}, there exists an $L$-chain of size $S(n) $ that separates $o$ and $g_no$. By Lemma \ref{lem dual chain}, there exists an $L$-chain dual to $[o, g_no ]$ of size greater than or equal to $\lfloor\frac{S(n)}{4L + 10}\rfloor$ that separates $o$ and $g_no$. Denote by $c_n = \{h^n_i\}_{i=1}^{S'(n)}$ a maximal $L$-chain dual to $[o, g_no]$, separating $o $ and $g_n o $, and orient the half-spaces so that $h_{i-1} \subseteq h_i ^{-}$ for all $i$. When the context is clear, we might omit the dependence in $n$ for ease of notations, and just write $\{h_i\}_{i=1}^{S'(n)}$ for a maximal $L$-chain dual to $[o, g_n o ]$. Recall that $S(n) \geq An$, hence $c_n$ must be of length $S'(n) \geq A'n$, where $A' = \frac{A}{4 L+10}$. 
	
	\begin{elem}\label{estimate y}
		Under the assumptions of Proposition \ref{geometric lemma}, there exists a constant $C$ such that for all $n \in \mathbb{N}$, the number of $L$-separated hyperplanes in $c_n$ that do not separate $\{o, y\}$ and $\{g_n o\} $ is less than $C$. 
	\end{elem}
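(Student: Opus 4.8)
The plan is to estimate $b_y(g_n o)$ using the $L$-chain $c_n = \{h^n_i\}$ dual to $[o, g_n o]$ and comparing it with the geodesic ray $[o, y)$ (recall $y \in B_L$ since its preimage under $\mathcal{I}_L$ lies in $B_L$, and the chain dual to that ray will be used too). First I would recall the setup: let $\gamma^y$ be the geodesic ray from $o$ to $y$, and let $\{h'_j\}$ be an infinite $L$-chain dual to $\gamma^y$, oriented so that $o \in (h'_j)^-$. By assumption \ref{assum y}, $b_y(g_n o) \geq n(\lambda - \varepsilon)$; the goal is to locate a uniformly bounded number of ``exceptional'' curtains in $c_n$ and show all the others must separate $\{o, y\}$ from $\{g_n o\}$.

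The key step is the following dichotomy for each curtain $h = h^n_i$ in $c_n$. Since $h$ separates $o$ from $g_n o$, it is dual to $[o, g_n o]$; either $\gamma^y$ crosses $h$ (hence $h$ separates $o$, and every point of $\gamma^y$ beyond some time, from $g_n o$ — in particular it separates $\{o, y\}$ from $\{g_n o\}$), or $\gamma^y$ never crosses $h$. In the second case, I want to argue that $h$ can occur only boundedly often. The idea is that if too many curtains of $c_n$ fail to be crossed by $\gamma^y$, then the projection of $\gamma^y$ onto $[o, g_n o]$ stays near $o$, which combined with the bottleneck Lemma \ref{lem bottleneck intro} forces $b_y(g_n o)$ to be close to $-d(o, g_n o) + O(L)$ rather than $+n\lambda$, contradicting assumption \ref{assum y} together with \ref{assum drift cat}. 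More precisely: take $h_j, h_k \in c_n$ with $j < k$ both not crossed by $\gamma^y$; then $\gamma^y$ lies entirely in $h_j^-$ (or meets $h_j$ without crossing — here I would invoke Remark \ref{infinite dual chain} / the fact that $\gamma^y$ meeting infinitely many elements of an $L$-chain would have to cross it). So any geodesic from $\gamma^y(t)$ to $g_n o$ must traverse the sub-chain between $h_j$ and $h_k$, and the bottleneck lemma pins $\pi_{[o,g_no]}$ of the crossing points within $2L+1$ of a region near $o$. Writing $b_y(g_n o) = \lim_t d(\gamma^y(t), g_n o) - d(\gamma^y(t), o)$ and using the triangle inequality through these bottleneck points yields $b_y(g_n o) \leq d(o, g_n o) - 2 d(o, \text{(crossing region)}) + O(L)$. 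If there were $m$ such uncrossed curtains among the first ones, the crossing region would be at $d$-distance $\geq m - O(1)$ from $o$ (curtains are thick, Remark \ref{remark curtains thick intro}), giving $b_y(g_n o) \leq d(o,g_n o) - 2m + O(L) \leq n(\lambda+\varepsilon) - 2m + O(L)$ by \ref{assum drift cat}. Combined with $b_y(g_n o) \geq n(\lambda - \varepsilon)$ from \ref{assum y}, this forces $m \leq \varepsilon n + O(L)$ — not yet a constant bound, so I need to be more careful.

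To get a genuine constant $C$, the refinement I would use is that the uncrossed curtains cannot be spread out arbitrarily: I would show they all lie within a bounded-cardinality window at the $o$-end of the chain $c_n$. Indeed $\gamma^y \in B_L$ crosses its own infinite $L$-chain $\{h'_j\}$, and a curtain of $c_n$ far from $o$ (say past the point where $\gamma^y$ has already crossed $h'_1, h'_2, h'_3$) together with $\gamma^y$ and $[o,g_n o]$ creates, via Lemma \ref{lem dual chain} and the bottleneck Lemma, an $L$-chain separating $y$ from $g_n o$ as well — so such a curtain automatically separates $\{o,y\}$ from $\{g_n o\}$. Only the curtains of $c_n$ lying ``before'' the divergence point of $\gamma^y$ from $[o, g_n o]$ can fail, and that divergence point is at $d$-distance $O(L)$ from $o$ by the bottleneck lemma applied to the first three curtains of $\{h'_j\}$ that are crossed. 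Since curtains are thick ($d(h^-, h^+) = 1$), at most $O(L)$ curtains of $c_n$ fit in a ball of radius $O(L)$ around $o$, giving the constant $C = C(L)$.

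The main obstacle I anticipate is the bookkeeping in this last refinement: making precise the claim that a curtain of $c_n$ ``past the divergence point'' necessarily separates $y$ from $g_n o$, which requires carefully chaining Lemma \ref{lem bottleneck intro}, Lemma \ref{lem dual chain}, the non-convexity caveat for curtains (Proposition \ref{star convexity} and Remark \ref{infinite dual chain}), and orienting all the half-spaces consistently. I would isolate this as a sub-lemma about two geodesics from $o$ whose prefixes cross a common long $L$-chain, and keep the constants explicit in terms of $L$ only, so that the final bound $C$ is manifestly independent of $n$ and of the sequence $(g_n)$. Once Lemma \ref{estimate y} is established in this form, estimate \ref{geom estimate 3} of Proposition \ref{geometric lemma} will follow by a further application of the bottleneck lemma to the $\geq 3$ surviving $L$-separated curtains, exactly as in the sketch of the two-step argument indicated after the figure.
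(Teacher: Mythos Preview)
Your proposal misses the key input, namely assumption \ref{assum cv}. You misread it at the outset: the point whose image under $\mathcal{I}_L$ is discussed is $z_L = \lim_n g_n o$, and the hypothesis says $\mathcal{I}_L(z_L) \neq y$; it says nothing about $y$ lying in $B_L$. There is no assumption that $y \in B_L$, so your refinement built on an infinite $L$-chain dual to $\gamma^y$ has no foundation in the stated hypotheses.

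More seriously, even granting $y \in B_L$, neither of your two attempts yields a bound independent of $n$. Your first computation via $b_y(g_n o)$ gives only $m \leq \varepsilon n + O(L)$, as you yourself observe. Your refinement then asserts that the ``divergence point'' of $[o,g_n o]$ from $\gamma^y$ lies at $d$-distance $O(L)$ from $o$, but this is false in general: the geodesic $[o,g_n o]$ could fellow-travel $\gamma^y$ for an arbitrarily long time before diverging, and nothing in your argument prevents this. The bottleneck lemma applied to curtains dual to $\gamma^y$ tells you where geodesics crossing those curtains pass near $\gamma^y$, but it does not force $[o,g_n o]$ to leave a bounded neighbourhood of $o$ before separating from $\gamma^y$.

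The paper's argument uses assumption \ref{assum cv} in exactly the way you avoid. Since $g_n o \to z_L$ in $X_L$, the image $z = \mathcal{I}_L(z_L)$ lies in $B_L$, so there is an infinite $L$-chain $\{k_i\}$ dual to the ray $\beta = [o,z)$. Because $y \neq z$, the ray $\gamma^y$ eventually leaves this chain: there exists a fixed $p$ with $\gamma^y \subseteq k_p^-$. For $n$ large, $g_n o \in k_{p+2}^+$, so $\{k_p,k_{p+1},k_{p+2}\}$ separates $\{o,\gamma^y\}$ from $g_n o$. The bottleneck lemma applied to $[o,g_n o]$ crossing this triple then pins the crossing point within $2L+1$ of a \emph{fixed} point $r$ on $\beta$ (in the pole of $k_{p+1}$), independent of $n$. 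Every curtain of $c_n$ whose pole lies beyond this crossing point on $[o,g_n o]$ separates $\{o,y\}$ from $g_n o$ by star convexity; and by thickness of curtains at most $d(o,r)+2L+2$ curtains of $c_n$ have poles before it. That is the constant $C$. The point is to anchor everything to the \emph{limiting} direction $z$, not to $y$.
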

	\begin{proof}[Proof of Lemma \ref{estimate y}]
		By assumption, $ \{g_n o \}_n $ converges in $(\overline{X_L}, d_L)$ to a point of the boundary $z_L \in \bdg X_L$. By Theorem \ref{equivariant embedding of boundaries}, there exists an $\iso(X)$-equivariant embedding $\mathcal{I}_L : \bdg X_L \rightarrow \bd X$, whose image lies in $B_L$. Denote by $z := \mathcal{I}_L(z_L)$ the image in $\bd X$ of the limit point $z_L$ by this embedding. 
		
		Denote by $\beta : [0, \infty) \rightarrow X$ a geodesic ray joining $o$ to $z$. Since $z \in B_L$, there exists an infinite $L$-chain $c = \{k_i\}_{i \in \mathbb{N}}$ that separate $o$ from $z$. Note that because of Lemma \ref{lem dual chain} and Remark \ref{infinite dual chain}, we can assume that $c$ is a chain of curtains which is dual to the geodesic ray $\beta$. Since $\{g_n o \}_n $ converges in $(\overline{X_L}, d_L)$ to $z_L \in \bdg X_L$, and $z$ is the image of $z_L$ by the equivariant embedding $\mathcal{I}$, Proposition \ref{prop inverse map} implies that $\{g_no\}_n $ converges to $z$ in $X$. The fact that $z \in B_L$ implies that for all $i \in \mathbb{N}$, there exists $n_0 \in \mathbb{N}$ such that for all $n\geq n_0$, $k_i$ separates $o $ from $g_n o $. Now, we denote by $\gamma : [0, \infty) \rightarrow X$ the geodesic ray that represents $y \in \bd X $. See figure \ref{figure estimate y}. 
		
		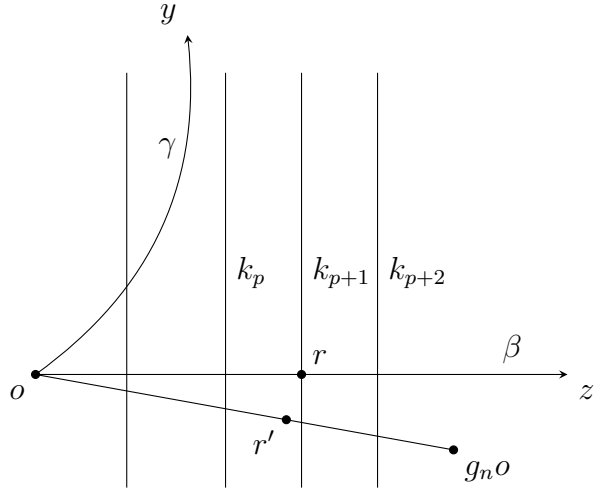
\begin{figure}
			\centering
			\begin{center}
				\begin{tikzpicture}[scale=1]
					\draw (0,0) [-stealth]-- (7,0)  ;
					\draw (1.2,4) -- (1.2,-1.5)  ;
					\draw (2.5,4) -- (2.5,-1.5)  ;
					\draw (3.5,4) -- (3.5,-1.5)  ;
					\draw (4.5,4) -- (4.5,-1.5)  ;
					\draw (0,0) node[below left]{$o$} ;
					\draw (2, 3) node[left]{$\gamma$} ;
					\draw (7,0) node[below right]{$z$} ;
					\draw (2, 4.5) node[above left]{$y$} ;
					\draw (2.5, 1) node[above right]{$k_p$} ;
					\draw (3.5, 1) node[above right]{$k_{p+1}$} ;
					\draw (3.5, 0) node[above right]{$r$} ;
					\draw (4.5, 1) node[above right]{$k_{p+2}$} ;
					\draw (5.5,-1) node[below right]{$g_no$} ;
					\draw (6,0) node[above right]{$\beta$} ;
					\filldraw[black] (0,0) circle (1.5pt) ; 
					\draw  (3.3,-0.6) node[below left]{$r'$} ;
					\filldraw[black] (3.5,0) circle (1.5pt) ; 
					\filldraw[black] (3.3,-0.6) circle (1.5pt) ; 
					\filldraw[black] (5.5,-1) circle (1.5pt) ; 
					\draw (0, 0 ) [-stealth] to[bend right]	(2, 4.5);
					\draw (0, 0 ) -- (5.5,-1);
				\end{tikzpicture}
			\end{center}
			\caption{Illustration of Lemma \ref{estimate y}.}\label{figure estimate y}
		\end{figure}
		
		Due to Remark \ref{infinite dual chain}, meeting $c$ infinitely often is equivalent to crossing it, then since $y \neq z$, there exists $p \in \mathbb{N} $ such that $\gamma \subseteq k_p^{-}$. Now consider $n_0$ such that for $n\geq n_0$, $g_n o \in k_{p+2}^{+}$. Fix $n \geq n_0$. Recall that $c_n$ is a maximal $L$-chain dual to $[o, g_n o]$ separating $o$ and $g_n o $. 
		
		Denote by $r \in \beta$ a point in the pole of $k_{p+1}$, and denote by $r'= r'(n)$ the projection of $r$ onto the geodesic $[o, g_n o ]$. Then by Lemma \ref{lem bottleneck intro}, 
		\begin{equation*}
			d(o, r'(n)) \leq d(o , r) + 2L +1 \nonumber. 
		\end{equation*} 
		Recall that curtains are thick, i.e. $d(h^+, h^-) = 1$ for all curtain $h$. Consequently, the number of curtains in $c_n$ that separate $o $ and $r' (n) $ is $\leq d(o , r) + 2L +2$. We emphasize that this number does not depend on $n\geq n_0$, because for all $n \geq n_0$, $g_n \in k_{p+2}^{+}$ and the previous equation holds.  
		
		Recall that $\gamma \subseteq k_{p}^{-}$, so in particular $\gamma \subseteq k_{p+1}^{-}$. Then by star convexity of the curtains (Lemma \ref{star convexity}), every curtain in $c_n$ whose pole belongs to $[r'(n), g_n o]$ separates $\{o,y\}$ from $g_n o $. Then by the previous argument, the number of curtains that do not separate $\{o, y\}$ from $\{g_no\}$ is less than $d(o , r'(n))$. In particular, the number of curtains that do not separate $\{o, y\}$ from $\{g_no\}$ is less than $d(o, r) + 2L+2 $. Since this quantity does not depend on $n$, we have proven the Lemma. 
	\end{proof}
	
	Now, for a fixed $n$, let us give an estimate for the number of curtains in $ c_n = \{h^n_1 , \dots , h^n_{S'(n)}\} $ that separate $o $ and $g_n x $. When $n$ is fixed, we omit the dependence in $n$ and just write $c_n = \{h_1 , \dots , h_{S'(n)}\}$ to ease the notations. Let $\gamma_n : [0, \infty) \rightarrow X$ be the geodesic ray joining $o $ and $g_n x$. Let us take $k_0= k_0(n)$ (depending on $n$) large enough so that for all $k \geq k_0$, 
	\begin{equation}
		| (g_n o | g_n x )_o - (g_no | \gamma_n(k)_o) | \leq 1.   \nonumber
	\end{equation}
	
	\begin{elem}\label{estimate gnx}
		Under the assumptions of Proposition \ref{geometric lemma}, the number of $L$-separated hyperplanes in $c_n$ that separate $\{o\}$ and $\{g_n o, \gamma_n(k_0)\} $ is unbounded in $n$. More precisely, for all $M \in \mathbb{N}$, there exists $n_0$ such that for all $n \geq n_0$, the number of $L$-separated hyperplanes in $c_n$ that separate $\{o\}$ and $\{g_n o, \gamma_n(k)\} $ is greater than $M$ for all $k \geq k_0$. 
	\end{elem}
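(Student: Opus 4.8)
The plan is to prove Lemma \ref{estimate gnx} by combining the lower bound on the Gromov product $(g_n x | g_n o)_o$ established in \ref{geom estimate 1} of Proposition \ref{geometric lemma} with the fact that curtains are thick and that an $L$-chain dual to a geodesic segment realizes the chain metric $d_L$. Concretely, fix $M \in \mathbb{N}$ and recall that by \ref{geom estimate 1} we have $(g_n x | g_n o)_o \geq (\lambda - \varepsilon)n$, so by the choice of $k_0 = k_0(n)$ we also have $(g_n o | \gamma_n(k))_o \geq (\lambda - \varepsilon)n - 1$ for all $k \geq k_0$. Since $\gamma_n$ is a $\cat$(0) geodesic from $o$ to $g_n x$, rewriting the Gromov product shows that $d(o, \gamma_n(k))$ and $d(o, g_n o)$ agree up to $O(\varepsilon n)$ along the portion before the "branch point", so the geodesics $[o, g_n o]$ and $\gamma_n$ fellow-travel (in the $\cat$(0) metric, by convexity) for a time of order $(\lambda - \varepsilon)n$.

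First I would make this fellow-traveling precise: using the comparison inequality of Proposition \ref{prop convexite cat} (convexity of $t \mapsto d([o,g_no](t), \gamma_n(t))$), together with the bound on the Gromov product, one gets that for $t$ up to roughly $(\lambda - 2\varepsilon)n$ the two geodesics stay within a bounded (actually sublinear, but bounded suffices here after a small adjustment) distance of each other — more carefully, one shows there is a point $p_n$ on $[o, g_n o]$ with $d(o, p_n) \geq (\lambda - 2\varepsilon)n - O(1)$ such that $p_n$ lies within bounded distance of $\gamma_n$. Then I would invoke star convexity of curtains (Proposition \ref{star convexity}) exactly as in the proof of Lemma \ref{estimate y}: any curtain $h_i \in c_n$ whose pole lies on the initial sub-segment $[o, p_n]$ of $[o, g_n o]$ separates $o$ from both $g_n o$ and $\gamma_n(k)$ (for $k \geq k_0$), because $g_n o$ and $\gamma_n(k)$ are both on the far side of $h_i$. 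Since the curtains in $c_n$ are thick ($d(h^-, h^+) = 1$, Remark \ref{remark curtains thick intro}) and pairwise disjoint along the geodesic, the number of them with pole in $[o, p_n]$ is at least $d(o, p_n) - O(1) \geq (\lambda - 2\varepsilon)n - O(1)$.

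So it remains only to choose $n_0$ large enough that $(\lambda - 2\varepsilon)n_0 - O(1) \geq M$, which is possible since $\lambda - 2\varepsilon > 0$ (we may and do assume $\varepsilon$ small compared to $\lambda$, which is legitimate because in the application $\varepsilon$ is a free small parameter). This gives the stated conclusion: for all $n \geq n_0$, the number of $L$-separated curtains in $c_n$ separating $\{o\}$ from $\{g_n o, \gamma_n(k)\}$ exceeds $M$ for every $k \geq k_0$, and in particular this count is unbounded in $n$.

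The main obstacle I anticipate is not the curtain-counting, which is a direct repetition of the technique in Lemma \ref{estimate y}, but rather making the fellow-traveling statement clean enough to extract a \emph{curtain pole} on $[o, g_n o]$ that genuinely sits on the correct side of the relevant curtains: one has to be careful that the bound coming from convexity of the distance function is uniform and that the $O(1)$ error terms (the hyperbolicity constant $\delta$ of $X_L$, the $2L+1$ from the bottleneck Lemma \ref{lem bottleneck intro}, the thickness constant $1$) do not secretly depend on $n$. Since all of these constants depend only on $L$ (Theorem \ref{theorem hyperbolic models intro}), and $L$ is fixed throughout, this is routine but must be checked. A secondary point is that $\gamma_n$ is a $\cat$(0) geodesic and not a priori dual to $c_n$; one handles this exactly as before by noting that $g_n x$ lies on one definite side of each curtain with pole in $[o, p_n]$ and applying star convexity to control $\gamma_n$.
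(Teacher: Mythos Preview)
Your approach has a genuine gap in the fellow-traveling step. The claim that convexity of $t\mapsto d([o,g_no](t),\gamma_n(t))$ together with the Gromov-product bound $(g_no\mid \gamma_n(k))_o\geq(\lambda-\varepsilon)n-1$ yields a point $p_n\in[o,g_no]$ at distance $\geq(\lambda-2\varepsilon)n$ from $o$ and within \emph{bounded} distance of $\gamma_n$ is simply false in a general $\cat(0)$ space. Work it out in $\mathbb{R}^2$: if $d(o,g_no)\in[(\lambda-\varepsilon)n,(\lambda+\varepsilon)n]$ and the Gromov product is $\geq(\lambda-\varepsilon)n$, convexity only gives $d([o,g_no](t),\gamma_n(t))\leq C\varepsilon n$ at time $t=(\lambda-2\varepsilon)n$, which is linear in $n$, not bounded and not sublinear. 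Your parenthetical ``actually sublinear, but bounded suffices here after a small adjustment'' papers over exactly the point where the argument fails. Genuine fellow-traveling in this situation comes not from $\cat(0)$ convexity but from the $L$-chain structure of $c_n$, via the bottleneck Lemma~\ref{lem bottleneck intro} --- and once you invoke that, you are essentially running the paper's argument.

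The paper proceeds instead by contradiction: assuming at most $p$ curtains of $c_n$ separate $o$ from $\gamma_n(k)$, the remaining $L$-chain $\{h_{p+2},\dots,h_{S'(n)}\}$ separates $\{o,\gamma_n(k)\}$ from $g_no$; the bottleneck lemma applied to $[\gamma_n(k),g_no]$ then bounds $(\gamma_n(k)\mid g_no)_o$ above by $d(o,r'(n))+(2L+1)\leq d(o,g_no)-(S'(n)-(p+1))+(2L+1)$, and combining with $(g_nx\mid g_no)_o\geq(\lambda-\varepsilon)n$, $d(o,g_no)\leq(\lambda+\varepsilon)n$ and $S'(n)\geq A'n$ forces $0\leq(2\varepsilon-A')n+p+2L+3$, a contradiction for large $n$ precisely when $A'>2\varepsilon$, i.e.\ $A>2(4L+10)\varepsilon$. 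Note also that your final count is off: $c_n$ is a specific $L$-chain of length $S'(n)\geq A'n=\tfrac{A}{4L+10}n$, not the set of all curtains dual to $[o,g_no]$, so the number of its poles in a subsegment of length $\ell$ is not $\ell-O(1)$; the relevant lower bound comes from $S'(n)$ minus the length of the complementary segment.
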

	
	\begin{proof}[Proof of Lemma \ref{estimate gnx}]
		Let $k \geq k_0$. Suppose that the number of curtains in $ c_n = \{h_1 , \dots , h_{S'(n)}\} $ separating $o $ and $\gamma_n(k) $ is less than or equal to $p \in  [0, S'(n)-4]$. Then $\{h_{p+2} , \dots , h_{S'(n)}\} $ is an $L$-chain separating $\{o, \gamma_n(k)\} $ and $\{g_n o\} $. We then denote by $r(n)$ a point on $h_{p+3} \cap [\gamma_n(k), g_no]$ and by $r'(n)$ the projection of $r(n)$ onto $[o, g_n o]$, see Figure \ref{figure estimate gnx}. By hypothesis on $k$,
		\begin{eqnarray}
			2((g_nx | g_n o)_o - 1) &\leq& 2(\gamma_n(k)| g_no)_o \nonumber \\
			&=& d(\gamma_n(k) , o ) + d(g_n o , o ) - d(g_n o, \gamma_n (k) ). \nonumber
		\end{eqnarray} 
		Now by the ``bottleneck'' Lemma \ref{lem bottleneck intro} and the triangular inequality, 
		\begin{eqnarray}
			2(\gamma_n(k)| g_no)_o & = & d(\gamma_n(k) , o ) + d(g_n o , o ) - (d(g_n o, r(n) ) + d(r(n) , \gamma_n(k))) \nonumber \\
			&\leq & d(\gamma_n(k) , o ) + d(g_n o , o ) - (d(g_n o, r' (n)) - (2L + 1) + d(r (n), \gamma_n(k))) \nonumber \\
			&\leq & d(r (n) , o ) + d(g_n o , o ) - d(g_n o, r'(n) ) + 2L + 1 \nonumber \\ 
			&\leq & d(r' (n) , o ) + 2L + 1 + d(g_n o , o ) - d(g_n o, r'(n) ) + 2L + 1 \nonumber \\ 
			& \leq & 2 d(r' (n), o ) + 2(2L + 1). \nonumber
		\end{eqnarray} 
		
		Because the pole of a curtain is of diameter 1, $d(o , r'(n)) \leq d(g_n o, o) - (S'(n) - (p +1))$. However, by assumptions \ref{assum drift hyp} and \ref{assum drift cat} of Lemma \ref{geometric lemma}, one gets that $d(g_no , o ) \leq (\lambda + \varepsilon) n $ and $S(n) \geq An $. Recall that by Lemma \ref{lem dual chain}, this means that $S'(n) \geq A'n$, where $A' = \frac{A}{4L +1}$. Combining this with the previous result yields
		\begin{eqnarray}
			& & (g_nx | g_n o)_o - 1 \leq d(o, r'(n)) + 2L+1 \nonumber \\
			& \Rightarrow& (\lambda - \varepsilon) n  - 1 \leq (\lambda + \varepsilon )n - (A'n - (p+1)) + 2L+1 \text{ by Lemma \ref{geometric lemma}, \ref{geom estimate 1}}\nonumber  \\
			& \Rightarrow & 0 \leq (2\varepsilon - A' ) n + 2L + p+3.   \nonumber
		\end{eqnarray}
		If $A' > 2 \varepsilon$, there exists $n_0 $ large enough such that for all $n \geq n_0$, the above inequality gives a contradiction. As a consequence, if $A' > 2 \varepsilon$, or equivalently if $ A > 2(4L +10) \varepsilon$, there exists $n_0$ such that for all $n \geq n_0$, the number of curtains in $c_n$ separating ${o } $ and $\{\gamma_n (k), g_no\}$ is greater than $p$.
	\end{proof}
	
	\begin{figure}
		\centering
		\begin{center}
			\begin{tikzpicture}[scale=2]
				\draw (0,0) -- (4,0)  ;
				\draw (3.6,2) -- (3.6,-0.5)  ;
				\draw (2.5,2) -- (2.5,-0.5)  ;
				\draw (3,2) -- (3,-0.5)  ;
				\draw (1.5,2) -- (1.5,-0.5)  ;
				\draw (0,0) node[below left]{$o$} ;
				\draw (2.2, 2.5) to[bend right = 40] (4,0);
				\draw (0,0) [-stealth]to[bend right = 20] (2.3,2.8);
				\draw (2.2, 2.5) node[below left]{$\gamma_n(k)$} ;
				\draw (2.3, 2.8) node[above right]{$g_n x$} ;
				\draw (3,0.5) node[above right]{$r(n)$} ;
				\filldraw[black] (2.2, 2.5) circle (1 pt) ;
				\filldraw[black] (3, 0.5) circle (1 pt) ;
				\filldraw[black] (0,0) circle (1 pt) ;
				\filldraw[black] (2.85,0) circle (1 pt) ;
				\filldraw[black] (4,0) circle (1 pt) ;
				\draw (3,-0.5) node[below ]{$h_{p+3}$} ;
				\draw (1.5,-0.5) node[below ]{$h_p$} ;
				\draw (2.85,0) node[above]{$r'(n)$} ;
				\draw (2.5,-0.5) node[below ]{$h_{p+2}$} ;
				\draw (4,0) node[below right]{$g_no$} ;	
			\end{tikzpicture}
		\end{center}
		\caption{Illustration of Lemma \ref{estimate gnx}.}\label{figure estimate gnx}
	\end{figure}
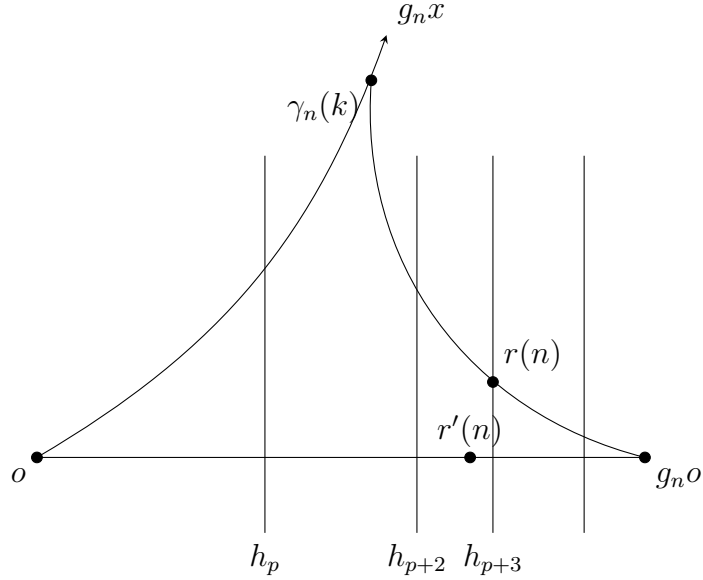
	
	We can now conclude the proof of Proposition \ref{geometric lemma}. 
	
	\begin{proof}[Proof of estimate \ref{geom estimate 3}]
		Recall that we denote by $\gamma : [0, \infty) \rightarrow X$ the geodesic ray based at $o$ that represents $y \in \bd X $ and by $\gamma_n : [0, \infty) \rightarrow X$ the geodesic ray joining $o $ and $g_n x$. Combining Lemma \ref{estimate y} and Lemma \ref{estimate gnx}, we get that if $A > 2(4L+10)\varepsilon$, there exists $n_0$, $k_0$ such that for all $n \geq n_0$ and all $k\geq k_0$, $c_n$ contains at least 3 pairwise $L$-separated curtains that separate $\{o, \gamma(k)\} $ on the one side and $\{g_no, \gamma_n(k)\}$ on the other. Call these hyperplanes $\{h_1, h_2, h_3\}$ and arrange the order so that $h_i \subseteq h_{i+1}^{-}$. Denote by $m_k (n) \in h_2$ some point on the geodesic segment joining $\gamma(k) $ to $\gamma_n(k)$, and $m'_k(n)$ belonging to the geodesic segment $[o, g_no]$ such that $d(m_k(n), m'_k(n)) \leq 2 L + 1$, see Figure \ref{figure proof geom estimate}
		
		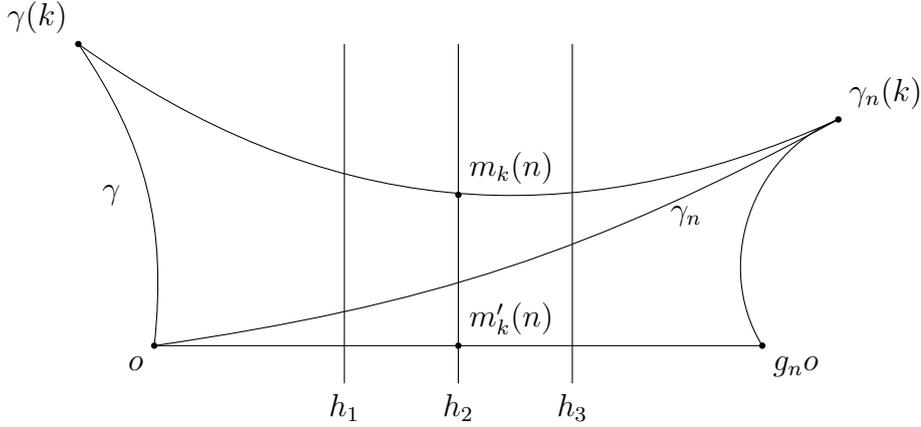
\begin{figure}
			\centering
			\begin{center}
				\begin{tikzpicture}[scale=1]
					\draw (0,0) -- (8,0)  ;
					\draw (2.5,4) -- (2.5,-0.5)  ;
					\draw (4,4) -- (4,-0.5)  ;
					\draw (5.5, 4) -- (5.5,-0.5)  ;
					\draw (0,0) node[below left]{$o$} ;
					\draw (-0.3, 2) node[left]{$\gamma$} ;
					\draw (0, 0) to[bend right = 20](-1, 4) ;
					\draw (0, 0) to[bend right = 10] (9,3) ;
					\draw (-1, 4) to[bend right = 30] (9,3);
					\draw (8,0) to[bend left = 50] (9,3);
					\draw (-1, 4) node[above left]{$\gamma(k)$} ;
					\draw (2.5, -0.5) node[below]{$h_1$} ;
					\draw (4, -0.5) node[below]{$h_2$} ;
					\draw (5.5, -0.5) node[below]{$h_3$} ;
					\draw (8,0) node[below right]{$g_no$} ;	
					\draw (9,3) node[above right]{$\gamma_n(k)$};
					\draw (4,2) node[above right]{$m_k(n)$};
					\draw (4,0) node[above right]{$m'_k(n)$};
					\draw (7,2) node[below]{$\gamma_n$};
					\filldraw[black] (-1, 4) circle (1 pt) ;
					\filldraw[black] (4,0) circle (1 pt) ;
					\filldraw[black] (4, 2.0) circle (1 pt) ;
					\filldraw[black] (9,3) circle (1 pt) ;
					\filldraw[black] (8,0) circle (1 pt) ;
					\filldraw[black] (0,0) circle (1 pt) ;
				\end{tikzpicture}
			\end{center}
			\caption{Proof of Proposition \ref{geometric lemma}}\label{figure proof geom estimate}
		\end{figure}
		Then we have 
		\begin{eqnarray}
			2(\gamma(k)| \gamma_n(k))_o & = & d(\gamma(k) , o ) + d(o, \gamma_n(k)) - d(\gamma(k), \gamma_n(k)) \nonumber\\
			& \leq & d(\gamma(k) , o ) + d(o, m'_k(n)) + d(m'_k(n), m_k(n) ) \nonumber \\
			& & + \, d(m_k (n), \gamma_n(k)) - d(\gamma(k), \gamma_n(k)) \text{ by the triangular inequality }\nonumber\\
			& \leq & d(\gamma(k) , o ) + d(o, m'_k(n))  - d(\gamma(k), m_k(n)) + 2L + 1 \nonumber
		\end{eqnarray}
		by Lemma \ref{lem bottleneck intro}. Since $m'_k$ is on $[o , g_no]$, $d(o , m'_k)= d(o, g_no ) - d(g_no , m'_k(n))$. 
		We then have:
		\begin{eqnarray}
			2(\gamma(k)| \gamma_n(k))_o & \leq & d(\gamma(k) , o ) + d(o, g_no ) - d(g_no , m'k(n)) - d(\gamma(k), m_k(n)) + 2L + 1 \nonumber \\
			& = & d(\gamma(k) , o ) + d(o, g_no ) - (d(g_no , m'_k(n)) + d(\gamma(k), m_k(n))) + 2L + 1. \nonumber
		\end{eqnarray}
		Now observe that 
		\begin{eqnarray}
			d(\gamma(k), g_no) &\leq& d(g_no , m'_k(n)) + d(\gamma(k), m_k(n)) + d(m_k, m'_k) \nonumber \\
			& \leq & d(g_no , m'_k(n)) + d(\gamma(k), m_k(n)) + 2L + 1 \text{ by Lemma \ref{lem bottleneck intro}}, \nonumber
		\end{eqnarray}
		hence $d(\gamma(k), g_n o ) - (2 L + 1) \leq d(g_no , m'_k(n)) + d(\gamma(k), m_k(n))$. Then 
		\begin{eqnarray}
			2(\gamma(k)| \gamma_n(k))_o & \leq & d(\gamma(k) , o ) + d(o, g_no ) - (d(\gamma(k), g_n o ) - (2 L + 1)) + 2L + 1 \nonumber \\
			& = & d(\gamma(k) , o ) + d(o, g_no ) - d(\gamma(k), g_n o ) + 2(2 L + 1) \nonumber \\
			& = & 2(\gamma(k)| g_n o )_o + 2(2 L + 1) \nonumber.
		\end{eqnarray}
		As $k \rightarrow \infty$, one obtains that $(g_nx| y)_o \leq (g_no |y)_o + (2 L + 1)$, and the result follows from the geometric estimate \ref{geom estimate 2}. 
	\end{proof}

	\subsection{Proof of the Central Limit Theorem}
	
	In this section, we prove the main result of this section. The setting is the same: we let $G$ be a discrete countable group and $G \curvearrowright X$ an action by isometries on a complete $\cat (0)$ space $X$. Let $\mu \in \prob(G) $ be an admissible probability measure on $G$ with finite second moment, and assume that $G $ contains a pair of independent contracting isometries for this action. Let $o \in X$ be a basepoint of the random walk. We begin this section by summarizing what we know. 
	\newline
	
	By Proposition \ref{prop non elem}, there exists a number $L \geq 0$ such that $G$ acts by isometries on $X_L = (X, d_L) $ non-elementarily. Then one can consider the random walk $(Z_n(\omega) o)_n$ as a random walk on $(X, d_L) $. The model $(X, d_L) $ is hyperbolic, so we can apply the results of Maher and Tiozzo \cite{maher_tiozzo18} and Gouëzel \cite{gouezel22} summarized in Section \ref{section rw courbure non positive}. In particular, due to Theorem \ref{thm cv rw hyp}, the random walk $(Z_n o)_n$ in $X_L$ converges to a point of the Gromov boundary $\bdg X_L$ of $(X, d_L)$. 
	
	Moreover, since we assume $\mu $ to have finite first moment (for the action on the $\cat$(0) space $X$), and since $d(x, y ) \geq d_L(x, y )$ for all $x, y \in X$, the measure $\mu $ is also of finite first moment for the action on the hyperbolic model $(X, d_L)$. By Theorem \ref{thm gouezel 1.2}, the drift $\tilde{\lambda}$ of the random walk $(Z_n \tilde{o})_n$ is almost surely positive. In other words, we have that $\mathbb{P}$-almost surely, 
	\begin{equation}
		\lim_{n \rightarrow \infty} \frac{1}{n} d(Z_n(\omega) \tilde{o}, \tilde{o}) = \tilde{\lambda} >0. \nonumber
	\end{equation}
	
	Due to Theorem \ref{thm uniqueness stat measure cat}, there exists a unique $\mu$-stationary probability measure $\nu$ on $\overline{X}$. If we define $\mui \in \prob(G)$ by $\mui (g) = \mu(g^{-1})$, $\mui $ is still admissible and of finite second moment. We denote by $\nui$ the unique $\mui$-stationary measure on $\overline{X}$. 
	
	We recall that the Busemann cocycle $\beta : G \times \overline{X} \rightarrow \mathbb{R}$ is defined by:
	\begin{equation*}
		\beta(g,x) = b_x (g^{-1} o).
	\end{equation*}
	
	Our goal is to apply Theorem \ref{thm criterium clt benoist quint} to the Busemann cocycle $\beta$. The results of Section \ref{strategy} show that proving a central limit theorem for the random walk $(Z_n(\omega)o)_n$ amounts to proving that $\beta$ is centerable. As in the works of \cite{benoist_quint16}, \cite{horbez18} and \cite{fernos_lecureux_matheus21}, the natural candidate for solving the cohomological equation \eqref{cohom equation} is the function:
	\begin{equation*}
		\psi ( x ) = -2 \int_{\overline{X}} (x | y)_o d\nui(y).
	\end{equation*}
	
	\begin{eprop}\label{prop centerable cocycle}
		Let $G$ be a discrete group and $G \curvearrowright X$ an action by isometries on a complete $\cat (0)$ space $X$. Let $\mu \in \prob(G) $ be an admissible probability measure on $G$ with finite second moment, and assume that $G $ contains a pair of independent contracting isometries for this action. Let $o \in X$ be a basepoint of the random walk. Then the map
		$\psi(x) = \int_{\overline{X}} (x |y)_o \, d\nui(y)$ is well-defined, Borel and essentially bounded.  
	\end{eprop}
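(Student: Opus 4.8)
The plan is to show that $\psi(x) = \int_{\overline X} (x|y)_o \, d\nui(y)$ is well-defined (the integrand is integrable in $y$ for $\nui$-a.e.\ $x$, in fact for every $x \in B_L$), Borel measurable in $x$, and essentially bounded on $\overline X$ with respect to $\nu$. The key point is that $\nu$ charges only $B_L$ (by Theorem \ref{thm cv cat}), and similarly $\nui$ charges only $B_L$; so it suffices to bound $\int_{\overline X} (x|y)_o\, d\nui(y)$ uniformly for $x \in B_L$. Since $\nui$ is non-atomic (Lemma \ref{lem non atomic}) and charges only $B_L$, for fixed $x \in B_L$ we have $y \neq x$ for $\nui$-a.e.\ $y$, and both points lie in $B_L$.

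The heart of the matter is a uniform bound on the Gromov product $(x|y)_o$ (computed in the hyperbolic model $X_L$, or equivalently controlling the $\cat$(0) quantity via curtains) when $x, y \in B_L$ are distinct. The key geometric input is Lemma \ref{lem bottleneck intro} (the bottleneck lemma) together with Lemma \ref{lem dual chain}: if $x, y \in B_L$ are distinct, the geodesic rays $\gamma_o^x$ and $\gamma_o^y$ each cross an infinite $L$-chain, and since they are distinct, by Remark \ref{infinite dual chain} there is a finite index $k$ beyond which $\gamma_o^y$ lies on the negative side of the curtains dual to $\gamma_o^x$. This forces $(x|y)_o \leq C(L) + c(x)$ where $c(x)$ depends a priori on how deep one must go in the $L$-chain of $x$ before $\gamma_o^y$ separates off. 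The subtlety — and the main obstacle — is that this separation index depends on the pair $(x,y)$, so pointwise finiteness of $(x|y)_o$ is immediate but a \emph{uniform} bound requires an argument. The resolution is that, essentially as in Proposition \ref{prop approx bornee horof}: for fixed $x \in B_L$, choosing the infinite $L$-chain $\{h_i\}$ dual to $\gamma_o^x$, the set of $y$ with separation index $\geq N$ is the decreasing sequence of sets $U_{h_N}(x)$ in the curtain topology, and $\bigcap_N U_{h_N}(x) = \emptyset$ (since meeting the chain infinitely often forces $y = x$). Hence for $\nui$-a.e.\ $y$ the separation index is finite, and one integrates: $\int (x|y)_o\, d\nui(y) \leq C(L) + \sum_N N \cdot \nui\big(U_{h_N}(x) \setminus U_{h_{N+1}}(x)\big)$. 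To get finiteness — and indeed a bound independent of $x$ — one uses the finite second moment hypothesis: by stationarity of $\nui$, the tail $\nui(U_{h_N}(x))$ decays, and the second moment condition on $\mu$ transfers (via Proposition \ref{prop approx bornee horof} and the identification $(x|y)_o \leftrightarrow$ horofunction values) to square-integrability of the relevant displacement, yielding $\int (x|y)_o\, d\nui(y) < \infty$ with a uniform bound over $x \in B_L$.

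For measurability: the function $(x, y) \mapsto (x|y)_o$ is Borel on $\overline X \times \overline X$ (it extends continuously off the diagonal and is a $\liminf$-sup of continuous functions in general, cf.\ Proposition \ref{prop topo tits }), so $\psi(x) = \int (x|y)_o\, d\nui(y)$ is Borel by Fubini--Tonelli once integrability is established. The final assertion "essentially bounded" then means: $\nu$-essentially bounded, which follows because $\nu(B_L) = 1$ and we have bounded $\psi$ uniformly on $B_L$. I would organize the proof as: (1) reduce to $x \in B_L$ using $\nu(B_L) = 1$; (2) for fixed $x \in B_L$, fix an infinite $L$-chain dual to $\gamma_o^x$ and decompose $\overline X \setminus \{x\}$ into the shells between consecutive curtains; (3) on the $N$-th shell bound $(x|y)_o$ linearly in $N$ plus a constant $C(L)$, via the bottleneck lemma; (4) estimate $\nui$ of the shells using $\mui$-stationarity and the finite second moment hypothesis to get a summable series with sum bounded independently of $x$; (5) conclude integrability, the uniform bound, and Borel measurability via Fubini. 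The step I expect to be the main obstacle is step (4): extracting, from the finite second moment of $\mu$, a quantitative tail bound on $\nui(U_{h_N}(x))$ that is uniform in $x$ — this is exactly where one needs to be careful, and it parallels the displacement estimates of Proposition \ref{prop approx bornee horof} and the cocycle moment bound in Proposition \ref{Busemann cocyle prop}\,\ref{moment}.
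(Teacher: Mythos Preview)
Your proposal has a genuine gap at the step you yourself flag as the main obstacle (step (4)): you never explain how to extract, from the second moment of $\mu$ and $\mui$-stationarity, a tail bound $\nui(U_{h_N}(x)) \leq C_N$ with $\sum N\,C_N < \infty$ that is \emph{uniform in $x$}. Stationarity alone gives $\nui(U_{h_N}(x)) = \int \nui(g^{-1}U_{h_N}(x))\,d\mui^{\ast n}(g)$, but the set $U_{h_N}(x)$ is $x$-dependent (the curtains are dual to $\gamma_o^x$), and there is no obvious translate or displacement control that makes the right-hand side small uniformly. Proposition \ref{prop approx bornee horof} and the moment bound in Proposition \ref{Busemann cocyle prop}\,\ref{moment} do not supply this; they bound $|b_\eta(x_n)-d(o,x_n)|$ along a sequence converging to a \emph{fixed} point $\xi \neq \eta$, not the $\nui$-mass of a shrinking neighbourhood of a varying $x$.

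The paper takes a different route that avoids this difficulty entirely. Rather than estimating the stationary measure of curtain-neighbourhoods directly, it works with \emph{transition probabilities} of the random walk and applies Proposition \ref{estimates}, which reduces the boundedness of $\psi$ to three estimates of the form $\mathbb{P}\big((Z_n x\,|\,y)_o \geq an\big)\leq C_n$ with $(C_n)\in\ell^1$. These are verified by combining: (i) Lemma \ref{Lemma drift hyp} (Gou\"ezel's exponential lower bound on $d_L(Z_n o,o)$), which furnishes an $\ell^1$ control on the event $\{d_L(Z_no,o)<An\}$; (ii) Proposition \ref{proba control} (Benoist--Quint's $\ell^1$ law of large numbers for the Busemann cocycle under a second-moment hypothesis), which gives $\ell^1$ control on the deviations $|b_x(Z_n^{-1}o)-n\lambda|$, $|b_y(Z_no)-n\lambda|$, $|d(Z_no,o)-n\lambda|$; and (iii) Proposition \ref{geometric lemma}, the geometric lemma in Section \ref{Section geom estimates}, which under these quantitative hypotheses produces the required bounds on $(Z_no|Z_nx)_o$, $(Z_no|y)_o$ and, crucially, $(Z_nx|y)_o$. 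The last of these (estimate \ref{geom estimate 3}) is the hard one and uses the bottleneck lemma in a more elaborate way than you sketch --- it requires $L$-chains dual to $[o,Z_no]$ separating $\{o,y\}$ from $\{Z_no,Z_nx\}$, controlled via Lemmas \ref{estimate y} and \ref{estimate gnx}. Once the three estimates hold, stationarity is used in the simple form $\nu(\{x:(x|y)_o\geq an\}) = \int \mu^{\ast n}(\{g:(gx|y)_o\geq an\})\,d\nu(x)\leq C_n$, and summing over $n$ gives the uniform bound. This is where the uniformity in $x$ comes from: it is hidden in the fact that estimate \ref{estimate 3} holds for $\nu$-a.e.\ $x$ with the \emph{same} summable sequence.
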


	In order to show that $\psi$ is well-defined and bounded, we need the following statement, which resembles \cite[Proposition 4.5]{benoist_quint16}.

	\begin{eprop}\label{estimates}
		Let $G$ be a discrete countable group and $G \curvearrowright X$ a non-elementary action by isometries on a proper $\cat (0)$ space $X$. Let $\mu \in \prob(G) $ be an admissible probability measure on $G$ with finite second moment, and assume that $G $ contains a rank one element. Let $o \in X$ be a basepoint for the random walk $(Z_n(\omega)o)_n$. Let $\lambda$ be the (positive) drift of the random walk, and $\nu$ a $\mu$-stationary measure on $\overline{X}$. Assume that there exists $a >0 $ and $(C_n)_n \in \ell^1 (\mathbb{N})$ such that for almost every $x, y \in \overline{X}$, we have, for every $n$: 
		\begin{enumerate}
			\item $\mathbb{P}((Z_n o | Z_n x )_o \leq an) \leq C_n $; \label{estimate 1}
			\item $\mathbb{P}((Z_n o | y )_o \geq an) \leq C_n $; \label{estimate 2}
			\item $\mathbb{P}((Z_n x | y )_o \geq an) \leq C_n $. \label{estimate 3}
		\end{enumerate}
		Then one has: 
		\begin{eqnarray}
			\sup_{x \in \overline{X}} \int_{\overline{X}} (x |y)_o d\nu(y) < \infty \nonumber.
		\end{eqnarray}
	\end{eprop}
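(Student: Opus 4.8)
The plan is to follow the layer--cake scheme of Benoist--Quint and Fernós--Lécureux--Mathéus: reduce the finiteness of the integral to a summable tail bound on $\nu$ and feed the deviation hypotheses \ref{estimate 1}--\ref{estimate 3} into it. Fix $a>0$ and $(C_n)_n\in\ell^1(\mathbb N)$ as in the statement and fix (a.e.) $x\in\overline X$. Since $(x|y)_o\ge 0$,
\[
\int_{\overline X}(x|y)_o\,d\nu(y)=\int_0^\infty\nu\big(\{y:(x|y)_o>t\}\big)\,dt\ \le\ 2a\sum_{n\ge 0}\nu\big(\{y:(x|y)_o>2an\}\big),
\]
so it suffices to bound $\nu(\{y:(x|y)_o>2an\})$ by a summable sequence independent of $x$. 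By Theorem \ref{thm uniqueness stat measure cat} the measure $\nu$ is the hitting measure of the walk, so this quantity equals $\mathbb P\big[(x|\xi^+(\omega))_o>2an\big]$ where $\xi^+(\omega)=\lim_m Z_m(\omega)o\in B_L$; equivalently, by $\mu$-stationarity $\nu=\mu^{\ast n}\ast\nu$ together with Fubini it equals $(\mathbb P\times\nu)\big(\{(\omega,y):(x|Z_n(\omega)y)_o>2an\}\big)$, which is at times the more convenient form.

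The core step is to show that $\{(x|\xi^+(\omega))_o>2an\}$ is covered, up to $\mathbb P$-probability $O(C_n)$, by the three deviation events. Working in the hyperbolic model $X_L$ of Section \ref{section modèle hyp} (which is $\delta$-hyperbolic by Theorem \ref{theorem hyperbolic models intro}), where Gromov products obey the usual four--point inequalities, and using that $Z_n o\to\xi^+$, one gets the near--ultrametric inclusion
\[
\{(x|\xi^+)_o>2an\}\ \subseteq\ \{(x|Z_n o)_o>2an-\delta\}\ \cup\ \{(\xi^+|Z_n o)_o\le 2an+\delta\}.
\]
The first event is controlled by hypothesis \ref{estimate 2} (with the second point taken equal to $x$), giving probability $\le C_n$ once $n\ge\delta/a$; the second forces the walk to have failed to progress linearly toward its limit and has probability decaying exponentially in $n$ by Gouëzel's Theorem \ref{thm gouezel 1.2} applied to $(Z_n o)$ in $X_L$ (whose drift is positive, after shrinking $a$ if necessary). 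In the $\mu$-stationary formulation one instead partitions the $(\omega,y)$-space according to the events $\{(Z_n o|Z_n x)_o\le an\}$, $\{(Z_n o|y)_o\ge an\}$, $\{(Z_n x|y)_o\ge an\}$ --- of total $\mathbb P\times\nu$-mass $\le 3C_n$ by \ref{estimate 1}--\ref{estimate 3} and Fubini --- and checks that on the complement the mixed product $(x|Z_n y)_o$ stays below $2an$ up to an additive constant, so that $\{(x|Z_n y)_o>2an+c\}$ meets only those three events.

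Summing over $n$ then yields $\int_{\overline X}(x|y)_o\,d\nu(y)\lesssim a\big(\text{const}+\sum_n C_n\big)<\infty$ with a bound independent of $x$, which is the assertion. The main obstacle is precisely the geometric comparison invoked above: in a general $\cat$(0) space the Gromov product for the $\cat$(0) metric does not satisfy the four--point inequality, so these near--ultrametric manipulations cannot be carried out naïvely. This is where the curtain geometry of Section \ref{section modèle hyp} enters --- one argues along the $L$-chain--controlled directions exactly as in Proposition \ref{geometric lemma}, whose estimates \ref{geom estimate 1}--\ref{geom estimate 3} are the deterministic incarnation of the comparison needed --- and it is also why properness of $X$ is assumed: it guarantees, via visibility of the points of $B_L$, that $(x|y)_o$ is finite for $\nu$-almost every $y$, so that the integrand is meaningful; non-atomicity of $\nu$ (Lemma \ref{lem non atomic}) lets one discard the diagonal $\{y=x\}$ where $(x|y)_o=+\infty$.
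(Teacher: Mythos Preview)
Your layer--cake reduction is correct, and so is the identity $\nu=\mu^{\ast n}\ast\nu$. The gap is in which variable you feed into stationarity. You write
\[
\nu\big(\{y:(x|y)_o>2an\}\big)=(\mathbb P\times\nu)\big(\{(\omega,y):(x|Z_n(\omega)y)_o>2an\}\big),
\]
and then try to control $(x|Z_n y)_o$ by the three hypotheses. But none of the hypotheses \ref{estimate 1}--\ref{estimate 3} says anything about $(x|Z_n y)_o$; they all have the random walk acting on the \emph{first} slot, i.e.\ they control $(Z_n x|y)_o$ and $(Z_n o|\,\cdot\,)_o$. The ``near--ultrametric inclusion'' you invoke to bridge the gap is a genuine $\delta$-hyperbolic four--point inequality for the $\cat(0)$ Gromov product, which you yourself note does not hold; pointing to Proposition~\ref{geometric lemma} does not resolve this, since that proposition gives deterministic estimates under a specific list of assumptions on $(g_n)$, not a four--point inequality, and in any case its role in the paper is to \emph{verify} hypotheses \ref{estimate 1}--\ref{estimate 3}, not to consume them.

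The paper's proof avoids all of this by a one--line trick: use the symmetry $(x|y)_o=(y|x)_o$ and apply stationarity to the \emph{other} variable. Fix $y$ and compute
\[
\nu\big(\{x:(x|y)_o\ge an\}\big)=\int_{\overline X}\mu^{\ast n}\big(\{g:(gx|y)_o\ge an\}\big)\,d\nu(x)\le C_n
\]
directly from hypothesis~\ref{estimate 3}. Then Abel summation on $A_{n,y}=\{x:(x|y)_o\ge an\}$ gives $\int(x|y)_o\,d\nu(x)\le a+a\sum_n C_n$, uniformly in $y$, and symmetry of the Gromov product converts this into the stated $\sup_x$. Only estimate~\ref{estimate 3} is used; estimates~\ref{estimate 1} and~\ref{estimate 2} play no role in this proposition (they are used elsewhere). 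No hyperbolic model, no curtains, no properness, no visibility, no non-atomicity is needed here --- the proof is pure measure theory.
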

	\begin{proof}
		Suppose that there exist $a>0$, $(C_n)_n \in \ell^1 (\mathbb{N})$ such that for almost every $x, y \in \overline{X}$, we have estimates \ref{estimate 1} to \ref{estimate 3}. We get: 
		\begin{eqnarray}
			\nu(\{x \in X | (x|y) \geq an \}) & = & \int_{\overline{X}} \mu^{\ast n }(\{g\in G \, | \, (gx |y )_o \geq an \})d\nu(x) \text{ by $\mu$-stationarity} \nonumber \\
			& \leq & \int_{\overline{X}} C_n d\nu(x) = C_n \text{ by estimate \ref{estimate 3}}.\nonumber 
		\end{eqnarray}
		Then, define $A_{n, y } := \{ x \in \overline{X} \, | \, (x|y )_o \geq an\}$, so that by splitting along the subsets $A_{n-1, y } - A_{n, y }$, one gets 
		\begin{eqnarray}
			\int_{\overline{X}} (x |y)_o d\nu(x) & \leq & \sum_{n \geq 1} a n (\nu(A_{n-1, y }) - \nu(A_{n, y })) \nonumber \\
			& \leq &  \sum_{n \geq 1} an (C_{n-1} - C_n) \nonumber \\
			& = & a + \sum_{n \geq 1} aC_n (n+1 - n) < \infty. \nonumber
		\end{eqnarray}
	\end{proof}
	
	We want to show that estimates from Proposition \ref{estimates} hold. As we will see, estimates \ref{estimate 1} and \ref{estimate 2} are quite straightforward to check using the positivity of the drift. Most of the work concerns estimate \ref{estimate 3}.

	Combining Proposition \ref{Busemann cocyle prop} with Theorem \ref{thm drift cat} and \cite[Proposition 3.2]{benoist_quint16CLTlineargroups}, one obtains the following:
	
	\begin{eprop}\label{proba control}
		Consider the same assumptions as in Proposition \ref{prop centerable cocycle}. Then, for every $\varepsilon >0 $, there exists $(C_n)_n \in \ell^1 (\mathbb{N})$ such that for any $x \in \overline{X}$, 
		\begin{eqnarray}
			& & \mathbb{P}(|\beta(Z_n, x) - n \lambda | \geq \varepsilon n) \leq C_n;\label{prob estimate 1}\\
			& & \mathbb{P}(|\beta(Z^{-1}_n, x) - n \lambda | \geq \varepsilon n) \leq C_n ;  \label{prob estimate 2} \\
			& & \mathbb{P}(|d(Z_n o, o) - n \lambda | \geq \varepsilon n) \leq C_n.  \label{prob estimate 3}
		\end{eqnarray}
	\end{eprop}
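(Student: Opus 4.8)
The plan is to deduce all three estimates from a single large-deviations input, namely \cite[Proposition 3.2]{benoist_quint16CLTlineargroups}, which turns the almost sure convergence of a subadditive-type cocycle together with a finite second moment into an exponential-summability bound on the deviations. First I would recall the setup from \cite[Proposition 3.2]{benoist_quint16CLTlineargroups}: given a cocycle $\sigma : G \times Z \to \mathbb{R}$ with $\int_G \sup_{z}|\sigma(g,z)| d\mu(g) < \infty$ and such that $\frac{1}{n}\sigma(Z_n,z) \to \lambda$ almost surely for $\nu$-almost every $z$, if moreover the relevant second moment is finite then for every $\varepsilon > 0$ the probabilities $\mathbb{P}(|\sigma(Z_n,z) - n\lambda| \geq \varepsilon n)$ are summable, uniformly in $z$. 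The point is that the Busemann cocycle $\beta$ (and its inverse counterpart, and the displacement $d(Z_n o, o)$) each satisfy these hypotheses.

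For \eqref{prob estimate 1}, I would apply this to $\sigma = \beta$: by Proposition \ref{Busemann cocyle prop}\ref{moment} we have $\int_G \sup_{x \in \overline{X}} |\beta(g,x)|^2 d\mu(g) < \infty$, so in particular the first moment is finite; by Proposition \ref{Busemann cocyle prop}\ref{drift cocycle} (i.e.\ Proposition \ref{prop approx displacement horo}), for $\nu$-almost every $\xi$ we have $\frac{1}{n}\beta(Z_n(\omega),\xi) \to \lambda$ almost surely; and the uniform-in-$x$ square-integrability is exactly the condition needed to invoke \cite[Proposition 3.2]{benoist_quint16CLTlineargroups}. This gives a sequence $(C_n) \in \ell^1(\mathbb{N})$, independent of $x$, with $\mathbb{P}(|\beta(Z_n,x) - n\lambda| \geq \varepsilon n) \leq C_n$. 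Here one should be slightly careful that the cited result is stated for $\nu$-almost every $x$; since $\beta(Z_n, x)$ is, up to an additive constant depending only on $x$ via Proposition \ref{prop approx bornee horof}, comparable to $d(Z_n o, o)$ on the support of $\nu$ (which is contained in $B_L$ and is non-atomic by Lemma \ref{lem non atomic}), and since $\beta(g,\cdot)$ is $1$-Lipschitz-type in a coarse sense, one can actually upgrade ``$\nu$-a.e.\ $x$'' to ``every $x \in \overline{X}$'' by comparing $\beta(g,x)$ to $\beta(g,y)$ for $y$ in the support of $\nu$; alternatively one simply records the estimate for $\nu$-a.e.\ $x$, which is all that Proposition \ref{estimates} needs.

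For \eqref{prob estimate 2}, I would run the identical argument for the reflected measure $\mui$ and the reflected walk $(\check{Z}_n)$: by the remark following Proposition \ref{Busemann cocyle prop} the drift of $(\check{Z}_n o)$ equals $\lambda$, by Proposition \ref{Busemann cocyle prop}\ref{moment} we have $\int_G \sup_x |\beta(g,x)|^2 d\mui(g) < \infty$, and $\beta(Z_n^{-1},x)$ is precisely the Busemann cocycle evaluated along the $\mui$-walk (since $Z_n^{-1}$ has the law of $\check Z_n$), so the same large-deviations statement applies verbatim. For \eqref{prob estimate 3}, I would take $\sigma(g,x) = d(go,o)$, which is a (point-independent) subadditive cocycle; Kingman's theorem (Theorem \ref{thm kingman}) together with Theorem \ref{thm drift cat} gives $\frac{1}{n}d(Z_n o, o) \to \lambda$ a.s., finite second moment of $\mu$ is assumed, and \cite[Proposition 3.2]{benoist_quint16CLTlineargroups} again yields the summable bound. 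Finally I would intersect the three $\ell^1$ sequences obtained (take $C_n$ to be their sum) so that a single $(C_n) \in \ell^1(\mathbb{N})$ works for all three inequalities simultaneously. The main obstacle is the bookkeeping in the first step: making sure that the hypotheses of \cite[Proposition 3.2]{benoist_quint16CLTlineargroups} are literally met — in particular the passage from the $\nu$-almost-everywhere statement about $\beta$ to a uniform statement, and checking that the ``finite second moment'' hypothesis in their proposition is the one guaranteed by Proposition \ref{Busemann cocyle prop}\ref{moment} rather than a subtly stronger variant — but this is routine once the comparison $|\beta(Z_n^{\pm 1},x) - d(Z_n o, o)| \leq C(x)$ from Proposition \ref{prop approx bornee horof} is in hand.
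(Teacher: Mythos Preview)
Your approach to \eqref{prob estimate 1} and \eqref{prob estimate 2} is exactly the paper's: invoke \cite[Proposition 3.2]{benoist_quint16CLTlineargroups} for the Busemann cocycle $\beta$ with respect to $\mu$ and $\mui$ respectively, the hypotheses being supplied by Proposition~\ref{Busemann cocyle prop}. (The paper does not linger on the $\nu$-a.e.\ versus all $x$ issue; the Benoist--Quint statement already gives the bound uniformly in $x$ under the uniform second moment condition $\int_G \sup_x |\beta(g,x)|^2\,d\mu(g)<\infty$, so your worry there is unnecessary.)

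There is however a genuine, if small, gap in your treatment of \eqref{prob estimate 3}. The function $\sigma(g,x)=d(go,o)$ is \emph{not} a cocycle: the relation $d(g_1g_2o,o)=d(g_1o,o)+d(g_2o,o)$ fails, you only have subadditivity. Hence \cite[Proposition 3.2]{benoist_quint16CLTlineargroups}, which is stated for genuine cocycles, does not apply to it directly. The paper avoids this by deducing \eqref{prob estimate 3} from \eqref{prob estimate 2} rather than proving it independently: since $\beta(Z_n^{-1},\eta)=b_\eta(Z_no)$ and, by Proposition~\ref{prop approx bornee horof} (which underlies Proposition~\ref{prop approx displacement horo}), $|b_\eta(Z_no)-d(Z_no,o)|\leq C$ for $\nu$-almost every $\eta$, one fixes any such $\eta$ and transfers the summable deviation bound from $\beta(Z_n^{-1},\eta)$ to $d(Z_no,o)$ at the cost of an additive constant, which is harmless for the estimate $|d(Z_no,o)-n\lambda|\geq\varepsilon n$. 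You already have this comparison in hand at the end of your proposal; you just need to use it for \eqref{prob estimate 3} rather than for the measurability bookkeeping.
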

	
	\begin{proof}
		Recall that by Proposition \ref{Busemann cocyle prop}, $\beta $ is a continuous cocycle such that
		\begin{eqnarray}
			& & \int_G \sup_{x \in \overline{X}} |\beta(g,x)|^2 d\mu(g) < \infty \text{ and }
			\int_G \sup_{x \in \overline{X}} |\beta(g,x)|^2 d\mui(g) < \infty. \nonumber
		\end{eqnarray}
		Moreover, 
		\begin{eqnarray}
			\lambda = \int_{G \times \overline{X}} \beta(g, x) d\mu(g)d\nu(x)= \int_{G \times \overline{X}} \beta(g, x) d\mui(g)d\nui(x). \nonumber
		\end{eqnarray} 
		We can then apply \cite[Proposition 3.2]{benoist_quint16CLTlineargroups}: for every $\varepsilon >0 $, there exists a sequence $(C_n) \in \ell^1 (\mathbb{N})$ such that for every $x \in \overline{X}$, 
		\begin{eqnarray}
			\mathbb{P}\big(\omega \in \Omega \, : \, \big|\frac{\beta(Z_n(\omega), x)}{n} - \lambda\big| \geq \epsilon\big) \leq C_n \nonumber. 
		\end{eqnarray}
		The same goes for $\mui $ and $\nui$, which gives estimates \eqref{prob estimate 1} and \eqref{prob estimate 2}. 
		
		Estimate \eqref{prob estimate 3} is then a straightforward consequence of Proposition \ref{prop approx displacement horo}. 
	\end{proof}
	
	The following Lemma will also be important in the proof of Proposition \ref{prop centerable cocycle}. 
	
	\begin{elem}\label{Lemma drift hyp}
		Consider the same assumptions as in Proposition \ref{prop centerable cocycle}. Then there exists $A > 0$ and $(C_n) \in \ell^1 (\mathbb{N}) $ such that 
		\begin{eqnarray}
			\mathbb{P}\big( d_L (Z_n o, o) < An\big) \leq C_n \nonumber. 
		\end{eqnarray}
	\end{elem}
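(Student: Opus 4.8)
The plan is to deduce this from the corresponding statement for the hyperbolic model $X_L$, which is already available in the literature. By Proposition \ref{prop non elem}, since $G$ acts on $X$ with a pair of independent contracting isometries, there exists $L \geq 0$ such that $G$ acts on the hyperbolic space $X_L = (X, d_L)$ by isometries with a pair of independent loxodromic isometries, i.e. non-elementarily. We fix this $L$ (it is the same $L$ used throughout the section). The space $X_L$ is $\delta$-hyperbolic and $\alpha$-almost geodesic by Theorem \ref{theorem hyperbolic models intro}, so it falls within the framework of Section \ref{section rw hyp}.

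First I would invoke Gouëzel's exponential deviation estimate, Theorem \ref{thm gouezel 1.2}: applied to the non-elementary action $G \curvearrowright X_L$ and the admissible measure $\mu$, it gives that the drift $l_{X_L}(\mu)$ is positive, and that for any $r < l_{X_L}(\mu)$ there exists $\kappa > 0$ with
\begin{eqnarray}
	\mathbb{P}\big(d_L(o, Z_n(\omega) o) \leq r n\big) \leq \exp(-\kappa n) \nonumber
\end{eqnarray}
for all $n$. Crucially, this holds with no moment assumption on $\mu$, which is why the lemma does not require one either. Now I would simply pick $A := l_{X_L}(\mu)/2 > 0$ (or any fixed $r$ strictly between $0$ and $l_{X_L}(\mu)$, set $A = r$), and set $C_n := \exp(-\kappa n)$. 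Then $\mathbb{P}(d_L(Z_n o, o) < An) \leq \mathbb{P}(d_L(o, Z_n o) \leq An) \leq C_n$, and $(C_n)_n \in \ell^1(\mathbb{N})$ since it is a geometric sequence. This gives exactly the claimed conclusion.

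There is essentially no obstacle here: the lemma is a direct translation of Gouëzel's theorem to the hyperbolic model, the only point requiring a line of justification being that the action on $X_L$ is non-elementary, which is precisely the content of Proposition \ref{prop non elem}, and that $X_L$ is a hyperbolic space to which Theorem \ref{thm gouezel 1.2} applies (it is stated for Gromov-hyperbolic spaces, not necessarily geodesic, so almost-geodesicity is more than enough; in fact Theorem \ref{thm gouezel 1.2} as recalled in the excerpt already allows non-geodesic spaces). If one prefers to avoid relying on the positivity assertion inside Gouëzel's theorem, one can alternatively first cite Theorem \ref{thm drift cat} to know $l_X(\mu) > 0$, but since $d_L \leq d$ the drift in $X_L$ could a priori still be zero, so the cleanest route really is to use Theorem \ref{thm gouezel 1.2} directly on $X_L$, where the positivity of $l_{X_L}(\mu)$ is part of the statement. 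I would therefore keep the proof to those two or three sentences.
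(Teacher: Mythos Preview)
Your proof is correct and follows exactly the same approach as the paper: invoke Proposition \ref{prop non elem} to get a non-elementary action on the hyperbolic model $X_L$, then apply Gou\"ezel's Theorem \ref{thm gouezel 1.2} to obtain the exponential bound, which is summable. The paper's own proof is even terser (it does not spell out the choice of $A$ and $C_n$), but the argument is identical.
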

	
	\begin{proof}
		The action $G \curvearrowright (X,d)$ is non elementary and contains a pair of independent contracting isometries, hence by Proposition \ref{prop non elem}, there exists $L$ such that the action $G \curvearrowright (X, d_L)$ is non-elementary as the loxodromic isometries $g$ and $h$ are independent. We can then apply Theorem \ref{thm gouezel 1.2}, which gives the Lemma. 
	\end{proof}
	
	Let us now complete the proof of Proposition \ref{prop centerable cocycle}.
	\begin{proof}[Proof of Proposition \ref{prop centerable cocycle}]
		By Proposition \ref{prop non elem} and Theorem \ref{thm cv rw hyp}, there exists $L >0 $ such that $(Z_n(\omega) o )_n$ converges in $(X_L, d_L)$ to a point $z_L$ of the boundary. By Theorem \ref{thm uniqueness stat measure cat}, there is a unique $\mu$-stationary measure $ \nu$ on $\bd X$, and this measure is non-atomic by Lemma \ref{lem non atomic}.
		
		Fix $A$ as in Lemma \ref{Lemma drift hyp}, and $(C_n)_n \in \ell^1 (\mathbb{N})$ such that
		\begin{eqnarray}
			\mathbb{P}\big( d_L (Z_n o, o) < An\big) < C_n \nonumber. 
		\end{eqnarray} 
		Now take $0 < \varepsilon < \min(\frac{A}{2(4L+10)}, \lambda/2)$. Due to Proposition \ref{proba control}, there exists a sequence $C'_n \in \ell^1( \mathbb{N})$ such that 
		\begin{eqnarray}
			& & \mathbb{P}(|\beta(Z_n, x) - n \lambda | \geq \varepsilon n) \leq C'_n\nonumber\\
			& & \mathbb{P}(|\beta(Z^{-1}_n, x) - n \lambda | \geq \varepsilon n) \leq C'_n  \nonumber \\
			& & \mathbb{P}(|d(Z_n o, o) - n \lambda | \geq \varepsilon n) \leq C'_n. \nonumber 
		\end{eqnarray}
		We can assume that $C_n = C'_n $ for all $n$. Then for $\nu$-almost every $x, y \in \bd X$, we have the quantitative assumptions in Proposition \ref{geometric lemma}: with 
		\begin{enumerate}[label= (\roman*)]
			\item $ \{Z_no  \}_n $ converges almost surely in $(\overline{X_L}, d_L)$ to a point of the boundary $z_L \in \bdg X_L$, whose image in $\bd X $ by the embedding $\mathcal{I}_L$ is not $y$; 
			\item $\mathbb{P}\big( d_L(Z_n o, o) \geq An \big) \geq 1 - C_n$; 
			\item $\mathbb{P}\big(|b_x (Z_n^{-1} o ) - n \lambda | \leq \varepsilon n\big) \geq 1-C_n$; 
			\item $\mathbb{P}\big(|b_y (Z_n o ) - n \lambda | \leq \varepsilon n\big) \geq 1- C_n$; 
			\item $\mathbb{P}\big(|d(gZ_n o , o) - n \lambda | \leq \varepsilon n\big) \geq 1- C_n$. 
		\end{enumerate}	
		We  obtain that for $\nu$-almost every $x, y \in \bd X$, the probability that these estimates are not satisfied is bounded above by $4 C_n$. Now choosing $a \in (\varepsilon, \lambda - \varepsilon)$, we get that for $n$ large enough,
		\begin{enumerate}
			\item $\mathbb{P}\big((g_nx |g_n o )_o \geq a n\big) \geq 1 - 4C_n$; 
			\item $\mathbb{P}\big((y | g_n o )_o \leq a n\big)\geq 1 - 4C_n$;
			\item $\mathbb{P}\big((y| g_n x )_o \leq a n\big) \geq 1 - 4 C_n$.
		\end{enumerate}
		
		Since the sequence $(4C_n)_n$ is still summable, we can apply Proposition \ref{estimates}, that states that the function $\psi $ defined by 
		\begin{equation*}
			\psi ( x ) = -2 \int_{\overline{X}} (x | y)_o d\nui(y)
		\end{equation*}
		is well-defined, and Borel by Fubini. Moreover, $\psi $ is essentially bounded:
		\begin{eqnarray}
			\sup_{x \in \overline{X}} \int_{\overline{X}} (x |y)_o d\nu(y) < \infty \nonumber.
		\end{eqnarray}
		
	\end{proof}
	
	\begin{ecor}\label{cor centerable cocycle}
		Under the same assumptions as in Proposition \ref{prop centerable cocycle}, the cocycle $\beta(g, x) = h_x (g^{-1} o )$ is centerable.
	\end{ecor}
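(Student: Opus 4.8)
\textbf{Proof proposal for Corollary \ref{cor centerable cocycle}.}

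The plan is to exhibit explicitly the decomposition $\beta(g,x) = \beta_0(g,x) + \psi(x) - \psi(gx)$ required by the definition of a centerable cocycle, with $\psi$ as in Proposition \ref{prop centerable cocycle} and $\beta_0$ forced to be whatever remains. First I would set $\psi(x) = -2\int_{\overline{X}} (x|y)_o \, d\nui(y)$, which by Proposition \ref{prop centerable cocycle} is well-defined, Borel, and essentially bounded; in particular it is a bounded measurable function $\overline{X} \rightarrow \R$, which is exactly the kind of function allowed in the cohomological equation (see the remark following Theorem \ref{thm criterium clt benoist quint}, which permits $\psi$ bounded measurable rather than continuous). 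Then I would \emph{define} $\beta_0(g,x) := \beta(g,x) - \psi(x) + \psi(gx)$. This is automatically a cocycle, being a sum of the cocycle $\beta$ and the coboundary $(g,x)\mapsto \psi(gx)-\psi(x)$; the only nontrivial thing to check is that $\beta_0$ has \emph{constant drift}, i.e. that $g\mapsto \int_G \beta_0(g,x)\,d\mu(g)$ — wait, rather that $x \mapsto \int_G \beta_0(g,x)\,d\mu(g)$ is essentially constant.

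The key computation is the following. Using the cocycle relation for $\beta$ and a change of variables under $\mu$-stationarity of $\nu$, one writes
\begin{eqnarray}
\int_G \beta_0(g,x)\,d\mu(g) & = & \int_G \beta(g,x)\,d\mu(g) - \psi(x) + \int_G \psi(gx)\,d\mu(g) \nonumber \\
& = & \int_G \beta(g,x)\,d\mu(g) - \psi(x) + (\mu \ast \delta_x \text{-average of } \psi). \nonumber
\end{eqnarray}
The term $\int_G \beta(g,x)\,d\mu(g)$ can be rewritten via $\beta(g,x) = b_x(g^{-1}o)$, and one uses the identity $b_x(g^{-1}o) = d(o, g^{-1}o) - 2(x|g^{-1}o)_o = d(go,o) - 2(x|g^{-1}o)_o$ together with the definition of $\psi$ in terms of Gromov products to see that the $x$-dependence of $\int_G \beta(g,x)\,d\mu(g)$ is carried entirely by a Gromov-product term of the form $-2\int_G (x|g^{-1}o)_o\,d\mu(g)$, which is precisely $\psi$ evaluated against $\mu_\ast$-type data. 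Combining these and invoking $\mu$-stationarity of $\nu$ (which is what makes the $\psi(x)$ and $\int_G\psi(gx)\,d\mu(g)$ terms interact correctly, exactly as in Remark \ref{rem average cocycle}), the $x$-dependence should cancel, leaving $\int_G\beta_0(g,x)\,d\mu(g)$ equal to the constant $\lambda$ — the positive drift — for $\nu$-almost every $x$. Here one leans on Proposition \ref{Busemann cocyle prop}\ref{average Busemann}, which already identifies $\int_{G\times\overline{X}}\beta(g,x)\,d\mu(g)\,d\nu(x) = \lambda$, so the constant can only be $\lambda$.

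The main obstacle I anticipate is \emph{not} the boundedness of $\psi$ — that is handed to us by Proposition \ref{prop centerable cocycle}, which in turn rests on the delicate geometric estimates of Proposition \ref{geometric lemma} and the summable-deviation bounds of Proposition \ref{proba control} — but rather making the cancellation in the drift computation rigorous, since it involves juggling Gromov products based at different points, the relation between $b_x$ and $(x|\cdot)_o$, and an application of $\mu$-stationarity that must be justified on the (merely standard Borel, non-compact) space $\overline{X}$ with the bounded measurable $\psi$. One must be careful that all integrals converge — this is where $\mu$ having finite first (indeed second) moment and the essential boundedness of $\psi$ are used — and that the ``essentially constant'' conclusion holds $\nu$-a.e.\ in $x$, not merely in $L^1$. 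Once constant drift is established, $\beta_0$ and $\psi$ witness that $\beta$ is centerable, and by Remark \ref{rem average cocycle} together with Proposition \ref{Busemann cocyle prop}\ref{average Busemann} its average is $\lambda$, completing the proof. This is exactly the input needed to feed $\beta$ into Theorem \ref{thm criterium clt benoist quint} in the next subsection.
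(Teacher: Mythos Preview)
Your overall architecture is correct: define $\psi$ via Proposition \ref{prop centerable cocycle}, set $\beta_0(g,x) := \beta(g,x) - \psi(x) + \psi(gx)$, and show $\int_G \beta_0(g,x)\,d\mu(g)$ is constant in $x$. The gap is in the computation you sketch for that last step.

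The identity you propose, $b_x(g^{-1}o) = d(go,o) - 2(x|g^{-1}o)_o$, is correct but does not do the job. Its $x$-dependent part is $-2\int_G (x|g^{-1}o)_o\,d\mu(g)$, which has no reason to combine with $\psi(x) = -2\int_{\overline X}(x|y)_o\,d\nui(y)$ and $\int_G\psi(gx)\,d\mu(g)$ into a constant: the first is an integral of Gromov products against points $g^{-1}o$ in $X$ under $\mu$, while $\psi$ is an integral against boundary points under $\nui$. There is no stationarity relation linking these two, and your appeal to Remark \ref{rem average cocycle} is misplaced --- that remark computes the average of an already-centerable cocycle, it does not establish centerability.

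The paper's proof instead uses the Benoist--Quint four-point identity (\cite[Lemma 1.2]{benoist_quint16}): for all $g\in G$ and $x,y\in\overline X$,
\[
b_x(g^{-1}o) = -2(x|g^{-1}y)_o + 2(gx|y)_o + b_y(go).
\]
Integrating this over $d\mu(g)\,d\nui(y)$ is what makes everything click. The second term gives exactly $-\int_G\psi(gx)\,d\mu(g)$ by definition of $\psi$. The first term, after the change of variables $g\mapsto g^{-1}$ turning $\mu$ into $\mui$, becomes $-2\int_G\int(x|gy)_o\,d\mui(g)\,d\nui(y)$, and now $\mui$-stationarity of $\nui$ (not $\mu$-stationarity of $\nu$, as you wrote) collapses this to $-2\int(x|y)_o\,d\nui(y) = \psi(x)$. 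The third term integrates to $\int_{G\times\overline X}\beta(g,y)\,d\mui(g)\,d\nui(y) = \lambda$ by Proposition \ref{Busemann cocyle prop}\ref{average Busemann}. Assembling these yields $\int_G\beta(g,x)\,d\mu(g) = \psi(x) - \int_G\psi(gx)\,d\mu(g) + \lambda$, which is exactly the statement that $\beta_0$ has constant drift $\lambda$. The introduction of the auxiliary variable $y$ and the use of $\nui$ rather than $\nu$ are the missing ingredients.
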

	
	\begin{proof}
		By Proposition \ref{prop centerable cocycle}, the function $\psi $ defined by 
		\begin{equation*}
			\psi ( x ) = -2 \int_{\overline{X}} (x | y)_o d\nui(y).
		\end{equation*}
		is well-defined, Borel and essentially bounded. Also, as observed in \cite[Lemma 1.2]{benoist_quint16}, a quick computation shows that for all $g \in G$, $x,y \in \overline{X}$:
		\begin{eqnarray}
			h_x(g^{-1}o) = -2(x| g^{-1}y)_o + 2(gx|y)_o + h_y (go) \nonumber. 
		\end{eqnarray}
		Fix $x \in X$. Integrate this equality on $(G \times \bd X, \mu \otimes \nui)$ gives 
		\begin{eqnarray}
			\int_G  \beta(g,x) d\mu(g) &=& -2\int_G \int_{\bd X}(x| g^{-1}y)_o d\mu(g) d\nui(y) \nonumber \\
			& & + 2\int_G \int_{\bd X}(gx|y)_o d\mu(g) d\nui(y)+ \int_G \int_{\bd X} \beta(g^{-1}, x)d\mu(g) d\nui(y) \nonumber \\
			&  = & -2\int_G \int_{\bd X}(x| gy)_o d\mui(g) d\nui(y) - \int_G \psi(gx)d\mu(g) \nonumber \\
			& & +\int_G \int_{\bd X} \beta(g, x)d\mui(g) d\nui(y) \nonumber.
		\end{eqnarray}
		But $\int_G \int_{\bd X}(x| gy)_o d\mui(g) d\nui(y)=  \int_{\bd X}(x| y)_o d\nui(y)$ because $\nui$ is $\mui$-stationary. Also, by point \ref{average Busemann} in Proposition \ref{Busemann cocyle prop}, we have that 
		\begin{eqnarray}
			\int_G \int_{\bd X} \beta(g, x)d\mui(g) d\nui(y) = \lambda.
		\end{eqnarray}
		Combining these, we get: 
		\begin{eqnarray}
			\int_G \beta(g,x) d\mu(g) = \psi(x) - \int_G \psi(gx)d\mu(g) + \lambda. \nonumber
		\end{eqnarray}
		
		Hence if we define $\beta_0 (g, x) = \beta(g, x) - \psi(x) + \psi(gx) $, we obtain that for all $x \in \overline{X}$, 
		\begin{eqnarray}
			\int_G \beta_0(g, x) d\mu(g)= \lambda, 
		\end{eqnarray}
		and the cocycle $\beta_0$ has constant drift $\lambda$. Then by Remark \ref{rem average cocycle}, $\beta$ is centerable with average $\lambda$, as wanted. 
	\end{proof}
	
	We can now state the following. 
	\begin{ethm}\label{thm CLT cat}
		Let $G$ be a discrete group and $G \curvearrowright X$ an action by isometries on a complete $\cat $(0) space $X$. Let $\mu \in \prob(G) $ be an admissible probability measure on $G$ with finite second moment, and assume that $G $ contains a pair of independent contracting isometries for this action. Let $o \in X$ be a basepoint of the random walk. Let $\lambda$ be the (positive) drift of the random walk. Then the random variables $(\frac{1}{\sqrt{n}}(d(Z_n o, o) - n \lambda))_n $ converge in law to a non-degenerate Gaussian distribution $N_\mu$. Furthermore, the variance of $N_\mu$ is given by 
		\begin{eqnarray}
			\int_{G \times \bd X} (b_x(g^{-1} o) - \psi(x) + \psi(gx) - \lambda)^2 d\mu(g)d\nu(x).
		\end{eqnarray}
	\end{ethm}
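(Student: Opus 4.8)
The plan is to reduce the statement to an application of the Benoist--Quint criterion (Theorem \ref{thm criterium clt benoist quint}) applied to the Busemann cocycle $\beta(g,x) = b_x(g^{-1}o)$ on the compact (or standard Borel) space $\overline{X}$. All the structural hypotheses needed for that theorem have already been assembled: Proposition \ref{Busemann cocyle prop} gives the square-integrability $\int_G \sup_{x\in\overline{X}}|\beta(g,x)|^2\, d\mu(g) < \infty$ and identifies the would-be average with the drift $\lambda$; Theorem \ref{thm uniqueness stat measure cat} gives uniqueness of the $\mu$-stationary measure $\nu$ on $\overline{X}$; and Corollary \ref{cor centerable cocycle} establishes that $\beta$ is centerable with average $\lambda$, with the explicit coboundary $\psi(x) = -2\int_{\overline{X}}(x|y)_o\, d\nui(y)$ that is Borel and essentially bounded by Proposition \ref{prop centerable cocycle}. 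So the first step is simply to invoke Theorem \ref{thm criterium clt benoist quint} (in the Borel version of Remark following it, so that boundedness of $\psi$ rather than continuity suffices) to conclude that the random variables $\tfrac{1}{\sqrt{n}}(\beta(Z_n,\xi) - n\lambda)$ converge in law to a Gaussian $N_\mu$ with variance $\int_{G\times\bd X}(\beta_0(g,x)-\lambda)^2\, d\mu(g)d\nu(x)$ where $\beta_0(g,x) = \beta(g,x)-\psi(x)+\psi(gx)$, which is precisely the variance formula in the statement.

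The second step is to transfer this CLT from the cocycle $\beta(Z_n,\xi)$ to the displacement $d(Z_n o, o)$. This is where Proposition \ref{prop approx bornee horof} does the work: for $\nu$-almost every $\xi \in B_L$ (and the hitting measure charges only $B_L$ by Theorem \ref{thm cv cat}), and $\mathbb{P}$-almost surely, one has $|b_\xi(Z_n(\omega)o) - d(o,Z_n(\omega)o)| \leq C$ for a constant $C$ depending on $\xi$ but not on $n$. Rewriting $\beta(Z_n,\xi) = b_\xi(Z_n^{-1}o)$, I would apply the bounded-difference estimate to the reversed walk, or equivalently note that the distribution of $d(Z_n^{-1}o,o)$ equals that of $d(Z_n o,o)$; in any case the difference between $\tfrac{1}{\sqrt n}(\beta(Z_n,\xi)-n\lambda)$ and $\tfrac{1}{\sqrt n}(d(Z_no,o)-n\lambda)$ goes to $0$ almost surely, hence in probability, and Slutsky's lemma gives that $\tfrac{1}{\sqrt n}(d(Z_no,o)-n\lambda)$ converges in law to the same Gaussian $N_\mu$. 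One must be slightly careful that the bound $C$ is a random variable (depending on the limit point $\xi^+(\omega)$), but since it is almost surely finite, dividing by $\sqrt n$ still kills it almost surely, which is enough.

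The third and genuinely substantive step is non-degeneracy of $N_\mu$, i.e. that the variance $\sigma^2 = \int_{G\times\bd X}(\beta_0(g,x)-\lambda)^2\, d\mu(g)d\nu(x)$ is strictly positive. The standard dichotomy (already implicit in Brown's theorem underlying Theorem \ref{thm criterium clt benoist quint}) is that $\sigma^2 = 0$ forces $\beta_0(g,x) = \lambda$ for $\mu\otimes\nu$-almost every $(g,x)$, which after telescoping means $d(Z_n o,o)$ differs from $n\lambda$ by a uniformly bounded amount along the walk. I would rule this out using the presence of two independent contracting isometries: by Theorem \ref{thm prop contracting} the walk contains contracting elements with overwhelming probability, and by combining Gouëzel's deviation estimate (Theorem \ref{thm gouezel 1.2}) with the loxodromic dynamics on the hyperbolic model $X_L$ one can exhibit, with positive probability, two trajectory prefixes $Z_n(\omega)$ and $Z_n(\omega')$ of the same length whose displacements $d(Z_n o,o)$ differ by an amount growing linearly in $n$ --- for instance by comparing a sample path that aligns with the axis of one contracting element against one that backtracks. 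This contradicts the bounded fluctuation forced by $\sigma^2=0$. The main obstacle is precisely making this last argument clean: one needs to produce the required fluctuation purely from the geometry of $X_L$ and the independence of the two contracting isometries, without a properness assumption, so I would phrase it entirely in terms of $L$-separated curtains and the bottleneck Lemma \ref{lem bottleneck intro}, mimicking the non-degeneracy arguments in \cite{benoist_quint16} and \cite{fernos_lecureux_matheus21}. Once non-degeneracy is in hand, the proof of Theorem \ref{thm CLT cat} is complete.
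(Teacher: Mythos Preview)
Your first two steps match the paper almost exactly: invoke centerability (Corollary \ref{cor centerable cocycle}), uniqueness of the stationary measure (Theorem \ref{thm uniqueness stat measure cat}), and square-integrability (Proposition \ref{Busemann cocyle prop}) to feed the Busemann cocycle into Theorem \ref{thm criterium clt benoist quint}, then pass from $\beta(Z_n,\xi)$ to $d(Z_n o,o)$ via the bounded-gap estimate. The paper cites Proposition \ref{prop approx displacement horo} for that transfer; your slightly more careful handling of $Z_n$ versus $Z_n^{-1}$ and the Slutsky reduction is fine and perhaps even cleaner.

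The genuine divergence is in the non-degeneracy argument, and here the paper's route is both different and considerably sharper than what you sketch. You propose to produce, with positive probability, two sample paths of length $n$ whose displacements differ linearly in $n$, using the hyperbolic model $X_L$ and the independence of the two contracting isometries; you acknowledge this is the hard part and leave it as a programme. The paper instead argues entirely algebraically on the support of $\mu$ and avoids any pathwise comparison. Assuming $\sigma^2=0$ forces $|b_\xi(h^{-1}o)-p\lambda|\leq 2\|\psi\|_\infty$ for every $\xi\in\supp(\nu)$, every $h\in\supp(\mu^{\ast p})$, and every $p\geq 1$. Now take a single contracting element $g$; its attracting fixed point $\xi^+$ lies in $\supp(\nu)$ by the North--South dynamics (Lemma \ref{lem attracting pt supp}), and one has the exact relation $\ell(g)=\lim_n \tfrac{1}{n} b_{\xi^+}(g^{-n}o)$. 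Choosing $m$ with $g\in\supp(\mu^{\ast m})$ and applying the bound to $g^n\in\supp(\mu^{\ast mn})$ gives $\ell(g)=m\lambda$. But admissibility gives some $q\geq 1$ with $e\in\supp(\mu^{\ast q})$, hence also $g\in\supp(\mu^{\ast(m+q)})$ and $\ell(g)=(m+q)\lambda$, contradicting $\lambda>0$. This uses only one contracting element, no properness, no curtain geometry, and no probabilistic estimates beyond the cohomological equation itself. Your fluctuation idea could likely be made to work, but it is substantially more effort for the same conclusion; the translation-length trick is the efficient move here.
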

	
	\begin{proof}
		By Proposition \ref{cor centerable cocycle}, the cocycle $\beta$ is centerable, with average $\lambda$. By Theorem \ref{thm uniqueness stat measure cat}, the measure $\nu$ is the unique $\mu$-stationary measure on $\overline{X}$, and we can apply Theorem \ref{thm criterium clt benoist quint}: the random variables $(\frac{1}{\sqrt{n}}(\beta(Z_n(\omega),x) - n \lambda))_n$ converge to a Gaussian law $N_\mu $. But thanks to Proposition \ref{prop approx displacement horo}, this is equivalent to the convergence of the random variables $(\frac{1}{\sqrt{n}}d(Z_n(\omega)o, o) - n \lambda)_n$ to a Gaussian law. Moreover, by Theorem \ref{thm criterium clt benoist quint} and Proposition \ref{Busemann cocyle prop}, the covariance 2-tensor of the limit law is given by 
		
		\begin{eqnarray}
			\int_{G \times \bd X} (\beta_0(g, z) - \lambda)^2 d\mu(g) d\nu(z) \nonumber, 
		\end{eqnarray}
		where $\beta_0 (g,x)= \beta(g,x) - \psi(x) + \psi(gx)$. 
	\end{proof}
	
	In order to finish the proof of Theorem \ref{thm CLT cat}, it only remains to show that the limit law is non-degenerate. This is what we do in the next Proposition. 
	
	\begin{eprop}\label{prop non deg limit law cat}
		With the same assumptions and notations as in Theorem \ref{thm CLT cat}, the covariance 2-tensor of the limit law satisfies:
		\begin{eqnarray}
			\int_{G \times \bd X} (\beta_0(g, z) - \lambda)^2 d\mu(g) d\nu(z) >0 \nonumber. 
		\end{eqnarray}
		In particular, the limit law $N_\mu$ of the random variables $(\frac{1}{\sqrt{n}}(d(Z_n o, o) - n \lambda))_n $ is non-degenerate. 
	\end{eprop}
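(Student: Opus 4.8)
The plan is to argue by contradiction, following the standard degeneracy argument of Benoist--Quint (see \cite[Section 4]{benoist_quint16}). Suppose the covariance $2$-tensor vanishes. Then $\beta_0(g,z) = \lambda$ for $(\mu \times \nu)$-almost every $(g,z)$, i.e. the centered cocycle $\beta_0(\cdot,\cdot) - \lambda$ vanishes almost everywhere; since $\beta_0$ has constant drift $\lambda$ and $\beta(g,z) = \beta_0(g,z) + \psi(z) - \psi(gz)$, this means that $\mathbb{P}$-almost surely, along the random walk, $\beta(Z_n, z) = n\lambda + \psi(z) - \psi(Z_n z) + (\text{telescoping error})$, so the fluctuations of $\beta(Z_n, z)$ around $n\lambda$ stay \emph{bounded} (controlled by $2\|\psi\|_\infty$). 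Via Proposition \ref{prop approx displacement horo}, which shows $b_\xi(Z_n o)$ and $d(Z_n o, o)$ differ from $n\lambda$ by the same asymptotics, this forces $|d(Z_n o, o) - n\lambda|$ to be essentially bounded along the walk; more precisely, iterating the cocycle relation, $d(Z_n o, o) - n\lambda$ would be a bounded perturbation of a sum of i.i.d.-like increments that almost surely telescopes. First I would make this precise: deduce that there is a constant $M$ such that $|d(Z_n o, o) - n\lambda| \le M$ for all $n$, $\mathbb{P}$-almost surely.

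Next I would derive a contradiction from this boundedness using the independent contracting isometries. The idea is that the deviation estimates in Proposition \ref{proba control} and Lemma \ref{Lemma drift hyp} give genuine exponential concentration around the drift, but boundedness of $d(Z_n o,o) - n\lambda$ is far too rigid: one can construct, with positive probability, two sample paths (or two times along one path) that realize visibly different displacements. Concretely, since $G$ contains two independent contracting elements $g_1, g_2$ whose fixed points are four distinct points of $\bd X$ lying in some $B_L$, one can prepend to a typical trajectory a word in $g_1$ (respectively $g_2$) and use the North--South dynamics (Theorem \ref{thm contract dyn NS}) together with the bottleneck Lemma \ref{lem bottleneck intro} to show that the displacement $d(Z_n o, o)$ can be altered by a definite linear amount on a set of positive probability, contradicting the almost-sure bound $|d(Z_n o, o) - n\lambda| \le M$. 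Equivalently, and perhaps more cleanly, one shows that if $\beta_0 \equiv \lambda$ then the Busemann cocycle is a coboundary plus a deterministic drift, which would force the hitting measure $\nu$ to be supported on a single point or on a $g_i$-invariant configuration — impossible since $\nu$ is non-atomic (Lemma \ref{lem non atomic}) and the action is non-elementary.

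The cleanest route, which I would adopt, is the following: the vanishing of the variance implies (by Theorem \ref{thm criterium clt benoist quint}, or directly) that $\frac{1}{\sqrt n}(d(Z_n o, o) - n\lambda) \to 0$ in law, hence the sequence $d(Z_n o, o) - n\lambda$ is tight. Combined with Kingman's theorem (Theorem \ref{thm kingman}) giving the almost sure convergence $\frac{1}{n}d(Z_n o, o)\to \lambda$, tightness of the fluctuations forces a rigidity that is incompatible with the hyperbolic-like behavior in the model $X_L$: indeed in $X_L$ one has a genuine central limit theorem with positive variance for loxodromic actions — this is built into the Maher--Tiozzo/Gouëzel machinery and Björklund-type results — so the fluctuations of $d_L(Z_n o, o)$, hence of $d(Z_n o, o)$ (as the two metrics are comparable up to the bottleneck estimates, $d_L \le d$ and $d - d_L$ is controlled along geodesics crossing $L$-chains by Lemma \ref{lem bottleneck intro}), cannot be bounded. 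I expect the main obstacle to be precisely this last comparison step: transferring non-degeneracy of the variance from the hyperbolic model $X_L$ back to the original $\cat$(0) metric $d$, since $d$ and $d_L$ are only coarsely comparable and one must control their difference \emph{quantitatively} along the typical trajectory — this is where the curtain estimates of Section \ref{Section geom estimates}, in particular Proposition \ref{geometric lemma}, and the fact that the hitting measure charges only $B_L$ (Theorem \ref{thm cv cat}), do the essential work. Once this comparison is in place, the contradiction with boundedness is immediate and the proof concludes.
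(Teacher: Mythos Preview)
Your proposal has a genuine gap: you correctly identify that vanishing variance forces $\beta(Z_n,z) - n\lambda$ (and hence, via Proposition~\ref{prop approx bornee horof}, $d(Z_n o,o) - n\lambda$) to stay bounded, but none of your proposed contradictions is actually carried out. The ``cleanest route'' you favor --- transferring positive variance from the hyperbolic model $X_L$ back to $(X,d)$ --- faces precisely the obstacle you flag and do not resolve: we only know $d_L \le d$, and the curtain estimates of Section~\ref{Section geom estimates} do \emph{not} give a two-sided comparison of $d(Z_n o,o)$ and $d_L(Z_n o,o)$ at the $\sqrt{n}$ scale. A CLT with positive variance in $X_L$ says nothing a priori about non-degeneracy in $X$. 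Your alternative sketches (prepending words in $g_1,g_2$, or forcing $\nu$ to be atomic) are not developed enough to be arguments.

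The paper's proof is short and avoids any comparison with $X_L$. It exploits a single contracting element $g \in G$ and its attracting fixed point $\xi^+$, which lies in $\supp(\nu)$ by North--South dynamics (Lemma~\ref{lem attracting pt supp}). If the variance vanishes, then for every $p$ and every $h \in \supp(\mu^{\ast p})$ one has $|b_{\xi^+}(h^{-1}o) - p\lambda| \le 2\|\psi\|_\infty$. Since $\mu$ is admissible, pick $m$ with $g \in \supp(\mu^{\ast m})$; then $g^n \in \supp(\mu^{\ast nm})$ for all $n$, so $|b_{\xi^+}(g^{-n}o) - nm\lambda| \le 2\|\psi\|_\infty$, whence $l(g) = \lim_n \frac{1}{n} b_{\xi^+}(g^{-n}o) = m\lambda$. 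But one can also find $q \ge 1$ with $e \in \supp(\mu^{\ast q})$, so $g \in \supp(\mu^{\ast(m+q)})$ as well, and the same reasoning gives $l(g) = (m+q)\lambda$. Since $\lambda > 0$ this is a contradiction. The key idea you are missing is to test the bounded-fluctuation constraint against powers of a \emph{specific} contracting element whose translation length is a fixed number, and to exploit that this element sits in the support of $\mu^{\ast p}$ for several distinct values of $p$.
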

	
	In the course of the proof, we shall use the following fact. We give the proof for completeness. 
	
	\begin{elem}\label{lem attracting pt supp}
		We use the same assumptions and notations as in Theorem \ref{thm CLT cat}. Let $g \in G$ be a contracting isometry of $X$, and let $\xi^+ \in \bd X$ be its attracting fixed point at infinity. Then $\xi^+ \in \supp(\nu)$, where $\nu$ is the unique $\mu$-stationary measure on $\overline{X}$. 
	\end{elem}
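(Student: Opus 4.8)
The plan is to show that the attracting fixed point $\xi^+$ of a contracting isometry $g$ lies in the support of any $\mu$-stationary measure $\nu$. Since $\mu$ is admissible, its support generates $G$ as a semigroup, so there exists $n \geq 1$ and $g_1, \dots, g_n \in \supp(\mu)$ with $g_1 \cdots g_n = g$. First I would recall that the support of a $\mu$-stationary measure is invariant under the action of $\supp(\mu)$ in the following weak sense: if $\nu$ is $\mu$-stationary and $h \in \supp(\mu)$, then $h_\ast \nu$ is absolutely continuous with respect to $\sum_k \mu^{\ast k} \ast \nu$, and more precisely one has the inclusion $\supp(\nu) \subseteq \overline{\bigcup_{h \in \supp(\mu)} h \cdot \supp(\nu)}$, which by iterating gives that $\supp(\nu)$ is invariant under the semigroup generated by $\supp(\mu)$, i.e. under all of $G$ — wait, more carefully: from $\mu \ast \nu = \nu$ one gets $\nu = \sum_{h} \mu(h) h_\ast \nu$, so for any open set $U$ with $\nu(U) > 0$ and any $h \in \supp(\mu)$, we have $\nu(h^{-1} U) > 0$; hence $h \cdot \supp(\nu)$ meets every neighbourhood of every point of $\supp(\nu)$. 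This shows $\supp(\nu)$ is a closed, nonempty, $\supp(\mu)$-invariant subset of $\overline{X}$ in the sense that $h^{-1}\cdot \supp(\nu) \supseteq \supp(\nu)$ up to closure; dually $g^{-n} \cdot \supp(\nu)$ accumulates onto $\supp(\nu)$.

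Next I would use the North-South dynamics. Pick any $\eta \in \supp(\nu)$; since $\nu$ is non-atomic (Lemma \ref{lem non atomic}, as the action is non-elementary), we may even pick $\eta \neq g^-$, the repelling fixed point. By Theorem \ref{thm contract dyn NS}, $g$ acts with North-South dynamics on $\overline{X}$ with respect to $g^+ = \xi^+$ and $g^-$: for any neighbourhood $U$ of $\xi^+$ and any neighbourhood $V$ of $g^-$, there is $k_0$ such that $g^k(\overline{X} \setminus V) \subseteq U$ for all $k \geq k_0$. Taking $V$ small enough that $\eta \notin V$, we get $g^k \eta \to \xi^+$. Now $g^k$ is a product of elements of $\supp(\mu)$ (namely $(g_1 \cdots g_n)^k$), so by the semigroup invariance of $\supp(\nu)$ noted above, $g^k \eta \in \supp(\nu)$ for every $k$ — more carefully, one shows $\nu(g^k \cdot U') > 0$ for every neighbourhood $U'$ of $\eta$, which combined with $g^k \eta \to \xi^+$ yields that every neighbourhood of $\xi^+$ has positive $\nu$-measure. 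Hence $\xi^+ \in \supp(\nu)$.

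The main subtlety to get right is the direction of the invariance: from $\mu \ast \nu = \nu$ one directly controls $\nu(h^{-1} U)$ for $h \in \supp(\mu)$, i.e. one pushes $\supp(\nu)$ \emph{forward} by $h$, not backward. So to conclude I want $g^k \eta$, with $g^k$ a positive word in $\supp(\mu)$, to lie in (the closure of) $\supp(\nu)$ — and indeed if $\eta \in \supp(\nu)$ and $U'$ is a neighbourhood of $g^k\eta$, then $(g^k)^{-1}U'$ is a neighbourhood of $\eta$ so $\nu((g^k)^{-1}U') > 0$, and since $\nu = \mu^{\ast kn} \ast \nu + (\text{rest})$ with $\mu^{\ast kn}(g^k) > 0$ (as each $g_i \in \supp(\mu)$), we get $\nu(U') \geq \mu^{\ast kn}(g^k)\, \nu((g^k)^{-1}U') > 0$. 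This is exactly what is needed. Combining with $g^k \eta \to \xi^+$ and closedness of $\supp(\nu)$ finishes the argument. The only real care needed is ensuring $\eta \neq g^-$, which is immediate from non-atomicity of $\nu$ and the fact that $\supp(\nu)$, being $G$-invariant up to closure and the action being non-elementary, is infinite.
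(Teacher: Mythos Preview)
Your proof is correct and follows essentially the same approach as the paper: both combine the North--South dynamics of the contracting element (Theorem \ref{thm contract dyn NS}), the non-atomicity of $\nu$ (Lemma \ref{lem non atomic}), and the stationarity relation $\nu = \mu^{\ast p} \ast \nu$ to show that every neighbourhood of $\xi^+$ has positive $\nu$-measure. The only cosmetic difference is that the paper works with a set --- choosing a neighbourhood $V$ of $\xi^-$ with $\nu(\bd X \setminus V) > 0$ and noting $g^{-k}U \supseteq \bd X \setminus V$ for large $k$ --- whereas you track a single point $\eta \in \supp(\nu) \setminus \{g^-\}$ along its orbit $g^k\eta \to \xi^+$ and use closedness of the support; the underlying inequality $\nu(U) \geq \mu^{\ast p}(g^k)\,\nu(g^{-k}U) > 0$ is identical in both.
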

	\begin{proof}
		Denote by $\xi^-  \in \bd X$ the repelling fixed point in of $g$. The isometry $g$ is contracting, hence by Theorem \ref{thm contract dyn NS} it acts on $\bd X$ with North-South dynamics. This means that for every neighbourhood $U$ of $\xi^{+}$, $V$ of $\xi^{-}$ in $\bd X$, there exists $k$ such that for all $n \geq k$, $g^n (\bd X  - V) \subseteq U $ and $g^{-n} (\bd X  - U) \subseteq V $. By Lemma \ref{lem non atomic}, $\nu$ is non-atomic, hence there exists a neighbourhood $V$ of $\xi ^{-}$ such that $\nu (\bd X - V) > 0$. Fix such a $V$, and let $U$ be any neighbourhood of $\xi^{+}$. Take $k$ large enough so that for all $n \geq k$, $g^n (\bd X  - V) \subseteq U $. Since $\mu$ is admissible, there exists $p' \in \mathbb{N}$ such that $g^k \in \supp(\mu^{\ast p'})$. Check that $\nu$ is still $\mu^{\ast p'}$-stationary, therefore 
		\begin{eqnarray}
			\sum_{h \in G} \nu(h^{-1}U) \mu^{\ast p'} (h) = \nu(U) \nonumber. 
		\end{eqnarray}
		In particular, by North-South dynamics, 
		\begin{eqnarray}
			\nu (U) \geq \nu (g ^{-k}U) \mu^{\ast p'} (g^k) \geq \nu (\bd X - V) \mu^{\ast p'} (g^k) > 0 \nonumber. 
		\end{eqnarray} 
		This is true for every neighbourhood $U$ of $\xi^{+}$, hence $\xi^{+} \in \supp(\nu)$. 
	\end{proof}

	\begin{proof}[Proof of Proposition \ref{prop non deg limit law cat}]
		Let $g$ be a contracting isometry in $G$. Recall that $g$ has an axis $\gamma \subseteq X$ on which $g$ acts as a translation, and let $\xi^{+}, \xi^{-}$ be its attracting and repelling fixed points in $\bd X$ respectively. We let $l(g) = \lim \frac{d(g^n o,o)}{n}$ be the translation length of $g$ in $(X,d)$. Observe that 
		\begin{eqnarray}
			l(g) = \lim_n \frac{b_{\xi^{+}}(g^{-n} o)}{n}, \label{translation cocycle}
		\end{eqnarray}
		where $b_{\xi^{+}} $ is the horofunction centred on $\xi^+$ and based at $o $. 
		Indeed, if $o$ belongs to $\gamma$, then $b_{\xi^{+}}(g^{-n} o) = d(g^n o , o)$ and equation \eqref{translation cocycle} is true. If $o$ does not belong to $\gamma$, take $o' \in \gamma$, and by triangular inequality, 
		\begin{eqnarray}
			|b_{\xi^{+}}(g^{-1} o) - b_{\xi^{+}}^{o'}(g^{-1}o') |\leq 2 d(o, o') \nonumber,
		\end{eqnarray}
		where $b_{\xi^{+}}^{o'}$ is the horofunction with basepoint $o'$. Since $l(g) = \lim_n \frac{1}{n} b_{\xi^{+}}^{o'}(g^{-n} o')$, we obtain that $l(g) = \lim_n \frac{1}{n} b_{\xi^{+}}(g^{-n} o)$. 
		\newline 
		
		Suppose by contradiction that $\int_{G \times \bd X} (\beta_0(h, z) - \lambda)^2 d\mu(h) d\nu(z) = 0$. This means that for almost every $\xi \in \supp(\nu)$ and $h \in \supp(\mu)$, 
		\begin{eqnarray}
			b_\xi (h^{-1} o) - \lambda = \psi (\xi) - \psi(h\xi) \nonumber. 
		\end{eqnarray} 
		Since $\psi $ is bounded and continuous, we get that for every $\xi \in \supp(\nu)$ and every $h \in \supp(\mu)$, $|b_\xi (h^{-1} o) - \lambda| \leq 2 \| \psi \|$. 
		\newline 
		
		Now consider the random walk generated by $\mu^{\ast p}$, for $p \geq 1$. Observe that $\mu^{\ast p }$ is still admissible of finite second moment and that $\nu$ is still a $\mu^{\ast p }$-stationary measure on $\bd X$. We can then apply Theorems \ref{thm drift cat} and \ref{thm CLT cat}, so that the random walk generated by $\mu^{\ast p}$ converges to the boundary with positive drift $l_X (\mu^{\ast p }) = p \lambda >0$. By the previous argument, for almost every $\xi \in \supp(\nu)$ and every $h \in \supp(\mu^{\ast p})$, 
		\begin{eqnarray}\label{eq non deg}
			|b_\xi (h^{-1} o) - p\lambda| \leq 2 \| \psi \|. 
		\end{eqnarray} 
		
		Let $g$ be a contracting element in $G$, and let $\xi^+ $ be its attracting fixed point. Because $\mu $ is admissible, there exists $m $ such that $\mu^{\ast m} ( g) > 0$. Then by Equation \eqref{eq non deg}, for all $n \geq 1$, $|b_{\xi^{+}}(g^{-n}o) - nm \lambda | \leq 2 \| \psi \|$. By Lemma \ref{lem attracting pt supp}, we can apply equation \eqref{translation cocycle}, and we obtain that 
		\begin{eqnarray}
			\lim_n \frac{b_{\xi^{+}}(g^{-n}o)}{n} =  l(g) = m \lambda \nonumber. 
		\end{eqnarray}
		But there also exists $q \in \mathbb{N}^\ast$ such that $1 \in \supp(\mu^{\ast q})$, hence $g \in \supp(\mu^{\ast (m+q)})$ and by the same argument, $l(g) = (m + q) \lambda$. Since by Theorem \ref{thm drift cat}, $\lambda$ is positive, we get a contradiction. 
	\end{proof}

	\chapter{Stationary measures on $\tilde{A}_2$-buildings}\label{chapter rw immeuble}
	
	In this chapter, we study non-elementary group actions on locally finite buildings of type $\tilde{A}_2$. Using techniques from boundary theory, we prove that there is a unique stationary measure $\nu$ supported on the set of chambers of the building at infinity $\bdinf$. In a joint work with J. Lécureux and J. Schillewaert, we then prove that any such action admits a hyperbolic element, and we derive a local-to-global result for fixed points. Here, we consider simplicial metric buildings, but we believe that the results hold in the non-discrete, metrically complete separable case too, as most of the proofs remain true in this context. It is to be noted that similar local-to-global results were already obtained. In \cite{schillewaert_struyve_thomas22}, the authors prove that if a group acts on a complete affine building $X$ of type $\tilde{A}_2$ or $\tilde{C}_2$ in such a way that each element is elliptic, then there is a global fixed point in the bordification $\overline{X}$. In \cite{osajda_przytycki21}, Osajda and Przytycki prove a similar result for any $\cat$(0) triangle complex, hence also valid for locally finite $\tilde{A}_2$-buildings. However, the approaches are completely different, as the techniques presented here involve random walks. In Section \ref{section prelim A2}, we recall some vocabulary on affine buildings, and we introduce the notion of measurable metric fields. In Section \ref{section meas struc A2}, we build the necessary framework in order to apply the theory of strong boundaries to our situation. In Section \ref{section bd map immeuble}, we study $G$-maps $B \rightarrow \ch(\bdinf)$ between the Poisson-Furstenberg boundary and the set of chambers at infinity. Using these properties, we derive in Section \ref{section uniq stat meas A2} the main result of the paper \cite{le-bars23immeuble}: we prove that there exists a unique stationary measure supported on the chambers at infinity. Last, we show in Section \ref{section hyp eleme A2} that any such action admits a hyperbolic element, and a local-to-global principle. This last part was done in collaboration with J. Lécureux and J. Schillewaert in \cite{le-bars_lecureux_schillewaert23}. 
	\vspace{1cm}

	\minitoc
	\vspace{1cm}
	
	\section{Metric fields and $\tilde{A}_2$-buildings}\label{section prelim A2}
	\subsection{Preliminaries on $\tilde{A}_2$-buildings}
	We refer to section \ref{section immeuble affine} for an introduction to the general theory of buildings. Let $\Delta$ be a discrete affine building of type $\tilde{A}_2$, and let $\Sigma $ be a fundamental apartment of $\Delta$. In this case, $\Sigma$ is the 2-dimensional simplicial complex afforded by the tessellation of the Euclidean plane $\R^2 $ by equilateral triangles, and any apartment of $\Delta$ is a subcomplex isomorphic to $\Sigma$. Recall that $\Delta$ is a building if and only if 
	\begin{itemize}
		\item any two simplices are contained in an apartment;
		\item If $A$ and $A'$ are two apartments, there exists an isomorphism of simplicial complexes $A \rightarrow A'$ fixing $A \cap A'$ pointwise. 
	\end{itemize}
	A simplex of $\Delta$ of dimension 2 is called a chamber. 
	
	Between any two points $x,y \in X$, define the distance between $x$ and $y$ to be the Euclidean distance in an apartment that contains both. This metric is well defined, and turns $\Delta$ into a $\cat$(0) space, which we call $(X,d)$: it is the canonical geometric realization of $\Delta$.  We denote its $\cat$(0) boundary by $\bd X$. The set of vertices $X^{(0)} $ of $X$ is endowed with a colouring $X^{(0)} \rightarrow \{0,1,2\} $ such that any chamber (2-dim simplex) has exactly one vertex of each colour. 
	
	When two simplices $A, B \in X$ belong to the same apartment, one can define the projection from $A $ to $B$ (for example using sign sequences). This definition can be extended to the whole building, by choosing an apartment that contains both simplices. In the following, this projection will be written $\proj_B (A)$. 
	
	Recall that a wall in $X$ is a geodesic line which is a union of edges. 
	
	\begin{eDef}
		A \textit{sector} based at a vertex $o$ in an apartment $\Sigma$ is a connected component in $\Sigma$ of the complement of the union of the walls passing through $o$. A \textit{sector} in $X$ is a subset of $X$ which is isometric to a sector based at $o$ in $\Sigma$. 
	\end{eDef}
	
	A boundary point $v \in \bd X$ is called a vertex at infinity if it is the endpoint of a geodesic ray asymptotic to a wall in $X$. Two sectors are said to be equivalent if they contain a common subsector. The equivalence class of a sector is the ideal boundary of a sector, and is also called a \textit{Weyl chamber}, two of which are said to be adjacent if they contain a common vertex at infinity. The graph $\bdinf $ is the graph whose vertices are the vertices at infinity, and for which there is an edge between two vertices if they lie in a common Weyl chamber. $\bdinf $ is then a spherical building, called the building at infinity of $X$. The set of edges in $ \bdinf$ is denoted $\ch (\bdinf)$: it is then the set of equivalence classes of sectors. In the same way, $\bdinf$ is endowed with a colouring that divides it into two types, and the corresponding subsets are denoted $\bdinf_1$ and $\bdinf_2$. 
	
	The set of chambers at infinity $\ch (\bdinf)$ can be endowed with a topology, which makes it a compact, totally disconnected space. Recall that for every vertex $x \in \Delta$, and every chamber at infinity $C \in \bdinf$, there exists a unique sector $Q(x, C)$ based at $x$ in the equivalence class of $C$. A basis of open neighbourhoods in $\ch (\bdinf)$ is then given by 
	\begin{eqnarray}
		\Omega_x(y) := \{ C \in \ch(\bdinf) \, | \, y \in Q(x, C)\}, \nonumber
	\end{eqnarray}
	for $x \in X$ and $C \in \ch(\bdinf)$. 
	
	With this topology, the action of $G $ a group of automorphisms of $\Delta$ can be extended to a continuous action on $\ch(\bdinf)$. 
	From the geometric viewpoint, the boundary of a $\cat$(0) space can be endowed with the Tits metric, with which it becomes a $\cat(1)$ space and on which its isometry group still acts by isometries. The matter of existence of fixed points for group action on a $\cat(1)$ space has been widely studied, see \cite{caprace_lytchak10}, \cite{caprace_monod13}, \cite{balser_lytchak05}. The following is taken from \cite{balser_lytchak05}, and will be useful later.

	\begin{eprop}[{\cite[Proposition 1.4]{balser_lytchak05}}]\label{circumcenter}
		Let $X$ be a $\cat$(1) space of finite dimension and of radius $r \leq \pi/2$. Then $X$ has a circumcenter which is fixed by every isometry of $X$. 
	\end{eprop}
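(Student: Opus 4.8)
The plan is to detect a canonical point of $X$ via the circumradius function and then invoke the fact that isometries preserve it. Define $f\colon X\to\R$ by $f(x)=\sup_{y\in X}d(x,y)$, the circumradius of $X$ as seen from $x$. Since $d$ is a metric, $f$ is $1$-Lipschitz, hence continuous, and $\inf_{x\in X}f(x)=r$ is the radius. Every $g\in\iso(X)$ satisfies $f\circ g=f$, so $g$ preserves the (a priori possibly empty) set $C=f^{-1}(r)$ of circumcenters. It therefore suffices to produce a point of $X$ that is \emph{canonically} attached to $X$ — ideally the unique circumcenter — since such a point is automatically fixed by every isometry. This is exactly the $\cat$(1) analogue of the Bruhat--Tits circumcenter of a bounded set in a $\cat$(0) space (Proposition \ref{prop bruhat tits centre}), and the only extra geometry needed is that in a $\cat$(1) space balls of radius $<\pi/2$ are uniquely geodesic and convex (comparison with $\mathbb{S}^2$). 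Throughout I would work with a complete space (passing to the completion if necessary, which costs nothing in the intended applications such as the Tits boundary, complete by Theorem \ref{tits}).

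First I would treat the strict case $r<\pi/2$. Take a minimizing sequence $x_n$ with $f(x_n)\to r$. For $m,n$ large, $f(x_m),f(x_n)<\pi/2$, so $d(x_m,x_n)<\pi$ and there is a unique midpoint $z_{mn}$. For any $y\in X$ the $\cat$(1) comparison inequality (the spherical law of cosines on a comparison triangle with vertices $x_m,x_n,y$) yields the midpoint estimate
\begin{equation*}
	\cos d(z_{mn},y)\;\geq\;\frac{\cos d(x_m,y)+\cos d(x_n,y)}{2\cos\!\big(\tfrac{1}{2}d(x_m,x_n)\big)}\;\geq\;\frac{2\cos\!\big(\max(f(x_m),f(x_n))\big)}{2\cos\!\big(\tfrac{1}{2}d(x_m,x_n)\big)}.
\end{equation*}
Taking the infimum over $y$ turns the left side into $\cos f(z_{mn})$, and since $f(z_{mn})\geq r$ we get $\cos r\geq \cos\!\big(\max(f(x_m),f(x_n))\big)/\cos\!\big(\tfrac12 d(x_m,x_n)\big)$. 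As $\max(f(x_m),f(x_n))\to r$, this forces $d(x_m,x_n)\to 0$, so $(x_n)$ is Cauchy; by completeness it converges to some $c_X$ with $f(c_X)=r$, and the same estimate applied to two circumcenters shows $c_X$ is the unique one. Uniqueness makes $c_X$ canonical, hence $\iso(X)$-fixed.

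The hard part is the borderline case $r=\pi/2$: balls of radius $\pi/2$ need not be convex, so the uniform-convexity argument above collapses, and this is precisely where finite-dimensionality is indispensable. Here the plan is to follow Balser--Lytchak: first show $C$ is non-empty and a closed, $\pi$-convex subset of $X$, hence itself a complete $\cat$(1) space. If $\operatorname{rad}(C)<\pi/2$ we are reduced to the previous case applied to $C$ — and the resulting point, being canonical in $C$ and $C$ being canonical in $X$, is $\iso(X)$-fixed; if $\operatorname{rad}(C)=0$ we are already done. To rule out the remaining possibility $\operatorname{rad}(C)=\pi/2$ one argues by induction on the geometric (telescopic) dimension, showing that passing from $X$ to its center set $C$ strictly decreases the dimension, so the descent must terminate in finitely many steps. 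Verifying $\pi$-convexity of $C$, controlling the dimension drop at each stage, and checking that the radius cannot stabilize at $\pi/2$ along the descent is the technical core of the proof and the step I expect to be the main obstacle; the rest is the soft packaging described above.
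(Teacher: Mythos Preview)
The paper does not give a proof of this proposition: it is simply quoted from \cite[Proposition 1.4]{balser_lytchak05} as a black box and used later (in the proofs of Lemmas \ref{no T1} and \ref{no d2T1}). There is therefore nothing in the paper to compare your argument against.

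For what it is worth, your sketch is the standard Balser--Lytchak argument. The case $r<\pi/2$ is handled correctly via the spherical midpoint inequality, and your outline for $r=\pi/2$ (show the center set $C$ is nonempty, closed, $\pi$-convex, hence again $\cat(1)$; iterate, using finite dimension to guarantee termination) is exactly the strategy of the cited reference. You are right that the dimension drop and the exclusion of a stabilized radius $\pi/2$ are the substantive points; these are precisely what Balser and Lytchak establish, and your proposal does not pretend otherwise.
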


	Recall that a point $v \in  \bd X$ is an equivalence class of rays, two rays $r_1$ and $r_2$ being equivalent, for which we write $r_1 \sim r_2$, if they contain subrays that lie in a common apartment and are parallel in this apartment.  We will say that two geodesic rays $r_1$ and $r_2$ are \textit{strongly asymptotic}, and write $r_1 \simeq r_2$, if their intersection contains a geodesic ray.  For two $\sim$-equivalent geodesic rays $r_1$ and $r_2$ that represent the boundary point $v \in \bd X$, we define their distance to be:
	
	\begin{equation}
		d_v (r_1, r_2) := \underset{s}{\inf} \underset{t \rightarrow \infty}{\lim} d(r_1(t+s), r_2(t)). 
	\end{equation}
	Note that it defines a distance, and that two strongly asymptotic rays  $r_1 $ and $r_2$ satisfy $d_v(r_1, r_2) =0$. Then this distance does not depend on the $\simeq $-strongly asymptotic classes of ray amongst rays from the same $\sim$-equivalence class. Again, this will only be interesting if we consider $v \in \bdinf$ a vertex at infinity. 
	\begin{eprop}[{\cite[Proposition 2.13]{bader_caprace_lecureux19}}]
		Let $v \in \bdinf$ be a vertex at infinity. The metric space $(T_v, d_v)$ of asymptotic classes of rays in the class of a vertex at infinity $v$ is a thick tree, and if $X$ is locally finite of order $q$, $T_v$ is regular of order $q+1$. 
	\end{eprop}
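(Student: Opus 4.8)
The statement to prove is that for a vertex at infinity $v \in \bdinf$, the space $(T_v, d_v)$ of strongly asymptotic classes of rays representing $v$ is a thick tree, regular of order $q+1$ when $X$ is locally finite of order $q$.

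The plan is to use the "panel tree" description already set up in Section \ref{section panel trees}. Recall that for a panel (codimension-$1$ facet) $F_\infty$ of the spherical building at infinity, the excerpt constructs the affine building at infinity $X(F_\infty)$, which in the two-dimensional case is exactly the tree $T_{F_\infty}$, with distance $d_{T_u}(\germ_\infty(\gamma), \germ_\infty(\gamma')) = \inf_s \lim_{t \to \infty} d(\gamma(t+s), \gamma'(t))$. Since a vertex at infinity of an $\tilde A_2$-building is precisely a panel of $\bdinf$ (the edges of $\bdinf$ are the chambers, so the vertices are the panels), $T_v$ coincides with this $T_{F_\infty}$, and $d_v$ is the distance $d_{T_u}$ above. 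So the first step is just to identify the two constructions and transport the needed structural facts.

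Second, I would establish that $T_v$ is a tree, i.e. that it is $\cat(0)$ and that every geodesic segment lies in the union of two "half-apartments" meeting along a point — equivalently, that it is a one-dimensional affine building. This follows from Proposition~\ref{prop homeo panel tree ch} (the homeomorphism $\phi_F : \res(F_\infty) \to X(F_\infty)^\infty$ identifying the boundary of $T_v$ with the residue of $v$ in $\bdinf$) together with \cite[Proposition 8.1.5]{charignon09}, which grants $X(F_\infty)$ the structure of an affine building; an affine building of dimension $1$ is by definition a (real or simplicial) tree. Alternatively, and perhaps more self-contained in the discrete setting, one can argue directly: apartments of $X$ containing a sector in the class of $v$ retract onto lines that project to geodesic lines of $T_v$, and the building axioms (A1)–(A5), restricted to these apartments, descend to the tree axioms for $T_v$. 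The point is that two rays in the class of $v$ eventually lie in a common apartment $A$, and inside $A$ the two germs are separated by a well-defined finite number of walls of $A$ perpendicular to the wall directed by $v$; this number is the $d_v$-distance, and additivity of this count along a third apartment gives the $\mathbb{R}$-tree (here simplicial tree) condition.

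Third, for thickness and the precise order: I would count, for a fixed germ $c \in T_v$ and a fixed vertex $x$ of $X$ on the corresponding sector, the germs adjacent to $c$ in $T_v$. These correspond to the chambers of $X$ sharing a panel with the "first" chamber of the sector $Q(x,C)$ for chambers $C$ in the residue $\res(v)$; equivalently, by the residue/boundary identification, to the neighbours of the corresponding boundary chamber in the spherical link. Thickness ($\geq 3$ neighbours) is immediate from thickness of $X$ (every panel of $X$ lies in at least $3$ chambers), giving thickness of the residue $\res(v)$, hence of $T_v$ via $\phi_F$. For the exact value: when $X$ is locally finite of order $q$, every panel of $X$ lies in exactly $q+1$ chambers; pushing this count through the panel-tree construction — each branching of $T_v$ corresponds to choosing one of the $q+1$ chambers on a panel of $X$ transverse to the wall directed by $v$ — yields that $T_v$ is regular of valence $q+1$. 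This is stated precisely in \cite[Proposition 2.13]{bader_caprace_lecureux19}, so at the level of a plan I would cite that and verify only that the hypotheses (locally finite, type $\tilde A_2$, vertex at infinity) translate correctly.

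The main obstacle is the regularity (the exact valence $q+1$) rather than the tree property: establishing that $X(F_\infty)$ is a tree is essentially a black box from \cite{charignon09}, but pinning down that the branching number equals $q+1$ requires a careful translation between (i) the local order $q$ of $X$, which controls the number of chambers on a panel of the \emph{affine} building, and (ii) the combinatorics of the link of a vertex at infinity inside the \emph{spherical} building $\bdinf$, and then (iii) the identification of the boundary of $T_v$ with $\res(v)$ via $\phi_F$. One must be careful that no "extra" or "missing" branching is introduced by the germ equivalence relation $\simeq$ (strongly asymptotic rays), and that the retraction from $X$ onto an apartment interacts well with the projection $\pi_{F_\infty}$. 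Once this bookkeeping is done, the result follows; I would keep the exposition light here and defer the fine count to the cited proposition.
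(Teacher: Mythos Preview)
The paper does not give its own proof of this proposition: it is stated as a quotation of \cite[Proposition 2.13]{bader_caprace_lecureux19} and used as a black box. Your proposal is therefore not competing with any argument in the paper; you are sketching a justification for a result the author simply cites. Your outline (identify $T_v$ with the panel-tree $X(F_\infty)$ from Section~\ref{section panel trees}, invoke \cite{charignon09} for the affine-building/tree structure, and count valence via the local order $q$ of $X$) is a reasonable roadmap, and you yourself ultimately defer the precise regularity count to the cited reference, which is exactly what the paper does.
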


	The tree $T_v$ is called the panel tree at $v$. Note that this notion can be generalized in higher dimension, and that it can be endowed with a natural affine building structure, see \cite[Section 4.3]{remy_trojan21} and \cite[Section 5]{caprace_lecureux11}. A key point is that the space of ends of a given panel tree $T_v$ is in bijection with the residue $\res(v)$ of the chambers at infinity containing $v$ ($\res(v)$ can also be called the star at infinity containing $v$). For $v$ a vertex at infinity, denote by $\Aut(X)_v$ the group of automorphisms of $X$ that stabilize $v$. We recall the following result, see \cite[Proposition 4.5]{remy_trojan21}.
	
	\begin{eprop}\label{bijection ends chambers}
		Let $v $ be a vertex in $\bdinf$. There is a $\Aut(X)_v $-equivariant homeomorphism between the space of ends of the panel tree $T_v$ and the set $\res(v)$ of chambers of $\bdinf$ containing $v$.  
	\end{eprop}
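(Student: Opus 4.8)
The plan is to construct the homeomorphism directly by associating to each chamber $C$ of $\bdinf$ containing $v$ a ray in the panel tree $T_v$, and conversely. First I would fix a vertex $o \in X$ and recall that, by the discussion preceding the statement, for every chamber $C \in \res(v)$ there is a unique sector $Q(o,C)$ based at $o$ representing $C$. Since $C$ contains $v$ as a vertex at infinity, the sector $Q(o,C)$ contains a geodesic ray $r = r_{o,C}$ with endpoint $v$ (the ray spanning the appropriate edge of the sector that points toward $v$). This gives a well-defined map $\Phi : \res(v) \to \mathrm{Ends}(T_v)$ sending $C$ to the $\sim$-equivalence class of $r_{o,C}$, viewed as an end of the panel tree; one checks that the end depends only on $C$ and not on the chosen representative sector, because two equivalent sectors share a subsector, hence share a subray asymptotic to $v$, which is the same point of $T_v$ at every level. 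Equivariance under $\Aut(X)_v$ is immediate from the construction, since automorphisms stabilizing $v$ send sectors to sectors and rays to rays compatibly with the tree structure on $T_v$.

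Next I would build the inverse. Given an end $\xi$ of $T_v$, represented by a geodesic ray $\rho : [0,\infty) \to T_v$ based at the germ of some ray $r_0$ asymptotic to $v$, I would use the affine building structure on $T_v$ (or more elementarily, the fact that a nested sequence of half-trees determines a sequence of nested ``sector-like'' subsets of $X$): the end $\xi$ singles out a coherent choice, at every vertex of $T_v$, of which branch to follow, and this corresponds to a coherent choice of sectors in $X$ all representing a common chamber at infinity $C_\xi$ that contains $v$. The key technical input here is Proposition 2.13 (the panel tree is a thick tree, regular of order $q+1$ when $X$ is locally finite of order $q$) together with the standard fact that sectors in $X$ sharing the vertex at infinity $v$ are organized by their ``relative position'' exactly according to the combinatorics of $T_v$. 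I would verify that $C_{\Phi(C)} = C$ and $\Phi(C_\xi) = \xi$, so the two maps are mutually inverse bijections.

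It remains to check that $\Phi$ and its inverse are continuous, i.e.\ that $\Phi$ is a homeomorphism. For this I would compare the two topologies explicitly: a basic open set of $\res(v)$ in $\ch(\bdinf)$ is of the form $\Omega_x(y) \cap \res(v)$ for $x \in X$, $y \in X$, which translates (after choosing $x$ on a ray toward $v$) into the condition that the sector $Q(x,C)$ passes through $y$; on the tree side, a basic open set of $\mathrm{Ends}(T_v)$ is a shadow of an edge, i.e.\ the set of ends passing through a given vertex of $T_v$. The correspondence between ``$Q(x,C)$ contains $y$'' and ``the ray $r_{x,C}$ has a prescribed germ at some level of $T_v$'' shows that $\Phi$ carries a basis to a basis, hence is a homeomorphism. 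I expect the main obstacle to be the second paragraph: making precise the passage from an abstract end of $T_v$ back to an honest chamber at infinity of $X$, because one must show that the ``branch choices'' in $T_v$ glue up to a genuine sector class in $X$ rather than just a consistent family of finite sectors — this is where the completeness/discreteness of $\Delta$ and the building axioms (every two simplices lie in a common apartment, uniqueness of retractions) are really used, essentially reconstructing the chamber at infinity as a limit. Once this gluing is established, the bijectivity, equivariance, and bicontinuity are all routine verifications along the lines of \cite[Section 4.3]{remy_trojan21}.
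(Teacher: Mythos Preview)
The paper does not actually prove this proposition: it is stated with the attribution ``We recall the following result, see \cite[Proposition 4.5]{remy\_trojan21}'' and no argument is given. The earlier French version (Proposition~\ref{prop homeo panel tree ch}) is likewise only cited. So there is no paper proof to compare your proposal against; your outline is essentially a sketch of the argument one finds in the cited reference.

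That said, your construction of the map $\Phi$ in the first paragraph is not quite right and this is a genuine gap. A single geodesic ray $r_{o,C}$ in $X$ with endpoint $v$ determines a \emph{point} of $T_v$ (its strong-asymptote class), not an end of $T_v$. What one needs is a geodesic ray \emph{in the tree} $T_v$. The correct assignment is to take the whole sector $Q(o,C)$ and push it forward by the projection $\pi_v : X \to T_v$; the image $\pi_v(Q(o,C))$ is a half-line in $T_v$ (moving the basepoint along the wall of $Q(o,C)$ not asymptotic to $v$ produces rays toward $v$ whose strong-asymptote classes march off to infinity in $T_v$), and it is the end of this half-line that you should call $\Phi(C)$. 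Your justification that ``two equivalent sectors share a subsector, hence share a subray asymptotic to $v$, which is the same point of $T_v$ at every level'' reflects this confusion: a shared subray toward $v$ gives a common point of $T_v$, whereas what you need is that a shared \emph{subsector} gives a common sub-half-line in $T_v$, hence the same end.

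Once $\Phi$ is defined correctly via $\pi_v$ of sectors, the rest of your plan (inverse by gluing nested sector germs, equivariance, comparison of the basic open sets $\Omega_x(y)\cap\res(v)$ with edge-shadows in $T_v$) is the standard route and goes through as in \cite{remy_trojan21}. Your identified ``main obstacle'' --- reconstructing a genuine chamber at infinity from an end of $T_v$ --- is real but routine once one uses that nested sectors with common direction $v$ and growing $T_v$-projection intersect in subsectors, so their equivalence classes stabilize.
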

	
	Note that for all $\xi \in \bd X$, there is a well-defined and continuous application defined by 
	\begin{eqnarray}
		\pi_\xi : X &\longrightarrow& T_\xi \nonumber \\
		x &\longmapsto & \germ_\infty(x + \xi) \nonumber. 
	\end{eqnarray}
	
	where $\germ_\infty (x + \xi)$ is here defined as the (strongly) asymptote class of the geodesic ray based at $x$, in the direction of $\xi$.

	From now on, we will denote by $T_1 =\underset{u \in \bdinf_1}{\bigsqcup}T_u$ the union of all the panel trees $T_u$, for $u \in \bdinf_1$, namely: 
	\begin{eqnarray}
		T_1 =  \{ (u, r) \, | \,  u \in \bdinf_1, r \text{ is an asymptotic class of ray in the equivalence class of } u\} \nonumber.
	\end{eqnarray}
	
	Note that for each $v \in \Delta_1^\infty$, there is a distance on $T_v$ defined by \begin{equation}\label{eq metric panel tree}
		d_v (\xi, \eta) := \underset{s}{\inf} \underset{t \rightarrow \infty}{\lim} d(r_1(t+s), r_2(t)), 
	\end{equation}
	
	where $r_1$ and $r_2$ are two geodesic rays in the equivalence class of $v$, and in the asymptote class of $\xi$ and $\eta$ respectively. We will prove in Lemma \ref{lem t1 measurable structure} that $T_1$ can be endowed with a measurable structure, compatible with the topology.

	Similarly, denote by $\dt_1 $ the union of all the ends of the panel trees $T_u, u \in \bdinf_1$. Namely, 
	\begin{equation*}
		\dt_1 = \{ (u, C) \, | \,  u \in \bdinf_1, C \in \partial T_u\}. \nonumber
	\end{equation*}
	Due to Proposition \ref{bijection ends chambers}, we can identify $\dt_1$ with the space $\res(\bdinf_1)$, where  
	\begin{equation*}
		\res(\bdinf_1) = \{ (u, C), u \in \bdinf_1, C \in \res(u)\}. 
	\end{equation*}
	
	Last, define $\partial_i T_u$ the set of unordered $i$ distinct points in $\partial T_u $ and $\partial_i T_1$ the union of all $\partial_i T_u$, $u \in \bdinf_1$. In other words, $\partial_i T_1$ is the set
	\begin{equation*}
		\{ (u , \xi_1, \dots, \xi _i ), u \in \bdinf_1, \xi_k \in \partial T_u\}, 
	\end{equation*}
	up to reorganizing the order of the $(\xi_1, \dots, \xi _i) \in (\partial T_u)^i$, where $\xi_i $ are pairwise distinct. We will come back on these spaces in Section \ref{section meas struc A2}, and we will endow them with a measurable structure.

	\subsection{Boundary theory}\label{section bd theory immeuble}
	
	In this Section, we review some notions of boundary theory that we will need later. We refer the reader to Section \ref{section bords ergodiques} for the vocabulary of fiberwise isometric actions, and to \cite{duchesne13}, \cite{bader_duchesne_lecureux16}. In particular, we will need some definitions and important properties in the theory of strong boundaries. These results were presented in Section \ref{section action moyennable} and Section \ref{section bords ergodiques}. 
	\newline 
	
	We will use the following characterization of amenable actions, see Proposition \ref{prop application de bord}. Recall that $(A, \eta)$ is an amenable $G$-space if $(A, \eta)$ is a standard Borel space such that there exists a non-singular and amenable action $G \curvearrowright (A, \eta)$, see Section \ref{section action moyennable}. 
	
	\begin{eprop}
		Let $G$ be a locally compact second countable group, and let $(A, \eta)$ be an amenable $G$-space. Let $(X,d)$ be a compact metrizable space such that $G \curvearrowright (X, d)$ is an isometric action. Then  there exists a measurable $G$-equivariant map $(A, \eta) \rightarrow \prob(X)$ which is essentially well-defined. 
	\end{eprop}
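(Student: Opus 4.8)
The plan is to combine the amenability of the action $G \curvearrowright (A,\eta)$ with the compactness of $X$ in the standard way, via the affine fiber-bundle formulation of amenability recalled in Section \ref{section action moyennable}. First I would observe that since $(X,d)$ is a compact metrizable space, the space $\prob(X)$ of Borel probability measures on $X$, equipped with the weak-$\ast$ topology, is a nonempty compact convex subset of the dual of the separable Banach space $C(X)$ of continuous functions on $X$; moreover, the isometric action of $G$ on $X$ induces a continuous affine action of $G$ on $\prob(X)$ by pushforward, i.e. $g \cdot m = g_\ast m$. Thus the trivial bundle $A \times \prob(X)$ over $A$, equipped with the diagonal $G$-action, is a $G$-affine bundle over $(A,\eta)$ in the sense of the definition preceding the statement (the fibers $\{A_a = \prob(X)\}_{a \in A}$ are compact convex subsets of the closed unit ball of $C(X)^\ast$, the set $\{(a, A_a)\}$ is trivially Borel, and the cocycle here is the constant one given by the $G$-action on $\prob(X)$).

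Second, by the definition of an amenable action, there exists a measurable $G$-equivariant section $A \to A \times \prob(X)$ of this bundle. Composing with the projection onto the second factor $\prob(X)$ yields a measurable map $f \colon (A,\eta) \to \prob(X)$ satisfying $f(ga) = g_\ast f(a)$ for all $g \in G$ and $\eta$-almost every $a \in A$, which is exactly the desired map. The essential uniqueness-up-to-null-sets (``essentially well-defined'') follows from Lemma \ref{lemme stricte invariance}, item 3, applied to $Z = \prob(X)$ with the $G$-action just described: a measurable $G$-equivariant map defined almost everywhere can be modified on a null set to a strictly equivariant map on a set of full measure, and the statement as written only asserts existence of such a map up to this modification.

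I do not expect a serious obstacle here: this is essentially the content of Proposition \ref{prop application de bord} (``Application de bord''), whose proof is recalled in the excerpt, specialized to the case where the target compact convex is a space of probability measures rather than a general $C$. The only point requiring a small amount of care is verifying that $A \times \prob(X)$, with its diagonal $G$-action, genuinely fits the definition of a $G$-affine bundle, i.e. that the map $a \mapsto \prob(X)$ (a constant ``field'' of convex compacta) is Borel in the required sense and that the action is measurable as a function of $(g,a)$; but measurability of $(g,a) \mapsto g_\ast f(a)$ for measurable $f$ follows since $G$ acts continuously on $X$ hence on $\prob(X)$, and $G$ is second countable. With these verifications the statement is immediate from the amenability hypothesis.
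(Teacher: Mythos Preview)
Your proposal is correct and follows the same approach as the paper: the statement is a restatement of Proposition \ref{prop application de bord}, whose proof in the paper is exactly to form the trivial $G$-affine bundle $A \times \prob(X)$ with the diagonal action and invoke amenability to obtain a $G$-equivariant section. Your additional remarks on the verification that this is a $G$-affine bundle and on the meaning of ``essentially well-defined'' via Lemma \ref{lemme stricte invariance} are fine elaborations, but the core argument is identical.
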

	
	We recall the definition of a $G$-boundary pair, introduced in Section \ref{section bords ergodiques}. 
	
	\begin{eDef}
		Let $(B_{-}, \nu_{-})$ and $(B_{+}, \nu_{+})$ be $G$-spaces. We say that $(B_{-}, B_{+} )$ is a $G$- boundary pair if: 
		\begin{itemize}
			\item the actions $G \curvearrowright B_{+}$ and $G \curvearrowright B_{-}$ are amenable; 
			\item the $G$-maps $B_{-} \times B_{+} \rightarrow B_{-} $ and $B_{-} \times B_{+} \rightarrow B_{+} $ are relatively metrically ergodic. 
		\end{itemize}
		We say that a $G$-space $(B, \nu)$ is a $G$-boundary if $(B,B)$ is a $G$-boundary pair. 
	\end{eDef}
	
	A reason why it is so convenient to use $G$-boundaries is that they give rigidity results on the group $G$, one of which is the existence of Furstenberg maps. Here, we are dealing with spaces with finite geometric dimension, hence with finite telescopic dimension, see \cite{caprace_lytchak10}. 
	
	\begin{ethm}[{\cite[Theorem 1.1]{bader_duchesne_lecureux16}}]\label{furst map}
		Let $X$ be a $\cat$(0) space of finite telescopic dimension and let $G$ be a locally compact second countable group acting continuously by isometries on $X$ without invariant flats. If $(B, \nu)$ is a $G$-boundary, then there exists a measurable $G$-map $B \rightarrow \bd X$. 
	\end{ethm}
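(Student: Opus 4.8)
The statement to prove (Theorem from \cite{bader_duchesne_lecureux16}, restated as \texttt{furst map}) asserts the existence of a measurable $G$-equivariant map $B \to \bd X$ from a $G$-boundary to the visual boundary of a finite telescopic dimension $\cat$(0) space on which $G$ acts without invariant flats. Since this is quoted from the literature, the plan is to reconstruct the argument of Bader--Duchesne--Lécureux rather than invent something new, emphasizing how each hypothesis is used.

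First I would use amenability of the $G$-action on $B$ to produce a measurable $G$-map $\varphi : B \to \prob(\overline{X}^h)$ into probability measures on a suitable compact $G$-metrizable bordification of $X$ — here one works with the horofunction/visual compactification, which is compact precisely because the telescopic dimension is finite (so that the relevant space of directions behaves well even without properness). This step is exactly Proposition \ref{prop application de bord}: a compact metrizable $G$-space always carries an equivariant boundary map from an amenable action. The real content is then to \emph{improve} this measure-valued map to a point-valued map $B \to \bd X$, and this is where relative metric ergodicity of the projections $B \times B \to B$ enters.

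The key steps, in order, would be: (1) decompose $\overline{X}^h = X \sqcup \partial X$ measurably and $G$-equivariantly, and rule out that $\varphi(b)$ charges $X$ or a ``bounded part'' using non-elementarity — a measure charging a bounded-radius region would, via the circumcenter construction (cf.\ Proposition \ref{prop bruhat tits centre} and the barycenter maps of the type used in \cite{bader_caprace_furman_sisto22}), give a $G$-invariant bounded set, contradicting minimality/non-elementarity; (2) for the part of $\varphi(b)$ living at infinity, use the structure theory of $\cat$(0) boundaries of finite telescopic dimension (decomposition of $\partial_T X$ into a join of a finite-dimensional spherical building part and a part with no large spheres, as in Caprace--Lytchak) to reduce the support; (3) apply a barycenter/circumcenter map on $\cat(1)$ spaces of radius $< \pi/2$ — this is where Proposition \ref{circumcenter} and the finite dimension of the Tits boundary are used — to canonically and measurably pick, for each $b$, either a point of $\partial X$ or a ``smaller'' invariant subset, and iterate; (4) use relative metric ergodicity to conclude that the resulting map $B \times B \to \prob(\partial X)$ (or its refinement) must have essential values that are Dirac masses, since any genuinely measure-valued equivariant map would, by the ergodicity of the diagonal action, force an invariant bounded subset of $X$ or an invariant flat, both excluded.

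The main obstacle — and the technical heart of the Bader--Duchesne--Lécureux argument — is step (3)--(4): making the induction on telescopic dimension work while staying measurable. One must show that if $\varphi(b)$ is not a Dirac mass at a boundary point, then its ``geometric support'' has a well-defined canonical invariant (a circumcenter in a space of directions, or a canonical proper subspace) of strictly smaller telescopic dimension, measurably in $b$, and that this process terminates. The subtlety is that the relevant spaces of directions vary with $b$, so one is really working with \emph{measurable fields} of $\cat(1)$ spaces, and the circumcenter of Proposition \ref{circumcenter} must be taken fiberwise; checking that fiberwise circumcenters assemble into a measurable section, and that non-elementarity survives at each stage of the induction, is the delicate part. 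Once the induction bottoms out, relative metric ergodicity of $B \times B \to B$ upgrades the map to a genuine $G$-map $B \to \bd X$, completing the proof.
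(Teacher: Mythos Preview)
The paper does not give a proof of this theorem: it is quoted verbatim from \cite{bader_duchesne_lecureux16} and used as a black box (notably in the proof of Theorem~\ref{thm bd map immeuble}). There is therefore nothing in the paper to compare your proposal against.

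As a reconstruction of the Bader--Duchesne--L\'ecureux argument your outline is in the right spirit, but two points deserve correction. First, your claim that the horofunction compactification ``is compact precisely because the telescopic dimension is finite'' is wrong: as the present paper itself notes (after the definition of $\overline{X}^h$), the horoclosure $\widehat{X}$ is \emph{always} compact by Arzel\`a--Ascoli, regardless of any dimension hypothesis. The finite telescopic dimension enters elsewhere --- in the Caprace--Lytchak structure theory of the Tits boundary and in making the dimension-reduction terminate. Second, the actual organization in \cite{bader_duchesne_lecureux16} is closer to the measurable-field framework that this paper recalls: rather than starting from a map into measures on a compactification and then stripping off a bounded part, one works directly with measurable fields of $\cat$(0) spaces over $B$ and invokes Duchesne's equivariant Adams--Ballmann theorem (stated here as Theorem~\ref{thm meas adams ballmann}) together with relative metric ergodicity to produce the boundary section. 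Your steps (3)--(4) capture the genuine difficulty correctly, but the entry point (1)--(2) is not how the cited proof is set up.
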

	
	For instance, Theorem \ref{thm paire de bords} states that the Poisson-Furstenberg boundary $(B(G, \mu), \nu)$ associated to an admissible measure is a $G$-boundary.

	\subsection{Metric fields and equivariant theorem}
	
	In the course of the proof, we will use the vocabulary of measurable metric fields, a notion introduced by B. Duchesne in \cite{duchesne13}. This notion bears many similarities with the language of fiberwise isometric actions, as explained in \cite{duchesne_lecureux_pozetti23}. A good introduction of these notions is done in \cite{bader_duchesne_lecureux16}. 
	
	\subsubsection{Measurable metric fields}
	Let $(A, \eta)$ be a measurable space. A measurable metric field $\mathbf{X}$  over $A$ can be thought of as a way of attaching a metric space $X_a$ to each point $a \in A$  in a measurable way. 
	
	\begin{eDef}
		Let $(A, \eta)$ be a Lebesgue space (a standard Borel space endowed with a Borel probability measure). Let $\bold{X}$ be a collection $\bold{X} = \{(X_a, d_a)\}_{a \in A}$ of complete separable metric spaces over $(A, \eta)$, and let $\{x^n\}_{n \in \mathbb{N}}$ be a countable set of elements of $\underset{a \in A}{\Pi} X_a $ such that: 
		\begin{itemize}
			\item $\forall n, m \in \mathbb{N}, \, a \mapsto d_a(x^n_a, x^m_a)$ is measurable; 
			\item for $\eta$-almost every $a \in A$, $\{x^n_a\}_{n \in \mathbb{N}}$ is dense in $X_a$. 
		\end{itemize}
		The family $\{x^n\}_{n \in \mathbb{N}}$ is called a \emph{fundamental family} for $\bold{X}$. The data $((A, \eta), \bold{X}, \{x^n\}_{n \in \mathbb{N}})$ is called a \emph{metric field} over $(A, \eta)$. 
		
		A \emph{section} $s : (A, \eta) \rightarrow \bold{X} $ is a map such that for every $ a \in A$, $s(a) \in X_a$ and for every element $x^n $ of the fundamental family, $a \mapsto d_a (x^n_a, s(a))$ is measurable. 
	\end{eDef}
	
	Fix a locally compact second countable group $G$ endowed with its Haar measure, and let $(A, \eta)$ be a $G$-space. 
	Now let $\bold{X} = \{X_a\}_{a \in A}$ be a metric field over $A$. We say that $G$ acts on $\bold{X}$ by the cocycle $\alpha$ if: 
	\begin{itemize}
		\item for all $g \in G$ and almost every $a \in A$, $\alpha (g, a) \in \iso(X_a, X_{ga})$; 
		\item for all $g,g' \in G$ and almost every $a \in A$, $\alpha (gg', a) = \alpha(g, g'a) \alpha(g', a)$; 
		\item for all element $x^n, x^m $ of the fundamental family associated to $\bold{X}$, the map $(g, a) \mapsto d_a(x^n_a, \alpha (g, g^{-1}a) x^m_{g^{-1}a})$ is measurable. 
	\end{itemize}
	A section $s : (A, \eta) \rightarrow \{X_a\}_{a \in A} $ is then said to be \emph{$\alpha$-invariant} if for all $g \in G$ and almost every $a \in A$, $s(ga) = \alpha (g, a) s(a)$. 
	
	\begin{eDef}\label{def subfield}
		Let $\bold{X} = \{X_a\}_{a \in A}$ be a metric field over $(A, \eta)$. We say that $\bold{X}$ is a \emph{$\cat$($\kappa$)-field} if for almost every $a$, $X_a$ is a $\cat(\kappa)$ space. Similarly, if for almost every $a$, $X_a$ is a Euclidean space, we say that $\bold{X}$ is a \emph{Euclidean field}. A \emph{subfield} of a $\cat$(0) field is a collection $\{Y_a\}_a $ of non-empty closed convex subsets such that for every section $x $ of $\bold{X}$, the function $a \mapsto d (x_a, Y_a)$ is measurable. A subfield is called \emph{invariant} if for all $g$, and $\eta$-almost every $a \in A$, 
		\begin{eqnarray}
			\alpha(g , g^{-1} a)Y_{g^{-1}a } = Y_a \nonumber. 
		\end{eqnarray}
	\end{eDef}
	
	Let $\bold{X}$ be a $\cat$(0)-field over $(A, \eta)$. There is a way to define the \emph{boundary field} $\partial \bold{X}$ of $\bold{X}$, but one must be careful with the measurable structure and the separability conditions. For instance, the boundary of the
	hyperbolic plane $\mathbb{H}^2$, endowed with the Tits metric, is an uncountable discrete space. However, it is possible if one considers the compactification by horofunctions, as explained in \cite[Section 9.2]{duchesne13}. 
	
	Let $G$ be a locally compact second countable group and $B$ an ergodic $G$-space. Let $\bold{X}$ be a measurable field of $\cat$(0) spaces of finite telescopic dimension endowed with a $G$-cocycle $\alpha : G \times B \rightarrow \iso(\bold{X})$. Fix a fundamental family $\{x^n\}$ for $\bold{X}$. For $b \in B$, let $C_b$ be the locally convex linear space of convex functions that vanish at $x^0_b$, and endow $C_b$ with the topology of pointwise convergence. Let $L_b$ be the subset of $C_b$ which consists of functions that are moreover 1-Lipschitz. A metric on $L_b$ is given by 
	\begin{eqnarray}
		D_b(f,g) = \sum \frac{|f(x^i_b) - g(x^i_b)|}{2^i d_b(x^i_b, x^0_b)} \nonumber. 
	\end{eqnarray}
	Let $\iota_b : X_b \to L_b $ be the map defined by 
	\begin{eqnarray}
		\iota_b (x) : y \in X_b \mapsto d_b (x, y ) - d(x, x^0_b) \nonumber,
	\end{eqnarray}
	which is indeed a 1-Lipschitz map on $X_b$. Define $K_b $ to be the closed convex hull of $\iota(X_b) $ in $L_b$. Then $\bold{K} := (K_b)_b$ is a measurable field of compact spaces over $B$, with a fundamental family given by $\{\iota_b(x^i_b)\}_i$. The metric field $\bold{K}$ corresponds to the horocompactification of the metric field $\bold{X}$. 
	\newline 
	
	Let $\bold{X}$ is a measurable field of proper metric spaces endowed with a $G$-cocycle $\alpha : G \times B \rightarrow \iso(\bold{X})$. Fix a fundamental family $\{x^n\}$ for $\bold{X}$. Then there exists a measurable structure on the collection $\prob(\bold{X}) := \{\prob(X_b)\}_b$, coming from a Borel structure on the set $C(\bold{X}):= \{C(X_b)\}_b$  of Banach spaces of continuous functions over $\bold{X}$. We refer to \cite[Theorem 2.19]{anderegg_henry14}
	for details. 
	\newline 
	
	The link between measurable metric fields and fiberwise isometric actions is given by the following Lemma. 
	\begin{elem}[{\cite[Lemma 4.11]{duchesne_lecureux_pozetti23}}]\label{lem metric field fiber action}
		Let $G$ be a countable discrete group and $\bold{X} = \{(X_a, d_a)\}_{a \in A}$ be a measurable metric field over a Lebesgue $G$-space $(A, \eta)$, and let us assume that $G$ acts on $\bold{X}$ with some cocycle $\alpha$. Then there is a $G$-invariant Borel subset $A_0 \subseteq A$ of full $\eta$-measure, a standard Borel structure on $\mathbb{X} := \bigsqcup_{a \in A_0} X_a$, and a Borel map $p : \mathbb{X} \rightarrow A_0$ such that $p$ admits a $G$-fiberwise isometric action. Moreover, the fiber $p^{-1}(a) $ is $X_a$ with the metric $d_a$. 
		
		If $s$ is an invariant section of $\bold{X}$, then $s$ corresponds canonically to a $G$-equivariant measurable map $A_0 \rightarrow \mathbb{X}$. 
	\end{elem}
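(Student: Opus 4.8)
The plan is to realise the disjoint union of the fibres concretely inside a standard Borel space by using the fundamental family, and then to transport the cocycle $\alpha$. Fix a fundamental family $\{x^n\}_{n \in \mathbb{N}}$ for $\bold{X}$. For $\eta$-almost every $a$, the map $X_a \to \mathbb{R}^{\mathbb{N}}$, $x \mapsto (d_a(x, x^n_a))_{n \in \mathbb{N}}$, is injective and continuous, since $\{x^n_a\}_n$ is dense in the complete space $X_a$. First I would set $\mathbb{X}$ to be the collection of pairs $(a, (t_n)_n) \in A \times \mathbb{R}^{\mathbb{N}}$ lying in the image of this map, with $p : \mathbb{X} \to A$ the first coordinate. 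The point is the following explicit description of the image: $(a, (t_n)_n) \in \mathbb{X}$ if and only if $|t_n - t_m| \le d_a(x^n_a, x^m_a) \le t_n + t_m$ for all $n, m$ and $\inf_n t_n = 0$. Indeed, given such a sequence, along any subsequence with $t_{n_k} \to 0$ the points $x^{n_k}_a$ form a Cauchy sequence, and its limit $x \in X_a$ satisfies $d_a(x, x^n_a) = t_n$ for every $n$ by continuity. Since $a \mapsto d_a(x^n_a, x^m_a)$ is measurable, this description is Borel jointly in $a$, so $\mathbb{X}$ is a Borel subset of the standard Borel space $A \times \mathbb{R}^{\mathbb{N}}$, hence itself standard Borel; and $p$ is Borel with $p^{-1}(a)$ canonically isometric to $(X_a, d_a)$.

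Next I would equip the fibred product $\mathbb{X} \times_p \mathbb{X}$ with the fibrewise metric. If $x \leftrightarrow (t_n)_n$ and $y \leftrightarrow (s_n)_n$ lie over the same $a$, then $d_a(x,y) = \inf_n (t_n + s_n)$: the triangle inequality gives ``$\le$'', and choosing $n_k$ with $x^{n_k}_a \to x$ forces ``$\ge$'' by continuity. This formula exhibits the fibrewise distance as a Borel function on $\mathbb{X} \times_p \mathbb{X}$, so $p$ carries a fibrewise metric in the sense of Section \ref{section bords ergodiques}. For the $G$-action, observe that if $x \in X_a$ has coordinates $(t_n)_n$, then $\alpha(g,a)x \in X_{ga}$ has $n$-th coordinate $d_{ga}(\alpha(g,a)x, x^n_{ga}) = \inf_m \big( t_m + d_{ga}(\alpha(g,a)x^m_a, x^n_{ga}) \big)$, by exactly the same argument. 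Hence for each fixed $g \in G$ the map sending $(a, (t_n)_n)$ to $(ga, (\tau_n)_n)$ with $\tau_n = \inf_m ( t_m + d_{ga}(\alpha(g,a)x^m_a, x^n_{ga}) )$ is Borel on $\mathbb{X}$, because the measurability axiom of the cocycle action makes $a \mapsto d_{ga}(\alpha(g,a)x^m_a, x^n_{ga})$ measurable. As $G$ is countable, these maps assemble into a Borel $G$-action on $\mathbb{X}$, fibrewise isometric by construction, and $p$ is equivariant.

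It remains to pass to a conull $G$-invariant set. I would let $A_0$ be the largest $G$-invariant Borel subset of $A$ contained in the countable intersection of the conull sets on which the fundamental family is dense in the fibre, the cocycle identity $\alpha(gg', \cdot) = \alpha(g, g'\cdot)\alpha(g', \cdot)$ holds, and each $\alpha(g, \cdot)$ is an isometry of the relevant fibres; this $A_0$ is conull since $\eta$ is quasi-invariant. Restricting $\mathbb{X}$, $p$ and the action to $A_0$ gives the statement. For the last assertion, an invariant section $s$ of $\bold{X}$ yields the map $a \mapsto (a, (d_a(x^n_a, s(a)))_n)$, which is Borel into $\mathbb{X}$ precisely because $a \mapsto d_a(x^n_a, s(a))$ is measurable for each $n$, and the invariance $s(ga) = \alpha(g,a)s(a)$ translates, via the formula for $\tau_n$, into $G$-equivariance of this map.

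The routine but genuinely delicate points are, first, checking that the image characterisation is both correct and Borel jointly in $a$ --- this is where completeness of the fibres and density of $\{x^n\}$ are essential --- and, second, verifying that the transported cocycle action is Borel, for which the $\inf_m$-formula is the crucial device, since it sidesteps any measurable selection of subsequences. Everything else is bookkeeping with conull $G$-invariant sets, and I expect no conceptual obstacle beyond these two points.
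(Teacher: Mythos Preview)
The paper does not supply its own proof of this lemma: it is quoted verbatim from \cite[Lemma 4.11]{duchesne_lecureux_pozetti23} and used as a black box, so there is nothing in the present text to compare your argument against line by line.

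That said, your approach is the standard one and is essentially what the cited reference does: embed each fibre into $\mathbb{R}^{\mathbb{N}}$ via the Kuratowski map $x\mapsto (d_a(x,x^n_a))_n$ determined by the fundamental family, characterise the image by the triangle-inequality constraints together with $\inf_n t_n=0$, and read off the Borel structure, the fibrewise metric, and the transported $G$-action from explicit countable formulas. Your verification of the image characterisation (completeness plus density), of the identity $d_a(x,y)=\inf_n(t_n+s_n)$, and of the formula $\tau_n=\inf_m\big(t_m+d_{ga}(\alpha(g,a)x^m_a,x^n_{ga})\big)$ for the pushed coordinates are all correct; the last one works because $\{\alpha(g,a)x^m_a\}_m$ is again dense in $X_{ga}$ and the measurability axiom on the cocycle, after the Borel change of variable $a\mapsto g^{-1}a$, gives exactly the measurability of $a\mapsto d_{ga}(\alpha(g,a)x^m_a,x^n_{ga})$. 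The final restriction to a conull $G$-invariant $A_0$ and the translation of invariant sections into equivariant maps are routine, as you say. I see no gap.
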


	\subsubsection{Adams-Ballmann equivariant theorem for amenable actions}
	
	Recall that a flat is a closed convex subspace which is isometric to some finite dimensional Euclidean space, and that a point is a flat of dimension $0$. In \cite[Theorem 1]{adams_ballmann98}, Adams and Ballmann prove a general dichotomy for amenable groups acting on Hadamard spaces. 
	
	\begin{ethm}[{Equivariant Adams-Ballmann Theorem}]
		Let $X$ be a locally compact complete Hadamard space. If $G$ is an amenable group of isometries of $X$, then at least one of the following proposition hold: 
		\begin{enumerate}
			\item[(i)] $G$ fixes a point in $\bd X$; 
			\item[(ii)] $X$ contains a $G$-invariant flat. 
		\end{enumerate}
	\end{ethm}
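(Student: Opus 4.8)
The plan is to deduce the equivariant Adams--Ballmann dichotomy from the measurable metric field version, by running the classical Adams--Ballmann argument fiberwise over a $G$-boundary and then descending. First I would fix a $G$-boundary $(B,\nu)$ (for instance the Poisson--Furstenberg boundary associated to some admissible measure, which is a $G$-boundary by Theorem~\ref{thm paire de bords}); since $G \curvearrowright B$ is amenable, Proposition~\ref{prop application de bord} furnishes a measurable $G$-equivariant map $\phi : B \rightarrow \prob(\overline{X}^h)$, where $\overline{X}^h$ is the horofunction compactification of $X$ (which is compact because $X$ is locally compact). The idea is that such a stationary family of measures on the horoboundary either concentrates, in a measurable way, on a canonical flat, or gives a measurable assignment of a fixed point at infinity; the amenability of the $B$-action replaces the amenability of $G$ used in the original proof.

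\textbf{Key steps.} The argument would proceed as follows. (1) Using $\phi$ and the barycenter/circumcenter technology for $\cat$(0) spaces together with the analysis of the horofunction boundary, one extracts from $\nu$-almost every $b \in B$ either a canonical nonempty closed convex subset of $X$ of minimal dimension (a ``flat part'') or a canonical boundary point, exactly as in \cite{adams_ballmann98}, but carried out in the measurable category; concretely this produces an invariant subfield $\{Y_b\}_{b \in B}$ of the constant $\cat$(0)-field $\bold{X} = \{X\}_{b\in B}$, with each $Y_b$ either a flat or reduced to a point of $\bd X$. (2) By relative metric ergodicity of the projections $B \times B \rightarrow B$ (part of the $G$-boundary structure), and since the space of flats of a given dimension in $X$, resp.\ the space $\bd X$ with its Tits metric, carries an $\iso(X)$-invariant metric on which $\iso(X)$ acts isometrically, the assignment $b \mapsto Y_b$ must be essentially constant. (3) A constant essentially invariant assignment is genuinely $G$-invariant: either we obtain a single $G$-invariant flat $Y \subseteq X$, giving alternative (ii), or a single $G$-fixed point $\xi \in \bd X$, giving alternative (i). Throughout, Lemma~\ref{lem metric field fiber action} and the measurability statements for $\prob(\bold{X})$ and for boundary fields of finite-telescopic-dimension $\cat$(0)-fields ensure that all these constructions stay in the measurable category.

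\textbf{Main obstacle.} The delicate point is Step~(1): making the Adams--Ballmann construction measurable and genuinely canonical fiberwise. In \cite{adams_ballmann98} one uses a $G$-invariant mean on $L^\infty(G)$ to average the displacement-type functions and produce a canonical minimal-displacement or asymptotic object; here one must instead use the stationary measures $\phi(b)$ on $\overline{X}^h$ and show that the resulting ``flat or point'' datum (i) depends measurably on $b$, (ii) is $\alpha$-equivariant for the natural cocycle, and (iii) is intrinsic, i.e.\ does not depend on auxiliary choices, so that ergodicity in Step~(2) can be applied. Handling the two regimes uniformly---the bounded-infimum horofunctions (which point to flats, via Lemma~\ref{lemma bdd subset}-type arguments) versus the unbounded ones (which point to $\bd X$)---and checking that the dimension of the flat part is minimized measurably, is where the real work lies; once that is in place, Steps~(2) and~(3) are formal consequences of relative metric ergodicity and of the fact that a $G$-equivariant essentially constant map into a set with trivial $G$-action is constant with value a $G$-fixed point.
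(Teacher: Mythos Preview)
The paper does not prove this statement: it is quoted verbatim from \cite[Theorem 1]{adams_ballmann98} as background, with no argument given. So there is no ``paper's own proof'' to compare against.

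That said, your proposal has a structural issue worth flagging. You are trying to deduce the classical Adams--Ballmann theorem from Duchesne's measurable version (Theorem~\ref{thm meas adams ballmann}) by passing through a nontrivial $G$-boundary and then using relative metric ergodicity to descend. But the hypothesis here is that $G$ \emph{itself} is amenable. In that case the one-point space $B=\{\ast\}$ is already an amenable ergodic $G$-space, and the constant field $\mathbf{X}=\{X\}$ over it is a metric field on which $G$ acts; applying Theorem~\ref{thm meas adams ballmann} directly to this trivial field yields the dichotomy with no need for Steps~(2)--(3), horofunction compactifications, or relative ergodicity. Your elaborate route through a nontrivial Poisson boundary and a ``descent by ergodicity'' argument is machinery designed for the case where $G$ is \emph{not} amenable but the boundary action is; importing it here is working against the grain of the hypothesis.

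There is also a hypothesis mismatch you glossed over: the classical theorem assumes $X$ locally compact, while Theorem~\ref{thm meas adams ballmann} assumes finite telescopic dimension. These are not the same condition, so even the one-line derivation above is not literally a proof of the statement as written; you would need to either check that locally compact Hadamard spaces have finite telescopic dimension (they need not in general) or go back to the original argument in \cite{adams_ballmann98}, which uses local compactness directly via compactness of $\overline{X}$ and an invariant mean on $G$.
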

	
	The following result is a (measurable) version of the equivariant Adams-Ballmann Theorem, but for amenable actions, and was proven by Duchesne.
	
	\begin{ethm}[{\cite[Theorem 1.8]{duchesne13}}]\label{thm meas adams ballmann}
		Let $G$ be a locally compact second countable group and $B$ an amenable ergodic $G$-space. Let $\bold{X}$ be a measurable metric field of Hadamard spaces of finite telescopic dimension over $B$. If $G$ acts on $\bold{X}$, then either there is an invariant section of the boundary field $\partial \bold{X}$, or there is an invariant Euclidean subfield of $\bold{X}$. 
	\end{ethm}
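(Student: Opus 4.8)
\textbf{Plan for the proof of Theorem \ref{thm meas adams ballmann}.} The strategy is to reduce the measurable statement to the classical equivariant Adams-Ballmann theorem, applied fiberwise, by means of the dictionary between measurable metric fields and fiberwise isometric actions (Lemma \ref{lem metric field fiber action}). First I would replace $B$ by a conull $G$-invariant Borel subset $B_0$ so that $\mathbb{X} := \bigsqcup_{b \in B_0} X_b$ carries a standard Borel structure, the projection $p : \mathbb{X} \to B_0$ is Borel, and $G$ acts on $\mathbb{X}$ by fiberwise isometries with fibers the Hadamard spaces $(X_b, d_b)$ of finite telescopic dimension. The point of working with the field rather than with a single space is that amenability of the action $G \curvearrowright B$ is exactly the ingredient that, via the existence of equivariant sections into affine $G$-fibered bundles, will let us produce the invariant sections we are after; without amenability one only has the abstract (non-measurable) Adams-Ballmann dichotomy on each fiber, with no way to glue the choices together.

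Next I would set up the relevant $G$-fibered affine bundles over $B$. Using the horocompactification field $\bold{K} = (K_b)_b$ described above (the fiberwise closed convex hull of $X_b$ inside the space $L_b$ of $1$-Lipschitz convex functions vanishing at the basepoint), one obtains a measurable field of compact convex spaces on which $G$ acts; then $\prob(\bold{K}) = (\prob(K_b))_b$ is a $G$-fibered affine bundle over $B$ in the sense of Section \ref{section action moyennable}. Amenability of $G \curvearrowright B$ gives a measurable $G$-equivariant section $\beta : B \to \prob(\bold{K})$, i.e. a measurable assignment $b \mapsto \beta_b \in \prob(K_b)$ with $\beta_{gb} = \alpha(g,b)_\ast \beta_b$. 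This is the measurable analogue of choosing an invariant probability measure on a horoclosure, which is precisely the input of the Adams-Ballmann argument. From $\beta$ one extracts, fiberwise, either a canonical boundary point (when the ``harmonic'' barycentric data of $\beta_b$ points to infinity) or a canonical invariant Euclidean flat inside $X_b$ (when it stays bounded), and the main task is to verify that the fiberwise construction of Adams-Ballmann depends measurably on $b$. Here I would lean on the measurability of the barycenter and circumcenter constructions for fields (as in the barycenter map $\beta_\varepsilon$ and the center of a bounded set, Proposition \ref{prop bruhat tits centre}), of the infimum map on $\Xhb$, and of the set $I(\cdot)$ of ``near-minimizers'' of a horofunction, all of which have measurable field versions; combining these yields either a measurable invariant section of $\partial \bold{X}$ or a measurable invariant subfield $\{Y_b\}_b$ of non-empty closed convex subsets.

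It then remains to upgrade the invariant convex subfield $\{Y_b\}_b$ to a \emph{Euclidean} invariant subfield. For this I would iterate the construction on the subfield $\{Y_b\}_b$ (which is again a field of Hadamard spaces of finite telescopic dimension, by convexity and monotonicity of telescopic dimension): either we eventually land on a field of flats, giving the Euclidean alternative, or at some stage we produce an invariant section of the boundary field, giving the first alternative. Finiteness of the telescopic dimension is what forces this descent to terminate after finitely many steps, exactly as in the non-measurable proof of Adams-Ballmann; passing the telescopic dimension bound along the nested subfields, and checking it behaves well under the measurable operations, is a routine but necessary verification. Finally, ergodicity of $B$ is used to ensure that the ``type'' of the outcome (boundary section versus Euclidean subfield, and in the Euclidean case the dimension of the flats) is essentially constant in $b$, so that the dichotomy is genuinely global and not a measurable mixture of the two cases.

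\textbf{Main obstacle.} The substantive difficulty is not the Hadamard-space geometry, which is handled fiberwise by Adams-Ballmann, but the bookkeeping of measurability: one must check that every step of their argument (forming the horoclosure, integrating against the invariant measure $\beta_b$ to get a canonical horofunction or a canonical convex core, taking circumcenters, restricting to the canonical subspace) can be performed in the category of measurable metric fields with the stated separability conditions, and that the resulting objects are genuinely Borel sections. The proof of \cite[Theorem 1.8]{duchesne13} is precisely the careful implementation of this program, and I would follow it, invoking the measurable-field versions of the barycenter, circumcenter, horofunction and $\cat(0)$-projection constructions established in \cite{duchesne13} and recalled in Section \ref{section bd theory immeuble}.
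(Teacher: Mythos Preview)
The paper does not prove this theorem: it is quoted verbatim from \cite[Theorem 1.8]{duchesne13} and used as a black box in Section~\ref{section bd map immeuble} (specifically in the proof of Lemma~\ref{disjonction}). There is therefore no proof in the paper to compare your proposal against.

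Your sketch is a reasonable outline of Duchesne's argument, and you yourself acknowledge at the end that you would be following \cite{duchesne13}. One point worth flagging: the classical Adams--Ballmann theorem \cite{adams_ballmann98} is stated for \emph{locally compact} Hadamard spaces, whereas the measurable version you are sketching must work for Hadamard spaces of finite telescopic dimension that need not be proper; Duchesne's contribution is not only the measurable bookkeeping but also the replacement of local compactness by the finite-telescopic-dimension hypothesis (via the work of Caprace--Lytchak \cite{caprace_lytchak10}). Your sketch gestures at this when you invoke the termination of the descent, but the actual mechanism in \cite{duchesne13} is more delicate than a straightforward fiberwise transcription of \cite{adams_ballmann98}. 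In any case, for the purposes of this paper the theorem is an imported result and no proof is expected here.
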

	
	In Section \ref{section bd map immeuble}, we will use this correspondence to study the properties of $G$-equivariant measurable maps $B \rightarrow \bdinf$.

	In the course of the proof, we will use the fact that taking the circumcenters in a subfield of bounded closed convex subsets gives a measurable map. 
	
	\begin{eprop}[{\cite[Lemma 3.2]{anderegg_henry14}}]\label{prop section circ borel}
		Let $\bold{X}$ be a metric field of proper $\cat$(0) spaces over a Borel space $(A, \eta)$, and let $\{B_a\}_{a \in A}$ be a Borel subfield of bounded subsets of $\bold{X}$. Then the circumradius function $a \in A \mapsto r_a(B_a)$ is Borel, and the section of circumcenters 
		\begin{eqnarray}
			\text{circ} : \bold{B} &\rightarrow& \bold{X} \nonumber \\
			B_a & \mapsto & \text{circ} (B_a), \nonumber
		\end{eqnarray}
		where $\text{circ} (B_a)$ is the circumcenter of $B_a$, is Borel. 
	\end{eprop}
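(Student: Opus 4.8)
The plan is to reduce the proposition to two ingredients that are already available: the measurable selection machinery attached to a metric field (all measurability being tested against the fixed fundamental family $\{x^n\}$ of $\mathbf{X}$), and the quantitative $\cat$(0) convexity estimates that underlie the existence/uniqueness statements of Propositions \ref{prop projection cat} and \ref{prop bruhat tits centre}. Concretely I would proceed in three steps: first produce a countable family of Borel sections that is fibrewise dense in $\{B_a\}$; then express the circumradius as a countable $\inf$ of countable $\sup$s of Borel functions; finally realise the circumcenter as the pointwise limit of a Borel sequence of approximate circumcenters drawn from the fundamental family. Throughout, "Borel section'' means (as in Definition \ref{def subfield}) that $a\mapsto d_a(z_a,\cdot)$ is measurable for every section $z$ of $\mathbf{X}$, and one uses the routine fact that the distance between two sections is measurable, via $d_a(s(a),t(a))=\sup_n|d_a(x^n_a,s(a))-d_a(x^n_a,t(a))|$.

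\emph{Step 1 (a dense Borel family inside the subfield).} For each $n$ set $s^n(a):=\pi_{B_a}(x^n_a)$, the projection of $x^n_a$ onto the closed convex set $B_a$, which is well defined for a.e.\ $a$ by Proposition \ref{prop projection cat}. Density of $\{s^n(a)\}_n$ in $B_a$ is immediate: if $y\in B_a$ and $x^n_a$ is $\varepsilon$-close to $y$, then $d_a(s^n(a),y)=d_a(\pi_{B_a}(x^n_a),\pi_{B_a}(y))\le d_a(x^n_a,y)\le\varepsilon$ since projections do not increase distances. The only real point is that $s^n$ is a Borel section. Here I would use that $a\mapsto d_a(x^n_a,B_a)$ and $a\mapsto d_a(x^m_a,B_a)$ are measurable by the definition of a subfield, and that $a\mapsto d_a(x^n_a,x^m_a)$ is measurable by the metric-field axioms: then the index $m_k(a)=\min\{m:\ d_a(x^n_a,x^m_a)\le d_a(x^n_a,B_a)+\tfrac1k\ \text{and}\ d_a(x^m_a,B_a)\le\tfrac1k\}$ is a well-defined Borel function of $a$, and $v_k(a):=x^{m_k(a)}_a$ is a Borel section (partition $A$ along the Borel sets $\{m_k=m\}$). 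The $\cat$(0) parallelogram inequality applied inside the convex set $B_a$ — exactly as in the proof of Proposition \ref{prop projection cat} — shows that the set of points of $B_a$ realising $d_a(x^n_a,B_a)$ up to an error $\varepsilon$ has diameter $O(\sqrt\varepsilon)$; comparing $v_k(a)$ with a nearest point of $B_a$ shows $(v_k(a))_k$ is Cauchy, hence (completeness of the fibres) $v_k(a)\to s^n(a)$, so $a\mapsto d_a(z_a,s^n(a))=\lim_k d_a(z_a,v_k(a))$ is Borel for every section $z$.

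\emph{Steps 2 and 3 (circumradius and circumcenter).} For a section $x$ of $\mathbf{X}$, density of $\{s^n(a)\}_n$ in $B_a$ and continuity of the metric give $\rho_a(x_a):=\sup_{y\in B_a}d_a(x_a,y)=\sup_n d_a(x_a,s^n(a))$, a countable supremum of Borel functions, hence Borel; moreover $\rho_a(\cdot)$ is $1$-Lipschitz on $X_a$ and finite since $B_a$ is bounded. As $\{x^n_a\}_n$ is dense in $X_a$, the circumradius is $r_a(B_a)=\inf_{x\in X_a}\rho_a(x)=\inf_n\rho_a(x^n_a)$, a countable infimum of Borel functions, hence Borel. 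For the circumcenter, let $n_k(a):=\min\{n:\ \rho_a(x^n_a)\le r_a(B_a)+\tfrac1k\}$, well defined by density and $1$-Lipschitz continuity of $\rho_a$ and Borel by the previous sentence, and put $w_k(a):=x^{n_k(a)}_a$, a Borel section. The $\cat$(0) parallelogram inequality used to prove uniqueness in Proposition \ref{prop bruhat tits centre} shows that $\rho_a(x),\rho_a(y)\le r_a(B_a)+\varepsilon$ forces $d_a(x,y)=O(\sqrt\varepsilon)$; hence $(w_k(a))_k$ is Cauchy, and by completeness of the fibres it converges to a point attaining $\rho_a(\cdot)=r_a(B_a)$, i.e.\ to $\text{circ}(B_a)$. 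Therefore $a\mapsto d_a(z_a,\text{circ}(B_a))=\lim_k d_a(z_a,w_k(a))$ is Borel for every section $z$, so $\text{circ}$ is a Borel section of $\mathbf{X}$.

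The main obstacle is Step 1: upgrading the very weak defining property of a Borel subfield (measurability of the distance-to-$\{B_a\}$ function against each fixed section) to an honest countable family of Borel sections that is fibrewise dense in $\{B_a\}$; this is essentially the assertion that the fibrewise nearest-point projection onto a Borel subfield is itself Borel. Once this is settled, Steps 2 and 3 are just two reuses of the same short template — select the first index of the fundamental family meeting a Borel near-minimality condition, then invoke the quantitative $\cat$(0) convexity estimate to force convergence — and the Borel-ness of both $r_a(B_a)$ and $\text{circ}(B_a)$ follows formally; note that only completeness of the fibres, not properness, is used in this argument.
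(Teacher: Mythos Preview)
The paper does not give its own proof of this proposition: it is quoted verbatim as \cite[Lemma 3.2]{anderegg_henry14} and used as a black box, so there is no argument in the paper to compare against. Your write-up is therefore an actual proof where the paper has none, and as far as I can see it is correct. The three-step scheme (produce a fibrewise dense countable family of Borel sections of the subfield via nearest-point projection of the fundamental family; write the circumradius as $\inf_n\sup_m$ of Borel functions; select approximate circumcenters from the fundamental family and use the $\cat(0)$ midpoint inequality to force Cauchy convergence) is the standard route and each step goes through. In Step~1 your Cauchy argument is fine once one notes, as you do, that the point $v_k(a)$ is only $1/k$-close to $B_a$, so one first passes to $\pi_{B_a}(v_k(a))\in B_a$ before applying the midpoint inequality; the constant in the $O(\sqrt\varepsilon)$ bound depends on $d_a(x^n_a,B_a)$, which is fixed for each $(n,a)$, so this is harmless. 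Your closing remark is also accurate: the argument only uses completeness of the fibres, not properness, so you in fact prove slightly more than the stated hypothesis requires.
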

	
	\section{Measurable structures and barycenter}\label{section meas struc A2}
	
	Let $X$ be a thick building of type $\tilde{A}_2$ and let $G$ be a locally compact second countable group acting continuously by isometries on $X$ without fixed points nor invariant flats. Up to taking a subgroup of index 2, we assume that $G$ preserves the type of the vertices at infinity. Let $\mu $ be an admissible symmetric measure on $G$, and let $(B, \nu)$ be the Poisson boundary associated to $(G, \mu)$. In this Section, we give an explicit construction of different tools that we will need in the proof of our main theorem. We will try to be careful with the measurable structures, so that the results of Section \ref{section bd theory immeuble} can apply. 
	
	\subsection{Measurable structures on the panel trees}
	Recall that for every $u  \in \bdinf_1$, $T_u $ is the panel tree associated to the vertex $u$. Endow $T_u$ with the separable metric $d_u$ given by Equation \ref{eq metric panel tree}. 
	
	\begin{elem}\label{lem t1 measurable structure}
		The collection of panel trees $T_1 = \bigsqcup_u T_u$ admits a standard Borel structure and the natural projection $p : T_1 \rightarrow \bdinf_1 $ admits a $G$-fiberwise isometric action. 
	\end{elem}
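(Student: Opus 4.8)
The plan is to construct the Borel structure on $T_1 = \bigsqcup_{u \in \bdinf_1} T_u$ by exhibiting a fundamental family, then invoke Lemma \ref{lem metric field fiber action} to pass from a measurable metric field to a standard Borel bundle with a fiberwise isometric action. First I would fix a basepoint $o \in X$ and recall that for every vertex at infinity $u \in \bdinf_1$, the panel tree $T_u$ is the tree of strong asymptote classes of geodesic rays based at $o$ pointing in directions that, together with $u$, span a sector. Concretely, each point of $T_u$ is represented by the germ $\germ_\infty(Q(o,C))$ of a sector $Q(o,C)$ with $u \in \overline{C}$; equivalently, by the class of the projection $\pi_u$ of a point of $X$. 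So the natural candidate for a countable fundamental family is $\{a \mapsto \pi_a(x_n)\}_{n}$ where $\{x_n\}$ is a fixed countable dense subset of the (separable) building $X$.

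The key steps, in order: (1) Show that $\bdinf_1$ is a standard Borel space — this follows because $\bdinf$ carries the compact totally disconnected topology described in Section \ref{section prelim A2}, and $\bdinf_1$ is a closed (clopen) subset by the type colouring, hence Polish. (2) Check that for each $u \in \bdinf_1$ the metric $d_u$ of Equation \eqref{eq metric panel tree} makes $(T_u, d_u)$ a complete separable metric space — completeness and separability of regular/thick trees is standard, and density of $\{\pi_u(x_n)\}_n$ in $T_u$ holds because every germ of sector based at $o$ is approximated by projections of points of $X$ lying inside that sector, and $\{x_n\}$ is dense in $X$. (3) Verify the measurability clause: for fixed $n,m$ the map $u \mapsto d_u(\pi_u(x_n), \pi_u(x_m))$ is Borel on $\bdinf_1$. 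This is where I would spend the most care: one writes $d_u(\pi_u(x_n),\pi_u(x_m)) = \inf_s \lim_{t\to\infty} d(r^n_u(t+s), r^m_u(t))$ where $r^n_u$ is the ray from $\pi$-data, and one must express this as a countable combination of Borel functions of $u$. A cleaner route is to use the local structure: $d_u(\pi_u(x_n),\pi_u(x_m))$ equals the length of the segment in the tree, which can be read off inside a single apartment containing $o$, $x_n$, $x_m$ and a subsector in the class of $u$; the existence of such an apartment and the continuous dependence of the relevant sign sequences / wall configurations on $u$ (using the open sets $\Omega_x(y)$) give Borel measurability. (4) Declare $\{x^n\} = \{u \mapsto \pi_u(x_n)\}$ a fundamental family; then $((\bdinf_1,\text{Borel}), \{T_u\}_u, \{x^n\})$ is a measurable metric field over $\bdinf_1$. (5) The action of $G$ on $X$ extends to $\bdinf_1$ continuously (Section \ref{section prelim A2}), and $g \in G$ sends $T_u$ isometrically onto $T_{gu}$ — this defines a cocycle $\alpha(g,u) : T_u \to T_{gu}$; its measurability in $(g,u)$ follows from continuity of the $G$-action on $X$ and on $\bdinf$ together with the equivariance $\pi_{gu}(gx) = g\cdot\pi_u(x)$. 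Finally, apply Lemma \ref{lem metric field fiber action}: there is a $G$-invariant conull (here: all of) $\bdinf_1$, a standard Borel structure on $T_1 = \bigsqcup_u T_u$, and a Borel projection $p : T_1 \to \bdinf_1$ admitting a $G$-fiberwise isometric action, with fibers $(T_u, d_u)$.

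The main obstacle is step (3): proving the joint Borel measurability of the fiberwise distances cleanly, i.e. that $u \mapsto d_u(\pi_u(x_n), \pi_u(x_m))$ is Borel. The difficulty is that the germ/strong-asymptote relation is defined via a limit over an unbounded parameter, so one cannot simply quote continuity; one has to localize the computation inside an apartment and control how that apartment — and the combinatorial data determining the tree distance — varies measurably with $u$. I expect this to go through by covering $\bdinf_1$ by the basic clopen sets $\Omega_x(y)$ and observing that on each such piece the relevant sector germs stabilize, so that $d_u(\pi_u(x_n),\pi_u(x_m))$ is locally constant on a countable Borel partition, hence Borel; but making this partition explicit is the technical heart of the argument.
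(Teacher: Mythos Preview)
Your proposal is correct and follows essentially the same architecture as the paper: build a fundamental family via the projections $\pi_u(x_n)$ of a countable subset of $X$, verify the metric-field axioms, check that the natural $G$-cocycle acts by fiberwise isometries, and invoke Lemma~\ref{lem metric field fiber action}. The paper uses the countable set of vertices of the simplicial building rather than an arbitrary dense subset, but this is immaterial.

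The only real divergence is your step~(3), which you correctly flag as the crux but then overcomplicate. You propose to localize inside apartments and argue that $u \mapsto d_u(\pi_u(x_n),\pi_u(x_m))$ is locally constant on a countable clopen partition of $\bdinf_1$; you note yourself that making this partition explicit is the ``technical heart.'' The paper avoids this entirely with a one-line observation: for fixed $y \in X$, the map $\xi \in \bd X \mapsto \gamma_y^\xi$ is continuous for the topology of uniform convergence on compact sets, so each function $(u,s,t) \mapsto d(\gamma_{x_n}^u(t+s),\gamma_{x_m}^u(t))$ is continuous in $u$; then $d_u(\pi_u(x_n),\pi_u(x_m)) = \inf_s \lim_{t\to\infty} d(\gamma_{x_n}^u(t+s),\gamma_{x_m}^u(t))$ is an infimum (over countable $s$) of pointwise limits of measurable functions, hence measurable. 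No apartment bookkeeping is needed. Your route would presumably work in the locally finite simplicial case, but it is unnecessary labor and the local-constancy claim would itself require justification.
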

	
	\begin{proof}
		The collection $\{(T_u, d_u)\}_{u \in \bdinf_1}$ is a family of complete, separable metric spaces. For the rest of the proof, identify each tree $T_u$ with its set of vertices. Let us construct a fundamental family for $\{(T_u, d_u)\}_{u}$ over $\bdinf_1$. Let $\{y_n\}_n$ be the (countable) set of vertices of the building $X$. For every $n $, and every $u \in \bdinf_1$, consider 
		\begin{eqnarray}
			x^n_u := \pi_u(y_n) = \germ_\infty (y + u), \nonumber
		\end{eqnarray}
		where $\germ_\infty (y + u)$ is defined as the class (for the strong asymptote relation) of the geodesic ray based at $y$ in the direction of $u$. Observe that for every $u $, $\{x^n_u\}_n$ is dense in $T_u$. Fix $n, m \in \mathbb{N}$. Then the function 
		\begin{eqnarray}
			u \in \bdinf_1 \mapsto d_u (x^n_u, x^m_u) \nonumber
		\end{eqnarray}
		is measurable. Indeed, for every $ y  \in X$, the application $\xi \in \bd X \mapsto \gamma_y^\xi$ is continuous for the topology of uniform convergence on compact sets, where $\gamma_y^\xi$ is the unique geodesic ray based at $y$ representing $\xi $. Recall that by definition, 
		\begin{eqnarray}
			d_u (x^n_u, x^m_u) = \underset{s}{\inf} \lim_{t \rightarrow \infty} d(\gamma_{y_n}^u (t+ s), \gamma_{y_m}^u(t)). \nonumber
		\end{eqnarray} 
		Since we consider the infimum of measurable functions, we have that for all $n, m $, $u \in \bdinf_1 \mapsto d_u (x^n_u, x^m_u)$ is then measurable. Therefore, the collection $\{x^n\}$ is a fundamental family for the metric field $\{T_u\}_u$ over $\bdinf_1$.
		
		Let us now prove that $G$ acts on the metric field $\{T_u\}_u$. If $u \in \bdinf_1 $ is a vertex at infinity and $\xi \in T_u$ is a class (for the strong asymptote relation) of geodesic rays parallel to $u$, the element $g \in G$ acts as a cocycle $\alpha $ with 
		\begin{eqnarray}
			\alpha (g, u) (u, \xi) = g \cdot(u, \xi) = (gu, g \xi) \nonumber, 
		\end{eqnarray}
		where $g\xi$ is the class (for the strong asymptote relation) of $g \gamma$, for any geodesic ray $\gamma $ in the class of $\xi$. It is clear for all $g \in G$, 
		\begin{eqnarray}
			\alpha (g, u) \in \iso(T_u, T_{gu}) \nonumber. 
		\end{eqnarray}
		Since the action of an element $g $ on each fiber $T_u$ does not depend on the vertex $u$, $\alpha  $ satisfies the cocycle relation: for all $g, g' \in G$, $u \in \bdinf_1$, $\xi \in T_u$, 
		\begin{eqnarray}
			g' \cdot (g \cdot(u, \xi)) = (g'g ) \cdot (u, \xi). \nonumber
		\end{eqnarray} 
		It remains to prove that $G$ acts on this metric field in a measurable way. 
		
		Take $n,  m \in \mathbb{N}$. Observe that for $g \in G$, $u \in \bdinf_1$, 
		\begin{eqnarray}
			g \cdot x^m_{g^{-1}u}&=& g \cdot \pi_{g^{-1}u}(y^m) \nonumber \\
			&=& g \cdot \germ_\infty (y^m + g^{-1}u) \nonumber \\
			&=& \germ_\infty (gy^m + u) \in T_u\nonumber.
		\end{eqnarray}
		As $G$ acts continuously on $X$ and preserves the type of the vertices at infinity, we obtain that 
		\begin{eqnarray}
			(g, u) \mapsto d_u (x^n_u , g \cdot x^m_{g^{-1}u}) \nonumber
		\end{eqnarray}
		is measurable.

		Now by Lemma \ref{lem metric field fiber action}, there is a standard Borel structure on $T_1 := \bigsqcup_{u \in \bdinf_1} T_u$, and a Borel map $p : T_1 \rightarrow \bdinf_1$ such that $p$ admits a $G$-fiberwise isometric action and such that $p^{-1}(u) = T_u$ for every $u \in \bdinf_1$. 
	\end{proof}
	
	Recall that we denote by $\dt_1 $ the union of all the boundaries of the panel trees $T_u, u \in \bdinf_1$. Namely, 
	\begin{equation*}
		\dt_1 = \{ (u, C) \, | \,  u \in \bdinf_1, C \in \partial T_u\}. \nonumber
	\end{equation*}
	
	For $(u ,\xi ) \in T_1$, that is, $u\in \bdinf_1$ and $\xi \in T_u$, we can define the following metric on $\dt_u$:
	\begin{eqnarray}
		d_{u, \xi}(C, D) := \exp(-(C |D)_\xi),\nonumber
	\end{eqnarray} 
	with $(C,D) \in \dt_u$, and where $(. \, | \,. )_\xi$ the Gromov product on the tree $T_u$ based at $\xi \in T_u$.
	
	The next Lemma states that the natural projection $T_1 \times \dt_1 \rightarrow T_1 $ admits a $G$-fiberwise isometric action for this metric.

	\begin{elem}\label{lem dt1 measurable structure}
		The collection $T_1 \times \dt_1 = \bigsqcup_{u \in \bdinf_1} T_u \times \dt_u$ admits a standard Borel structure and the natural projection $q : T_1 \times \dt_1 \rightarrow T_1 $ admits a $G$-fiberwise isometric action. 
	\end{elem}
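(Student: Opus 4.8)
The strategy is to repeat the argument of Lemma \ref{lem t1 measurable structure}, but now working fiberwise over $T_1$ instead of over $\bdinf_1$, using the metric $d_{u,\xi}$ on $\dt_u$ coming from the Gromov product based at $\xi \in T_u$. The first step is to produce a fundamental family for the collection $\{(T_u \times \dt_u, d_{u,\xi})\}$ over $T_1$. Since $T_1$ already carries a standard Borel structure from Lemma \ref{lem t1 measurable structure}, and since $T_u$ is a regular tree (of order $q+1$ when $X$ is locally finite of order $q$), the set of ends $\dt_u$ can be described combinatorially: fixing the dense countable family $\{x^n\}$ of vertices $x^n_u = \pi_u(y_n)$ of $T_u$ constructed in Lemma \ref{lem t1 measurable structure}, every end is a limit of a sequence of vertices, so one can enumerate a countable dense subset of $\dt_u$ as follows. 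For each pair $(n,m)$ with $x^n_u \neq x^m_u$, let $\rho^{n,m}_u$ be the end of the half-line issued from $x^n_u$ away from $x^m_u$ (equivalently, the direction at $x^n_u$ opposite to the one toward $x^m_u$); the countable family $\{(x^k, \rho^{n,m})\}_{k,n,m}$ will be fundamental for $T_1 \times \dt_1$ over $T_1$, provided we check the two required measurability-and-density conditions.

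\textbf{Measurability of the fiber metrics.} For the density condition: in a regular tree of fixed order, the ends $\rho^{n,m}_u$ obtained in this way are dense in $\dt_u$ for the metric $d_{u,\xi}$ as soon as $\{x^n_u\}$ is dense in $T_u$, which holds by Lemma \ref{lem t1 measurable structure}. For the measurability condition, one must show that for fixed indices the function
\[
(u,\xi) \in T_1 \longmapsto d_{u,\xi}\big((x^n_u,\rho^{a,b}_u),(x^m_u,\rho^{c,d}_u)\big) = \exp\big(-(\rho^{a,b}_u \mid \rho^{c,d}_u)_{\xi}\big)
\]
is Borel. The Gromov product of two ends of a tree with respect to a basepoint vertex $\xi$ is $+\infty$ if the ends coincide and otherwise equals $d_u(\xi, m_u)$, where $m_u$ is the median of $\xi$ and the two ends (the point where the bi-infinite geodesic between the two ends is closest to $\xi$). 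The location of this median, and hence the distance $d_u(\xi, m_u)$, is determined by finitely many of the distances $d_u(x^i_u, x^j_u)$ and $d_u(\xi, x^i_u)$, all of which are Borel in $(u,\xi)$ by Lemma \ref{lem t1 measurable structure} (and the map $(u,\xi)\mapsto \xi$ being measurable on $T_1$). Writing $d_{u,\xi}$ as a countable combination of such Borel quantities, with the convention $\exp(-\infty)=0$ handling the coinciding-ends case, shows the fiber metrics vary measurably.

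\textbf{The $G$-action and conclusion.} The action of $G$ on $T_1 \times \dt_1$ is the natural one: $g\cdot(u,\xi,C) = (gu, g\xi, gC)$, where $g$ acts on ends of $T_u$ by pushing forward geodesic rays. For fixed $g$ this is an isometry from the fiber over $(u,\xi)$ to the fiber over $(gu,g\xi)$ because $g$ acts isometrically on $T_u$ and hence preserves Gromov products; the cocycle relation is immediate since, as in Lemma \ref{lem t1 measurable structure}, the action of $g$ on a fiber does not depend on the base point. The remaining measurability requirement, that $(g,(u,\xi))\mapsto d_{u,\xi}\big((x^n_u,\rho^{a,b}_u), g\cdot(x^m_{g^{-1}(u,\xi)},\rho^{c,d}_{g^{-1}(u,\xi)})\big)$ be Borel, follows from continuity of the $G$-action on $X$ together with the combinatorial description of the ends $\rho^{a,b}$ in terms of vertices (so that $g$ maps such an end to another of the same kind, with indices tracked by the action on the $\{y_n\}$), exactly as in the proof of Lemma \ref{lem t1 measurable structure}. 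Applying Lemma \ref{lem metric field fiber action} to the metric field $\{(T_u\times\dt_u, d_{u,\xi})\}$ over the Lebesgue $G$-space $T_1$ then yields a standard Borel structure on $T_1\times\dt_1$ and a Borel map $q : T_1\times\dt_1 \to T_1$ admitting a $G$-fiberwise isometric action, which is the claim. The main obstacle is the bookkeeping in the measurability of $(u,\xi)\mapsto d_{u,\xi}$: one has to handle the discontinuity of the Gromov product where two ends merge, and check that the combinatorial ``median'' computation really is Borel in the base point $(u,\xi)$ of $T_1$ — everything else is a faithful transcription of Lemma \ref{lem t1 measurable structure}.
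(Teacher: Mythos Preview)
Your overall architecture is correct---build a metric field over $T_1$ with fiber $(\dt_u,d_{u,\xi})$, exhibit a fundamental family, check the cocycle conditions, then invoke Lemma~\ref{lem metric field fiber action}---but your construction of the fundamental family has a genuine gap. You define $\rho^{n,m}_u$ as ``the end of the half-line issued from $x^n_u$ away from $x^m_u$ (equivalently, the direction at $x^n_u$ opposite to the one toward $x^m_u$)''. This is not well-defined: the panel tree $T_u$ is thick (regular of order $q+1\geq 3$ when $X$ is locally finite and thick), so at a branch point $x^n_u$ there are at least two directions that avoid $x^m_u$, and no canonical ``opposite'' one. Thus your $\rho^{n,m}_u$ is not a single end, and the rest of the argument (density, measurability of Gromov products, tracking under the $G$-action) never gets off the ground. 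A related issue is that even if you picked some end for each pair, nothing guarantees that the resulting countable family is dense in $\dt_u$, nor that the choice varies measurably in $u$.

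The paper sidesteps this by exploiting the building structure rather than the bare tree: via Proposition~\ref{prop homeo panel tree ch} one has an equivariant identification $\dt_u\simeq\res(u)$, so a countable dense family in $\dt_u$ can be produced uniformly in $u$ by fixing a countable dense family $\{C_n\}$ in $\ch(\bdinf)$ and setting $C^n_u:=\proj_u(C_n)\in\res(u)$. The projection $\proj_u$ is continuous in both variables, which gives measurability of $(u,\xi)\mapsto d_{u,\xi}(C^n_u,C^m_u)$ for free; and the $G$-equivariance follows from the continuous $G$-action on $\ch(\bdinf)$. This is the key idea you are missing: there is no purely tree-internal way to enumerate ends of $T_u$ coherently across all $u$, but the identification with residues of chambers at infinity provides one.
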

	
	\begin{proof}
		Endow $T_1$ with the measurable structure given by Lemma \ref{lem t1 measurable structure}. Consider the family $\{T_u \times \dt_u\}_{u \in \bdinf_1}$, and call $q$  the projection on the first coordinate 
		\begin{eqnarray}
			q &:& \{T_u \times \dt_u\}_{u \in \bdinf_1} \longrightarrow T_1 \nonumber \\
			&&(u , \xi, C) \longmapsto (u, \xi) \in T_u. \nonumber
		\end{eqnarray}
		For every $u \in \bdinf_1 $, $\xi \in T_u$, the metric $d_{u, \xi}$ on the fiber $q^{-1}\{(u, \xi)\} = \{(u , \xi)\} \times \dt_u$ is complete and separable. For convenience, we denote the metric spaces $(q^{-1}\{(u, \xi)\}, d_{u, \xi})$ by $\dt_{u, \xi}$. In order to prove that this is a metric field over $T_1$, we need to construct a fundamental family. Consider a countable dense family $\{C_n\}$ of chambers in $\ch(\bdinf)$. For every $u \in \bdinf_1$, define 
		\begin{eqnarray}
			C^n_u := \proj_{u} (C_n) \in \res(u). \nonumber
		\end{eqnarray}
		The projection onto the vertex $u$ is continuous. Moreover, if one fixes $C \in \ch(\bdinf)$, the application $u \in \bdinf_1 \mapsto \proj_{u} (C)$ is continuous. By Proposition \ref{prop homeo panel tree ch}, $C^n_u \in \dt_u$. Consequently, for every $(u, \xi) \in T_1$, define 
		\begin{eqnarray}
			\mathcal{C}^n_{u, \xi} := (u, \xi, C^n_u) \in T_u \times \dt_u \nonumber. 
		\end{eqnarray}
		For all $(u, \xi) $, the collection $\{\mathcal{C}^n_{u, \xi}\}_n$ is dense in the fiber $\dt_{u, \xi}$. Fix $n , m \in \mathbb{N}$, and let $(u , \xi ) \in T_1$. We have that 
		\begin{eqnarray}
			d_{u, \xi}(\mathcal{C}^n_{u, \xi}, \mathcal{C}^m_{u, \xi}) &=& d_{u, \xi}(C^n_u, C^m_u)\nonumber \\
			& = & \exp (-(C^n_u |C^m_u)_\xi) \nonumber \\
			& = & \exp (-(\proj_u(C_n) |\proj_u(C^m))_\xi) \nonumber. 
		\end{eqnarray}
		By composition, the application 
		\begin{eqnarray}
			(u, \xi) \in T_1 \longmapsto d_{u, \xi}(\mathcal{C}^n_{u, \xi}, \mathcal{C}^m_{u, \xi}) \nonumber 
		\end{eqnarray}
		is measurable. The collection $\{\mathcal{C}^n_{u, \xi}\}_n$ is then a fundamental family for the metric field $\{T_u \times \dt_u\}_{u \in \bdinf_1}$ over $T_1 $. 
		
		By Lemma \ref{lem t1 measurable structure}, the projection $T_1 \rightarrow \bdinf_1$ admits a $G$-fiberwise isometric action. For $g \in G$, and $(u, \xi, C) \in \{T_u \times \dt_u\}$, 
		\begin{eqnarray}
			\alpha (g, (u, \xi)) (u, \xi, C) = g \cdot (u, \xi, C) = (gu, g\xi, gC), \nonumber
		\end{eqnarray}
		where $(gu, g\xi) $ was defined in the proof of Lemma \ref{lem t1 measurable structure}, and $gC \in \dt_{gu}$ is the boundary point of $\dt_{gu}$ that corresponds to the chamber $gC \in \res(gu)$ by Proposition \ref{prop homeo panel tree ch}. 
		With this action, for all $g \in G$, 
		\begin{eqnarray}
			\alpha (g, (u, \xi)) \in \iso(\dt_{u, \xi}, \dt_{g \cdot (u, \xi)}). \nonumber
		\end{eqnarray}
		Moreover, the projection $q$ is $G$-equivariant. Last, for every $n, m \in \mathbb{N}$, the measurability of
		\begin{eqnarray}
			(u, \xi) \in T_1 \longmapsto d_{u, \xi}(\mathcal{C}^n_{u, \xi}, g \cdot \mathcal{C}^m_{g^{-1}\cdot(u, \xi)}) \nonumber 
		\end{eqnarray}
		comes from the continuity of the $G$-action on the chambers $\ch(\bdinf)$. This proves that $G$ acts on the metric field $\{T_u \times \dt_u\}_{u \in \bdinf_1}$ with a natural cocycle. By Lemma \ref{lem metric field fiber action}, there is a standard Borel structure on $T_1 \times \dt_1 = \bigsqcup_u T_u \times \dt_u$ such that the projection $T_1 \times \dt_1 \rightarrow T_1 $ is Borel and admits a $G$-fiberwise isometric action. 
	\end{proof}
	
	\subsection{Barycenter map}
	
	Consider the set $(\dt_1)^2 $ defined by 
	\begin{equation*}
		(\dt_1)^2 = \{ (u , C_1, C_2), u \in \bdinf_1, C_1, C_2 \in \partial T_u\}. 
	\end{equation*}
	Denote by $\mathcal{D}(\dt_1)$ the diagonal: 
	\begin{equation*}
		\mathcal{D}(\dt_1) = \{ (u , C_1, C_2), u \in \bdinf_1, C_1 = C_2 \in \partial T_u\} \subseteq (\dt_1)^2.  
	\end{equation*} 
	Define the application:
	\begin{eqnarray}
		\tau &:& (\dt_1)^2 \to (\dt_1)^2  \nonumber \\
		& & (u , C_1, C_2) \in (\dt_u)^2  \mapsto (u , C_2, C_1) \in (\dt_u)^2 \nonumber. 
	\end{eqnarray}
	Let $(\partial T_1)^{(2)}$ be the measurable set $(\dt_1)^2 -\mathcal{D}(\dt_1)$, and define the space $\partial_2 T_1$ as the set $(\partial T_1)^{(2)}/ \tau $ of unordered pair of distinct points of $\dt_1$. Similarly, we define by $\partial_n T_1$ as the space of unordered $n$-tuples of points of $\dt_1$ that are pairwise distinct. Endow $\partial_2 T_1$ and $\partial_n T_1$ with the product measurable structure coming from $\dt_1$. 
	\newline
	
	In the course of the proof, we will need the existence of a well-defined barycenter map on the set $\partial_3 T_1$. 
	
	\begin{eprop}\label{prop bary panel trees}
		For all $n \geq 3$, there exists a measurable $G$-equivariant map $\partial_n T_1 \to T_1$. 
	\end{eprop}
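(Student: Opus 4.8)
The plan is to construct the map fiberwise over $\bdinf_1$ and then assemble it using the measurable field formalism. On a single panel tree $T_u$, an unordered $n$-tuple of pairwise distinct ends $(\xi_1,\dots,\xi_n)\in\partial_n T_u$ spans a canonical \emph{bounded} subset of $T_u$, namely the finite set of medians $m(\xi_i,\xi_j,\xi_k)$ of its triples (for three distinct ends of a tree, the three bi-infinite geodesics $(\xi_i,\xi_j)$ meet in a unique point, which depends only on the unordered triple); the value of the desired map at $(u,\xi_1,\dots,\xi_n)$ will be the circumcenter of this finite set, which exists and is unique by Proposition \ref{prop bruhat tits centre}. Since this construction is purely metric and the $G$-action on the field $\{T_u\}$ is fiberwise isometric, $G$-equivariance will be automatic; the real work is Borel measurability, for which I would invoke Proposition \ref{prop section circ borel}.

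First I would set up the field. As $X$ is locally finite, each panel tree $T_u$ is a regular tree of finite valence $q+1$, hence a proper $\cat$(0) space, so $\{(T_u,d_u)\}_{u\in\bdinf_1}$ is a measurable field of proper $\cat$(0) spaces over $\bdinf_1$ with fiberwise isometric $G$-action and fundamental family $\{x^n\}$ produced in Lemma \ref{lem t1 measurable structure}. Pulling this field back along the projection $\pi:\partial_n T_1\to\bdinf_1$ yields a metric field $\mathbf{T}$ of proper $\cat$(0) spaces over the standard Borel space $\partial_n T_1$, with fiber $T_u$ over each point $(u,\xi_1,\dots,\xi_n)$ and a fundamental family obtained by pullback of $\{x^n\}$; the fiberwise isometric $G$-action pulls back too. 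One small point to verify here is that $\partial_n T_1$, equipped with the product Borel structure coming from $\dt_1$, is genuinely standard Borel, which follows from the constructions of Lemmas \ref{lem t1 measurable structure} and \ref{lem dt1 measurable structure}.

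Second, I would treat $n=3$: the median map $\partial_3 T_1\to T_1$, $(u,\xi_1,\xi_2,\xi_3)\mapsto m(\xi_1,\xi_2,\xi_3)\in T_u$. This is a Borel section of $\mathbf{T}$ because $m(\xi_1,\xi_2,\xi_3)$ is determined by the finitely many Gromov products $(\xi_i|\xi_j)_{x^0_u}$ of the ends based at the fundamental point $x^0_u$ (these determine which of the three rays from $x^0_u$ diverge first and at what distance), and the Gromov products on the tree $T_u$ vary measurably in $u$, as already used in Lemma \ref{lem dt1 measurable structure} where $d_{u,\xi}(C,D)=\exp(-(C|D)_\xi)$ was shown to be measurable in $u$. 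Equivariance is immediate since a tree isometry carries medians to medians. For general $n\ge3$, put $B_a:=\{\,m(\xi_i,\xi_j,\xi_k)\;:\;1\le i<j<k\le n\,\}\subseteq T_u$ for $a=(u,\xi_1,\dots,\xi_n)$; this is finite, hence bounded, and $\{B_a\}_{a\in\partial_n T_1}$ is a Borel subfield of bounded subsets of $\mathbf{T}$ since $d(x_a,B_a)=\min_{i<j<k}d(x_a,m(\xi_i,\xi_j,\xi_k))$ is a minimum of finitely many Borel sections by the $n=3$ case. Applying Proposition \ref{prop section circ borel} to this subfield gives a Borel section of circumcenters $c:\partial_n T_1\to T_1$; because the $G$-action on $\mathbf{T}$ is fiberwise isometric and both "set of medians of triples" and "circumcenter of a bounded set" commute with isometries, $c$ is $G$-equivariant. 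By Lemma \ref{lem metric field fiber action}, $c$ is exactly a measurable $G$-equivariant map $\partial_n T_1\to T_1$, as asserted.

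I expect the main obstacle to be the measurability bookkeeping rather than any geometry: one has to make sure the pulled-back field $\mathbf{T}$ over $\partial_n T_1$ really satisfies the hypotheses of Proposition \ref{prop section circ borel} (standardness of the base, correct pullback of the fundamental family, properness of the fibers), and that the median map is Borel \emph{as a section} — the cleanest route being to reduce it, as above, to the measurability of the tree-Gromov-products already established. Once that is in place, the circumcenter step and the equivariance are formal.
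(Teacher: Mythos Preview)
Your proof is correct and takes a genuinely different route from the paper's. The paper constructs the fiberwise barycenter via the Burger--Mozes function $F_S(x)=\sum_{C\neq C'\in S}(C|C')_x$ on $T_u$, which is proper and convex (this is what the citation to \cite{burger_mozes96} provides), and then takes the circumcenter of its minimizing set; measurability is obtained, as in your argument, from Proposition \ref{prop section circ borel}. You instead work directly with tree geometry: for $n=3$ you take the tripod center (the unique common point of the three bi-infinite geodesics), and for $n\geq 3$ you take the circumcenter of the finite set of medians of all triples. Both constructions are $\iso(T_u)$-equivariant and reduce to the same circumcenter-of-a-bounded-subfield step. For $n=3$ the two maps actually coincide, since the median is the unique zero of $F_S$; for larger $n$ they may differ, but either serves the purpose. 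Your approach is more elementary in that it avoids invoking properness and convexity of $F_S$, while the paper's approach is uniform in $n$ (one function, one minimizer set) and sits more naturally in the Burger--Mozes framework used later in Proposition \ref{prop bary proba 3 T1}. Your measurability argument for the median via Gromov products is sound, and indeed the paper relies on the same measurability of $(u,\xi)\mapsto (C|D)_\xi$ established in Lemma \ref{lem dt1 measurable structure}; the pullback of the field to $\partial_n T_1$ and the verification that $\{B_a\}$ is a Borel subfield of bounded sets are routine, as you note.
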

	\begin{proof}
		Let $u \in \bdinf_1$. Let $S\subseteq \partial_n T_u$ be a set of $n\geq 3$ distinct points in $\partial T_u$. Consider the function 
		\begin{eqnarray}
			F_S : x \in T_u \mapsto \sum_{C\neq C' \in S} (C| C')_x \nonumber. 
		\end{eqnarray}
		It is standard to check that the function $F_S$ is proper and convex, see for example \cite[Section 2]{burger_mozes96}. Therefore its minimum $\min(F_S) = \{ x \in T_u \; | \; F_S(x) = \inf F_S\}$ is non-empty, bounded and closed. Since $T_u$ is a tree, we can take the circumcenter of this set, which gives a continuous map
		\begin{eqnarray}
			\bary_u : \partial_n T_u \to T_u. \nonumber
		\end{eqnarray}
		We define
		\begin{eqnarray}
			\bary &:& \partial_n T_1 \to T_1 \nonumber \\
			& & (u, C_1, \dots, C_n) \in\partial_n T_u \mapsto \bary_u(u, C_1, \dots, C_n) \in T_u \nonumber. 
		\end{eqnarray}
		Let $G$ act on $\partial_n T_1$ with the natural diagonal action. Then the map $\bary$ is $G$-equivariant: 
		\begin{eqnarray}
			\bary (g \cdot (u, C_1, \dots, C_n)) &=& \bary_{gu} (g \cdot (u, C_1, \dots, C_n)) \nonumber \\
			& =& g \cdot \bary (u, C_1, \dots, C_n) \nonumber. 
		\end{eqnarray}
		Moreover, $\bary$ is measurable for the measurable structures defined by Lemma \ref{lem t1 measurable structure} and \ref{lem dt1 measurable structure}. Indeed, if we take a measurable section $s : \bdinf_1 \to \partial_n T_1 $, then the map 
		\begin{eqnarray}
			u \in \bdinf_1 \mapsto \bary_u (s(u)) \nonumber 
		\end{eqnarray}
		is the composition of two maps. First, the collection $\{\min(F_{s(u)})\}_{u \in \bdinf_1}$ is a Borel subfield of non-empty closed convex bounded subsets of the metric field $T_1$, in the sense of Definition \ref{def subfield}. Indeed, for any other section $s'$, 
		\begin{eqnarray}
			u \in \bdinf_1 \mapsto d_u (s'(u), \min(F_{s(u)})) \nonumber
		\end{eqnarray}
		is measurable by the definition of $u \mapsto F_{s(u)}$. 
		Next, by Proposition \ref{prop section circ borel}, taking the section of circumcenters gives a Borel map 
		\begin{eqnarray}
			\{\min(F_{s(u)})\}_{u \in \bdinf_1} \rightarrow T_u \nonumber. 
		\end{eqnarray}
		This ends the proof. 
	\end{proof}
	
	Let $Y$ be a proper $\cat(-1) $ space, and let $\prob_3( \bdg Y)$ be the space of positive measures on $\bdg Y$ whose support contains at least three points. In \cite[Proposition 2.1]{burger_mozes96}, Burger and Mozes prove that there exists a $\iso(Y)$-equivariant map 
	\begin{eqnarray}
		\bary : \prob_3 (\bdg Y) \to Y
	\end{eqnarray}
	whose restriction to any $\iso (Y)$-orbit is continuous. We shall now prove that there exists a measurable version of this fact, for the metric field $T_1$.  
	\newline 
	
	Let $u \in \bdinf_1$ be a vertex at infinity of type 1 and let $\prob_3( \partial T_u)$ be the space of positive measures on $\partial T_u$ whose supports contain at least three points.
	\begin{elem}\label{lem bary proba tu}
		There exists a measurable map $\varphi_u : \prob_3( \partial T_u) \rightarrow \prob(T_u)$. 
	\end{elem}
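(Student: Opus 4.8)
The plan is to construct $\varphi_u$ from the classical Burger--Mozes barycenter map, but applied fiberwise over $\bdinf_1$ and made measurable using the metric field framework of Lemma \ref{lem t1 measurable structure} and Lemma \ref{lem dt1 measurable structure}. Fix $u \in \bdinf_1$. Since $X$ is locally finite, the panel tree $(T_u, d_u)$ is a regular tree of finite order, hence a proper $\cat(-1)$ space (after rescaling, or simply a $0$-hyperbolic geodesic space), and its visual boundary is exactly $\partial T_u$. By \cite[Proposition 2.1]{burger_mozes96}, there is an $\iso(T_u)$-equivariant map $\bary_u : \prob_3(\partial T_u) \to T_u$ whose restriction to each $\iso(T_u)$-orbit is continuous. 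First I would set $\varphi_u(\lambda) := \delta_{\bary_u(\lambda)}$ for $\lambda \in \prob_3(\partial T_u)$, which takes values in $\prob(T_u)$; the only thing to check is measurability, and the content of the lemma is really that this can be done coherently in $u$ so that it assembles into a measurable map on the metric fields.

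The key steps, in order: (1) Recall from Lemma \ref{lem dt1 measurable structure} that $T_1 \times \dt_1$ carries a standard Borel structure compatible with the metrics $d_{u,\xi}$, so that $\{\partial T_u\}_{u \in \bdinf_1}$ is a measurable field of compact metric spaces; consequently $\{\prob(\partial T_u)\}_{u\in\bdinf_1}$ is a measurable field of compact metrizable spaces with the weak-$\ast$ topology, and the subset $\{\prob_3(\partial T_u)\}_u$ of measures with at least three points in their support is a measurable subfield. (2) Over a fixed base point one uses that $F_S(x) = \sum_{C\ne C' \in \supp(\lambda)} (C|C')_x$ generalizes to $F_\lambda(x) = \int\!\int (C|C')_x \, d\lambda(C)d\lambda(C')$, a proper convex function on $T_u$ whose minset is non-empty, bounded and closed, exactly as in the proof of Proposition \ref{prop bary panel trees}. (3) For a measurable section $\lambda : \bdinf_1 \to \prob_3(\partial T_1)$, the collection $\{\min(F_{\lambda(u)})\}_{u \in \bdinf_1}$ is a Borel subfield of non-empty closed bounded convex subsets of the metric field $T_1$: one checks that for any section $s'$ of $T_1$, the function $u \mapsto d_u(s'(u), \min(F_{\lambda(u)}))$ is measurable, which follows from the fact that $F_{\lambda(u)}(x^n_u)$ depends measurably on $u$ for each element $x^n$ of the fundamental family (using that the Gromov products $(C^i_u | C^j_u)_{x^n_u}$ are measurable in $u$ and integrating against $\lambda(u)$). (4) Apply Proposition \ref{prop section circ borel} to take the section of circumcenters, giving a Borel map $u \mapsto \mathrm{circ}(\min(F_{\lambda(u)})) \in T_u$, i.e. a measurable section $\bary_u(\lambda(u))$; then $\varphi_u(\lambda(u)) := \delta_{\bary_u(\lambda(u))}$ is measurable into the field $\{\prob(T_u)\}$ using the Borel structure on $\prob(T_1)$ coming from \cite[Theorem 2.19]{anderegg_henry14}. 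Assembling over all measurable sections yields the measurable map $\varphi_u : \prob_3(\partial T_u) \to \prob(T_u)$.

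The main obstacle I expect is the bookkeeping around the measurable structures: one must make sure that the ``doubly integrated'' function $F_\lambda$ varies measurably with the measure $\lambda$ (not merely with a finite support $S$ as in Proposition \ref{prop bary panel trees}), which requires knowing that $\prob_3(\partial T_1) = \bigsqcup_u \prob_3(\partial T_u)$ is itself a well-behaved measurable field and that integration against a measurably-varying probability measure preserves measurability — this is routine Fubini-type reasoning but needs the measurable field of compact spaces $\{\partial T_u\}$ from Lemma \ref{lem dt1 measurable structure} to be in place, together with the fact that a countable dense family of chambers $\{C_n\}$ projects to fundamental families $\{C^n_u\}$ in each $\dt_u$. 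Everything else — properness and convexity of $F_\lambda$, non-emptiness of the minset, Borel-ness of circumcenters — is either classical ($\cat(-1)$ geometry of trees, Burger--Mozes) or already established in the excerpt (Proposition \ref{prop section circ borel}, Proposition \ref{prop bary panel trees}). I would not belabor the $G$-equivariance here since the lemma as stated is only about a fixed vertex $u$; equivariance of the assembled map $\bigsqcup_u \prob_3(\partial T_u) \to \bigsqcup_u \prob(T_u)$ would follow from the $\iso$-equivariance of each $\bary_u$ exactly as in Proposition \ref{prop bary panel trees}.
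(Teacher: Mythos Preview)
Your approach differs from the paper's, and as written it has a genuine gap. The paper does \emph{not} try to produce a single barycenter point for $\nu\in\prob_3(\partial T_u)$. Instead it forms $\tilde\nu=\nu\otimes\nu\otimes\nu$, restricts (and renormalizes) to the set of pairwise distinct triples, and pushes forward through the already-constructed $3$-point barycenter $\bary_u:\partial_3 T_u\to T_u$ of Proposition~\ref{prop bary panel trees}, obtaining a genuine (typically non-Dirac) measure on $T_u$. A short case split (non-atomic, purely atomic, mixed) ensures that the off-diagonal part of $\tilde\nu$ has positive mass so the renormalization makes sense; each step is a composition of manifestly measurable operations on probability measures.

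The issue with your route is the function $F_\lambda(x)=\int\!\!\int (C|C')_x\,d\lambda(C)\,d\lambda(C')$. If $\lambda$ has any atom, the diagonal contributes $(C|C)_x=+\infty$ with positive mass, so $F_\lambda\equiv+\infty$ and there is no minset. Even after excluding the diagonal, the integral need not be finite for every non-atomic $\lambda$: since $(C|C')_x\to\infty$ as $C'\to C$, one can build non-atomic probability measures on $\partial T_u$ concentrated near a boundary point so that this ``energy'' diverges; then $F_\lambda(x)=+\infty$ for every $x$ (the difference $|(C|C')_x-(C|C')_o|\le d(x,o)$ shows finiteness is independent of $x$). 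So ``proper and convex, exactly as in the proof of Proposition~\ref{prop bary panel trees}'' does not go through for arbitrary $\lambda\in\prob_3(\partial T_u)$, and your step~(3) never gets off the ground. The Burger--Mozes barycenter you cite does exist, but its construction is not via this double Gromov-product integral, so invoking it for existence while using $F_\lambda$ for measurability leaves the measurability unproved. Finally, most of your proposal is devoted to field-level coherence in $u$, which is the content of the \emph{next} lemma (Lemma~\ref{lem bary proba t1}); the present lemma is for a fixed $u$, and the paper's push-forward construction handles it without any of that bookkeeping.
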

	
	\begin{proof}
		Let $\nu \in \prob_3( \partial T_u)$. For the moment, assume that $\nu$ is non-atomic. Then consider the probability measure $\tilde{\nu} := \nu \otimes \nu \otimes \nu $ on $(\partial T_u)^3$. Let $\mathcal{D}(\partial T_u)^3 $ be the space of triples $(u, \xi_1, \xi_2, \xi_3) \in (\partial T_u)^3$ such that at least two entries coincide. The set $\mathcal{D}(\partial T_u)^3 $ is measurable. Since $\nu$ is non-atomic, $\tilde{\nu}$ gives zero measure to $\mathcal{D}(\partial T_u)^3 $ and we can see $\tilde{\nu}$ as a measure on $(\partial T_1)^{(3)} $. Passing to the quotient, we can see $\tilde{\nu}$ as a measure on $\partial_3 T_1 $, which we do without changing the notation. Now, thanks to the barycenter map defined in Proposition \ref{prop bary panel trees}, $\varphi_u (\nu) := (\bary_u)_\ast \tilde{\nu}$ is a probability measure on $T_u $.

		Assume now that $\nu$ is purely atomic, that is, for every $\xi \in \supp(\nu)$, $\nu(\xi) > 0$. Since the support of $\nu$ is of cardinal at least $3$, $\nu$ has at least 3 atoms $\xi_1, \xi_2, \xi_3$. Again, let $\tilde{\nu} := \nu \otimes \nu \otimes \nu $ be the product probability measure on $(\partial T_u)^3$. By definition, $\tilde{\nu} (\xi_1, \xi_2, \xi_3) > 0$ and in particular $\tilde{\nu}((\partial T_1)^{(3)} ) >0 $. Recall that taking the restriction of a measure to an open subset is a measurable operation. We can then consider
		\begin{eqnarray}
			\bar{\nu} = \frac{1}{\tilde{\nu}((\partial T_1)^{(3)} )} \tilde{\nu}_{|(\partial T_1)^{(3)} }, \nonumber
		\end{eqnarray} 
		which is a probability measure on the set of triples of pairwise distinct points in $\dt_u$. Again, we can see $\bar{\nu}$ as a probability measure on $\partial_3 T_1$, and applying the barycenter map $\bary_u$ from Proposition \ref{prop bary panel trees}, one obtains a probability measure $\varphi_u (\nu)$ on $T_u$.

		If now $\nu \in \prob_3( \partial T_u)$ is any probability measure, we can decompose $\nu$ as a sum $\nu = \nu_c + \nu_a$, where $\nu_c$ is non-atomic and $\nu_a$ is purely atomic. By the previous case, we can assume that the non-atomic part is nonzero $\nu_c > 0$. Therefore, the probability measure $\nu_c / \nu_c (\partial T_u)$ is non-atomic, and by the first case we can measurably associate a probability measure on $T_u$.
	\end{proof}
	We denote by $\prob_3(\partial T_1)$ the subfield of $\prob(\dt_1)$ of measurable sections $u \in \bdinf_1\mapsto \nu_u \in \prob_3(\partial T_u)$ for the measurable structure on $\partial T_1$. 
	\begin{elem}\label{lem bary proba t1}
		There exists a measurable and $G$-equivariant map $\varphi : \prob_3( \partial T_1) \rightarrow \prob(T_1)$.
	\end{elem}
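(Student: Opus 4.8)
The plan is to assemble the map $\varphi$ fibrewise from the maps $\varphi_u : \prob_3(\partial T_u) \to \prob(T_u)$ constructed in Lemma~\ref{lem bary proba tu}, and then to check measurability and $G$-equivariance using the Borel structures set up in Lemmas~\ref{lem t1 measurable structure} and~\ref{lem dt1 measurable structure}. Concretely, for a measurable section $u \in \bdinf_1 \mapsto \nu_u \in \prob_3(\partial T_u)$, I would define $\varphi(u, \nu_u) := (u, \varphi_u(\nu_u)) \in \prob(T_u)$, so that $\varphi$ is a map of the field $\prob_3(\partial T_1)$ into the field $\prob(T_1)$ over $\bdinf_1$. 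The content of the lemma is that this fibrewise assignment is a genuine morphism of measurable fields (i.e.\ sends measurable sections to measurable sections) and intertwines the two $G$-cocycles.

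First I would verify measurability. Recall from the proof of Lemma~\ref{lem bary proba tu} that $\varphi_u$ is built in three stages: decompose $\nu_u = \nu_{u,c} + \nu_{u,a}$ into its non-atomic and purely atomic parts; form the triple product $\tilde\nu_u = \nu_u^{\otimes 3}$ (suitably normalised and restricted to the set of pairwise-distinct triples, viewed inside $\partial_3 T_1$); then push forward by the barycenter map $\bary_u$ of Proposition~\ref{prop bary panel trees}. Each of these operations is measurable \emph{in the base variable} $u$: the Lebesgue decomposition of a measurably-varying family of measures is measurable; the map $\nu \mapsto \nu^{\otimes 3}$ is measurable into $\prob((\partial T_1)^3)$ by the standard description of the Borel structure on spaces of probability measures via $\int f\, d\nu$ for $f$ in a countable dense family (here using the fundamental family $\{\mathcal{C}^n_{u,\xi}\}_n$ of Lemma~\ref{lem dt1 measurable structure}); restriction to the open (hence measurable) subfield $(\partial T_1)^{(3)}$ and renormalisation are measurable; and pushforward along the Borel map $\bary : \partial_3 T_1 \to T_1$ is measurable by Proposition~\ref{prop bary panel trees} together with the fact that $h_\ast$ is a measurable operation on probability measures whenever $h$ is Borel. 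Composing these stages gives that $u \mapsto \varphi_u(\nu_u)$ is a measurable section of $\prob(T_1)$, i.e.\ $\varphi$ is measurable.

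Next I would check $G$-equivariance. The group $G$ acts on $\prob_3(\partial T_1)$ by the pushforward of the fibrewise isometric cocycle on $\partial T_1$ described in Lemma~\ref{lem dt1 measurable structure}, and on $\prob(T_1)$ by the pushforward of the cocycle on $T_1$ from Lemma~\ref{lem t1 measurable structure}. Since for $g \in G$ the fibre isometry $\alpha(g,u) : \partial T_u \to \partial T_{gu}$ respects the Lebesgue decomposition, commutes with the triple product, and — crucially — satisfies $\bary_{gu} \circ (\alpha(g,u))^{\otimes 3} = \alpha(g,u) \circ \bary_u$ by the $G$-equivariance of $\bary$ proved in Proposition~\ref{prop bary panel trees}, one gets $\varphi_{gu}(g_\ast \nu_u) = g_\ast \varphi_u(\nu_u)$ for almost every $u$ and every $g$. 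Thus $\varphi$ is $G$-equivariant.

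The main obstacle, as in the preceding lemmas, is bookkeeping rather than a genuine mathematical difficulty: one has to make sure that every step of the three-case construction of $\varphi_u$ (non-atomic, purely atomic, mixed) really is measurable in $u$, with no hidden non-Borel choice — in particular the Lebesgue decomposition step, where one should invoke the measurability of the atomic part of a measurably-varying family (for instance via $\nu_{u,a} = \sum_{\xi} \nu_u(\{\xi\})\,\delta_\xi$ over the countable set of atoms, which can be enumerated measurably since the fibres $\partial T_u$ are Polish). Once the measurability of $\varphi_u$ in $u$ is granted, packaging it into a morphism of measurable fields and verifying equivariance is routine given Lemmas~\ref{lem t1 measurable structure}, \ref{lem dt1 measurable structure} and Proposition~\ref{prop bary panel trees}.
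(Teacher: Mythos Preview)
Your proposal is correct and follows essentially the same approach as the paper: assemble $\varphi$ fibrewise from the maps $\varphi_u$ of Lemma~\ref{lem bary proba tu}, then verify measurability and $G$-equivariance by tracing through each step of that construction. The paper's proof is considerably terser---it simply asserts that every operation in the construction of $\varphi_u$ was $G$-equivariant and that measurability follows from Proposition~\ref{prop bary panel trees} and the operations in Lemma~\ref{lem bary proba tu}---whereas you spell out the individual steps (Lebesgue decomposition, triple product, restriction, pushforward by $\bary$) and flag the one place where a genuine check is needed, namely the measurability in $u$ of the atomic/non-atomic decomposition.
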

	
	\begin{proof}
		On every fiber $u \in \bdinf_1$, the application $\varphi_u : \prob_3( \partial T_u) \rightarrow \prob(T_u)$ is measurable. Moreover, every operation that we did in the proof of Lemma \ref{lem bary proba tu} was $G$-equivariant, i.e. if $\nu = \{\nu_u\}_{u \in \bdinf_1} \in \prob_3(\dt_1)$, then for every $u \in \bdinf_1$, 
		\begin{eqnarray}
			g \cdot (\varphi_u (\nu_u)) = \varphi_{gu} (\nu_{gu}). \nonumber
		\end{eqnarray}
		Measurability of this map comes from Proposition \ref{prop bary panel trees} and the operations in the proof of Lemma \ref{lem bary proba tu}. This proves the measurability and $G$-equivariance of the application. 
	\end{proof}
	
	For self-containment, we now recall the classical construction of a barycenter map for the set of measures on a tree. Let $\varepsilon < \frac{1}{2}$, $\nu \in \prob(T_u)$ and $\xi \in T_u $. For any $R >0$, define 
	\begin{eqnarray}
		F^\nu_\xi (R) := \nu(B(\xi, R)) \nonumber, 
	\end{eqnarray}
	where $B(\xi, R)$ is the ball of radius $R$ and center $\xi$ in $T_u$ for the metric $d_u$ given by equation \eqref{eq metric panel tree}.
	It is clear that $F^\nu_\xi (R) \to 1$ as $R \to \infty $, hence for all $\xi \in T_u $, there exists a well-defined and finite 
	\begin{eqnarray}
		R^\nu_{\xi, \varepsilon} := \inf \{ R \; | \; F^\nu_\xi (R) > 1 - \varepsilon\} \nonumber. 
	\end{eqnarray}
	Also, denote by $R^\nu_{\varepsilon} := \inf_{\xi \in T_u} R^\nu_{\xi, \varepsilon}$. We can then define the measurable application 
	\begin{eqnarray}
		\tilde{\beta}^u_\varepsilon (\nu):= \overline{\{\xi \in T_u \; | \; R^\nu_{\xi, \varepsilon} < R^\nu_{\varepsilon} +1\}} \nonumber. 
	\end{eqnarray}
	Check that $\tilde{\beta}^u_\varepsilon (\nu) $ is a closed bounded set on $T_u$. Indeed, if $\xi_1, \xi_2 \in \tilde{\beta}^u_\varepsilon (\nu) $, then by definition 
	\begin{eqnarray}
		\nu(\xi \in T_u \; | \; B(\xi_i, R^\nu_{\varepsilon} +1)) > 1/2 \text{ for $i = 1, 2$}\nonumber,
	\end{eqnarray}
	and then $B(\xi_1, R^\nu_{\varepsilon} +1)) \cap B(\xi_2, R^\nu_{\varepsilon} +1)) \neq \emptyset$. Now since trees are $\cat$(0) spaces, Proposition \ref{prop section circ borel} states that we can measurably associate to each bounded set its circumcenter. We then have a measurable application 
	\begin{eqnarray}
		\beta^u_\varepsilon : \nu \in \prob(T_u) \mapsto \text{circ} (\tilde{\beta}^u_\varepsilon (\nu) ) \in T_u. \nonumber
	\end{eqnarray}
	From the definition, $\beta_\varepsilon^u$ is $\iso(T_u)$-equivariant. 
	
	Considering the metric field $\prob(T_1)$ over $\bdinf_1$, what we have obtained is that the map 
	\begin{eqnarray}
		\beta_\varepsilon : \prob(T_1) &\to& T_1 \nonumber \\
		\{\nu_u\}_{u \in \bdinf_1} &\mapsto & \{\beta^u_\varepsilon (\nu_u)\}_{u \in \bdinf_1} \nonumber
	\end{eqnarray}
	is well-defined, measurable, and $G$-equivariant for the natural $G$-action on $\prob(T_1)$. Composing $\beta_\varepsilon $ and $\varphi $ from Lemma \ref{lem bary proba t1}, we have obtained the following result. 
	
	\begin{eprop}\label{prop bary proba 3 T1}
		There exists a measurable and $G$-equivariant map $\prob_3( \partial T_1) \rightarrow T_1$.
	\end{eprop}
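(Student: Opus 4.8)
The plan is to obtain the desired map as the composition of the two maps already built in this section, exactly as announced. First I would invoke Lemma~\ref{lem bary proba t1}, which supplies a measurable, $G$-equivariant map $\varphi : \prob_3(\dt_1) \to \prob(T_1)$: fiberwise it sends a measure $\nu_u \in \prob_3(\dt_u)$ to the probability measure $\varphi_u(\nu_u)$ on $T_u$ obtained by pushing forward a suitable normalization of the product $\nu_u \otimes \nu_u \otimes \nu_u$ (seen, after discarding the diagonal, as a measure on the triples of pairwise distinct ends) under the fiberwise barycenter map $\bary_u : \partial_3 T_u \to T_u$ of Proposition~\ref{prop bary panel trees}. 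Second, I would fix some $\varepsilon < \frac{1}{2}$ and use the map $\beta_\varepsilon : \prob(T_1) \to T_1$ constructed just above, which fiberwise assigns to a measure $\nu_u \in \prob(T_u)$ the circumcenter of the closed bounded set $\tilde\beta^u_\varepsilon(\nu_u)$ of approximate barycenters; this map is measurable and $G$-equivariant, the equivariance coming from the $\iso(T_u)$-equivariance of each $\beta^u_\varepsilon$ together with the natural $G$-action on the field $\prob(T_1)$.

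Then $\beta_\varepsilon \circ \varphi : \prob_3(\dt_1) \to T_1$ is the map we want, and the only things left to check are bookkeeping. For measurability of the composition: precomposing a measurable section $s : \bdinf_1 \to \prob_3(\dt_1)$ with $\varphi$ yields a measurable section of $\prob(T_1)$ by Lemma~\ref{lem bary proba t1}, and composing further with $\beta_\varepsilon$ preserves measurability by the construction of $\beta_\varepsilon$ and Proposition~\ref{prop section circ borel}. For equivariance: given $\nu = \{\nu_u\}_u$ and $g \in G$, at the fiber level $g$ carries $\varphi_u(\nu_u) \in \prob(T_u)$ to $\varphi_{gu}(\nu_{gu}) \in \prob(T_{gu})$ and $\beta^u_\varepsilon$ intertwines with $\beta^{gu}_\varepsilon$, so $\beta_\varepsilon(\varphi(g\nu)) = \beta_\varepsilon(g\,\varphi(\nu)) = g\,\beta_\varepsilon(\varphi(\nu))$; that this fiberwise $\iso$-equivariance (which a priori sees no topology on the base $\bdinf_1$) assembles into genuine global $G$-equivariance is the content of Lemma~\ref{lem metric field fiber action} applied to the fields $T_1$ and $\prob(T_1)$.

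I do not expect a real obstacle at this stage, since the substance has already been extracted in the preceding results. Lemma~\ref{lem bary proba tu} handles the case split between the non-atomic and purely atomic parts of a measure in $\prob_3(\dt_u)$, which is precisely what makes $\varphi$ total on all of $\prob_3(\dt_1)$ rather than merely on non-atomic measures; Proposition~\ref{prop bary panel trees} provides the well-defined fiberwise barycenter, via properness and convexity of the functions $F_S$ on the tree $T_u$ and the measurable selection of circumcenters from Proposition~\ref{prop section circ borel}; and the fiberwise construction of $\tilde\beta^u_\varepsilon$ is elementary on trees. If anything deserves attention, it is simply ensuring that at every stage one works with the standard Borel structures on $\dt_1$, $\partial_3 T_1$, $T_1$ and $\prob(T_1)$ furnished by Lemmas~\ref{lem t1 measurable structure} and~\ref{lem dt1 measurable structure} and by \cite[Theorem~2.19]{anderegg_henry14}, so that each of $\varphi$ and $\beta_\varepsilon$ is genuinely a morphism of measurable metric fields over $\bdinf_1$ and the composition is legitimate.
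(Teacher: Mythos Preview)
Your proposal is correct and follows exactly the approach of the paper: the proposition is obtained by composing $\varphi$ from Lemma~\ref{lem bary proba t1} with the barycenter map $\beta_\varepsilon$ constructed just before the statement. The paper in fact treats the proposition as an immediate consequence of these two constructions and gives no further argument, so your additional bookkeeping on measurability and equivariance is more detailed than what the paper itself provides.
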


	We define $T_2$ as the collection of panel trees over the vertices at infinity $\bdinf_2$ of type 2, and we proceed similarly for $\dt_2$ and $\partial_i T_2$. All the results in this section apply for these metric fields in the same way.

	\section{Boundary maps in $\tilde{A}_2$-buildings}\label{section bd map immeuble}
	
	Let $X$ be a thick building of type $\tilde{A}_2$ and let $G$ be a locally compact second countable group acting continuously by isometries on $X$ without fixed points nor invariant flats. For the rest of this chapter, let $\mu $ be a symmetric measure on $G$, which is \emph{spread-out}, i.e. there is $n$ such that $\mu^n $ and $\haar_G$ are non-singular, and such that there is no proper closed subgroup of full $\mu$-measure. We shall say that a measure satisfying such conditions is \emph{admissible}. Let $(B, \nu)$ be the Poisson boundary associated to $(G, \mu)$. By Theorem \ref{furst map}, there exists a measurable $G$-map $\psi : B \rightarrow \bd X$. 
	
	By Proposition \ref{prop isom autom immeuble}, and since the building at infinity $\bdinf$ is thick, any isometry of $X$ preserves the Weyl chambers. In particular the action of $G $ on $X$ induces an action of $G$ on the chambers at infinity $\ch(\bdinf)$. Up to taking a subgroup of index 2, we assume that $G$ preserves the types of the vertices at infinity, so that $G$ acts on $\bdinf_1$ and on $\bdinf_2$. Moreover, the partition of the visual boundary $\bd X $ between regular points (that is, classes of regular geodesic rays in the sense of Definition \ref{def regular geod immeuble}), and singular points (that is, classes of singular geodesic rays) is measurable because $\bdinf_1$ and $\bdinf_2$ are closed in $\bd X$. In other words, we have the measurable decomposition $\bd X =  \bdinf_1 \sqcup \bdinf_2 \sqcup (\bd X - \bdinf_1 \cup \bdinf_2 )$, and the set of regular points $\bd X - \bdinf_1 \cup \bdinf_2 $ projects equivariantly onto $\ch(\bdinf)$. 
	\newline 
	
	We showed in Section \ref{section mesure stat intro} that in order to study stationary measures on $\ch(\bdinf)$, one needs to characterize the $G$-maps $B \rightarrow \prob(\ch(\bdinf))$. The purpose of this section is to study such boundary maps. More specifically, we will prove that there exists a unique boundary map $B \to \ch(\bdinf)$. First, we prove that such a map exists. 
	
	\begin{ethm}\label{thm bd map immeuble}
		Let $X$ be a thick building of type $\tilde{A}_2$ and let $G$ be a locally compact second countable group acting continuously by isometries on $X$ without invariant flats nor fixed point at infinity. Assume that $G$ preserves the types of the vertices of the building at infinity $\bdinf$. Let $\mu $ be an admissible symmetric measure on $G$, and let $(B, \nu)$ be the Poisson boundary associated to $(G, \mu)$. Then there exists a measurable map $B \rightarrow \ch (\bdinf)$ which is $G$-equivariant. 
	\end{ethm}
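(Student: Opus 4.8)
The plan is to begin from the Furstenberg map provided by Theorem~\ref{furst map}. As $X$ is (the geometric realization of) a two-dimensional building it has finite telescopic dimension, and $G$ acts without invariant flats, so that theorem yields a measurable $G$-equivariant map $\psi : B \to \bd X$. The visual boundary decomposes $G$-equivariantly and measurably as $\bd X = \big(\bd X \setminus (\bdinf_1 \cup \bdinf_2)\big) \sqcup \bdinf_1 \sqcup \bdinf_2$, the first piece being the set of regular points; since $G \curvearrowright B$ is ergodic, the three $G$-invariant measurable preimages $\psi^{-1}(\cdot)$ partition $B$ and exactly one has full measure. If $\psi$ essentially takes values in the regular locus, then composing with the $G$-equivariant projection $\bd X \setminus (\bdinf_1 \cup \bdinf_2) \to \ch(\bdinf)$ already produces the required map and we are done. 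Since the two types of panels play symmetric roles, it remains to treat the case $\psi = \psi_1 : B \to \bdinf_1$.

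Next I would record two non-degeneracy facts. First, $\psi_1$ is non-constant: otherwise $G$ fixes a vertex of $\bdinf \subseteq \bd X$, contradicting non-elementarity. Second, $\overline{\mathrm{essim}\,\psi_1}$ is not contained in a single line of the projective plane $\bdinf$; indeed, if it were contained in a line $\ell$, then $G$-equivariance of $\psi_1$ together with the fact that two distinct points of $\bdinf$ span a unique line forces $g\ell = \ell$ for all $g \in G$, again a $G$-fixed point at infinity. Finally, using that $\mu$ is symmetric and admissible, double ergodicity of the Poisson boundary (Theorem~\ref{thm double ergod}) shows that $\psi_1 \times \psi_1 : B \times B \to \bdinf_1 \times \bdinf_1$ essentially misses the diagonal, so $\psi_1(b) \neq \psi_1(b')$ for $(\nu \times \nu)$-almost every $(b,b')$.

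The core of the argument is to lift $\psi_1$ to an \emph{end} of the corresponding panel tree, i.e.\ to produce a measurable $G$-equivariant map $B \to \dt_1$ with $B \to \dt_1 \to \bdinf_1$ equal to $\psi_1$; through the equivariant homeomorphism $\dt_1 \cong \res(\bdinf_1)$ of Proposition~\ref{bijection ends chambers} this is the same as a $G$-map $B \to \ch(\bdinf)$. To obtain it I would pull back the measurable metric field of panel trees $\{T_u\}_{u \in \bdinf_1}$ (Lemma~\ref{lem t1 measurable structure}) along $\psi_1$ to a $G$-field of trees over $B$. Trees are Hadamard spaces of telescopic dimension one and $G \curvearrowright B$ is amenable and ergodic, so the measurable Adams--Ballmann theorem (Theorem~\ref{thm meas adams ballmann}) applies: either there is a $G$-invariant section of the associated boundary field --- which is precisely the sought map into $\dt_1$ --- or there is a $G$-invariant Euclidean subfield, that is, a $G$-invariant section $b \mapsto x_b$ choosing a point of the tree $T_{\psi_1(b)}$ (a germ at infinity of a ray of $X$ asymptotic to $\psi_1(b)$), or a $G$-invariant section $b \mapsto \ell_b$ choosing a geodesic line of $T_{\psi_1(b)}$.

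The main obstacle is to rule out these two degenerate alternatives. A $G$-equivariant choice of point $x_b \in T_{\psi_1(b)}$ amounts to a $G$-equivariant germ at infinity asymptotic to $\psi_1(b)$, while a $G$-invariant line field yields a $G$-equivariant unordered pair of chambers $\{C^1_b, C^2_b\} \in \res(\psi_1(b))$, hence an unordered pair of lines of $\bdinf$ through the point $\psi_1(b)$. In both cases the strategy is to feed this datum, together with $\psi_1$ and the non-degeneracy statements above, into an argument on $B \times B$ (and, if needed, $B \times B \times B$): using $\psi_1(b) \neq \psi_1(b')$, the incidence geometry of the projective plane $\bdinf$, the circumcenter of a finite configuration in the $\cat(1)$ Tits boundary (Proposition~\ref{circumcenter}), and --- whenever a probability measure on a panel tree arises --- the measurable barycenter maps of Propositions~\ref{prop bary panel trees} and~\ref{prop bary proba 3 T1}, one produces a $G$-invariant flat, a global $G$-fixed point of $X$, or a $G$-fixed point of $\bd X$, each contradicting non-elementarity of the action. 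What makes this step delicate is that the objects coming out of the amenability machinery live over the moving base $\bdinf_1$ and must be compared across distinct fibers through the ambient building $X$; arranging these comparisons so that they return genuinely $G$-invariant geometric data, rather than data depending on the extra boundary variables, is the technical heart of the proof. Once both Euclidean alternatives are excluded, the boundary section exists and the theorem follows.
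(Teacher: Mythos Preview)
Your outline matches the paper's architecture: start from the Furstenberg map of Theorem~\ref{furst map}, split by ergodicity on regular versus singular values, and in the singular case apply the measurable Adams--Ballmann theorem to the pulled-back field of panel trees to obtain the trichotomy $B\to\partial T_1$, $B\to T_1$, or $B\to\partial_2 T_1$. Where the proposal stops being a proof is in ruling out the two Euclidean alternatives; you describe the toolbox correctly but not the mechanism, and one key tool is missing.

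The paper's device is the following. For two distinct type-$1$ vertices $u\neq v$ in $\bdinf$, the projection $\proj_u(v)$ is a \emph{chamber} in $\res(u)$, hence an end of $T_u$. From a hypothetical $G$-map $\psi:B\to T_1$ (with underlying $\psi_1:B\to\bdinf_1$) one forms
\[
\Phi:B\times B\longrightarrow T_1\times\partial T_1,\qquad (b,b')\longmapsto\bigl(\psi(b),\ \proj_{\psi_1(b)}(\psi_1(b'))\bigr),
\]
sitting over $\psi:B\to T_1$ via the projection $q:T_1\times\partial T_1\to T_1$ of Lemma~\ref{lem dt1 measurable structure}. Since $q$ carries a fiberwise isometric $G$-action, \emph{relative isometric ergodicity} of $B\times B\to B$ produces a section $s:B\to T_1\times\partial T_1$ with $\Phi=s\circ\pi_B$. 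Thus $\proj_{\psi_1(b)}(\psi_1(b'))$ is independent of $b'$; its type-$2$ vertex $u(b)$ then has $\psi_1(b')\in\res(u(b))$ for almost every $b'$, a set of circumradius at most $\pi/2$, and Proposition~\ref{circumcenter} yields the forbidden fixed point. The line case $B\to\partial_2 T_1$ is handled by the same projection: either $\proj_{\psi_1(b)}(\psi_1(b'))$ falls outside the given pair of ends, and the barycenter on $\partial_3 T_{\psi_1(b)}$ reduces to the previous case, or it lies in the pair and the circumcenter argument applies directly.

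The gap in your write-up is that you invoke only double ergodicity (Theorem~\ref{thm double ergod}). That is not enough here: the target of $\Phi$ is a field over the moving base $T_1$, not a fixed separable metric space, so there is no $G$-invariant scalar function on $B\times B$ to which plain ergodicity applies. It is precisely the relative notion, together with the fiberwise metric on $T_1\times\partial T_1$ built in Lemma~\ref{lem dt1 measurable structure}, that lets you conclude the second coordinate depends on $b$ alone. Your instinct to work on $B\times B$ is right, and the ``incidence geometry'' you mention is exactly the projection $\proj_u(v)$ above; but without naming relative isometric ergodicity and this specific construction, the technical heart remains unfilled. (The detour through $B\times B\times B$ and the line-in-the-projective-plane observation are not needed.)
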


	\subsection{A trichotomy for the boundary map}
	Thanks to Theorem \ref{thm furst map}, there exists a Furstenberg map $\psi : B \to \bd X$. The purpose of this section is to investigate the characteristics of this map, using the nice ergodic properties of $B$ and the geometric structure of $\bd X$. If almost surely, $\psi(b)$ is in a chamber of the building at infinity, we get a $G$-map $B \to \ch (\bdinf)$, which is what we want. This leads us to study the case where the target space is the set of vertices at infinity. 
	
	\begin{elem}\label{disjonction}
		Consider the same assumptions as in Theorem \ref{thm bd map immeuble}, and assume that there exists a $G$-map $B \rightarrow \bdinf_1$. Then there is either a $G$-map $B \rightarrow T_1$, or a $G$-map $B \rightarrow \dt_1$, or a $G$-map $B \rightarrow \partial_2 T_1$. 
	\end{elem}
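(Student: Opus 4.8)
The plan is to pull back the field of panel trees along the given map and invoke the measurable Adams--Ballmann theorem. Let $\phi \colon B \to \bdinf_1$ be the assumed measurable $G$-map. By Lemma~\ref{lem t1 measurable structure}, $T_1 = \bigsqcup_{u \in \bdinf_1} T_u$ carries a standard Borel structure for which the projection $p \colon T_1 \to \bdinf_1$ is a measurable field of trees equipped with a fiberwise isometric $G$-action, given by a cocycle $\alpha \colon G \times \bdinf_1 \to \iso(T_1)$. Define $\mathcal T := \{T_{\phi(b)}\}_{b \in B}$; since $\phi$ is $G$-equivariant, $\tilde\alpha(g,b) := \alpha(g,\phi(b))$ satisfies the cocycle identity over $B$, so $\mathcal T$ is a measurable metric field of Hadamard spaces of telescopic dimension $1$ over $B$ on which $G$ acts (a fundamental family for $\mathcal T$ is obtained by composing with $\phi$ the one for $T_1$, and Lemma~\ref{lem metric field fiber action} gives the associated fiberwise isometric Borel map). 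As $(B,\nu)$ is a $G$-boundary, the action $G \curvearrowright B$ is amenable and metrically ergodic, hence ergodic, so Theorem~\ref{thm meas adams ballmann} applies: either there is a $\tilde\alpha$-invariant section of the boundary field $\partial\mathcal T$, or there is a $\tilde\alpha$-invariant Euclidean subfield $\mathcal E = \{E_b\}_{b \in B}$ of $\mathcal T$ in the sense of Definition~\ref{def subfield}.

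In the first case, an invariant section $s$ of $\partial \mathcal T$ assigns to $\nu$-a.e.\ $b$ an end $s(b) \in \partial T_{\phi(b)}$, and $b \mapsto (\phi(b), s(b))$ is a measurable $G$-map $B \to \dt_1$ by the construction of the Borel structure on $\dt_1$ in Lemma~\ref{lem dt1 measurable structure}. In the second case, for a.e.\ $b$ the set $E_b$ is a non-empty closed convex subset of the tree $T_{\phi(b)}$ isometric to a Euclidean space; since a tree contains no isometrically embedded flat plane, $E_b$ is either a single point or a bi-infinite geodesic line. The sets $B_0 = \{b : E_b \text{ is a point}\}$ and $B_1 = \{b : E_b \text{ is a line}\}$ are $G$-invariant, and they are measurable: by Proposition~\ref{prop section circ borel} (applied after a standard truncation) the circumradius $b \mapsto r_b(E_b) \in [0,\infty]$ is Borel, and $B_0 = \{r_b(E_b) = 0\}$. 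By ergodicity of $G \curvearrowright B$, exactly one of $B_0, B_1$ has full measure. If $\nu(B_0) = 1$, then $b \mapsto E_b$ is an invariant section of $\mathcal T$, which descends via $\phi$ to a measurable $G$-map $B \to T_1$. If $\nu(B_1) = 1$, then $E_b$ has two distinct endpoints in $\partial T_{\phi(b)}$, and $b \mapsto \big(\phi(b), \{\text{endpoints of } E_b\}\big)$ is a measurable $G$-map $B \to \partial_2 T_1$, using again Lemma~\ref{lem dt1 measurable structure} and the product Borel structure on $\partial_2 T_1$. This exhausts the three possibilities.

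The step requiring the most care is the measurable bookkeeping: checking that the pullback $\mathcal T$ is genuinely a measurable metric field over $B$ to which Theorem~\ref{thm meas adams ballmann} applies (with $B$ as the amenable ergodic base), that the circumradius and hence the dimension splitting $B = B_0 \sqcup B_1$ are Borel, and that each of the three geometric conclusions of the dichotomy --- an invariant end, an invariant point, an invariant line in the fibers of $\mathcal T$ --- transports into a measurable $G$-equivariant map valued in $\dt_1$, $T_1$, and $\partial_2 T_1$ respectively, compatibly with the Borel structures constructed in Lemmas~\ref{lem t1 measurable structure} and~\ref{lem dt1 measurable structure}. Everything else is a direct application of the measurable Adams--Ballmann theorem together with the classification of Euclidean subsets of a tree.
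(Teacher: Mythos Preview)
Your proof is correct and follows essentially the same approach as the paper: pull back the panel-tree field along $\phi$ to a metric field over $B$, apply the measurable Adams--Ballmann theorem (Theorem~\ref{thm meas adams ballmann}) using amenability and ergodicity of $B$, and then translate the resulting dichotomy (boundary section versus Euclidean subfield of dimension $0$ or $1$) into the three target spaces. Your version is in fact slightly more explicit than the paper's about the measurable bookkeeping (the pullback cocycle, the dimension splitting, and the passage to $\partial_2 T_1$), but the structure is identical.
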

	\begin{proof}
		Let $\phi$ be a $G$-map $\phi: B \rightarrow \bdinf_1$, then by Lemma \ref{lem t1 measurable structure}, the panel trees $\{T_{\phi(b)}\}_{b \in B}$ endowed with their natural metrics form a measurable metric field over $B$. Moreover, there is a natural $G$-action on $T_1$ over $\bdinf_1 $. Each individual $T_u$ is $\cat$(0), complete and of finite telescopic dimension, so we can apply the measurable equivariant Theorem \ref{thm meas adams ballmann} to the metric field $T_1$ over $\bdinf_1$. Therefore there either there exists an invariant section of the boundary field $\partial T_1$, or there is an invariant Euclidean subfield of $T_1$. Note that by Lemma \ref{lem metric field fiber action}, an invariant section of the boundary field gives a measurable $G$-map
		\begin{eqnarray}
			B \rightarrow \dt_1 \nonumber.
		\end{eqnarray} 
		Otherwise, there is an invariant Euclidean subfield. If this subfield is of dimension $0$ (the subfield consists of points), it gives a $G$-map $B \rightarrow T_1$, and if this subfield is of dimension $1$ (it consists of lines), it gives a $G$-map $B \rightarrow \partial_2 T_1$. 
	\end{proof}

	Now our goal is to rule out two of the three previous possibilities. 
	\begin{elem}\label{no T1}
		Under the same assumptions as in Theorem \ref{thm bd map immeuble}, there is no $G$-map $B \rightarrow T_1$. 
	\end{elem}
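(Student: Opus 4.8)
The plan is to argue by contradiction, ruling out the possibility that Theorem~\ref{thm meas adams ballmann}, as applied in Lemma~\ref{disjonction}, produces a $G$-invariant Euclidean subfield of dimension~$0$ inside the tree field $T_1$. So suppose there is a measurable $G$-map $\tilde{\phi} : B \to T_1$, and write $\phi = p \circ \tilde{\phi} : B \to \bdinf_1$ for the composition with the bundle projection $p : T_1 \to \bdinf_1$ of Lemma~\ref{lem t1 measurable structure}. First I would dispose of the degenerate case: if $\phi$ is essentially constant, then its essential value is a vertex at infinity fixed by $G$, contradicting the standing hypothesis that $G$ has no fixed point in $\bd X$. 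Otherwise, using double ergodicity of $G \curvearrowright B$ (Theorem~\ref{thm double ergod}), the $G$-invariant subset $\{(b_1,b_2) \in B \times B \;:\; \phi(b_1) = \phi(b_2)\}$ is null or co-null; co-nullity would force $\phi$ essentially constant by Fubini, so it is null, i.e.\ $\phi(b_1) \neq \phi(b_2)$ for $(\nu \times \nu)$-almost every $(b_1, b_2)$.

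The geometric core of the argument is then to convert the pair $(\phi, \tilde{\phi})$ into data forbidden by the hypotheses. Since $X$ has type $\tilde{A}_2$, the building at infinity $\bdinf$ has type $A_2$, and two \emph{distinct} vertices of $\bdinf$ of the same type~$1$ are never opposite; hence $\phi(b_1)$ and $\phi(b_2)$ are joined in $\bdinf$ by a unique type-$2$ vertex, equivalently they lie in a common chamber $C(b_1,b_2) \in \ch(\bdinf)$ containing $\phi(b_1)$. Via Proposition~\ref{bijection ends chambers}, $C(b_1,b_2)$ is an end $e(b_1,b_2) \in \partial T_{\phi(b_1)}$ of the panel tree over $\phi(b_1)$. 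Pushing $\nu$ forward along $b_2 \mapsto e(b_1,b_2)$ gives, for almost every $b_1$, a $\mu_{b_1}$-stationary measure on $\partial T_{\phi(b_1)}$ whose support has at least three points (again by double ergodicity, arranging $\phi(b_1),\phi(b_2),\phi(b_3)$ in general position), so Proposition~\ref{prop bary proba 3 T1} attaches to it a canonical vertex $\beta(b_1) \in T_{\phi(b_1)}$, producing a second $G$-map $\beta : B \to T_1$ lying over the same $\phi$. Comparing $\tilde{\phi}$ with $\beta$ inside each tree $T_{\phi(b_1)}$: the distance $b_1 \mapsto d_{\phi(b_1)}\big(\tilde{\phi}(b_1), \beta(b_1)\big)$ is a $G$-invariant measurable $\R_{\geq 0}$-valued function on $B$, hence essentially constant. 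Using the barycenter map of Proposition~\ref{prop bary panel trees} together with $\tilde{\phi}$ to form new $G$-maps that strictly decrease this constant (by moving towards $\beta(b_1)$ along the relevant geodesics, and invoking that the ends $e(b_1, \cdot)$ separate $\tilde{\phi}(b_1)$ whenever it differs from $\beta(b_1)$), one forces the constant to be~$0$, so $\tilde{\phi} = \beta$ almost everywhere; but then $\tilde{\phi}(b_1)$ is intrinsically determined by the family of chambers at infinity $\{C(b_1,b_2)\}_{b_2}$, and chasing this back one extracts either a $G$-fixed end of a panel tree --- hence, by Proposition~\ref{bijection ends chambers}, a $G$-fixed chamber of $\bdinf$, hence a $G$-fixed point of $\bd X$ --- or a $G$-invariant flat in $X$. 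Both contradict the hypotheses on the action.

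The step I expect to be the genuine obstacle is this last one: translating the bundle-theoretic equality $\tilde{\phi} = \beta$ into concrete forbidden geometry in $X$, while keeping everything measurable. Here the delicate points are (i) making the reduction "move towards the barycenter" rigorous, so that a non-trivial displacement between $\tilde{\phi}$ and $\beta$ really does yield a third $G$-map with smaller (essentially constant) displacement, and (ii) identifying precisely what a $G$-equivariant choice of vertex in the panel trees encodes in terms of walls or flats of $X$, using the description of the affine buildings at infinity and the fact that, $X$ being $2$-dimensional, each panel tree is genuinely a tree. The remaining ingredients --- non-constancy of $\phi$, the no-opposition fact for type-$1$ vertices of an $A_2$ building, and the measurability of the barycenter constructions --- are either immediate from ergodicity or already supplied by Section~\ref{section meas struc A2}.
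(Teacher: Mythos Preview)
Your approach diverges substantially from the paper's, and the divergence creates a genuine gap. You correctly set up the projection $\phi = p \circ \tilde\phi$, dispose of the constant case, and identify the key map $(b_1, b_2) \mapsto e(b_1, b_2) \in \partial T_{\phi(b_1)}$ (this is exactly the paper's $\proj_{\psi_1(b)}(\psi_1(b'))$). But then you abandon the central tool: \emph{relative isometric ergodicity}. The paper observes that $T_1 \times \partial T_1 \to T_1$ carries a fiberwise isometric $G$-action (Lemma~\ref{lem dt1 measurable structure}), so RIE applied to the projection $B \times B \to B$ forces $e(b_1, b_2)$ to be essentially independent of $b_2$. This immediately produces a $G$-map $b \mapsto u(b) \in \bdinf_2$ (the type-$2$ vertex of the common chamber) such that $\phi(b')$ lies in $\res(u(b))$ for almost every $b'$; since $\res(u(b))$ has Tits-radius at most $\pi/2$, Proposition~\ref{circumcenter} gives a $G$-fixed point in $\bd X$. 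That is the whole proof.

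Your replacement argument --- push $\nu$ forward to a measure on $\partial T_{\phi(b_1)}$, take a barycenter $\beta(b_1)$, then force $\tilde\phi = \beta$ by a displacement-decreasing trick --- does not close. First, the claim that the pushforward has support of cardinality at least three is unjustified: double ergodicity says nothing about triples, and distinct $\phi(b_2)$ can project to the same chamber of $\res(\phi(b_1))$; the single-point case is exactly the paper's conclusion but you have not argued it, and the two-point case would invoke Lemma~\ref{no d2T1}, whose proof \emph{uses} the present lemma. Second, and more seriously, even if you could establish $\tilde\phi = \beta$, you would merely have shown that one $G$-map $B \to T_1$ equals another --- this is no contradiction at all, and your final sentence (``extracts either a $G$-fixed end \ldots\ or a $G$-invariant flat'') has no mechanism behind it. The geometric contradiction in this lemma really does come from the circumradius argument, which needs $e(b_1,b_2)$ independent of $b_2$; obtaining that requires the full RIE property of the $G$-boundary, not just ergodicity of $B \times B$.
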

	
	\begin{proof}
		Let us assume that there is a $G$-map $\psi : B \rightarrow T_1$. Consider the natural projection $p : T_1 \rightarrow \bdinf_1$, and denote $\psi_1 = p \circ \psi : B \rightarrow \bdinf_1$. By Lemma \ref{lem t1 measurable structure}, $p$ is measurable and $G$-equivariant, hence by composition so is $\psi_1$. Take $u,v \in \bdinf_1$ two vertices at infinity of type $1$. Recall that for the boundary of an $\tilde{A}_2 $-building, if $u\neq v$ are both of the same type, the projection $\proj_u (v) $ is a chamber in the residue $\res(u)$. This map is continuous in $v$, and measurable in $u$. Thanks to Proposition \ref{bijection ends chambers}, we can identify the chamber $\proj_u (v) $ to an endpoint of $\dt_u$. 
		
		Observe that because $\psi_1$ is $G$-equivariant, the set $\{ (b, b') \; | \; \psi_1 (b) = \psi_1(b') \}$ is $G$-invariant. Since the action of $G$ on $B \times B$ is doubly ergodic, this set is either null or conull. If $\nu \otimes \nu$ almost surely, $\psi_1 (b) = \psi_1(b')$, then varying $b$ and $b'$ separately gives that $\psi_1$ is almost surely constant. But since $\psi_1$ is $G$-equivariant, it means that there is a $G$-fixed point in $\bd X$, which is forbidden by non-elementarity of the $G$-action. As a consequence, $\psi_1(b) \neq \psi_1(b') $ almost surely.
		
		This discussion allows us to define the map:
		\begin{eqnarray}
			\phi: & B\times B &\longrightarrow \dt_1 \nonumber \\
			& (b, b') & \longmapsto (\psi_1(b), \proj_{\psi_1(b)} (\psi_1(b'))) \in \dt_{\psi_1(b)} \nonumber. 
		\end{eqnarray} 
		This application is measurable by composition. As in Lemma \ref{lem t1 measurable structure}, the group $G$ acts on $\dt_1$ via its implicit action on the chambers, using the bijection given by Proposition \ref{bijection ends chambers}. With this action, $\phi$ is $G$-equivariant. Now consider
		\begin{eqnarray}
			\Phi : & B\times B &\longrightarrow T_1 \times \dt_1 \nonumber \\
			& (b, b') & \longmapsto ( \psi(b), \phi(b,b')) \in T_{\psi_1(b)} \times \dt_{\psi_1(b)} \nonumber, 
		\end{eqnarray}
		where  $ T_1 \times\dt_1 $ is the metric field over $T_1$ defined in Lemma \ref{lem dt1 measurable structure}. 
		
		The application $\Phi$ is measurable because $\psi$ and $\phi $ are measurable. The group $G$ acts on $T_1 \times \dt_1$ via the diagonal action $g \cdot (u, \xi, C) = (gu, g\xi, gC)\in T_{gu} \times \dt_{gu}$. Thus, for $g \in G$, and almost every $(b, b') \in B \times B$,
		\begin{eqnarray}
			\Phi(g b , g b')  & = & (\psi (gb ), \proj_{\psi_1(gb)} (\psi_1(gb'))) \nonumber \\
			& = & (g \psi(b ), \proj_{g \psi_1(b)} (g\psi_1 (b'))) \text{ because $\psi $ and $\psi_1$ are $G$-equivariant} \nonumber \\
			& = & (g \psi (b ), g \proj_{\psi_1(b)} (\psi_1(b'))) \nonumber. 
		\end{eqnarray}
		
		As a consequence, $\Phi$ is a $G$-map. We then have the following commutative diagram:
		\[
		\begin{tikzcd}
			B \times B \arrow[r, "\Phi"] \arrow[d, "\pi_B"] 
			& T_1 \times \dt_1 \arrow[d, "\pi_1"] \\
			B \arrow[r, "\psi"]
			& T_1	
		\end{tikzcd}
		\]
		
		By Lemma \ref{lem dt1 measurable structure}, $T_1 \times \dt_1 \rightarrow T_1$ admits a fiberwise isometric $G$-action. But $B$ is a $G$-boundary by Theorem \ref{thm paire de bords}, hence the projection on the first factor $\pi_B : (b, b') \in B \times B \mapsto b \in B$ is relatively isometrically ergodic. Therefore, there exists an invariant section $s : B \longrightarrow T_1 \times \dt_1$ such that $\nu \times \nu $-almost surely, $\Phi (b, b') = s \circ \pi_B(b,b') = s(b)$. 
		\[
		\begin{tikzcd}
			B \times B \arrow[r, "\Phi"] \arrow[d, "\pi_B"] 
			& T_1 \times \dt_1 \arrow[d, "\pi_1"] \\
			B \arrow[r, "\psi"] \arrow[ur, dashed,"s"]
			& T_1	
		\end{tikzcd}
		\]
		
		In particular, the projection $\phi(b,b')= \proj_{\psi_1(b)}(\psi_1(b'))$ does not depend on $b'$. We then have an essentially well defined measurable map $u : B \rightarrow \bdinf_2$ such that for almost every $b \in B$, $u(b) \in \bdinf_2$ is the unique vertex of type 2 belonging to the chamber $\proj_{\psi_1(b)}(\psi_1(b'))$. Then almost surely, $\psi_1(b')$ belongs to a chamber in $\res(u(b))$, see Figure \ref{figure no t1}. 
		
		In other words, there exists a subset $B_0 \times B_1$ of $B \times B$ of full $\nu \times \nu$ measure such that for all $(b,b') \in B_0 \times B_1$, $\psi_1(b')$ belongs to a chamber in $\res(u(b))$. But for all $b \in B_0$, $\res(u(b))$ is a subset of $\bd X$ of circumradius $\leq \pi/2$. The application $\psi_1 $ is then  a $G$-map from $B_1$ to a subset of $\bd X$ of circumradius $\leq \pi/2$, so by Proposition \ref{circumcenter}, $G$ fixes a point in $\bd X$, in contradiction with the assumption that the action is non-elementary.  
		
	\end{proof}
	
	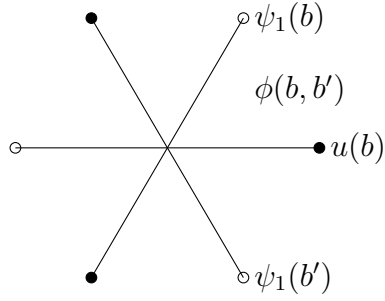
\begin{figure}[h]
		\centering
		\begin{center}
			\begin{tikzpicture}[scale=2]
				\draw (-1,0) -- (1,0)  ;
				\draw (-0.5, -0.86) -- (0.5, 0.86)  ;
				\draw (-0.5, 0.86) -- (0.5, -0.86)  ;
				\draw (0.5, 0.86) circle(1pt) ;
				\draw (0.5, -0.86) circle(1pt) ;
				\draw (-1,0) circle(1pt) ;
				\filldraw (-0.5, 0.86) circle(1pt) ;
				\filldraw (-0.5, -0.86) circle(1pt) ;
				\filldraw (1,0) circle(1pt) ;
				\draw (1,0) node[right]{$u(b)$} ;
				\draw (0.5,0.38) node[right]{$\phi(b,b')$} ;
				\draw (0.5, 0.86) node[right]{$\psi_1(b)$} ;
				\draw (0.5, -0.86) node[right]{$\psi_1(b')$} ;				
			\end{tikzpicture}
		\end{center}
		\caption{An apartment of the building at infinity $\bdinf$, in the proof of Lemma \ref{no T1}.}\label{figure no t1}
	\end{figure}
	
	It remains to prove that there is no $G$-map $B \rightarrow \partial_2 T_1$. 
	
	\begin{elem}\label{no d2T1}
		Under the same assumptions as in Theorem \ref{thm bd map immeuble}, there is no $G$-map $B \rightarrow \partial_2T_1$. 
	\end{elem}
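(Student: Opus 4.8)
The plan is to argue by contradiction, following the architecture of the proof of Lemma~\ref{no T1}, with one new ingredient: the datum a point of $\partial_2 T_1$ carries beyond its basepoint in $\bdinf_1$ is a \emph{geodesic line} in a panel tree, and feeding in a third, generic, end (via the barycenter map) will let us manufacture a point of $T_1$, thereby reducing the main case to Lemma~\ref{no T1}. So suppose $\psi\colon B\to\partial_2 T_1$ is a measurable $G$-map, and write $\psi(b)=(\psi_1(b),\{\xi_1(b),\xi_2(b)\})$, where $\psi_1\colon B\to\bdinf_1$ is the induced $G$-map and $\ell(b)$ is the geodesic line joining $\xi_1(b)$ to $\xi_2(b)$ inside the panel tree $T_{\psi_1(b)}$. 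As in Lemma~\ref{no T1}, double ergodicity of $G\curvearrowright B\times B$ forces $\psi_1$ to be essentially nonconstant (otherwise $G$ fixes a point of $\bd X$, against non-elementarity), hence $\psi_1(b)\neq\psi_1(b')$ for a.e.\ $(b,b')$. Then $\eta(b,b'):=\proj_{\psi_1(b)}(\psi_1(b'))$ is, via Proposition~\ref{bijection ends chambers}, a well-defined end of $T_{\psi_1(b)}$ depending measurably on $(b,b')$, corresponding to the chamber $(\psi_1(b),\ell_{\psi_1(b)\psi_1(b')})$ of $\bdinf$ (where $\ell_{uv}$ denotes the unique line of the projective plane through the distinct type-$1$ vertices $u,v$). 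I would then split according to whether the $G$-invariant measurable set $E=\{(b,b'):\eta(b,b')\in\{\xi_1(b),\xi_2(b)\}\}$ is null or conull.

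Suppose first $E$ is null. Then for a.e.\ $(b,b')$ the three ends $\xi_1(b),\xi_2(b),\eta(b,b')$ of the tree $T_{\psi_1(b)}$ are pairwise distinct, so Proposition~\ref{prop bary panel trees} yields their barycenter $m(b,b')\in T_{\psi_1(b)}$, a genuine (branch) point of the tree. This defines a measurable $G$-map $\Phi\colon B\times B\to\mathcal{T}$, $\Phi(b,b')=(\psi(b),m(b,b'))$, where $\mathcal{T}\to\partial_2 T_1$ is the pullback of the metric field $T_1\to\bdinf_1$ along $\partial_2 T_1\to\bdinf_1$, so that the fibre over $(u,\{\xi_1,\xi_2\})$ is $T_u$ and $\pi_{\mathcal T}\circ\Phi=\psi\circ\pi_B$. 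By Lemma~\ref{lem t1 measurable structure} and Lemma~\ref{lem metric field fiber action}, $\mathcal{T}\to\partial_2 T_1$ carries a $G$-fibrewise isometric action; since $B$ is a $G$-boundary (Theorem~\ref{thm paire de bords}), $\pi_B\colon B\times B\to B$ is relatively isometrically ergodic, so there is an invariant section $s\colon B\to\mathcal{T}$ with $\Phi=s\circ\pi_B$. Writing $s(b)=(\psi(b),m(b))$ with $m(b)\in T_{\psi_1(b)}$, the assignment $b\mapsto(\psi_1(b),m(b))$ is a measurable $G$-map $B\to T_1$, contradicting Lemma~\ref{no T1}.

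Suppose instead $E$ is conull. By Fubini, for a.e.\ $b$ one has $\eta(b,b')\in\{\xi_1(b),\xi_2(b)\}$ for a.e.\ $b'$, which by the chamber description of $\eta$ says exactly that $\psi_1(b')$ is incident to one of the two lines $\ell_1(b),\ell_2(b)$ of $\bdinf$ (the type-$2$ vertices of the chambers corresponding to $\xi_1(b),\xi_2(b)$) for a.e.\ $b'$. Choosing two generic points $b_1\neq b_2$ with $\psi_1(b_1)\neq\psi_1(b_2)$, the measure $(\psi_1)_\ast\nu$ is therefore supported on $(\ell_1(b_1)\cup\ell_2(b_1))\cap(\ell_1(b_2)\cup\ell_2(b_2))$, a set of at most four type-$1$ vertices (two lines through $\psi_1(b_1)$ meet two lines through $\psi_1(b_2)$ in at most four points). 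Thus $(\psi_1)_\ast\nu$ is a $\mu$-stationary measure with finite support; as in the proof of Lemma~\ref{lem non atomic}, using admissibility of $\mu$, its atoms of maximal mass form a finite nonempty $G$-invariant subset of $\bdinf_1\subseteq\bd X$, hence a finite $G$-orbit at infinity, contradicting non-elementarity of the action. In all cases we reach a contradiction, so no $G$-map $B\to\partial_2 T_1$ exists.

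I expect the main obstacle to be the measurability bookkeeping rather than the ideas: verifying that $\mathcal{T}\to\partial_2 T_1$ really qualifies as a $G$-fibrewise isometric action in the precise sense required to invoke relative isometric ergodicity, that $\Phi$ and the barycenter assignments are Borel, and that the identification of ``$\eta(b,b')\in\{\xi_1(b),\xi_2(b)\}$'' with ``$\psi_1(b')$ incident to $\ell_1(b)\cup\ell_2(b)$'' is made fully rigorous at the level of the spherical building $\bdinf$ via Proposition~\ref{bijection ends chambers}. The ergodic core — producing a point of $T_1$ from a third generic end and an invariant section — is short once the framework of Section~\ref{section meas struc A2} is available.
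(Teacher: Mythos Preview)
Your Case~1 is correct and essentially identical to the paper's argument: both feed in the third end $\eta(b,b')=\proj_{\psi_1(b)}(\psi_1(b'))$, take the barycenter of $\{\xi_1(b),\xi_2(b),\eta(b,b')\}$ in $T_{\psi_1(b)}$, and use relative isometric ergodicity over the fibration with fibre $T_{\psi_1(b)}$ to produce a $G$-map $B\to T_1$, contradicting Lemma~\ref{no T1}. (Whether you phrase the base as $\bdinf_1$ or as $\partial_2 T_1$ is immaterial; the fibres are the same trees.)

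Your Case~2, however, has a genuine gap. You claim that for two generic $b_1,b_2$ with $\psi_1(b_1)\neq\psi_1(b_2)$, the intersection $(\ell_1(b_1)\cup\ell_2(b_1))\cap(\ell_1(b_2)\cup\ell_2(b_2))$ consists of at most four type-$1$ vertices. But recall what $E$ being conull says: for a.e.\ $(b,b')$, the chamber $\proj_{\psi_1(b)}(\psi_1(b'))$ --- whose type-$2$ vertex is the line through $\psi_1(b)$ and $\psi_1(b')$ --- coincides with one of your two chambers $(\psi_1(b),\ell_i(b))$. In other words, the line $\ell_{\psi_1(b)\psi_1(b')}$ is one of $\ell_1(b),\ell_2(b)$. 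Applying this to $(b_1,b_2)$ and to $(b_2,b_1)$ (both are generic since $\nu\otimes\nu$ is symmetric) shows that the line through $\psi_1(b_1)$ and $\psi_1(b_2)$ is simultaneously one of the $\ell_i(b_1)$ and one of the $\ell_j(b_2)$. Hence the two pairs of lines always share a common line, and the intersection you wrote contains \emph{every} type-$1$ vertex on that shared line --- infinitely many in a thick building. So $(\psi_1)_\ast\nu$ need not have finite support, and your contradiction does not follow.

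The paper handles Case~2 differently (and more simply, though its exposition is also brisk): once $\phi(b,b')\in\{\xi_1(b),\xi_2(b)\}$ almost surely, one has a $G$-map $B\times B\to\mathbb{Y}$ into the bundle over $\partial_2 T_1$ whose fibre over $(u,\{\xi_1,\xi_2\})$ is the two-point set $\{\xi_1,\xi_2\}$ with the discrete metric. This is a fibrewise isometric $G$-action, so relative isometric ergodicity gives a section $B\to\mathbb{Y}$, i.e.\ $\phi(b,b')$ is in fact independent of $b'$. One then concludes exactly as in Lemma~\ref{no T1}: the type-$2$ vertex $u(b)$ of the chamber $\phi(b)$ is such that $\psi_1(b')\in\res(u(b))$ for a.e.\ $b'$, and the circumradius argument yields a $G$-fixed point in $\bd X$. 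Your incidence-geometry route in the projective plane can be salvaged, but it requires a further combinatorial argument (the pairs $\{\ell_1(b),\ell_2(b)\}$ pairwise intersect, which forces their union to be a finite $G$-invariant set of type-$2$ vertices); as written it does not go through.
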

	
	\begin{proof}
		Assume that there exists a measurable $G$-map $\psi : B  \longrightarrow \partial_2 T_1$. Denote by $p : \partial_2 T_1 \rightarrow \bdinf_1$ the natural projection, and $p \circ \psi $ by $\psi_1$. The application $\psi_1 : B \rightarrow \bdinf_1$ is measurable by composition, and $G$-equivariant. 
		
		Again, we define  the application $\phi$ on $B \times B$ by 
		\begin{eqnarray}
			\phi : (b,b') \in B\times B \mapsto \proj_{\psi_1(b)} (\psi_1(b')) \in \dt_{\psi_1(b)} \nonumber.
		\end{eqnarray}
		
		As before, the application $\phi$ is $G$-equivariant, and measurable. Observe that the set $\{(b,b') \in B\times B' \,  | \, \phi(b,b') \in \psi (b)\}$ is $G$-invariant. By double ergodicity of $G \curvearrowright B $, Theorem \ref{thm double ergod}, its measure is full or null. Assume that almost surely, $\phi(b,b') \notin \psi(b)$. Then we get a measurable $G$-map from $B \times B$ to the space of distinct triples of points of $\dt_1$:
		\begin{eqnarray}
			\Psi :&&  	B \times B  \longrightarrow  \partial_3 T_1  \nonumber \\
			&&(b,b')  \longmapsto  (\psi(b), \phi(b,b')) \in \partial_3 T_{\psi_1(b)}. \nonumber
		\end{eqnarray}
		Now consider the barycenter map	
		\begin{eqnarray}
			\bary : && \partial_3 T_1  \longrightarrow T_1 \nonumber \\
			&&(C_1, C_2, C_3) \in \partial_3 T_u  \longmapsto  \bary_u(C_1, C_2, C_3) \in T_u \nonumber, 
		\end{eqnarray}
		By Proposition \ref{prop bary panel trees}, the barycenter map just defined is measurable and $G$-equivariant, hence $\bary \circ \Psi $ is a measurable $G$-map. It implies that the following diagram
		\[
		\begin{tikzcd}
			B \times B \arrow[r, "\bary \circ \Psi"] \arrow[d, "\pi_B"] 
			& T_1 \arrow[d, "\pi_1"] \\
			B \arrow[r, "\psi_1"]
			& \bdinf_1	
		\end{tikzcd}
		\]
		is commutative. But $T_1 \rightarrow \bdinf_1$ admits a fiberwise isometric $G$-action by Lemma \ref{lem t1 measurable structure}. Therefore, by relative isometric ergodicity of the Poisson boundary, there exists an invariant section $s : B \longrightarrow T_1$, which contradicts Lemma \ref{no T1}.
		
		As a consequence, we have that almost surely, $\phi(b,b') \in\psi(b)$. This means that almost surely, $\phi(b,b')$ does not depend on $b'$. But the image of $\phi(b,b')$ lies in $\res(\psi_1(b))$, which is a subset of $\bd X $ of circumradius $\leq \pi/2$. Now by the same arguments as for Lemma \ref{no T1}, we get a $G$-fixed point in the boundary and a contradiction. 
	\end{proof}
	
	It is clear that Lemmas \ref{no T1} and \ref{no d2T1} remain true for the vertices of type 2: there is no $G$-map $B \rightarrow T_2$ nor $G$-maps $B \rightarrow \partial_2 T_2$. 
	\newline 
	
	We can now prove that there exists a measurable $G$-map from $B$ to the set of chambers at infinity. 
	
	\begin{proof}[Proof of Theorem \ref{thm bd map immeuble}]
		By assumption, $G$ acts continuously without invariant flats on the affine building $X$, and $X$ is a complete $\cat$(0) space of finite telescopic dimension. We can then apply Theorem \ref{furst map}: there exists a measurable $G$-map $\psi : B \rightarrow \bd X$.
		
		Observe that the set $\{b \in B \; | \; \psi (b) \in \bd X - (\bdinf_1 \sqcup \bdinf_2)\}$ is measurable, and $G$-invariant. By ergodicity of $G \curvearrowright B $, its measure is null or conull. If for almost every $b \in B$, $\phi (b) $ is a regular point, then we get a $G$-map $\psi : B \rightarrow \ch (\bdinf)$, which is what we want. 
		Otherwise, by ergodicity of the action $G \curvearrowright B$, we get either a map $ \psi : B \rightarrow \bdinf_1$ or $ \psi : B \rightarrow \bdinf_2$. Assume the former, the argument being identical in both cases. 
		
		We can apply Lemma \ref{disjonction}: there is either a $G$-map $B \rightarrow T_1$, or a $G$-map $B \rightarrow \dt_1$, or a $G$-map $B \rightarrow \partial_2 T_1$. Along with Lemmas \ref{no T1} and \ref{no d2T1}, there is a $G$-equivariant measurable map $B \rightarrow \dt_1$. But by Proposition \ref{bijection ends chambers}, the chambers are in bijection with $\dt_1$, hence this gives a $G$-equivariant map $ B \rightarrow \ch (\bdinf)$. 
	\end{proof}
	
	\subsection{Uniqueness of the boundary map}
	What was done in the previous section does not rely on the particular structure of the Poisson-Furstenberg boundary. In fact, for any $G$-boundary $(\tilde{B}, \tilde{\nu})$, the results of Section \ref{section bd map immeuble} hold. 
	
	In this section, we prove that the measurable map given by Theorem \ref{thm bd map immeuble} is actually unique. 
	
	\begin{ethm}\label{thm uniqueness chamber map}
		Let $X$ be a thick building of type $\tilde{A}_2$ and let $G$ be a locally compact second countable group acting continuously by isometries on $X$ without invariant flats. Let $(B, \nu_B)$ be a $G$-boundary. Then there exists a unique measurable $G$-equivariant boundary map $\psi : B \longrightarrow \ch(\bdinf)$. 
	\end{ethm}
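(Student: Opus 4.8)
The plan is to prove uniqueness by showing that any two measurable $G$-equivariant maps $\psi_1,\psi_2:B\to\ch(\bdinf)$ agree $\nu_B$-almost everywhere; existence is then obtained exactly as in Theorem~\ref{thm bd map immeuble}, since (as already remarked) every argument of Section~\ref{section bd map immeuble} — in particular Lemmas~\ref{no T1} and \ref{no d2T1} — uses only that $(B,\nu_B)$ is a $G$-boundary, and $\tilde A_2$-buildings have finite telescopic dimension. The crucial structural input is that $G$ preserves types and Weyl distances at infinity, so the function $b\mapsto \delta\big(\psi_1(b),\psi_2(b)\big)\in W$, where $W\cong S_3$ is the (finite, type $A_2$) Weyl group, is $G$-invariant, hence $\nu_B$-essentially equal to a fixed $w\in W$ by ergodicity of the $G$-boundary $B$. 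It therefore suffices to rule out $w\neq e$, and in every case I will produce a $G$-equivariant measurable map $B\to\partial_2 T_1$ (or $\partial_2 T_2$), contradicting Lemma~\ref{no d2T1}.

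First I would handle $\ell(w)\in\{1,2\}$. In $S_3$ every element of length $\le 2$ has a unique reduced expression, so the building axioms furnish, for $\nu_B$-almost every $b$, a \emph{unique} minimal gallery $\psi_1(b)=E_0(b),E_1(b),\dots,E_{\ell(w)}(b)=\psi_2(b)$, and each $E_k$ is measurable and $G$-equivariant. The chambers $E_0(b)=\psi_1(b)$ and $E_1(b)$ are distinct and share a panel, which in a rank-$2$ building is a single vertex $v(b)\in\bdinf$; the type of $v(b)$ is a $G$-invariant function of $b$, hence a.e.\ constant, and we may assume it is $1$ (Lemma~\ref{no d2T1} holds for both types). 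Via the equivariant identification of Proposition~\ref{bijection ends chambers}, $\psi_1(b)$ and $E_1(b)$ become two distinct ends of the panel tree $T_{v(b)}$, so $b\mapsto\big(v(b);\psi_1(b),E_1(b)\big)$ is a $G$-equivariant measurable map $B\to\partial_2 T_1$ — measurability with respect to the structures of Lemmas~\ref{lem t1 measurable structure} and \ref{lem dt1 measurable structure} being routine since it is built from continuous operations on $\bdinf$ composed with the $\psi_i$ — contradicting Lemma~\ref{no d2T1}.

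The hard part will be the remaining case $w=w_0$, where $\psi_1(b)$ and $\psi_2(b)$ are opposite chambers: they share no vertex and no minimal gallery is unique, so the previous trick fails. Here I would exploit that the thick spherical building $\bdinf$ is a projective plane: letting $v_1(b),v_2(b)\in\bdinf_1$ be the type-$1$ vertices of $\psi_1(b),\psi_2(b)$ — distinct, since opposite chambers share no vertex — they lie on a unique common line $m(b)\in\bdinf_2$, and $b\mapsto m(b)$ is measurable and $G$-equivariant. A short computation in a hexagonal apartment of $\bdinf$ shows $\{v_1(b),m(b)\}\neq\psi_1(b)$ (equality would force $\delta(\psi_1(b),\psi_2(b))$ to have length $\le2$), while both chambers contain $v_1(b)$; so $b\mapsto\big(v_1(b);\psi_1(b),\{v_1(b),m(b)\}\big)$ is again a $G$-equivariant measurable map $B\to\partial_2 T_1$, contradicting Lemma~\ref{no d2T1}. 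Hence $w=e$, i.e.\ $\psi_1=\psi_2$ $\nu_B$-almost everywhere, and combined with existence this proves Theorem~\ref{thm uniqueness chamber map}. The main obstacle is precisely isolating, in the opposite-chamber case, a $G$-equivariant auxiliary vertex (here $m(b)$) that links the two chambers through a common panel tree; everything else reduces to ergodicity together with elementary combinatorics of rank-$2$ buildings.
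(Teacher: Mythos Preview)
Your argument is correct, but it takes a longer path than the paper's. The paper observes that a chamber is determined by its two vertices, so uniqueness of the chamber map $\psi$ reduces immediately to uniqueness of the induced vertex maps $\psi_i:B\to\bdinf_i$ for $i=1,2$. Given a second map $\psi_1':B\to\bdinf_1$, ergodicity gives $\psi_1(b)\neq\psi_1'(b)$ almost surely; the projective-plane axiom then produces the unique type-$2$ vertex $u(b)$ adjacent to both, and the two chambers $\{\psi_1(b),u(b)\}$ and $\{\psi_1'(b),u(b)\}$ yield a $G$-map $B\to\partial_2 T_2$, contradicting Lemma~\ref{no d2T1}. This single step replaces your entire case analysis on the Weyl distance $w=\delta(\psi_1(b),\psi_2(b))$: the paper never needs to distinguish $\ell(w)=1,2,3$, nor to invoke unique minimal galleries. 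What your approach buys is a direct, chamber-level argument that makes the role of the Weyl distance explicit and would adapt more transparently to higher rank (where the reduction to vertex maps still works but involves more types); what the paper's approach buys is brevity and the avoidance of any case distinction, since the projective-plane structure handles all possibilities at once. Your treatment of the opposite case $w=w_0$ is in fact essentially the paper's argument in disguise: your auxiliary vertex $m(b)$ is exactly the $u(b)$ the paper constructs from the two type-$1$ vertices.
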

	\begin{proof}
		The existence of such a boundary map $\psi $ is given by Theorem \ref{thm bd map immeuble}. Recall that for each chamber $C \in \ch({\bdinf})$, $C$ contains a unique vertex of type 1 and a unique vertex of type 2. Hence the boundary map $\psi : B \longrightarrow \ch(\bdinf)$ induces two $G$-maps $\psi_1 : B \longrightarrow \bdinf_1$ and $\psi_2 : B \longrightarrow \bdinf_2$. It is then sufficient to prove that those are the only $G$-maps $ B \longrightarrow \bdinf_1$ and $ B \longrightarrow \bdinf_2 $. The argument being the same in both cases, we only prove that $\psi_1: B \longrightarrow \bdinf_1$ is unique. Assume that we have another $G$-map $\psi_1' : B  \longrightarrow \bdinf_1$. By ergodicity, $\psi_1(b) \neq \psi'_1 (b)$ almost surely. Since $\psi_1(b)$ and $\psi'_1(b)$ are of type 1, there is a unique vertex of type 2 adjacent to both. Call this vertex $u(b)$. Now $\psi_1(b) $ and $u(b)$ belong to a unique common chamber $C_1(b)$, and the same goes for $\psi_2(b) $ and $u(b)$, whose common chamber we denote by $C_2(b)$. Eventually, using Proposition \ref{bijection ends chambers}, this yields a $G$-map 
		\begin{eqnarray}
			B  &\longrightarrow & \partial_2 T_2 \nonumber \\
			b & \longmapsto &(u(b), C_1(b), C_2(b)) \nonumber, 
		\end{eqnarray}
		which is in contradiction with Lemma \ref{no d2T1}. Hence $\psi_1$ is unique, proving the result. 
	\end{proof}
	
	\section{Stationary measures on the chambers at infinity}\label{section uniq stat meas A2}
	\subsection{Uniqueness of the stationary measure}
	The main result of this section is the uniqueness of the stationary measure $\nu$ on the set of chambers $\ch(\bdinf)$ of the spherical building at infinity of $X$. 
	
	\begin{ethm}\label{thm uniqueness stationary measure immeuble}
		Let $X$ be a building of type $\tilde{A}_2$ and let $G$ be a locally compact second countable group acting continuously by isometries on $X$ without fixed points nor invariant flats. Let $\mu $ be an admissible symmetric measure on $G$. Then there is a unique $\mu$-stationary measure $\nu$ on  $\ch(\bdinf)$. Moreover, we have the decomposition 
		\begin{eqnarray}
			\nu = \int_{b \in B} \delta_{\psi(b)} d\nu_B(b) \nonumber, 
		\end{eqnarray}
		where $\psi$ is the unique boundary map given by Theorem \ref{thm uniqueness chamber map}.
	\end{ethm}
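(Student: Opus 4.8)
The plan is to reduce the statement to the uniqueness of the boundary map established in Theorem~\ref{thm uniqueness chamber map}, using the general correspondence between stationary measures and boundary maps recalled in Section~\ref{section mesure stat intro} (Theorem~\ref{thm bdry map = stat measure} and Corollary~\ref{cor application de bord mesure stat}). First I would fix an admissible symmetric $\mu$ and let $(B, \nu_B) = B(G,\mu)$ be the Poisson--Furstenberg boundary, which is a $G$-boundary by Theorem~\ref{thm paire de bords}. Since $\ch(\bdinf)$ is a compact metrizable $G$-space, Proposition~\ref{prop existence stationnaire compact} gives existence of at least one $\mu$-stationary measure; the content is uniqueness. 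Let $\nu$ be any $\mu$-stationary probability measure on $\ch(\bdinf)$. By Theorem~\ref{thm bdry map = stat measure} applied to the $(G,\mu)$-space $(\ch(\bdinf), \nu)$, there is an essentially unique measurable $G$-equivariant map $\Phi : B \to \prob(\ch(\bdinf))$ with $\nu = \int_B \Phi(b)\, d\nu_B(b)$. So the whole problem is to show that $\Phi(b)$ is almost surely a Dirac mass, and that the resulting $G$-map $B \to \ch(\bdinf)$ is the canonical one $\psi$ of Theorem~\ref{thm uniqueness chamber map}; the decomposition $\nu = \int_B \delta_{\psi(b)}\, d\nu_B(b)$ then follows, and in particular $\nu$ is unique.

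The key step is therefore: \emph{any} measurable $G$-map $B \to \prob(\ch(\bdinf))$ is essentially Dirac-valued. Here I would use the same machinery as in Section~\ref{section bd map immeuble}, transposed from a point-valued to a measure-valued setting, together with the barycenter constructions of Section~\ref{section meas struc A2}. Concretely: decompose $\ch(\bdinf)$ according to the type-$1$ vertex it contains, giving a $G$-equivariant projection $\ch(\bdinf) \to \bdinf_1$; push $\Phi$ forward to get a $G$-map $B \to \prob(\bdinf_1)$. Either this is Dirac-valued (and then by Theorem~\ref{thm bd map immeuble}'s proof, the conditional measure on each residue $\res(u(b))$ living in a panel tree $T_{u(b)}$ must again be studied), or it charges several vertices. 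One forms the three-fold fiber product $B \times_{B} B \times_B B$-type object, or more simply works over $B \times B$ with the diagonal map $b \mapsto \Phi(b)\otimes\Phi(b)\otimes\ldots$, and uses double ergodicity of $G \curvearrowright B$ (Theorem~\ref{thm double ergod}) to force a dichotomy: either the support is essentially a single point, or one gets a $G$-map into $\prob_3$ of some panel-tree end-space, which via the measurable barycenter map $\prob_3(\partial T_1) \to T_1$ of Proposition~\ref{prop bary proba 3 T1} yields a $G$-map $B \to T_1$, contradicting Lemma~\ref{no T1}. The Euclidean-subfield alternative of the measurable Adams--Ballmann Theorem~\ref{thm meas adams ballmann} is ruled out exactly as in Lemma~\ref{no d2T1}. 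Finally the relative isometric ergodicity of $\pi_B : B \times B \to B$ (Theorem~\ref{thm paire de bords}, Section~\ref{section bords ergodiques}) is used, just as in the proof of Lemma~\ref{no T1}, to collapse the dependence on the second variable and conclude that $\Phi(b)$ cannot charge a subset of a residue of circumradius $\le \pi/2$ nontrivially without producing a $G$-fixed point in $\bd X$ via Proposition~\ref{circumcenter}, contradicting non-elementarity.

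Once $\Phi$ is known to be Dirac-valued, write $\Phi(b) = \delta_{\tilde\psi(b)}$ for a measurable $G$-map $\tilde\psi : B \to \ch(\bdinf)$. By Theorem~\ref{thm uniqueness chamber map}, $\tilde\psi = \psi$ almost surely, the unique such boundary map. Hence $\nu = \int_B \delta_{\psi(b)}\, d\nu_B(b)$, which does not depend on the choice of $\nu$, so the $\mu$-stationary measure on $\ch(\bdinf)$ is unique. I expect the main obstacle to be the first part of the key step, namely verifying carefully that the measure-valued analogues of Lemmas~\ref{no T1} and~\ref{no d2T1} go through: one must check that all the auxiliary objects (pushforwards of $\Phi$ to $\bdinf_1$, the induced families of measures on panel trees and their end-boundaries, the application of $\varphi$ and $\beta_\varepsilon$ from Section~\ref{section meas struc A2}) remain measurable for the metric-field structures of Lemmas~\ref{lem t1 measurable structure} and~\ref{lem dt1 measurable structure}, and that the double-ergodicity dichotomies are applied to genuinely $G$-invariant measurable subsets. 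The geometric inputs---circumradius $\le \pi/2$ of a residue, non-elementarity forbidding fixed points at infinity---are already in place, so the difficulty is bookkeeping of measurability rather than new geometry.
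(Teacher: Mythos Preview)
Your overall reduction is exactly right and matches the paper: by Lemma~\ref{furst}, uniqueness of the stationary measure reduces to showing that any measurable $G$-map $\Phi : B \to \prob(\ch(\bdinf))$ is essentially $b \mapsto \delta_{\psi(b)}$, with $\psi$ the unique boundary map of Theorem~\ref{thm uniqueness chamber map}.

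Where your proposal diverges from the paper, and becomes genuinely vague, is in the mechanism for forcing $\Phi$ to be Dirac-valued. You propose pushing $\Phi$ forward along $\pi_1 : \ch(\bdinf) \to \bdinf_1$ and splitting into the cases where $(\pi_1)_*\Phi(b)$ is Dirac or not. Your Case B (support $\geq 2$ in $\bdinf_1$) is not resolved: you assert this yields ``a $G$-map into $\prob_3$ of some panel-tree end-space'', but $\bdinf_1$ is not the end-space of a single panel tree, so Proposition~\ref{prop bary proba 3 T1} does not apply directly, and you have not explained how to pass from several type-$1$ vertices to ends of one panel tree.

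The paper sidesteps this difficulty by a cleaner move: rather than pushing forward to $\prob(\bdinf_1)$, it uses the already established map $\psi_1 = \pi_1 \circ \psi : B \to \bdinf_1$ and sets $\phi_1(b) = (\proj_{\psi_1(b)})_* \Phi(b) \in \prob(\res(\psi_1(b))) \cong \prob(\partial T_{\psi_1(b)})$. This lands the measure in the boundary of a \emph{single} panel tree, where the support cardinality $k(b)$ is a.s.\ constant by ergodicity, and the three cases $k \geq 3$, $k = 2$, $k = 1$ are dispatched respectively by Proposition~\ref{prop bary proba 3 T1} (giving a $G$-map $B\to T_1$, contradicting Lemma~\ref{no T1}), by Lemma~\ref{no d2T1}, and by Theorem~\ref{thm uniqueness chamber map}. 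This yields $\phi_1(b) = \delta_{\psi(b)}$, i.e.\ $\supp\Phi(b)\subseteq\{C : \proj_{\psi_1(b)} C = \psi(b)\}$. One then repeats the argument with $\psi_2 : B \to \bdinf_2$ to get $\supp\Phi(b)\subseteq\{C : \proj_{\psi_2(b)} C = \psi(b)\}$ as well, and the intersection of these two sets is the single chamber $\{\psi(b)\}$. This last step---using \emph{both} types to pin down the chamber---is also absent from your sketch. No further appeal to the measurable Adams--Ballmann theorem or to relative isometric ergodicity is needed here; those ingredients were already absorbed into Lemmas~\ref{no T1} and~\ref{no d2T1}.
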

	
	Before proving this theorem, recall the following classical result, which goes back to H.~Furstenberg \cite[Lemma 1.33]{furstenberg73}. It was described in Theorem \ref{thm bdry map = stat measure}

	\begin{elem}\label{furst}
		Let $B(G, \mu)$ be the Poisson-Furstenberg boundary associated to the probability measure $\mu$ on $G$, and let $M$ be a Lebesgue $G$-space. Then for any $\mu$-stationary measure $\nu \in \prob(M)$, there is a $G$-equivariant map $ \phi : B \rightarrow \prob(M)$ such that $\nu = \int \phi (b) d \mathbb{P}(b)$. Conversely, if $\phi : B \rightarrow \prob(M)$ is measurable and $G$-equivariant, then the measure $\nu = \int \phi (b) d \mathbb{P}(b)$ is $\mu$-stationary. 
	\end{elem}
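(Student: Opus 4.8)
The plan is to establish the two directions of the Furstenberg correspondence using the two tools already at hand: Theorem \ref{thm mesures limites}, which attaches to a $\mu$-stationary measure its family of conditional (limit) measures, and Corollary \ref{cor application de bord mesure stat}, which reverses this construction. The conceptual bridge is the description of $B = B(G,\mu)$ from Section \ref{section bord poisson} as the shift-invariant quotient of the trajectory space $\tilde{\Omega}$, realized by the measurable factor $\bnd : \tilde{\Omega} \to B$. Write $\iota : \Omega \to \tilde{\Omega}$, $\iota(\omega) = (Z_n(\omega))_n$, for the map sending increments to the trajectory issued from the identity, and set $\bnd_+ := \bnd \circ \iota : \Omega \to B$; by definition of the hitting measure one has $(\bnd_+)_\ast \mathbb{P} = \nu_B$, the stationary measure on $B$ (denoted $\mathbb{P}$ in the statement). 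The single lemma on which both directions hinge is that the limit measures factor through the boundary.

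For the direct implication, I would start from a $\mu$-stationary $\nu \in \prob(M)$ and apply Theorem \ref{thm mesures limites} with $Y = M$, producing a measurable map $\omega \mapsto \nu_\omega$ with $Z_n(\omega)_\ast \nu \to \nu_\omega$ weakly-$\ast$, the relation $\nu_\omega = {\omega_0}_\ast \nu_{S\omega}$, and the decomposition $\nu = \int_\Omega \nu_\omega \, d\mathbb{P}(\omega)$. The key observation is that on the trajectory space the assignment $\tilde{\omega} = (Z_n)_n \mapsto \tilde{\nu}_{\tilde{\omega}} := \lim_n (Z_n)_\ast \nu$ is $S$-invariant, since reindexing does not change the limit: $\tilde{\nu}_{S\tilde{\omega}} = \lim_n (Z_{n+1})_\ast \nu = \tilde{\nu}_{\tilde{\omega}}$. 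Hence $\tilde{\nu}$ is $\mathcal{A}_S$-measurable and factors through the Mackey realization $B$, giving a measurable $\phi : B \to \prob(M)$ with $\tilde{\nu}_{\tilde{\omega}} = \phi(\bnd(\tilde{\omega}))$, so that $\nu_\omega = \phi(\bnd_+(\omega))$. Equivariance is then immediate: the $G$-action $g \cdot (Z_n)_n = (gZ_n)_n$ commutes with $S$, descends to $B$ with $\bnd(g\tilde{\omega}) = g\,\bnd(\tilde{\omega})$, and $\tilde{\nu}_{g\tilde{\omega}} = \lim_n (gZ_n)_\ast \nu = g_\ast \tilde{\nu}_{\tilde{\omega}}$, whence $\phi(gb) = g_\ast \phi(b)$. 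Finally, feeding $\nu_\omega = \phi(\bnd_+(\omega))$ into the decomposition and pushing forward by $\bnd_+$ yields $\nu = \int_\Omega \phi(\bnd_+(\omega))\, d\mathbb{P}(\omega) = \int_B \phi(b)\, d\nu_B(b)$, as required.

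For the converse, given a measurable $G$-equivariant $\phi : B \to \prob(M)$, I would set $\tilde{\phi} := \phi \circ \bnd_+ : \Omega \to \prob(M)$ and check that it satisfies the hypothesis of Corollary \ref{cor application de bord mesure stat}, namely $\tilde{\phi}(\omega) = {\omega_0}_\ast \tilde{\phi}(S\omega)$. This reduces to the identity $\bnd_+(\omega) = \omega_0 \cdot \bnd_+(S\omega)$, which I would obtain from the trajectory computation $\omega_0 \cdot \iota(S\omega) = S_{\tilde{\Omega}}(\iota(\omega))$ (both sides have $n$-th entry $\omega_0\omega_1\cdots\omega_{n+1}$) together with the $S$-invariance and $G$-equivariance of $\bnd$. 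Combining this with the equivariance of $\phi$ gives $\tilde{\phi}(\omega) = \phi(\omega_0 \cdot \bnd_+(S\omega)) = {\omega_0}_\ast \phi(\bnd_+(S\omega)) = {\omega_0}_\ast \tilde{\phi}(S\omega)$. Corollary \ref{cor application de bord mesure stat} then guarantees that $\int_\Omega \tilde{\phi}(\omega)\, d\mathbb{P}(\omega)$ is $\mu$-stationary, and this integral equals $\int_B \phi(b)\, d\nu_B(b) = \nu$ after pushing forward by $\bnd_+$.

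The main obstacle is measure-theoretic bookkeeping rather than a single hard estimate. The delicate points are: justifying that the $S$-invariant map $\tilde{\nu}$ on $\tilde{\Omega}$ truly descends to the Mackey realization $B$ (i.e. that $\mathcal{A}_S$-measurability is equivalent, up to null sets, to factoring through $\bnd$), and that the limit $\lim_n(Z_n)_\ast\nu$ exists $\tilde{m}$-almost everywhere on the trajectory space — which, for a general standard Borel $G$-space $M$ not assumed compact, is exactly the content packaged into Theorem \ref{thm mesures limites} and rests on the weak-$\ast$ sequential structure of $\prob(M)$. Once these identifications are granted, both directions are short formal manipulations of the relations $\nu_\omega = {\omega_0}_\ast\nu_{S\omega}$ and $\bnd_+(\omega) = \omega_0\cdot\bnd_+(S\omega)$.
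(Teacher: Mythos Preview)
Your proof is correct and is precisely the argument one would assemble from the tools developed earlier in the paper. Note, however, that the paper does not actually prove Lemma~\ref{furst}: it is introduced as a classical result going back to Furstenberg and referred to Theorem~\ref{thm bdry map = stat measure}, which is itself stated without proof. Your proposal therefore fills in what the paper leaves as a citation, and does so using exactly the ingredients the paper has prepared: Theorem~\ref{thm mesures limites} for the direct implication (the martingale limits $\nu_\omega$ are $S$-invariant on the trajectory space and thus factor through the Mackey realization $B$, with $G$-equivariance coming for free), and Corollary~\ref{cor application de bord mesure stat} for the converse (pulling $\phi$ back to $\Omega$ via $\bnd_+$ and checking the shift relation). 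Your identification of the only genuinely delicate point --- that $\mathcal{A}_S$-measurability of $\tilde{\nu}$ on $\tilde{\Omega}$ is equivalent, modulo null sets, to factoring through $\bnd$ --- is accurate; this is exactly the content of Mackey's point realization theorem invoked in Section~\ref{section bord poisson}.
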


	\begin{proof}[Proof of Theorem \ref{thm uniqueness stationary measure immeuble}]
		By Lemma \ref{furst}, it is enough to prove that there is a unique $G$-equivariant map $ B \longrightarrow \prob(\ch(\bdinf))$. Let $\psi : B \longrightarrow \ch (\bdinf)$ be the measurable $G$-equivariant map given by Theorem \ref{thm bd map immeuble}. Observe that the map 
		\begin{eqnarray}
			b \in B \mapsto \delta_{\psi(b)} \in \prob(\ch(\bdinf)) \nonumber
		\end{eqnarray}
		is measurable and $G$-equivariant. Let $\phi : B \rightarrow \prob(\ch(\bdinf))$ be any $G$-equivariant map. The goal is then to show that almost surely, $\phi $ is given by $\phi(b) = \delta_{\psi(b)}$. 
		
		Recall that each chamber in $\bdinf$ contains exactly one vertex of type 1 and one vertex of type 2, so denote by $\pi_1 : \ch(\bdinf) \longrightarrow \bdinf_1 $ and $\pi_2 : \ch(\bdinf) \rightarrow \bdinf_2 $ the corresponding maps. Let $\psi_i: B \rightarrow \bdinf_i$ be defined by $\psi_i= \pi_i \circ \psi : B \rightarrow \bdinf_i$, for $i = 1, 2 $. By composition, $\psi_i$ is a measurable, $G$-equivariant map. Consider also the map 
		\begin{eqnarray}
			\phi_1 : b \in B \mapsto  ({\proj_{\psi_1(b)}})_\ast \phi(b) \in \prob(\ch(\bdinf)) \nonumber.
		\end{eqnarray} 
		The map $\phi_1$ is a measurable $G$-equivariant map between $B$ and the probability measures on the chambers, but now the support of $\phi_1(b)$ is contained in $\res({\psi_1(b)})$. Now identify $\res(\psi_1(b))$ with the ends $\dt_{\psi_1(b)}$ of the panel tree at $\psi_1(b)$ given by Proposition \ref{bijection ends chambers}. Therefore, $\phi_1$ can be written as 
		\begin{eqnarray}
			\phi_1 : B &\rightarrow &\prob(\dt_1) \nonumber \\
			b & \mapsto &\phi_1(b) \in \prob(\dt_{\psi_1(b)}). \nonumber 
		\end{eqnarray} 
		
		By ergodicity of the Poisson boundary, the cardinal $k(b)$ of the support of $\phi_1(b)$ is almost surely constant. If almost surely $k(b) \geq 3$, then the barycenter map constructed in Proposition \ref{prop bary proba 3 T1} associates to the support of $\phi(b)$ a canonical point in $T_{\psi_1(b)}$. This gives a measurable $G$-equivariant map $B \longrightarrow T_1$, which is impossible due to Lemma \ref{no T1}. 
		\newline 
		
		If the support of the measure $\phi_1(b)$ is almost surely of cardinal $2$, then we have a $G$-map $B \longrightarrow \partial_2 T_1$, which is in contradiction with Lemma \ref{no d2T1}. 
		\newline 
		
		Then $\phi_1(b)$ has to be a Dirac mass, which we denote by $\delta_{\xi(\omega) }$ for $(\psi_1 (\omega), \xi (\omega )) \in \dt_{\psi_1(\omega)}$. Again, by Proposition \ref{prop homeo panel tree ch}, this gives a $G$-map $B \longrightarrow \ch(\bdinf)$. But by Theorem \ref{thm uniqueness chamber map}, any such map is unique and equal to $\psi$. Therefore, $\phi_1(b)$ is the Dirac mass at $\psi(b)$. 
		
		What we obtained so far is that for any $C \in \ch(\bdinf)$, 
		\begin{eqnarray}\label{eq uniq stat measure A2}
			\phi_1(b) (\proj_{\psi_1(b)} (C)) &=& \delta_{\psi(b)} (\proj_{\psi_1(b)} (C)).
		\end{eqnarray}
		By definition, $ \phi_1(b)  = ({\proj_{\psi_1(b)}})_\ast \phi(b) $. Then by Equation \eqref{eq uniq stat measure A2}, the support of $\phi(b)$ is contained in the set of chambers $C \in \ch(\bdinf)$ such that $ \proj_{\psi_1(b)} (C) = \psi(b) $. 
		
		By repeating the same argument as before for $\psi_2 : B \rightarrow \bdinf_2$, we obtain that $\phi(b) $ must also be supported on the set of chambers $C \in \ch(\bdinf)$ such that $ \proj_{\psi_2(b)} (C) = \psi(b) $. Combining these two results, we get that $\phi(b)$-almost surely, $\proj_{\psi_1(b)} (C) = \proj_{\psi_2(b)} (C) = \psi(b)$, hence $\phi(b)$ is supported on the chamber $\psi(b)$, meaning that $\phi(b)$ is essentially equal to $\delta_{\psi(b)}$. This proves the uniqueness of $\psi$, therefore of the stationary measure. 
	\end{proof}
	
	\subsection{Opposite chambers at infinity}
	The following is an important feature of the stationary measure $\nu$. Recall that two chambers $C, C'$ in $\bdinf$ are opposite if they are at maximal gallery distance, i.e. if $d_\Gamma (C, C') = 3$. Since apartments are convex, the convex hull of two chambers is contained in any apartment containing both of them. As a consequence, two opposite chambers are contained in a unique apartment. Denote by $\nu$ the unique $\mu$-stationary measure on $\ch(\bdinf)$ given by Theorem \ref{thm uniqueness stationary measure immeuble}. 
	
	\begin{eprop}\label{prop proba opposite chambers}
		Then $\nu \otimes \nu$-almost every pair of chambers in $\ch(\bdinf)$ are opposite. 
	\end{eprop}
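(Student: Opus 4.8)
The plan is to use the boundary map $\psi : B \to \ch(\bdinf)$ and the decomposition $\nu = \int_B \delta_{\psi(b)}\, d\nu_B(b)$ from Theorem~\ref{thm uniqueness stationary measure immeuble}, together with the double ergodicity of the $G$-action on $B$ (Theorem~\ref{thm double ergod}), to reduce the statement to showing that the map $B \times B \to \ch(\bdinf)^2$, $(b,b') \mapsto (\psi(b), \psi(b'))$ sends $\nu_B \otimes \nu_B$ almost every pair to an opposite pair of chambers. Since $\nu \otimes \nu$ is the pushforward of $\nu_B \otimes \nu_B$ under this map, this is exactly what we need. The set $\Omega_{\mathrm{opp}} = \{(b,b') \in B \times B : \psi(b) \text{ and } \psi(b') \text{ are opposite}\}$ is $G$-invariant (since the $G$-action on $\ch(\bdinf)$ preserves gallery distance and $\psi$ is $G$-equivariant) and measurable, so by double ergodicity it has measure $0$ or $1$; the task is to rule out measure $0$.

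First I would analyze what happens on the complement $\Omega_{\mathrm{opp}}^c$, assuming it were conull. Two chambers $C, C'$ of $\bdinf$ that are not opposite have gallery distance $d_\Gamma(C,C') \in \{0,1,2\}$. Passing to the vertex structure: writing $\psi_1 = \pi_1 \circ \psi : B \to \bdinf_1$ and $\psi_2 = \pi_2 \circ \psi : B \to \bdinf_2$, the relative position of $\psi(b)$ and $\psi(b')$ is controlled by whether $\psi_1(b)$ and $\psi_1(b')$ (resp. $\psi_2(b), \psi_2(b')$) coincide, are adjacent, or are ``far'' in the residue/panel-tree sense. Using ergodicity one can first dispose of the degenerate cases: if $\psi_1(b) = \psi_1(b')$ almost surely, then varying $b, b'$ separately forces $\psi_1$ essentially constant, contradicting non-elementarity (this is the standard argument used already in the proof of Lemma~\ref{no T1}); similarly for $\psi_2$. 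So almost surely $\psi_1(b) \neq \psi_1(b')$ and $\psi_2(b) \neq \psi_2(b')$, and the failure of oppositeness must come from a ``common panel'' phenomenon. The key geometric fact in an $\tilde A_2$ building at infinity is that two chambers failing to be opposite share either a vertex or a wall, and this can be encoded via the projection $\proj_{\psi_1(b)}(\psi_1(b'))$ lying in a constrained position inside $\res(\psi_1(b))$, i.e. inside $\dt_{\psi_1(b)}$ under the identification of Proposition~\ref{bijection ends chambers}.

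The main step, and the hard part, is to derive a contradiction from this constraint exactly as in Lemmas~\ref{no T1} and~\ref{no d2T1}. Concretely: the failure of oppositeness would give, via the projection maps, a measurable $G$-equivariant assignment $(b,b') \mapsto$ a point or a pair of points in the panel tree $T_{\psi_1(b)}$ (or $T_{\psi_2(b)}$), and then by relative isometric ergodicity of the $G$-boundary the dependence on $b'$ disappears, producing a $G$-map $B \to \dt_1$ whose image in $\res(\psi_1(b))$ has circumradius $\le \pi/2$; by Proposition~\ref{circumcenter} this yields a $G$-fixed point in $\bd X$, contradicting non-elementarity. This is structurally identical to the arguments already carried out, so the work is in checking that every relative position other than ``opposite'' does fall into one of these cases. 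Once the contradiction is established, $\Omega_{\mathrm{opp}}$ must be conull, which is the assertion. I expect the only real obstacle to be the careful bookkeeping of the possible relative positions of two chambers of a thick $\tilde A_2$ spherical building and verifying each non-opposite one is ruled out by an Adams--Ballmann/relative-ergodicity argument; the measurability of all the maps involved follows from the metric-field framework set up in Section~\ref{section meas struc A2}.
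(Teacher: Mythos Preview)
Your overall strategy matches the paper's: push $\nu\otimes\nu$ forward from $\nu_B\otimes\nu_B$ via $\psi\times\psi$, use double ergodicity of $G\curvearrowright B\times B$ to make the relative position of $\psi(b),\psi(b')$ constant, and rule out each non-opposite position. The difference is in how the case analysis is organized, and there is a genuine gap in your treatment of the gallery-distance-$2$ case.

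The paper does not work with the vertex maps $\psi_1,\psi_2$ and the projections at all here. Instead it considers the single $G$-invariant map
\[
\phi:B\times B\to W,\qquad (b,b')\mapsto \delta(\psi(b),\psi(b')),
\]
where $\delta$ is the Weyl distance and $W\simeq S_3$. By double ergodicity $\phi$ is essentially constant. Length~$0$ forces $\psi$ constant (fixed chamber, contradiction); length~$1$ forces all $\psi(b')$ into a fixed residue of circumradius $\le\pi/2$ (contradiction via Proposition~\ref{circumcenter}); and for length~$2$ the paper observes that $\delta(\psi(b'),\psi(b))=\delta(\psi(b),\psi(b'))^{-1}$, so the constant value of $\phi$ must be an involution in $W$, whereas the two length-$2$ elements of $S_3$ are the $3$-cycles, which are not self-inverse. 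This one-line symmetry argument disposes of the length-$2$ case instantly.

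Your proposal does not have an analogue of this step. Your assertion that ``two chambers failing to be opposite share either a vertex or a wall'' is false at gallery distance~$2$: in an apartment of $\bdinf$ (a hexagon) two edges at distance~$2$ share no vertex. So the ``common panel phenomenon'' you invoke does not directly apply, and the sketch ``derive a contradiction exactly as in Lemmas~\ref{no T1} and~\ref{no d2T1}'' is not substantiated. One \emph{can} push your projection approach through here --- e.g.\ if the constant Weyl distance were $s_1s_2$ then $\proj_{\psi_2(b)}(\psi_2(b'))=\psi(b)$, forcing all $\psi_2(b')$ into $\res(\psi_1(b))$ --- but you have not supplied this analysis, and the Weyl-distance/self-inverse argument is both shorter and cleaner.
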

	
	\begin{proof}
		Check that due to Lemma \ref{lem non atomic}, $\nu$ is non-atomic. Consider the $G$-equivariant map $\psi : B \rightarrow \ch(\bdinf)$ given by Theorem \ref{thm bd map immeuble}, and the following measurable map : 
		\begin{eqnarray}
			\phi : & B\times B &\longrightarrow W \nonumber \\
			& (b, b') & \longmapsto \delta(\psi(b), \psi(b')) \nonumber, 
		\end{eqnarray}
		where $\delta$ is the Weyl distance function associated to the Coxeter complex $(W, S)$ of type $A_2 $. Then by $G$-equivariance of $\psi$, $\phi$ is a $G$-invariant measurable map. By double ergodicity of the Poisson boundary, $\phi $ is then essentially constant. 
		
		If its essential value is $e$, then almost surely, $\psi(b) = \psi(b')$, and there is a $G$-fixed chamber on the boundary, which is impossible because the action is non-elementary. 
		
		Let us denote by $l_S$ the word metric on $W$ given by the set $S$. From now on, let $B_0, B_1 \subseteq B$ be conull sets such that for all $(b,b') \in B_0 \times B_1$, $\phi(b,b') $ is the essential value of $\phi$. If almost surely, $l_S(\phi(b,b'))= 1 $, then almost surely $\psi(b)$ and $\psi(b')$ are adjacent. Since $\phi$ is constant, then almost surely $\psi(b)$ and $\psi(b')$ contain a vertex of constant type. Let $b \in B_0$, denote by $u(b) \in \psi(b)$ the unique vertex of type $1$. Then for all $b' \in B_1$, $\psi(b') \in \res(u(b))$. Again, because the action is non-elementary, this is impossible by Proposition \ref{circumcenter} since $\res(u(b))$ has radius $\leq \pi/2$. 
		
		If $l_S(\phi)$ is almost surely equal to 2. Then the gallery between $\psi(b)$ and $\psi(b')$ is of constant type $(1,2)$ or $(2,1)$. But this is impossible because $\delta(\psi(b'), \psi(b))= \delta(\psi(b), \psi(b'))^{-1}$, so by ergodicity, we have that $\delta(\psi(b), \psi(b'))= \delta(\psi(b), \psi(b'))^{-1}$. 
		
		Consequently, $\psi(b)$ and $\psi(b')$ are almost surely opposite. The Proposition then follows from the decomposition 
		
		\begin{eqnarray}
			\nu = \int_{b \in B} \delta_{\psi(b)} d\mathbb{P}(b) \nonumber 
		\end{eqnarray}
		given by Theorem \ref{thm uniqueness stationary measure immeuble}. 
	\end{proof}

	\section{Hyperbolic elements}\label{section hyp eleme A2}
	
	In this section, we present the results obtained in a joint work with J.~Lécureux and J.~Schillewaert. We recall that a hyperbolic element is a semi-simple axial isometry. The main result is the following.  
	
	\begin{ethm}[{Existence of hyperbolic elements}]\label{mainthm}
		Let $G$ be a group with a non-elementary isometric action on a metrically complete locally finite $\tilde{A}_2$-building $X$. Then there exists a hyperbolic element in $G$.
	\end{ethm}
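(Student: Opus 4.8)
The plan is to deduce the existence of a hyperbolic (semi-simple axial) element from the uniqueness of the stationary measure $\nu$ on $\ch(\bdinf)$ proved in Theorem \ref{thm uniqueness stationary measure immeuble}, together with the fact that $\nu\otimes\nu$-almost every pair of chambers at infinity is opposite (Proposition \ref{prop proba opposite chambers}). First I would fix an admissible symmetric probability measure $\mu$ on $G$ whose support generates $G$; such a $\mu$ exists since $G$ is countable (or locally compact second countable). By Theorem \ref{thm isom ss immeuble affine} every element of $G$ is either elliptic or axial, so it suffices to rule out the possibility that every element of $G$ is elliptic. Suppose, for contradiction, that all elements of $G$ are elliptic, i.e.\ each $g\in G$ fixes a point of $X$ (by the Bruhat–Tits fixed point theorem, since an elliptic isometry has a bounded orbit).

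The key step is to exploit the random walk $(Z_n o)$ and the structure of opposite chambers. Since $\nu\otimes\nu$-almost every pair $(C,C')$ of chambers is opposite, and two opposite chambers lie in a unique common apartment $A(C,C')$, I would consider two independent trajectories $\omega,\omega'$ of the walk, and the pair of limit chambers $(\psi(\bnd_+(\omega)),\psi(\bnd_-(\omega')))$. Almost surely these are opposite, spanning an apartment $A$, which is a flat copy of the Euclidean plane tiled by equilateral triangles, with a bi-infinite family of walls. The plan is to show that the random walk, ``viewed from the apartment $A$'', has positive drift: projecting the walk onto $A$ via the building retraction $\rho_{A,C}$ centered on a chamber and using Kingman's subadditive theorem (Theorem \ref{thm kingman}) on the cocycle $d(Z_n o,o)$, one obtains a well-defined drift $\lambda\geq 0$. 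If $\lambda>0$, a standard argument (a ping-pong / contraction argument between the two opposite limit chambers, in the spirit of Proposition \ref{non elem caprace fuj intro}) produces an element $g$ with $g C$ close to $C^+=\psi(\bnd_+(\omega))$ and $g^{-1}C$ close to $C^-=\psi(\bnd_-(\omega'))$, forcing $g$ to translate along a geodesic in $A$ joining $C^-$ to $C^+$, hence to be axial. The contradiction with the all-elliptic assumption would then finish the proof.

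The main obstacle, and where the all-elliptic hypothesis must be used, is ruling out $\lambda=0$ and, more seriously, handling the degenerate situation where the walk stays bounded in $X$. If every element is elliptic, one cannot directly appeal to the random walk having positive drift; instead I would argue as follows. The group $G$ acts without fixed point and without invariant flat (non-elementarity), yet every element is elliptic; by the local-to-global structure theory (e.g.\ the results of \cite{schillewaert_struyve_thomas22} quoted in the excerpt, or an argument à la Osajda–Przytycki \cite{osajda_przytycki21}) a group all of whose elements are elliptic acting on a complete $\tilde A_2$-building has a global fixed point in $X$ — but this contradicts non-elementarity. Since the paper later proves a fixed-point theorem (the ``Point fixe'' theorem in the introduction) precisely as a consequence of Theorem \ref{mainthm}, I would instead give a self-contained argument: use the unique stationary measure $\nu$ on $\ch(\bdinf)$ and the associated barycenter/projection machinery of Section \ref{section meas struc A2} to show that if $G$ had no axial element, then the supports of the limit chambers would be forced to collapse (the walk would be tight), yielding a $G$-invariant bounded set in $X$ and hence a fixed point, again contradicting non-elementarity.

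Concretely, the steps in order would be: (1) fix admissible symmetric $\mu$ and reduce to showing $G$ contains an axial element, using Theorem \ref{thm isom ss immeuble affine}; (2) invoke Theorem \ref{thm uniqueness stationary measure immeuble} to get the unique $\mu$-stationary $\nu$ on $\ch(\bdinf)$ with decomposition $\nu=\int_B \delta_{\psi(b)}\,d\nu_B(b)$, and Proposition \ref{prop proba opposite chambers} for almost-sure oppositeness; (3) assuming all elements elliptic, each $g\in G$ fixes a point, and by a compactness/convexity argument (Bruhat–Tits, Proposition \ref{prop bruhat tits centre}) together with the boundary maps, derive that $G$ stabilizes a bounded subset of $X$; (4) conclude $G$ fixes a point of $X$, contradicting the hypothesis that the action is non-elementary (no fixed point). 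The hard part is step (3): turning the measure-theoretic non-triviality of $\nu$ on opposite chambers into an honest bounded invariant subset of $X$, which requires controlling how the two opposite limit chambers and the apartments they span vary, and using the finite-type combinatorics of $\tilde A_2$ (gallery distance $\leq 3$, unique apartment through opposite chambers) to pin down a canonical bounded region. I expect this to be where local finiteness of $X$ is genuinely used, via a compactness argument on the space of apartments.
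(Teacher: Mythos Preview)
Your proposal has a genuine gap, and it is precisely at the point you flag as ``the hard part''. You set up a proof by contradiction (assume every element is elliptic) and then try to show that this forces a $G$-invariant bounded subset of $X$. But your step (3) is not an argument: you never explain how the measure-theoretic information on $\nu$ produces a bounded invariant set, and the vague tightness/collapse heuristic does not work --- the random walk can perfectly well go to infinity in a building with no axial elements a priori, and nothing in the oppositeness of limit chambers pins down a bounded region. Worse, you correctly observe that appealing to Schillewaert--Struyve--Thomas or Osajda--Przytycki would short-circuit the whole paper, since the local-to-global fixed point theorem is derived here \emph{from} Theorem \ref{mainthm}.

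The paper avoids this trap entirely by arguing directly rather than by contradiction, and the key input you are missing is Izeki's theorem (Theorem \ref{thm izeki}): for a non-elementary action on a proper $\cat(0)$ space and $\mu$ with finite second moment, the drift is automatically positive. There is no need to assume all elements elliptic. With $\lambda>0$ in hand, Karlsson--Margulis gives almost sure convergence $Z_n o\to\xi^+(\omega)\in\bd X$, and the stationary measure argument (Theorem \ref{thm cv reg A2}) shows $\xi^+(\omega)$ is regular, hence lies in a chamber $C(\omega)$. The second ingredient you do not have is a concrete hyperbolicity criterion: Lemma \ref{hyperbolic-creation} says that if $[g^{-1}o,o]$ and $[o,go]$ lie in opposite sectors based at $o$, then $g$ is hyperbolic. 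To produce such a $g=Z_n$, the paper uses an ergodic average (Lemma \ref{lem:nu(omega)}, via Breiman's law of large numbers) to show that for a chamber $c'$ opposite to $c=\proj_{\res(o)}(C(\omega))$ with $\nu(\Omega_o(c'))>0$ --- and such $c'$ exists by Proposition \ref{prop proba opposite chambers} and local finiteness --- the set of times $n$ with $Z_n^{-1}o\in\Omega_o(c')$ has positive density, hence is infinite. This yields infinitely many $n$ for which $Z_no$ and $Z_n^{-1}o$ sit in opposite sectors at $o$, and Lemma \ref{hyperbolic-creation} finishes. Your ping-pong sketch gestures in this direction but never isolates the opposite-sectors criterion or the Birkhoff-type recurrence needed to realise it.
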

	
	From this we deduce a fixed point theorem if the group is finitely generated. 
	
	\begin{ethm}[{Fixed point}]\label{local-global}
		Let $G$ be a finitely generated group acting on a simplicial building of type $\tilde{A}_2$. Assume that all elements of $G$ are elliptic. Then $G$ fixes a point of $X$.
	\end{ethm}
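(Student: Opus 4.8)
The plan is to argue by contradiction: suppose $G$ is finitely generated, acts on a simplicial $\tilde{A}_2$-building $X$ with all elements elliptic, and yet $G$ has no global fixed point. First I would dispose of the elementary case. If the action were elementary, then $G$ would either stabilise a (possibly degenerate) flat or fix a point at infinity; the case of a fixed point $\xi\in\bd X$ combined with everyone being elliptic still needs the building structure to rule out, but the key is that a finitely generated group of elliptic isometries which is elementary must actually fix a bounded set, hence fix a point by Bruhat--Tits (Corollary of Proposition~\ref{prop bruhat tits centre}). Indeed, in an affine building the stabiliser of a point at infinity acts on a smaller-dimensional building (a panel tree or a sector germ), and a finitely generated group of elliptic isometries of a tree or a lower-rank building fixes a point by induction on the rank, via the standard argument that two elliptic isometries whose fixed-point sets are disjoint produce a hyperbolic product. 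So after this reduction we may assume the action $G\curvearrowright X$ is non-elementary.

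Next I would invoke Theorem~\ref{mainthm} (Existence of hyperbolic elements): a non-elementary isometric action on a metrically complete locally finite $\tilde{A}_2$-building contains a hyperbolic element. A simplicial $\tilde{A}_2$-building is locally finite precisely when it is thick of finite order, which is automatic for a discrete simplicial building with finitely many chambers per panel; and its geometric realization is metrically complete by Proposition~\cite[Proposition 2.10]{parreau00}. The only subtlety is that Theorem~\ref{mainthm} as stated assumes \emph{local finiteness}, i.e.\ that each panel lies in finitely many chambers, whereas a general simplicial $\tilde{A}_2$-building need not be locally finite. To handle this I would pass to a minimal $G$-invariant subbuilding, or more directly observe that since $G$ is finitely generated, the subcomplex spanned by a $G$-orbit of a chamber together with a fixed finite generating set is contained in a sub-building on which $G$ acts, and one can reduce to the thick building it generates; alternatively one uses that the statement of Theorem~\ref{mainthm} holds more generally for separable metrically complete $\tilde{A}_2$-buildings as noted in the introduction. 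Granting this, $G$ contains a hyperbolic element $g$, i.e.\ a semi-simple isometry with $|g|>0$. This directly contradicts the hypothesis that every element of $G$ is elliptic, since an elliptic element has translation length $0$. Hence the assumption that $G$ has no fixed point is untenable, and $G$ fixes a point of $X$.

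The main obstacle is the reduction to the non-elementary case combined with the local finiteness issue in applying Theorem~\ref{mainthm}. Concretely, one must check that a finitely generated group all of whose elements are elliptic, acting on an $\tilde{A}_2$-building, cannot be genuinely elementary without already having a fixed point; this uses the structure of stabilisers of boundary simplices (they act on panel trees and sector-germ buildings of strictly smaller rank) and an induction, the base cases being: a finitely generated group of elliptic isometries of a tree fixes a vertex or an edge (hence a point), and a finitely generated group of elliptic isometries of a spherical building of radius $\le \pi/2$ fixes a point by Proposition~\ref{circumcenter}. The geometric content — that disjointness of fixed-point sets of two elliptic isometries forces a hyperbolic product along a bridge — is standard in $\cat(0)$ geometry and is exactly the mechanism behind Proposition~\ref{non elem caprace fuj intro}. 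Once the action is non-elementary, everything else is immediate from Theorem~\ref{mainthm}.
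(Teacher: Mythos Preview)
Your outline is essentially the paper's approach: use Theorem~\ref{mainthm} to force the action to be elementary, then handle the elementary case via the action on a panel tree and Serre's fixed-point theorem for trees (Proposition~\ref{prop pt fixe serre}). However, your treatment of the elementary case has a gap at the lifting step. A fixed point $x_0$ in the panel tree $T_\xi$ is \emph{not} a point of $X$; the fibre $\pi_\xi^{-1}(x_0)$ is an unbounded set of strongly asymptotic rays, so you cannot conclude by Bruhat--Tits, and the ``disjoint fixed sets force a hyperbolic product'' mechanism you invoke does not by itself transport fixed points from $T_\xi$ back to $X$.

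The paper fills this gap as follows. Each $g\in G$ is elliptic, so fixes some $y\in X$; since $g$ also fixes $\xi$, it fixes the whole ray $[y,\xi)$, and because $g$ preserves horospheres centred at $\xi$ one checks that $g$ actually fixes a ray in the strong-asymptote class $x_0$. Thus each generator $g_i$ fixes a half-line $[x^{g_i},\xi)$ projecting to the common point $x_0\in T_\xi$. Any two such half-lines are strongly asymptotic and hence share a terminal sub-ray; with finitely many generators one obtains a single half-line fixed by all of them, hence by $G$. This lifting argument is the actual content of the elementary case and should be made explicit in your write-up. Your observation about the local-finiteness hypothesis in Theorem~\ref{mainthm} is pertinent; the paper implicitly assumes it.
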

	
	\subsection{Convergence to the boundary}
	
	Let $X$ be a building of type $\tilde{A}_2$ and let $G$ be a discrete countable group acting continuously by isometries on $X$ without fixed points nor invariant flats. Our approach for proving Theorem \ref{mainthm} involves the study of a random walk $(Z_n o)$ generated by some well-chosen measure $\mu$ on $G$. More precisely, we can take $\mu \in \prob(G)$ a symmetric admissible probability measure, and assume that $\mu$ has finite second moment: 
	\begin{eqnarray}
		\sum_{g \in G} d(g o, o)^2 \mu(g) < \infty \nonumber. 
	\end{eqnarray}
	
	We are going to use the following theorem from H. Izeki. 
	\begin{ethm}[{\cite[Theorem A]{izeki22}}]\label{thm izeki}
		Let $X$ be a complete CAT(0) space which is either proper or of finite
		telescopic dimension, and let $G$ be a discrete countable group equipped with a symmetric
		and admissible probability measure $\mu$ with finite second moment. Assume that $G \curvearrowright X $ is a non-elementary action. Then the drift of the random walk is positive:
		\begin{eqnarray}
			l_X(\mu) > 0 \nonumber. 
		\end{eqnarray}
	\end{ethm}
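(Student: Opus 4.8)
The plan is to prove the contrapositive, in the sharp form behind \cite[Theorem A]{izeki22}: if the drift $l_X(\mu)$ vanishes, then $X$ contains a $G$-invariant flat. Since a non-elementary action stabilizes no flat (possibly degenerate to a point) and fixes no point of $\bd X$, this forces $l_X(\mu)>0$. First I would record, via Kingman's subadditive theorem \ref{thm kingman} applied to the integrable cocycle $a(n,\omega)=d(Z_n(\omega)o,o)$ (integrable since $\mu$ has finite first, indeed second, moment), that $\lambda:=l_X(\mu)=\lim_n \tfrac1n d(Z_n o,o)$ exists and is essentially constant. The whole content is then the dichotomy in the degenerate regime $\lambda=0$.

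The second step sets up the boundary machinery. Because $X$ has finite telescopic dimension, the horofunction bordification $\overline{X}^h$ is compact (Arzel\`a--Ascoli), so Proposition \ref{prop existence stationnaire compact} gives a $\mu$-stationary measure $\nu$ on $\overline{X}^h$, and Theorem \ref{thm mesures limites} furnishes the conditional measures $\nu_\omega=\lim_n Z_n(\omega)_\ast\nu$. Symmetry and admissibility of $\mu$ make the Poisson boundary $(B,\nu_B)$ a $G$-boundary by Theorem \ref{thm paire de bords}, hence amenable and doubly ergodic \ref{thm double ergod}; the stationary data is thereby equivalent to a $G$-equivariant measurable map $B\to \prob(\overline{X}^h)$, on which the ergodicity of $B\times B$ can be brought to bear.

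The engine is Karlsson's non-positive-curvature ergodic theorem, the cocycle refinement of \cite{karlsson_margulis}: the finite second moment guarantees that the Busemann cocycle $\beta(g,\xi)=b_\xi(g^{-1}o)$ is square-integrable, and attaches to $\mathbb{P}$-almost every $\omega$ a Busemann function $h_\omega$ with $\tfrac1n h_\omega(Z_n(\omega)o)\to -\lambda$. By a Birkhoff-type computation analogous to Proposition \ref{Busemann cocyle prop} and Remark \ref{rem average cocycle}, $\lambda$ equals the $\mu\otimes\nu$-average of $\beta$. When $\lambda=0$ this says the walk gains no linear amount of any horofunction; the goal is to upgrade this soft ``no linear escape'' to genuine rigidity. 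Using convexity of Busemann functions together with the $\cat$(0) four-point inequality $d(z,m)^2\le \tfrac12(d(z,x)^2+d(z,y)^2)-\tfrac14 d(x,y)^2$, the vanishing of the average combined with the finite second moment should yield an $L^2$/variance estimate forcing the conditional measures $\nu_\omega$ to charge pairs $(\xi,\eta)$ joined by a geodesic line; by the flat strip theorem and Theorem \ref{tits} such pairs lie at Tits distance $\ge\pi$, and the convex hull of the corresponding lines is a Euclidean strip.

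Finally I would make the flat $G$-invariant. Double ergodicity of $B\times B$ controls the Tits-diameter of the support of the resulting $G$-map into $\partial_T X$, and since in finite telescopic dimension any subset of the Tits boundary of radius $\le \pi/2$ carries a canonical circumcenter (Propositions \ref{circumcenter}, \ref{prop bruhat tits centre}), one obtains either a fixed point at infinity or, after splitting off the maximal Euclidean de Rham factor, a $G$-invariant flat; the measurable equivariant Adams--Ballmann theorem \ref{thm meas adams ballmann} applied to the amenable boundary action packages this last passage. Non-elementarity excludes both outcomes, whence $\lambda>0$. The main obstacle is precisely the third step: translating ``no linear escape'' into ``invariant flat.'' This is exactly where the finite second moment is indispensable, since it is the $L^2$ estimate — not mere integrability — that prevents the conditional measures from diffusing over the boundary and pins the walk into a flat; controlling this in the absence of any contracting element, where the walk need not converge to a single boundary point, is the delicate heart of the argument.
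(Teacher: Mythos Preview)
The paper does not prove this statement. Theorem \ref{thm izeki} is quoted verbatim from \cite[Theorem A]{izeki22} and invoked as a black box in the proof of Theorem \ref{thm cv reg A2} (``Due to Theorem \ref{thm izeki}, the random walk has positive drift $l_X(\mu)>0$''). There is therefore no in-paper proof to compare your proposal against.

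What you have written is a plausible high-level outline of how one might approach Izeki's result, and you correctly identify the contrapositive formulation and the role of the finite second moment. But as you yourself acknowledge, the third step---passing from ``zero linear escape'' to an honest $G$-invariant flat---is where all the work lies, and your sketch does not supply it. The sentence ``the vanishing of the average combined with the finite second moment should yield an $L^2$/variance estimate forcing the conditional measures $\nu_\omega$ to charge pairs $(\xi,\eta)$ joined by a geodesic line'' is a hope, not an argument: you have not said which quantity has vanishing variance, nor why that forces the support onto antipodal pairs, nor how to pass from a measurable family of flat strips to a single $G$-invariant flat. Izeki's actual proof uses a rather different mechanism (harmonic map techniques and energy estimates for equivariant maps into $\cat(0)$ targets), not the Busemann-cocycle and boundary-ergodicity route you sketch. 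If you want to reconstruct the proof, you should consult \cite{izeki22} directly; the thesis treats the result as input, not as something to be reproved.
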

	
	Using this result, we can prove that almost surely, the random walk converges to the boundary. We recall that in our notations, $(\Omega,\mathbb{P}) = (G^\mathbb{N}, \mu^{\otimes \mathbb{N}})$ is the increment space on $G$, endowed with the product $\sigma$-algebra. The group $G$ acts on $\Omega$ in the following way: for $g \in G$ and $\omega \in \Omega$, 
	\begin{eqnarray}
		g. (\omega_1, \omega_2, \dots) & =& (g\omega_1, \omega_2, \dots). \nonumber
	\end{eqnarray}
	Also, $\mathbb{N}$ acts on $\Omega$ by the transformation map:
	\begin{eqnarray}
		T : \omega  = (\omega_1, \omega_2, \dots) \in \Omega & \mapsto & (\omega_1\omega_2, \omega_3, \dots) \in\Omega \nonumber
	\end{eqnarray}
	Both the $G$-action and the $T$-action are on $\Omega$ are non-singular, and they commute. Last, recall that if we denote by $(B, \nu_B)$ the Poisson-Furstenberg boundary associated to $\mu$, we have a canonical measurable map 
	\begin{eqnarray}
		\bnd : \Omega \rightarrow B \nonumber
	\end{eqnarray}
	such that $\bnd_\ast \mathbb{P} = \nu_B$.

	\begin{ethm}[{Convergence to regular points}]\label{thm cv reg A2}
		Let $X$ be a locally finite building of type $\tilde{A}_2$ and let $G$ be discrete countable group acting continuously by isometries on $X$ without fixed points nor invariant flats. Let $\mu$ be a symmetric and admissible probability measure $\mu$ with finite second moment. Then the random walk $(Z_n(\omega)o)$ generated by $\mu$ converges almost surely to a regular point of the boundary $\xi^+(\omega) \in \bd X$. 
	\end{ethm}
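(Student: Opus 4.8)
The plan is to first deduce convergence to the visual boundary $\bd X$ from a positive--drift statement, and then to upgrade ``convergence to $\bd X$'' to ``convergence to a regular point'' using the ergodicity of the Poisson boundary together with the panel--tree machinery of Section~\ref{section meas struc A2}. To begin, note that a finite second moment implies a finite first moment, that a locally finite simplicial building is a complete, proper $\cat$(0) space of finite telescopic dimension, and that the action is non-elementary (no fixed point, no invariant flat); hence Izeki's Theorem~\ref{thm izeki} applies and gives $l_X(\mu)>0$. By the Kingman subadditive theorem~\ref{thm kingman} the drift $l_X(\mu)=\lim_n\frac1n d(Z_n(\omega)o,o)$ is then an almost sure limit.

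Next, I would apply the theorem of Karlsson and Margulis \cite{karlsson_margulis} to the cocycle $\omega\mapsto Z_n(\omega)=\omega_1\cdots\omega_n$ over the Bernoulli shift $(\Omega,\mathbb P,S)$ (it is integrable since $\mu$ has finite first moment, and has positive drift $l_X(\mu)$): for $\mathbb P$-almost every $\omega$ there is a geodesic ray $\gamma^\omega$ from $o$ with $\frac1n d(Z_n(\omega)o,\gamma^\omega(l_X(\mu)n))\to 0$. Since $X$ is proper, $\overline X$ is compact and this forces $Z_n(\omega)o\to\xi^+(\omega):=\gamma^\omega(\infty)\in\bd X$ in the cone topology. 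The map $\xi^+\colon\Omega\to\bd X$ is measurable, $G$-equivariant, and $T$-invariant (indeed $Z_n(T\omega)o=Z_{n+1}(\omega)o$), so it factors through the Poisson boundary as $\xi^+=\Psi\circ\bnd$ for a $G$-map $\Psi\colon B\to\bd X$, and the hitting measure $\nu:=\Psi_*\nu_B$ is $\mu$-stationary and $G$-ergodic.

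It then remains to show that $\nu$ gives full mass to the open $G$-invariant set $R\subseteq\bd X$ of regular points. By ergodicity $\nu(R)\in\{0,1\}$, so suppose $\nu(R)=0$. Since $\bdinf_1$ and $\bdinf_2$ are closed and $G$-invariant, we may then assume $\nu(\bdinf_1)=1$, i.e.\ $\Psi$ is a $G$-map $B\to\bdinf_1$. The proof of Theorem~\ref{thm uniqueness chamber map} shows that $\Map_G(B,\bdinf_1)$ is a singleton, equal to $p_1\circ\psi$, where $\psi\colon B\to\ch(\bdinf)$ is the boundary map of Theorem~\ref{thm bd map immeuble} and $p_1$ sends a chamber to its type-$1$ vertex; hence $\Psi=p_1\circ\psi$ and $\psi(b)$ is an end of the panel tree $T_{\Psi(b)}$. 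Moreover $\bar\nu:=\psi_*\nu_B$ is the unique $\mu$-stationary measure on $\ch(\bdinf)$ by Theorem~\ref{thm uniqueness stationary measure immeuble}, and $(\ch(\bdinf),\bar\nu)$ is a $\mu$-boundary, so Theorem~\ref{thm furst mu bord} gives $Z_n(\omega)_*\bar\nu\to\delta_{\psi(\bnd\omega)}$. Combined with $Z_n(\omega)o\to\Psi(\bnd\omega)$ in $\bd X$, this says that the walk escapes along $T_{\Psi(\bnd\omega)}$ towards the end $\psi(\bnd\omega)$. I would contradict this using the positive drift: the non-elementary action of $G$ on the measurable field of panel trees $T_1$ over $(\bdinf_1,\nu)$ has no invariant Euclidean subfield (Lemmas~\ref{no T1} and~\ref{no d2T1}), so a fibered form of the Izeki / Karlsson--Margulis argument shows that the transverse drift of $Z_n o$ along $T_{\Psi(\bnd\omega)}$ is linear, whereas the convergence $Z_n(\omega)o\to\Psi(\bnd\omega)$ to the \emph{singular} vertex $\Psi(\bnd\omega)$ only allows a sublinear transverse drift. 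Therefore $\nu(R)=1$, which is the desired statement.

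The main obstacle is precisely this last step: making rigorous, in a measurable and fiberwise way, the passage from ``positive drift in $X$ plus non-elementarity'' to ``linear drift along the panel trees'', and controlling the measurable field $T_1$ together with the $\omega$-dependence of the tree $T_{\Psi(\bnd\omega)}$ (so that the fibered drift argument can be run over $(\bdinf_1,\nu)$, using Lemma~\ref{lem non atomic} to rule out finite orbits on $\bdinf_1$ and Proposition~\ref{circumcenter} for the residual cases). Everything preceding it — the verification of Izeki's hypotheses, the Karlsson--Margulis convergence, the factorization through $B$, and the ergodic dichotomy $\nu(R)\in\{0,1\}$ — is routine.
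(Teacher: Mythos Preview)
Your first two steps (positive drift via Izeki, convergence via Karlsson--Margulis, factoring through $B$) are correct and coincide with the paper's proof. The divergence is entirely in how you prove regularity, and your final step is a genuine gap, as you yourself suspect.

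The ``fibered drift'' argument you sketch is not available in the paper and is not straightforward to set up. You would need a version of Izeki's theorem (or of Karlsson--Margulis) for a measurable field of trees $\{T_{\Psi(b)}\}_{b\in B}$ with a cocycle action, together with a statement that Lemmas~\ref{no T1} and~\ref{no d2T1} (no $G$-maps $B\to T_1$ or $B\to\partial_2 T_1$) force positive drift in almost every fibre. Neither implication is established anywhere, and the second one in particular is delicate: non-existence of equivariant sections is a much weaker condition than non-elementarity of an action on a single tree. Your companion claim --- that convergence $Z_n o\to u$ to a \emph{singular} point forces \emph{sublinear} growth of $d_{T_u}(\pi_u(o),\pi_u(Z_n o))$ --- is plausible (it is essentially a statement about angles going to zero) but also requires a proof in the building setting, and you have not supplied one.

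The paper bypasses all of this with a short dynamical argument on $\bd X$ itself. It uses the unique stationary measure $\nu$ on $\ch(\bdinf)$ from Theorem~\ref{thm uniqueness stationary measure immeuble}, pushes it forward to $\bd X$ via the map $\mathrm{mid}$ sending a chamber to its Tits-midpoint, and observes (from $\nu_\omega=\delta_{\psi(\omega)}$) that $Z_n(\omega)_*\,\mathrm{mid}_*\nu\to\delta_{\mathrm{mid}\,\psi(\omega)}$. On the other hand, extracting a subsequence with $Z_{\phi(n)}^{-1}o\to\eta$ and applying the Papasoglu--Swenson $\pi$-convergence Lemma (Theorem~\ref{thm PS}) with $\theta=\pi-\pi/6$, together with Proposition~\ref{prop proba opposite chambers} (so that $\mathrm{mid}_*\nu$ gives zero mass to $B_T(\eta,\pi-\pi/6)$), yields $Z_{\phi(n)}(\omega)_*\,\mathrm{mid}_*\nu\to\delta_{\xi(\omega)}$. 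Comparing the two limits gives $\xi(\omega)=\mathrm{mid}\,\psi(\omega)$, which is regular by construction. This argument uses only tools already proved in the paper and avoids any fibrewise drift analysis.
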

	In the course of the proof, we will use the following convergence-type property, which was proven by Papasoglu and Swenson. For $\xi $ a point in $\bd X$, we write $B_T(\xi, \theta)$ the closed ball of radius $\theta$ around $\xi$ for the Tits metric.
	\begin{ethm}[{\cite[Lemma 19]{papasoglu_swenson09}}]\label{thm PS}
		Let $X$ be a proper $\cat (0)$ space, and $G \curvearrowright X$ an action by isometries. Let $x \in X$, $\theta \in [0, \pi]$ and $(g_n) \subseteq G$ be a sequence of isometries for which there exists $x \in X $ such that $g_n(x) \rightarrow \xi \in \bd X $ and $ g^{-1}_n(x) \rightarrow \eta \in \bd X$. Then for all compact subset $K \subseteq \bd X -  B_T(\eta, \theta)$ and for all open subset $U$ such that $B_T(\xi, \pi - \theta)\subseteq U$, there exists $n_0$ such that for all $n \geq n_0$, $g_n(K) \subseteq U$. 
	\end{ethm}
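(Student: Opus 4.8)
The plan is to prove this as a weak North--South (or $\pi$-convergence) statement on the Tits boundary, via a compactness-and-contradiction argument that converts the hypotheses on the Tits balls $B_T(\eta,\theta)$ and $B_T(\xi,\pi-\theta)$ into estimates on Alexandrov angles, which are in turn controlled by the angle-sum inequality in $\cat$(0) triangles together with the invariance of angles under isometries. Since $X$ is proper, $\overline{X}=X\cup\bd X$ is compact and $\bd X$ is compact for the cone topology; this is exactly what allows the reduction to sequences.

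First I would set up the contradiction. Suppose the conclusion fails: there are a subsequence (still written $g_n$) and points $\zeta_n\in K$ with $g_n\zeta_n\notin U$. By compactness of $\bd X$ and of $K$ (resp. of the closed set $\bd X\setminus U$), I pass to a further subsequence so that $\zeta_n\to\zeta\in K$ and $g_n\zeta_n\to\omega\in\bd X\setminus U$ in the cone topology. Because $B_T(\xi,\pi-\theta)\subseteq U$ we get $d_T(\xi,\omega)>\pi-\theta$, while $\zeta\in K$ gives $d_T(\eta,\zeta)>\theta$. The goal is then to contradict the first inequality. The key translation is that, for two boundary points, $\angle$ and $d_T$ coincide as soon as their common value is below $\pi$ (Theorem \ref{tits}), so it suffices to produce the angular estimate $\angle(\xi,\omega)\le\pi-\angle(\eta,\zeta)<\pi$, after which the equality $d_T=\angle$ below $\pi$ upgrades it to $d_T(\xi,\omega)=\angle(\xi,\omega)<\pi-\theta$, the desired contradiction. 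Here I would use that $d_T(\eta,\zeta)>\theta$ forces $\angle(\eta,\zeta)=\min(\pi,d_T(\eta,\zeta))>\theta$ (again by Theorem \ref{tits}), so that $\angle(\eta,\zeta)$ is the quantity feeding the estimate.

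The driving geometric input is the angle inequality in $\cat$(0): for the (possibly ideal) triangle $\Delta(o,g_no,g_n\zeta_n)$ one has $\angle_o(g_no,g_n\zeta_n)+\angle_{g_no}(o,g_n\zeta_n)\le\pi$, and by invariance of angles under isometries $\angle_{g_no}(o,g_n\zeta_n)=\angle_o(g_n^{-1}o,\zeta_n)$ (apply $g_n^{-1}$), see \cite{bridson_haefliger99}. As $n\to\infty$ the segments $[o,g_no]$ and the rays $[o,g_n\zeta_n)$ converge uniformly on compacta to rays representing $\xi$ and $\omega$, while $g_n^{-1}o\to\eta$. The plan is to pass to the limit and read this inequality as $\angle(\xi,\omega)\le\pi-\angle(\eta,\zeta)$, using the lower semicontinuity of the angular metric for the cone topology (Proposition \ref{prop topo tits }) to retain control of the limiting angles. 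Because the angular metric is the supremum of the angles over all basepoints, the decisive point is to measure the relevant angles at the escaping points $g_no$ and $g_n^{-1}o$ rather than only at the fixed basepoint $o$, where the Alexandrov angle is merely upper semicontinuous; one then compares, through Euclidean comparison triangles along the diverging geodesics, the angle seen from $g_no$ looking back toward $o$ with the angular distance at infinity. This comparison step --- reconciling the upper semicontinuity of angles based at a fixed point with the supremum that defines the Tits metric --- is precisely the content of Karlsson's argument in the case $\theta=\pi/2$ (Theorem \ref{thm pi/2 cv karlsson}, \cite{karlsson05}), and it is the main obstacle here: it is the part that genuinely uses the proper, finite-dimensional $\cat$(0) geometry rather than soft topology.

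Finally I would treat separately the degenerate case where $\xi$ and $\omega$ (or $\eta$ and $\zeta$) are joined by a geodesic line, i.e. where $\angle=\pi$ but $d_T\ge\pi$ and the two metrics part ways (Theorem \ref{tits}, item 3): there the configuration contains a flat strip or half-flat, and I would argue directly that such a flat is incompatible with $g_no\to\xi$ and $g_n^{-1}o\to\eta$ unless the required Tits estimate already holds, so this case does not obstruct the conclusion. Assembling the generic estimate $\angle(\xi,\omega)\le\pi-\angle(\eta,\zeta)<\pi-\theta$ with $d_T=\angle$ below $\pi$ contradicts $d_T(\xi,\omega)>\pi-\theta$; since the failure of uniform containment was the only possibility, this yields $g_n(K)\subseteq U$ for all large $n$, as claimed.
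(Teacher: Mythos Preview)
The paper does not give its own proof of this statement: Theorem~\ref{thm PS} is quoted verbatim from \cite[Lemma 19]{papasoglu_swenson09} and used as a black box in the proof of Theorem~\ref{thm cv reg A2}, so there is nothing in the paper to compare your argument against.

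That said, your outline is essentially the standard $\pi$-convergence argument behind the cited result. The contradiction set-up, the triangle inequality $\angle_o(g_no,g_n\zeta_n)+\angle_{g_no}(o,g_n\zeta_n)\le\pi$, and the isometry identity $\angle_{g_no}(o,g_n\zeta_n)=\angle_o(g_n^{-1}o,\zeta_n)$ are exactly the ingredients used in \cite{papasoglu_swenson09} and in Karlsson's earlier $\theta=\pi/2$ version (Theorem~\ref{thm pi/2 cv karlsson}). You are also right that the delicate step is the semicontinuity mismatch: Alexandrov angles at a fixed basepoint are only upper semicontinuous in the cone topology, while the Tits metric is a supremum, so the passage to the limit requires the comparison-triangle estimate at the moving basepoints $g_no$ rather than a soft limiting argument at $o$. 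Your sketch acknowledges this but does not carry it out; the actual work in \cite{papasoglu_swenson09} is to control comparison angles along $[o,g_no]$ and show directly that limit points of $g_n\zeta_n$ lie within Tits distance $\pi-\theta$ of $\xi$. Your separate treatment of the ``degenerate'' case $\angle=\pi$ is not really needed once that estimate is in place, and the claim that a flat strip would be ``incompatible'' with the convergence hypotheses is not obviously true without further argument.
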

	\begin{proof}[Proof of Theorem \ref{thm cv reg A2}.]
		Due to Theorem \ref{thm izeki}, the random walk has positive drift $l_X(\mu) > 0$. By Karlsson and Margulis' Theorem \cite[Theorem 2.1]{karlsson_margulis}, the random walk $(Z_n o)$ converges almost surely to a point of the boundary $\xi (\omega)$. It remains to prove that $\xi (\omega)$ is almost surely a regular point. 
		
		Let $o \in X$ be a vertex. For every $C \in \ch(\bdinf)$, there is a unique Weyl chamber $Q(o, C)$ based at $o$ representing $C$. Recall that $Q(o, C)$ is isometric to the Euclidean cone $\mathcal{C}$ with angle $\pi/3$ based at 0 of the plane $\R^2$ (with the usual metric), as defined by
		\begin{eqnarray}
			\mathcal{C} := \{(x,y) \in \R^2_+ \, | \, y \leq \sqrt{3}x  \} \nonumber.
		\end{eqnarray} 
		In $\mathcal{C}$, define the geodesic ray $\bar{\gamma}(t) = (\frac{\sqrt{3}}{2}t, \frac{1}{2} t ) \in \R^2$ so that $\bar{\gamma}$ divides $\mathcal{C}$ in two isometric parts. In $Q(o, C)$, there is a unique geodesic ray corresponding to $\bar{\gamma}$ under the isometry $ \mathcal{C} \to Q(o, C)$, and we denote this geodesic ray by $\gamma$. The equivalence class of $\gamma$ gives a point of $C$, that we denote by $\text{mid}(C)$. From a geometric point of view,  $\text{mid}(C)$ is the middle of the chamber $ C$ (seen as a subset of $\bd X$) for the Tits metric. 
		
		Let $\nu$ be the unique $\mu$-stationary measure on the chambers at infinity $\ch(\bdinf)$. Define $\tnu$ as the image of $\nu$ under the map 
		\begin{eqnarray}
			 \text{mid} &:& \ch(\bdinf) \rightarrow \bd X \nonumber \\
			& & C \mapsto  \text{mid}(C), \nonumber
		\end{eqnarray}
		where  $\text{mid}(C)$ is the unique point of the chamber $C$ given by the construction above. 
		
		Note that since $\nu$ is $\mu$-stationary, Theorem \ref{thm mesures limites} implies that there exists a measurable map $ \omega \in \Omega \mapsto \nu_\omega \in \prob(\bd X) $ such that almost surely, 
		\begin{eqnarray}
			Z_n(\omega)_\ast \nu \rightarrow \nu_\omega \nonumber. 
		\end{eqnarray}
		Recall that by Theorem \ref{thm uniqueness chamber map}, we have that $ \nu_\omega = \delta_{\psi(\omega)}$, with $\psi(\omega)$ defined by Theorem~\ref{thm bd map immeuble}. 
		As the map $\text{mid}$ is $G$-equivariant and continuous, $\tnu$ is also $\mu$-stationary, and we immediately obtain that almost surely, 
		\begin{eqnarray}
			Z_n(\omega)_\ast \tnu \rightarrow \tnu_\omega \nonumber,
		\end{eqnarray}
		where 
		\begin{eqnarray}\label{eq nu tnu lim}
			\tnu_\omega = \text{mid}_\ast \nu_\omega= \delta_{\text{mid}_\ast\psi(\omega)}.
		\end{eqnarray} 
		
		Let $ \Omega' \subseteq \Omega $ such that for all $\omega \in \Omega'$, the random walk $Z_n(\omega) o $ converges to a point $\xi (\omega) \in \bd X$ of the boundary. For all $\omega \in \Omega' $, we can extract a subsequence $ \phi(n)$ such that $Z_{\phi(n)}(\omega)^{-1} o $ converges to a point $\eta  \in \bd X$. By Theorem \ref{thm PS}, we obtain that for all $\omega \in \Omega' $, and for all compact subset $K \subseteq \bd X -  B_T(\eta, \pi -\frac{\pi}{6} )$ and $U$ open subset containing $B_T(\xi(\omega), \pi/6)$, there exists $n_0$ such that for all $n \geq n_0$, $Z_{\phi(n)}(\omega)(K) \subseteq U$. 
		
		Now by Proposition \ref{prop proba opposite chambers},  $\nu \otimes \nu$-almost every pair of chambers in $\ch(\bdinf)$ are opposite. As a consequence, for $\tnu$-almost every $\eta \in \bd X$, $\tnu (B_T (\eta, \pi - \frac{\pi}{6}))=0$. 
		We then obtain that for all $\omega \in \Omega'$, 
		\begin{eqnarray}
			Z_{\phi(n)}(\omega)_\ast \tnu \rightarrow \delta_{\xi(\omega)} \nonumber. 
		\end{eqnarray}
		By uniqueness of the limit, $ \tnu_\omega = \delta_{\xi(\omega)}$. Combining this with Equation \eqref{eq nu tnu lim}, it means that $\xi(\omega )$ is a regular point of the boundary.
	\end{proof}
	
	\subsection{Local-to-global results}
	In this Section, we prove our local-to-global results.
	The following Lemma gives an easy criterium for an isometry to be hyperbolic in an affine building. 
	
	\begin{elem}\label{hyperbolic-creation}
		Suppose that there exists $g\in G$ and a point $o$ such that $go\neq o$ and  the segments $[g^{-1}o,o]$ and $[o,g(o)]$ are contained in  sectors $S_0$ and $S_1$ respectively, and $S_0$ and $S_1$ are based at $o$ and opposite at $o$. Then $g$ is hyperbolic.
	\end{elem}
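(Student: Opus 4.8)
The plan is to show that, under these hypotheses, the bi-infinite path obtained by concatenating the segments $[g^n o, g^{n+1}o]$, $n \in \mathbb{Z}$, is an honest geodesic line of $X$ on which $g$ translates by $d(o,go) > 0$; this immediately makes $g$ a semisimple axial, i.e. hyperbolic, isometry. Everything therefore reduces to the single local statement $\angle_o(g^{-1}o, go) = \pi$, i.e. to the fact that $[g^{-1}o, o] \cup [o, go]$ is already a geodesic segment (of length $d(g^{-1}o,o) + d(o,go) = 2\,d(o,go)$, since $g$ is an isometry).

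First I would prove this local statement from the opposition hypothesis. Two sectors $S_0, S_1$ based at $o$ whose chambers at infinity are opposite lie in a common apartment $A$ of $X$, and inside $A$ — a copy of the $\tilde{A}_2$-tiled Euclidean plane — they are interchanged by the central inversion $\sigma_o$ at $o$. Since $[g^{-1}o,o] \subseteq S_0 \subseteq A$ and $[o,go] \subseteq S_1 \subseteq A$, both segments live in the single apartment $A$ and issue from $o$ into opposite Weyl chambers, and I would argue that the opposition forces the two initial directions at $o$ to be antipodal in the link circle $\Sigma_o A \subseteq \Sigma_o X$, whence $\angle_o(g^{-1}o, go) = \pi$. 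The hard part will be precisely this step: the bare fact that $g^{-1}o$ lies in $S_0$ and $go$ in $S_1$ with $S_0, S_1$ opposite is not by itself enough (two segments issuing from $o$ into opposite Weyl chambers of $A$ need not make angle $\pi$), so one must use the combinatorial structure of opposite chambers in the spherical building $\bdinf$ (equivalently in $\Sigma_o$) together with the extra information built into the construction that produces $S_0$ and $S_1$ — namely that the two segments point towards mutually opposite boundary points, coming from the regular opposite points of $\bdinf$ supplied earlier — to pin down the antipodality of the directions.

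Once the local statement is in hand, I would propagate it along the orbit by equivariance. As $g$ is an isometry it preserves the Weyl chambers and the Tits metric on $\bdinf$, hence it sends the pair $(S_0, S_1)$ to a pair $(gS_0, gS_1)$ of sectors based at $go$ that are again opposite, while sending $[g^{-1}o,o]$ and $[o,go]$ to $[o,go]$ and $[go, g^2o]$; so the configuration at $go$ satisfies the same hypotheses, and inductively so does the configuration at every $g^n o$. The first step then gives $\angle_{g^n o}(g^{n-1}o, g^{n+1}o) = \pi$ for all $n \in \mathbb{Z}$, so the path $\gamma := \bigcup_{n \in \mathbb{Z}} [g^n o, g^{n+1}o]$ is locally geodesic, hence (in the $\cat$(0) space $X$) a geodesic line. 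Finally $g$ stabilizes $\gamma$ and translates it by $d(o,go)>0$, so by the classification of isometries of $\cat$(0) spaces — the characterization of axial isometries through invariant geodesic lines — $g$ is axial with $\lvert g\rvert = d(o,go) > 0$, i.e. hyperbolic.
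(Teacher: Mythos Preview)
You correctly spot the crux: two segments issuing from $o$ into opposite $\tilde{A}_2$-sectors make an angle in $[2\pi/3,\pi]$, not necessarily $\pi$, so mere opposition of $S_0$ and $S_1$ does not give $\angle_o(g^{-1}o,go)=\pi$. Your proposed repair, however, is a genuine gap. You try to import ``extra information built into the construction that produces $S_0$ and $S_1$'' from the application context (regular opposite boundary points), but the lemma is stated and must be proved with only the hypotheses given: $[g^{-1}o,o]\subseteq S_0$, $[o,go]\subseteq S_1$, and $S_0,S_1$ opposite at $o$. No information about where the segments point at infinity is available, and even in the application (Proposition~\ref{prop:probaopposite}) one only knows the projections on $\res(o)$ are opposite chambers, not that $Z_n o$ and $Z_n^{-1}o$ converge to opposite regular points. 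So as written your argument does not close.

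The paper takes a different route that avoids the angle-$\pi$ issue entirely. It does \emph{not} show the concatenated path is geodesic. Instead it propagates the opposite-sector configuration by equivariance (this part you do correctly) to obtain, for every $n$, sectors based at $g^{n-1}o$ that are opposite there and contain $g^{-1}o,\dots,g^{n-1}o$ on one side and $g^n o$ on the other. The cosine rule in the containing apartment, with the weaker bound $\theta\geq 2\pi/3$ (so $\cos\theta\leq -\tfrac12$), then gives a recursive inequality forcing $d(o,g^n o)\to\infty$. Hence $g$ is not elliptic, and one concludes by Parreau's theorem (Théorème~\ref{thm isom ss immeuble affine}) that every isometry of a complete affine building preserving Weyl chambers is semisimple. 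There is a way to salvage your alignment approach, recorded as a remark in the paper: since $g$ is type-preserving it maps $[g^{-1}o,o]$ to $[o,go]$, so $\theta[g^{-1}o,o]=\theta[o,go]$; two segments from $o$ with the \emph{same} type vector lying in opposite sectors are forced to be antipodal in the apartment, whence $g^{-1}o,o,go$ are aligned. That is the internal hypothesis you were missing.
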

	\begin{proof}
		First assume that the $g^{-1}o,o$ and $g(o)$ are contained in a line. Then we can repeatedly apply $g$ or $g^{-1}$ and we find that $g$ is hyperbolic and its translation axis. 
		
		If the segment $[o,go]$ was contained in a wall  then by applying $g^{-1}$ it is also the case of $[g^{-1}o,o]$. Hence $[g^{-1}o,o]$ is contained in one of the two walls of $S_0$. But since the action of $G$ on $X$ is type-preserving, there is only one of the two walls which is possible, and it is the wall which continues the segment $[o,go]$. Hence in this case we again get the result.
		
		So now let us assume that the segment $[o,go]$ is not contained in a wall, and therefore has a projection on $\res(o)$ which is exactly the chamber of $S_0$ containing $o$. 
		By \cite[Proposition 1.12]{parreau00} there exists an apartment $A$ containing $S_0$ and $S_1$. We thus find a sector $S'_0$ based at $g(o)$ which contains $S_0$ and thus $g^{-1}(o)$, $o$ and $g(o)$. Since $S_0$ and $S_1$ are opposite at $o$, the sectors $S''_0=gS_0$ and $S''_1=gS_1$ are opposite at $go$. Now the sectors $S'_0$ and $S''_0$ are both based at $go$ and contain the segment $[o,go]$. Since this segment is not contained in a wall, it follows that the projections of $S'_0$ and $S''_0$ on $\res(o)$ are the same, and therefore $S'_0$ and $S''_1$ are sectors based at $go$ which are opposite at $go$, and such that $S'_0$ contains $g^{-1}o,o,go$ and $S''_1$ contains $go,g^2 o$.

		Repeating this argument we obtain by induction that for all $n\geq 0$ there exists an apartment which contains $(g^io), -1\leq i \leq n$. More precisely, for every $n$  there exists opposite sectors $S_n$ and $S'_n$ which are based at $g^{n-1} o$ and such that $g^{n} o \in S'_n$, and $g^i o \in S_n$ for $-1\leq i\leq n-1$.
		
		Applying the cosine rule to the geodesic triangle $\Delta(o,g^{n-1}o,g^no)$ and using that $g$ is an isometry yields 
		\begin{eqnarray}
			d^2(o,g^n o)=d^2(o,go)+d^2(o,g^{n-1}o)-2d(o,go)d(o,g^{n-1}o)\cos \theta, \nonumber
		\end{eqnarray} 
		where $\theta$ is the angle at $g^{n-1}o$ between $o$ and $g^n o$. Since $o\in S_n$ and $g^{n-1} o \in S'_n$, and $S_n$ and $S'_n$ are opposite at $g^{n-1} o$, we see that $\theta\geq 2\pi/3$, and therefore
		$\cos \theta\leq -\frac{1}{2}$. It follows that we have $d^2(o,g^no)\geq d^2(o,go)+d^2(o,g^{n-1}o)+d(o,go)d(o,g^{n-1}o)$.
		Thus $d(o,g^no)\to \infty$ for $n\to \infty$. Therefore $g$ is not elliptic, and by Parreau's Theorem \ref{thm isom ss immeuble affine}, $g$ is hyperbolic.
	\end{proof}
	
	\begin{erem}
		A.~Parreau suggested to us that in fact, if  $go\neq o$ and  the segments $[g^{-1}o,o]$ and $[o,g(o)]$ are contained in  sectors $S_0$ and $S_1$ respectively, with $S_0$ and $S_1$ based at $o$ and opposite at $o$, then $g^{-1}o$, $o$ and $go$ must be aligned. This is due to the fact that $[g^{-1}o,o]$ and $[o,go]$ have same type. This idea simplifies the previous proof.
	\end{erem}
	For $o\in X$, and $c$ a chamber in $\res(o)$, let $\Omega_o(c)$ be the subset of points $\eta\in \overline X$ such that the projection of $\eta$ on $\res(o)$ is $c$; that is, $c$ is in the convex hull of $o$ and $\eta$, see Section \ref{section panel trees}. Note that $\Omega_o(c)$ is a closed open subset of $\overline X$. 
	
	For the rest of this section, we let $\mu$ be a symmetric and admissible probability measure on $G$, with finite second moment, and we denote by $\nu$ the unique stationary measure on $\ch(\bdinf)$ defined by Theorem \ref{thm uniqueness stationary measure immeuble}. Similarly, we will denote by $\nui$ the unique stationary measure associated to $\mui = \iota_\ast \mu$, where $\iota(g) = g^{-1}$. Since $\mu$ is symmetric, we immediately have that $\nui = \nu$. As before, we denote by $(Z_n (\omega))$ the random walk generated by $\mu$ on $G$. 
	
	The following result is a standard application of \cite[Corollary 2.7]{benoist_quint}.

	\begin{elem}\label{lem:nu(omega)}
		For every chamber $c$ in $\res(o)$, 
		we have almost surely
		\begin{eqnarray}
			\lim_{n\to +\infty} \frac{1}{n} | \{ k\leq n\mid Z_k^{-1}o \in \Omega_o(c) \} | =\nu(\Omega_o(c)) \nonumber
		\end{eqnarray}
	\end{elem}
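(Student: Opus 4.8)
The plan is to reinterpret the sequence $(Z_k^{-1}(\omega)o)_k$ as an ordinary $\mui$-random walk on $X$ and to apply the law of large numbers for stationary measures (\cite[Corollary 2.7]{benoist_quint}), once we know that the relevant hitting measure is the unique stationary measure on the whole bordification $\overline{X}$. First I would record the elementary reformulation: since $X$ is locally finite it is proper, so $\overline{X} = X \cup \bd X$ is compact and the $G$-action extends continuously to it; for $c \in \res(o)$ the set $\Omega_o(c) \subseteq \overline{X}$ is open and closed, so $\varphi := \mathds{1}_{\Omega_o(c)}$ is a continuous function on $\overline{X}$ with $\varphi(Z_k^{-1}(\omega)o) = 1$ exactly when $Z_k^{-1}(\omega)o \in \Omega_o(c)$. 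Moreover, writing $c_i := \omega_i^{-1}$, the increments $(c_i)_i$ are i.i.d.\ of law $\mui$ (equal to $\mu$ by symmetry) and
\begin{equation*}
Z_k^{-1}(\omega)o = (\omega_1\cdots\omega_k)^{-1}o = c_k c_{k-1}\cdots c_1 o,
\end{equation*}
so that, setting $p_0 = o$ and $p_k = c_k p_{k-1}$, the process $(p_k)_k = (Z_k^{-1}(\omega)o)_k$ is precisely the $\mui$-random walk on $X$ issued from $o$. Thus the quantity to estimate is $\frac1n \sum_{k=1}^n \varphi(p_k)$.

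Second, I would upgrade the uniqueness statement of Theorem~\ref{thm uniqueness stationary measure immeuble} from $\ch(\bdinf)$ to all of $\overline{X}$. Since $\mui$ is again admissible, symmetric and of finite second moment, Theorem~\ref{thm cv reg A2} applies to $\mui$, so the $\mui$-walk converges almost surely to a regular boundary point; combined with Theorem~\ref{thm mesures limites} this forces any $\mui$-stationary probability measure on $\overline{X}$ to coincide with the hitting measure of that walk, which by symmetry is $\nui = \nu$. Hence $\nu$ is the \emph{unique} $\mui$-stationary measure on the compact metric $G$-space $\overline{X}$, and it is non-atomic by Lemma~\ref{lem non atomic}.

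Finally, I would invoke \cite[Corollary 2.7]{benoist_quint}: for the unique $\mui$-stationary measure $\nu$ on the compact metric $G$-space $\overline{X}$ and the continuous function $\varphi$, the Birkhoff averages $\frac1n \sum_{k=1}^n \varphi(c_k\cdots c_1 x)$ converge almost surely to $\int_{\overline{X}} \varphi\, d\nu = \nu(\Omega_o(c))$, for every starting point $x \in \overline{X}$, in particular for $x = o$. Since $c_k\cdots c_1 o = Z_k^{-1}(\omega)o$ and $\varphi(Z_k^{-1}(\omega)o)=\mathds{1}_{\Omega_o(c)}(Z_k^{-1}(\omega)o)$, this is exactly the asserted convergence $\frac1n\,|\{k\leq n \mid Z_k^{-1}o \in \Omega_o(c)\}| \to \nu(\Omega_o(c))$.

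The hard part, and the only place something beyond bookkeeping is needed, is the passage from a $\nu$-generic starting point — where the convergence is just Birkhoff's theorem applied to the skew product $(\omega,x)\mapsto\big((\omega_{i+1})_i,\,\omega_1^{-1}x\big)$ over the Bernoulli shift, which is ergodic by \cite[Proposition~1.14]{benoist_quint} because $\nu$ is the unique $\mui$-stationary measure — to the fixed basepoint $o\in X$. This is precisely what the unique–ergodicity (Breiman-type) form of \cite[Corollary~2.7]{benoist_quint} delivers, and it is why the second step, upgrading uniqueness to all of $\overline{X}$ rather than only on $\ch(\bdinf)$, is essential. Alternatively, one can reach the fixed basepoint by a direct fellow-travelling argument in $\overline{X}$: as $Z_k o \to \xi^+(\omega)$ and $\nu$ is non-atomic, a $\nu$-generic $\xi$ and the point $o$ lie coarsely behind $Z_k o$, so that the geodesics $[o, Z_k^{-1}o]$ and $[o, Z_k^{-1}\xi)$ share the same initial chamber of $\res(o)$ for all large $k$, reducing the fixed-basepoint statement to the $\nu$-generic one.
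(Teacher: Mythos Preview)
Your proposal is correct and follows exactly the paper's approach: both arguments apply Breiman's law of large numbers \cite[Corollary~2.7]{benoist_quint} to the continuous function $\varphi=\mathds{1}_{\Omega_o(c)}$ along the trajectory $Z_k^{-1}o=c_k\cdots c_1 o$, using $\mui=\mu$ by symmetry so that $\nui=\nu$.

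You are in fact more explicit than the paper on one point: Corollary~2.7 requires uniqueness of the stationary measure on the ambient compact space $\overline{X}$, not just on $\ch(\bdinf)$, and you correctly flag that this upgrade is needed. Your justification of the upgrade, however, is a little quick: convergence of the $\mui$-walk to a regular boundary point together with Theorem~\ref{thm mesures limites} does not by itself force $\nu'_\omega=\delta_{\xi(\omega)}$ for an \emph{arbitrary} stationary $\nu'$ on $\overline X$; one still needs either the Papasoglu--Swenson dynamics used inside the proof of Theorem~\ref{thm cv reg A2}, or the boundary-map arguments of Section~\ref{section bd map immeuble} applied to the $G$-map $b\mapsto\nu'_b\in\prob(\overline X)$ to rule out mass on $X$ and on singular points. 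The paper glosses over this same step, so your treatment is at least as complete as the original; your final paragraph, identifying the passage from a $\nu$-generic starting point to the fixed basepoint $o$ as the place where unique ergodicity is genuinely used, is exactly the right diagnosis.
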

	
	\begin{proof}
		Let $\varphi$ be the characteristic function of $\Omega_o(c)$. As $\Omega_o(c)$ is closed and open, $\varphi$ is continuous. Let $\nui$ be the unique $\mui $-stationary measure on $\ch(\bdinf)$ given by Theorem \ref{thm uniqueness stationary measure immeuble}. Since $\mu$ is symmetric, we have that $\nui = \nu$. By \cite[Corollary 2.7]{benoist_quint} we obtain
		\begin{eqnarray}
			\lim_{n\to +\infty} \frac{1}{n}\sum_{k=1}^n \varphi(Z_k^{-1} o) = \nui(\varphi) = \nu(\varphi).  \nonumber
		\end{eqnarray}
		The result follows.
	\end{proof}
	
	If $C,C'$ are two chambers at infinity, and $o\in X$, then say that $C$ and $C'$ are \emph{opposite at $o$} if $\proj_o(C)$ and $\proj_o(C')$ are opposite chambers in $\res(o)$. Note that $C$ and $C'$ are opposite (in the usual sense) if and only if there exists an $o$ such that $C$ and $C'$ are opposite at $o$. Indeed, one can simply take for $o$ a vertex in an apartment containing $C$ and $C'$. We denote by $\Opp_o (C)$ the set of chambers at that are opposite to $C$ at $o$.

	
	%

	\begin{eprop}\label{prop:probaopposite}
		For almost every $\omega \in \Omega$, there exists $o$ such that almost surely, $Z_n (\omega)o$ and $Z_n^{-1} (\omega) o$ are in opposite sectors based at $o$ for infinitely many $n $. 
	\end{eprop}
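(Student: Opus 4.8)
The plan is to deduce this from three facts already available: the forward walk converges almost surely to a \emph{regular} boundary point (Theorem~\ref{thm cv reg A2}), the stationary measure $\nu$ is supported on mutually opposite chambers at infinity (Proposition~\ref{prop proba opposite chambers}), and the inverse walk spends, with a prescribed Ces\`aro density, a positive proportion of time pointing in each direction of $\res(o)$ (Lemma~\ref{lem:nu(omega)}). The only genuinely new ingredient is a judicious choice of the basepoint $o$, made \emph{after} looking at $\omega$.

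First I would fix a reference vertex $o_0$. By Theorem~\ref{thm cv reg A2}, for $\mathbb{P}$-almost every $\omega$ the walk $Z_n(\omega)o_0$ converges to a regular point $\xi^+(\omega)=\text{mid}(C^+(\omega))$ with $C^+(\omega)=\psi(\bnd(\omega))\in\ch(\bdinf)$. By the decomposition $\nu=\int_B\delta_{\psi(b)}\,d\nu_B(b)$ of Theorem~\ref{thm uniqueness stationary measure immeuble}, the law of $C^+(\omega)$ is $\nu$, so Proposition~\ref{prop proba opposite chambers} ensures that for almost every $\omega$ the chamber $C^+(\omega)$ is opposite to $\nu$-almost every chamber at infinity; since $\nu$ is non-atomic and $\nu(\supp\nu)=1$, I can choose $C^-(\omega)\in\supp(\nu)$ opposite to $C^+(\omega)$. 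The chambers $C^+(\omega),C^-(\omega)$ span a unique apartment $A(\omega)$, and I take $o=o(\omega)$ to be any vertex of $A(\omega)$. Inside the Euclidean apartment $A(\omega)$ the sectors based at $o$ pointing towards $C^+(\omega)$ and towards $C^-(\omega)$ are opposite at $o$, so their base chambers $c^+:=\proj_o(C^+(\omega))$ and $c^-:=\proj_o(C^-(\omega))$ are opposite in $\res(o)$, i.e. $C^-(\omega)\in\Opp_o(C^+(\omega))$. Since $\Opp_o(C^+(\omega))$ is open in $\ch(\bdinf)$ (a finite union of sets $\Omega_o(c')\cap\ch(\bdinf)$) and contains the point $C^-(\omega)\in\supp(\nu)$, we get $\nu(\Opp_o(C^+(\omega)))>0$.

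Next I would read off the two limiting behaviours at the vertex $o$. Replacing the basepoint $o_0$ by $o$ does not change the limit (the two orbits stay at bounded distance), so $Z_n(\omega)o\to\xi^+(\omega)$; since $\xi^+(\omega)$ is regular its projection to $\res(o)$ is the chamber $c^+$, and as $\Omega_o(c^+)$ is closed and open there is $N(\omega)$ with $\proj_{\res(o)}(Z_n(\omega)o)=c^+$ for all $n\geq N(\omega)$, which forces $[o,Z_n(\omega)o]$ to lie in a sector based at $o$ with base chamber $c^+$ (its convex hull with $o$ lies in an apartment, hence in the sector of that apartment with base chamber $c^+$). On the other side, Lemma~\ref{lem:nu(omega)} at the vertex $o$, summed over the finitely many chambers $c'$ of $\res(o)$ opposite to $c^+$, shows that $\{k:Z_k^{-1}(\omega)o\in\Omega_o(c')\text{ for some }c'\text{ opposite }c^+\}$ has Ces\`aro density $\nu(\Opp_o(C^+(\omega)))>0$, hence is infinite; for each such $k$ the segment $[o,Z_k^{-1}(\omega)o]$ lies in a sector based at $o$ whose base chamber is opposite at $o$ to $c^+$. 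Intersecting with $\{n\geq N(\omega)\}$ still leaves infinitely many $n$, and for each of those $Z_n(\omega)o$ and $Z_n(\omega)^{-1}o$ sit in opposite sectors based at $o$, as wanted.

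Two points require attention. First, $o(\omega)$ depends on $\omega$, so Lemma~\ref{lem:nu(omega)} cannot be invoked at a single fixed vertex; I would circumvent this by noting that the conclusion of Lemma~\ref{lem:nu(omega)} holds almost surely for each of the countably many vertices of $X$, hence almost surely for all of them at once, and then run the argument on the intersection of this full-measure event with the (full-measure) event on which the forward walk converges to a regular point enjoying the opposition property above. Second, and this is the main obstacle, one must bridge the gap between ``opposite at infinity'', which is what Proposition~\ref{prop proba opposite chambers} supplies, and ``opposite at $o$'', which is what Lemma~\ref{hyperbolic-creation} (and hence the sequel) needs: two opposite chambers at infinity need not project to opposite chambers in an arbitrary residue $\res(o)$, but they do whenever $o$ belongs to the apartment they span, and this is precisely why the basepoint must be chosen inside $A(\omega)$ rather than fixed in advance.
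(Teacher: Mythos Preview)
Your proof is correct and follows essentially the same route as the paper: convergence to a regular point gives the forward chamber $c^+$, Proposition~\ref{prop proba opposite chambers} yields a vertex $o$ with $\nu(\Opp_o(C^+(\omega)))>0$, and Lemma~\ref{lem:nu(omega)} (applied simultaneously at all countably many vertices) forces $Z_n^{-1}o$ to land in an opposite sector infinitely often. The only cosmetic difference is how you produce $o$: you pick $C^-\in\supp(\nu)$ and take $o$ in the apartment it spans with $C^+$, using openness of $\Opp_o(C^+)$, whereas the paper writes $\{C'\text{ opposite to }C^+\}=\bigcup_o\Opp_o(C^+)$ and invokes countability of the vertex set directly.
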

	
	\begin{proof}

		Let $x\in X $ be a basepoint. We know by Theorem \ref{thm cv reg A2} that $(Z_n (\omega)x) $ converges almost surely to a regular point $\xi^+(\omega) $ in the boundary $\bd X$. Let $\Omega' \subseteq \Omega $ be a conull set such that, for all $\omega \in \Omega'$, $(Z_n(\omega)x)_n $ converges to a regular point of the boundary. For all $\omega \in \Omega'$, denote by $C(\omega)$ the unique chamber that contains $\xi^+(\omega)$.
		By Proposition \ref{prop proba opposite chambers}, two chambers at infinity are almost surely opposite. We then have that for $\nu$-almost every $C \in \ch (\bdinf)$, 
		\begin{eqnarray}
			\nu \left( \underset{o \in X^{(o)}}{\bigcup} \Opp_o (C)\right) = 1 \nonumber. 
		\end{eqnarray} 
		Because $X$ is locally finite, $X^{(o)}$ is countable. Then, for $\nu$-almost every $C \in \ch(\bdinf)$, there must exist a vertex $o \in X^{(o)} $ such that $\nu(\Opp_o (C))>0$.  
		In particular, there exists $\Omega''$ of full $\mathbb{P}$-measure such that for every $\omega \in \Omega''$, $(Z_n(\omega)x)_n $ converges to a regular point in the chamber $C(\omega)$ and there exists a vertex $o(\omega) $ for which $\nu(\Opp_o (C(\omega)))>0$.
		Since for all $\omega \in \Omega''$, $(Z_n (\omega )o(\omega))$ converges to a point in $C(\omega)$, the projection of $Z_n (\omega)o(\omega)$ on $\res(o(\omega))$ is a constant chamber $c(\omega)$ for $n$ sufficiently large. 
		By our choice of $o(\omega)$, there exists $c' := c'(\omega)$ opposite  to $c(\omega)$ such that $\nu(\Omega_o(c'))>0$. By Lemma \ref{lem:nu(omega)}, we have that 
		\begin{eqnarray}
			\lim_{n\to +\infty} \frac{1}{n} | \{ k\leq n\mid Z_k^{-1}o \in \Omega_o(c') \} | =\nu(\Omega_o(c')) > 0\nonumber.
		\end{eqnarray}
		In particular the cardinal of set $ | \{ n\mid Z_n^{-1} \in \Omega_o(c') \} |$ is almost surely unbounded. We proved that for almost every $\omega \in \Omega$, there exists $o \in X^{(o)}$ and opposite chambers $c$ and $c'$ such that for infinitely many $n$, the projection of $Z_n(\omega)o$ on $\res(o)$ is $c$ and the projection of $Z_n(\omega)^{-1}o$ on $\res(o)$ is $c'$. 
	\end{proof}

	We are now in ready to prove our main result.
	
	\begin{proof}[Proof of Theorem \ref{mainthm}]
		By Proposition \ref{prop:probaopposite}, for almost every $\omega \in \Omega$, there exists a vertex $o \in X$ and $n \in \mathbb{N}$ such that $Z_n (\omega)o$ and $Z^{-1}_n (\omega)o $ are in opposite sectors based at $o$. The result follows from Lemma \ref{hyperbolic-creation}. 
	\end{proof}
	
	\begin{erem}
		Using similar techniques, we believe we can prove that the proportion of hyperbolic isometries in the random walk goes to 1 almost surely as $n $ goes to infinity, in a similar fashion as Theorem \ref{thm prop contracting} for contracting elements in general $\cat$(0) spaces. 
	\end{erem}
	
	Using Theorem \ref{mainthm} we can derive our local-to-global fixed point result. First, we recall a fixed point property for trees due to Serre. 
	
	\begin{eprop}[{\cite[Corollary I.6.5.3]{serre80}}]\label{prop pt fixe serre}
		Let $G$ be a finitely generated group acting on a tree. If each element of $G$ has a fixed point, then $G$ has a global fixed point. 
	\end{eprop}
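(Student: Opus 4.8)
The plan is to reduce the statement to the \emph{Helly property} for subtrees of a tree, exploiting the fact that the only obstruction to two elliptic isometries sharing a fixed point is that their product becomes hyperbolic. Throughout I regard the tree $T$ as a complete $\R$-tree (its geometric realization), so that it is a $\cat$(0) space and, as recalled in Section~\ref{section isom cat}, every isometry of $T$ is semi-simple: it is either \emph{elliptic} (fixes a point) or \emph{axial} (translates a line with positive translation length), with no parabolics. Under the hypothesis that every element of $G$ is elliptic, for each $g \in G$ the fixed-point set $\mathrm{Fix}(g)$ is a non-empty closed convex subtree of $T$, since displacement functions are convex in $\cat$(0) spaces.

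The geometric heart of the argument — and the step I expect to be the main obstacle — is the following lemma: if $a,b$ are elliptic isometries of $T$ with $\mathrm{Fix}(a) \cap \mathrm{Fix}(b) = \emptyset$, then $ab$ is axial. To prove it I would consider the \emph{bridge} $[p,q]$ realizing $d(\mathrm{Fix}(a), \mathrm{Fix}(b))$, with $p \in \mathrm{Fix}(a)$, $q \in \mathrm{Fix}(b)$ and $\ell := d(p,q) > 0$; in a tree this bridge is unique and meets each fixed set only at its endpoint. Since $a$ fixes $p$ but not $q$, the segment $a[p,q] = [p, aq]$ branches off $[p,q]$ at $p$ (if $a$ fixed the direction at $p$ towards $q$ it would fix a subsegment of the bridge, contradicting that the bridge meets $\mathrm{Fix}(a)$ only at $p$), and likewise $b[p,q] = [bp, q]$ branches at $q$. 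A direct concatenation argument then shows that $ab$ carries $q$ to $aq$ at distance $2\ell$, that $[q, aq]$ passes through $p$, and that the translates $(ab)^n[q,aq]$ assemble into a bi-infinite geodesic without backtracking; this geodesic is the axis of $ab$ and the translation length equals $2\ell > 0$, so $ab$ is axial and has no fixed point. The delicate point is precisely to verify that the concatenated segments form an honest geodesic at the branch points $p$ and $q$, which is where the $0$-hyperbolicity of $T$ is used.

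With this lemma in hand the proof is short. Since $G$ is finitely generated, write $G = \langle g_1, \dots, g_n \rangle$. Each $g_i$ is elliptic, so $\mathrm{Fix}(g_i)$ is a non-empty subtree; and for every pair $i \neq j$ the element $g_i g_j \in G$ is elliptic by hypothesis, hence by the contrapositive of the key lemma $\mathrm{Fix}(g_i) \cap \mathrm{Fix}(g_j) \neq \emptyset$. Thus the subtrees $\mathrm{Fix}(g_1), \dots, \mathrm{Fix}(g_n)$ pairwise intersect. I would then invoke the Helly property for trees: a finite family of pairwise-intersecting subtrees of a tree has a common point. Consequently $\bigcap_{i=1}^n \mathrm{Fix}(g_i) \neq \emptyset$, and any point in this intersection is fixed by all generators, hence by all of $G$.

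It remains to establish the Helly property, which I would prove by induction on the number $n$ of subtrees. The case $n = 3$ is the essential one: given pairwise intersection points $p_{12}, p_{13}, p_{23}$, the three geodesics joining them meet at a common \emph{median} point $m$ of the tree, and by convexity each geodesic lies in the corresponding subtree, so $m$ lies in all three. For the inductive step I would replace the last two subtrees $T_{n-1}, T_n$ by their intersection $T_{n-1} \cap T_n$ (non-empty by hypothesis): the $n=3$ case guarantees $T_i \cap (T_{n-1}\cap T_n) \neq \emptyset$ for each $i \leq n-2$, so the reduced family of $n-1$ subtrees still pairwise intersects and the induction applies. Note that finite generation is genuinely needed here: the Helly property fails for infinite families — the nested rays $[k,\infty) \subseteq \R$ pairwise meet but have empty total intersection — which is exactly why the hypothesis cannot be dropped.
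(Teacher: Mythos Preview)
The paper does not supply its own proof of this proposition: it is quoted verbatim as \cite[Corollary I.6.5.3]{serre80} and used as a black box in the proof of Theorem~\ref{local-global}. Your argument is correct and is essentially Serre's original proof --- the key lemma that two elliptic isometries with disjoint fixed-point sets have hyperbolic product, followed by the Helly property for finitely many subtrees --- so there is nothing to compare against in the manuscript itself.
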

	
	\begin{proof}[Proof of Theorem \ref{local-global}]
		By Theorem \ref{mainthm}, we see that the action of $G$ on $X$ must be elementary. We proceed by induction on the dimension of $X$. If $X$ is a tree, then the result is given by Proposition \ref{prop pt fixe serre}. 
		
		By passing to a finite index subgroup, we can always assume that $G$ fixes a boundary vertex $\xi$ in $\partial_\infty X$, and therefore a facet $\sigma$ of the building at infinity. Hence $G$ acts on the corresponding panel tree $T_\xi$. The projection $\pi_\xi : X\to T_\xi$  is $G$-equivariant, so the action on $T_\xi$ is still elliptic.  By Proposition \ref{prop pt fixe serre}, $T_\xi$ has a $G$-fixed point, which we denote by $x_0$. 
		
		If $g\in G$, then it follows that $g$ fixes some point $x^g$ in $\pi_\sigma^{-1}(x_0)$. Hence it fixes a half-line $[x^g ,\xi)$, whose projection on $T_\xi$ is $x_0$. 
		
		Now let $g_1,\dots,g_n$ be a generating set for $G$. Then each $g_i$ fixes a half-line $[x^{g_i},\xi)$ which projects to $x_0$. Since all these lines have the same projection, it follows that any two of them intersect. Therefore there is a half-line contained in all the $[x^{g_i},\xi)$, which is fixed pointwise by all of the finitely many generators of $G$, and therefore by all of $G$.
	\end{proof}
	
	\selectlanguage{french}
	\addcontentsline{toc}{chapter}{Bibliographie}
	\small
	\bibliographystyle{alpha}
	\bibliography{bibliography}
\end{document}